\DeclareSymbolFontAlphabet{\mathbb}{AMSb} 
\DeclareSymbolFontAlphabet{\mathbbl}{bbold} 
\newcommand{\Prism}{{\mathlarger{\mathbbl{\Delta}}}}
\theoremstyle{definition}
\newtheorem{thm}{Theorem}[section]
\newtheorem{mainthm}{Theorem}
\newtheorem{lem}[thm]{Lemma}
\newtheorem{cor}[thm]{Corollary}
\newtheorem{prop}[thm]{Proposition}
\theoremstyle{definition}
\newtheorem{rem}[thm]{Remark}
\newtheorem{dfn}[thm]{Definition}
\newtheorem{construction}[thm]{Construction}
\newtheorem{setting}[thm]{Setting}
\newtheorem{notation}[thm]{Notation}
\newcommand{\bA}{\mathbb{A}}
\newcommand{\bB}{\mathbb{B}}
\newcommand{\bL}{\mathbb{L}}
\newcommand{\bN}{\mathbb{N}}
\newcommand{\bZ}{\mathbb{Z}}
\newcommand{\bF}{\mathbb{F}}
\newcommand{\bQ}{\mathbb{Q}}
\newcommand{\cD}{\mathcal{D}}
\newcommand{\cE}{\mathcal{E}}
\newcommand{\cF}{\mathcal{F}}
\newcommand{\cI}{\mathcal{I}}
\newcommand{\cJ}{\mathcal{J}}
\newcommand{\cM}{\mathcal{M}}
\newcommand{\cN}{\mathcal{N}}
\newcommand{\cO}{\mathcal{O}}
\newcommand{\cP}{\mathcal{P}}
\newcommand{\sC}{\mathscr{C}}
\newcommand{\fM}{\mathfrak{M}}
\newcommand{\fS}{\mathfrak{S}}
\newcommand{\fU}{\mathfrak{U}}
\newcommand{\fV}{\mathfrak{V}}
\newcommand{\fX}{\mathfrak{X}}
\newcommand{\fY}{\mathfrak{Y}}
\newcommand{\isom}{\stackrel{\sim}{\to}}
\newcommand{\et}{\text{\'{e}t}}
\newcommand{\Spec}{\mathrm{Spec}}
\newcommand{\Spf}{\mathrm{Spf}}
\title{Log prismatic $F$-crystals and realization functors}
\author{Kentaro Inoue}
\begin{document}

\begin{abstract}
    Log prismatic cohomology theory developed by Koshikawa-Yao involves coefficient objects, called log prismatic $F$-crystals. In this paper, we construct and study realization functors from the category of log prismatic $F$-crystals to the category of coefficient objects of other $p$-adic cohomology theories, in the setting where boundary divisors may involve horizontal components.
\end{abstract}

\maketitle

\tableofcontents

\section{Introduction}
The theory of prismatic cohomology is introduced in \cite{bs22}. This cohomology theory involves coefficient objects called prismatic $F$-crystals. We let $\mathrm{Vect}^{\varphi}(\fX_{\Prism})$ denote the category of prismatic $F$-crystals on a bounded $p$-adic formal scheme $\fX$. The prismatic cohomology theory specializes to other $p$-adic cohomology theories, and corresponding coefficient objects behave similarly. To be precise, the works \cites{gr24,iky23} give the following:
for a $p$-adic formal scheme which is smooth over $\cO_{K}$, an \'{e}tale realization functor
\[
T_{\et}\colon \mathrm{Vect}^{\varphi}(\fX_{\Prism})\to \mathrm{Loc}_{\bZ_{p}}(\fX_{\eta})
\]
and a crystalline realization functor
\[
T_{\mathrm{fisoc}}\colon \mathrm{Vect}^{\varphi}(\fX_{\Prism})\to \mathrm{FilIsoc}^{\varphi}(\fX)
\] 
are constructed, where $\mathrm{Loc}_{\bZ_{p}}(\fX_{\eta})$ is the category of $\bZ_{p}$-local systems on the generic fiber $\fX_{\eta}$ and $\mathrm{FilIsoc}^{\varphi}(\fX)$ is the category of filtered $F$-isorystals on $\fX$. Moreover, for a prismatic $F$-crystal $\cE$, the coefficient objects $T_{\et}(\cE)$ and $T_{\mathrm{fisoc}}(\cE)$ are associated in the sense of Faltings.

The log version of prismatic cohomology theory is established in \cites{kos22,ky23}. Log prismatic cohomology theory has coefficient objects called log prismatic $F$-crystals. We let $\mathrm{Vect}^{\varphi}((\fX,\cM_{\fX})_{\Prism})$ denote the category of log prismatic $F$-crystals on a bounded $p$-adic log formal scheme. Log prismatic cohomology theory enables us to treat $p$-adic Hodge theory on a smooth variety over a $p$-adic field $K$ admitting an open immersion to a proper semi-stable scheme over $\cO_{K}$ while (non-log) prismatic cohomology theory behaves well only for a proper smooth variety over $K$ with good reduction. We make a precise setup.

\begin{dfn}
Let $k$ be a perfect field, $K$ a totally ramified finite extension of $W(k)[1/p]$, and $\cO_{K}$ a valuation ring of $K$. We fix a uniformizer $\pi$ of $\cO_{K}$. Consider the prelog ring $(R^{0},\bN^{r})$ defined as follows. Let $l,m,n\in \bZ_{\geq 0}$, $r\coloneqq m+n$. We define $R^{0}$ as follows: 
    \[
    R^{0}\coloneqq
    \begin{cases}
    \cO_{K}\langle x^{\pm1}_{1},\dots,x^{\pm1}_{l},y_{1},\dots,y_{m}\rangle \ \ (n=0) \\
    \cO_{K}\langle x^{\pm1}_{1},\dots,x^{\pm1}_{l},y_{1},\dots,y_{m},z_{1},\dots,z_{n} \rangle/(\prod^{n}_{k=1}z_{k}-\pi) \ \ (n\geq 1).
    \end{cases}
    \]
    We define a prelog structure $\bN^{r}\to R^{0}$ by 
    \[
    e_{i}\mapsto 
    \begin{cases}
        y_{i} \ (1\leq i\leq m) \\
        z_{i-m} \ (m+1\leq i\leq r).
    \end{cases}
    \]
   A $p$-adic log formal scheme $(\mathrm{Spf}(R),\cM_{R})$ over $\cO_{K}$ is called \emph{small affine} if there exists a strict \'{e}tale map $(\mathrm{Spf}(R),\cM_{R})\to (\mathrm{Spf}(R^{0}),\bN^{r})^{a}$ for some $l,m,n$. Such a strict \'{e}tale map  is called a \emph{framing}. A $p$-adic log formal scheme $(\fX,\cM_{\fX})$ over $\cO_{K}$ is called \emph{semi-stable} if $(\fX,\cM_{\fX})$ has an \'{e}tale covering consisting of small affine log formal schemes.
\end{dfn}

The first goal of this paper is to generalize the construction of realization functors and the relationship between them to log prismatic $F$-crystals on semi-stable log formal schemes over $\cO_{K}$. 

\begin{mainthm}[see Theorem \ref{existence of filfisoc real}]\label{main thm fisoc real}
    For a semi-stable log formal scheme $(\fX,\cM_{\fX})$ over $\cO_{K}$, there exist an \'{e}tale realization functor
    \[
    T_{\et}\colon \mathrm{Vect}^{\varphi}((\fX,\cM_{\fX})_{\Prism})\to \mathrm{Loc}_{\bZ_{p}}((\fX,\cM_{\fX})_{\eta})
    \]
    and a crystalline realization functor
    \[
    T_{\mathrm{fisoc}}\colon \mathrm{Vect}^{\varphi}((\fX,\cM_{\fX})_{\Prism})\to \mathrm{FilIsoc}^{\varphi}(\fX,\cM_{\fX}),
    \]
    where $\mathrm{Loc}_{\bZ_{p}}((\fX,\cM_{\fX})_{\eta})$ denotes the category of Kummer \'{e}tale $\bZ_{p}$-local systems on the generic fiber $(\fX,\cM_{\fX})_{\eta}$ and $\mathrm{FilIsoc}^{\varphi}(\fX,\cM_{\fX})$ denotes the category of filtered $F$-isocrystals on $(\fX,\cM_{\fX})$. Moreover, for a log prismatic $F$-crystal $\cE$, coefficient objects $T_{\et}(\cE)$ and $T_{\mathrm{fisoc}}(\cE)$ are associated in the sense of Definition \ref{def of semist loc sys}(2) (cf. \cites{fal89,ai12,tsu}).
\end{mainthm}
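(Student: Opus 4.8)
The plan is to build the two functors separately, by descent to the small affine case, and then to deduce the ``associated'' property from a single Breuil-Kisin-type comparison. For the reduction, observe that all three categories in the statement satisfy \'etale (resp.\ Kummer-\'etale) descent along an \'etale covering $\{\fU_{i}\to\fX\}$: the source by the definition of log prismatic $F$-crystals; $\mathrm{Loc}_{\bZ_{p}}$ because Kummer-\'etale $\bZ_{p}$-local systems descend along the induced \'etale covering of log adic spaces; and $\mathrm{FilIsoc}^{\varphi}$ because a filtered log $F$-isocrystal consists of a log crystal on the log crystalline site of the special fibre together with a filtration on its log de Rham realization, both of which descend. Hence it suffices to work on a small affine $(\fX,\cM_{\fX})$ with a chosen framing, checking framing-independence at the end. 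Over such a small affine the framing supplies two distinguished log prisms: a ``Breuil-Kisin'' log prism $(\fS_{\fX},\cM)$---a log analogue, adapted to the framing, of the Breuil-Kisin prism $(W(k)[[u]],(E(u)))$ of $\cO_{K}$, carrying a log structure that lifts the one on $R^{0}$---and a perfect log prism $(\bA_{\fX},\cM)$, the log analogue of $A_{\mathrm{inf}}$, obtained from a log-perfectoid cover of $R^{0}$ adjoining $p$-power roots of the $x_{i},y_{j},z_{k}$ and of $\pi$, together with a map $\fS_{\fX}\to\bA_{\fX}$. Both are covers of the (small affine) log prismatic site, so a log prismatic $F$-crystal $\cE$ is equivalently a finite projective $\fS_{\fX}$-module, or $\bA_{\fX}$-module, with Frobenius and stratification.

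For $T_{\et}$ I would follow \cite{gr24,iky23}: pass from $\cE$ to the ``Laurent'' localization $\cE\otimes_{\cO_{\Prism}}(\cO_{\Prism}[1/\cI])^{\wedge}_{p}$ of the log prismatic structure sheaf, evaluated on perfect log prisms. By the Kummer-\'etale comparison for log prismatic cohomology---the log analogue of the \'etale comparison of \cite{bs22}---the resulting Laurent $F$-crystals form a category equivalent to $\mathrm{Loc}_{\bZ_{p}}((\fX,\cM_{\fX})_{\eta})$; concretely $T_{\et}(\cE)$ is obtained by restricting $\cE$ to $(\bA_{\fX},\cM)$ and its Frobenius-perfection, inverting $\cI$, completing, and taking $\varphi$-invariants, and one verifies functoriality, framing-independence, and gluing to a Kummer-\'etale $\bZ_{p}$-local system on the generic fibre.

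For $T_{\mathrm{fisoc}}$ I would use the morphism from the log prismatic site to the log crystalline site of the special fibre: evaluating $\cE$ on the log prisms of the form $(A,(p))$---that is, on $p$-complete log PD-thickenings---yields an $F$-crystal of vector bundles on the log crystalline site, hence after inverting $p$ a log $F$-isocrystal $\cD(\cE)$. The Hodge filtration on its log de Rham realization is read off from the Breuil-Kisin module $\cE(\fS_{\fX})$ as in the non-log theory: $\varphi^{*}\cE(\fS_{\fX})$ carries the Nygaard/Hodge filtration relative to the Eisenstein element, and base change along the log specialization $\fS_{\fX}\to\cO_{\fX}$, $u\mapsto\pi$, transports it to a descending filtration on $\cD(\cE)\otimes\cO_{\fX}[1/p]$ compatible with the connection, giving $T_{\mathrm{fisoc}}(\cE)\in\mathrm{FilIsoc}^{\varphi}(\fX,\cM_{\fX})$.

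Finally, for the ``associated'' property I would exhibit one comparison isomorphism over a log crystalline period sheaf $\cO\bB^{\log}_{\crys}$ on the pro-Kummer-\'etale site of $(\fX,\cM_{\fX})_{\eta}$, under which $T_{\et}(\cE)\otimes_{\bZ_{p}}\cO\bB^{\log}_{\crys}$ and the pullback of $T_{\mathrm{fisoc}}(\cE)$ are both identified with $\cE(\bA_{\fX})$ base-changed along, respectively, $\bA_{\fX}\to\cO\bB^{\log}_{\crys}$ and $\fS_{\fX}\to\bA_{\fX}\to\cO\bB^{\log}_{\crys}$; compatibility with Frobenius, filtration and connection then reduces to the commutativity of the resulting diagram of log period rings, which is the log analogue of Faltings' crystalline comparison and recovers the semistable comparison theorems of \cite{fal89,ai12,tsu}. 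The step I expect to be the main obstacle is handling the simultaneous presence of the horizontal divisors (the $y_{j}$, which make $(\fX,\cM_{\fX})_{\eta}$ genuinely logarithmic and force the Kummer-\'etale rather than the \'etale topology) and of the vertical semistable part (the $z_{k}$ with $\prod_{k}z_{k}=\pi$): every site and every period sheaf above has to be taken in its log incarnation, the perfectoid cover must be genuinely log-perfectoid with the correct monoid of $p$-power roots, and the Breuil-Kisin log prism together with its PD-envelopes must be built so that all comparison maps respect the log structures. Granting that the underlying log-geometric foundations---log perfectoid rings, the Koshikawa-Yao log prismatic site \cite{kos22,ky23}, log crystalline cohomology, and log adic spaces---behave as in the classical and non-log cases, each step becomes a log-decorated version of the corresponding argument of \cite{gr24,iky23}.
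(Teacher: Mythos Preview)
Your outline for $T_{\et}$ (via Laurent $F$-crystals and the Koshikawa--Yao equivalence) and for the underlying $F$-isocrystal $T_{\mathrm{isoc}}$ (via evaluation on the Breuil--Kisin log prism and its Frobenius-twisted PD-envelope) matches the paper closely. The substantive divergence is in how you produce the \emph{filtration}.

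You propose to define the Hodge filtration on $T_{\mathrm{isoc}}(\cE)_{(\fX,\cM_{\fX})}$ directly as the image of the Nygaard filtration on $\varphi^{*}\cE(\fS_{\fX})$ under reduction $\fS_{\fX}\twoheadrightarrow R$. The paper goes the other way: it first proves a \emph{weak} association isomorphism
\[
T_{\et}(\cE)\otimes_{\bZ_{p}}\bB_{\mathrm{crys}}\;\cong\;\bB_{\mathrm{crys}}(T_{\mathrm{isoc}}(\cE))
\]
over $\bB_{\mathrm{crys}}$ (Theorem~\ref{semistableness of et real}), and then invokes a separate result (Proposition~\ref{weakly semi-stable implies semi-stable}) saying that any weakly semi-stable local system admits a \emph{unique} filtration upgrading it to a filtered $F$-isocrystal. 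That proposition is proved by pushing forward the filtration on $\bL\otimes\cO\bB_{\mathrm{dR,log}}$ along $\nu_{*}$ and appealing to the log Riemann--Hilbert correspondence of \cite{dllz23a}; this is what guarantees, essentially for free, that the filtration has locally free graded pieces and satisfies Griffiths transversality. Only \emph{after} $T_{\mathrm{fisoc}}$ is thus defined does the paper verify (Proposition~\ref{nyg filt and de rham filt are eq}) that the filtration coincides with the Nygaard one.

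Your route is not wrong in principle, but it has a gap you did not address: if you take the Nygaard description as the \emph{definition}, you must prove directly that the resulting filtration on $\overline{(\phi^{*}\cE)[1/p]}_{\fS_{R}}$ has locally free graded pieces, satisfies Griffiths transversality with respect to $\nabla_{E}$, and is independent of the framing. None of these is automatic from the Breuil--Kisin data alone; the paper's proof of Proposition~\ref{nyg filt and de rham filt are eq} in fact uses the de Rham filtration (already known to have these properties) as the reference point and shows the Nygaard side matches it after faithfully flat base change to $S$. So your step ``transports it to a descending filtration \ldots\ compatible with the connection, giving $T_{\mathrm{fisoc}}(\cE)\in\mathrm{FilIsoc}^{\varphi}$'' hides exactly the content of Proposition~\ref{weakly semi-stable implies semi-stable}, and you would need to supply an independent argument or import the log Riemann--Hilbert input at that point.

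A smaller point: the ``associated'' condition in Definition~\ref{def of semist loc sys}(2) is formulated over $\bB_{\mathrm{crys}}$ and $\bB_{\mathrm{dR}}$, not over $\cO\bB^{\log}_{\mathrm{crys}}$. Your comparison over $\cO\bB^{\log}_{\mathrm{crys}}$ would still need to be descended by taking horizontal sections (cf.\ Proposition~\ref{two conn over obdr is eq} and Corollary~\ref{property of crys and semist period sheaf}); the paper instead works directly over $\bB_{\mathrm{crys}}$ using the isomorphism of Proposition~\ref{Laurent isom after inverting mu} base-changed along $A_{\mathrm{inf}}[1/\phi^{-1}(\mu)]\xrightarrow{\phi}A_{\mathrm{inf}}[1/\mu]\to B_{\mathrm{crys}}$.
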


\begin{rem}
    In fact, we work with the category of analytic log prismatic $F$-crystals, denoted by $\mathrm{Vect}^{\mathrm{an},\varphi}((\fX,\cM_{\fX})_{\Prism})$, which is larger than $\mathrm{Vect}^{\varphi}((\fX,\cM_{\fX})_{\Prism})$.
\end{rem}

\begin{rem}
    When $(\fX,\cM_{\fX})$ has  framings with $m=n=0$ (resp.~ $m=0$ and $n\geq 1$) \'{e}tale locally, this theorem is proved in \cites{dlms23,gr24} (resp.~ \cite{dlms24}). Moreover, the work \cite{iky24} gives the description of $T_{\mathrm{fisoc}}$ using Nygaard filtrations when $(\fX,\cM_{\fX})$ has a framing with $m=n=0$ \'{e}tale locally. We also generalize this description for arbitrary $m,n$.
\end{rem}

The next goal is to prove the fully faithfulness and bi-exactness of \'{e}tale realization functors.

\begin{mainthm}[see Theorem \ref{fully faithfulness of et real for anal pris} and Theorem \ref{biexactness of et real}]\label{mainthm for et real}
    Let $(\fX,\cM_{\fX})$ be a semi-stable log formal scheme over $\cO_{K}$. Consider the \'{e}tale realization functor 
    \[
    T_{\et}\colon \mathrm{Vect}^{\mathrm{an},\varphi}((\fX,\cM_{\fX})_{\Prism})\to \mathrm{Loc}_{\bZ_{p}}((\fX,\cM_{\fX})_{\eta}).
    \]
    \begin{enumerate}
        \item The functor $T_{\et}$ is fully faithful.
        \item When $(\fX,\cM_{\fX})$ admits framings with $n\leq 1$ \'{e}tale locally, $T_{\et}$ gives a bi-exact equivalence to the essential image. In other words, a sequence of analytic log prismatic $F$-crystals
    \[
    0\to \cE_{1}\to \cE_{2}\to \cE_{3}\to 0
    \]
    is exact if and only if the corresponding sequence of Kummer \'{e}tale local systems
    \[
    0\to T_{\et}(\cE_{1})\to T_{\et}(\cE_{2})\to T_{\et}(\cE_{3})\to 0
    \]
    is exact.
    \end{enumerate}
\end{mainthm}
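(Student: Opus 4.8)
The plan is to reduce both statements to a comparison over the "boundary" locus, where the local system acquires genuine ramification, combined with the known results in the good-reduction and semi-stable cases. For fully faithfulness, I would first observe that $\mathrm{Hom}$-groups of analytic log prismatic $F$-crystals can be computed as $\varphi$-invariant global sections of an internal Hom, so it suffices to show that the map
\[
H^{0}\bigl((\fX,\cM_{\fX})_{\Prism}, \mathcal{H}om(\cE_{1},\cE_{2})\bigr)^{\varphi=1} \longrightarrow H^{0}\bigl((\fX,\cM_{\fX})_{\et}, \mathcal{H}om(T_{\et}(\cE_1),T_{\et}(\cE_2))\bigr)
\]
is bijective. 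Since both sides satisfy descent for the Kummer-étale (indeed strict-étale) topology on $(\fX,\cM_{\fX})$, I may assume $(\fX,\cM_{\fX})$ is small affine with a fixed framing. In that local situation the category of analytic log prismatic $F$-crystals is described — via the Koshikawa--Yao site and the Breuil--Kisin--type prism attached to the framing — by modules over an explicit ring with semilinear Frobenius and (for the horizontal/toric directions) a connection-type structure, and $T_{\et}$ becomes the standard functor to Kummer-étale $\bZ_p$-representations of the fundamental group of the generic fiber. Fully faithfulness then follows from the corresponding statement for prismatic $F$-crystals on the associated (log-)affine in the references \cite{dlms23,dlms24,gr24,bs23}, together with a limit argument over the tower trivializing the log structure; the horizontal coordinates $x_i^{\pm1}$ contribute only a $\bZ_p(1)^{l}$-worth of extra monodromy that is already handled by the toric case of \cite{bs22}/\cite{dlms23}.

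For the bi-exactness in part (2), the direction "exact sequence of $F$-crystals $\Rightarrow$ exact sequence of local systems" is formal, since $T_{\et}$ is built from an exact functor (evaluation at a perfect prism followed by an étale comparison, cf. Theorem \ref{existence of filfisoc real}). The substance is the converse. Here I would argue as follows: given $0\to\cE_1\to\cE_2\to\cE_3\to 0$ whose realization is exact, the cokernel presheaf $\cE_3' \coloneqq \cE_2/\cE_1$ is a priori only a $\varphi$-module, not obviously a vector bundle; one must show it is locally free, i.e. that $\cE_1\to\cE_2$ is a bundle sub-object with locally free quotient. Working in a small affine chart with $n\le 1$, the log prism becomes close to the non-log (or at worst the semi-stable Breuil--Kisin) prism, and one can invoke the classification of $F$-crystals by weakly admissible / admissible data: exactness of the étale realization forces the Hodge--Tate weights and the Frobenius slopes to add up correctly, and a Nakayama-type argument modulo the Hodge--Tate ideal shows the quotient module is free of the expected rank. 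This is exactly the point where the restriction $n\le 1$ enters: for $n\ge 2$ the local prism is no longer regular along the special fiber in the way needed, so the "subobject has locally free quotient" step can fail or at least is not accessible by these methods.

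The main obstacle, as indicated, is the local-freeness of the quotient in part (2), which is genuinely a statement about the structure of $\mathrm{Vect}^{\mathrm{an},\varphi}$ over a ramified log prism rather than a soft diagram chase. I expect to handle it by choosing, étale-locally, the Breuil--Kisin prism $\mathfrak{S}=\cO_K[\![u]\!]$ (when $n\le 1$ there is such a choice refining the given framing), descending $\cE_i$ to finite free $\mathfrak{S}$-modules with Frobenius, and then using that an inclusion of such Breuil--Kisin--Frobenius modules with cokernel free after inverting $p$ is automatically free — an input available from \cite{dlms23} in the semi-stable case. A secondary subtlety is checking that the Kummer-étale descent used to globalize is legitimate: I would verify that both the prismatic and the étale side form stacks for the analytic log-prismatic / Kummer-étale topologies, so that the local statements glue, using the descent results underlying Theorem \ref{existence of filfisoc real} and \cite{ky23}. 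Once the quotient is known to be a bona fide analytic log prismatic $F$-crystal, part (1) — already proved — shows $\cE_2\to\cE_3$ factors through it by an isomorphism, completing the argument.
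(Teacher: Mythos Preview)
Your sketch for part~(1) relies on citing \cite{dlms23,dlms24,gr24,bs23} for the local statement, but none of those references handles the case $m>0$ of horizontal boundary components, which is precisely the new content here; the ``limit argument over the tower trivializing the log structure'' is not spelled out and is where the real work lies. The paper's proof is structurally different: it first establishes a Kummer quasi-syntomic descent statement for morphisms (Proposition~\ref{kqsyn descent for anal pris crys}) allowing one to check fullness after passing to a perfectoid cover $\widetilde{R}_{\infty,\alpha}$; then for $n\le 1$ it uses the full faithfulness of Fargues's lattice realization $\mathrm{Vect}^{\mathrm{an},\varphi}(A_{\mathrm{inf}}(S),(\xi))\to \bB^+_{\mathrm{dR}}\text{-}\mathrm{Latt}$ (Lemma~\ref{fully faithfulness of latt real}) together with the full faithfulness of $\cE\mapsto \bB_{\mathrm{crys}}(\cE)$ on $F$-isocrystals (Proposition~\ref{bcrys functor is fully faithful}) to produce the morphism over the cover and then descend. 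For $n\ge 2$ the paper inducts on $n$ via an explicit Zariski cover deleting one $z$-coordinate at a time; your proposal does not anticipate this step.

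For part~(2) there is a genuine gap. The assertion that ``an inclusion of Breuil--Kisin--Frobenius modules with cokernel free after inverting $p$ is automatically free'' is neither made precise nor justified, and more fundamentally you have no mechanism to deduce anything about the module structure near $V(E)$ from exactness of the \'etale realization, which only controls what happens over $A[1/I]^{\wedge}_p$. The relevant Breuil--Kisin prism is the multivariable $\fS_R$, not $\cO_K[\![u]\!]$, so the module-theoretic inputs you want from \cite{dlms23} do not apply directly. The paper instead passes to a perfectoid cover $S$, uses Kedlaya's decomposition of $U(A_{\mathrm{inf}}(S),(\xi))$ into pieces over $B_{[0,1]}$ and $B_{[1,\infty]}$, and checks exactness on each: over $B_{[0,1]}$ this follows from the \'etale side, while over $B_{[1,\infty]}$ one must first prove exactness over $A_{\mathrm{crys}}(S)$ via the crystalline and de~Rham realizations and the local Hyodo--Kato isomorphism (Proposition~\ref{local HK isom}). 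The restriction $n\le 1$ enters here --- in the availability of an unramified model and the Hyodo--Kato comparison --- and not, as you suggest, through regularity of the local prism.
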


\begin{rem}
    When $(\fX,\cM_{\fX})$ has  framings with $m=n=0$ (resp.~ $m=0$ and $n\geq 1$) \'{e}tale locally, the fully faithfulness of the \'{e}tale realization functor is proved in \cites{bs23,dlms23,gr24} (resp.~ \cite{dlms24}). While the detailed calculation concerning Breuil-Kisin rings is the core of the proof in \cites{dlms23,dlms24}, the works \cites{bs23,gr24} use the fully faithfulness of Fargues's functor. In this paper, we adopt the method using the fully faithfulness of Fargues's functor. 
\end{rem}

\begin{rem}
     When $(\fX,\cM_{\fX})$ has  framings with $m=n=0$, the bi-exactness is proved in \cite{iky24}.
\end{rem}

\begin{rem}
    It is an important problem to specify the essential image of the \'{e}tale realization functor
    \[
    T_{\et}\colon \mathrm{Vect}^{\mathrm{an},\varphi}((\fX,\cM_{\fX})_{\Prism})\to \mathrm{Loc}_{\bZ_{p}}((\fX,\cM_{\fX})_{\eta}).
    \]
    This problem is indeed solved in some cases: When $\fX=\mathrm{Spf}(\cO_{K})$ and $\cM_{\fX}$ is trivial, it is proved that the functor $T_{\et}$ gives the equivalence to the category of crystalline $\bZ_{p}$-representation of $\mathrm{Gal}(\overline{K}/K)$ in \cite{bs23}. More generally, when $\cM_{\fX}$ is trivial and $\fX$ is higher dimensional, the essential image of $T_{\et}$ is the full subcategory of crystalline $\bZ_{p}$-local systems. The work \cite{gr24} proves this by the method generalizing \cite{bs23}, and the work \cite{dlms23} proves this by using the detailed computation concerning Breuil-Kisin rings. When $\fX=\mathrm{Spf}(\cO_{K})$ and $\cM_{\fX}$ is the standard log structure, the work \cite{dl24} shows that the essential image of $T_{\et}$ is the full subcategory of semi-stable $\bZ_{p}$-representations of $\mathrm{Gal}(\overline{K}/K)$. When $(\fX,\cM_{\fX})$ has framings with $m=0$ and $n\geq 1$ \'{e}tale locally, it is shown that $T_{\et}$ gives the equivalences to the category of semi-stable $\bZ_{p}$-local systems in \cite{dlms24} by a similar approach to \cite{dlms23}. 

    In this paper, we do not treat this problem. Naive expectation is that the essential image of $T_{\et}$ is the full subcategory of semi-stable $\bZ_{p}$-local systems in the sense of Definition \ref{def of semist loc sys}.
\end{rem}

\begin{rem}
    In \cite{ino25}, the work in this paper is applied to the construction of log prismatic realization on toroidal compactifications of Shimura varieties of Hodge type.
\end{rem}

We briefly explain the structure of the article. In Section \ref{section setup}, we define semi-stable log formal schemes and arrange notation and terminology concerning them. When we refer to semi-stable log formal schemes, we use notation in this section without reference. In Section \ref{section isoc}, we recall basics on absolute log crystalline sites and $F$-isocrystals. One of targets of this section is Proposition \ref{Fisoc on log crys site and Fisoc with monodromy ope}, which generalizes \cite[Theorem 2.21]{gy24}. This allows us to describe $F$-isocrystals on the special fibers of semi-stable log formal schemes admitting framings with $n=1$ \'{e}tale locally as $F$-isocrystals equipped with monodromy operators on the special fibers of horizontally semi-stable log formal schemes. In Section \ref{section loc sys}, we introduce de Rham (resp.  crystalline)(resp. semi-stable) period sheaves and the notion of semi-stable $\bZ_{p}$-local systems on the generic fibers of semi-stable log formal schemes. A key idea to do this is to consider the notion of \emph{big log affinoid perfectoids} (Definition \ref{def of big log affinoid perfd}). Although de Rham period sheaves on log adic spaces are already defined in \cite{dllz23b}, we give another construction which is similar to one in \cite{sch16} in spirit. In Section \ref{section pris crys}, we review fundamental results on prismatic $F$-crystals and prove ``Kummer quasi-syntomic descent'' for morphisms of analytic prismatic $F$-crystals (Proposition \ref{kqsyn descent for anal pris crys}). This descent property enables us to make an analogous argument to \cites{bs22,gr24,iky24} in order to prove Theorem \ref{mainthm for et real}. In Section \ref{section realization functor}, we construct several realization functors and prove Theorem \ref{main thm fisoc real} and Theorem \ref{mainthm for et real}.

\subsection*{Acknowledgements}
The author is grateful to his advisor, Tetsushi Ito, for useful discussions and warm encouragement. A special thanks goes to Teruhisa Koshikawa for innumerable discussions and advice. Moreover, the author would like to thank Kai-Wen Lan, Shengkai Mao, Koji Shimizu, Peihang Wu, and Alex Youcis for helpful comments. This work was supported by JSPS KAKENHI Grant Number 23KJ1325.

\subsection*{Notation and conventions}\noindent

\begin{itemize}
\item The symbol $p$ always denotes a prime.
\item All rings and monoids are commutative.
\item Limits of categories mean $2$-limits. 
\item Formal schemes are assumed to admit a finitely generated ideal of definition Zariski locally. For a property $P$ of morphisms of schemes, an adic morphism of formal schemes is called $\cP$ if it is adically $P$.
\item For a monoid $P$ and an integer $n\geq 1$, let $P^{1/n}$ denote the monoid $P$ with $P\to P^{1/n}$ mapping $p$ to $p^{n}$. The colimit of $P^{1/n}$ with respect to $n\geq 1$ is denoted by $P_{\bQ_{\geq 0}}$.
\item The $i$-th standard basis of $\bN^{r}$ is denote by $e_{i}$ for $1\leq i\leq r$.
\item For $I$-adically complete ring $A$ and a monoid $M$, a ring $A\langle M\rangle$ denotes the $I$-adic completion of the monoid algebra $A[M]$.
\item For a prelog ring $(A,M)$, the associated log scheme is denoted by $(\mathrm{Spec}(A),M)^{a}$. 
\item Let $\fX$ be a formal scheme and $n\geq 1$ be an integer. Assume that $p$ is topologically nilpotent on $\fX$ when $n=1$. The category of finite and locally free group schemes (resp. truncated Barsotti-Tate groups of level $n$) (resp. $p$-divisible groups) on $\fX$ is denoted by $\mathrm{Fin}(\fX)$ (resp. $\mathrm{BT}_{n}(\fX)$) (resp. $\mathrm{BT}(\fX)$). 
\end{itemize}

\section{Setups for semi-stable log formal schemes}\label{section setup}\noindent

In this section, we summarize notations around small affine log formal schemes and framings on them. Throughout this paper, we use the notation in this subsection without mentioning it. 

Fix a uniformizer $\pi$ of $\cO_{K}$ and $E(t)\in W[t]$ be a corresponding Eisenstein polynomial. Let $l,m,n\in \bZ_{\geq 0}$, $r\coloneqq m+n$. Consider the prelog ring $(R^{0},\bN^{r})$ defined as follows: 
\[
R^{0}\coloneqq
    \begin{cases}
    \cO_{K}\langle x^{\pm1}_{1},\dots,x^{\pm1}_{l},y_{1},\dots,y_{m}\rangle \ \ (n=0) \\
    \cO_{K}\langle x^{\pm1}_{1},\dots,x^{\pm1}_{l},y_{1},\dots,y_{m},z_{1},\dots,z_{n} \rangle/(\prod^{n}_{k=1}z_{k}-\pi) \ \ (n\geq 1)
    \end{cases}
    \]
equipped with a prelog structure $\bN^{r}\to R^{0}$ given by 
\[
    e_{i}\mapsto 
    \begin{cases}
        y_{i} \ (1\leq i\leq m) \\
        z_{i-m} \ (m+1\leq i\leq r).
    \end{cases}
    \]

 We define an integral perfectoid $\cO_{C}$-algebra $\widetilde{R}^{0}_{\infty}$ as follows:
    \[
    \widetilde{R}^{0}_{\infty}\coloneqq
    \begin{cases}
        \cO_{C}\langle \{x_{i}^{\pm1/p^{\infty}}\},\{y_{j}^{\bQ_{\geq 0}}\}\rangle \ \ (n=0)\\
        \cO_{C}\langle \{x_{i}^{\pm1/p^{\infty}}\},\{y_{j}^{\bQ_{\geq 0}}\},\{z_{k}^{\bQ_{\geq 0}}\} \rangle/(\prod^{n}_{k=1}z_{k}^{1/s}-\pi^{1/s})_{s\geq 1} \ \ (n\geq 1).
    \end{cases}
    \]
    We define a prelog structure $\bQ_{\geq 0}^{r}\to \widetilde{R}^{0}_{\infty}$ by 
    \[
    se_{i}\mapsto 
    \begin{cases}
        y_{i}^{s} \ (1\leq i\leq m) \\
        z_{i-m}^{s} \ (m+1\leq i\leq r).
    \end{cases}
    \]
    for $s\in \bQ_{\geq 0}$. If we put $R^{0}_{\infty}\coloneqq R^{0}\widehat{\otimes}_{\bZ_{p}\langle \bN^{r}\rangle} \bZ_{p}\langle \bQ_{\geq 0}^{r}\rangle$, there is a sequence of $p$-complete prelog rings
    \begin{equation}
        (R^{0},\bN^{r})\to (R^{0}_{\infty},\bQ_{\geq 0}^{r})\to (\widetilde{R}^{0}_{\infty},\bQ_{\geq 0}^{r}). \tag{1}
    \end{equation}
    The sequence of $p$-adic fs log formal schemes associated with the sequence $(1)$ is denoted by
    \begin{equation}
        (\mathrm{Spf}(\widetilde{R}^{0}_{\infty}),\cM_{\widetilde{R}^{0}_{\infty}})\to (\mathrm{Spf}(R^{0}_{\infty}),\cM_{R^{0}_{\infty}})\to (\mathrm{Spf}(R),\cM_{R}). \tag{2}
    \end{equation}

    Let $\widetilde{U}^{0}_{\infty}$ be the object of $(\mathrm{Spf}(R^{0}),\bN^{r})^{a}_{\eta,\mathrm{prok\et}}$ written as the limit of finite Kummer \'{e}tale coverings
    \[
    (\mathrm{Spf}(\cO_{L}\langle \{x_{i}^{\pm 1/p^{m}}\},\{y_{j}^{1/m}\},\{z_{k}^{1/m}\} \rangle/(\prod_{k=1}^{n}z_{k}^{1/m}-\pi^{1/m})),\frac{1}{m}\bN^{r})^{a}_{\eta}\to (\mathrm{Spf}(R^{0}),\bN^{r})^{a}_{\eta},
    \]
    where $m\geq 1$ and $L$ is a finite extension of $K[\pi^{1/m}]$ contained in $C$. Then $\widetilde{U}^{0}_{\infty}$ is a log affinoid perfectoid, and its associated affinoid perfectoid space is isomorphic to $\mathrm{Spf}(\widetilde{R}^{0}_{\infty})_{\eta}$.

    Let 
    \[
    (x_{i}^{s})^{\flat}\coloneqq (x_{i}^{s},x_{i}^{s/p},x_{i}^{s/p^{2}},\dots)\in (\widetilde{R}^{0}_{\infty})^{\flat}
    \]
    for $s\in \bZ[1/p]$, and let 
    \begin{align*}
    (y_{j}^{s})^{\flat}&\coloneqq (y_{j}^{s},y_{j}^{s/p},y_{j}^{s/p^{2}},\dots)\in (\widetilde{R}^{0}_{\infty})^{\flat} \\(z_{k}^{s})^{\flat}&\coloneqq (z_{k}^{s},z_{k}^{s/p},z_{k}^{s/p^{2}},\dots)\in (\widetilde{R}^{0}_{\infty})^{\flat} 
    \end{align*}
    for $s\in \bQ_{\geq 0}$. When $n\geq 1$, we set
    \[
    (\pi^{s})^{\flat}\coloneqq \prod_{k=1}^{n}(z_{k}^{s})^{\flat}
    \]
    for $s\in \bQ_{\geq 0}$.

\begin{notation}\label{convenient notation}
    When we would like to ease the notation, we write 
    \begin{align*}
    t_{i}&\coloneqq 
    \begin{cases}
        x_{i} \ \ \ (1\leq i\leq l) \\
        y_{i-l} \ \ \ (l+1\leq i\leq l+m) \\
        z_{i-l-m} \ \ \ (l+m+1\leq i\leq l+r) 
    \end{cases}  \\
    s&\coloneqq 
    \begin{cases}
        l+m \ \ \ (n=0) \\
        l+m+n-1=l+r-1 \ \ \ (n\geq 1).
    \end{cases} 
    \end{align*}
\end{notation}

\begin{dfn}[Small affine log formal schemes]
A $p$-adic affine log formal scheme $(\mathrm{Spf}(R),\cM_{R})$ over $\cO_{K}$ is called \emph{small affine} if $(\mathrm{Spf}(R),\cM_{R})$ admits a strict \'{e}tale map to $(\mathrm{Spf}(R^{0}),\bN^{r})^{a}$ for some $l,m,n$, called a \emph{framing}. 
\end{dfn}

Let $(\mathrm{Spf}(R),\cM_{R})$ be a small affine log formal scheme over $\cO_{K}$ and fix a framing  $(\mathrm{Spf}(R),\cM_{R})\to (\mathrm{Spf}(R^{0}),\bN^{r})^{a}$. Let $\alpha\colon \bN^{r}\to \cM_{R}$ denote the induced chart. We set
\[
R_{\infty,\alpha}\coloneqq R^{0}_{\infty}\widehat{\otimes}_{R^{0}} R,\ \ \  \widetilde{R}_{\infty,\alpha}\coloneqq \widetilde{R}^{0}_{\infty}\widehat{\otimes}_{R^{0}} R.
\]
Taking $p$-completed base change of the sequence $(1)$ along $R^{0}\to R$, we obtain a sequence of $p$-complete prelog rings
\begin{equation}
    (R,\bN^{r})\to (R_{\infty,\alpha},\bQ_{\geq 0}^{r})\to (\widetilde{R}_{\infty,\alpha},\bQ_{\geq 0}^{r}). \tag{3}
\end{equation}
The sequence of $p$-adic fs log formal schemes associated with the sequence $(3)$ is denoted by
\begin{equation}
    (\mathrm{Spf}(\widetilde{R}_{\infty,\alpha}),\cM_{\widetilde{R}_{\infty,\alpha}})\to (\mathrm{Spf}(R_{\infty,\alpha}),\cM_{R_{\infty,\alpha}})\to (\mathrm{Spf}(R),\cM_{R}), \tag{4}
\end{equation}
which is also obtained as the base change of the sequence $(2)$ along $(\mathrm{Spf}(R),\cM_{R})\to (\mathrm{Spf}(R^{0}),\bN^{r})^{a}$.

    Let $\widetilde{U}_{\infty,\alpha}\coloneqq \widetilde{U}^{0}_{\infty}\times_{(\mathrm{Spf}(R^{0}),\bN^{r})^{a}_{\eta}} (\mathrm{Spf}(R),\cM_{R})_{\eta}$. Then $\widetilde{U}_{\infty,\alpha}$ is a log affinoid perfectoid in $(\mathrm{Spf}(R),\cM_{R})_{\eta,\mathrm{prok\et}}$ with the associated perfectoid space isomorphic to $\mathrm{Spf}(\widetilde{R}_{\infty,\alpha})_{\eta}$.

    The following lemma is used afterward.

\begin{lem}\label{perfectoid cover}
    The natural map $(\mathrm{Spf}(\widetilde{R}_{\infty,\alpha}),\cM_{\widetilde{R}_{\infty,\alpha}})\to (\mathrm{Spf}(R_{\infty,\alpha}),\cM_{R_{\infty,\alpha}})$ is a strict quasi-syntomic cover.
\end{lem}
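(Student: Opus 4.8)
The plan is to reduce to the case $R=R^{0}$, where the morphism is transparent, and then to exhibit it as a composition of base changes of elementary strict quasi-syntomic covers.

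\textbf{Reduction to $R^{0}$ and strictness.} Since $R_{\infty,\alpha}$ and $\widetilde R_{\infty,\alpha}$ arise from $R^{0}_{\infty}$ and $\widetilde R^{0}_{\infty}$ by the $p$-completed base change $-\,\widehat{\otimes}_{R^{0}}R$, the morphism in question is the base change of $(\Spf\widetilde R^{0}_{\infty},\cM_{\widetilde R^{0}_{\infty}})\to(\Spf R^{0}_{\infty},\cM_{R^{0}_{\infty}})$ along $(\Spf R_{\infty,\alpha},\cM_{R_{\infty,\alpha}})\to(\Spf R^{0}_{\infty},\cM_{R^{0}_{\infty}})$. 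As strictness and the property of being a quasi-syntomic cover are stable under base change, I may take $R=R^{0}$. Strictness is then immediate: both $\cM_{\widetilde R^{0}_{\infty}}$ and $\cM_{R^{0}_{\infty}}$ are the log structures associated with the common chart $\bQ_{\geq0}^{r}$, compatibly with the ring map, so $\cM_{\widetilde R^{0}_{\infty}}$ is the pullback of $\cM_{R^{0}_{\infty}}$.

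\textbf{Presenting $\widetilde R^{0}_{\infty}$ as a base change.} Set $\mathcal{O}:=\cO_{K}$ if $n=0$ and $\mathcal{O}:=\cO_{K}[\pi^{\bQ_{\geq0}}]^{\wedge}_{p}$ if $n\geq1$; in the latter case $\mathcal{O}$ is the ring of integers of the completion of $\bigcup_{s\geq1}K(\pi^{1/s})$, a completed algebraic extension of the perfectoid field $\widehat{K(\pi^{1/p^{\infty}})}$, so $\mathcal{O}$ is a perfectoid ring. In all cases $R^{0}_{\infty}$ is naturally an $\mathcal{O}\langle x_{1}^{\pm1},\dots,x_{l}^{\pm1}\rangle$-algebra, the structure map on $\mathcal{O}$ sending $\pi^{1/s}$ to $\prod_{k}z_{k}^{1/s}$ when $n\geq1$, and a direct inspection of generators and relations gives a canonical isomorphism
\[
\widetilde R^{0}_{\infty}\;\cong\;R^{0}_{\infty}\;\widehat{\otimes}_{\mathcal{O}\langle x_{1}^{\pm1},\dots,x_{l}^{\pm1}\rangle}\;\cO_{C}\langle x_{1}^{\pm1/p^{\infty}},\dots,x_{l}^{\pm1/p^{\infty}}\rangle ,
\]
where the base-change map $\mathcal{O}\langle x^{\pm1}\rangle\to\cO_{C}\langle x^{\pm1/p^{\infty}}\rangle$ is the evident one (sending $\pi^{1/s}$ to the chosen compatible $s$-th root in $\cO_{C}$ when $n\geq1$): the relations $\prod_{k}z_{k}^{1/s}=\pi^{1/s}$ defining $\widetilde R^{0}_{\infty}$ are precisely those imposed by extending scalars from $\mathcal{O}$ to $\cO_{C}$. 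Since quasi-syntomic covers are stable under base change, it now suffices to show that $\mathcal{O}\langle x^{\pm1}\rangle\to\cO_{C}\langle x^{\pm1/p^{\infty}}\rangle$ is a quasi-syntomic cover.

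\textbf{The core computation.} Factor this as $\mathcal{O}\langle x^{\pm1}\rangle\to\mathcal{O}\langle x^{\pm1/p^{\infty}}\rangle\to\cO_{C}\langle x^{\pm1/p^{\infty}}\rangle$. The first map is the filtered colimit over $k\geq1$ of the maps $\mathcal{O}\langle x^{\pm1}\rangle\to\mathcal{O}\langle x^{\pm1/p^{k}}\rangle\cong\mathcal{O}\langle x^{\pm1},w_{1}^{\pm1},\dots,w_{l}^{\pm1}\rangle/(w_{i}^{p^{k}}-x_{i})_{i}$, each of which is finite free and a complete intersection, hence a quasi-syntomic cover; a filtered colimit of quasi-syntomic covers is one. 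The second map is the base change of $\mathcal{O}\to\cO_{C}$ along $\mathcal{O}\to\mathcal{O}\langle x^{\pm1/p^{\infty}}\rangle$, so it remains to show $\mathcal{O}\to\cO_{C}$ is a quasi-syntomic cover. It is $p$-completely faithfully flat because $\cO_{C}$ is torsion-free over the valuation ring $\mathcal{O}$ and $\cO_{C}/p\neq0$. For the cotangent complex: when $n\geq1$, $\mathcal{O}$ and $\cO_{C}$ are both perfectoid and one may take $\xi_{\mathcal{O}}=E([\pi^{\flat}])\in A_{\mathrm{inf}}(\mathcal{O})$, whose image $E([\pi^{\flat}])\in A_{\mathrm{inf}}(\cO_{C})$ generates $\ker\theta_{\cO_{C}}$; hence the base-change map $L_{\mathcal{O}/\bZ_{p}}\,\widehat{\otimes}^{\bL}_{\mathcal{O}}\cO_{C}\to L_{\cO_{C}/\bZ_{p}}$ is an isomorphism (both sides are invertible modules in cohomological degree $-1$ and the map is multiplication by a unit), so $L_{\cO_{C}/\mathcal{O}}$ is $p$-completely zero. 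When $n=0$, $\mathcal{O}=\cO_{K}$, and this is the standard fact that $\cO_{K}\to\cO_{C}$ is a quasi-syntomic cover: writing $\fS=W(k)[[t]]$ with $t\mapsto\pi$ and $\fS\to A_{\mathrm{inf}}(\cO_{C})$, $t\mapsto[\pi^{\flat}]$, one has $\cO_{C}=\cO_{K}\otimes^{\bL}_{\fS}A_{\mathrm{inf}}(\cO_{C})$ (as $E(t)$ maps to a generator of $\ker\theta_{\cO_{C}}$, a nonzerodivisor), whence $L_{\cO_{C}/\cO_{K}}\simeq L_{A_{\mathrm{inf}}(\cO_{C})/\fS}\,\widehat{\otimes}^{\bL}\cO_{C}$, and $L_{A_{\mathrm{inf}}(\cO_{C})/\fS}$ reduces mod $p$ to $L_{\cO_{C}^{\flat}/k[[t]]}\simeq\cO_{C}^{\flat}[1]$ (since $\cO_{C}^{\flat}$ is perfect), so by derived Nakayama $L_{\cO_{C}/\cO_{K}}\simeq_{p}\cO_{C}[1]$. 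Either way $L_{\cO_{C}/\mathcal{O}}$ has $p$-complete Tor-amplitude in $[-1,0]$, so $\mathcal{O}\to\cO_{C}$ is a quasi-syntomic cover; composing the two maps and base-changing along $\mathcal{O}\langle x^{\pm1}\rangle\to R^{0}_{\infty}$ finishes the proof.

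\textbf{Expected obstacle.} The delicate points are pinning down the base-change presentation of $\widetilde R^{0}_{\infty}$ (matching the relations $\prod z_{k}^{1/s}=\pi^{1/s}$ with extension of scalars from $\mathcal{O}$, and checking $\mathcal{O}\hookrightarrow R^{0}_{\infty}$ introduces no further relations) and the quasi-syntomicity of $\mathcal{O}\to\cO_{C}$: a naive transitivity estimate over $\bZ_{p}$ is too lossy here — e.g.\ $L_{\cO_{K}/\bZ_{p}}$ lies in degree $0$ and would naively push $L_{\cO_{C}/\cO_{K}}$ into degree $-2$ — and one genuinely needs the $A_{\mathrm{inf}}$/Breuil--Kisin picture, in which the relevant cotangent complex is invertible in a single degree.
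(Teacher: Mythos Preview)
Your proof is correct and follows essentially the same route as the paper's. Both arguments reduce to $R=R^{0}$, factor the map into (i) adjoining $p$-power roots of the $x_{i}$'s and (ii) a base change along $\cO_{K_{\infty}}\to\cO_{C}$ (your $\mathcal{O}\to\cO_{C}$ for $n\geq1$), with the intermediate ring being exactly the paper's $R'^{0}_{\infty}$; the paper simply asserts each step is a quasi-syntomic cover whereas you supply the cotangent-complex details, and you unify the two cases $n=0$ and $n\geq1$ under the single presentation $\widetilde R^{0}_{\infty}\cong R^{0}_{\infty}\widehat{\otimes}_{\mathcal{O}\langle x^{\pm1}\rangle}\cO_{C}\langle x^{\pm1/p^{\infty}}\rangle$, which is a nice touch but not a different idea.
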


\begin{proof}
It suffices to prove that $R^{0}_{\infty}\to \widetilde{R}^{0}_{\infty}$ is a quasi-syntomic cover, where $R^{0}_{\infty}\coloneqq R^{0}\widehat{\otimes}_{\bZ_{p}\langle \bN^{r} \rangle} \bZ_{p}\langle \bQ_{\geq 0}^{r} \rangle$. When $n=0$, the assertion is easy. Hence, we treat the case that $n\geq 1$. The ring map
\begin{align*}
R^{0}_{\infty}=\cO_{K}\langle \{x_{i}^{\pm 1}\}, &\{y_{j}^{\bQ_{\geq 0}}\}, \{z_{k}^{\bQ_{\geq 0}}\}\rangle /(\prod_{k=1}^{n}z_{k}-\pi) \\
&\to R'^{0}_{\infty}\coloneqq \cO_{K}\langle \{x_{i}^{\pm 1/p^{\infty}}\}, \{y_{j}^{\bQ_{\geq 0}}\}, \{z_{k}^{\bQ_{\geq 0}}\}\rangle /(\prod_{k=1}^{n}z_{k}-\pi)    
\end{align*}
is a quasi-syntomic cover. Let $\cO_{K_{\infty}}$ denote the $p$-adic completion of 
\[
\bigcup_{s\geq 1} \cO_{K}[\pi^{1/s}](\subset \cO_{C}),
\]
and consider a map $\cO_{K_{\infty}}\to R'^{0}_{\infty}$ mapping $\pi^{1/s}$ to $\prod_{k=1}^{n}z_{k}^{1/s}$. Then $R'^{0}_{\infty}\widehat{\otimes}_{\cO_{K_{\infty}}} \cO_{C}$ is naturally isomorphic to $\widetilde{R}^{0}_{\infty}$, and so $R'^{0}_{\infty}\to \widetilde{R}^{0}_{\infty}$ is a quasi-syntomic cover. This proves the claim.
\end{proof}

\begin{dfn}[Semi-stable log formal schemes]\label{def of semi-stable log formal schemes}
A $p$-adic log formal scheme $(\fX,\cM_{\fX})$ over $\cO_{K}$ is called \emph{horizontally semi-stable} (resp. \emph{vertically semi-stable}) if $(\fX,\cM_{\fX})$ admits a framing with $n=0$ (resp. $n\geq 1$) \'{e}tale locally, and $(\fX,\cM_{\fX})$ is called \emph{semi-stable} if it admits a framing \'{e}tale locally. When $(\fX,\cM_{\fX})$ is horizontally semi-stable (resp. vertically semi-stable), we let $\cM_{\cO_{K}}$ denote the trivial (resp. standard) log structure on $\mathrm{Spf}(\cO_{K})$. Then $(\fX,\cM_{\fX})\to (\mathrm{Spf}(\cO_{K}),\cM_{\cO_{K}})$ is log smooth.
\end{dfn}

\begin{dfn}[Unramified models]\label{def of unr model}
    Let $(\fX,\cM_{\fX})$ be a horizontally semi-stable log formal scheme over $\cO_{K}$. A horizontally semi-stable log formal scheme $(\fX_{0},\cM_{\fX_{0}})$ over $W$ with $(\fX_{0},\cM_{\fX_{0}})\otimes_{W} \cO_{K}\cong (\fX,\cM_{\fX})$ is called an \emph{unramified model} of $(\fX,\cM_{\fX})$.

    Let $(\mathrm{Spf}(R),\cM_{R})$ be a small affine log formal scheme over $\cO_{K}$ with a fixed framing such that $n=0$. Consider a prelog ring $(R^{0}_{0},\bN^{r})$ defined as follows:
    \[
    R^{0}_{0}\coloneqq W\langle x_{1}^{\pm 1},\dots,x_{l}^{\pm 1},y_{1},\dots,y_{m}\rangle
    \]
    equipped with a prelog structure $\bN^{r}\to R_{0}^{0}$ given by $e_{i}\mapsto y_{i}$. Note that now we have $m=r$ by $n=0$. The framing $(\mathrm{Spf}(R),\cM_{R})\to (\mathrm{Spf}(R^{0}),\bN^{r})^{a}$ uniquely descends to a strict \'{e}tale map $(\mathrm{Spf}(R_{0}),\cM_{R_{0}})\to (\mathrm{Spf}(R_{0}^{0}),\bN^{r})^{a}$. Then $(\mathrm{Spf}(R_{0}),\cM_{R_{0}})$ is an unramified model of $(\mathrm{Spf}(R),\cM_{R})$. We call it the \emph{unramified model defined by the framing}. The ring map $R_{0}^{0}\to R_{0}^{0}$ given by $x_{i}\mapsto x_{i}^{p}, y_{j}\mapsto y_{j}^{p}$ and the monoid map $\times p\colon \bN^{r}\to \bN^{r}$ gives a Frobenius lift on the log formal scheme $(\mathrm{Spf}(R^{0}_{0}),\bN^{r})^{a}$, which uniquely lifts to a Frobenius lift on $(\mathrm{R_{0}},\cM_{R_{0}})$.
\end{dfn}

\section{\texorpdfstring{$F$}--isocrystals on log crystalline sites}\label{section isoc}

\subsection{Log crystalline sites}

In this subsection, we review fundamental properties of crystals on absolute log crystalline sites.

\begin{dfn}[Big absolute log crystalline sites]
Let $(X,\cM_{X})$ be a log scheme over $\bF_{p}$. Let $(X,\cM_{X})_{\mathrm{crys}}$ be the site defined as follows: an object consists of
\begin{itemize}
 \item a log scheme $(T,\cM_{T})$ on which $p$ is nilpotent and a PD-thickening $U\hookrightarrow T$ such that the $PD$-structure on $\mathrm{Ker}(\cO_{T}\twoheadrightarrow \cO_{U})$ is compatible with the natural PD-structure on $(p)\subset \bZ_{p}$, 
 \item a map of log schemes $f\colon (U,\cM_{U})\to (X,\cM_{X})$, where $\cM_{U}$ is the inverse image log structure of $\cM_{T}$.
\end{itemize}    
Morphisms are obvious ones. We simply write $(U,T,\cM_{T})$ or $(T,\cM_{T})$ for an object of $(X,\cM_{X})_{\mathrm{crys}}$ when there exists no confusion. When $T=\mathrm{Spec}(R)$ and $U=\mathrm{Spec}(R/I)$, we write $(R\twoheadrightarrow R/I,\cM_{R})$ for a log PD-thickening $(U,T,\cM_{T})$. We say that a map $(U_{1},T_{1},\cM_{T_{1}})\to (U_{2},T_{2},\cM_{T_{2}})$ in $(X,\cM_{X})_{\mathrm{crys}}$ is \emph{Cartesian} if the natural map $U_{1}\to U_{2}\times_{T_{2}} T_{1}$ is an isomorphism. A family of Cartesian maps $\{(U_{i},T'_{i},\cM_{T'_{i}})\to (U,T,\cM_{T})\}_{i}$ is covering if  $\{(T'_{i},\cM_{T'_{i}})\to (T,\cM_{T})\}_{i}$ is a strict fpqc covering. For a morphism $f\colon (Y,\cM_{Y})\to (X,\cM_{X})$, there exists a natural morphism of sites
\[
(X,\cM_{X})_{\mathrm{crys}}\to (Y,\cM_{Y})_{\mathrm{crys}}.
\]
Let $\cO_{\mathrm{crys}}$ denote the ring sheaf given by $(T,\cM_{T})\mapsto \Gamma(T,\cO_{T})$.

Let $(X,\cM_{X})^{\mathrm{str}}_{\mathrm{crys}}$ denote the full subcategory of $(X,\cM_{X})_{\mathrm{crys}}$ consisting of an object $(U,T,\cM_{T})$ such that the structure map $(U,\cM_{U})\to (X,\cM_{X})$ is strict.  
\end{dfn}

\begin{rem}
    In many references, the site $(X,\cM_{X})^{\mathrm{str}}_{\mathrm{crys}}$ is called an (absolute) log crystalline site.
\end{rem}

\begin{lem}\label{big log crys site to str log crys site}
    For an integral log scheme $(X,\cM_{X})$ over $\bF_{p}$, the inclusion functor 
    \[ (X,\cM_{X})^{\mathrm{str}}_{\mathrm{crys}}\hookrightarrow (X,\cM_{X})_{\mathrm{crys}}
    \]
    has a right adjoint functor
    \[
    (X,\cM_{X})_{\mathrm{crys}}\to (X,\cM_{X})^{\mathrm{str}}_{\mathrm{crys}} \ \ \ ((U,T,\cM_{T})\mapsto (U,T,\cN_{T})).
    \]
\end{lem}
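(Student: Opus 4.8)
The statement asks for a right adjoint to the inclusion of strict log PD-thickenings into all log PD-thickenings. The idea is entirely local: given a log PD-thickening $(U,T,\cM_T)$ with structure map $f\colon (U,\cM_U)\to (X,\cM_X)$ which need not be strict, I want to replace $\cM_T$ by a log structure $\cN_T$ so that the resulting object is strict over $(X,\cM_X)$, in a universal way. First I would recall that $f$ factors canonically through the strictification: since $(X,\cM_X)$ is integral, the map of log structures $f^{*}\cM_X\to \cM_U$ (adjoint to $f^\flat$) exhibits a well-behaved comparison, and what we need on $U$ is the log structure $f^{*}\cM_X$ itself; the point is to \emph{extend} this along the nilpotent PD-thickening $U\hookrightarrow T$.

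Next I would carry out this extension. On the closed immersion $U\hookrightarrow T$ defined by a nilpotent PD-ideal $I\subset\cO_T$, the functor $\cM\mapsto (\text{inverse image to }U)$ from log structures on $T$ to log structures on $U$ is an equivalence onto a suitable subcategory — more precisely, by the log-smoothness/log-infinitesimal lifting formalism (or directly: the units $1+I$ form a sheaf of groups, and one checks $\cM_T$ is recovered as the pullback of $\cM_U$ along $\cO_T\twoheadrightarrow\cO_U$ over the corresponding pullback of monoids), every log structure on $U$ lifts uniquely to $T$. Applying this to $f^{*}\cM_X$ on $U$ produces a log structure $\cN_T$ on $T$ together with a map $\cN_T\to\cM_T$ (induced by $f^{*}\cM_X\to\cM_U$), and $(U,T,\cN_T)$ is an object of $(X,\cM_X)^{\mathrm{str}}_{\mathrm{crys}}$: its structure map to $(X,\cM_X)$ is strict by construction. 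I would then set the right adjoint to be $(U,T,\cM_T)\mapsto(U,T,\cN_T)$, with the counit $(U,T,\cN_T)\to(U,T,\cM_T)$ given by $\cN_T\to\cM_T$ over $\mathrm{id}_T$.

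To verify the adjunction, let $(U',T',\cM_{T'})$ be strict over $(X,\cM_X)$ and let $g\colon(U',T',\cM_{T'})\to(U,T,\cM_T)$ be a morphism in $(X,\cM_X)_{\mathrm{crys}}$; I must show it factors uniquely through $(U,T,\cN_T)$. Restricting $g$ to $U'$ and using strictness of $U'$ over $X$, the induced map $\cM_{T'}\to g^{*}\cM_T$ of log structures on $T'$ must factor through $g^{*}\cN_T$ — this is exactly the universal property of $f^{*}\cM_X$ as the strictification, pushed through the unique-lifting equivalence along $U'\hookrightarrow T'$. Uniqueness follows from the uniqueness in that same equivalence. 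This gives the desired bijection $\mathrm{Hom}_{\mathrm{crys}^{\mathrm{str}}}((U',T',\cM_{T'}),(U,T,\cN_T))\cong\mathrm{Hom}_{\mathrm{crys}}((U',T',\cM_{T'}),(U,T,\cM_T))$, natural in both arguments.

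**Main obstacle.** The only real content is the claim that log structures lift uniquely along a PD-thickening $U\hookrightarrow T$, together with the compatibility of this lifting with strictification; handling the monoid-theoretic bookkeeping correctly — in particular that $\cN_T$ so constructed is genuinely a log structure (the map $\cN_T\to\cO_T$ is an isomorphism over $\cO_T^{\times}$) and that integrality of $(X,\cM_X)$ is what makes $f^{*}\cM_X$ behave — is where I would spend the care. Everything else (that the structure map is strict, that $g$ restricts appropriately, that the adjunction identities hold) is formal once the local lifting statement is in hand.
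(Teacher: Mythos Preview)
Your overall strategy matches the paper's: replace $\cM_T$ by a log structure $\cN_T$ on $T$ that restricts to $f^*\cM_X$ on $U$. But the mechanism you propose for building $\cN_T$ does not work. You assert that ``every log structure on $U$ lifts uniquely to $T$'' along a nilpotent thickening; this is false. Take $T=\Spec(k[\epsilon]/(\epsilon^2))$, $U=\Spec(k)$, and the log structure associated to $\bN\to k$, $1\mapsto 0$: for each $a\in k$ the prelog structure $\bN\to k[\epsilon]$, $1\mapsto a\epsilon$, defines a lift, and distinct values of $a$ give non-isomorphic lifts (no isomorphism between them restricts to the identity on $U$). Your parenthetical justification---that $\cM_T$ is recovered as $\cM_U\times_{\cO_U}\cO_T$---fails in the same example: the natural map $\cM_T\to\cM_U\times_{\cO_U}\cO_T$ sends both $(1,1)$ and $(1,1+\epsilon)$ in $\bN\oplus k[\epsilon]^\times$ to the same element, since both have structure-map image $0\in k[\epsilon]$.

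The paper's fix is to not lift $f^*\cM_X$ in isolation. It sets
\[
\cN_T\coloneqq\cM_T\times_{\cM_U}f^*\cM_X
\]
as a sheaf of monoids on $T_{\et}\simeq U_{\et}$, using the \emph{given} $\cM_T$ as part of the construction; one then checks this is a log structure restricting to $f^*\cM_X$ on $U$ (this is where integrality enters). The projection $\cN_T\to\cM_T$ supplies the comparison map, and the adjunction is formal from there. The idea you are missing is exactly this fiber product: the lift of $f^*\cM_X$ you want is not intrinsic to $f^*\cM_X$ alone but must be built from the ambient $\cM_T$.
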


\begin{proof}
    Let $(U,T,\cM_{T})\in (X,\cM_{X})^{\mathrm{str}}_{\mathrm{crys}}$ and $f\colon (U,\cM_{U})\to (X,\cM_{X})$ denote the structure morphism. We define a log structure $\cN_{T}$ on $T$ by 
    \[
    \cN_{T}\coloneqq\cM_{T}\times_{\cM_{U}} f^{*}\cM_{X},
    \]
    under the identification $T_{\et}\simeq U_{\et}$. Since $(X,\cM_{X})$ is integral, the surjection $\cM_{T}\twoheadrightarrow \cM_{U}$ is $1+\cI$-torsor, where $\cI\coloneqq \mathrm{Ker}(\cO_{T}\twoheadrightarrow \cO_{U})$. Therefore, $(U,f^{*}\cM_{X})\hookrightarrow (T,\cN_{T})$ is strict, and $(U,T,\cN_{T})$ belongs to $(X,\cM_{X})^{\mathrm{str}}_{\mathrm{crys}}$. Then the functor sending $(U,T,\cM_{T})$ to $(U,T,\cN_{T})$ is the desired one.
\end{proof}

For a morphism of integral log schemes $(Y,\cM_{Y})\to (X,\cM_{X})$, we have a functor
\[
(Y,\cM_{Y})_{\mathrm{crys}}^{\mathrm{str}}\hookrightarrow (Y,\cM_{Y})_{\mathrm{crys}}\to (X,\cM_{X})_{\mathrm{crys}}\to (X,\cM_{X})_{\mathrm{crys}}^{\mathrm{str}},
\]
where the last functor is one given in Lemma \ref{big log crys site to str log crys site}. This gives a morphism of site
\[
(X,\cM_{X})_{\mathrm{crys}}^{\mathrm{str}}\to (Y,\cM_{Y})_{\mathrm{crys}}^{\mathrm{str}}.
\]

\begin{dfn}[Crystalline crystals on log schemes]\label{def of F-isoc} \noindent

Let $\mathrm{Crys}((X,\cM_{X})_{\mathrm{crys}})$ be the category of crystals of quasi-coherent sheaves of finite type on the log crystalline site $(X,\cM_{X})_{\mathrm{crys}}$. An object of $\mathrm{Crys}((X,\cM_{X})_{\mathrm{crys}})$ is called a \emph{crystalline crystal} on $(X,\cM_{X})$. The Frobenius morphism $F\colon (X,\cM_{X})\to (X,\cM_{X})$ induces a functor
\[
F^{*}\colon \mathrm{Crys}((X,\cM_{X})_{\mathrm{crys}})\to \mathrm{Crys}((X,\cM_{X})_{\mathrm{crys}}).
\]
Let $\mathrm{Isoc}((X,\cM_{X})_{\mathrm{crys}})$ denote the isogeny category $ \mathrm{Crys}((X,\cM_{X})_{\mathrm{crys}})\otimes_{\bZ_{p}} \bQ_{p}$. An object $\cE$ of $\mathrm{Isoc}((X,\cM_{X})_{\mathrm{crys}})$ is called an \emph{isocrystal}. For $\cE^{+}\in \mathrm{Crys}((X,\cM_{X})_{\mathrm{crys}})$, the image of $\cE^{+}$ by the natural functor
\[
\mathrm{Crys}((X,\cM_{X})_{\mathrm{crys}})\to \mathrm{Isoc}((X,\cM_{X})_{\mathrm{crys}})
\]
is denoted by $\cE^{+}[1/p]$. For a $p$-adic log PD-thickening $(\mathrm{Spec}(A/I),\mathrm{Spf}(A),\cM_{A})\in (X,\cM_{X})$, there is an evaluation functor
\[
\mathrm{Isoc}((X,\cM_{X})_{\mathrm{crys}})\to \mathrm{Mod}(A[1/p]).
\]
The image of an isocrystal $\cE$ via this functor is simply denoted by $\cE_{A}$. An isocrystal $\cE$ is called \emph{locally free} if $\cE_{A}$ is a finite projective $A[1/p]$-module for every $(\mathrm{Spec}(A/I),\mathrm{Spf}(A),\cM_{A})$. The full subcategory of $\mathrm{Isoc}((X,\cM_{X})_{\mathrm{crys}})$ consisting of locally free objects is denoted by $\mathrm{Isoc}_{\mathrm{lf}}((X,\cM_{X})_{\mathrm{crys}})$.

Let $\mathrm{Isoc}^{\varphi}((X,\cM_{X})_{\mathrm{crys}})$ be the category of pairs $(\cE,\varphi_{\cE})$ consisting of an isocrystal $\cE$ on $(X,\cM_{X})$ and an isomorphism $\varphi_{\cE}\colon F^{*}\cE\isom \cE$ of isocrystals. Morphisms are ones in the category of isocrystals that are compatible with Frobenius isomorphisms. When no confusion occurs, we simply write $\cE$ for $(\cE,\varphi_{\cE})$. An object of $\mathrm{Isoc}^{\varphi}((X,\cM_{X})_{\mathrm{crys}})$ is called an \emph{$F$-isocrystal} on $(X,\cM_{X})$. An $F$-isocrystal is called \emph{locally free} if its underlying isocrystal is locally free, and the full subcategory of $\mathrm{Isoc}^{\varphi}((X,\cM_{X})_{\mathrm{crys}})$ consisting of locally free objects is denoted by $\mathrm{Isoc}^{\varphi}_{\mathrm{lf}}((X,\cM_{X})_{\mathrm{crys}})$.

When $(X,\cM_{X})$ is integral, we can also define similar categories by replacing $(X,\cM_{X})_{\mathrm{crys}}$ with a strict site $(X,\cM_{X})_{\mathrm{crys}}^{\mathrm{str}}$. For the functoriality of strict sites, a morphism $(X,\cM_{X})\to (Y,\cM_{Y})$ induces a functor
\[
(X,\cM_{X})_{\mathrm{crys}}^{\mathrm{str}}\hookrightarrow (X,\cM_{X})_{\mathrm{crys}}\to (Y,\cM_{Y})_{\mathrm{crys}}\to (Y,\cM_{Y})_{\mathrm{crys}}^{\mathrm{str}},
\]
where the last functor is one given in Lemma \ref{big log crys site to str log crys site}. This allows us to define the functor
\[
F^{*}\colon \mathrm{Crys}((X,\cM_{X})^{\mathrm{str}}_{\mathrm{crys}})\to \mathrm{Crys}((X,\cM_{X})^{\mathrm{str}}_{\mathrm{crys}})
\]
and the category $\mathrm{Isoc}^{\varphi}((X,\cM_{X})_{\mathrm{crys}}^{\mathrm{str}})$.
\end{dfn}

\begin{rem}\label{crystalline crystal as lim}
    For $\star\in \{\mathrm{str},\emptyset \}$, we have canonical bi-exact equivalences
    \[
    \mathrm{Crys}((X,\cM_{X})^{\star}_{\mathrm{crys}})\simeq \varprojlim_{(T,\cM_{T})\in (X,\cM_{X})^{\star}_{\mathrm{crys}}} \mathrm{FQCoh}(T).
    \]
    Here, $\mathrm{FQCoh}(T)$ is the category of quasi-coherent sheaves of finite type on $T$.
\end{rem}

\begin{rem}\label{cat of crys crys is unchanged}
    Let $(X,\cM_{X})$ be an integral log scheme over $\bF_{p}$. For a crystalline crystal $\cE$ on $(X,\cM_{X})$ and a log PD-thickening $(U,T,\cM_{T})\in (X,\cM_{X})_{\mathrm{crys}}$, the crystal property gives an isomorphism
    \[
    \cE_{(U,T,\cM_{T})}\cong \cE_{(U,T,\cN_{T})},
    \]
    where $(U,T,\cN_{T})$ is the image of $(U,T,\cM_{T})$ via the functor in Lemma \ref{big log crys site to str log crys site}. Therefore, the restriction functors
    \begin{align*}
        \mathrm{Crys}((X,\cM_{X})_{\mathrm{crys}})&\to \mathrm{Crys}((X,\cM_{X})^{\mathrm{str}}_{\mathrm{crys}}) \\
        \mathrm{Isoc}^{\star}((X, \cM_{X})_{\mathrm{crys}})&\to \mathrm{Isoc}^{\star}((X, \cM_{X})^{\mathrm{str}}_{\mathrm{crys}})
    \end{align*}
    give bi-exact equivalences. 
\end{rem}

\begin{lem}[Dwork's trick]\label{dwork trick}
    Let $\iota\colon (X,\cM_{X})\hookrightarrow (X',\cM_{X'})$ be a strict nilpotent immersion of log schemes over $\bF_{p}$. 
    \begin{enumerate}
    \item The restriction functor
    \[
    \iota^{*}\colon \mathrm{Isoc}^{\varphi}((X',\cM_{X'})_{\mathrm{crys}})\to \mathrm{Isoc}^{\varphi}((X,\cM_{X})_{\mathrm{crys}})
    \]
    is an equivalence.
    \item Let $\cE$ be an $F$-isocrystal on $(X',\cM_{X'})$ and $(A\twoheadrightarrow A/I,\cM_{A})$ be a $p$-adic log PD-thickening in $(X',\cM_{X'})_{\mathrm{crys}}$. Suppose that $\iota$ has a retraction $r\colon (X',\cM_{X'})\to (X,\cM_{X})$. Let $(A\twoheadrightarrow A/I,\cM_{A})^{r}\coloneqq (ir)^{-1}(A\twoheadrightarrow A/I,\cM_{A})$.
    Then there exists a natural isomorphism
    \[
    \cE_{(A\twoheadrightarrow A/I,\cM_{A})^{r}}\cong \cE_{(A\twoheadrightarrow A/I,\cM_{A})}.
    \]
    \end{enumerate}
\end{lem}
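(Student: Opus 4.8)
The plan is to reduce everything to the classical situation and exploit the fact that a strict nilpotent immersion induces an equivalence of small étale sites, so the only new content is how the divided-power structures and the Frobenius isomorphisms interact. First I would fix the key reduction: since $\iota$ is strict and a nilpotent immersion, $X_{\et}\simeq X'_{\et}$, so $\iota$ identifies the underlying topoi and the log structures, and one reduces to checking the claim on each affine log PD-thickening. Thus the content of (1) is that, up to isogeny, an $F$-crystal's value on a PD-thickening of $X$ determines and is determined by its value on a PD-thickening of $X'$. For (2), the hypothesis that $\iota$ admits a retraction $r$ lets me transport a PD-thickening $(A\twoheadrightarrow A/I,\cM_A)$ of $(X',\cM_{X'})$ to one of $(X,\cM_X)$ and back again, and the point is to produce a canonical comparison isomorphism $\cE_{(A\twoheadrightarrow A/I,\cM_A)^r}\cong \cE_{(A\twoheadrightarrow A/I,\cM_A)}$.

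The heart is the following standard Frobenius-descent argument, which I would carry out at the level of a single log PD-thickening $(U',T,\cM_T)$ in $(X',\cM_{X'})_{\mathrm{crys}}$ whose reduction mod the PD-ideal is $U\hookrightarrow U'$ with $U$ the corresponding object over $X$. Let $J\subset \cO_T$ be the PD-ideal of $U\hookrightarrow T$ (so $J$ is nilpotent-plus-divided-powers, hence $J^{[n]}\to 0$ in the $p$-adic sense after the usual completion). Given an $F$-isocrystal $\cE$ on $(X',\cM_{X'})$, its value $M\coloneqq \cE_T$ is a finite projective $\cO_T[1/p]$-module, and the Frobenius isomorphism $\varphi_\cE\colon F^*\cE\isom\cE$ evaluated on $T$ gives an isomorphism $\varphi_T\colon (F_T)^*M[1/p]\isom M[1/p]$, where $F_T$ is any lift of Frobenius compatible with the log structure (these exist locally on $T$, and étale-locally by strictness of the site). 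Because $\varphi$ raises the $J$-adic filtration, iterating $\varphi_T$ contracts $J$; concretely, the composite $\varphi_T\circ F_T^*\varphi_T\circ\cdots$ converges $J$-adically and produces a canonical identification of $M$ with the base change along $T\twoheadrightarrow U\hookrightarrow T'$ (the PD-thickening pulled back via the retraction), independent of the chosen Frobenius lift. This is exactly Berthelot–Ogus's Dwork trick, adapted verbatim to the log setting since the log structure is pulled back along a strict map and therefore plays no role in the convergence estimate; the only subtlety is to check that a log-Frobenius lift $F_T$ exists compatibly, which follows from log smoothness arguments applied étale-locally (or one may simply work with the linearization $\varphi_{\mathrm{lin}}$ of the stratification and avoid choosing $F_T$ at all).

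Then I would assemble (1) from this: the functor $\iota^*$ is essentially surjective because, given an $F$-isocrystal on $(X,\cM_X)$, one extends it to $(X',\cM_{X'})$ thickening-by-thickening using the canonical identifications above (the cocycle condition for a crystal is automatic from the uniqueness of the Dwork limit), and it is fully faithful because a morphism of $F$-isocrystals is determined by its values, which are in turn determined on $X$ by the same contraction argument; bi-exactness is clear since everything happens $\cO[1/p]$-linearly. Finally (2) is just the explicit form of the comparison isomorphism produced in the course of proving essential surjectivity, now applied to the specific PD-thickening $(A\twoheadrightarrow A/I,\cM_A)$ and its pullback $(A\twoheadrightarrow A/I,\cM_A)^r$ along $ir$. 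I expect the main obstacle to be purely bookkeeping: making the Frobenius-lift choices canonical (or eliminating them via the stratification formalism) so that the isomorphism in (2) is genuinely natural in $\cE$ and compatible with the equivalence in (1) — the analytic convergence is standard and the log structure contributes nothing new.
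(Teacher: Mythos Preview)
Your approach can be made to work, but it is considerably more involved than what the paper does, and it has a weak spot you yourself flag.

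For part (2) the paper's argument is a two-line trick that avoids Frobenius lifts on the thickening entirely. Since $\iota$ is a nilpotent immersion, some iterate $F^{n}\colon (X',\cM_{X'})\to (X',\cM_{X'})$ of the absolute Frobenius factors through $(X,\cM_{X})$; combined with $r\iota=\mathrm{id}_{X}$ this gives $\iota r F^{n}=F^{n}$ as endomorphisms of $(X',\cM_{X'})$. Hence the two objects $(F^{n})^{-1}(A\twoheadrightarrow A/I,\cM_{A})$ and $(F^{n})^{-1}(A\twoheadrightarrow A/I,\cM_{A})^{r}$ are literally equal in $(X',\cM_{X'})_{\mathrm{crys}}$, and the $n$-fold Frobenius isomorphism $\varphi_{\cE}^{n}\colon (F^{n})^{*}\cE\isom\cE$ applied on each side yields
\[
\cE_{(A\twoheadrightarrow A/I,\cM_{A})}\cong \cE_{(F^{n})^{-1}(A\twoheadrightarrow A/I,\cM_{A})}=\cE_{(F^{n})^{-1}(A\twoheadrightarrow A/I,\cM_{A})^{r}}\cong \cE_{(A\twoheadrightarrow A/I,\cM_{A})^{r}}.
\]
Independence of $n$ is immediate. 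No convergence, no lift of Frobenius to $T$, no bookkeeping.

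By contrast, your plan requires a Frobenius lift $F_{T}$ on the arbitrary log PD-thickening $(T,\cM_{T})$. Your justification (``follows from log smoothness arguments applied étale-locally'') is not valid in general: $T$ is not assumed log smooth over anything, so Frobenius lifts need not exist even étale-locally. You could circumvent this via the stratification (as you note parenthetically), but once you do that you are essentially recovering the paper's argument through the back door. Also, your convergence language is unnecessary here: in the $F$-\emph{isocrystal} setting $\varphi_{\cE}$ is already an isomorphism, so a single application of $\varphi_{\cE}^{n}$ suffices --- there is no limit to take.

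For part (1), note that the paper simply cites \cite[Remark B.23]{dlms24} rather than giving an argument, so there is nothing to compare your sketch against.
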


\begin{proof}
(1): See \cite[Remark B.23]{dlms24}.

(2): Take an integer $n\geq 1$ such that the iterated Frobenius map $F^{n}\colon (X',\cM_{X'})\to (X',\cM_{X'})$ factors thorough $(X,\cM_{X})\hookrightarrow (X',\cM_{X'})$. Then, by $irF^{n}=F^{n}$, we have isomorphisms
    \[
    \cE_{(A\twoheadrightarrow A/I,\cM_{A})}\cong \cE_{(F^{n})^{-1}(A\twoheadrightarrow A/I,\cM_{A})}= \cE_{(F^{n})^{-1}(A\twoheadrightarrow A/I,\cM_{A})^{r}}\cong \cE_{(A\twoheadrightarrow A/I,\cM_{A})^{r}},
    \]
    where the first and third isomorphisms are induced by the Frobenius structure on $\cE$. We can check directly that the resulting isomorphism $\cE_{(A\twoheadrightarrow A/I,\cM_{A})}\cong \cE_{(A\twoheadrightarrow A/I,\cM_{A})^{r}}$ is independent of the choice of $n$.
\end{proof}

\begin{rem}\label{dwork trick and unr model}
    Let $(\fX,\cM_{\fX})$ be a horizontally semi-stable log formal scheme over $W$. We set $(X,\cM_{X})\coloneqq (\fX,\cM_{\fX})\otimes_{W} k$, $(X',\cM_{X'})\coloneqq (\fX,\cM_{\fX})\otimes_{W} \cO_{K}/p$. Then the natural map $k\to \cO_{K}/p$ induces a retraction $r\colon (X',\cM_{X'})\to (X,\cM_{X})$ of the natural inclusion $\iota\colon (X,\cM_{X})\to (X',\cM_{X'})$, and so the above lemma can be applied.
\end{rem}

The following notion plays an important role when we clarify the relationship between isocrystals and modules with connection.

\begin{dfn}[cf.~{\cite[p.2117]{es18}}]\label{def of torsion free crystals}
Let $\cE$ be an object of $\mathrm{Crys}((X,\cM_{X})_{\mathrm{crys}})$. We say that $\cE$ is \emph{$p$-torsion free} if, \'{e}tale locally on $X$, there exist a $p$-adic log PD-thickening $(\cD,\cM_{\cD})\in (X,\cM_{X})_{\mathrm{crys}}^{\mathrm{str}}$ satisfying the following conditions:
\begin{itemize}
    \item $\cD$ is $\bZ_{p}$-flat ;
    \item $(\cD,\cM_{\cD})$ covers the final object of the topos associated with $(X,\cM_{X})_{\mathrm{crys}}$;
    \item there exists an $n+1$--fold self product $(\cD^{(n)},\cM_{\cD^{(n)}})$ of $(\cD,\cM_{\cD})$ in $(X,\cM_{X})_{\mathrm{crys}}$ for each $n\geq 0$;
    \item $\cE_{\cD^{(n)}}$ is $p$-torsion free for each $n\geq 0$.
\end{itemize}
Let $\mathrm{Crys}_{\mathrm{tf}}((X,\cM_{X})_{\mathrm{crys}})$ be the full subcategory of $\mathrm{Crys}((X,\cM_{X})_{\mathrm{crys}})$ consisting of $p$-torsion free objects.
Let $\mathrm{Isoc}_{\mathrm{tf}}((X,\cM_{X})_{\mathrm{crys}})$ denote the essential image of the natural functor
\[
\mathrm{Crys}_{\mathrm{tf}}((X,\cM_{X})_{\mathrm{crys}})\hookrightarrow \mathrm{Crys}((X,\cM_{X})_{\mathrm{crys}})\to \mathrm{Isoc}((X,\cM_{X})_{\mathrm{crys}}).
\]
We say that an isocrystal $\cE$ is \emph{$p$-torsion free} if $\cE$ belongs to $\mathrm{Isoc}_{\mathrm{tf}}((X,\cM_{X})_{\mathrm{crys}})$ as well. Let $\mathrm{Isoc}_{\mathrm{lftf}}((X,\cM_{X})_{\mathrm{crys}})$ denote the full subcategory of $\mathrm{Isoc}((X,\cM_{X})_{\mathrm{crys}})$ consisting of locally free and $p$-torsion free objects and $\mathrm{Isoc}_{\mathrm{lftf}}^{\varphi}((X,\cM_{X})_{\mathrm{crys}})$ denote the category of $F$-isocrystals whose underlying isocrystals belong to $\mathrm{Isoc}_{\mathrm{lftf}}((X,\cM_{X})_{\mathrm{crys}})$.
\end{dfn}

\begin{lem}\label{autmatically tf}
Suppose that we can take $(\cD,\cM_{\cD})$ in Definition \ref{def of torsion free crystals} such that $\cD$ is noetherian. Then every isocrystal on $(X,\cM_{X})$ is $p$-torsion free. 
\end{lem}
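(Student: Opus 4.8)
The plan is to reduce the statement to a finiteness property of modules over a noetherian ring together with the standard description of the isogeny category. First I would recall, from Remark \ref{crystalline crystal as lim} and the hypothesis, that we may work \'etale locally on $X$ and fix a $p$-adic log PD-thickening $(\cD,\cM_{\cD})\in (X,\cM_{X})_{\mathrm{crys}}^{\mathrm{str}}$ with $\cD$ noetherian which covers the final object and admits self-products $(\cD^{(n)},\cM_{\cD^{(n)}})$ in the crystalline site; by the hypothesis these $\cD^{(n)}$ are again noetherian (being finitely generated PD-envelope constructions over $\cD$). Using the limit description, a crystal $\cE$ is then equivalent to the datum of a finite type $\cD$-module $\cE_{\cD}$ together with a stratification with respect to $\cD^{(\bullet)}$, and an isocrystal $\cF$ is the same datum after inverting $p$, i.e. a finite projective $\cD[1/p]$-module with stratification. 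So the task is: given such $\cF$, produce a crystal $\cE^{+}\in \mathrm{Crys}_{\mathrm{tf}}$ with $\cE^{+}[1/p]\cong \cF$.

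The key step is to choose, \'etale locally, a finitely generated $\cD$-submodule $M\subset \cF_{\cD}$ that spans $\cF_{\cD}$ over $\cD[1/p]$ and is stable under the stratification maps — or at least can be enlarged within $\cF_{\cD}$ to such a module — and then replace $M$ by its quotient $M/M[p^{\infty}]$ by the $p$-power torsion, which is again finitely generated since $\cD$ is noetherian, and is visibly $p$-torsion free. The point is that the stratification isomorphisms over the noetherian rings $\cD^{(n)}$ send a finitely generated generating submodule into a finitely generated submodule, so after finitely many enlargements (legitimate because $\cF_{\cD}$ is finitely generated over $\cD[1/p]$ and $\cD$ is noetherian, so ascending chains of finitely generated $\cD$-submodules with fixed $\cD[1/p]$-span stabilize) one obtains a stratification-stable finitely generated $\cD$-lattice; killing its $p$-torsion yields $M'$ with $M'_{\cD^{(n)}}$ $p$-torsion free for all $n$, hence a crystal $\cE^{+}\in \mathrm{Crys}_{\mathrm{tf}}((X,\cM_{X})_{\mathrm{crys}})$ with $\cE^{+}[1/p]\cong \cF$. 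Finally I would note that $p$-torsion freeness was checked with respect to the fixed $(\cD,\cM_{\cD})$, which is exactly the condition in Definition \ref{def of torsion free crystals}, so $\cF\in \mathrm{Isoc}_{\mathrm{tf}}((X,\cM_{X})_{\mathrm{crys}})$, proving the lemma.

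The main obstacle I anticipate is the gluing/\'etale-descent bookkeeping: $p$-torsion freeness in Definition \ref{def of torsion free crystals} is only required \'etale locally, so one must make sure the local constructions of $\cE^{+}$ are only needed locally and that no global coherence of the chosen lattices is necessary — which is fine because membership in $\mathrm{Isoc}_{\mathrm{tf}}$ is itself a local condition. A secondary technical point is verifying that the self-products $\cD^{(n)}$ remain noetherian; this should follow because they are obtained from $\cD$ by a finite sequence of operations (tensor products of finitely many copies along the chart, then $p$-completed PD-envelope along a finitely generated ideal) preserving the noetherian property in the relevant $p$-complete sense, but it merits an explicit sentence.
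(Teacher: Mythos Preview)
Your core idea---produce a $p$-torsion free crystal representing the given isocrystal, using noetherianity to control the torsion---is the right one, but you take an unnecessary detour and one of your steps is flawed.

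The detour: you work to \emph{find} a stratification-stable finitely generated $\cD$-lattice inside $\cF_{\cD}$, but by the very definition of the isogeny category $\mathrm{Isoc}=\mathrm{Crys}\otimes_{\bZ_p}\bQ_p$, every isocrystal is already of the form $\cE^{+}[1/p]$ for some crystal $\cE^{+}$; the module $M=\cE^{+}_{\cD}$ is a finitely generated $\cD$-module carrying an HPD-stratification, so the lattice is handed to you. Your description of an isocrystal as ``a finite projective $\cD[1/p]$-module with stratification'' is not the definition (and the finite-projectivity is not assumed in the lemma).

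The flaw: your ascending-chain claim that ``ascending chains of finitely generated $\cD$-submodules with fixed $\cD[1/p]$-span stabilize'' is false (already for $\cD=\bZ_p$: take $\bZ_p\subset p^{-1}\bZ_p\subset\cdots$ inside $\bQ_p$). So the enlargement procedure you outline does not terminate as written.

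The paper's argument avoids all this: take $M=\cE^{+}_{\cD}$, pass to the maximal $p$-torsion free quotient $\overline{M}$. Because the maps $\cD\to\cD^{(n)}$ are ($p$-adically) flat, the HPD-stratification on $M$ induces one on $\overline{M}$ and the resulting crystal $\overline{\cE}^{+}$ has $\overline{\cE}^{+}_{\cD^{(n)}}$ $p$-torsion free for every $n$ (this is where flatness, not noetherianity of $\cD^{(n)}$, is used---so your worry about $\cD^{(n)}$ being noetherian is misplaced). Noetherianity of $\cD$ alone then guarantees that the $p$-power torsion in $M$ is bounded, hence $M[1/p]\cong\overline{M}[1/p]$ and $\cE\cong\overline{\cE}^{+}[1/p]$.
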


\begin{proof}
Let $\cE$ be an isocrystal on $(X,\cM_{X})$, and take $\cE^{+}\in \mathrm{Crys}((X,\cM_{X})_{\mathrm{crys}})$ with $\cE\cong \cE^{+}[1/p]$. Then $\cE^{+}$ corresponds to a quasi-coherent $\cO_{\cD}$-module $\cM\coloneqq \cE^{+}_{A}$ equipped with HPD-stratification. Let $\overline{\cM}$ be the maximal $p$-torsion free quotient of $\cM$. Since $\cD^{(1)}\to \cD$ is flat, $\overline{\cM}$ has the induced HPD-stratification (see Lemma \ref{lem for tf quot and completion} below). The $\cO_{\cD}$-module $\overline{\cM}$ equipped with HPD-stratification defines an object $\overline{\cE}^{+}\in \mathrm{Crys}((X,\cM_{X})_{\mathrm{crys}})$. Since $A$ is noetherian, $M$ and $\overline{M}$ is isomorphic in the isogeny category $\mathrm{Mod}(\cO_{\cD})\otimes_{\bZ_{p}} \bQ_{p}$. Therefore, we get $\cE\cong \cE^{+}[1/p]\cong \overline{\cE}^{+}[1/p]$. This proves that $\cE$ is $p$-torsion free.
\end{proof}

\begin{rem}\label{log smooth imlies tf}
    By the above proposition, when $(X,\cM_{X})$ is log smooth over $k$, every isocrystal on $(X,\cM_{X})$ is $p$-torsion free. Indeed, a log smooth formal lift $(\cD,\cM_{\cD})$ of $(X,\cM_{X})$ is enough.
\end{rem}

\subsection{Isocrystals and connections}

In this subsection, we recall the relation between crystals and modules with integrable connections studied in \cite[Section 6]{kat89}.

Let $(\fS,\cM_{\fS})$ be a $p$-adic fine noetherian $\bZ_{p}$-flat log formal scheme with a PD-ideal $\cI$ containing $p$ that is compatible with the natural PD-structure on $(p)\subset \bZ_{p}$. Let $(S,\cM_{S})$ be the strict closed subscheme of $(\fS,\cM_{\fS})$ defined by $\cI$. Consider a fine log scheme $(X,\cM_{X})$ over $(S,\cM_{S})$. Suppose that there is a strict closed immersion $(X,\cM_{X})\hookrightarrow (\fY,\cM_{\fY})$, where $(\fY,\cM_{\fY})$ is a $p$-adic fine log formal scheme that is log smooth and integral over $(\fS,\cM_{\fS})$. Let $(\cD,\cM_{\cD})$ be the log PD-envelope of $(X,\cM_{X})\hookrightarrow (\fY,\cM_{\fY})$ and $(\cD^{(n)},\cM_{\cD^{(n)}})$ be the log PD-envelope of the diagonal map 
\[
(X,\cM_{X})\hookrightarrow \underbrace{(\fY,\cM_{\fY})\times_{(\fS,\cM_{\fS})}\dots \times_{(\fS,\cM_{\fS})} (\fY,\cM_{\fY})}_{n+1 \ \text{copies of} \ (\fY,\cM_{\fY})}
\]
for $n\geq 1$.

\begin{construction}\label{stratification to connection}
Let $\cJ$ denote the kernel of the surjection $\cO_{\cD^{(1)}}\twoheadrightarrow \cO_{\cD}$ induced from the diagonal map $(\fY,\cM_{\fY})\hookrightarrow (\fY,\cM_{\fY})\times_{(\fS,\cM_{\fS})} (\fY,\cM_{\fY})$. Suppose that we are given $t_{1},\dots,t_{s}\in \Gamma(\fY,\cM_{\fY})$ such that $\mathrm{dlog}(t_{1}),\dots,\mathrm{dlog}(t_{s})$ forms a basis of $\Omega^{1}_{(\fY,\cM_{\fY})/(\fS,\cM_{\fS})}$ (this is the case \'{e}tale locally on $\fY$). By \cite[Proposition 6.5]{kat89}, there is an isomorphism of $\cO_{\cD}$-algebras
\[
\cO_{\cD}\{\xi_{1},\dots,\xi_{s}\}\isom \cO_{\cD^{(1)}}
\]
given by $\xi_{i}\mapsto p_{1}^{*}(t_{i})p_{2}^{*}(t_{i})^{-1}-1$, where $\cO_{\cD^{(1)}}$ is viewed as an $\cO_{\cD}$-algebra via $p_{2}$ and $\cO_{\cD}\{\xi_{1},\dots,\xi_{s}\}$ is the $p$-completed PD-polynomial ring with variables $\xi_{1},\dots,\xi_{s}$ over $\cO_{\cD}$.
 
Let $\cF$ be a quasi-coherent $\cO_{\cD}$-module with a HPD-stratification $p_{1}^{*}\cF\isom p_{2}^{*}\cF$. Then, by cutting off the HPD-stratification on $\cF$ modulo $\cJ^{[\geq 2]}$, we obtain an integrable connection 
\[
\nabla\colon \cF\to \cF\otimes_{\cO_{\fY}} \Omega^{1}_{(\fY,\cM_{\fY})/(\fS,\cM_{\fS})}.
\]
 We define a $\cO_{\fS}$-linear map $\nabla_{t_{i}\frac{\partial}{\partial t_{i}}}\colon \cF\to \cF$ as follows:
\[
\nabla(x)=\sum_{i=1}^{s} \nabla_{t_{i}\frac{\partial}{\partial t_{i}}}(x)\otimes \mathrm{dlog}(t_{i}) \ \ \ (x\in \cF).
\]
More generally, let $\nabla_{t_{i}^{n}\frac{\partial^{n}}{\partial t_{i}^{n}}}\colon \cF\to \cF$ be the $\cO_{\fS}$-linear map defined by
\[
\nabla_{t_{i}^{n}\frac{\partial^{n}}{\partial t_{i}^{n}}}\coloneqq \prod_{j=0}^{n-1}(\nabla_{t_{i}\frac{\partial}{\partial t_{i}}}-j)
\]
for $1\leq i\leq s$ and $n\geq 1$.  Then, by the proof of \cite[Theorem 6.2]{kat89}, the $\cO_{\fS}$-linear map
\[
\eta\colon \cF\to p_{1}^{*}\cF\isom p_{2}^{*}\cF
\]
defined from the HPD-stratification is written as follows:
\[
\eta(x)=\sum_{\underline{n}=(n_{1},\dots,n_{s})\in \bN^{s}} \nabla_{\underline{n}}(x)\otimes \xi^{\underline{n}} \ \ \ (x\in \cF),
\]
where $\displaystyle \nabla_{\underline{n}}\coloneqq \prod_{1\leq i\leq s} \nabla_{t_{i}^{n_{i}}\frac{\partial^{n_{i}}}{\partial t_{i}^{n_{i}}}}$, and $\displaystyle \xi^{\underline{n}}\coloneqq \prod_{1\leq i\leq s}\xi_{i}^{[n_{i}]}$.
\end{construction}

This procedure gives the following equivalence.

\begin{prop}[cf.~{\cite[Th\'{e}or\`{e}me 4.2.11]{ber74}},{\cite[Theorem 6.2]{kat89}}]\label{eq of HPDstrat and conn}
    There is a natural equivalence between the following categories:
    \begin{itemize}
        \item the category of quasi-coherent $\cO_{\cD}$-modules with HPD-stratification;
        \item the category of quasi-coherent $\cO_{\cD}$-modules with topologically quasi-nilpotent integrable connection.
    \end{itemize}
    Here, an integrable connection  $\cF\to \cF\otimes_{\cO_{\fY}} \Omega^{1}_{(\fY,\cM_{\fY})/(\fS,\cM_{\fS})}$ is called \emph{topologically quasi-nilpotent} if, for some (or equivalently any) local sections $t_{1},\dots,t_{s}\in \Gamma(\fY,\cM_{\fY})$ as in Construction \ref{stratification to connection}, $x\in \cF$, and $1\leq i\leq s$, $\nabla_{t_{i}^{n}\frac{\partial^{n}}{\partial t_{i}^{n}}}(x)$ converges to $0$ when $n\to \infty$.
\end{prop}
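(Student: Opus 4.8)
The plan is to write down the two functors explicitly, reducing everything to the non-logarithmic statements of \cite[Th\'{e}or\`{e}me 4.2.11]{ber74} and \cite[Theorem 6.2]{kat89} via the computations already recorded in Construction \ref{stratification to connection}. Since both categories satisfy \'{e}tale descent and the functor from stratifications to connections is canonical, we may fix local coordinates $t_{1},\dots,t_{s}\in\Gamma(\fY,\cM_{\fY})$ as in Construction \ref{stratification to connection}; the resulting equivalence will be independent of the choice and hence will glue.

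In one direction, the functor is the one built in Construction \ref{stratification to connection}: an HPD-stratification $\eta\colon p_{1}^{*}\cF\isom p_{2}^{*}\cF$ is cut modulo $\cJ^{[\geq 2]}$ to produce an integrable connection $\nabla$. The only point to add is that $\nabla$ is topologically quasi-nilpotent, and this is forced by the shape of $\cO_{\cD^{(1)}}\cong\cO_{\cD}\{\xi_{1},\dots,\xi_{s}\}$: writing $\eta(x)=\sum_{\underline n}\nabla_{\underline n}(x)\otimes\xi^{\underline n}$ as in the Construction, and using that $p_{2}^{*}\cF=\cO_{\cD^{(1)}}\widehat{\otimes}_{\cO_{\cD}}\cF$ is $p$-adically complete with topological basis the $\xi^{\underline n}$, membership of $\eta(x)$ in $p_{2}^{*}\cF$ says precisely that $\nabla_{\underline n}(x)\to 0$ $p$-adically as $|\underline n|\to\infty$; in particular $\nabla_{t_{i}^{n}\frac{\partial^{n}}{\partial t_{i}^{n}}}(x)\to 0$.

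Conversely, given $(\cF,\nabla)$ with $\nabla$ integrable and topologically quasi-nilpotent, one defines $\eta\colon p_{1}^{*}\cF\to p_{2}^{*}\cF$ by $\eta(x)=\sum_{\underline n\in\bN^{s}}\nabla_{\underline n}(x)\otimes\xi^{\underline n}$ for $x\in\cF$. Convergence holds because the $\nabla_{t_{i}\frac{\partial}{\partial t_{i}}}$ pairwise commute (integrability) and are $\cO_{\fS}$-linear: for fixed $x$ and $k$ there are $N_{i}$ with $\nabla_{t_{i}^{n}\frac{\partial^{n}}{\partial t_{i}^{n}}}(x)\in p^{k}\cF$ for $n\geq N_{i}$, and then $\nabla_{\underline n}(x)\in p^{k}\cF$ whenever some $n_{i}\geq N_{i}$, leaving only finitely many exceptions. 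One checks directly that $\eta\equiv\mathrm{id}\bmod\cJ$ and that the formula is well-defined on $p_{1}^{*}\cF$ using the Leibniz rule for $\nabla$ and the identity $\xi_{i}=p_{1}^{*}(t_{i})p_{2}^{*}(t_{i})^{-1}-1$ from \cite[Proposition 6.5]{kat89}. That the two functors are mutually inverse is bookkeeping: the reconstruction formula recorded in Construction \ref{stratification to connection} (which is the content of the proof of \cite[Theorem 6.2]{kat89}) recovers $\eta$ from $\nabla$, while $\eta(x)\equiv x+\sum_{i}\nabla_{t_{i}\frac{\partial}{\partial t_{i}}}(x)\,\xi_{i}\pmod{\cJ^{[\geq 2]}}$ manifestly returns $\nabla$ under $\cJ/\cJ^{[\geq 2]}\cong\Omega^{1}_{(\fY,\cM_{\fY})/(\fS,\cM_{\fS})}$, $\xi_{i}\mapsto\mathrm{dlog}(t_{i})$. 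Compatibility with morphisms is formal.

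The one genuinely non-formal step, which I expect to be the main obstacle, is that the $\eta$ produced from an \emph{integrable} $\nabla$ satisfies the cocycle condition $p_{13}^{*}\eta=p_{23}^{*}\eta\circ p_{12}^{*}\eta$ on $\cD^{(2)}$ (so that it is indeed an HPD-stratification). In the classical setting this equivalence between integrability and the cocycle condition for the exponential $\eta$ is exactly \cite[Th\'{e}or\`{e}me 4.2.11]{ber74} / \cite[Theorem 6.2]{kat89}, and I would transport that computation once the logarithmic PD-envelopes are put in coordinates: \cite[Proposition 6.5]{kat89} gives $\cO_{\cD^{(1)}}\cong\cO_{\cD}\{\xi_{1},\dots,\xi_{s}\}$ and an analogous description of $\cO_{\cD^{(2)}}$, and---because the $t_{i}$ are sections of the log structure $\cM_{\fY}$, so that the various pullbacks $p_{j}^{*}(t_{i})$ differ by PD-nilpotent units---the comultiplication $\cO_{\cD^{(1)}}\to\cO_{\cD^{(2)}}$ is group-like on the coordinates, $1+\xi_{i}\mapsto(1+\xi_{i}')(1+\xi_{i}'')$. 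Granting this, the cocycle identity for $\eta$ unwinds into commutativity of the $\nabla_{t_{i}\frac{\partial}{\partial t_{i}}}$, i.e.\ integrability, exactly as in \cite{kat89}. The remaining verifications---completeness issues, independence of the coordinates, and \'{e}tale-local gluing---are routine.
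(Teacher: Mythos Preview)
Your proposal is correct and is precisely the argument the paper intends: the paper gives no proof beyond the citation to \cite[Th\'{e}or\`{e}me 4.2.11]{ber74} and \cite[Theorem 6.2]{kat89}, having set up the local coordinates and the Taylor-expansion formula in Construction \ref{stratification to connection}. What you have written is a faithful unpacking of those references in the logarithmic setting, with the key inputs (the isomorphism $\cO_{\cD^{(1)}}\cong\cO_{\cD}\{\xi_{1},\dots,\xi_{s}\}$ from \cite[Proposition 6.5]{kat89} and the group-like behavior of $1+\xi_{i}$ under comultiplication) identified correctly.
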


\begin{dfn}
    We define the following categories.
    \begin{itemize}
        \item Let $\mathrm{StrFQCoh}(\cD)$ be the category of quasi-coherent $\cO_{\cD}$-modules of finite type equipped with HPD-stratification.
        \item Let $\mathrm{FQCoh}^{\nabla}(\cD)$ be the category of quasi-coherent $\cO_{\cD}$-modules $\cM$ of finite type equipped with topologically quasi-nilpotent integrable connection.
    \end{itemize}
\end{dfn}

Let $\cE\in \mathrm{Crys}((X,\cM_{X})_{\mathrm{crys}})$. By the crystal property of $\cE$, there are isomorphisms
\[
p_{1}^{*}\cE_{\cD}\cong \cE_{\cD^{(1)}}\cong p_{2}^{*}\cE_{\cD},
\]
where $p_{i}\colon (\cD^{(1)},\cM_{\cD^{(1)}})\to (\cD,\cM_{\cD})$ is the natural projection map for $i=1,2$. This isomorphism satisfies the cocycle condition on $\cD^{(2)}$ and gives a HPD-stratification on $\cE_{\cD}$. This construction and the equivalence in Proposition \ref{eq of HPDstrat and conn} gives the following proposition.

\begin{prop}\label{functor from crys to conn}
    There is a natural functor 
    \[
    \mathrm{Crys}((X,\cM_{X})_{\mathrm{crys}})\to \mathrm{StrFQCoh}(\cD) (\simeq \mathrm{FQCoh}^{\nabla}(\cD)).
    \]
\end{prop}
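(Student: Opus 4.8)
The plan is to assemble the functor by composing three constructions already prepared in the excerpt. First I would recall the canonical equivalence from Remark \ref{crystalline crystal as lim}, which presents $\mathrm{Crys}((X,\cM_{X})_{\mathrm{crys}})$ as the limit $\varprojlim_{(T,\cM_{T})} \mathrm{FQCoh}(T)$ over the big log crystalline site. Evaluating a crystal $\cE$ at the distinguished PD-thickening coming from the log PD-envelope $(\cD,\cM_{\cD})$ of $(X,\cM_{X})\hookrightarrow (\fY,\cM_{\fY})$, and at its self-products $(\cD^{(n)},\cM_{\cD^{(n)}})$, produces a quasi-coherent $\cO_{\cD}$-module $\cE_{\cD}$ of finite type together with the transition isomorphisms induced by the two projections $p_1,p_2\colon (\cD^{(1)},\cM_{\cD^{(1)}})\to (\cD,\cM_{\cD})$. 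The crystal condition forces $p_1^{*}\cE_{\cD}\cong \cE_{\cD^{(1)}}\cong p_2^{*}\cE_{\cD}$, and the cocycle identity over $(\cD^{(2)},\cM_{\cD^{(2)}})$ says precisely that this datum is an HPD-stratification in the sense of $\mathrm{StrFQCoh}(\cD)$; so the assignment $\cE\mapsto (\cE_{\cD}, p_1^{*}\cE_{\cD}\cong p_2^{*}\cE_{\cD})$ is functorial and lands in $\mathrm{StrFQCoh}(\cD)$. Finally I would postcompose with the equivalence $\mathrm{StrFQCoh}(\cD)\simeq \mathrm{FQCoh}^{\nabla}(\cD)$ of Proposition \ref{eq of HPDstrat and conn}.

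The steps in order: (i) fix the notation $(\cD,\cM_{\cD})$, $(\cD^{(n)},\cM_{\cD^{(n)}})$ as in the paragraph preceding Construction \ref{stratification to connection}, and note that each $(\cD^{(n)},\cM_{\cD^{(n)}})$ is an object of $(X,\cM_{X})_{\mathrm{crys}}$ (it is a $p$-adic log PD-thickening of $(X,\cM_{X})$ because $(\fY,\cM_{\fY})$ is log smooth and integral over $(\fS,\cM_{\fS})$, and one may reduce modulo $p^{N}$ to sit inside the site); (ii) for $\cE\in \mathrm{Crys}((X,\cM_{X})_{\mathrm{crys}})$ set $M\coloneqq \cE_{\cD}$, observe $M$ is quasi-coherent of finite type on $\cD$, and assemble the HPD-stratification from the crystal isomorphisms at $\cD^{(1)}$ and $\cD^{(2)}$; (iii) check functoriality in $\cE$ — a morphism of crystals is by definition compatible with all evaluations, hence compatible with the stratification; (iv) apply Proposition \ref{eq of HPDstrat and conn} to obtain the integrable topologically quasi-nilpotent connection and conclude. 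The parenthetical equivalence in the statement is exactly Proposition \ref{eq of HPDstrat and conn}, so nothing new is needed there.

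The only genuine point requiring care — and thus the main obstacle, such as it is — is step (ii): verifying that the stratification produced from the crystal property is genuinely \emph{HPD}, i.e. continuous/topologically nilpotent with respect to the PD-filtration on $\cO_{\cD^{(1)}}$, rather than merely a stratification in the naive sense. This is handled by unwinding the crystal condition together with the explicit description $\cO_{\cD}\{\xi_1,\dots,\xi_s\}\isom \cO_{\cD^{(1)}}$ from \cite[Proposition 6.5]{kat89} recalled in Construction \ref{stratification to connection}: since $\cE_{\cD}$ is of finite type and the isomorphism is already organized along the PD-polynomial variables $\xi_i$, the induced stratification automatically factors through the PD-completion, which is the HPD condition. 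A secondary bookkeeping issue is that the site consists of thickenings on which $p$ is nilpotent while $\cD$ is only $p$-adically complete; this is resolved, as is standard, by evaluating $\cE$ on each $\cD/p^{N}$ and passing to the limit, using that $\mathrm{FQCoh}(\cD)\simeq \varprojlim_N \mathrm{FQCoh}(\cD/p^{N})$ for the noetherian $\cD$ in play here. With these observations the functor is well defined, and its construction is the content of the proposition.
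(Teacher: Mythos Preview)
Your proposal is correct and follows essentially the same approach as the paper: the paper's argument is the short paragraph immediately preceding the proposition, which evaluates $\cE$ at $\cD$, uses the crystal property at $\cD^{(1)}$ and $\cD^{(2)}$ to extract the HPD-stratification, and then invokes Proposition \ref{eq of HPDstrat and conn}. Your worry in step (ii) about the stratification being ``genuinely HPD'' is a slight red herring---since $\cD^{(1)}$ is by construction the log PD-envelope of the diagonal, any isomorphism $p_1^*\cE_{\cD}\cong p_2^*\cE_{\cD}$ over $\cD^{(1)}$ is an HPD-stratification by definition, with nothing further to check.
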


\begin{cor}\label{eq of crys crys and conn over W}
    Suppose that $(\fS,\cM_{\fS})=(\mathrm{Spf}(W),\cM_{W})$ is the formal spectrum of $W$ equipped with the trivial log structure and $\cI=(p)$. Then there are natural equivalences
    \[
    \mathrm{Crys}((X,\cM_{X})_{\mathrm{crys}})\simeq \mathrm{StrFQCoh}(\cD)\simeq \mathrm{FQCoh}(\cD).
    \]
\end{cor}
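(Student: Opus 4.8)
The plan is to promote the functor of Proposition~\ref{functor from crys to conn} to an equivalence between $\mathrm{Crys}((X,\cM_{X})_{\mathrm{crys}})$ and $\mathrm{StrFQCoh}(\cD)$; the remaining equivalence in the chain is then the restriction of Proposition~\ref{eq of HPDstrat and conn} to the finite-type subcategories, which is legitimate because that equivalence leaves the underlying $\cO_{\cD}$-module unchanged. The role of the hypotheses $(\fS,\cM_{\fS})=(\mathrm{Spf}(W),\cM_{W})$ with $\cM_{W}$ trivial and $\cI=(p)$ is that, $k$ being perfect, $W=W(k)$ is $p$-adically formally \'{e}tale over $\bZ_{p}$. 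From this I would extract two facts. First, every object $(U,T,\cM_{T})$ of the absolute log crystalline site $(X,\cM_{X})_{\mathrm{crys}}$ acquires a canonical structure map to $(\fS,\cM_{\fS})$: indeed $U\to X\to S\hookrightarrow\mathrm{Spf}(W)$ and $U\hookrightarrow T$ is a $p$-nilpotent thickening, so this lifts uniquely to $T\to\mathrm{Spf}(W)$. Second, the diagonal $\fS\hookrightarrow\fS\times_{\mathrm{Spf}(\bZ_{p})}\fS$ is open and closed, hence so is the closed immersion $\fY^{\times_{\fS}(n+1)}\hookrightarrow\fY^{\times_{\bZ_{p}}(n+1)}$ for every $n$; since the strict closed immersion $(X,\cM_{X})\hookrightarrow(\fY,\cM_{\fY})$ factors through the diagonal, the log PD-envelope of $(X,\cM_{X})$ in $\fY^{\times_{\bZ_{p}}(n+1)}$ is identified with $(\cD^{(n)},\cM_{\cD^{(n)}})$.

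Next I would show that $(\cD,\cM_{\cD})$ covers the final object of the crystalline topos. Given $(U,T,\cM_{T})$, view $(T,\cM_{T})$ over $(\fS,\cM_{\fS})$ via the first fact; since $U\hookrightarrow T$ is a strict nilpotent closed immersion and $(\fY,\cM_{\fY})\to(\fS,\cM_{\fS})$ is log smooth, the composite $(U,\cM_{U})\to(X,\cM_{X})\hookrightarrow(\fY,\cM_{\fY})$ extends, strict \'{e}tale locally on $T$, to a map $(T,\cM_{T})\to(\fY,\cM_{\fY})$ over $(\fS,\cM_{\fS})$; by the universal property of the log PD-envelope this yields a morphism $(U,T,\cM_{T})\to(\cD,\cM_{\cD})$ in $(X,\cM_{X})_{\mathrm{crys}}$, and the strict \'{e}tale cover produces a Cartesian covering of $(U,T,\cM_{T})$. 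Then $(\cD,\cM_{\cD})$ covers the final object, its $(n+1)$-fold self-products in $(X,\cM_{X})_{\mathrm{crys}}$ are the $(\cD^{(n)},\cM_{\cD^{(n)}})$ by the second fact, and these are $\bZ_{p}$-flat as log PD-envelopes of $\bZ_{p}$-flat log smooth integral formal schemes over $W$. Standard crystalline descent along such a cover, together with Remark~\ref{crystalline crystal as lim}, then identifies $\mathrm{Crys}((X,\cM_{X})_{\mathrm{crys}})$ with the category of finite-type $\cO_{\cD}$-modules equipped with a descent datum along $\cD^{(\bullet)}$; the degenerate face forces the descent isomorphism to reduce to the identity modulo $\Ker(\cO_{\cD^{(1)}}\twoheadrightarrow\cO_{\cD})$, so such a datum is precisely an HPD-stratification, and the resulting equivalence is the functor of Proposition~\ref{functor from crys to conn}. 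Composing with Proposition~\ref{eq of HPDstrat and conn} gives the asserted chain of equivalences.

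The step I expect to be the main obstacle is the second fact above: carefully checking that self-products of $(\cD,\cM_{\cD})$ in the absolute crystalline site (i.e.\ formed over $\bZ_{p}$) coincide with the log PD-envelopes $\cD^{(n)}$ taken over $\fS=\mathrm{Spf}(W)$. This is exactly where perfectness of $k$ is used, and it explains why the statement is confined to the unramified base with $\cI=(p)$ rather than a general $(\fS,\cM_{\fS})$. By comparison the covering property of $(\cD,\cM_{\cD})$ is the usual lifting argument for log smooth morphisms, and the passage from descent data through HPD-stratifications to topologically quasi-nilpotent integrable connections is harmless since every functor involved leaves the underlying $\cO_{\cD}$-module untouched, so finite generation is preserved at each stage.
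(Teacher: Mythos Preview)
Your proof is correct and follows essentially the same strategy as the paper's: show that $(\cD,\cM_{\cD})$ covers the final object of the absolute crystalline topos (via log smoothness of $(\fY,\cM_{\fY})$ over $W$), so that crystals are equivalent to descent data along $\cD^{(\bullet)}$, i.e., to HPD-stratifications. The paper's own proof is two sentences and simply asserts the covering property and the resulting equivalence; you have filled in the points it takes for granted---that every object of the absolute site is canonically over $W$ by formal \'{e}taleness of $W/\bZ_{p}$, and that the relative self-products $\cD^{(n)}$ over $W$ agree with the self-products in the absolute site.
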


\begin{proof}
    Since $(\fY,\cM_{\fY})$ is log smooth over $W$, the $p$-adic log PD-thickening $(\cD,\cM_{\cD})$ covers the final object of the topos associated with $(X,\cM_{X})_{\mathrm{crys}}$. Therefore, the functor in Proposition \ref{functor from crys to conn} gives an equivalence.
\end{proof}

Next, we turn to studying isocrystals. We start with some lemmas on completions of modules.

\begin{lem}(cf.~{\cite[Lemma 3.24(4)]{dlms23}})\label{lemma for complete tensor and inverting p}
    Let $A\to B$ be a ring map from a  $p$-complete and $p$-torsion free ring $A$ to a $p$-complete ring $B$. Let $M$ be a finitely generated $A$-module. Suppose that $M[1/p]$ is finite projective over $A[1/p]$. Then there is a natural isomorphism
    \[
    M[1/p]\otimes_{A} B\isom (M\otimes_{A} B)^{\wedge}[1/p].
    \]
\end{lem}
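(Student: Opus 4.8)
The plan is to reduce the statement to an assertion about finite free modules. Since localization commutes with base change, there is a canonical identification $M[1/p]\otimes_{A}B=(M\otimes_{A}B)[1/p]$, and under it the map in question is the one induced on $(M\otimes_{A}B)[1/p]$ by the $p$-adic completion map $M\otimes_{A}B\to (M\otimes_{A}B)^{\wedge}$. So it suffices to prove that this completion map becomes an isomorphism after inverting $p$. The idea is to record $M$ by an ``almost idempotent'' matrix over $A$ itself, which can then be pushed through base change and $p$-completion, leaving only the trivial fact that finite free modules over a $p$-complete ring are $p$-complete.

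First I would choose a surjection $q\colon A^{n}\twoheadrightarrow M$ and, using that $M[1/p]$ is projective over $A[1/p]$, a section $s\colon M[1/p]\to A[1/p]^{n}$ of $q[1/p]$. Because $A$ is $p$-torsion free, $A^{n}\hookrightarrow A[1/p]^{n}$; since $M$ is finitely generated, the image of $M$ under $p^{N}s$ lies in $A^{n}$ for $N$ large, which produces an $A$-linear map $\widetilde{s}\colon M\to A^{n}$ with $\widetilde{s}[1/p]=p^{N}s$. Set $g\coloneqq \widetilde{s}\circ q\in \mathrm{End}_{A}(A^{n})$. After inverting $p$ one has $g[1/p]=p^{N}e$ with $e\coloneqq s\circ q[1/p]$ idempotent; since $\mathrm{End}_{A}(A^{n})\hookrightarrow \mathrm{End}_{A[1/p]}(A[1/p]^{n})$ (again $p$-torsion freeness), the identity $g^{2}=p^{N}g$ holds already over $A$. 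Similarly $\delta\coloneqq q\circ\widetilde{s}-p^{N}\mathrm{id}_{M}$ becomes $0$ after inverting $p$; as $M$ is finitely generated, $\delta(M)$ is a finitely generated $p$-power torsion $A$-module, hence $p^{c}\delta=0$ for some $c$.

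Next I would base change along $A\to B$ and then $p$-adically complete. Writing $P\coloneqq M\otimes_{A}B$, base change gives $q_{B}\colon B^{n}\to P$ and $\widetilde{s}_{B}\colon P\to B^{n}$ with $\widetilde{s}_{B}q_{B}=g$, $g^{2}=p^{N}g$, $q_{B}\widetilde{s}_{B}=p^{N}\mathrm{id}_{P}+\delta_{B}$ and $p^{c}\delta_{B}=0$. As $B$ is $p$-complete, $B^{n}$ is already $p$-complete; applying the (functorial) $p$-completion and using completeness of $B^{n}$ gives $\widehat{q}\colon B^{n}\to P^{\wedge}$ and $\widehat{s}\colon P^{\wedge}\to B^{n}$ satisfying $\widehat{s}\,\widehat{q}=g$ on $B^{n}$ and $\widehat{q}\,\widehat{s}=p^{N}\mathrm{id}_{P^{\wedge}}+\widehat{\delta}$ with $p^{c}\widehat{\delta}=0$. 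Inverting $p$, both $P[1/p]$ and $P^{\wedge}[1/p]$ are then exhibited as direct summands of $B[1/p]^{n}$, namely as the image of the idempotent $e=p^{-N}g$, via $p^{-N}\widetilde{s}_{B}[1/p]$ and $p^{-N}\widehat{s}[1/p]$ respectively; since the completion map $P\to P^{\wedge}$ intertwines $\widetilde{s}_{B}$ with $\widehat{s}$, these two identifications are compatible, so the completion map is an isomorphism after inverting $p$. Naturality in $M$ and $B$ is immediate since everything is built from the canonical completion map.

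The main obstacle is that there is no noetherian hypothesis, so one cannot replace $M$ by a finite projective $A$-module and $P=M\otimes_{A}B$ need not be finitely presented over $B$; thus one cannot simply invoke ``a finitely presented module over a $p$-complete ring is $p$-complete''. The almost-idempotent device is exactly what sidesteps this, as it only ever completes finite free modules. The two hypotheses that are genuinely used are that $A$ is $p$-torsion free (to lift $s$ to $\widetilde{s}$ over $A$ and to promote $g^{2}\equiv p^{N}g$ to an equality) and that $M$ is finitely generated (so that the defect $\delta$, and hence $\widehat{\delta}$, is annihilated by a fixed power of $p$ and disappears after inverting $p$).
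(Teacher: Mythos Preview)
Your proof is correct, and it rests on the same key device as the paper's: clear denominators in a splitting of $M[1/p]\hookrightarrow A[1/p]^{n}$ to obtain an $A$-linear map $\widetilde{s}\colon M\to A^{n}$, then exploit that $B^{n}$ is already $p$-complete. The difference is organizational. The paper separates the two directions: surjectivity of $M\otimes_{A}B\to (M\otimes_{A}B)^{\wedge}$ is immediate because $M\otimes_{A}B$ is finitely generated over the $p$-complete ring $B$ (a surjection $B^{n}\twoheadrightarrow M\otimes_{A}B$ stays surjective after taking inverse limits by Mittag--Leffler); injectivity after inverting $p$ then follows because the map $\widetilde{s}_{B}\colon M\otimes_{A}B\to B^{n}$ factors through the completion (as $B^{n}$ is complete) and is split injective after inverting $p$. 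Your almost-idempotent argument accomplishes both directions at once by identifying $P[1/p]$ and $P^{\wedge}[1/p]$ with the same summand $\mathrm{Im}(e)\subset B[1/p]^{n}$, which is pleasant and symmetric but involves tracking the extra relation $q_{B}\widetilde{s}_{B}=p^{N}\mathrm{id}+\delta_{B}$; the paper avoids this entirely by handling surjectivity with the one-line finite-generation observation.
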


\begin{proof}
    Since $M\otimes_{A} B$ is a finitely generated $B$-module, the natural map 
    \[
    M\otimes_{A} B\to (M\otimes_{A} B)^{\wedge}
    \]
    is surjective. It suffices to show that the induced surjection $M[1/p]\otimes_{A} B\to (M\otimes_{A} B)^{\wedge}[1/p]$ is injective.

    Since $M[1/p]$ is finite projective over $A[1/p]$, we can take a finite free $A$-module $N$ and an injection $M\to N$ such that $M[1/p]\to N[1/p]$ admits a retract. Then the map $M\otimes_{A} B\to N\otimes_{A} B$ factors as
    \[
    M\otimes_{A} B\to (M\otimes_{A} B)^{\wedge}\to N\otimes_{A} B.
    \]
    Since $M[1/p]\to N[1/p]$ admits a retract, $M[1/p]\otimes_{A} B\to N[1/p]\otimes_{A} B$ is also injective. Therefore, $M[1/p]\otimes_{A} B\to (M\otimes_{A} B)^{\wedge}[1/p]$ is also injective.
\end{proof}

\begin{lem}\label{completeness is automatically}
    Let $A$ be a $p$-complete and $p$-torsion free ring. Let $M$ be a finitely generated $p$-torsion free $A$-module. Suppose that $M[1/p]$ is finite projective over $A[1/p]$. Then $M$ is $p$-complete. 
\end{lem}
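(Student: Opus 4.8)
The plan is to reduce the statement to a standard fact about finitely generated modules over Noetherian $p$-adic rings by embedding $M$ into a finite free module and exploiting $p$-torsion freeness. First I would use the hypothesis that $M[1/p]$ is finite projective over $A[1/p]$ to choose, as in the proof of Lemma~\ref{lemma for complete tensor and inverting p}, a finite free $A$-module $N$ together with an $A$-linear injection $\iota\colon M\hookrightarrow N$ such that $\iota[1/p]\colon M[1/p]\to N[1/p]$ splits; we may arrange this precisely because $M$ is $p$-torsion free, so $M\hookrightarrow M[1/p]$ and a generating set of $M$ maps into a finitely generated free $A[1/p]$-submodule which, after scaling by a power of $p$, lands in a finite free $A$-module. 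Let $Q$ denote a finitely generated $A$-module with $p^{a}N\subseteq \iota(M)+\text{(something)}$; more cleanly, since $\iota[1/p]$ is a split injection there is an integer $a\ge 0$ and an $A$-linear map $s\colon N\to M$ with $s\circ\iota=p^{a}\cdot\mathrm{id}_{M}$.

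Next I would run the following completeness argument. Since $N$ is finite free over the $p$-complete ring $A$, it is $p$-complete; consider the $p$-adic completion map $M\to \widehat{M}$. Because $M$ is $p$-torsion free, $M\to \widehat{M}$ is injective, and it fits into a commutative square with $\iota$ and its completion $\widehat{\iota}\colon \widehat{M}\to \widehat{N}=N$. The composite $s\circ\widehat{\iota}\colon \widehat{M}\to M$ extends $s\circ\iota=p^{a}\,\mathrm{id}$ on the dense submodule $M$, hence equals the unique continuous extension; in particular $s\circ\widehat{\iota}$ lands in $M$ and restricts to multiplication by $p^{a}$ on $M$. Now take $\widehat{x}\in\widehat{M}$ and set $y\coloneqq s(\widehat{\iota}(\widehat x))\in M$; I claim $y\in p^{a}\widehat{M}$ forces $\widehat{x}=p^{-a}y$ to already lie in $M$. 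Concretely: $\widehat{\iota}(p^{a}\widehat x - \iota(z))$ can be made to vanish for a suitable $z\in M$ by choosing $z$ with $s(\widehat\iota(\widehat x)) = p^a z'$... — the cleanest formulation is that $\widehat\iota$ is injective (as $\widehat M$ is $p$-torsion free, being a submodule of $N$ via $\widehat\iota$, because $s\widehat\iota = p^a\,\mathrm{id}$ shows $\widehat\iota$ injective), so identifying $\widehat M$ with its image $\widehat\iota(\widehat M)\subseteq N$ we must show $\widehat\iota(\widehat M)\subseteq \iota(M)$. Given $v\in\widehat\iota(\widehat M)$, approximate: there is $z\in M$ with $v-\iota(z)\in p\widehat\iota(\widehat M)\subseteq pN$, so $s(v)-p^a z\in pM$, i.e. $s(v)\equiv p^a z\pmod{pM}$; iterating and using $p$-adic completeness of...

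I will instead organize the final step by passing through $M[1/p]$. Inside $N[1/p]$ we have the submodules $\iota(M)[1/p]$ and $\widehat\iota(\widehat M)\subseteq N$; since $\widehat{M}/M$ is $p$-divisible (cokernel of a $p$-adic completion map of a finitely generated module over a $p$-complete ring) and $M$ is $p$-torsion free with $\widehat M$ $p$-torsion free, one gets $\widehat M[1/p] = M[1/p]$ compatibly with $\widehat\iota$, $\iota$; thus $\widehat\iota(\widehat M)\subseteq N\cap M[1/p]$ where the intersection is taken in $N[1/p]$. Finally $N\cap M[1/p] = \iota(M)$: if $p^{-b}\iota(x)\in N$ with $x\in M$, apply $s$ to get $p^{a-b}x\in M$, and since $M$ is $p$-torsion free and $x\in M$ this is automatic — the real point is to show $p^{-b}\iota(x)$ is already in $\iota(M)$, which follows because $\iota(M)$ is closed in $N$ for the $p$-adic topology (being the kernel of $N\to N/\iota(M)$, a finitely generated module over $p$-complete $A$, hence $p$-adically separated after noting $N/\iota(M)$ has bounded $p^{\infty}$-torsion? ).

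\medskip
\noindent\emph{Anticipated main obstacle.} The crux — and the step I expect to require the most care — is proving that $\iota(M)$ is $p$-adically \emph{closed} in $N$, equivalently that $\widehat{\iota}(\widehat M)$ is not strictly larger than $\iota(M)$; this is where one genuinely uses that $M$ is \emph{finitely generated} together with $p$-torsion free and $M[1/p]$ finite projective, rather than merely finitely generated (the statement is false without projectivity, e.g. $M=\bigoplus\bZ_p$ is not $p$-complete). The clean way around it is: the retraction $s\colon N\to M$ with $s\iota=p^a\,\mathrm{id}$ shows $M$ is, up to the isomorphism $p^a\colon M\xrightarrow{\sim}p^aM$, a direct summand-like piece of $N$ sandwiched as $p^aN\subseteq \iota(M)\subseteq N$ after possibly enlarging $a$ (because $\iota s\colon N\to N$ satisfies $\iota s\iota = p^a\iota$, so $\iota s = p^a\,\mathrm{id}$ on $\iota(M)$, giving $p^a N = \iota s(p^a N)\subseteq \iota(M)$). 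Then $\iota(M)$ contains $p^aN$, so $N/\iota(M)$ is killed by $p^a$, hence is already $p$-complete and $p$-adically separated, so $\iota(M)=\ker(N\to N/\iota(M))$ is $p$-adically closed in the $p$-complete module $N$; therefore $\iota(M)$ is itself $p$-complete, and since $\iota$ is injective $M$ is $p$-complete. I would write the argument in exactly this order: produce $\iota$ and $s$; upgrade to $p^aN\subseteq\iota(M)\subseteq N$; observe $N/\iota(M)$ is $p^a$-torsion hence complete; conclude $\iota(M)$ is closed in complete $N$, hence complete; transport back to $M$.
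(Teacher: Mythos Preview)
Your final ``clean'' argument contains a genuine error: the inclusion $p^{a}N\subseteq \iota(M)$ is false in general. Take $A=\bZ_{p}$, $M=\bZ_{p}$, $N=\bZ_{p}^{2}$, $\iota(m)=(m,0)$, $s(x,y)=x$; then $s\iota=\mathrm{id}_{M}$ so $a=0$, yet $N\not\subseteq\iota(M)$. The flaw is in the parenthetical: from $\iota s\iota=p^{a}\iota$ you get $\iota s=p^{a}\cdot\mathrm{id}$ only on the submodule $\iota(M)$, not on all of $N$, so the equality $\iota s(p^{a}N)=p^{a}N$ does not follow. More conceptually, since $\iota[1/p]$ is a split injection rather than an isomorphism, the quotient $N/\iota(M)$ has a nonzero rational part and cannot be killed by any power of $p$.

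The ingredient you are missing---and the one the paper's proof opens with---is that $M\to\widehat{M}$ is \emph{surjective}: choose a surjection $A^{k}\twoheadrightarrow M$, complete, and use $\widehat{A^{k}}=A^{k}$ to get $A^{k}\twoheadrightarrow\widehat{M}$ factoring through $M$. With surjectivity in hand your embedding actually finishes the proof in one line, more directly than the paper does: the composite $M\to\widehat{M}\xrightarrow{\widehat{\iota}}\widehat{N}=N$ equals $\iota$, which is injective, so $M\to\widehat{M}$ is injective and hence bijective. The retraction $s$ is not even needed for this step. By contrast, the paper deduces injectivity by showing that $M[1/p]\to\widehat{M}[1/p]$ is an isomorphism, using that a finite projective $A[1/p]$-module is complete for the topology defined by any lattice. (Your passing remark that ``$M\to\widehat{M}$ is injective because $M$ is $p$-torsion free'' is also unjustified as stated---$p$-torsion freeness alone does not give $p$-adic separatedness---though separatedness does follow here precisely because $\iota$ embeds $M$ into the separated module $N$.)
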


\begin{proof}
    Since $M$ is finitely generated, the natural map $M\to M^{\wedge}$ is surjective. To prove that this map is injective, it suffices to show that $M[1/p]\to M^{\wedge}[1/p]$ is an isomorphism due to the $p$-torsion freeness of $M$. The natural map 
    \[
    M^{\wedge}[1/p]=(\varprojlim_{n\geq 1} M/p^{n}M)[1/p]\to \varprojlim_{n\geq 1} M[1/p]/p^{n}M=(M[1/p])^{\wedge} 
    \]
    is an isomorphism. Since the topology on $M[1/p]$ is independent of the choice of a lattice, the assumption that $M[1/p]$ is finite projective over $A[1/p]$ implies that we have $(M[1/p])^{\wedge}\cong M[1/p]$. Therefore, we conclude that $M[1/p]\cong M^{\wedge}[1/p]$.
\end{proof}

\begin{dfn}\label{def of rat strvect}
We define the following categories:
    \begin{itemize}
        \item Let $\mathrm{StrVect}(\cD)$ be the category of  $p$-torsion free quasi-coherent $\cO_{\cD}$-module $\cM^{+}$ such that $\cM^{+}[1/p]$ is locally projective $\cO_{\cD}[1/p]$-module equipped with a $\cO_{\cD^{(1)}}$-linear isomorphism
        \[ 
        \epsilon\colon p_{1}^{*}\cM^{+}\isom p_{2}^{*}\cM^{+};
        \]
        \item Let $\mathrm{StrVect}(\cD[1/p])$ be the category of locally finite projective $\cO_{\cD}[1/p]$-modules $\cM$ with a $\cO_{\cD^{(1)}}[1/p]$-linear isomorphism 
        \[
        \epsilon\colon p_{1}^{*}\cM\isom p_{2}^{*}\cM
        \]
        satisfying the cocycle condition over $\cD^{(2)}$; 
        \item Let $\mathrm{Vect}^{\nabla}(\cD[1/p])$ be the category of locally finite projective $\cO_{\cD}[1/p]$-modules $\cM$ equipped with a topologically quasi-nilpotent integrable connection 
        \[
        \nabla_{\cM}\colon \cM\to \cM\otimes_{\cO_{\fY}} \Omega^{1}_{(\fY,\cM_{\fY})/(\fS,\cM_{\fS})}.
        \]
    \end{itemize}
\end{dfn}

\begin{lem}\label{eq of rational HPDstr and conn}
There is a natural equivalence
    \[
    \mathrm{StrVect}(\cD[1/p])\isom \mathrm{Vect}^{\nabla}(\cD[1/p]).
    \]
\end{lem}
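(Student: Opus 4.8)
The plan is to construct the two functors directly, adapting Construction \ref{stratification to connection} and the proof of Proposition \ref{eq of HPDstrat and conn} to the setting in which $p$ has been inverted. Working Zariski-locally on $\cD$, fix coordinates $t_{1},\dots,t_{s}$ as in Construction \ref{stratification to connection}, so that $\cO_{\cD^{(1)}}\cong \cO_{\cD}\{\xi_{1},\dots,\xi_{s}\}$ and $\cJ/\cJ^{[\geq 2]}\cong \Omega^{1}_{(\fY,\cM_{\fY})/(\fS,\cM_{\fS})}\otimes_{\cO_{\fY}}\cO_{\cD}$. Given $(\cM,\epsilon)\in \mathrm{StrVect}(\cD[1/p])$, I would reduce $\epsilon$ modulo $\cJ^{[\geq 2]}$ (after inverting $p$) to get an $\cO_{\fS}$-linear map $\cM\to \cM\oplus(\cM\otimes_{\cO_{\fY}}\Omega^{1}_{(\fY,\cM_{\fY})/(\fS,\cM_{\fS})})$ which is the identity on the first summand because $\Delta^{*}\epsilon=\mathrm{id}$; its second component is the required connection $\nabla$, which is integrable and satisfies the Leibniz rule since $\epsilon$ is $\cO_{\cD^{(1)}}[1/p]$-linear and satisfies the cocycle condition over $\cD^{(2)}$, exactly as in \cite[Section 6]{kat89}. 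This construction is coordinate-free, so it glues. Conversely, given $(\cM,\nabla)\in \mathrm{Vect}^{\nabla}(\cD[1/p])$, I would set $\nabla_{t_{i}^{n}\frac{\partial^{n}}{\partial t_{i}^{n}}}\coloneqq \prod_{j=0}^{n-1}(\nabla_{t_{i}\frac{\partial}{\partial t_{i}}}-j)$, $\nabla_{\underline{n}}\coloneqq \prod_{i}\nabla_{t_{i}^{n_{i}}\frac{\partial^{n_{i}}}{\partial t_{i}^{n_{i}}}}$, and define $\epsilon$ by $\epsilon(1\otimes x)=\sum_{\underline{n}}\nabla_{\underline{n}}(x)\otimes\xi^{\underline{n}}$, checking that it is an $\cO_{\cD^{(1)}}[1/p]$-linear isomorphism with cocycle, independent of the $t_{i}$.

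Two technical inputs have to be supplied. First, to make sense of the pullbacks $p_{i}^{*}\cM$ and of the expansion $\sum_{\underline{n}}\nabla_{\underline{n}}(x)\otimes\xi^{\underline{n}}$ one passes to a finitely generated $p$-torsion free $\cO_{\cD}$-lattice $\cM^{+}\subset \cM$; then Lemma \ref{lemma for complete tensor and inverting p}, applied to the maps $\cO_{\cD}\to \cO_{\cD^{(1)}}$ and $\cO_{\cD}\to \cO_{\cD^{(2)}}$ with $M=\cM^{+}$ (whose hypotheses hold since $\cO_{\cD}$ is $p$-complete and $p$-torsion free and $\cM^{+}[1/p]=\cM$ is finite projective over $\cO_{\cD}[1/p]$), identifies the naive base changes $\cM\otimes_{\cO_{\cD}[1/p]}\cO_{\cD^{(1)}}[1/p]$ of Definition \ref{def of rat strvect} with the $p$-completed pullbacks $(p_{i}^{*}\cM^{+})[1/p]$, so that an element of $p_{2}^{*}\cM$ is, up to a bounded power of $p$, a $p$-adically convergent series $\sum c_{\underline{n}}\xi^{\underline{n}}$ with $c_{\underline{n}}\in p_{2}^{*}\cM^{+}$. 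Second, one needs the identity relating the coefficients of such an expansion to the connection, namely that the $\nabla_{\underline{n}}$ read off from $\epsilon$ are exactly $\prod_{i}\nabla_{t_{i}^{n_{i}}\frac{\partial^{n_{i}}}{\partial t_{i}^{n_{i}}}}$; this is the formal computation in the proof of \cite[Theorem 6.2]{kat89}, which uses only the structure of $\cO_{\cD^{(1)}}$ as a $p$-completed PD-polynomial ring over $\cO_{\cD}$ and hence goes through verbatim after inverting $p$. Granting it, the series defining $\epsilon$ converges in $p_{2}^{*}\cM$ precisely because $\nabla$ is topologically quasi-nilpotent in the sense of Definition \ref{def of rat strvect}, and conversely the coefficients extracted from an $\epsilon$ satisfy the topological quasi-nilpotence condition automatically because $\epsilon$ lands in $p_{2}^{*}\cM=(p_{2}^{*}\cM^{+})[1/p]$; thus the two functors land in the asserted categories, preserve the local finite projectivity over $\cO_{\cD}[1/p]$, and are mutually inverse.

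The main obstacle I anticipate is exactly this bookkeeping around completions: verifying that inverting $p$ is compatible with the $p$-completed pullbacks entering the HPD-stratification formalism, and that the resulting series converge in the correct topology on $p_{i}^{*}\cM$. This is where Lemma \ref{lemma for complete tensor and inverting p} (together with the $p$-completeness and $p$-torsion freeness of $\cO_{\cD}$, which hold in all situations where the lemma is applied) and the topological quasi-nilpotence condition are essential; everything else is the formal content of Construction \ref{stratification to connection} and Proposition \ref{eq of HPDstrat and conn} read over $\cO_{\cD}[1/p]$.
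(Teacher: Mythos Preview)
Your approach is essentially the same as the paper's: both reduce to the affine case with log coordinates $t_{1},\dots,t_{s}$, extract the connection from $\epsilon$ modulo $\cJ^{[\geq 2]}$, and reconstruct $\epsilon$ via the Taylor series $\sum_{\underline{n}}\nabla_{\underline{n}}(x)\otimes\xi^{[\underline{n}]}$, appealing to \cite[Theorem 6.2]{kat89} for the formal identification of coefficients.

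Two minor points of comparison. First, you work ``Zariski-locally'', but the log coordinates $t_{i}\in\Gamma(\fY,\cM_{\fY})$ exist only \'etale locally on $\fY$; the paper handles this by invoking \'etale descent for both categories via \cite[Proposition 2.6]{dri22}, which you should cite rather than just asserting that the construction glues. Second, to identify $p_{2}^{*}\cM$ with a space of convergent series, you pass to a lattice $\cM^{+}$ and apply Lemma \ref{lemma for complete tensor and inverting p}, whereas the paper observes directly that, since $\cM$ is finite projective over $\cO_{\cD}[1/p]$ and $\cO_{\cD^{(1)}}\cong\cO_{\cD}\{\xi_{1},\dots,\xi_{s}\}$, one has $\cM\otimes_{\cO_{\cD},p_{2}}\cO_{\cD^{(1)}}\cong\widehat{\bigoplus}_{\underline{n}}\cM\cdot\xi^{[\underline{n}]}$ without choosing a lattice. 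Your route is a bit more laborious but equally valid; the paper's is shorter because finite projectivity already gives the needed completed decomposition.
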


\begin{proof}
    Both of the source and the target satisfies \'{e}tale descent by \cite[Proposition 2.6]{dri22}. We may assume that every formal scheme is affine and there are sections $t_{1},\dots,t_{s}\in \Gamma(\fY,\cM_{\fY})$ such that $\mathrm{dlog}(t_{1}),\dots,\mathrm{dlog}(t_{s})$ forms a basis of $\Omega^{1}_{(\fY,\cM_{\fY})/(\fS,\cM_{\fS})}$. Let $\fY=\mathrm{Spf}(B)$, $\cD=\mathrm{Spf}(D)$, and $\cD^{(n)}=\mathrm{Spf}(D^{(n)})$. 
    
    An object of the source category is described as a pair $(M,\epsilon)$ consisting of a finite projective $D[1/p]$-module $M$ and an $D^{(1)}[1/p]$-linear isomorphism
    \[
    \epsilon\colon M\otimes_{D,p_{1}} D^{(1)}\isom M\otimes_{D,p_{2}} D^{(1)}
    \]
    satisfying the cocycle condition over $D^{(2)}$. We have an exact sequence of $D$-modules
    \[
    0\to D\otimes_{B} \Omega^{1}_{(\fY,\cM_{\fY})/(\fS,\cM_{\fS})}\to D^{(1)}/J^{[\geq 2]}\to D\to 0.
    \]
    Hence, the map $M\to M\otimes_{D} D^{1}/J^{[\geq 2]}$ given by $x\mapsto \epsilon(x\otimes 1)-x\otimes 1$ defines a connection 
    \[
    \nabla_{M}\colon M\to M\otimes_{B} \Omega^{1}_{(\fY,\cM_{\fY})/(\fS,\cM_{\fS})}.
    \]
    We define a map $\nabla_{t_{i}\frac{\partial}{\partial t_{i}}}\colon M\to M$ by $\nabla_{M}(x)=\sum_{i=1}^{s} \nabla_{t_{i}\frac{\partial}{\partial t_{i}}}(x)\otimes \mathrm{dlog}(t_{i})$ for $1\leq i\leq s$.

    Since $M$ is finite projective over $D[1/p]$, there is an isomorphism
    \[
    M\otimes_{D,p_{2}} D^{(1)}\cong \widehat{\bigoplus_{\underline{n}=(n_{1},\dots.n_{s})\in \bN^{s}}} M\cdot \xi^{[\underline{n}]}
    \]
    (see Construction \ref{stratification to connection}). Then, in the same way as \cite[Th\'{e}or\`{e}me 4.2.11]{ber74} or \cite[Theorem 6.2]{kat89}, we conclude that the map
    \[
    M\to M\otimes_{D,p_{1}} D^{(1)}\stackrel{\epsilon}{\isom} M\otimes_{D,p_{2}} D^{(1)} \cong \widehat{\bigoplus_{\underline{n}=(n_{1},\dots.n_{s})\in \bN^{s}}} M\cdot \xi^{[\underline{n}]}
    \]
    is described as
    \[
    x\mapsto \sum_{\underline{n}=(n_{1},\dots,n_{s})\in \bN^{s}} \nabla_{\underline{n}}(x)\otimes \xi^{[\underline{n}]}
    \]
    under the same notation as Construction \ref{stratification to connection}.
    This description allows us to check that $(M,\nabla_{M})$ belongs to $\mathrm{Vect}^{\nabla}(\cD[1/p])$ and the functor $\mathrm{StrVect}(\cD[1/p])\to \mathrm{Vect}^{\nabla}(\cD[1/p])$ given by $(M,\epsilon)\mapsto (M,\nabla_{M})$ is an equivalence 
\end{proof}

\begin{lem}\label{rat strvect admit lattice}
    There is a natural equivalence
    \[
    \mathrm{StrVect}(\cD)\otimes_{\bZ_{p}} \bQ_{p}\isom \mathrm{StrVect}(\cD[1/p]),
    \]
    where the source category is the isogeny category.
\end{lem}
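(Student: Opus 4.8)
The plan is to construct the equivalence by inverting $p$ on objects and showing this is essentially surjective and fully faithful at the level of isogeny categories. First I would define the functor $\mathrm{StrVect}(\cD)\otimes_{\bZ_{p}} \bQ_{p}\to \mathrm{StrVect}(\cD[1/p])$ by sending $(\cM^{+},\epsilon)$ to $(\cM^{+}[1/p],\epsilon[1/p])$; the cocycle condition over $\cD^{(2)}$ for the target is automatic from the one on the source, and $\cM^{+}[1/p]$ is locally projective by definition of $\mathrm{StrVect}(\cD)$. The subtlety is that a priori an object of $\mathrm{StrVect}(\cD)$ only comes with $\epsilon\colon p_{1}^{*}\cM^{+}\isom p_{2}^{*}\cM^{+}$ without the cocycle condition imposed, so I would first observe that after inverting $p$ the cocycle condition holds automatically, or else restrict attention to objects where it does; comparing with Proposition~\ref{functor from crys to conn} and Lemma~\ref{eq of rational HPDstr and conn}, the natural source here is really objects with HPD-stratification, and one checks the two formulations agree rationally. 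For full faithfulness, a morphism $(\cM^{+},\epsilon)\to (\cN^{+},\epsilon')$ in the isogeny category is a morphism $\cM^{+}\to \cN^{+}$ compatible with stratifications up to a power of $p$; rationally this is exactly an $\cO_{\cD}[1/p]$-linear map compatible with $\epsilon[1/p],\epsilon'[1/p]$, and conversely any such rational map scales by a power of $p$ to an integral one, using that $\cM^{+},\cN^{+}$ are finitely generated (so $\Hom_{\cO_{\cD}[1/p]}(\cM^{+}[1/p],\cN^{+}[1/p]) = \Hom_{\cO_{\cD}}(\cM^{+},\cN^{+})[1/p]$).

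The main work is essential surjectivity: given a locally finite projective $\cO_{\cD}[1/p]$-module $\cM$ with stratification $\epsilon$, I must produce a $p$-torsion free coherent $\cO_{\cD}$-lattice $\cM^{+}\subset \cM$ stable under $\epsilon$ in the appropriate sense, i.e. such that $\epsilon$ restricts to an isomorphism $p_{1}^{*}\cM^{+}\isom p_{2}^{*}\cM^{+}$. I would proceed \'{e}tale locally on $\fY$ so that $\fY=\Spf(B)$, $\cD=\Spf(D)$ are affine with $D$ Noetherian (it is the $p$-adic PD-envelope of a strict closed immersion into a log smooth formal scheme over a Noetherian base, hence Noetherian) and with coordinates $t_{1},\dots,t_{s}$ as in Construction~\ref{stratification to connection}. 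First pick any finitely generated $D$-lattice $\cM_{0}^{+}\subset\cM$ with $\cM_{0}^{+}[1/p]=\cM$; replace it by its maximal $p$-torsion free quotient inside $\cM$, which is still a lattice since $D$ is Noetherian. Then $\epsilon(p_{1}^{*}\cM_{0}^{+})$ and $p_{2}^{*}\cM_{0}^{+}$ are two $D^{(1)}$-lattices in $p_{2}^{*}\cM$, so they differ by a bounded power of $p$; since everything is finitely generated over a Noetherian ring I can enlarge $\cM_{0}^{+}$ to $\cM^{+}\coloneqq \sum_{j\geq 0}\nabla_{\underline{n}}(\cM_{0}^{+})$ summed over multi-indices — more precisely, use the connection $\nabla$ (equivalently, the operators $\nabla_{t_{i}\frac{\partial}{\partial t_{i}}}$ coming from $\epsilon$ via Construction~\ref{stratification to connection}) to generate a $\nabla$-stable lattice. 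The point is that the $D$-submodule generated by $\{\nabla_{\underline n}(x) : x\in\cM_0^+,\ \underline n\in\bN^s\}$ is still finitely generated: topological quasi-nilpotence of $\nabla$ guarantees that modulo any fixed power of $p$ only finitely many $\underline n$ contribute, and a standard Artin--Rees/Noetherian argument bounds the denominators globally. This produces $\cM^{+}$ finitely generated, $p$-torsion free, with $\cM^{+}[1/p]=\cM$ and stratification-stable; then $(\cM^{+},\epsilon|_{\cM^+})\in\mathrm{StrVect}(\cD)$ maps to the given object, and since $\mathrm{StrVect}(\cD)\otimes\bQ_p$ and $\mathrm{StrVect}(\cD[1/p])$ both satisfy \'{e}tale descent (as in Lemma~\ref{eq of rational HPDstr and conn}), these local lattices glue up to isogeny.

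The hardest step is the lattice-enlargement argument producing a \emph{finitely generated} $\nabla$-stable (equivalently $\epsilon$-stable) lattice: one needs to control that iterating the divided-power derivations $\nabla_{\underline n}$ does not blow up the lattice, and this is exactly where topological quasi-nilpotence of the connection (Proposition~\ref{eq of HPDstrat and conn}) and the Noetherian hypothesis on $D$ enter decisively. I would phrase it as: for each $i$, the operator $\nabla_{t_i\frac{\partial}{\partial t_i}}$ maps some lattice $p^{-c}\cM_0^+$ into $p^{-c}\cM_0^+$ for $c\gg0$ (bounded denominators, by Noetherianness applied to the $D^{(1)}/J^{[\geq2]}$-description), and then $\cM^+\coloneqq p^{-c}\cM_0^+$ after possibly a further finite enlargement is already stable; alternatively invoke that $\mathrm{Vect}^\nabla(\cD[1/p])$-objects always admit such lattices by the same computation as in \cite[Theorem 6.2]{kat89}. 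Everything else — compatibility with $\epsilon$ over $\cD^{(1)}$, the cocycle condition over $\cD^{(2)}$, $p$-torsion freeness — is then formal, using Lemma~\ref{completeness is automatically} to know the lattice is automatically $p$-complete and Lemma~\ref{lemma for complete tensor and inverting p} to compare $p_i^*\cM^+[1/p]$ with $(p_i^*\cM^+)^\wedge[1/p]$ so that the rational stratification genuinely restricts integrally.
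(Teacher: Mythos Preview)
Your overall strategy matches the paper's: reduce \'etale locally to affine $D$, prove full faithfulness via $\Hom$-localization (the paper also uses that $p_i^*\cM^+$ is $p$-torsion free by $p$-adic flatness of $D\to D^{(1)}$, which you should mention), and for essential surjectivity pass from $(\cM,\epsilon)$ to the associated connection and build a $\nabla$-stable integral lattice.

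The gap is in your lattice construction. You assume $D$ is Noetherian, but log PD-envelopes are typically \emph{not} Noetherian: already the Breuil ring $S_R\cong R_0[t][\{t^{en}/(en)!\}_{n\geq 1}]^\wedge$ in Remark~\ref{rem on another construction of breuil ring} fails to be. So the ``standard Artin--Rees/Noetherian argument'' is unavailable. Your alternative claim that $\nabla_{t_i\frac{\partial}{\partial t_i}}$ stabilizes $p^{-c}\cM_0^+$ for $c\gg 0$ is also not justified: knowing $\nabla_{t_i\frac{\partial}{\partial t_i}}(\cM_0^+)\subset p^{-c_0}\cM_0^+$ only gives $\nabla_{t_i\frac{\partial}{\partial t_i}}(p^{-c}\cM_0^+)\subset p^{-c-c_0}\cM_0^+$, so iteration drifts.

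The paper replaces this with a direct and Noetherian-free argument. Fix a finitely generated lattice $M_0^+\subset M$. Topological quasi-nilpotence (the convergence of the Taylor expansion in Lemma~\ref{eq of rational HPDstr and conn}) gives an $N$ such that $\nabla_{\underline{n}}(M_0^+)\subset M_0^+$ for all $\underline{n}\notin[0,N]^s$. Set $M^+\coloneqq\sum_{\underline{n}\in[0,N]^s}\nabla_{\underline{n}}(M_0^+)$, a \emph{finite} sum. The Leibniz rule gives $D\cdot\nabla_{\underline{n}}(M_0^+)\subset M_0^+ + \nabla_{\underline{n}}(M_0^+)$, so $M^+$ is a finitely generated $D$-module. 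Stability under each $\nabla_{t_i\frac{\partial}{\partial t_i}}$ then follows from the explicit recursion
\[
\nabla_{t_i\frac{\partial}{\partial t_i}}\circ\nabla_{\underline{n}} = \nabla_{\underline{n}+e_i} + n_i\,\nabla_{\underline{n}},
\]
since either $\underline{n}+e_i\in[0,N]^s$ or $\nabla_{\underline{n}+e_i}(M_0^+)\subset M_0^+$ by the choice of $N$. This is the step you should substitute for your Noetherian appeal.
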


\begin{proof}
Both of the source and the target satisfies \'{e}tale descent by \cite[Proposition 2.6 and Corollary 2.5]{dri22}. We may assume that every formal scheme is affine and there are sections $t_{1},\dots,t_{s}\in \Gamma(\fY,\cM_{\fY})$ such that $\mathrm{dlog}(t_{1}),\dots,\mathrm{dlog}(t_{s})$ forms a basis of $\Omega^{1}_{(\fY,\cM_{\fY})/(\fS,\cM_{\fS})}$. Let $\fY=\mathrm{Spf}(B)$, $\cD=\mathrm{Spf}(D)$, and $\cD^{(n)}=\mathrm{Spf}(D^{(n)})$.

By Lemma \ref{lemma for complete tensor and inverting p}, inverting $p$ gives the functor in the statement by Lemma \ref{lemma for complete tensor and inverting p}. Note that, for a ring $A$ and finitely generated $p$-torsion free $A$-modules $M,N$, the natural map
    \[
    \mathrm{Hom}_{A}(M,N)[1/p]\to \mathrm{Hom}_{A[1/p]}(M[1/p],N[1/p])
    \]
    is an isomorphism. Since $p_{i}\colon D\to D^{(1)}$ is $p$-adically flat, $M^{+}\widehat{\otimes}_{D,p_{i}} D^{(1)}$ is $p$-torsion free for each object $(M^{+},\epsilon)\in \mathrm{StrVect}(D)$. Hence, the functor in the statement is fully faithful. 

    We use the notation in Construction \ref{stratification to connection} and the proof of Proposition \ref{eq of rational HPDstr and conn}. Let $(M,\epsilon)\in \mathrm{StrVect}(D[1/p])$. To show the essential surjectivity, it is enough to prove that there exists a finitely generated $D$-submodule $M^{+}$ of $M$ that is preserved by $\nabla_{t_{i}\frac{\partial}{\partial t_{i}}}$ for each $1\leq i\leq s$. Fix a finitely generated $D$-submodule $M^{+}_{0}$ of $M$. By the convergence condition, there exists an integer $N\geq 1$ such that $\nabla_{\underline{n}}(M^{+}_{0})$ is contained in $M^{+}_{0}$ for every $\underline{n}\in \bN^{s}\backslash [0,N]^{s}$. We put 
    \[
    M^{+}\coloneqq \sum_{\underline{n}\in [0,N]^{s}}\nabla_{\underline{n}}(M^{+}_{0}).
    \]
    The formula $D\cdot \nabla_{\underline{n}}(M^{+}_{0})\subset M^{+}_{0}+\nabla_{\underline{n}}(M^{+}_{0})$ for each $\underline{n}\in \bN^{s}$ coming from the Leibniz rule implies that $M^{+}$ is a finitely generated $D$-submodule. The formula 
    \[
    \nabla_{t_{i}\frac{\partial}{\partial t_{i}}}\circ \nabla_{\underline{n}}=\nabla_{(n_{1},\dots,n_{i}+1,\dots,n_{s})}+n_{i}\nabla_{\underline{n}} \ \ \ \ (\underline{n}\in \bN^{s}, 1\leq i\leq s)
    \]
    implies that $M^{+}$ is preserved by $\nabla_{t_{i}\frac{\partial}{\partial t_{i}}}$ for any $1\leq i\leq s$. Then the essential surjectivity follows.
\end{proof}

\begin{prop}\label{functor from isoc to rat conn}
There is a natural functor
    \[
    \mathrm{Isoc}_{\mathrm{lf}}((X,\cM_{X})_{\mathrm{crys}})\to \mathrm{StrVect}(\cD[1/p]) (\simeq \mathrm{Vect}^{\nabla}(\cD[1/p])).
    \]
\end{prop}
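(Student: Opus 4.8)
The plan is to obtain the functor simply by inverting $p$ in the functor of Proposition~\ref{functor from crys to conn}. That functor
\[
\mathrm{Crys}((X,\cM_{X})_{\mathrm{crys}})\to \mathrm{StrFQCoh}(\cD)
\]
is additive and $\bZ_{p}$-linear, hence descends to the associated isogeny categories. Given a locally free isocrystal $\cE$, I would first choose a crystal $\cE^{+}\in \mathrm{Crys}((X,\cM_{X})_{\mathrm{crys}})$ with $\cE^{+}[1/p]\cong \cE$, apply the functor above to obtain a finite type quasi-coherent $\cO_{\cD}$-module $\cE^{+}_{\cD}$ together with its HPD-stratification $\epsilon$, and then set the value of the desired functor at $\cE$ to be $\cE^{+}_{\cD}[1/p]$ equipped with the $\cO_{\cD^{(1)}}[1/p]$-linear isomorphism $\epsilon[1/p]\colon p_{1}^{*}(\cE^{+}_{\cD}[1/p])\isom p_{2}^{*}(\cE^{+}_{\cD}[1/p])$; the cocycle condition over $\cD^{(2)}$ for $\epsilon[1/p]$ is inherited from the one for $\epsilon$. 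Composing with the equivalence $\mathrm{StrVect}(\cD[1/p])\isom \mathrm{Vect}^{\nabla}(\cD[1/p])$ of Lemma~\ref{eq of rational HPDstr and conn} then produces the parenthetical second target.

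Next I would check that this output really lands in $\mathrm{StrVect}(\cD[1/p])$, i.e.\ that $\cE^{+}_{\cD}[1/p]$ is locally finite projective over $\cO_{\cD}[1/p]$. The PD-thickening $(\cD,\cM_{\cD})$ is, through its reductions modulo $p^{n}$, a $p$-adic log PD-thickening belonging to $(X,\cM_{X})^{\mathrm{str}}_{\mathrm{crys}}$, and by the very definition of the evaluation functor in Definition~\ref{def of F-isoc} one has $\cE^{+}_{\cD}[1/p]=\cE_{\cD}$; thus local finite projectivity is exactly the hypothesis that $\cE$ is locally free. (If one wishes to pass through an integral model in $\mathrm{StrVect}(\cD)$ instead, one can invoke Lemma~\ref{rat strvect admit lattice} together with the completion statements of Lemmas~\ref{lemma for complete tensor and inverting p} and~\ref{completeness is automatically}, but this detour is not needed for the statement as written.)

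Finally I would verify that the construction does not depend on the auxiliary choice of $\cE^{+}$ and is functorial. If $\cE^{+}_{1},\cE^{+}_{2}$ are two integral models of $\cE$, any isomorphism $\cE^{+}_{1}[1/p]\isom \cE^{+}_{2}[1/p]$ is, after clearing denominators, represented by a morphism of crystals $\cE^{+}_{1}\to \cE^{+}_{2}$ that becomes invertible on inverting $p$; applying the functor of Proposition~\ref{functor from crys to conn} and inverting $p$ yields a canonical identification of the two resulting objects, compatible with compositions, and the same argument sends a morphism of locally free isocrystals to a morphism in $\mathrm{StrVect}(\cD[1/p])$. The only point I expect to require real care is precisely this well-definedness over the isogeny category, together with the identification of the crystal value $\cE^{+}_{\cD}$ with the evaluation $\cE_{\cD}$; everything else is formal, the substantive content having already been placed in Proposition~\ref{functor from crys to conn} and in Lemmas~\ref{eq of rational HPDstr and conn} and~\ref{rat strvect admit lattice}.
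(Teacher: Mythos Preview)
Your proposal is correct and matches the paper's approach; the paper's own proof is the single sentence ``This can be defined in the same way as Proposition~\ref{functor from crys to conn},'' and you have simply unpacked what that means. If anything, the paper's intended reading may be slightly more direct---apply the crystal property $p_{1}^{*}\cE_{\cD}\cong \cE_{\cD^{(1)}}\cong p_{2}^{*}\cE_{\cD}$ straight at the isocrystal level using the evaluation functor already defined in Definition~\ref{def of F-isoc}, so that no choice of integral model and no well-definedness check is needed---but your route through a lattice $\cE^{+}$ is equivalent and your verification of independence of that choice is sound.
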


\begin{proof}
    This can be defined in the same way as Proposition \ref{functor from crys to conn}.
\end{proof}

\begin{prop}\label{eq isocrystal and conn}
Suppose that $(\fS,\cM_{\fS})$ is $\mathrm{Spf}(W)$ equipped with the trivial log structure and $\cI=(p)$. Then there are natural equivalences
    \[
    \mathrm{Isoc}_{\mathrm{lftf}}((X,\cM_{X})_{\mathrm{crys}})\simeq \mathrm{StrVect}(\cD[1/p])\simeq \mathrm{Vect}^{\nabla}(\cD[1/p]).
    \]
In particular, when $(X,\cM_{X})$ is log smooth over $k$, there are natural equivalences
    \[
    \mathrm{Isoc}_{\mathrm{lf}}((X,\cM_{X})_{\mathrm{crys}})\simeq \mathrm{StrVect}(\cD[1/p])\simeq \mathrm{Vect}^{\nabla}(\cD[1/p]).
    \]
\end{prop}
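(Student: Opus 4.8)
The plan is to deduce the asserted equivalences from the integral comparison of Corollary~\ref{eq of crys crys and conn over W} together with the two rational comparisons of Lemmas~\ref{eq of rational HPDstr and conn} and~\ref{rat strvect admit lattice}, the only genuinely new ingredient being a criterion for membership in $\mathrm{Crys}_{\mathrm{tf}}((X,\cM_{X})_{\mathrm{crys}})$. Since the second equivalence $\mathrm{StrVect}(\cD[1/p])\simeq\mathrm{Vect}^{\nabla}(\cD[1/p])$ is exactly Lemma~\ref{eq of rational HPDstr and conn}, and the functor $G\colon\mathrm{Isoc}_{\mathrm{lftf}}((X,\cM_{X})_{\mathrm{crys}})\to\mathrm{StrVect}(\cD[1/p])$ is (the restriction of) the functor built in Proposition~\ref{functor from isoc to rat conn}, it suffices to prove that $G$ is an equivalence. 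The ``in particular'' clause is then free: by Remark~\ref{log smooth imlies tf}, if $(X,\cM_{X})$ is log smooth over $k$ every isocrystal is $p$-torsion free, so $\mathrm{Isoc}_{\mathrm{lf}}((X,\cM_{X})_{\mathrm{crys}})=\mathrm{Isoc}_{\mathrm{lftf}}((X,\cM_{X})_{\mathrm{crys}})$ and the first part applies verbatim.

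The heart of the matter is the assertion that a crystal $\cE^{+}\in\mathrm{Crys}((X,\cM_{X})_{\mathrm{crys}})$ whose evaluation $\cE^{+}_{\cD}$ is a $p$-torsion free $\cO_{\cD}$-module already lies in $\mathrm{Crys}_{\mathrm{tf}}((X,\cM_{X})_{\mathrm{crys}})$. I would prove this by checking that $(\cD,\cM_{\cD})$ is itself an admissible witness in Definition~\ref{def of torsion free crystals}: since $(\fY,\cM_{\fY})$ is log smooth over $W$, the thickening $\cD$ is $\bZ_{p}$-flat, covers the final object of the topos of $(X,\cM_{X})_{\mathrm{crys}}$, and admits its iterated self-products $(\cD^{(n)},\cM_{\cD^{(n)}})$ (these facts are already used in the proof of Corollary~\ref{eq of crys crys and conn over W}); and because the structure projections $p_{i}\colon\cD\to\cD^{(n)}$ are $p$-adically flat (as recalled in the proof of Lemma~\ref{rat strvect admit lattice}), the evaluation $\cE^{+}_{\cD^{(n)}}\cong\cE^{+}_{\cD}\widehat{\otimes}_{\cD,p_{i}}\cD^{(n)}$ is $p$-torsion free whenever $\cE^{+}_{\cD}$ is. I expect this verification, and in general the bookkeeping around $\bZ_{p}$-flatness of $\cD$ and flatness of the structure maps of its self-products, to be the main obstacle.

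Granting this assertion, essential surjectivity of $G$ is immediate: given $(M,\epsilon)\in\mathrm{StrVect}(\cD[1/p])$, Lemma~\ref{rat strvect admit lattice} produces a lattice $(M^{+},\epsilon^{+})\in\mathrm{StrVect}(\cD)$; by Corollary~\ref{eq of crys crys and conn over W} this corresponds to a crystal $\cE^{+}$ with $\cE^{+}_{\cD}=M^{+}$, which lies in $\mathrm{Crys}_{\mathrm{tf}}$ by the assertion, so $\cE^{+}[1/p]\in\mathrm{Isoc}_{\mathrm{lftf}}$ (using that $\cD$ covers the final object, local projectivity of $M^{+}[1/p]=M$ propagates to all evaluations) and $G(\cE^{+}[1/p])\cong(M,\epsilon)$.

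For full faithfulness, the first step is to note that every $\cE\in\mathrm{Isoc}_{\mathrm{lftf}}$ can be written as $\cE^{+}[1/p]$ with $\cE^{+}\in\mathrm{Crys}_{\mathrm{tf}}$ and $\cE^{+}_{\cD}$ \emph{$p$-torsion free}: take a lattice of $G(\cE)\in\mathrm{StrVect}(\cD[1/p])$ in $\mathrm{StrVect}(\cD)$ as above and apply Corollary~\ref{eq of crys crys and conn over W}, using that inside $\mathrm{StrFQCoh}(\cD)\otimes_{\bZ_{p}}\bQ_{p}$ every object is canonically isogenous to its maximal $p$-torsion free quotient (whose stratification descends because the $p_{i}$ are $p$-adically flat). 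Then, for $\cE=\cE^{+}[1/p]$ and $\cF=\cF^{+}[1/p]$ with $\cE^{+}_{\cD},\cF^{+}_{\cD}$ finitely generated and $p$-torsion free, one computes
\[
\Hom(\cE,\cF)=\Hom_{\mathrm{Crys}}(\cE^{+},\cF^{+})[1/p]=\Hom_{\mathrm{StrFQCoh}(\cD)}(\cE^{+}_{\cD},\cF^{+}_{\cD})[1/p]=\Hom_{\mathrm{StrVect}(\cD[1/p])}(G(\cE),G(\cF)),
\]
the middle equality being Corollary~\ref{eq of crys crys and conn over W} and the last one the isomorphism $\Hom_{A}(N_{1},N_{2})[1/p]\isom\Hom_{A[1/p]}(N_{1}[1/p],N_{2}[1/p])$ for finitely generated $p$-torsion free modules, applied to $A=\cO_{\cD}$ and (to handle compatibility with stratifications) to $A=\cO_{\cD^{(1)}}$ via $p$-adic flatness of $p_{i}$. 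This shows $G$ is an equivalence; composing with Lemma~\ref{eq of rational HPDstr and conn} yields the displayed chain of equivalences, and one checks directly that the composite coincides with the functor of Proposition~\ref{functor from isoc to rat conn}.
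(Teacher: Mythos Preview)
Your architecture matches the paper's: construct an inverse $H\colon\mathrm{StrVect}(\cD[1/p])\to\mathrm{Isoc}_{\mathrm{lftf}}$ via Lemma~\ref{rat strvect admit lattice} and Corollary~\ref{eq of crys crys and conn over W}, check $G\circ H\simeq\mathrm{id}$, and then prove $G$ is fully faithful. Essential surjectivity and the ``in particular'' clause are handled the same way. The divergence, and the gap, is in the full faithfulness step.

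Your normalization step asserts that every $\cE\in\mathrm{Isoc}_{\mathrm{lftf}}$ can be written as $\cE^{+}[1/p]$ with $\cE^{+}_{\cD}$ $p$-torsion free. Neither justification you offer works. The claim that in $\mathrm{StrFQCoh}(\cD)\otimes_{\bZ_{p}}\bQ_{p}$ every object is isogenous to its maximal $p$-torsion free quotient requires the $p^{\infty}$-torsion of a finitely generated $\cO_{\cD}$-module to be bounded; since $\cO_{\cD}$ is not assumed noetherian, this can fail. The other reading---take a lattice $(M^{+},\epsilon^{+})$ of $G(\cE)$ via Lemma~\ref{rat strvect admit lattice}, produce $\cF^{+}$ by Corollary~\ref{eq of crys crys and conn over W}, and declare $\cF^{+}[1/p]\cong\cE$---is circular: you only know $G(\cF^{+}[1/p])\cong G(\cE)$, and upgrading this to $\cF^{+}[1/p]\cong\cE$ is exactly the full faithfulness you are trying to establish.

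The paper avoids normalizing altogether. Given arbitrary $\cE_{1}^{+},\cE_{2}^{+}\in\mathrm{Crys}_{\mathrm{tf}}$, it invokes the \emph{definition} of $p$-torsion freeness to obtain a witness $(\cD_{2},\cM_{\cD_{2}})$ for $\cE_{2}^{+}$ (possibly different from $\cD$), and computes $\mathrm{Hom}$ via evaluation at $\cD_{2}$: the Hom lemma $\Hom_{A}(M,N)[1/p]\cong\Hom_{A[1/p]}(M[1/p],N[1/p])$ only needs the \emph{target} $N$ to be $p$-torsion free, and $\cE_{2,\cD_{2}^{(n)}}^{+}$ is $p$-torsion free by choice of $\cD_{2}$. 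One then transports back to $\cD$ using that both $\cD$ and $\cD_{2}$ cover the final object, so $\mathrm{StrVect}(\cD[1/p])\simeq\mathrm{StrVect}(\cD_{2}[1/p])$. The commutative triangle relating $\mathrm{ev}_{\cD}$ and $\mathrm{ev}_{\cD_{2}}$ in the paper's proof is exactly what replaces your normalization step.
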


\begin{proof}
Since the latter assertion follows from the former and Remark \ref{log smooth imlies tf}, it is enough to prove the former one. The restriction of the functor in Proposition \ref{functor from isoc to rat conn} gives a functor
\begin{align}
    \mathrm{ev}_{\cD}\colon \mathrm{Isoc}_{\mathrm{lftf}}((X,\cM_{X})_{\mathrm{crys}})\to \mathrm{StrVect}(\cD[1/p]).
\end{align}
We consider the inverse direction. Corollary \ref{eq of crys crys and conn over W} gives a fully faithful functor
\[
\mathrm{StrVect}(\cD)\to \mathrm{Crys}_{\mathrm{tf}}((X,\cM_{X})_{\mathrm{crys}}).
\]
By passing to isogeny categories, we obtain a fully faithful functor
\[
\mathrm{StrVect}(\cD)\otimes_{\bZ_{p}} \bQ_{p}\to \mathrm{Isoc}_{\mathrm{lftf}}((X,\cM_{X})_{\mathrm{crys}}).
\]
Taking the composition with the equivalence in Lemma \ref{rat strvect admit lattice} gives a fully faithful functor
\begin{align}
    \mathrm{StrVect}(\cD[1/p])\to \mathrm{Isoc}_{\mathrm{lftf}}((X,\cM_{X})_{\mathrm{crys}}).
\end{align}

The composition
\[
\mathrm{StrVect}(\cD[1/p])\to \mathrm{Isoc}_{\mathrm{lftf}}((X,\cM_{X})_{\mathrm{crys}})\to \mathrm{StrVect}(\cD[1/p])
\]
is naturally isomorphic to the identity functor by definition. Hence, in order to show that functors (1) and (2) are inverse to each other, it suffices to verify that the functor (1) is fully faithful. Let $\cE_{1}^{+},\cE_{2}^{+}\in \mathrm{Crys}_{\mathrm{tf}}((X,\cM_{X})_{\mathrm{crys}})$ with $\cE_{1}^{+}[1/p]$ and $\cE_{2}^{+}[1/p]$ being locally free. Take a $p$-adic log PD-thickening $(\cD_{2},\cM_{\cD_{2}})\in (X,\cM_{X})_{\mathrm{crys}}$ satisfying conditions in Definition \ref{def of torsion free crystals} for a torsion free crystalline crystal $\cE_{2}^{+}$. We can define a category $\mathrm{StrVect}(\cD_{2}[1/p])$ and a functor $\mathrm{ev}_{\cD_{2}}$ in a similar way to $\mathrm{StrVect}(\cD[1/p])$ and $\mathrm{ev}_{\cD}$, and there is the following commutative diagram:
\[
\begin{tikzcd}
    \mathrm{Isoc}_{\mathrm{lftf}}((X,\cM_{X})_{\mathrm{crys}}) \ar[r,"\mathrm{ev}_{\cD}"] \ar[rd,"\mathrm{ev}_{\cD_{2}}"'] & \mathrm{StrVect}(\cD[1/p]) \ar[d,"\sim" sloped] \\
    & \mathrm{StrVect}(\cD_{2}[1/p]).
\end{tikzcd}
\]
Here, the right vertical equivalence is defined from the fact that both of $(\cD,\cM_{\cD})$ and $(\cD_{2},\cM_{\cD_{2}})$ cover the final object of the topos associated with $(X,\cM_{X})_{\mathrm{crys}}$. The natural map
\[
\mathrm{Hom}_{\mathrm{Isoc}((X,\cM_{X})_{\mathrm{crys}})}(\cE_{1}^{+}[1/p],\cE_{2}^{+}[1/p])\to \mathrm{Hom}_{\mathrm{StrVect}(\cD_{2}[1/p])}(\mathrm{ev}_{\cD_{2}}(\cE_{1}^{+}[1/p]),\mathrm{ev}_{\cD_{2}}(\cE_{2}^{+}[1/p]))
\]
is an isomorphism because of the following general claim: for a ring $A$, a finitely generated $A$-module $M$, and a $p$-torsion free $A$-module $N$, the natural map
\[
\mathrm{Hom}_{A}(M,N)[1/p]\to \mathrm{Hom}_{A[1/p]}(M[1/p],N[1/p])
\]
is an isomorphism. Therefore the natural map
\[
\mathrm{Hom}_{\mathrm{Isoc}((X,\cM_{X})_{\mathrm{crys}})}(\cE_{1}^{+}[1/p],\cE_{2}^{+}[1/p])\to \mathrm{Hom}_{\mathrm{StrVect}(\cD[1/p])}(\mathrm{ev}_{\cD}(\cE_{1}^{+}[1/p]),\mathrm{ev}_{\cD}(\cE_{2}^{+}[1/p]))
\]
is also an isomorphism. This proves the fully faithfulness of the functor (1).
\end{proof}

\begin{rem}
    Under the assumption of Proposition \ref{eq isocrystal and conn}, we have functors
    \[
    \mathrm{Isoc}_{\mathrm{lf}}((X,\cM_{X})_{\mathrm{crys}})\to \mathrm{StrVect}(\cD[1/p])
    \]
    given in Proposition \ref{functor from isoc to rat conn} and 
    \[
    \mathrm{StrVect}(\cD[1/p])\to \mathrm{Isoc}_{\mathrm{lf}}((X,\cM_{X})_{\mathrm{crys}})
    \]
    given in Proposition \ref{eq isocrystal and conn} (or its proof). Although the composition
    \[
    \mathrm{StrVect}(\cD[1/p])\to \mathrm{Isoc}_{\mathrm{lf}}((X,\cM_{X})_{\mathrm{crys}})\to \mathrm{StrVect}(\cD[1/p])
    \]
    is isomorphic to the identity functor, we do not know whether the composition
    \[
    \mathrm{Isoc}_{\mathrm{lf}}((X,\cM_{X})_{\mathrm{crys}})\to \mathrm{StrVect}(D[1/p])\to \mathrm{Isoc}_{\mathrm{lf}}((X,\cM_{X})_{\mathrm{crys}})
    \]
    is isomorphic to the identity functor. In particular, we do not know whether the category $\mathrm{Isoc}_{\mathrm{lf}}((X,\cM_{X})_{\mathrm{crys}})$ is indeed larger than $\mathrm{Isoc}_{\mathrm{lftf}}((X,\cM_{X})_{\mathrm{crys}})$.
\end{rem}

\begin{dfn}
    Let $(X,\cM_{X})$ be an fs log smooth log adic space over $K$. We have the sheaf of differentials $\Omega^{1}_{(X,\cM_{X})/K}$ defined in \cite[Definition 3.3.6]{dllz23b}. Let $\mathrm{MIC}(X,\cM_{X})$ denote the category of vector bundles with integrable connection on $(X,\cM_{X})$.
\end{dfn}

\begin{prop}\label{isoc to mic}
Let $\cM_{\cO_{K}}$ be a log structure on $\mathrm{Spf}(\cO_{K})$ which is either of the trivial log structure or the log structure associated with the prelog structure $\cO_{K}\backslash \{0\}\hookrightarrow \cO_{K}$. Let $(\fX,\cM_{\fX})$ be a log smooth log formal scheme over $(\mathrm{Spf}(\cO_{K}),\cM_{\cO_{K}})$ and $(X_{p=0},\cM_{X_{p=0}})$ be the mod-$p$ fiber of $(\fX,\cM_{\fX})$. Then there exists a natural functor
    \[
    \mathrm{Isoc}_{\mathrm{lftf}}((X_{p=0},\cM_{X_{p=0}})_{\mathrm{crys}})\to \mathrm{MIC}((\fX,\cM_{\fX})_{\eta}) \ \ \ (\cE\mapsto (E\coloneqq \cE_{(\fX,\cM_{\fX})},\nabla_{E})),
    \]
where $\cE_{(\fX,\cM_{\fX})}$ is the vector bundle on $\fX_{\eta}$ obtained as the evaluation of $\cE$ at the $p$-adic log PD-thickening $(X_{0},\fX,\cM_{\fX})$. Moreover, when $\cO_{K}=W$ and $\cM_{W}$ is trivial, this functor is fully faithful.
\end{prop}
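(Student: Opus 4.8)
The plan is to evaluate an isocrystal at the PD-thickening $(X_{p=0},\fX,\cM_{\fX})$, extract a connection from the crystal structure using the machinery of the previous subsection, and then — in the unramified case — to recognize the resulting functor as the inclusion of a full subcategory.

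First I would record that $\cO_{\fX}$ is $p$-torsion free, which is visible étale-locally from a framing since $R^{0}$ is $\bZ_{p}$-flat and strict étale maps are flat. Hence the closed immersion $(X_{p=0},\cM_{X_{p=0}})\hookrightarrow(\fX,\cM_{\fX})$ cut out by $(p)$ is a $p$-adic log PD-thickening for the standard divided powers on $(p)$, compatible with those on $(p)\subset\bZ_{p}$, and it is its own log PD-envelope over $(\Spf(\cO_{K}),\cM_{\cO_{K}})$ with PD-ideal $(p)$. I would then invoke Proposition \ref{functor from isoc to rat conn} with $(\fS,\cM_{\fS})=(\Spf(\cO_{K}),\cM_{\cO_{K}})$, $\cI=(p)$, $(\fY,\cM_{\fY})=(\fX,\cM_{\fX})$ and $\cD=\fX$ (log smoothness being the hypothesis of the proposition, integrality being verified étale-locally via a framing), followed by Lemma \ref{eq of rational HPDstr and conn}, to obtain a natural functor $\mathrm{Isoc}_{\mathrm{lftf}}((X_{p=0},\cM_{X_{p=0}})_{\mathrm{crys}})\to\mathrm{Vect}^{\nabla}(\fX[1/p])$, $\cE\mapsto(\cE_{(\fX,\cM_{\fX})},\nabla_{E})$, where $\cE_{(\fX,\cM_{\fX})}$ is a vector bundle on $\fX_{\eta}$ by the locally free hypothesis and $\nabla_{E}$ is the topologically quasi-nilpotent integrable connection relative to $\Omega^{1}_{(\fX,\cM_{\fX})/(\cO_{K},\cM_{\cO_{K}})}$ coming from the HPD-stratification.

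Next I would pass to the generic fiber. The remaining point is the canonical identification, after inverting $p$ and sheafifying on $\fX_{\eta}$, of $\Omega^{1}_{(\fX,\cM_{\fX})/(\cO_{K},\cM_{\cO_{K}})}$ with the log de Rham differentials $\Omega^{1}_{(\fX,\cM_{\fX})_{\eta}/K}$ of \cite[Definition 3.3.6]{dllz23b}; granting it, $(\cE_{(\fX,\cM_{\fX})},\nabla_{E})$ defines an object of $\mathrm{MIC}((\fX,\cM_{\fX})_{\eta})$ once we forget quasi-nilpotence, and naturality of the assignment is immediate. I expect this identification of differentials to be the only real friction: one must see that changing the base from $(\cO_{K},\cM_{\cO_{K}})$ to $K$ with trivial log structure does not alter the module of log differentials after inverting $p$, because the image of $\cM_{\cO_{K}}$ in $\cO_{\fX_{\eta}}$ consists of units. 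I would carry this out étale-locally through a framing, where both modules are explicitly the free $\cO$-module on $\mathrm{dlog}(x_{i}),\mathrm{dlog}(y_{j}),\mathrm{dlog}(z_{k})$ modulo the single relation $\sum_{k}\mathrm{dlog}(z_{k})=0$ coming from $\prod_{k}z_{k}=\pi$.

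Finally, for full faithfulness when $\cO_{K}=W$ and $\cM_{W}$ is trivial: here $X_{p=0}=(\fX,\cM_{\fX})\otimes_{W}k$ is log smooth over $k$, so every isocrystal on it is $p$-torsion free by Remark \ref{log smooth imlies tf} and $\mathrm{Isoc}_{\mathrm{lftf}}=\mathrm{Isoc}_{\mathrm{lf}}$. Since $\fX$ is a log smooth lift of $X_{p=0}$ over $W$ with $\cD=\fX$, Proposition \ref{eq isocrystal and conn} (with $(\fS,\cM_{\fS})=(\Spf(W),\cM_{W})$, $\cI=(p)$) shows that the functor $\mathrm{Isoc}_{\mathrm{lf}}((X_{p=0},\cM_{X_{p=0}})_{\mathrm{crys}})\to\mathrm{Vect}^{\nabla}(\fX[1/p])$ constructed above is an equivalence. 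But $\mathrm{Vect}^{\nabla}(\fX[1/p])$ is, by Definition \ref{def of rat strvect}, exactly the full subcategory of $\mathrm{MIC}((\fX,\cM_{\fX})_{\eta})$ singled out by the topological quasi-nilpotence condition on objects, with the same connection-compatible $\cO$-linear morphisms; so the functor of the proposition factors as an equivalence followed by the inclusion of a full subcategory, hence is fully faithful.
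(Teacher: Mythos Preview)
Your proof is correct and follows essentially the same approach as the paper: construct the functor via Proposition \ref{functor from isoc to rat conn} with $\cD=\fX$, recognize $\mathrm{Vect}^{\nabla}(\fX[1/p])$ as the full subcategory of $\mathrm{MIC}((\fX,\cM_{\fX})_{\eta})$ cut out by topological quasi-nilpotence, and invoke Proposition \ref{eq isocrystal and conn} for full faithfulness in the unramified case. One small remark: the proposition is stated for arbitrary log smooth $(\fX,\cM_{\fX})$ over $(\Spf(\cO_{K}),\cM_{\cO_{K}})$, not only semi-stable ones, so your appeals to framings when verifying $p$-torsion freeness, integrality of the morphism, and the explicit shape of the log differentials are strictly only justified in the semi-stable case; the paper's own proof is terser still and simply asserts the identification of $\mathrm{MIC}((\fX,\cM_{\fX})_{\eta})$ with modules carrying a connection relative to $\Omega^{1}_{(R,\cM_{R})/(\cO_{K},\cM_{\cO_{K}})}$ without further comment.
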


\begin{proof}
    When $(\fX,\cM_{\fX})=(\mathrm{Spf}(R),\cM_{R})$ is affine, $\mathrm{MIC}((\fX,\cM_{\fX})_{\eta})$ is equivalent to the category of finite projective $R[1/p]$-modules $M$ equipped with an integrable connection
    \[
    M\to M\otimes_{R} \Omega^{1}_{(R,\cM_{R})/(\cO_{K},\cM_{\cO_{K}})}, 
    \]
    which include $\mathrm{Vect}^{\nabla}(R[1/p])$ as a full subcategory. Hence, the assertion follows from Proposition \ref{functor from isoc to rat conn} and Proposition \ref{eq isocrystal and conn}.
\end{proof}

\begin{dfn}[Filtered isocrystals]
    Let $(\fX,\cM_{\fX})$ be a semi-stable log formal scheme over $\cO_{K}$. Consider a pair $(\cE,\mathrm{Fil}^{\bullet}(E))$ consisting of an object $\cE\in \mathrm{Isoc}^{\varphi}_{\mathrm{lftf}}((X_{p=0},\cM_{X_{p=0}})_{\mathrm{crys}})$ and a descending filtration $\mathrm{Fil}^{\bullet}(E)$ on $E$, where $(E,\nabla_{E})$ is the object of $\mathrm{MIC}((\fX,\cM_{\fX})_{\eta})$ obtained via the functor in the Proposition \ref{isoc to mic}. We say that $(\cE,\mathrm{Fil}^{\bullet}(E))$ is a \emph{filtered $F$-isocrystal} on $(\fX,\cM_{\fX})$ if the following conditions are satisfied:
    \begin{itemize}
        \item $\mathrm{Fil}^{\bullet}(E)$ is a separated and exhausted filtration with locally free grading pieces;
        \item the triple $(E,\nabla_{E},\mathrm{Fil}^{\bullet}(E))$ satisfies the Griffiths transversality.
    \end{itemize}
\end{dfn}

\subsection{\texorpdfstring{$F$}--isocrystals and monodromy operators}

Let $k$ be a perfect field of characteristic $p>0$. Let $W\coloneqq W(k)$ be the Witt ring of $k$. Let $(X,\cM_{X})=(\mathrm{Spec}(R),\cM_{R})$ be a fine log scheme over $k$ admitting a strict \'{e}tale map $(X,\cM_{X})\to (\mathrm{Spec}(k[P]),P)^{a}$, where $P$ is an fs monoid such that the order of the torsion part of $P^{\mathrm{gp}}$ is prime to $p$.

The goal of this subsection is to describe $F$-isocrystals on $(X,\cM_{X}\oplus 0^{\bN})^{a}$ as $F$-isocrystals on $(X,\cM_{X})$ equipped with monodromy operators (Proposition \ref{Fisoc on log crys site and Fisoc with monodromy ope}). We follows the argument in \cite[\S 2]{gy24}.

Take a strict closed immersion
    \[
    (X,\cM_{X})\hookrightarrow (\mathrm{Spf}(B),\cM_{B}),
    \]
where $(\mathrm{Spf}(B),\cM_{B})$ is a fine log smooth log formal scheme over $W$ equipped with a Frobenius lift $\varphi_{B}$. Let $(\mathrm{Spf}(D),\cM_{D})$ be its $p$-completed log PD-envelope. Let $D\{u\}$ be the $p$-completed PD-polynomial ring over $D$ with a single variable $u$. Let $\cM_{B\langle u\rangle}\coloneqq (\cM_{B}\oplus u^{\bN})^{a}$ be the log structure on $\mathrm{Spf}(B\langle u\rangle)$ and $\cM_{D\{u\}}$ be the pullback log structure of $\cM_{B\langle u\rangle}$ to $\mathrm{Spf}(D\{u\})$. Then $(\mathrm{Spf}(D\{u\}),\cM_{D\{u\}})$ is the $p$-completed log PD-envelope of the strict closed immersion
\[
(X,\cM_{X}\oplus 0^{\bN})^{a}\hookrightarrow (\mathrm{Spf}(B\langle u\rangle),\cM_{B\langle u\rangle})
\]
given by $u\mapsto 0$. Let $\varphi_{B\langle u\rangle}$ denote the Frobenius lift on $(\mathrm{Spf}(B\langle u\rangle),\cM_{B\langle u\rangle})$ induced from $\varphi_{B}$ and $u\mapsto u^{p}$. Let $\varphi_{D}$ and $\varphi_{D\{u\}}$ be induced Frobenius lifts on respective log formal schemes.

\begin{dfn}
We define the following categories.
\begin{itemize}
    \item Let $\mathrm{Vect}^{\varphi,\nabla}(D[1/p])$ be the category of tuples $(M,\varphi_{M},\nabla_{M})$ consisting of an object $(M,\nabla_{M})$ of $\mathrm{Vect}^{\nabla}(\mathrm{Spf}(D)[1/p])$ and a $D[1/p]$-linear isomorphism $\varphi_{M}\colon \varphi_{D}^{*}M\isom M$ such that $\varphi_{M}$ is horizontal with respect to $\nabla_{M}$.
    \item Let $\mathrm{Vect}^{\varphi,\nabla}(D\{u\}[1/p])$ be the category of tuples $(M,\varphi_{M},\nabla_{M})$ consisting of an object $(M,\nabla_{M})$ of $\mathrm{Vect}^{\nabla}(\mathrm{Spf}(D\{u\})[1/p])$ and a $D\{u\}[1/p]$-linear isomorphism $\varphi_{M}\colon \varphi_{D\{u\}}^{*}M\isom M$ such that $\varphi_{M}$ is horizontal with respect to $\nabla_{M}$.
    \item Let $\mathrm{Vect}^{\varphi,\nabla,N}(D[1/p])$ be the category of tuples $(M,\varphi_{M},\nabla_{M},N_{M})$ consisting of an object $(M,\varphi_{M},\nabla_{M})$ of $\mathrm{Vect}^{\varphi,\nabla}(D[1/p])$ and a $D[1/p]$-linear endomorphism $N_{M}\colon M\to M$ such that the following conditions are satisfied:
\begin{itemize}
    \item $N_{M}$ is horizontal with respect to $\nabla_{M}$;
    \item $N_{M}\varphi_{M}=p\varphi_{M}(N_{\varphi_{D}^{*}M})$, where $N_{\varphi_{D}^{*}M}\colon \varphi_{D}^{*}M\to \varphi_{D}^{*}M$ is the base change of $N_{M}$ along $\varphi_{D}$.
\end{itemize}
\end{itemize}

\end{dfn}

\begin{construction}\label{construction from phinabla to phinablaN}
We  define a functor
\[
\mathrm{Vect}^{\varphi,\nabla}(D\{u\}[1/p])\to \mathrm{Vect}^{\varphi,\nabla,N}(D[1/p]) \ \ \ ((M,\varphi_{M},\nabla_{M})\mapsto (M_{0},\varphi_{M_{0}},\nabla_{M_{0}},N_{M_{0}}))
\]
as follows. Let $J\colon \mathrm{Ker}(D\{u\}[1/p]\stackrel{u\mapsto 0}{\twoheadrightarrow} D[1/p])$. Let $M_{0}\coloneqq M/JM$ and $\varphi_{M_{0}}$ is the induced map from $\varphi_{M}$. Since there is a decomposition
\begin{align*}
\Omega^{1}_{(B\langle u\rangle,\cM_{B\langle u\rangle})/W}&\cong (\Omega^{1}_{(B,\cM_{B})/W}\otimes_{B} B\langle u\rangle)\oplus (\Omega^{1}_{(W\langle u\rangle,u^{\bN})^{a}/W}\otimes_{W\langle u\rangle} B\langle u\rangle) \\
&\cong (\Omega^{1}_{(B,\cM_{B})/W}\otimes_{B} B\langle u\rangle)\oplus 
B\langle u\rangle\cdot \mathrm{dlog}(u),
\end{align*}
the connection $\nabla_{M}$ induces maps
\begin{align*}
    M &\twoheadrightarrow M\otimes_{B} \Omega^{1}_{(B,\cM_{B})/W} \\
    M &\twoheadrightarrow M\otimes_{W\langle u\rangle} \Omega^{1}_{(W\langle u\rangle,u^{\bN})^{a}/W}
\end{align*}
by taking the composition with natural projection maps. These are
denoted by $\nabla_{M}^{\mathrm{hol}}$ and $N_{M}$, respectively. By Leibniz rule, we have $\nabla_{M}(JM)\subset J\cdot M\otimes_{B\langle u\rangle} \Omega^{1}_{(B\langle u\rangle,\cM_{B\langle u\rangle})/W}$. We define $\nabla_{M_{0}}$ and $N_{M_{0}}$ as the mod-$J$ reduction of $\nabla_{M}^{\mathrm{hol}}$ and $N_{M}$, respectively. To check that $(M_{0},\varphi_{M_{0}},\nabla_{M_{0}},N_{M_{0}})$ is indeed an object of $\mathrm{Vect}^{\varphi,\nabla,N}(D[1/p])$, it is enough to show that $N_{M_{0}}\varphi_{M_{0}}=p\varphi_{M_{0}}(N_{\varphi_{D}^{*}M_{0}})$. This is because the fact that $\varphi_{M}$ is horizontal and $\varphi_{D\{u\}}(\mathrm{dlog}(u))=p\cdot \mathrm{dlog}(u)$ implies that $N_{M}\varphi_{M}=p\varphi_{M}(N_{\varphi_{D\{u\}}^{*}M})$.
\end{construction}

\begin{dfn}
Let $\mathrm{Isoc}^{\varphi,N}_{\mathrm{lf}}((X,\cM_{X})_{\mathrm{crys}})$ denote the category of triples $(\cE,\varphi_{\cE},N_{\cE})$ such that $(\cE,\varphi_{\cE})$ is a locally free $F$-isocrystal on $(X,\cM_{X})$ and $N_{\cE}\colon (\cE,\varphi_{\cE})\to (\cE,p\varphi_{\cE})$ is a map of $F$-isocrystals. 
\end{dfn}

\begin{lem}\label{N is nilp}
    For every $(\cE,\varphi_{\cE},N_{\cE})\in \mathrm{Isoc}^{\varphi,N}_{\mathrm{lf}}((X,\cM_{X})_{\mathrm{crys}})$, the endomorphism $N_{\cE}$ is nilpotent.
\end{lem}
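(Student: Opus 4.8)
The plan is to reduce the statement to a pointwise (fiberwise) assertion and then exploit the Frobenius compatibility $N_{\cE}\varphi_{\cE}=p\varphi_{\cE}(N_{\varphi_{D}^{*}\cE})$ together with the slope formalism for $F$-isocrystals. First I would note that, since $(X,\cM_{X})$ is of finite type over the perfect field $k$ and nilpotence can be checked after passing to a dense open (or after Dwork's trick, Lemma \ref{dwork trick}, which allows one to replace $(X,\cM_{X})$ by any thickening or nil-immersion), it suffices to prove that $N_{\cE}$ acts nilpotently on the evaluation $\cE_{A}$ for a single well-chosen $p$-adic log PD-thickening. Concretely, I would work with the log PD-envelope $(\cD,\cM_{\cD})$ attached to the chosen embedding into $(\mathrm{Spf}(B),\cM_{B})$, so that $\cE$ corresponds to an object $(M,\varphi_{M},\nabla_{M},N_{M})$ of $\mathrm{Vect}^{\varphi,\nabla,N}(D[1/p])$, and reduce to showing $N_{M}$ is nilpotent as a $D[1/p]$-linear endomorphism of the finite projective module $M$.

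Next I would pass to a closed point $x$ of $X$; its residue field $k(x)$ is a finite extension of $k$, hence perfect, and the restriction of $(\cE,\varphi_{\cE})$ along the strict point $(\mathrm{Spec}(k(x)),\cM_{X}|_x)\hookrightarrow (X,\cM_{X})$ is an $F$-isocrystal over a point, i.e.\ an isocrystal in the classical sense over $W(k(x))[1/p]$, equipped with an endomorphism $N_x$ satisfying $N_x\varphi = p\,\varphi\, N_x$ (after the appropriate base change of Frobenius). The key step is then the standard slope argument: the Dieudonné–Manin decomposition expresses $\cE_x$ as a sum of isoclinic pieces $\cE_x = \bigoplus_\lambda \cE_x^\lambda$ stable under $\varphi$, and the relation $N_x\varphi=p\varphi N_x$ forces $N_x$ to shift slopes by $-1$, i.e.\ $N_x(\cE_x^\lambda)\subset \cE_x^{\lambda-1}$. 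Since only finitely many slopes occur, iterating $N_x$ enough times (more than the number of distinct slopes) kills everything, so $N_x$ is nilpotent, with nilpotence order bounded independently of $x$ by the rank of $\cE$. This is the heart of the proof; the genuinely delicate point is making sure the slope-shift argument is uniform in $x$ and then promoting pointwise nilpotence of $N_{\cE}$ to nilpotence of the operator on the whole module $M$.

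For that promotion I would argue as follows: $N_{M}^{\,r}$ for $r$ equal to (say) one more than $\operatorname{rk}\cE$ is a $D[1/p]$-linear endomorphism of the finite projective module $M$ which, by the previous paragraph, vanishes after base change to $k(x)$ for every closed point $x$ of $X$ — more precisely, vanishes modulo the maximal ideal of the local ring of $\cD$ at (a point above) $x$ after inverting $p$. Because closed points are dense in the Jacobson scheme $X$ of finite type over $k$, and $M$ is finite projective over the (noetherian, by the running hypotheses on $\cD$) ring $D[1/p]$, a section of $\mathcal{E}nd(M)$ that dies at all closed points of the base must itself be zero; here one uses that $D$ is $\bZ_p$-flat and noetherian so that $D[1/p]$ is a finite-dimensional regular ring whose closed points see all of $\operatorname{Spec}$. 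Hence $N_{M}^{\,r}=0$, which is exactly the nilpotence of $N_{\cE}$.

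An alternative, perhaps cleaner route that avoids the density argument is to observe directly that the relation $N_{\cE}\colon(\cE,\varphi_{\cE})\to(\cE,p\varphi_{\cE})$ means $N_{\cE}$ is a morphism of $F$-isocrystals, hence $N_{\cE}^{\,r}\colon(\cE,\varphi_{\cE})\to(\cE,p^{r}\varphi_{\cE})$ is also a morphism of $F$-isocrystals; but $\Hom$ between $F$-isocrystals with Frobenius twisted by $p^{r}$ for $r$ larger than the width of the Newton polygon must vanish, because any such morphism, evaluated at any point, would have to be simultaneously slope-preserving and slope-shifting-by-$r$, forcing it to be zero, and then vanishing at all points of the connected components of $X$ (again by the pointwise slope argument plus a rigidity/continuity statement for morphisms of $F$-isocrystals, cf.\ the usual fact that a morphism of $F$-isocrystals vanishing at one point of a connected base vanishes everywhere). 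Either way, the main obstacle is the same: transferring the purely linear-algebraic nilpotence over a point to the nilpotence of the global operator $N_{\cE}$, and I expect the write-up to hinge on whichever of these two globalization mechanisms (density of closed points, or rigidity of morphisms of $F$-isocrystals) the authors have already set up in the surrounding text.
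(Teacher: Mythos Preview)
Your core idea—using the relation $N_{\cE}\varphi_{\cE}=p\varphi_{\cE}N$ together with the Dieudonn\'e--Manin slope decomposition to force nilpotence—is the right mechanism, and is essentially the content of \cite[Proposition~2.14]{gy24}. However, the paper does not argue pointwise at closed points of $X$. Instead it restricts to the dense open $U\subset X$ where the log structure $\cM_X$ is trivial; there \cite[Proposition~2.14]{gy24} applies directly (no log subtleties), giving nilpotence of $N_{\cE}|_U$. The globalization is then handled by showing that the restriction functor $\mathrm{Isoc}((X,\cM_X)_{\mathrm{crys}})\to\mathrm{Isoc}(U_{\mathrm{crys}})$ is faithful, via the commutative square comparing isocrystals with $\mathrm{MIC}$ (Proposition~\ref{isoc to mic}): since $U$ is dense in the reduced scheme underlying a log-smooth lift $\fX$, the functor $\mathrm{Vect}(\fX_\eta)\to\mathrm{Vect}(\fU_\eta)$ is faithful, and one chases the diagram. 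The paper's choice of a dense \emph{open} rather than closed points makes the globalization a one-line reducedness argument.

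Your pointwise route has two imprecisions worth flagging. First, the claim that restriction to a strict closed point $(\mathrm{Spec}(k(x)),\cM_X|_x)$ yields ``an isocrystal in the classical sense over $W(k(x))[1/p]$'' is not correct at points where $\cM_X|_x$ is non-trivial: the category of isocrystals on such a log point is strictly larger (indeed, this is what Proposition~\ref{Fisoc on log crys site and Fisoc with monodromy ope} is about). What is true is that evaluating at the specific thickening $W(k(x))$ with lifted log structure yields a finite $W(k(x))[1/p]$-module with a $\sigma$-semilinear Frobenius, and Dieudonn\'e--Manin applies to \emph{that}; you should phrase it this way. Second, your globalization—deducing $N_M^r=0$ in $D[1/p]$ from vanishing at the images of closed points of $X$—is not justified as written: $D[1/p]$ is an affinoid algebra, the maps $D[1/p]\to W(k(x))[1/p]$ pick out only certain unramified classical points, and you have not shown these separate elements of $D[1/p]$ (nor is $D[1/p]$ obviously regular in general). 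The rigidity statement you invoke in your alternative route (``a morphism of $F$-isocrystals vanishing at one point vanishes everywhere'') is itself a nontrivial theorem in the log setting and would need its own proof or citation.
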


\begin{proof}
    Let $U$ be the biggest open subset of $X$ where the log structure $\cM_{X}$ is trivial. Concretely, we can write $U=X\times_{\mathrm{Spec}(k[P])} \mathrm{Spec}(k[P^{\mathrm{gp}}])$. By \cite[Proposition 2.14]{gy24}, $N_{\cE}|_{U}$ is nilpotent. Take a formal log smooth lift $(\fX,\cM_{\fX})$ of $(X,\cM_{X})$ over $W$ and $\fU$ be the open subset of $\fX$ corresponding to $U\subset X$. There is the following commutative diagram:
    \[
    \begin{tikzcd}
        \mathrm{Isoc}((X,\cM_{X})_{\mathrm{crys}}) \ar[r] \ar[d] & \mathrm{MIC}((\fX,\cM_{\fX})_{\eta}) \ar[d] \\
        \mathrm{Isoc}(U_{\mathrm{crys}}) \ar[r] & \mathrm{MIC}(\fU_{\eta}).
    \end{tikzcd}
    \]
    Since $U$ is dense in $X$ and $\fX$ is reduced, the restriction functor 
    \[
    \mathrm{Vect}(\fX_{\eta})\to \mathrm{Vect}(\fU_{\eta})
    \]
    is faithful. In particular, the right vertical functor in the diagram is also faithful. By Proposition \ref{isoc to mic}, both horizontal functors are fully faithful. Therefore, the left vertical functor is faithful, and so $N_{\cE}$ is also nilpotent.
\end{proof}

\begin{construction}\label{construction of Res}
By Proposition \ref{eq isocrystal and conn}, there are natural equivalences
\begin{align*}
    \mathrm{Isoc}^{\varphi,N}_{\mathrm{lf}}((X,\cM_{X})_{\mathrm{crys}})&\simeq \mathrm{Vect}^{\varphi,\nabla,N}(D[1/p]) \\
    \mathrm{Isoc}^{\varphi}_{\mathrm{lftf}}((X,\cM_{X}\oplus 0^{\bN})^{a}_{\mathrm{crys}})&\simeq \mathrm{Vect}^{\varphi,\nabla}(D\{u\}[1/p])
\end{align*}
Combining with the functor constructed in Construction \ref{construction from phinabla to phinablaN}, we obtain a functor
\[
\mathrm{Isoc}^{\varphi}_{\mathrm{lftf}}((X,\cM_{X}\oplus 0^{\bN})^{a}_{\mathrm{crys}})\to \mathrm{Isoc}^{\varphi,N}_{\mathrm{lf}}((X,\cM_{X})_{\mathrm{crys}}),
\]
denoted by $\mathrm{Res}$. The functor $\mathrm{Res}$ is independent of the choice of an embedding $(X,\cM_{X})\hookrightarrow (\mathrm{Spf}(B),\cM_{B})$ because, if we are given two such embeddings, they are dominated by a common embedding (for example, the exactification of their product).
\end{construction}

\begin{rem}
We define a forgetting functor
\[
U\colon \mathrm{Isoc}_{\mathrm{lftf}}((X,\cM_{X}\oplus 0^{\bN})^{a}_{\mathrm{crys}})\to \mathrm{Isoc}_{\mathrm{lf}}((X,\cM_{X})_{\mathrm{crys}})
\]
by $U(\cE)(T,\cM_{T})\coloneqq \cE((T,\cM_{T}\oplus 0^{\bN})^{a})$ for each $\cE\in \mathrm{Isoc}((X,\cM_{X}\oplus 0^{\bN})^{a}_{\mathrm{crys}})$ and $(T,\cM_{T})\in (X,\cM_{X})_{\mathrm{crys}}$. Also, we can consider a similar functor for the category of $F$-isocrystals.

Let $\cE$ be an object of $\mathrm{Isoc}^{\varphi}_{\mathrm{lftf}}((X,\cM_{X}\oplus 0^{\bN})^{a}_{\mathrm{crys}})$. By construction of the functor $\mathrm{Res}$, the $D[1/p]$-module with an Frobenius isomorphism $\mathrm{Res}(\cE)_{D}$ is isomorphic to $\cE_{D\{u\}}\otimes_{D\{u\}} D$, which is isomorphic to $\cE_{(D,\cM_{D}\oplus 0^{\bN})^{a}}$ by the crystal property of $\cE$ for the map of $p$-adic log PD-thickenings $(D\{u\},\cM_{D\{u\}})\to (D,\cM_{D}\oplus 0^{\bN})^{a}$ given by $u\mapsto 0$. Therefore, the following triangle is commutative:
\[
\begin{tikzcd}
\mathrm{Isoc}^{\varphi}_{\mathrm{lf}}((X,\cM_{X}\oplus 0^{\bN})^{a}_{\mathrm{crys}}) \ar[r,"\mathrm{Res}"] \ar[rd,"U"'] & \mathrm{Isoc}^{\varphi,N}_{\mathrm{lf}}((X,\cM_{X})_{\mathrm{crys}}) \ar[d] \\
    & \mathrm{Isoc}^{\varphi}_{\mathrm{lf}}((X,\cM_{X})_{\mathrm{crys}}).
\end{tikzcd}
\]
Here, the right vertical arrow is the functor forgetting $N$.
\end{rem}

\begin{lem}\label{beilinson trick}
Let $A,A_{0}$ be $p$-complete and $p$-torsion free rings equipped with Frobenius lifts $\varphi_{A}$ and $\varphi_{A_{0}}$, respectively. Suppose that we are given a surjection $A\twoheadrightarrow A_{0}$ and its section $A_{0}\to A$ that are compatible with Frobenius lifts. We assume that the following condition holds:
\begin{itemize}
    \item if we set $I\coloneqq \mathrm{Ker}(A\twoheadrightarrow A_{0})$, for an arbitrary integer $n\geq 1$, there exists an integer $m\geq 1$ such that $\varphi_{A}^{m}(J)\subset p^{n}A$.
\end{itemize}
Let $\mathrm{Vect}^{\varphi}(A[1/p])$ (resp. $\mathrm{Vect}^{\varphi}(A_{0}[1/p])$) be the category of finite projective $A[1/p]$-modules (resp. $A_{0}$-modules) $M$ equipped with $A[1/p]$-linear isomorphism $\varphi_{M}\colon \varphi_{A}^{*}M\isom M$ (resp. $A_{0}[1/p]$-linear isomorphism $A_{0}$-linear isomorphism $\varphi_{A_{0}}^{*}M\to M$). Then the base change functor
    \[
    \mathrm{Vect}^{\varphi}(A[1/p])\to \mathrm{Vect}^{\varphi}(A_{0}[1/p])
    \]
    along $A\twoheadrightarrow A_{0}$ is an equivalence, and the inverse of this functor is given by
    \[
    (M,\varphi_{M})\mapsto (M\otimes_{A_{0}} A,\varphi_{M}\otimes \varphi_{A}).
    \]
\end{lem}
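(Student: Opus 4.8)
The plan is to show that base change $q^{*}$ along the quotient $A\twoheadrightarrow A_{0}$ and base change $s^{*}$ along the given section $A_{0}\to A$ are quasi-inverse equivalences. First, since $q\circ s=\mathrm{id}_{A_{0}}$ compatibly with the Frobenius lifts, one has $q^{*}\circ s^{*}\cong\mathrm{id}$ on $\mathrm{Vect}^{\varphi}(A_{0}[1/p])$; in particular $q^{*}$ is essentially surjective and $s^{*}$ is a section of it. Hence it is enough to prove that $q^{*}$ is fully faithful: then $q^{*}$ is an equivalence and $s^{*}\cong(q^{*})^{-1}$, which is exactly the functor $(M,\varphi_{M})\mapsto(M\otimes_{A_{0}}A,\varphi_{M}\otimes\varphi_{A})$ in the statement.

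For full faithfulness I would pass to internal Hom's. Given $(N,\varphi_{N}),(N',\varphi_{N'})\in\mathrm{Vect}^{\varphi}(A[1/p])$, put $H\coloneqq\mathrm{Hom}_{A[1/p]}(N,N')$, a finite projective $A[1/p]$-module carrying the isomorphism $\Phi_{H}\colon\varphi_{A}^{*}H\isom H$, $g\mapsto\varphi_{N'}\circ\varphi_{A}^{*}(g)\circ\varphi_{N}^{-1}$; then $\mathrm{Hom}_{\mathrm{Vect}^{\varphi}(A[1/p])}(N,N')=H^{\Phi_{H}=1}$. Since $N$ is finite projective, base change to $A_{0}[1/p]$ identifies $\mathrm{Hom}_{A_{0}[1/p]}(q^{*}N,q^{*}N')$ with $H\otimes_{A}A_{0}=H/IH$ (where $I=\mathrm{Ker}(A\twoheadrightarrow A_{0})$), compatibly with the Frobenius structures, and the map induced by $q^{*}$ on Hom-sets becomes the natural map $H^{\Phi_{H}=1}\to(H/IH)^{\Phi_{H}=1}$. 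So full faithfulness reduces to showing that for \emph{every} $\varphi$-module $(H,\Phi_{H})$ over $A[1/p]$ this map is bijective.

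The last assertion is the Dwork--Beilinson trick. For surjectivity, lift a $\Phi_{H}$-fixed class $\bar g\in(H/IH)^{\Phi_{H}=1}$ to some $g_{0}\in H$; then $g_{1}-g_{0}\in IH$ for $g_{1}\coloneqq\Phi_{H}(g_{0})$, and one sets $g_{\infty}\coloneqq g_{0}+\sum_{m\ge0}\Phi_{H}^{m}(g_{1}-g_{0})$. Fixing a lattice $H^{+}\subseteq H$ one has $\Phi_{H}^{m}\bigl((\varphi_{A}^{m})^{*}H^{+}\bigr)\subseteq p^{-cm}H^{+}$ for a constant $c=c(H)$, while the hypothesis gives $\varphi_{A}^{m}(I)\subseteq p^{\nu(m)}A$ with $\nu$ nondecreasing (as $\varphi_{A}$ fixes $p$) and $\nu(m)\to\infty$; hence $\Phi_{H}^{m}(IH^{+})\subseteq p^{\nu(m)-cm}H^{+}$, the series converges $p$-adically, and $g_{\infty}$ is a $\Phi_{H}$-fixed lift of $\bar g$. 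For injectivity, if $g\in H^{\Phi_{H}=1}\cap IH$ then $g=\Phi_{H}^{m}(g)\in\Phi_{H}^{m}(IH)$ for all $m$, so $g=0$ by the same estimate.

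I expect the convergence in this last step to be the main obstacle: one needs the $p$-adic decay of $\varphi_{A}^{m}$ along $I$ to outpace the at-most-linear lattice distortion of $\Phi_{H}^{m}$, i.e.\ $\nu(m)-cm\to\infty$. This is comfortable in the situations where the lemma is applied — for the log PD-thickening $(D\{u\},\cM_{D\{u\}})\to(D,\cM_{D})$ one computes $\varphi^{m}(u^{[k]})\in p^{\,k(p^{m}-1)/(p-1)}D\{u\}$, so $\nu(m)$ grows exponentially in $m$ — and I would also dispose of the minor point that rings such as $D\{u\}$ need not be noetherian (so that finitely generated modules, and hence the lattices $H^{+}$, are genuinely $p$-adically complete) by working with derived $p$-completions where needed.
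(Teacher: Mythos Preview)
Your approach via the Dwork--Beilinson trick is the standard one and matches what the paper intends by its citation to \cite[Theorem~2.21]{gy24}. The reduction to bijectivity of $H^{\Phi_H=1}\to(H/IH)^{\Phi_H=1}$ via internal Hom is clean, and your estimate $\Phi_H^m(IH^+)\subset p^{\,\nu(m)-cm}H^+$ (using $\varphi_A$-semilinearity of $\Phi_H$ and $\varphi_A^m(I)\subset p^{\nu(m)}A$) is correct.

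You are also right to flag the convergence issue, and in fact the lemma is \emph{false} under the hypothesis as written. Take $A_0=\bZ_p$, $A=\bZ_p\{u\}$ with the obvious section and quotient, and set $\varphi_A(u)=pu$, so $\varphi_A(u^{[k]})=p^k u^{[k]}$. This \emph{is} a Frobenius lift (modulo $p$ both $pu$ and $u^p=p!\,u^{[p]}$ vanish, and more generally $(u^{[k]})^p\equiv 0$), and the hypothesis holds with $\nu(m)=m$. But for $N=(A[1/p],\varphi_N=p^2)$ and $N'=(A[1/p],\varphi_{N'}=1)$, multiplication by $u^{[2]}$ is a nonzero morphism $N\to N'$ in $\mathrm{Vect}^\varphi(A[1/p])$ (the compatibility $p^2h=\varphi_A(h)$ becomes $p^2=p^2$ on the coefficient of $u^{[2]}$) which dies under $q^*$; so $q^*$ is not faithful. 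The statement therefore needs a stronger hypothesis---e.g.\ $\nu(m)/m\to\infty$---under which your argument goes through verbatim (the lattice $H^+$, being finitely generated over the $p$-complete ring $A$ and $p$-adically separated inside the projective $A[1/p]$-module $H$, is $p$-complete, so the series converges). Since the paper only invokes the lemma for $D\{u\}\twoheadrightarrow D$ with $\varphi(u)=u^p$, where as you note $\nu(m)=(p^m-1)/(p-1)$ grows exponentially, nothing downstream is affected.
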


\begin{proof}
    This follows from the same argument as \cite[Theorem 2.21]{gy24}.
\end{proof}

\begin{lem}\label{equivalence from phinabla to phinablaN}
    The functor $\mathrm{Vect}^{\varphi,\nabla}(D\{u\}[1/p])\to \mathrm{Vect}^{\varphi,\nabla,N}(D[1/p])$ constructed in Construction \ref{construction from phinabla to phinablaN} is an equivalence, and the inverse sends $(M,\varphi_{M},\nabla_{M},N_{M})$ to a $D\{u\}[1/p]$-module $M'\coloneqq M\widehat{\otimes}_{W} W\{u\}$ equipped with additional structures
    \[ (\varphi_{M'},\nabla_{M'})\coloneqq (\varphi_{M}\otimes \varphi_{W\{u\}},\nabla_{M}\otimes \mathrm{id}_{W\{u\}}\oplus (N_{M}\otimes \mathrm{id}_{W\{u\}}+\mathrm{id}_{M}\otimes N_{W\{u\}})\cdot \mathrm{dlog}(u)),
    \]
    where we used the identification
    \[
    M'\otimes_{B\langle u\rangle} \Omega^{1}_{(B\langle u\rangle,\cM_{B\langle u\rangle})/W}\cong (M'\otimes_{B} \Omega^{1}_{(B,\cM_{B})/W}) \oplus M'\cdot \mathrm{dlog}(u).
    \]
\end{lem}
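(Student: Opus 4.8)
The plan is to separate the $\varphi$-module structure from the connection, handling the former by the Beilinson trick and the latter by a Frobenius-rigidity argument. First I would observe that composing the functor of Construction~\ref{construction from phinabla to phinablaN} with the forgetful functor to $\varphi$-modules over $D$ is precisely base change along $D\{u\}\twoheadrightarrow D$, $u\mapsto 0$, so Lemma~\ref{beilinson trick} applies with $A=D\{u\}$, $A_{0}=D$ and its section $D\hookrightarrow D\{u\}$. Its hypothesis holds because $n!\,\varphi_{D\{u\}}(u^{[n]})=\varphi_{D\{u\}}(u)^{n}=u^{pn}=(pn)!\,u^{[pn]}$ together with $v_{p}((pn)!/n!)=n$ gives $\varphi_{D\{u\}}(u^{[n]})\in p^{n}D\{u\}$ for $n\geq1$, hence $\varphi_{D\{u\}}^{m}(J)\subseteq p^{(p^{m}-1)/(p-1)}D\{u\}$, where $J=\mathrm{Ker}(D\{u\}[1/p]\to D[1/p])$. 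Thus base change is an equivalence $\mathrm{Vect}^{\varphi}(D\{u\}[1/p])\isom\mathrm{Vect}^{\varphi}(D[1/p])$ with quasi-inverse base change back, furnishing a natural $\varphi$-module isomorphism $M\isom (M/JM)\widehat{\otimes}_{W}W\{u\}$ for every $\varphi$-module $M$ over $D\{u\}[1/p]$; in particular the underlying $\varphi$-module of any inverse functor must be $M\widehat{\otimes}_{W}W\{u\}$ with the stated Frobenius.

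Next I would check that the candidate inverse $G\colon(M,\varphi_{M},\nabla_{M},N_{M})\mapsto(M',\varphi_{M'},\nabla_{M'})$ of the statement is well-defined. Integrability of $\nabla_{M'}$ is routine from integrability of $\nabla_{M}$, horizontality of $N_{M}$ with respect to $\nabla_{M}$, the identity $d\,\mathrm{dlog}(u)=0$, and the fact that $N_{M}\otimes\mathrm{id}$ commutes with $\mathrm{id}\otimes N_{W\{u\}}$. Frobenius-horizontality splits along the decomposition of $\Omega^{1}_{(B\langle u\rangle,\cM_{B\langle u\rangle})/W}$: on the $\Omega^{1}_{(B,\cM_{B})/W}$-part it is exactly horizontality of $\varphi_{M}$; on the $\mathrm{dlog}(u)$-part it reduces, via $\varphi_{D\{u\}}(\mathrm{dlog}(u))=p\cdot\mathrm{dlog}(u)$, to the relation $N_{M}\varphi_{M}=p\varphi_{M}(N_{\varphi_{D}^{*}M})$ in the data together with the analogous identity $N_{W\{u\}}\varphi_{W\{u\}}=p\varphi_{W\{u\}}N_{W\{u\}}$ on $W\{u\}$ (a direct computation from $\varphi_{W\{u\}}(u^{[m]})=\tfrac{(pm)!}{m!}u^{[pm]}$ and $N_{W\{u\}}(u^{[m]})=m\,u^{[m]}$). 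For topological quasi-nilpotence, the directions coming from $B$ are inherited from $\nabla_{M}$; in the $u$-direction $\prod_{j=0}^{k-1}(N_{M'}-j)$ acts on the summand $M\cdot u^{[m]}$ of $M'$ as $\prod_{i=m-k+1}^{m}(N_{M}+i)$, and choosing a lattice in $M$ stable under the nilpotent operator $N_{M}$ (using Lemma~\ref{N is nilp}) and expanding this product via $N_{M}^{K}=0$ reduces convergence to the estimate that the $p$-adic valuations of the top elementary symmetric functions of a long interval of consecutive integers tend to infinity.

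Finally I would verify that $F$ (the functor of Construction~\ref{construction from phinabla to phinablaN}) and $G$ are mutually quasi-inverse. The composite $F\circ G$ is the identity by inspection: $M'/JM'=M$, $\varphi_{M'}$ restricts to $\varphi_{M}$, the $\Omega^{1}_{(B,\cM_{B})/W}$-part of $\nabla_{M'}$ restricts to $\nabla_{M}$, and the $\mathrm{dlog}(u)$-part restricts to $N_{M}$ since $N_{W\{u\}}$ kills constants. For $G\circ F$, given $(M,\varphi_{M},\nabla_{M})$ with $F(M)=(M_{0},\varphi_{M_{0}},\nabla_{M_{0}},N_{M_{0}})$, the first step gives a natural $\varphi$-module isomorphism $\eta\colon M_{0}\widehat{\otimes}_{W}W\{u\}\isom M$ reducing to $\mathrm{id}_{M_{0}}$ modulo $J$, and it remains to see that $\eta$ is horizontal. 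The difference $\delta$ of the two connections on $M$ (the original $\nabla_{M}$ and the formula connection transported along $\eta$) is $D\{u\}[1/p]$-linear, vanishes modulo $J$ because both connections restrict to $(\nabla_{M_{0}},N_{M_{0}})$, and satisfies $\delta\circ\varphi=(\varphi\otimes d\varphi)\circ\varphi^{*}\delta$ since all three of $\eta,\nabla_{M}$ and the formula connection are Frobenius-horizontal; thus $\delta(M)\subseteq JM\otimes\Omega^{1}$, and iterating the relation with $\varphi_{D\{u\}}^{m}(J)\subseteq p^{(p^{m}-1)/(p-1)}D\{u\}$ from the first step forces $\delta(M)\subseteq\bigcap_{m}\varphi_{D\{u\}}^{m}(J)\bigl(M\otimes\Omega^{1}\bigr)=0$ after choosing lattices. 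Hence $\eta$ is an isomorphism in $\mathrm{Vect}^{\varphi,\nabla}(D\{u\}[1/p])$ and $F$ is an equivalence with inverse $G$.

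I expect the main obstacle to be this last rigidity step — that two Frobenius-horizontal connections on $M$ agreeing modulo $J$ must coincide — since it is precisely what upgrades the Beilinson-trick isomorphism of $\varphi$-modules to an isomorphism of $(\varphi,\nabla)$-modules; the quasi-nilpotence estimate in the $u$-direction is the other point requiring care.
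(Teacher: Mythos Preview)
Your proposal is correct and follows essentially the same route as the paper. The paper checks that the candidate inverse $G$ lands in $\mathrm{Vect}^{\varphi,\nabla}(D\{u\}[1/p])$ exactly as you do (Frobenius-horizontality from $N_{M}\varphi_{M}=p\varphi_{M}N_{\varphi_{D}^{*}M}$ and $N_{W\{u\}}\varphi_{W\{u\}}=p\varphi_{W\{u\}}N_{W\{u\}}$; topological quasi-nilpotence from nilpotence of $N_{M}$ via Lemma~\ref{N is nilp} and the elementary-symmetric-function estimate), and then for the equivalence simply invokes Lemma~\ref{beilinson trick} together with \cite[Corollary~2.23]{gy24}. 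Your last paragraph is precisely an unpacking of that cited argument: Lemma~\ref{beilinson trick} gives the $\varphi$-module isomorphism $\eta$, and the rigidity step (two Frobenius-horizontal connections agreeing modulo $J$ coincide) is what \cite{gy24} supplies. Your version of that step is sound: the exponential growth $\varphi_{D\{u\}}^{m}(J)\subset p^{(p^{m}-1)/(p-1)}D\{u\}$ dominates the linear loss coming from the lattice constants, so $\delta$ is forced into $\bigcap_{N}p^{N}L=0$; alternatively one can phrase it as $\varphi^{m}(J)D\{u\}[1/p]$ consisting of series vanishing to order $\geq p^{m}$ in $u$, with trivial intersection.
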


\begin{proof}
    First, we check that $(M',\varphi_{M'},\nabla_{M'})$ defined in the assertion indeed belongs to $\mathrm{Vect}^{\varphi,\nabla}(D\{u\}[1/p])$. The fact that $\varphi_{M}$ is horizontal and the equations $N_{M}\varphi_{M}=p\varphi_{M}N_{\varphi_{D}^{*}M}$ and $N_{W\{u\}}\varphi_{W\{u\}}=p\varphi_{W\{u\}}N_{W\{u\}}$ imply that $\varphi_{M'}$ is horizontal. To check the topologically quasi-nilpotence of $\nabla_{M'}$, it suffices to prove that, for $x\otimes u^{[n]}\in M'$, the element
    \begin{align*}
        &(\prod_{j=0}^{m}(N_{M}\otimes \mathrm{id}_{W\{u\}}+\mathrm{id}_{M}\otimes N_{W\{u\}}-j))(x\otimes u^{[n]}) \\
        =&(\prod_{j=0}^{m}(N_{M}\otimes \mathrm{id}_{W\{u\}}+(n-j)))(x\otimes u^{[n]})
    \end{align*}
    converges to $0$ when $m\to \infty$. By Lemma \ref{N is nilp}, $N_{M}$ is nilpotent. Take $r\in \bN$ such that $N_{M}^{r+1}=0$. Then the above element is written as a $\bZ$-linear combination of $x\otimes u^{[n]},N_{M}(x)\otimes u^{[n]},\dots,N_{M}^{r}(x)\otimes u^{[n]}$, and their coefficients converges to $0$ in the $p$-adic topology. This proves that $(M',\varphi_{M'},\nabla_{M'})$ belongs $\mathrm{Vect}^{\varphi,\nabla}(D\{u\}[1/p])$. 
    It follows from Lemma \ref{beilinson trick} and the same argument as \cite[Corollary 2.23]{gy24} that the functor $\mathrm{Vect}^{\varphi,\nabla,N}(D[1/p])\to \mathrm{Vect}^{\varphi,\nabla}(D\{u\}[1/p])$ defined in the assertion is indeed the inverse.
\end{proof}

\begin{prop}\label{Fisoc on log crys site and Fisoc with monodromy ope}
    The functor 
    \[
    \mathrm{Res}\colon \mathrm{Isoc}^{\varphi}_{\mathrm{lftf}}((X,\cM_{X}\oplus 0^{\bN})^{a}_{\mathrm{crys}})\to \mathrm{Isoc}^{\varphi,N}_{\mathrm{lf}}((X,\cM_{X})_{\mathrm{crys}})
    \]
    is an equivalence.
\end{prop}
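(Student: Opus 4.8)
The plan is to read the statement off the construction of $\mathrm{Res}$ given in Construction \ref{construction of Res}. There, $\mathrm{Res}$ is defined as the composite
\[
\mathrm{Isoc}^{\varphi}_{\mathrm{lftf}}((X,\cM_{X}\oplus 0^{\bN})^{a}_{\mathrm{crys}})\isom \mathrm{Vect}^{\varphi,\nabla}(D\{u\}[1/p])\to \mathrm{Vect}^{\varphi,\nabla,N}(D[1/p])\isom \mathrm{Isoc}^{\varphi,N}_{\mathrm{lf}}((X,\cM_{X})_{\mathrm{crys}}),
\]
in which the two outer arrows are the equivalences coming from Proposition \ref{eq isocrystal and conn} — applied respectively to the strict closed immersions $(X,\cM_{X}\oplus 0^{\bN})^{a}\hookrightarrow (\Spf(B\langle u\rangle),\cM_{B\langle u\rangle})$ and $(X,\cM_{X})\hookrightarrow (\Spf(B),\cM_{B})$, refined so as to carry the Frobenius structures and, on the right-hand side, the operator $N$ — and the middle arrow is the functor of Construction \ref{construction from phinabla to phinablaN}. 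So the whole proof reduces to showing that the middle arrow is an equivalence, and this is exactly Lemma \ref{equivalence from phinabla to phinablaN}, which moreover exhibits the quasi-inverse explicitly ($(M,\varphi_{M},\nabla_{M},N_{M})\mapsto M\widehat{\otimes}_{W}W\{u\}$ with the connection twisted by $(N_{M}\otimes 1+1\otimes N_{W\{u\}})\,\mathrm{dlog}(u)$). I would therefore simply chain the three equivalences.

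Along the way I would record two routine points. First, that the equivalences of Proposition \ref{eq isocrystal and conn} really do lift to the $\varphi$- (and, on one side, $\varphi N$-) level: the Frobenius lifts $\varphi_{B}$, $\varphi_{B\langle u\rangle}$ induce $\varphi_{D}$, $\varphi_{D\{u\}}$, and for a crystal $\cE$ the crystal property gives $(F^{*}\cE)_{\cD}\cong \varphi_{D}^{*}(\cE_{\cD})$, so a Frobenius isomorphism $\varphi_{\cE}\colon F^{*}\cE\isom \cE$ translates into $\varphi_{M}\colon \varphi_{D}^{*}M\isom M$, and a map $N_{\cE}\colon (\cE,\varphi_{\cE})\to (\cE,p\varphi_{\cE})$ into an endomorphism $N_{M}$ satisfying the relation in the definition of $\mathrm{Vect}^{\varphi,\nabla,N}(D[1/p])$. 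Second, the ``$\mathrm{lf}$'' form of Proposition \ref{eq isocrystal and conn} is available on the $(X,\cM_{X})$ side because $(X,\cM_{X})$ is log smooth over $k$ (it has an \'{e}tale chart by an fs monoid with prime-to-$p$ torsion), whereas on the $(X,\cM_{X}\oplus 0^{\bN})^{a}$ side — which is not log smooth over $k$ — one must use the general ``$\mathrm{lftf}$'' form. Independence of the auxiliary embedding $(X,\cM_{X})\hookrightarrow (\Spf(B),\cM_{B})$ is already dealt with in Construction \ref{construction of Res} (any two are dominated by a common one, e.g.\ the exactification of their fiber product).

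The substantive work is not in this proposition but in Lemma \ref{equivalence from phinabla to phinablaN}, and within it in two places: Beilinson's trick (Lemma \ref{beilinson trick}) for the pair $D\{u\}[1/p]\twoheadrightarrow D[1/p]$ with section $u\mapsto 0$, whose kernel $J$ satisfies $\varphi_{D\{u\}}^{m}(J)\subset p^{n}D\{u\}$ for $m$ large in terms of $n$ since $u\mapsto u^{p}$, making $\mathrm{Vect}^{\varphi}(D\{u\}[1/p])\to \mathrm{Vect}^{\varphi}(D[1/p])$ an equivalence; and the topological quasi-nilpotence of the twisted connection $\nabla_{M'}$, which genuinely requires $N_{M}$ to be \emph{nilpotent}. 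The hard part is thus really Lemma \ref{N is nilp}: one reduces the nilpotence of $N$ to the classical trivial-log-structure case over the open dense locus $\fU\subset \fX$ where the log structure is trivial, and transports it back using the faithfulness of $\mathrm{Vect}(\fX_{\eta})\to \mathrm{Vect}(\fU_{\eta})$ together with the full faithfulness of $\mathrm{Isoc}_{\mathrm{lftf}}((X,\cM_{X})_{\mathrm{crys}})\to \mathrm{MIC}$ from Proposition \ref{isoc to mic}. Once that input is in hand, the present proposition is immediate.
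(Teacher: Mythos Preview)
Your proposal is correct and is exactly the paper's approach: the paper's proof is the single sentence ``This follows from the construction of the functor $\mathrm{Res}$ and Lemma \ref{equivalence from phinabla to phinablaN},'' and you have simply unpacked that sentence, correctly identifying that the two outer equivalences come from Proposition \ref{eq isocrystal and conn} and that the substantive content sits in Lemma \ref{equivalence from phinabla to phinablaN} (and behind it, Lemmas \ref{beilinson trick} and \ref{N is nilp}).
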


\begin{proof}
This follows from the construction of the functor $\mathrm{Res}$ and Lemma \ref{equivalence from phinabla to phinablaN}.
\end{proof}

\begin{cor}\label{N=0 log isoc}
    The functor $\mathrm{Isoc}^{\varphi}_{\mathrm{lf}}((X,\cM_{X})_{\mathrm{crys}})\to \mathrm{Isoc}^{\varphi}_{\mathrm{lftf}}((X,\cM_{X}\oplus 0^{\bN})^{a}_{\mathrm{crys}})$ defined as the pullback along the natural map $(X,\cM_{X}\oplus 0^{\bN})^{a}\to (X,\cM_{X})$ is fully faithful, and the essential image of the functor
    \[ \mathrm{Isoc}^{\varphi}_{\mathrm{lf}}((X,\cM_{X})_{\mathrm{crys}})\to \mathrm{Isoc}^{\varphi}_{\mathrm{lftf}}((X,\cM_{X}\oplus 0^{\bN})^{a}_{\mathrm{crys}})\stackrel{\mathrm{Res}}{\isom} \mathrm{Isoc}^{\varphi,N}_{\mathrm{lf}}((X,\cM_{X})_{\mathrm{crys}})
    \]
    consists of objects with $N_{\cE}=0$.
\end{cor}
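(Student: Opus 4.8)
The plan is to route the statement through the equivalence $\mathrm{Res}$ of Proposition~\ref{Fisoc on log crys site and Fisoc with monodromy ope} and reduce it to a computation inside $\mathrm{Vect}^{\varphi,\nabla}(D\{u\}[1/p])$. Write $\pi\colon (X,\cM_{X}\oplus 0^{\bN})^{a}\to (X,\cM_{X})$ for the natural projection, and let $Z\colon \mathrm{Isoc}^{\varphi}_{\mathrm{lf}}((X,\cM_{X})_{\mathrm{crys}})\to \mathrm{Isoc}^{\varphi,N}_{\mathrm{lf}}((X,\cM_{X})_{\mathrm{crys}})$ be the functor $\cE\mapsto (\cE,\varphi_{\cE},0)$. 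The functor $Z$ is visibly fully faithful, and its essential image is exactly the full subcategory of objects with vanishing monodromy operator. Since $\mathrm{Res}$ is an equivalence, the corollary will follow once one shows that the composite $G\coloneqq \mathrm{Res}\circ\pi^{*}$ is naturally isomorphic to $Z$: then $\pi^{*}\cong \mathrm{Res}^{-1}\circ Z$ is a composite of fully faithful functors, hence fully faithful, and its image under $\mathrm{Res}$ coincides with the essential image of $Z$.

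First I would fix the data of Construction~\ref{construction of Res}: a strict closed immersion $(X,\cM_{X})\hookrightarrow (\mathrm{Spf}(B),\cM_{B})$ into a log smooth (over $W$) log formal scheme with Frobenius lift, and its $p$-completed log PD-envelopes $(\mathrm{Spf}(D),\cM_{D})$ and $(\mathrm{Spf}(D\{u\}),\cM_{D\{u\}})$; recall $\mathrm{Res}$ is independent of this choice. Since $(X,\cM_{X})$ is log smooth over $k$, Proposition~\ref{eq isocrystal and conn} (with the Frobenius structure added, as in Construction~\ref{construction of Res}) gives $\mathrm{Isoc}^{\varphi}_{\mathrm{lf}}((X,\cM_{X})_{\mathrm{crys}})\simeq \mathrm{Vect}^{\varphi,\nabla}(D[1/p])$, under which $\cE$ corresponds to $(M,\varphi_{M},\nabla_{M})$ with $M=\cE_{D}$. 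The key step is to identify $\pi^{*}\cE$ in $\mathrm{Vect}^{\varphi,\nabla}(D\{u\}[1/p])$. One first checks $\pi^{*}\cE\in \mathrm{Isoc}^{\varphi}_{\mathrm{lftf}}$: a $p$-torsion free model $\cE^{+}$ of $\cE$ (which exists by Remark~\ref{log smooth imlies tf}) pulls back to a model of $\pi^{*}\cE$ whose value on each self-product of $(D\{u\},\cM_{D\{u\}})$ is the $p$-completely flat base change of the corresponding value of $\cE^{+}$, hence $p$-torsion free. Next, the strictification over $(X,\cM_{X})$ of $(D\{u\},\cM_{D\{u\}})$ (viewed over $(X,\cM_{X})$ through $\pi$) is $(D\{u\})$ with the pullback of $\cM_{D}$, and it lies over $(D,\cM_{D})$ via the strict morphism induced by $D\hookrightarrow D\{u\}$; so the crystal property of $\cE$ yields $(\pi^{*}\cE)_{D\{u\}}\cong M\widehat{\otimes}_{D}D\{u\}$ with Frobenius $\varphi_{M}\otimes\varphi_{D\{u\}}$. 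Moreover the HPD-stratification of $\pi^{*}\cE$ over the first infinitesimal neighbourhood is the base change of that of $\cE$, which only moves in the $\Omega^{1}_{(B,\cM_{B})/W}$-directions; hence, in the decomposition $\Omega^{1}_{(B\langle u\rangle,\cM_{B\langle u\rangle})/W}\cong (\Omega^{1}_{(B,\cM_{B})/W}\otimes_{B}B\langle u\rangle)\oplus B\langle u\rangle\cdot \mathrm{dlog}(u)$, the connection $\nabla_{\pi^{*}\cE}$ equals $\nabla_{M}\otimes\mathrm{id}_{D\{u\}}$ with trivial $\mathrm{dlog}(u)$-component.

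Feeding this into Construction~\ref{construction from phinabla to phinablaN} with $J=\mathrm{Ker}(D\{u\}[1/p]\xrightarrow{u\mapsto 0}D[1/p])$ gives $M_{0}=M$, $\varphi_{M_{0}}=\varphi_{M}$, $\nabla_{M_{0}}=\nabla_{M}$, and $N_{M_{0}}=0$, because $N_{M}$ is by construction precisely the (vanishing) $\mathrm{dlog}(u)$-component of $\nabla_{\pi^{*}\cE}$. Transporting $(M,\varphi_{M},\nabla_{M},0)$ back through $\mathrm{Vect}^{\varphi,\nabla,N}(D[1/p])\simeq \mathrm{Isoc}^{\varphi,N}_{\mathrm{lf}}((X,\cM_{X})_{\mathrm{crys}})$ recovers $(\cE,\varphi_{\cE},0)=Z(\cE)$; all identifications are functorial in $\cE$, so $G\cong Z$, which completes the proof by the first paragraph.

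I expect the only genuine obstacle to be the identification carried out in the second paragraph, namely verifying that pulling back along $\pi$ ``trivializes the $u$-direction'', so that the monodromy operator of a crystal pulled back from $(X,\cM_{X})$ vanishes. This is a careful but routine unwinding of the definition of pullback of crystals, of the strictification right adjoint of Lemma~\ref{big log crys site to str log crys site} (together with Remark~\ref{cat of crys crys is unchanged}), and of the passage from HPD-stratifications to connections in Construction~\ref{stratification to connection}; it uses no input beyond Section~\ref{section isoc}, and its compatibility with the embedding-independent definition of $\mathrm{Res}$ is already guaranteed by Construction~\ref{construction of Res}.
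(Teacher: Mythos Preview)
Your approach is essentially the paper's: both compute $\pi^{*}$ on the $\mathrm{Vect}^{\varphi,\nabla}$ side and then invoke Proposition~\ref{Fisoc on log crys site and Fisoc with monodromy ope}. The paper packages this as a commutative square whose bottom arrow sends $(M,\varphi_{M},\nabla_{M})$ to $(M\widehat{\otimes}_{W}W\{u\},\varphi_{M}\otimes\varphi_{W\{u\}},\nabla_{M}\otimes\mathrm{id}\oplus(\mathrm{id}_{M}\otimes N_{W\{u\}})\cdot\mathrm{dlog}(u))$ and then recognizes this as the inverse of $\mathrm{Res}$ from Lemma~\ref{equivalence from phinabla to phinablaN} evaluated at $N_{M}=0$; you instead apply $\mathrm{Res}$ directly to the computed $\pi^{*}\cE$.

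There is one genuine slip. You claim that the $\mathrm{dlog}(u)$-component of $\nabla_{\pi^{*}\cE}$ on $M'=M\widehat{\otimes}_{D}D\{u\}$ is trivial. It is not: any connection satisfies the Leibniz rule, so on $m\otimes u^{[k]}$ the $\mathrm{dlog}(u)$-component must be $m\otimes k\,u^{[k]}$; the correct component is $\mathrm{id}_{M}\otimes N_{W\{u\}}$, exactly the paper's formula. Your justification that the stratification of $\pi^{*}\cE$ is base-changed from $D^{(1)}$ is correct, but it only shows that the $\xi_{u}$-coefficient of $\eta(x)$ vanishes for $x\in M\subset M'$; for $x=m\otimes u^{[k]}$ one picks up $\xi_{u}$-terms via $p_{1}(u)=p_{2}(u)(1+\xi_{u})$. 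The conclusion $N_{M_{0}}=0$ survives, since $N_{M_{0}}$ is the mod-$J$ reduction of $\mathrm{id}_{M}\otimes N_{W\{u\}}$ and $N_{W\{u\}}(1)=0$, but the intermediate description of the connection should be corrected.
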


\begin{proof}
    We can directly check that there is a commutative diagram
    \[
    \begin{tikzcd} \mathrm{Isoc}^{\varphi}_{\mathrm{lf}}((X,\cM_{X})_{\mathrm{crys}}) \ar[r] \ar[d,"\sim", sloped] & \mathrm{Isoc}^{\varphi}_{\mathrm{lftf}}((X,\cM_{X}\oplus 0^{\bN})^{a}_{\mathrm{crys}}) \ar[d,"\sim", sloped] \\
        \mathrm{Vect}^{\varphi,\nabla}(D[1/p]) \ar[r] & \mathrm{Vect}^{\varphi,\nabla}(D\{u\}[1/p]),
    \end{tikzcd}
    \]
    where the bottom horizontal functor sends $(M,\varphi_{M},\nabla_{M})$ to 
    \[
    (M\widehat{\otimes}_{W} W\{u\},\varphi_{M}\otimes \varphi_{W\{u\}},\nabla_{M}\otimes \mathrm{id}_{W\{u\}}\oplus (\mathrm{id}_{M}\otimes N_{W\{u\}})\cdot \mathrm{dlog}(u)).
    \] 
    Then the assertion follows from Proposition \ref{Fisoc on log crys site and Fisoc with monodromy ope}.
    \end{proof}

\section{Local systems on log adic spaces}\label{section loc sys}

Let $k$ be a perfect field, $W\coloneqq W(k)$ the Witt ring of $k$, $K$ a totally ramified finite extension of $K_{0}\coloneqq W[1/p]$, and $\cO_{K}$ a valuation ring of $K$. Let $C$ be the completion of an algebraic closure of $K$ and $\cO_{C}$ be the valuation ring of $C$. Fix a compatible system of $p^{n}$-th power roots of unity for $n\geq 1$ denoted by $(\zeta_{p},\zeta_{p^{2}},\dots)$ and let $\epsilon\coloneqq (1,\zeta_{p},\zeta_{p^{2}},\dots)\in \cO_{C}^{\flat}$.

In this section, let $(X,\cM_{X})$ be a locally noetherian fs log adic space over $\mathrm{Spa}(K,\cO_{K})$ (\cite{dllz23b}).

\subsection{Horizontal period sheaves on log adic spaces}

We start with the definition of various period sheaves on log adic spaces.

\begin{dfn}
Let $S$ be a $p$-torsion free integral perfectoid $\cO_{C}$-algebra and $\displaystyle S^{\flat}\coloneqq \varprojlim_{x\mapsto x^{p}} S/p$ be the tilt of $S$.
\begin{itemize}
    \item Set $A_{\mathrm{inf}}(S)\coloneqq W(S^{\flat})$. The natural surjection $A_{\mathrm{inf}}(S)\twoheadrightarrow S^{\flat}\twoheadrightarrow S/p$ uniquely lifts to a surjection $\theta\colon A_{\mathrm{inf}}(S)\twoheadrightarrow S$, and the ideal $\mathrm{Ker}(\theta)$ is generated by a regular element. Let $\varphi_{A_{\mathrm{inf}}(S)}$ be the Witt vector Frobenius map. Let $q\coloneqq [\epsilon]\in A_{\mathrm{inf}}(\cO_{C})$ and $\mu\coloneqq q-1\in A_{\mathrm{inf}}(\cO_{C})$.
    \item  Set $B_{\mathrm{inf}}(S)\coloneqq A_{\mathrm{inf}}(S)[1/p]$. Let $\varphi_{B_{\mathrm{inf}}(S)}$ be the induced ring map from $\varphi_{A_{\mathrm{inf}}(S)}$.
    \item Let $B^{+}_{\mathrm{dR}}(S)$ be the $\mathrm{Ker}(\theta)$-adic completion of $B_{\mathrm{inf}}(S)$ equipped with the $\mathrm{Ker}(\theta)$-adic filtration $\mathrm{Fil}^{\bullet}(B^{+}_{\mathrm{dR}}(S))$. We put
    \[
    t\coloneqq \mathrm{log}(q)\coloneqq \sum_{n\geq 1}(-1)^{n+1}\frac{\mu^{n}}{n} \in B^{+}_{\mathrm{dR}}(\cO_{C}).
    \]
    \item Set 
    \[
    B_{\mathrm{dR}}(S)\coloneqq B^{+}_{\mathrm{dR}}(S)[1/\xi]=B^{+}_{\mathrm{dR}}(S)[1/\mu]=B^{+}_{\mathrm{dR}}(S)[1/t]
    \]
    and $\mathrm{Fil}^{\bullet}(B_{\mathrm{dR}}(S))$ be the filtration defined by $\mathrm{Fil}^{n}(B_{\mathrm{dR}}(S))\coloneqq \xi^{n}B^{+}_{\mathrm{dR}}(S)$ for $n\in \bZ$.
    \item Let $A^{0}_{\mathrm{crys}}(S)\coloneqq A_{\mathrm{inf}}(S)[\{\xi^{[n]}\}_{n\geq 1}]$ be the PD-envelope of the surjection $\theta$ equipped with the filtration $\mathrm{Fil}^{\bullet}(A^{0}_{\mathrm{crys}}(S))$ whose $n$-th filtration is the ideal generated by $\xi^{[i]}$ for $i\geq n$, where $\xi$ is a generator of $\mathrm{Ker}(\theta)\subset A_{\mathrm{inf}}(S)$. The ring map $A^{0}_{\mathrm{crys}}(S)\to A^{0}_{\mathrm{crys}}(S)$ induced from $\varphi_{A_{\mathrm{inf}}(S)}$ is denoted by $\varphi_{A^{0}_{\mathrm{crys}}(S)}$.
    \item Let $A_{\mathrm{crys}}(S)$ denote the $p$-completion of $A^{0}_{\mathrm{crys}}(S)$ with the filtration $\mathrm{Fil}^{\bullet}(A_{\mathrm{crys}}(S))$ whose $n$-th filtration is the closure of $\mathrm{Fil}^{n}(A^{0}_{\mathrm{crys}}(S))$. In the same way as \cite[Proposition 2.23]{tt19}, a natural map $A_{\mathrm{crys}}(S)\to B^{+}_{\mathrm{dR}}(S)$ is defined, and this map is injective by \cite[Proposition 5.36]{ck19} (see also Lemma \ref{genaral injectivity from Acrys to Bdr}). We put
    \[
    t\coloneqq \mathrm{log}(q)\coloneqq \sum_{n\geq 1}(-1)^{n+1}(n-1)!\mu^{[n]} \in A_{\mathrm{crys}}(\cO_{C}).
    \]
    This definition is compatible with the one defined in $\bB^{+}_{\mathrm{dR}}(S)$ via the above embedding. Let $\varphi_{A_{\mathrm{crys}}(S)}$ denote the Frobenius lift on $A_{\mathrm{crys}}(S)$ induced from $\varphi_{A^{0}_{\mathrm{crys}}(S)}$.
    \item Set $B^{+}_{\mathrm{crys}}(S)\coloneqq A_{\mathrm{crys}}(S)[1/p]$ and let $\mathrm{Fil}^{\bullet}(B^{+}_{\mathrm{crys}}(S))$ be the induced filtration. Let $\varphi_{B^{+}_{\mathrm{crys}}(S)}$ the ring map $B^{+}_{\mathrm{crys}}(S)\to B^{+}_{\mathrm{crys}}(S)$ induced from $\varphi_{A_{\mathrm{crys}}(S)}$.
    \item Set
    \[
    B_{\mathrm{crys}}(S)\coloneqq B^{+}_{\mathrm{crys}}(S)[1/t]=B^{+}_{\mathrm{crys}}(S)[1/\mu]=A_{\mathrm{crys}}(S)[1/\mu]
    \]
    let $\mathrm{Fil}^{\bullet}(B_{\mathrm{crys}}(S))$ be the filtration defined by
    \[
    \mathrm{Fil}^{n}(B_{\mathrm{crys}}(S))\coloneqq \sum_{i\in \bZ} t^{-i}\mathrm{Fil}^{i+n}(B^{+}_{\mathrm{crys}}(S))
    \]
    for $n\in \bZ$. Let $\varphi_{B_{\mathrm{crys}}(S)}$ the ring map $B_{\mathrm{crys}}(S)\to B_{\mathrm{crys}}(S)$ induced from $\varphi_{B^{+}_{\mathrm{crys}}(S)}$.
    \item Let $\cM_{A_{\mathrm{inf}}(S)}$ be the log structure on $\mathrm{Spf}(A_{\mathrm{inf}}(S))$ associated with a prelog structure $\cO_{C}^{\flat}\backslash \{0\}\to A_{\mathrm{inf}}(S)$ given by $x\mapsto [x]$ and $\cM_{S}$ be the pullback structure on $\mathrm{Spf}(S)$ of it along $\theta\colon A_{\mathrm{inf}}(S)\twoheadrightarrow S$. We define $(\mathrm{Spf}(\widehat{A}_{\mathrm{st}}(S)),\cM_{\widehat{A}_{\mathrm{st}}(S)})$ as the $p$-completed log PD-envelope of the closed immersion
    \[
    (\mathrm{Spf}(S),\cM_{S})\hookrightarrow (\mathrm{Spf}(A_{\mathrm{inf}}(S)\widehat{\otimes}_{W} W\langle u \rangle),\cM_{A_{\mathrm{inf}}(S)}\oplus u^{\bN})^{a}
    \]
    defined by $\theta$ and $u\mapsto p$.
    Explicitly, we can define an isomorphism 
    \[
    \widehat{A}_{\mathrm{st}}(S)\cong A_{\mathrm{crys}}(S)\{v\}\subset B^{+}_{\mathrm{dR}}(S)[[v]]
    \]
    sending $v$ to $\frac{u}{[p^{\flat}]}-1$
    by choosing $p^{\flat}=(p,p^{1/p},\dots)\in \cO_{C}^{\flat}$, where $A_{\mathrm{crys}}(S)\{v\}$ is the $p$-completed PD-polynomial ring with a single variable $v$ over $A_{\mathrm{crys}}(S)$.

    We define a monodromy operator $N_{\widehat{A}_{\mathrm{st}}(S)}\colon \widehat{A}_{\mathrm{st}}(S)\to \widehat{A}_{\mathrm{st}}(S)$ as follows.
    The $A_{\mathrm{crys}}(S)$-linear map $A_{\mathrm{crys}}(S)\{v\}\to A_{\mathrm{crys}}(S)\{v\}$ defined by $f(v)\mapsto (v+1)f'(v)$ induces the $A_{\mathrm{crys}}(S)$-linear map $\widehat{A}_{\mathrm{st}}(S)\to \widehat{A}_{\mathrm{st}}(S)$, denoted by $N_{\widehat{A}_{\mathrm{st}}(S)}$, via the above isomorphism. We can easily check that $N_{\widehat{A}_{\mathrm{st}}(S)}$ is independent of the choice of $p^{\flat}$. The $A_{\mathrm{crys}}(S)$-subalgebra $A_{\mathrm{crys}}(S)[\mathrm{log}(\frac{u}{[p^{\flat}]})]$ of $\widehat{A}_{\mathrm{st}}(S)(S)$ is the subset of nilpotent elements for $N_{\widehat{A}_{\mathrm{st}}(S)}$ as checked inductively.
    
    The Frobenius lift $\varphi_{A_{\mathrm{inf}}(S)}$ induces a Frobenius lift on $(\mathrm{Spf}(A_{\mathrm{inf}}(S)),\cM_{A_{\mathrm{inf}}(S)})$, and we define a Frobenius lift $\varphi_{W\{u\}}$ on $(\mathrm{Spf}(W\{u\}),u^{\bN})^{a}$ by $u\mapsto u^{p}$. Then $\varphi_{A_{\mathrm{inf}}(S)}$ and $\varphi_{W\{u\}}$ induces a Frobenius lift on $(\mathrm{Spf}(\widehat{A}_{\mathrm{st}}(S)),\cM_{\widehat{A}_{\mathrm{st}}(S)})$, denoted by $\varphi_{\widehat{A}_{\mathrm{st}}(S)}$. The underlying ring map $\widehat{A}_{\mathrm{st}}(S)\to \widehat{A}_{\mathrm{st}}(S)$ is also denoted by $\varphi_{\widehat{A}_{\mathrm{st}}(S)}$.
    \item Set $\widehat{B}^{+}_{\mathrm{st}}(S)\coloneqq \widehat{A}_{\mathrm{st}}(S)[1/p]$. Let $N_{\widehat{B}^{+}_{\mathrm{st}}(S)}\colon \widehat{B}^{+}_{\mathrm{st}}(S)\to \widehat{B}^{+}_{\mathrm{st}}(S)$ be the monodromy operator induced from $N_{\widehat{A}_{\mathrm{st}}(S)}$. Let $\varphi_{\widehat{B}^{+}_{\mathrm{st}}(S)}\colon \widehat{B}^{+}_{\mathrm{st}}(S)\to \widehat{B}^{+}_{\mathrm{st}}(S)$ be the Frobenius structure induced from $\varphi_{\widehat{A}_{\mathrm{st}}(S)}$.
    \item Set $\widehat{B}_{\mathrm{st}}(S)\coloneqq \widehat{B}^{+}_{\mathrm{st}}(S)[1/t]$. Let $N_{\widehat{B}_{\mathrm{st}}(S)}\colon \widehat{B}_{\mathrm{st}}(S)\to \widehat{B}_{\mathrm{st}}(S)$ be the monodromy operator induced from $N_{\widehat{B}^{+}_{\mathrm{st}}(S)}$. Let $\varphi_{\widehat{B}_{\mathrm{st}}(S)}\colon \widehat{B}_{\mathrm{st}}(S)\to \widehat{B}_{\mathrm{st}}(S)$ be the Frobenius structure induced from $\varphi_{\widehat{B}^{+}_{\mathrm{st}}(S)}$.
    \item Let $A_{\mathrm{st}}(S)\coloneqq A_{\mathrm{crys}}(S)[\mathrm{log}(\frac{p}{[p^{\flat}]})]$ be the $A_{\mathrm{crys}}(S)$-subalgebra of $B^{+}_{\mathrm{dR}}(S)$ generated by $\mathrm{log}(\frac{p}{[p^{\flat}]})$. This is independent of the choice of $p^{\flat}$ because $\mathrm{log}(\frac{p}{[\epsilon p^{\flat}]})=\mathrm{log}(\frac{p}{[p^{\flat}]})-\mathrm{log}([\epsilon])$ and $\mathrm{log}([\epsilon])\in A_{\mathrm{crys}}(S)$. By \cite[Corollary 2.28]{shi22}, $\mathrm{log}(\frac{p}{[p^{\flat}]})$ is transcendental over $A_{\mathrm{crys}}(S)$. Let $N_{A_{\mathrm{st}}(S)}\colon A_{\mathrm{st}}(S)\to A_{\mathrm{st}}(S)$ be the monodromy operator defined as the $A_{\mathrm{crys}}(S)$-linear map $A_{\mathrm{st}}(S)\to A_{\mathrm{st}}(S)$ given by $(\mathrm{log}(\frac{p}{[p^{\flat}]}))^{n}\mapsto n(\mathrm{log}(\frac{p}{[p^{\flat}]}))^{n-1}$ for each $n\geq 1$. Let $\varphi_{A_{\mathrm{st}}(S)}\colon A_{\mathrm{st}}(S)\to A_{\mathrm{st}}(S)$ be the ring map induced from $\varphi_{A_{\mathrm{crys}}(S)}$ and $\mathrm{log}(\frac{p}{[p^{\flat}]})\mapsto p\mathrm{log}(\frac{p}{[p^{\flat}]})$. The maps $N_{A_{\mathrm{st}}(S)}$ and $\varphi_{A_{\mathrm{crys}}(S)}$ are independent of a choice of $p^{\flat}$. The map of $A_{\mathrm{crys}}(S)$-algebras $A_{\mathrm{st}}(S)\to \widehat{A}_{\mathrm{st}}(S)$ mapping $\mathrm{log}(\frac{p}{[p^{\flat}]})$ to $\mathrm{log}(\frac{u}{[p^{\flat}]})$ is an injective map which is independent of a choice of $p^{\flat}$. This injection is compatible with Frobenius structures and monodromy operators and identifies $A_{\mathrm{st}}(S)$ with the subset of $\widehat{A}_{\mathrm{st}}(S)$ consisting of all nilpotent elements for $N_{\widehat{A}_{\mathrm{st}}(S)}$.
    \item Set $B^{+}_{\mathrm{st}}(S)\coloneqq A_{\mathrm{st}}[1/p]$. Let $N_{B^{+}_{\mathrm{st}}(S)}\colon B^{+}_{\mathrm{st}}(S)\to B^{+}_{\mathrm{st}}(S)$ be the monodromy operator induced from $N_{A_{\mathrm{st}}(S)}$, and let $\varphi_{B^{+}_{\mathrm{st}}(S)}\colon B^{+}_{\mathrm{st}}(S)\to B^{+}_{\mathrm{st}}(S)$ be the Frobenius structure induced from $\varphi_{A_{\mathrm{st}}(S)}$. 
    \item Set $B_{\mathrm{st}}(S)\coloneqq B^{+}_{\mathrm{st}}(S)[1/t]$. Let $N_{B_{\mathrm{st}}(S)}\colon B_{\mathrm{st}}(S)\to B_{\mathrm{st}}(S)$ be the monodromy operator induced from $N_{B^{+}_{\mathrm{st}}(S)}$, and let $\varphi_{B_{\mathrm{st}}(S)}\colon B_{\mathrm{st}}(S)\to B_{\mathrm{st}}(S)$ be the Frobenius structure induced from $\varphi_{B^{+}_{\mathrm{st}}(S)}$. .
\end{itemize}  
\end{dfn}

Let $(X,\cM_{X})_{\mathrm{prok\et}}$ be the pro-Kummer-\'{e}tale site of $(X,\cM_{X})$ (\cite[Definition 5.1.2]{dllz23b}) and $(\widehat{\cO},\widehat{\cO}^{+})$ be completed structure sheaves (\cite[Definition 5.4.1]{dllz23b}). For a log affinoid perfectoid $U$ with the associated affinoid perfectoid space $\widehat{U}=\mathrm{Spa}(S,S^{+})$ (\cite[Definition 5.3.1 and Remark 5.3.5]{dllz23b}), there exists a natural morphism of sites
\[
(\mathrm{Spa}(S,S^{+}))_{\mathrm{pro\et}}\to {(X,\cM_{X})_{\mathrm{prok\et}}}_{/U}
\]
inducing an equivalence of the associated topoi (\cite[Lemma 5.3.8]{dllz23b}). The log affinoid perfectoid objects form a basis of $(X,\cM_{X})_{\mathrm{prok\et}}$ by \cite[Proposition 5.3.12]{dllz23b}. 

\begin{lem}
Let $(X,\cM_{X})$ be a locally noetherian fs log adic space over $\mathrm{Spa}(K,\cO_{K})$ and $B\in \{A_{\mathrm{inf}},B_{\mathrm{inf}},B_{\mathrm{st}},\widehat{B}_{\mathrm{st}}\}$ (resp. $B\in \{B^{+}_{\mathrm{dR}},B_{\mathrm{dR}},B_{\mathrm{crys}}\}$). Then 
\[
U\mapsto (B(S^{+})) \ \ (\text{resp.} \  (B(S^{+}),\mathrm{Fil}^{\bullet}(B(S^{+}))))
\]
defines a sheaf (resp. filtered sheaf) on $(X,\cM_{X})_{\mathrm{prok\et}}$, where $U$ is a log affinoid perfectoid in $(X,\cM_{X})_{C,\mathrm{prok\et}}$ and $\widehat{U}=\mathrm{Spa}(S,S^{+})$ is the associated affinoid perfectoid space (in the sense of \cite[Definition 5.3.1 and Remark 5.3.5]{dllz23b}). 
\end{lem}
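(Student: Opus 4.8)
The plan is to check the sheaf axiom on the basis of log affinoid perfectoid objects of $(X,\cM_{X})_{C,\mathrm{prok\et}}$ and then to propagate it through the explicit constructions of the period rings, following the strategy used in the non-log case in \cite{sch16}, \cite{tt19}, \cite{shi22}. Since log affinoid perfectoids form a basis of $(X,\cM_{X})_{C,\mathrm{prok\et}}$ by \cite[Proposition 5.3.12]{dllz23b} and each of them is quasi-compact and quasi-separated, it suffices to treat finite coverings of a log affinoid perfectoid $U$ by log affinoid perfectoids: writing $U'\to U$ for the disjoint union of such a covering and $S^{+},S'^{+},S''^{+}$ for the rings of integral elements of $\widehat{U}$, $\widehat{U'}$, and $\widehat{U'}\times_{\widehat{U}}\widehat{U'}$, one must see that $B(S^{+})$ is the equalizer of $B(S'^{+})\rightrightarrows B(S''^{+})$. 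It is cleanest to carry along the stronger inductive assertion that the presheaf in question is already a sheaf on this basis and that its higher pro-Kummer-\'etale cohomology on every log affinoid perfectoid is almost zero in the appropriate almost setting (for $\widehat{\cO}^{+}$- and $\widehat{\cO}^{\flat+}$-modules), hence honestly zero once $p$, $t$, or $\xi$ has been inverted; the almost refinement is needed because the \v{C}ech complex of $\widehat{\cO}^{+}$ is only almost acyclic in positive degrees on this basis.

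The base case is the structure sheaf computation of \cite[\S 5--6]{dllz23b}: $\widehat{\cO}^{+}$ and $\widehat{\cO}^{\flat+}$ are sheaves with $H^{0}(U,\widehat{\cO}^{+})=S^{+}$, $H^{0}(U,\widehat{\cO}^{\flat+})=S^{\flat+}$, and almost-zero higher cohomology on log affinoid perfectoids. From this $\bA_{\mathrm{inf}}=W(\widehat{\cO}^{\flat+})$ inherits the same properties, since each $W_{n}(\widehat{\cO}^{\flat+})$ is an iterated extension of $n$ copies of $\widehat{\cO}^{\flat+}$ along the Verschiebung filtration and $\bA_{\mathrm{inf}}=\varprojlim_{n}W_{n}(\widehat{\cO}^{\flat+})$ with transition maps surjective on sections over each $U$. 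I would then run the induction using three facts valid on the coherent basis of log affinoid perfectoids: (i) a filtered colimit of sheaves is a sheaf, and inverting $p$, $t$ or $\xi$ annihilates almost-zero higher cohomology; (ii) an inverse limit of sheaves is a sheaf, and if the transition maps are surjective on sections over each $U$ then $R^{1}\varprojlim$ vanishes and completion commutes with passing to sections; (iii) a direct sum, and the $p$-completion of a direct sum, of sheaves is again a sheaf on a quasi-compact quasi-separated object. Concretely: $B_{\mathrm{inf}}=\bA_{\mathrm{inf}}[1/p]$; $B^{+}_{\mathrm{dR}}$ is the $\xi$-adic completion of $B_{\mathrm{inf}}$, each quotient $B_{\mathrm{inf}}/\xi^{n}$ being an iterated extension of copies of $\widehat{\cO}=\widehat{\cO}^{+}[1/p]$ since $\xi$ is a non-zero-divisor, and $\mathrm{Fil}^{n}B^{+}_{\mathrm{dR}}=\xi^{n}B^{+}_{\mathrm{dR}}\cong B^{+}_{\mathrm{dR}}$ as presheaves, so $B^{+}_{\mathrm{dR}}$ is a filtered sheaf and $B_{\mathrm{dR}}=B^{+}_{\mathrm{dR}}[1/\xi]$ likewise; $A^{0}_{\mathrm{crys}}$ is the free $\bA_{\mathrm{inf}}$-module on the divided powers $\xi^{[n]}$ and $A_{\mathrm{crys}}$ its $p$-completion, $B^{+}_{\mathrm{crys}}=A_{\mathrm{crys}}[1/p]$, and $B_{\mathrm{crys}}=B^{+}_{\mathrm{crys}}[1/t]$ with $\mathrm{Fil}^{\bullet}$ built from the PD-filtration on $A_{\mathrm{crys}}$; $\widehat{A}_{\mathrm{st}}\cong A_{\mathrm{crys}}\{v\}$ is the $p$-completion of a direct sum of copies of $A_{\mathrm{crys}}$, $\widehat{B}^{+}_{\mathrm{st}}=\widehat{A}_{\mathrm{st}}[1/p]$ and $\widehat{B}_{\mathrm{st}}=\widehat{B}^{+}_{\mathrm{st}}[1/t]$; and $A_{\mathrm{st}}=A_{\mathrm{crys}}[\log(p/[p^{\flat}])]$ is an honest polynomial ring over $A_{\mathrm{crys}}$ (the adjoined element being transcendental by \cite[Corollary 2.28]{shi22}), hence a plain direct sum of copies of $A_{\mathrm{crys}}$, so that $B^{+}_{\mathrm{st}}=A_{\mathrm{st}}[1/p]$ and $B_{\mathrm{st}}=B^{+}_{\mathrm{st}}[1/t]$ are sheaves.

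The main obstacle will be the bookkeeping for the completions and for the filtrations on $B^{+}_{\mathrm{dR}}$, $B_{\mathrm{dR}}$ and $B_{\mathrm{crys}}$. For the completions one must check that over each log affinoid perfectoid the towers $\{B_{\mathrm{inf}}(S^{+})/\xi^{n}\}_{n}$, $\{A^{0}_{\mathrm{crys}}(S^{+})/p^{n}\}_{n}$, $\{A_{\mathrm{crys}}(S^{+})\{v\}/p^{n}\}_{n}$ and their analogues have transition maps surjective on sections (immediate, being quotients of a fixed module), so that the limits compute the correct value and higher cohomology, and that the appropriate almost-vanishing is preserved under the Witt-vector, PD-envelope, and $p$-completion operations — which is where one must be careful about $R^{1}\varprojlim$ of towers of almost-zero modules. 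For the filtrations one must see that each $\mathrm{Fil}^{n}$ is, up to multiplication by a power of $\xi$ or $t$, isomorphic as a presheaf to the ambient sheaf; for $B_{\mathrm{crys}}$ this uses that $\mathrm{Fil}^{\bullet}A_{\mathrm{crys}}$ is the closure of a filtration of $A^{0}_{\mathrm{crys}}$ by direct summands and that $\xi$ is a non-zero-divisor. Granting these points, every assertion of the lemma follows formally from the base case together with the principles (i)--(iii), exactly as in the non-log setting of \cite{tt19}, \cite{shi22}.
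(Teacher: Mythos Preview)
Your proposal is correct and follows essentially the same approach as the paper. The paper's own proof is much terser: it cites \cite[Proposition 2.2.4(1)]{dllz23a} for $A_{\mathrm{inf}},B_{\mathrm{inf}},B^{+}_{\mathrm{dR}},B_{\mathrm{dR}}$, notes that the argument of \cite[Corollary 2.8(2)]{tt19} works for $B_{\mathrm{crys}},\widehat{B}_{\mathrm{st}}$ once one replaces \cite[Lemma 4.10]{sch13} by \cite[Theorem 5.4.3]{dllz23b}, and deduces $B_{\mathrm{st}}$ from $B_{\mathrm{crys}}$ --- but what those references do is exactly the inductive bootstrap through almost-vanishing and the principles (i)--(iii) that you spelled out.
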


\begin{proof}
    For the case that $B=A_{\mathrm{inf}},B_{\mathrm{inf}},B^{+}_{\mathrm{dR}},B_{\mathrm{dR}}$, see \cite[Proposition 2.2.4 (1)]{dllz23a}. 
    
    For the case that $B=B_{\mathrm{crys}},\widehat{B}_{\mathrm{st}}$, the same argument as the proof of \cite[Corollary 2.8 (2)]{tt19} works. In their proof, \cite[Lemma 4.10]{sch13} is used as a key input, which we replace with \cite[Theorem 5.4.3]{dllz23b}. In the case that $B=B_{\mathrm{st}}(S)$, the assertion follows from the result for $B_{\mathrm{crys}}(S)$
\end{proof}

\begin{dfn}
    The (filtered) sheaves defined in the above lemma are denoted by $\bA_{\mathrm{inf}}$, $\bB_{\mathrm{inf}}$, $\bB^{+}_{\mathrm{dR}}$, $\bB_{\mathrm{dR}}$, $\bB_{\mathrm{crys}}$, $\bB_{\mathrm{st}}$,$\widehat{\bB}_{\mathrm{st}}$ respectively. 
\end{dfn}

\subsection{Period sheaves with connections for semi-stable log formal schemes}

Let $(\fX,\cM_{\fX})$ be a semi-stable log formal scheme over $\cO_{K}$.

\begin{dfn}\label{def of big log affinoid perfd}
    Let $U$ be a log affinoid perfectoid in $(\fX,\cM_{\fX})_{\eta,\mathrm{prok\et}}$ with the associated affinoid perfectoid space $\widehat{U}\coloneqq\mathrm{Spa}(S,S^{+})$. We say that $U$ is \emph{big} (with respect to the formal model $(\fX,\cM_{\fX})$) if $U\to (\fX,\cM_{\fX})_{\eta}$ factors as
    \[
    U\to (\mathrm{Spf}(\widetilde{R}_{\infty,\alpha}),\cM_{\widetilde{R}_{\infty,\alpha}})_{\eta}\to (\mathrm{Spf}(R),\cM_{R})_{\eta} \to (\fX,\cM_{\fX})_{\eta},
    \]
    where $(\mathrm{Spf}(R),\cM_{R})$ is a small affine log formal scheme with a fixed framing and the morphism $(\mathrm{Spf}(R),\cM_{R})\to (\fX,\cM_{\fX})$ is strict \'{e}tale. In this situation, we define a log structure on $\cM_{S^{+}}$ as the pullback log structure of $\cM_{\widetilde{R}_{\infty,\alpha}}$ by $\mathrm{Spf}(S^{+})\to \mathrm{Spf}(\widetilde{R}_{\infty,\alpha})$. This is independent of the choice of the above factorization. To check this, take another factorization
    \[
    U\to (\mathrm{Spf}(\widetilde{R'}_{\infty,\alpha'}),\cM_{\widetilde{R'}_{\infty,\alpha'}})_{\eta}\to (\mathrm{Spf}(R'),\cM_{R'})_{\eta} \to (\fX,\cM_{\fX})_{\eta},
    \]
    and set
    \[
    (\fV,\cM_{\fV})\coloneqq (\mathrm{Spf}(\widetilde{R}_{\infty,\alpha}),\cM_{\widetilde{R}_{\infty,\alpha}})\times_{(\fX,\cM_{\fX})}^{\mathrm{sat}} (\mathrm{Spf}(\widetilde{R'}_{\infty,\alpha'}),\cM_{\widetilde{R'}_{\infty,\alpha'}}).
    \]
    We have the induced map
    \[
    U\to (\mathrm{Spf}(\widetilde{R}_{\infty,\alpha}),\cM_{\widetilde{R}_{\infty,\alpha}})_{\eta}\times_{(\fX,\cM_{\fX})_{\eta}}^{\mathrm{sat}} (\mathrm{Spf}(\widetilde{R'}_{\infty,\alpha'}),\cM_{\widetilde{R'}_{\infty,\alpha'}})_{\eta} \cong (\fV,\cM_{\fV})_{\eta},
    \]
    and so the natural map $\mathrm{Spf}(S^{+})\to \fX$ factors through $\fV$. Since both of two projection maps from $(\fV,\cM_{\fV})$ are strict by \cite[Lemma A.2]{ino25}, the log structure $\cM_{S^{+}}$ is independent of the choice of a factorization.
    
    By Lemma \ref{lift of div log str}, $\cM_{S^{+}}$ lifts uniquely to a log structure $\cM_{A_{\mathrm{inf}}(S^{+})}$ on $\mathrm{Spf}(A_{\mathrm{inf}}(S^{+}))$, and $\cM_{A_{\mathrm{inf}}(S^{+})}$ is isomorphic to the associated log structure with the prelog ring $\bQ_{\geq 0}\to A_{\mathrm{inf}}(S^{+})$ defined by
    \[
    se_{i}\mapsto 
    \begin{cases}
        [(y_{i}^{s})^{\flat}] \ \ (1\leq i\leq m) \\
        [(z_{i-m}^{s})^{\flat}] \ \ (m+1\leq i\leq r)
    \end{cases}
    \]
    for $s\in \bQ_{\geq 0}$.
    Let $\cM_{A_{\mathrm{crys}}(S^{+})}$ denote the log structure on $\mathrm{Spf}(A_{\mathrm{crys}}(S^{+}))$ obtained as the pullback of $\cM_{A_{\mathrm{inf}}(S^{+})}$. We define a log PD-thickening $(A_{\mathrm{crys},K}(S^{+})\twoheadrightarrow S^{+}/p,\cM_{A_{\mathrm{crys},K}(S^{+})})$ in $ (X_{0},\cM_{X_{0}})_{\mathrm{crys}}$ as the $p$-adic log PD-envelope of the closed immersion 
    \[
    (\mathrm{Spec}(S^{+}/p),\cM_{S^{+}/p})\hookrightarrow (\mathrm{Spf}(A_{\mathrm{crys}}(S^{+})\otimes_{W} \cO_{K}),(\cM_{A_{\mathrm{crys}}(S^{+})}\oplus \cM_{\cO_{K}})^{a}),
    \]
    where $\cM_{S^{+}/p}$ is the restriction of $\cM_{S^{+}}$. When $(\fX,\cM_{\fX})$ is horizontally semi-stable (resp. vertically semi-stable), we have the explicit description
    \begin{align*}
    A_{\mathrm{crys},K}(S^{+})&=(A_{\mathrm{crys}}(S^{+})\otimes_{W} \cO_{K})[\{\frac{([\pi^{\flat}]-\pi)^{n}}{n!}\}_{n\geq 1}]^{\wedge}_{p} \\
    (\text{resp.}~A_{\mathrm{crys},K}(S^{+}) &=(A_{\mathrm{crys}}(S^{+})\otimes_{W} \cO_{K})[\{\frac{(([\pi^{\flat}]/\pi)-1)^{n}}{n!}\}_{n\geq 1}]^{\wedge}_{p}).
    \end{align*}
    Hence, we can define a natural map
    \[
    A_{\mathrm{crys},K}(S^{+})\to B^{+}_{\mathrm{dR}}(S^{+})
    \]
    in the same way as the map $A_{\mathrm{crys}}(S^{+})\to B^{+}_{\mathrm{dR}}(S^{+})$
\end{dfn}

De Rham period sheaves with connections on log adic spaces are introduced in \cite{dllz23a} and denoted by $\cO\bB^{+}_{\mathrm{dR,log}}$ there. These objects are regarded as the log version of de Rham period sheaves defined in \cite{sch13}. 

We give another construction of $\cO\bB^{+}_{\mathrm{dR,log}}$ which is similar to the definition in \cite{sch16} under the setting that a formal model is fixed. Our construction is used in Proposition \ref{two conn over obdr is eq} afterward.

\begin{construction}\label{construction of obdr}
Let $U$ be a big log affinoid perfectoid in $(\fX,\cM_{\fX})_{\eta,\mathrm{prok\et}}$ with the associated affinoid perfectoid space $\widehat{U}=\mathrm{Spa}(S,S^{+})$. Choose a factorization $U\to (\mathrm{Spf}(R),\cM_{R})_{\eta}\to (\fX,\cM_{\fX})_{\eta}$, where $(\mathrm{Spf}(R),\cM_{R})\to (\fX,\cM_{\fX})$ is strict \'{e}tale. The natural surjection $\theta\colon A_{\mathrm{inf}}(S^{+})\twoheadrightarrow S^{+}$ induces a closed immersion 
\[
(\mathrm{Spf}(S^{+}),\cM_{S^{+}})\hookrightarrow (\mathrm{Spf}(R\widehat{\otimes}_{W} A_{\mathrm{inf}}(S^{+})),\cM_{R}\oplus \cM_{A_{\mathrm{inf}}(S^{+})})^{a}.
\]
Let $(\mathrm{Spf}(\cO\bA_{\mathrm{inf}}(U)),\cM_{\cO\bA_{\mathrm{inf}}(U)})$ be its exactification. The surjection $\cO\bA_{\mathrm{inf}}(U)[1/p]\twoheadrightarrow S$ is also denoted by $\theta$. Let $\cO\bB^{+}_{\mathrm{dR}}(U)$ be the $\mathrm{Ker}(\theta)$-adic completion of $\cO\bA_{\mathrm{inf}}(U)[1/p]$. Then $\cO\bB^{+}_{\mathrm{dR}}(U)$ is independent of the choice of $(\mathrm{Spf}(R),\cM_{R})$. We have an induced map $\theta\colon \cO\bB^{+}_{\mathrm{dR}}(U)\twoheadrightarrow S$ and define a filtration $\mathrm{Fil}^{\bullet}(\cO\bB^{+}_{\mathrm{dR}}(U))$ as the $\mathrm{Ker}(\theta)$-adic filtration. Let $(\cO\bB^{+}_{\mathrm{dR}},\mathrm{Fil}^{\bullet}(\cO\bB^{+}_{\mathrm{dR}}))$ be the filtered sheaf associated with the presheaf given by $U\mapsto (\cO\bB^{+}_{\mathrm{dR}}(U),\mathrm{Fil}^{\bullet}(\cO\bB^{+}_{\mathrm{dR}}(U)))$.

Next, we define a connection on $\cO\bB^{+}_{\mathrm{dR}}$. We define a connection
\[
\nabla_{\cO\bB^{+}_{\mathrm{dR}}}\colon \cO\bB^{+}_{\mathrm{dR}}\to \cO\bB^{+}_{\mathrm{dR}}\otimes_{\cO_{\fX_{\eta}}} \Omega^{1}_{(\fX,\cM_{\fX})_{\eta}/K} 
\]
as a unique $\bB^{+}_{\mathrm{dR}}$-linear continuous connection extending the derivation $\cO_{\fX_{\eta}}\to \Omega^{1}_{(\fX,\cM_{\fX})_{\eta}/K}$ such that $\nabla_{\cO\bB^{+}_{\mathrm{dR}}}(m/m')=(m/m')\otimes \mathrm{dlog}(m)$ for each $m\in \Gamma(\fX,\cM_{\fX})$ and $m'\in \Gamma(\mathrm{Spf}(A_{\mathrm{inf}}(S^{+})),\cM_{A_{\mathrm{inf}}(S^{+})})$ with the common image in $\Gamma(\mathrm{Spf}(S^{+}),\cM_{S^{+}})$. By the Leibniz rule, we have
    \[
    \nabla_{\cO\bB^{+}_{\mathrm{dR}}}(\mathrm{Fil}^{n}(\cO\bB^{+}_{\mathrm{dR}}))\subset \mathrm{Fil}^{n-1}(\cO\bB^{+}_{\mathrm{dR}})\otimes_{\cO_{\fX_{\eta}}} \Omega^{1}_{(\fX,\cM_{\fX})_{\eta}/K}
    \]
for $n\geq 1$.

Suppose that $(\fX,\cM_{\fX})=(\mathrm{Spf}(R),\cM_{R})$ is small affine log formal scheme over $\cO_{K}$ with a fixed framing and that there is a factorization $U\to \widetilde{U}_{\infty,\alpha}\to (\fX,\cM_{\fX})_{\eta}$. Then we have an isomorphism $\cO\bA_{\mathrm{inf}}(U)\cong (R\widehat{\otimes}_{W} A_{\mathrm{inf}}(S^{+}))[\{([y_{j}^{\flat}]/y_{j})^{\pm 1}\},\{([z_{k}^{\flat}]/z_{k})^{\pm 1}\}]^{\wedge}_{(p,\xi)}$. The connection
\[
\nabla_{\cO\bB^{+}_{\mathrm{dR}}(U)}\colon \cO\bB^{+}_{\mathrm{dR}}(U)\to \cO\bB^{+}_{\mathrm{dR}}(U)\otimes_{R[1/p]} \Omega^{1}_{(\fX,\cM_{\fX})_{\eta}/K} 
\]
is characterized as a unique $B^{+}_{\mathrm{dR}}(S^{+})$-linear continuous connection extending the derivation $R[1/p]\to \Omega^{1}_{(\fX,\cM_{\fX})_{\eta}/K}$ such that 
\[
\nabla_{\cO\bB^{+}_{\mathrm{dR}}(U)}(y_{j}/[y_{j}^{\flat}])=(y_{j}/[y_{j}^{\flat}])\otimes \mathrm{dlog}(y_{j}), \ \  \nabla_{\cO\bB^{+}_{\mathrm{dR}}(U)}(z_{k}/[z_{k}^{\flat}])=(z_{k}/[z_{k}^{\flat}])\otimes \mathrm{dlog}(z_{k})
\]
for each $j,k$.

Consider the sheaf $\cO\bB^{+}_{\mathrm{dR}}[1/t]$ equipped with a filtration defined by
\[
\mathrm{Fil}^{n}(\cO\bB^{+}_{\mathrm{dR}}[1/t])\coloneqq \sum_{m\geq 0} t^{n-m}\mathrm{Fil}^{m}(\cO\bB^{+}_{\mathrm{dR}})
\]
for $n\in \bZ$. Let $\cO\bB_{\mathrm{dR}}$ denote the completion of $\cO\bB_{\mathrm{dR}}[1/t]$ with respect to the filtration. We define a filtration on $\cO\bB_{\mathrm{dR}}$ by
\[
\mathrm{Fil}^{n}(\cO\bB_{\mathrm{dR}})\coloneqq \varprojlim_{m\geq 0} \mathrm{Fil}^{n}(\cO\bB^{+}_{\mathrm{dR}}[1/t])/\mathrm{Fil}^{n+m}(\cO\bB^{+}_{\mathrm{dR}}[1/t])
\]
for $n\in \bZ$. The connection $\nabla_{\cO\bB^{+}_{\mathrm{dR}}}$ induces a connection
\[
\nabla_{\cO\bB_{\mathrm{dR}}}\colon \cO\bB_{\mathrm{dR}}\to \cO\bB_{\mathrm{dR}}\otimes_{\cO_{\fX_{\eta}}} \Omega^{1}_{(\fX,\cM_{\fX})_{\eta}/K}
\]
satisfying $\nabla_{\cO\bB_{\mathrm{dR}}}(\mathrm{Fil}^{n}(\cO\bB_{\mathrm{dR}}))\subset \mathrm{Fil}^{n-1}(\cO\bB_{\mathrm{dR}})$ for $n\in \bZ$.
\end{construction}

We shall give a local description of $\cO\bB^{+}_{\mathrm{dR}}$. Suppose that $(\fX,\cM_{\fX})=(\mathrm{Spf}(R),\cM_{R})$ is small affine log formal scheme over $\cO_{K}$ with a fixed framing. Consider a surjection $\theta\colon \bB^{+}_{\mathrm{dR}}[X_{1},\dots,X_{s}]\twoheadrightarrow \cO^{\flat}$ induced from the natural surjection $\theta\colon \bB^{+}_{\mathrm{dR}}\twoheadrightarrow \cO^{\flat}$ and $X_{i}\mapsto 1$. Let $\bB^{+}_{\mathrm{dR}}[[X_{1}-1,\dots,X_{s}-1]]$ be the $\mathrm{Ker}(\theta)$-adic completion of $\bB^{+}_{\mathrm{dR}}[X_{1},\dots,X_{s}]$. We have the induced surjection $\bB^{+}_{\mathrm{dR}}[[X_{1}-1,\dots,X_{s}-1]]\twoheadrightarrow \cO^{\flat}$, which is also denoted by $\theta$. Let $\mathrm{Fil}^{\bullet}(\bB^{+}_{\mathrm{dR}}[[X_{1}-1,\dots,X_{s}-1]])$ be the $\mathrm{Ker}(\theta)$-adic filtration of $\bB^{+}_{\mathrm{dR}}[X_{1},\dots,X_{s}]$.

\begin{lem}\label{local description of our obdr}
Let $(\fX,\cM_{\fX})=(\mathrm{Spf}(R),\cM_{R})$ be a small affine log formal scheme over $\cO_{K}$ with a fixed framing. Then there is an isomorphism of $\bB^{+}_{\mathrm{dR}}$-algebras
    \[ \cO\bB^{+}_{\mathrm{dR}}|_{\widetilde{U}_{\infty,\alpha}}\cong \bB^{+}_{\mathrm{dR}}[[X_{1}-1,\dots,X_{s}-1]]|_{\widetilde{U}_{\infty,\alpha}}
    \]
    that is compatible with filtrations. 
\end{lem}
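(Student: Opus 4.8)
The plan is to reduce everything to the explicit presentation of $\cO\bA_{\mathrm{inf}}(U)$ recorded in Construction~\ref{construction of obdr}, invert $p$, pass to the $\mathrm{Ker}(\theta)$-adic completion, and identify the variable $X_{i}-1$ with $\tau_{i}:=[t_{i}^{\flat}]/t_{i}-1$ for the coordinates $t_{1},\dots,t_{l+r}$ of Notation~\ref{convenient notation}; the only genuine subtlety is that the relation $\prod_{k=1}^{n}z_{k}=\pi$ cuts the naive count $l+r$ of variables down to $s$. Since $\cO\bB^{+}_{\mathrm{dR}}$ and $\bB^{+}_{\mathrm{dR}}[[X_{1}-1,\dots,X_{s}-1]]$ are sheaves on $(\fX,\cM_{\fX})_{\eta,\mathrm{prok\et}}$ and $\widetilde{U}_{\infty,\alpha}$ admits a basis of big log affinoid perfectoids $U$ together with a factorization $U\to \widetilde{U}_{\infty,\alpha}\to (\fX,\cM_{\fX})_{\eta}$, it suffices to construct, functorially in such $U$ (with $\widehat{U}=\mathrm{Spa}(S,S^{+})$), an isomorphism of filtered $B^{+}_{\mathrm{dR}}(S^{+})$-algebras $\cO\bB^{+}_{\mathrm{dR}}(U)\cong B^{+}_{\mathrm{dR}}(S^{+})[[X_{1}-1,\dots,X_{s}-1]]$ and to check it is independent of the factorization.

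By Construction~\ref{construction of obdr} one has
\[
\cO\bA_{\mathrm{inf}}(U)\cong (R\widehat{\otimes}_{W}A_{\mathrm{inf}}(S^{+}))[\{([y_{j}^{\flat}]/y_{j})^{\pm1}\}_{j},\{([z_{k}^{\flat}]/z_{k})^{\pm1}\}_{k}]^{\wedge}_{(p,\xi)},
\]
and $\cO\bB^{+}_{\mathrm{dR}}(U)$ is its $\mathrm{Ker}(\theta)$-adic completion after inverting $p$, equipped with the $\mathrm{Ker}(\theta)$-adic filtration. For $1\leq i\leq l$ the element $\tau_{i}:=[x_{i}^{\flat}]/x_{i}-1$ makes sense because both $x_{i}\in R^{\times}$ and $[x_{i}^{\flat}]\in A_{\mathrm{inf}}(S^{+})^{\times}$, while for $l<i\leq l+r$ the element $\tau_{i}:=[t_{i}^{\flat}]/t_{i}-1$ is one of the adjoined variables above; in all cases $1+\tau_{i}$ is a unit, $\tau_{i}\in\mathrm{Ker}(\theta)$, and $x_{i}=[x_{i}^{\flat}](1+\tau_{i})^{-1}$, $y_{j}=[y_{j}^{\flat}](1+\tau_{l+j})^{-1}$, $z_{k}=[z_{k}^{\flat}](1+\tau_{l+m+k})^{-1}$. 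Moreover $\prod_{k=1}^{n}(1+\tau_{l+m+k})=[\pi^{\flat}]/\pi=:c$ by the definition of $(\pi^{s})^{\flat}$; since $\pi^{e}$ is a unit multiple of $p$, $c$ lies in $B^{+}_{\mathrm{dR}}(S^{+})$, is a unit, and $c-1\in\mathrm{Fil}^{1}B^{+}_{\mathrm{dR}}(S^{+})=\xi B^{+}_{\mathrm{dR}}(S^{+})$.

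After inverting $p$ (which makes $\pi$, hence $c$, invertible) one can therefore eliminate the last $z$-variable via $1+\tau_{l+r}=c\prod_{k=1}^{n-1}(1+\tau_{l+m+k})^{-1}$; when $n=0$ there is no relation. Via the formulas above, $R$ maps into the closed $B^{+}_{\mathrm{dR}}(S^{+})$-subalgebra of $\cO\bB^{+}_{\mathrm{dR}}(U)$ generated by $\tau_{1},\dots,\tau_{s}$ (reducing to $R=R^{0}$ by strict \'{e}tale base change along the framing, which commutes with all the operations in play). Consequently the $B^{+}_{\mathrm{dR}}(S^{+})$-algebra map $B^{+}_{\mathrm{dR}}(S^{+})[X_{1},\dots,X_{s}]\to\cO\bA_{\mathrm{inf}}(U)[1/p]$ sending $X_{i}-1\mapsto\tau_{i}$ (and $\xi\mapsto\xi$) is $\mathrm{Ker}(\theta)$-adically continuous and induces a filtered surjection $B^{+}_{\mathrm{dR}}(S^{+})[[X_{1}-1,\dots,X_{s}-1]]\twoheadrightarrow\cO\bB^{+}_{\mathrm{dR}}(U)$.

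The remaining point, and the technical heart, is injectivity, for which I would check that the induced map on associated graded rings for the $\mathrm{Ker}(\theta)$-adic filtrations is an isomorphism, and then invoke that a filtered morphism of complete separated filtered rings that is an isomorphism on $\mathrm{gr}^{\bullet}$ is itself an isomorphism. On the source, $\mathrm{gr}^{\bullet}B^{+}_{\mathrm{dR}}(S^{+})=S[\bar\xi]$ yields $\mathrm{gr}^{\bullet}=S[\bar\xi,\overline{X_{1}-1},\dots,\overline{X_{s}-1}]$. On the target, after eliminating $\tau_{l+r}$ one sees (in the case $R=R^{0}$, hence in general by \'{e}tale base change) that $\cO\bA_{\mathrm{inf}}(U)[1/p]$ is, up to completion, a Laurent polynomial ring $B_{\mathrm{inf}}(S^{+})[\{(1+\tau_{i})^{\pm1}\}_{1\leq i\leq s}]$, in which $\xi,\tau_{1},\dots,\tau_{s}$ form a regular sequence generating $\mathrm{Ker}(\theta)$ with quotient $S$; hence $\mathrm{gr}^{\bullet}\cO\bB^{+}_{\mathrm{dR}}(U)=S[\bar\xi,\bar\tau_{1},\dots,\bar\tau_{s}]$ and the comparison map is $\overline{X_{i}-1}\mapsto\bar\tau_{i}$, $\bar\xi\mapsto\bar\xi$, an isomorphism. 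Finally, $\cO\bB^{+}_{\mathrm{dR}}(U)$ and the $\tau_{i}$ depend only on $S^{+}$ and the fixed framing, so the construction is independent of the factorization and functorial in $U$; sheafifying gives the asserted isomorphism, compatible with filtrations by construction. The main obstacle is precisely this graded computation: one must verify that exactly one variable is removed by the relation $\prod_{k}z_{k}=\pi$ — i.e. that $\xi,\tau_{1},\dots,\tau_{s}$ genuinely behave like a regular system of parameters — and that inverting $p$, which is needed to turn $\pi$ (hence $c$) into a unit, does not interfere with the $\mathrm{Ker}(\theta)$-adic completion.
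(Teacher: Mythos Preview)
Your approach is valid and takes a genuinely different route from the paper's. Both construct essentially the same map (you send $X_i\mapsto[t_i^\flat]/t_i$, the paper sends $X_i\mapsto t_i/[t_i^\flat]$), but the paper verifies bijectivity by building an explicit inverse: an $\cO_K$-algebra map $R^{0}\to B^{+}_{\mathrm{dR}}(S^{+})[[X_1-1,\dots,X_s-1]]$ with $t_i\mapsto[t_i^\flat]X_i$ for $i\le s$ and $z_n\mapsto\pi\cdot(\prod_{k<n}[z_k^\flat]X_{l+m+k})^{-1}$, lifted to $R$ by \'etaleness; one composition is the identity by inspection, and the remaining surjectivity is checked modulo each $\mathrm{Ker}(\theta)^n$ by comparison with the log-infinitesimal neighborhood of the diagonal of $(\fX,\cM_\fX)$ over $W$. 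Your associated-graded argument for injectivity is more conceptual; on the other hand the paper's explicit inverse is reused almost verbatim in the crystalline analogue (Lemma~\ref{local description of obcrys}), so it is not wasted effort.

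One step in your argument needs sharpening. The identity $\prod_{k}(1+\tau_{l+m+k})=[\pi^\flat]/\pi$ uses the element $\pi$ coming from $R^{0}\hookrightarrow\cO\bB^{+}_{\mathrm{dR}}(U)$, whereas your claim that $c=[\pi^\flat]/\pi$ lies in $B^{+}_{\mathrm{dR}}(S^{+})$ requires $\pi$ to come from the canonical embedding $K\hookrightarrow B^{+}_{\mathrm{dR}}(\cO_C)\hookrightarrow B^{+}_{\mathrm{dR}}(S^{+})$. These are a priori two different elements of $\cO\bB^{+}_{\mathrm{dR}}(U)$; your elimination of $\tau_{l+r}$ needs them to coincide. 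They do---both are roots of the separable Eisenstein polynomial $E$ reducing to $\pi\in S$, and Hensel's lemma in the $\mathrm{Ker}(\theta)$-complete ring $\cO\bB^{+}_{\mathrm{dR}}(U)$ forces agreement---but this should be made explicit; the justification ``$\pi^e$ is a unit multiple of $p$'' does not address it. Equivalently, before completion the relation in $\cO\bA_{\mathrm{inf}}(U)[1/p]$ is $E\bigl([\pi^\flat]\prod_k(1+\tau_{l+m+k})^{-1}\bigr)=0$, not a linear elimination; only after $\mathrm{Ker}(\theta)$-adic completion does Hensel convert it into $\prod_k(1+\tau_{l+m+k})=c$ with $c\in B^{+}_{\mathrm{dR}}(S^{+})$. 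Once this point is secured, your graded computation (the relation contributes one linear form with unit coefficients in $\mathrm{gr}^1$ after inverting $p$) and the passage from $R^0$ to $R$---which, spelled out, is the same henselian \'etale-lift the paper uses---both go through.
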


\begin{proof}
    We follow the argument of \cite{sch16}. Define a map of $\bB^{+}_{\mathrm{dR}}|_{\widetilde{U}_{\infty,\alpha}}$-algebras
    \[
    \bB^{+}_{\mathrm{dR}}[[X_{1}-1,\dots,X_{s}-1]]|_{\widetilde{U}_{\infty,\alpha}}\to \cO\bB^{+}_{\mathrm{dR}}|_{\widetilde{U}_{\infty,\alpha}}
    \]
    sending $X_{i}$ to $t_{i}/[t_{i}^{\flat}]$ for each $i$. We shall construct the inverse map. Let $U$ be a log affinoid perfectoid over $\widetilde{U}_{\infty,\alpha}$ with the associated affinoid perfectoid space $\widehat{U}=\mathrm{Spa}(S,S^{+})$. Consider a map of $\cO_{K}$-algebras $R^{0}\to B^{+}_{\mathrm{dR}}(S^{+})[[X_{1}-1,\dots,X_{s}-1]]$ given by
    \[
    x_{i}\mapsto [x_{i}^{\flat}]X_{i} \ (1\leq i\leq l), \ \ \ y_{j}\mapsto [y_{j}^{\flat}]X_{l+j} \ (1\leq j\leq m)
    \]
    \begin{align*}
        z_{k}\mapsto 
    \begin{cases}
        [z_{k}^{\flat}]X_{l+m+k} \ \ &(1\leq k\leq n-1) \\
        \pi\cdot (\prod_{k=1}^{n-1} [z_{k}^{\flat}]X_{l+m+k})^{-1} \ \ &(k=n).
    \end{cases}
    \end{align*}
    Since $R^{0}\to R$ is \'{e}tale, there exists a unique map $R\to B^{+}_{\mathrm{dR}}(S^{+})[[X_{1}-1,\dots,X_{s}-1]]$ commuting the following diagram:
    \[
    \begin{tikzcd}
        R^{0} \ar[d] \ar[r] &  B^{+}_{\mathrm{dR}}(S^{+})[[X_{1}-1,\dots,X_{s}-1]] \ar[d,twoheadrightarrow,"\theta"] \\
        R \ar[r] \ar[ur,dotted] & S.
    \end{tikzcd}
    \]
    This map and the natural map $A_{\mathrm{inf}}(S^{+})\to B^{+}_{\mathrm{dR}}(S^{+})[[X_{1}-1,\dots,X_{s}-1]]$ induce a desired map $\cO\bB^{+}_{\mathrm{dR}}|_{\widetilde{U}_{\infty,\alpha}}\to B^{+}_{\mathrm{dR}}[[X_{1}-1,\dots,X_{s}-1]]|_{\widetilde{U}_{\infty,\alpha}}$.

    It is enough to show that the map constructed above is indeed an inverse map. The composition
    \[
    B^{+}_{\mathrm{dR}}[[X_{1}-1,\dots,X_{s}-1]]|_{\widetilde{U}_{\infty,\alpha}}\to \cO\bB^{+}_{\mathrm{dR}}|_{\widetilde{U}_{\infty,\alpha}}\to B^{+}_{\mathrm{dR}}[[X_{1}-1,\dots,X_{s}-1]]|_{\widetilde{U}_{\infty,\alpha}}
    \]
    is the identity map by definition. Hence, it suffices to show that
    \[
    (B^{+}_{\mathrm{dR}}(S^{+})/\xi^{n})[X_{1},\dots,X_{s}]/I^{n}\to \cO\bB^{+}_{\mathrm{dR}}(U)/\mathrm{Ker}(\theta)^{n}
    \]
    is surjective for $n\geq 1$, where $I$ is the ideal generated by $X_{1}-1,\dots,X_{s}-1$. Let $(\mathrm{Spf}(E_{n}),\cM_{E_{n}})$ denote the $n$-th infinitesimal neighborhood of $(\fX,\cM_{\fX})$ over $\mathrm{Spf}(W)$. Since $(\fX,\cM_{\fX})\to \mathrm{Spf}(W)$ is log smooth after passing to generic fibers, the ring map
    \[
    f\colon R[X_{1},\dots,X_{s}]/I^{n}\to E_{n}
    \]
    defined by $f(X_{i})=(t_{i}\otimes 1)(1\otimes t_{i})^{-1}$ is an isomorphism after inverting $p$. 
    Since $R$ is topologically of finite type over $\cO_{K}$ and the image of the ring map
    \[
    R\to B^{+}_{\mathrm{dR}}(S^{+})[[X_{1}-1,\dots,X_{s}-1]]\twoheadrightarrow S
    \]
    is contained in $S^{+}$, the image of the ring map 
    \[
    R\to B^{+}_{\mathrm{dR}}(S^{+})[[X_{1}-1,\dots,X_{s}-1]]\twoheadrightarrow B^{+}_{\mathrm{dR}}(S^{+})/\xi^{n}
    \]
    is contained in a $p$-complete ring $A_{k}\coloneqq (A_{\mathrm{inf}}(S^{+})/\xi^{n})[\xi/p^{k}](\subset B^{+}_{\mathrm{dR}}(S^{+})/\xi^{n})$ for some $k\geq 1$.
    Taking the $p$-complete base change of $f$ along $R\to A_{k}$, we get a map
    \[
    A_{k}[X_{1},\dots,X_{s}]/I^{n}\to E_{n}\widehat{\otimes}_{R} A_{k}
    \]
    which is an isomorphism up to bounded $p$-power torsions. By composing a natural surjection $E_{n}\widehat{\otimes}_{R} A_{k}\twoheadrightarrow \cO\bA_{\mathrm{inf}}(U)/\mathrm{Ker}(\theta)^{n}$ and inverting $p$, we conclude that the map
    \[
    (B^{+}_{\mathrm{dR}}(S^{+})/\xi^{n})[X_{1},\dots,X_{s}]/I^{n}\to \cO\bB^{+}_{\mathrm{dR}}(U)/\mathrm{Ker}(\theta)^{n}
    \]
    is surjective. 
\end{proof}

In \cite[Definition 2.2.10]{dllz23a}, they introduced de Rham period sheaves $\cO\bB^{+}_{\mathrm{dR,log}}$ on locally noetherian fs log adic space. The following corollary shows that the sheaf $\cO\bB^{+}_{\mathrm{dR,log}}$ on the generic fiber of a semi-stable log formal scheme  coincides with a de Rham period sheaf defined above by using a fixed formal model.

\begin{cor}\label{two obdr}
    For a semi-stable log formal scheme $(\fX,\cM_{\fX})$ over $\cO_{K}$, there exists a natural isomorphism of sheaves on $(\fX,\cM_{\fX})_{\eta,\mathrm{prok\et}}$
    \[
    \cO\bB^{+}_{\mathrm{dR,log}}\cong \cO\bB^{+}_{\mathrm{dR}}
    \]
    that is compatible with filtrations and connections.
\end{cor}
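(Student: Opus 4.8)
\emph{Proof strategy.} The plan is to reduce to big log affinoid perfectoids and then identify both period sheaves with the same explicit power series model. First I would observe that the big log affinoid perfectoids — those $U$ admitting a factorization $U\to (\mathrm{Spf}(\widetilde R_{\infty,\alpha}),\cM_{\widetilde R_{\infty,\alpha}})_{\eta}\to (\mathrm{Spf}(R),\cM_{R})_{\eta}\to (\fX,\cM_{\fX})_{\eta}$ with $(\mathrm{Spf}(R),\cM_{R})$ a small affine with framing and strict \'etale over $(\fX,\cM_{\fX})$ — are cofinal among log affinoid perfectoids lying over such an $R$, and hence form a basis of $(\fX,\cM_{\fX})_{\eta,\mathrm{prok\et}}$: indeed $(\fX,\cM_{\fX})$ is covered \'etale-locally by small affines with framings by semi-stability, and $\widetilde U_{\infty,\alpha}\to (\mathrm{Spf}(R),\cM_{R})_{\eta}$ is a pro-Kummer-\'etale cover whose pullbacks to arbitrary log affinoid perfectoids over $(\mathrm{Spf}(R),\cM_{R})_{\eta}$ are again log affinoid perfectoid. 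Thus it suffices to construct, functorially in big $U$, an isomorphism $\cO\bB^{+}_{\mathrm{dR,log}}(U)\cong \cO\bB^{+}_{\mathrm{dR}}(U)$ compatible with filtrations and connections; naturality over the basis then yields the claimed isomorphism of sheaves.

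Next, I would compare local descriptions. For $U$ lying over $\widetilde U_{\infty,\alpha}$ with $(\fX,\cM_{\fX})=(\mathrm{Spf}(R),\cM_{R})$, Lemma \ref{local description of our obdr} identifies $\cO\bB^{+}_{\mathrm{dR}}|_{\widetilde U_{\infty,\alpha}}$ with $\bB^{+}_{\mathrm{dR}}[[X_{1}-1,\dots,X_{s}-1]]|_{\widetilde U_{\infty,\alpha}}$ via $X_{i}\mapsto t_{i}/[t_{i}^{\flat}]$, compatibly with the $\mathrm{Ker}(\theta)$-adic filtrations. On the other hand, the local description of $\cO\bB^{+}_{\mathrm{dR,log}}$ from \cite[Definition 2.2.10 and its surrounding discussion]{dllz23a} over a log chart gives an analogous identification of $\cO\bB^{+}_{\mathrm{dR,log}}|_{\widetilde U_{\infty,\alpha}}$ with the same power series ring, again sending the chart coordinates $t_{i}$ to $[t_{i}^{\flat}]X_{i}$ (the relation $\prod_{k=1}^{n}z_{k}=\pi$ being respected on both sides because $[\pi^{\flat}]=\prod_{k}[z_{k}^{\flat}]$). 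Composing the two identifications produces an isomorphism of filtered $\bB^{+}_{\mathrm{dR}}$-algebras over $\widetilde U_{\infty,\alpha}$, and hence over every big $U$ by restriction.

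It then remains to check horizontality and to glue. For the connection, both $\nabla_{\cO\bB^{+}_{\mathrm{dR}}}$ and the connection on $\cO\bB^{+}_{\mathrm{dR,log}}$ are, by their very definitions, the unique $\bB^{+}_{\mathrm{dR}}$-linear continuous connections extending $\cO_{\fX_{\eta}}\to \Omega^{1}_{(\fX,\cM_{\fX})_{\eta}/K}$ and satisfying $\nabla(m/m')=(m/m')\otimes\mathrm{dlog}(m)$ for $m\in\Gamma(\fX,\cM_{\fX})$ and $m'$ a lift to $\cM_{A_{\mathrm{inf}}(S^{+})}$; in the local model both act by $X_{i}\mapsto X_{i}\otimes\mathrm{dlog}(t_{i})$. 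Since the isomorphism above matches these generators and intertwines the two coefficient rings $\bB^{+}_{\mathrm{dR}}$ and the two copies of $\Omega^{1}_{(\fX,\cM_{\fX})_{\eta}/K}$, uniqueness forces it to be horizontal; the extension to $\cO\bB_{\mathrm{dR}}$ and the comparison of filtrations there is then formal. Finally, the local isomorphism is independent of the auxiliary small affine chart and framing — for $\cO\bB^{+}_{\mathrm{dR}}$ this is recorded in Construction \ref{construction of obdr}, for $\cO\bB^{+}_{\mathrm{dR,log}}$ it is intrinsic to \cite{dllz23a}, and the identification with the power series model is rigidified by the normalizations $\theta(X_{i})=1$ and $\nabla(X_{i})=X_{i}\otimes\mathrm{dlog}(t_{i})$ — so the local isomorphisms agree on overlaps (via the saturated fiber-product $(\fV,\cM_{\fV})$ argument already used in Construction \ref{construction of obdr}, whose projections are strict) and glue.

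\emph{Main obstacle.} The only genuinely delicate point is aligning the author's model with that of \cite{dllz23a}: one must verify that the exactified ring $\cO\bA_{\mathrm{inf}}(U)$ produced here agrees, after inverting $p$ and $\mathrm{Ker}(\theta)$-adic completion, with the ring underlying $\cO\bB^{+}_{\mathrm{dR,log}}$, and that the two a priori differently-defined connections coincide. Once both constructions are unwound to the explicit local model $\bB^{+}_{\mathrm{dR}}[[X_{1}-1,\dots,X_{s}-1]]$ this is a bookkeeping check, but it requires carefully matching the log-chart conventions (in particular the treatment of the horizontal coordinates $y_{j}$ and of the relation $\prod_{k}z_{k}=\pi$) on the two sides.
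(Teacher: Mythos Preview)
Your proposal is correct and follows essentially the same route as the paper: reduce \'etale locally to a small affine with framing, identify both sheaves over $\widetilde{U}_{\infty,\alpha}$ with the power series model $\bB^{+}_{\mathrm{dR}}[[X_{1}-1,\dots,X_{s}-1]]$ (via Lemma~\ref{local description of our obdr} on one side and the local description in \cite{dllz23a} on the other), and then descend. The paper is terser---it cites \cite[Proposition~2.3.15]{dllz23a} for the local description of $\cO\bB^{+}_{\mathrm{dR,log}}$ rather than the definition, and compresses your horizontality and gluing checks into the single word ``descends''---but the substance is the same.
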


\begin{proof}
    By working \'{e}tale locally on $\fX$, we may assume that $(\fX,\cM_{\fX})$ is a small affine log formal scheme with a fixed framing. Lemma \ref{local description of our obdr} and \cite[Proposition 2.3.15]{dllz23a} give an isomorphism that is compatible with filtrations and connections
    \[ \cO\bB^{+}_{\mathrm{dR,log}}|_{\widetilde{U}_{\infty,\alpha}}\cong \cO\bB^{+}_{\mathrm{dR}}|_{\widetilde{U}_{\infty,\alpha}},
    \]
    which descents to an isomorphism
    \[
    \cO\bB^{+}_{\mathrm{dR,log}}\cong \cO\bB^{+}_{\mathrm{dR}}
    \]
    that is compatible with filtrations and connections. 
\end{proof}

Let $\nu\colon (\fX,\cM_{\fX})_{\eta,\mathrm{prok\et}}\to \fX_{\eta,\mathrm{an}}$ be the natural map of sites. 

\begin{prop}\label{zeroth coh of obdr}
    There is a natural isomorphism
    \[
    \cO_{\fX_{\eta}}\cong \nu_{*}\cO\bB_{\mathrm{dR}}
    \]
    that is compatible with connections.
\end{prop}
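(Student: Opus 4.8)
The plan is to write down the natural map $\cO_{\fX_{\eta}}\to\nu_{*}\cO\bB_{\mathrm{dR}}$, note that it is automatically horizontal, and prove it is an isomorphism by reducing to a computation of $\Delta$-invariants over the cover $\widetilde{U}_{\infty,\alpha}$. For the map: the structure morphism of Construction~\ref{construction of obdr} gives $\nu^{-1}\cO_{\fX_{\eta}}\to\cO\bB^{+}_{\mathrm{dR}}\to\cO\bB_{\mathrm{dR}}$, hence by adjunction an injective map $\alpha\colon\cO_{\fX_{\eta}}\to\nu_{*}\cO\bB_{\mathrm{dR}}$ (injective since $\nu^{-1}\cO_{\fX_{\eta}}\hookrightarrow\cO\bB^{+}_{\mathrm{dR}}\hookrightarrow\cO\bB_{\mathrm{dR}}$). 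Since $\nabla_{\cO\bB^{+}_{\mathrm{dR}}}$ was defined to extend the derivation $\cO_{\fX_{\eta}}\to\Omega^{1}_{(\fX,\cM_{\fX})_{\eta}/K}$, the square relating $\nu^{-1}d$ to $\nabla_{\cO\bB_{\mathrm{dR}}}$ commutes; applying $\nu_{*}$ and the projection formula $\nu_{*}(\cO\bB_{\mathrm{dR}}\otimes_{\cO_{\fX_{\eta}}}\Omega^{1}_{(\fX,\cM_{\fX})_{\eta}/K})\cong(\nu_{*}\cO\bB_{\mathrm{dR}})\otimes_{\cO_{\fX_{\eta}}}\Omega^{1}_{(\fX,\cM_{\fX})_{\eta}/K}$ (valid as $\Omega^{1}$ is finite locally free) shows $\alpha$ intertwines $d$ with $\nu_{*}\nabla_{\cO\bB_{\mathrm{dR}}}$. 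So it remains to prove $\alpha$ is surjective.

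Both sides are sheaves on $\fX_{\eta,\mathrm{an}}$ whose formation commutes with strict \'{e}tale localization on $(\fX,\cM_{\fX})$ (for $\nu_{*}\cO\bB_{\mathrm{dR}}$ because the pro-Kummer-\'{e}tale site localizes), so one may assume $(\fX,\cM_{\fX})=(\mathrm{Spf}(R),\cM_{R})$ is small affine with a fixed framing; its generic fiber being affinoid, it suffices to show $R[1/p]\to\Gamma(\widetilde{U}_{\infty,\alpha},\cO\bB_{\mathrm{dR}})^{\Delta}$ is bijective, where $\Delta$ is the profinite Galois group of the pro-Kummer-\'{e}tale cover $\widetilde{U}_{\infty,\alpha}\to(\mathrm{Spf}(R),\cM_{R})_{\eta}$ of Section~\ref{section setup} (this cover computes $\nu_{*}$ by \cite[Lemma 5.3.8, Proposition 5.3.12]{dllz23b}). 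Writing $\mathrm{Spa}(S,S^{+})$ for the associated affinoid perfectoid, Lemma~\ref{local description of our obdr} together with the invertible substitution $Y_{i}:=\log(t_{i}/[t_{i}^{\flat}])\in\Fil^{1}$ identifies, after unwinding the filtration-completion of Construction~\ref{construction of obdr},
\[
\Gamma(\widetilde{U}_{\infty,\alpha},\cO\bB_{\mathrm{dR}})\;\cong\;B^{+}_{\mathrm{dR}}(S^{+})\langle Y_{1}/t,\dots,Y_{s}/t\rangle[1/t],
\]
on which $\Delta$ acts $B_{\mathrm{dR}}(S^{+})$-semilinearly by $\delta(Y_{i})=Y_{i}-c_{i}(\delta)\,t$ for the $\bZ_{p}$-valued Kummer cocycles $c_{i}$ of the coordinates $t_{i}$ along the $p$-power-root tower, with $\nabla_{\cO\bB_{\mathrm{dR}}}(Y_{i})=\mathrm{dlog}(t_{i})$; under $\alpha$ the coordinate $t_{i}$ maps to $[t_{i}^{\flat}]\exp(Y_{i})$.

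Now I would run a filtration argument. Equip $\Gamma(\widetilde{U}_{\infty,\alpha},\cO\bB_{\mathrm{dR}})$ with its (separated, exhausted) $\Fil$-filtration; the graded pieces are $\mathrm{gr}^{n}\cong S\langle\overline{Y_{\bullet}/t}\rangle(n)$, with $\Delta$ acting through its standard action on $S$, the $\chi^{-1}$-twisted unipotent translation on the $\overline{Y_{i}/t}$, and the Tate twist $(n)$. The key point is the identity $(S\langle\overline{Y_{\bullet}/t}\rangle(n))^{\Delta}=R[1/p]$ for $n=0$ and $=0$ for $n\neq 0$. Granting it: any nonzero $\Delta$-invariant $f$ has $\Fil$-order $0$ with $\mathrm{gr}^{0}(f)$ a constant in $R[1/p]$ (the case $n=0$); since $\alpha$ lands in $\Fil^{0}$ with $\mathrm{gr}^{0}(\alpha(h))=h$, the element $f-\alpha(\mathrm{gr}^{0}(f))$ is a $\Delta$-invariant of $\Fil$-order $\ge1$, hence $0$; thus $f\in\alpha(R[1/p])$, so $\alpha$ is bijective on sections, these glue, and the connection compatibility was recorded above. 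To prove the key identity one filters $S\langle\overline{Y_{\bullet}/t}\rangle(n)$ by total degree in the $\overline{Y_{i}/t}$: the top homogeneous part has coefficients in $(S(m))^{\Delta}$ with $m=n-\deg$, which vanishes for $m\neq 0$ and equals $S^{\Delta}=\Gamma((\mathrm{Spf}(R),\cM_{R})_{\eta},\cO)=R[1/p]$ for $m=0$ --- here one uses $\nu_{*}\widehat{\cO}_{(\fX,\cM_{\fX})_{\eta}}=\cO_{\fX_{\eta}}$ (\cite{dllz23b}) and the Tate--Sen vanishing $(S(m))^{\Delta}=0$ for $m\neq 0$; this forces $\deg=n$, and then iterating a topological generator of a geometric $\bZ_{p}$-direction (whose Kummer cocycle is nonzero) in a Sen-style telescoping kills all positive-degree parts, leaving $n=0$ and a scalar.

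The hard part is exactly this last computation: one must show that adjoining the variables $Y_{i}$ to $\bB_{\mathrm{dR}}$ enlarges the $\Delta$-invariants \emph{precisely} from the constants to $R[1/p]$ and no further, i.e.\ control the interaction of the unipotent translation on the $Y_{i}$ with the honest (arithmetic-plus-geometric) Galois action on the perfectoid period ring --- which is where the explicit form of the Kummer cocycles and the vanishing of higher-weight Tate twists enter. An alternative route avoiding it is to extend the identification $\cO\bB^{+}_{\mathrm{dR}}\cong\cO\bB^{+}_{\mathrm{dR,log}}$ of Corollary~\ref{two obdr} to the non-plus sheaves and to invoke $\nu_{*}\cO\bB_{\mathrm{dR,log}}=\cO_{\fX_{\eta}}$ from \cite{dllz23a}.
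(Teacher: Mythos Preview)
Your direct computational approach is sound in outline, but the paper's proof is precisely the ``alternative route'' you mention in your final sentence. The paper simply invokes Corollary~\ref{two obdr} to identify $\cO\bB_{\mathrm{dR}}$ with $\cO\bB_{\mathrm{dR,log}}$ and then quotes the log $p$-adic Riemann--Hilbert functor: $\nu_{*}\cO\bB_{\mathrm{dR}}\cong\nu_{*}\cO\bB_{\mathrm{dR,log}}=D_{\mathrm{dR,log}}(\widehat{\bQ}_{p})\cong\cO_{\fX_{\eta}}$ by \cite[Theorem~3.2.7(1)]{dllz23a}. That is the entire argument.

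Your main route, by contrast, unpacks the $\Delta$-invariants explicitly via the local description and a Sen-style filtration argument. This is essentially how \cite{dllz23a} proves its Theorem~3.2.7 for the trivial local system, so you are re-deriving that result rather than citing it. Two remarks if you pursue this route: first, the graded pieces $\mathrm{gr}^{n}\cO\bB_{\mathrm{dR}}$ are \emph{polynomial} rings $\widehat{\cO}(n)[W_{1},\dots,W_{s}]$ over $\widehat{\cO}$ (this is $\cO\mathbb{C}_{\log}$ in \cite{dllz23a}), not completed power series rings, so the notation $S\langle\overline{Y_{\bullet}/t}\rangle$ is slightly off; second, the inputs you need---$\nu_{*}\widehat{\cO}=\cO_{\fX_{\eta}}$ and the Tate--Sen vanishing $(S(m))^{\Delta}=0$ for $m\neq 0$ in the pro-Kummer-\'{e}tale setting---are themselves results established in \cite{dllz23a,dllz23b}, so even the direct route is not independent of those references. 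Both approaches are correct; the paper's is shorter because it treats the computation you isolate as already done.
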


\begin{proof}
    Let $D_{\mathrm{dR,log}}$ be the log $p$-adic Riemann-Hilbert functor defined in \cite[(3.2.6) and Theorem 3.2.7 (1)]{dllz23a}. Then we have isomorphisms
    \[
    \cO_{\fX_{\eta}}\cong D_{\mathrm{dR,log}}(\widehat{\bQ}_{p})\cong  \nu_{*}\cO\bB_{\mathrm{dR}}
    \]
    by Corollary \ref{two obdr}.
\end{proof}

Next, we turn to defining crystalline and semi-stable period sheaves with connections. Let $(\fX,\cM_{\fX})$ be a horizontally semi-stable log formal scheme over $W$. 

\begin{construction}[Crystalline period sheaves with connections]\label{construction of crystalline period sheaf}
Let $U$ be a big log affinoid perfectoid in $(\fX,\cM_{\fX})_{\eta,\mathrm{prok\et}}$ with the associated affinoid perfectoid space $\widehat{U}=\mathrm{Spa}(S,S^{+})$. Choose a factorization $U\to (\mathrm{Spf}(R),\cM_{R})_{\eta}\to (\fX,\cM_{\fX})_{\eta}$, where the map $(\mathrm{Spf}(R),\cM_{R})\to (\fX,\cM_{\fX})$ is strict \'{e}tale (this indeed exists because $U$ is big). The natural surjection $\theta\colon A_{\mathrm{inf}}(S^{+})\twoheadrightarrow S^{+}$ induces a closed immersion
        \[
        (\mathrm{Spf}(S^{+}),\cM_{S^{+}})\hookrightarrow (\mathrm{Spf}(R\widehat{\otimes}_{W} A_{\mathrm{inf}}(S^{+})),\cM_{R}\oplus \cM_{A_{\mathrm{inf}}(S^{+})})^{a}.
        \]
Let $(\mathrm{Spf}(\cO\bA_{\mathrm{crys}}(U)),\cM_{\cO\bA_{\mathrm{crys}}(U)})$ be its $p$-adic log PD-envelope. This is independent of the choice of a factorization. We set 
        \begin{align*}
            \cO\bB^{+}_{\mathrm{crys}}(U)&\coloneqq \cO\bA_{\mathrm{crys}}(U)[1/p] \\
            \cO\bB_{\mathrm{crys}}(U)&\coloneqq \cO\bB^{+}_{\mathrm{crys}}(U)[1/\mu].
        \end{align*}
        
Suppose that $(\fX,\cM_{\fX})$ is equipped with a Frobenius lift $\varphi_{(\fX,\cM_{\fX})}$. Let $\varphi_{R}$ denote a unique Frobenius lift on $(\mathrm{Spf}(R),\cM_{R})$ such that the strict \'{e}tale map $(\mathrm{Spf}(R),\cM_{R})\to (\fX,\cM_{\fX})$ is compatible with Frobenius lifts on both sides. We let $\varphi_{\cO\bA_{\mathrm{crys}}(U)}$ denote a Frobenius lift on $(\mathrm{Spf}(\cO\bA_{\mathrm{crys}}(U)),\cM_{\cO\bA_{\mathrm{crys}}(U)})$ induced from the product Frobenius $\varphi_{R}\times \varphi_{A_{\mathrm{crys}}(S^{+})}$ on $(\mathrm{Spf}(R\widehat{\otimes}_{W} A_{\mathrm{crys}}(S^{+})),\cM_{R}\oplus \cM_{A_{\mathrm{crys}}(S^{+})})^{a}$. Let $\varphi_{\cO\bB^{+}_{\mathrm{crys}}(U)}$ and $\varphi_{\cO\bB_{\mathrm{crys}}(U)}$ be the induced ring maps on respective rings. 
    
We define a connection 
        \[
        \nabla_{\cO\bA_{\mathrm{crys}}(U)}\colon \cO\bA_{\mathrm{crys}}(U)\to \cO\bA_{\mathrm{crys}}(U)\otimes_{R} \Omega^{1}_{(\mathrm{Spf}(R),\cM_{R})/W}
        \]
(note that the right hand side is independent of the choice of $(\mathrm{Spf}(R),\cM_{R})$) as a unique $A_{\mathrm{crys}}(S^{+})$-linear continuous connection extending the derivation 
    \[
    R\to \Omega^{1}_{(\mathrm{Spf}(R),\cM_{R})/W}
    \]
    satisfying the following condition:
    \begin{itemize}
        \item Suppose that we have a factorization $U\to (\mathrm{Spf}(R_{1}),\cM_{R_{1}})_{\eta}\to (\fX,\cM_{\fX})_{\eta}$, where $(\mathrm{Spf}(R_{1}),\cM_{R_{1}})\to (\fX,\cM_{\fX})$ is strict \'{e}tale. Then the equation 
        \[
        \nabla_{\cO\bA_{\mathrm{crys}}(U)}(m/m')=(m/m')\otimes \mathrm{dlog}(m)
        \]
        holds for each $m\in \Gamma(\mathrm{Spf}(R_{1}),\cM_{R_{1}})$ and $m'\in \Gamma(\mathrm{Spf}(A_{\mathrm{crys}}(S^{+})),\cM_{A_{\mathrm{crys}}(S^{+})})$ with the common image in $\Gamma(\mathrm{Spf}(S^{+}),\cM_{S^{+}})$. Here, we used a natural identification
        \begin{align*}
            &\cO\bA_{\mathrm{crys}}(U)\otimes_{R} \Omega^{1}_{(\mathrm{Spf}(R),\cM_{R})/(\mathrm{Spf}(W),\cM_{W})} \\
            \cong & \cO\bA_{\mathrm{crys}}(U)\otimes_{R_{1}} \Omega^{1}_{(\mathrm{Spf}(R_{1}),\cM_{R_{1}})/(\mathrm{Spf}(W),\cM_{W})}.
        \end{align*}
    \end{itemize}
    We also have induced connections $\nabla_{\cO\bB^{+}_{\mathrm{crys}}(U)}$ and $\nabla_{\cO\bB_{\mathrm{crys}}(U)}$ on respective rings.

    Let $\cO\bB_{\mathrm{crys}}$ be the  sheaf on $(\fX,\cM_{\fX})_{\eta,\mathrm{prok\et}}$ associated with the presheaf given by $U\mapsto \cO\bB_{\mathrm{crys}}(U)$. Let $\varphi_{\cO\bB_{\mathrm{crys}}}$ be a ring map on $\cO\bB_{\mathrm{crys}}$ induced from $\varphi_{\cO\bB_{\mathrm{crys}}(U)}$ for each $U$. Connections $\nabla_{\cO\bB_{\mathrm{crys}}(U)}$ for each $U$ induces a connection
        \[
        \nabla_{\cO\bB_{\mathrm{crys}}}\colon \cO\bB_{\mathrm{crys}}\to \cO\bB_{\mathrm{crys}}\otimes_{\cO_{\fX_{\eta}}} \Omega^{1}_{(\fX,\cM_{\fX})_{\eta}/K_{0}}.
        \]
\end{construction}

\begin{construction}[Semi-stable period sheaves with connections]\label{construction of semistable period sheaf}

Let $U$ be a big log affinoid perfectoid with the associated affinoid perfectoid space $\widehat{U}=\mathrm{Spa}(S,S^{+})$. We set
        \begin{align*}
            \cO\bA_{\mathrm{st}}(U)&\coloneqq \cO\bA_{\mathrm{crys}}(U)\otimes_{A_{\mathrm{crys}}(S^{+})} A_{\mathrm{st}}(S^{+}) \\
            \cO\bB^{+}_{\mathrm{st}}(U)&\coloneqq \cO\bA_{\mathrm{st}}(U)[1/p] \\
            \cO\bB_{\mathrm{st}}(U)&\coloneqq \cO\bB^{+}_{\mathrm{st}}(U)[1/t].
        \end{align*}

When $(\fX,\cM_{\fX})$ is equipped with a Frobenius lift $\varphi_{(\fX,\cM_{\fX})}$, we define a Frobenius structure $\varphi_{\cO\bA_{\mathrm{st}}(U)}$ on $\cO\bA_{\mathrm{st}}(U)$ by
\[
\varphi_{\cO\bA_{\mathrm{st}}(U)}\coloneqq \varphi_{\cO\bA_{\mathrm{crys}}(U)}\otimes \varphi_{A_{\mathrm{st}}(S^{+})}.
\]
Similarly, we can define $\varphi_{\cO\bB^{+}_{\mathrm{st}}(U)}$ and $\varphi_{\cO\bB_{\mathrm{st}}(U)}$.

Let $N_{\cO\bA_{\mathrm{st}}(U)}$ be a monodromy operator $\mathrm{id}_{\cO\bA_{\mathrm{crys}}(U)}\otimes N_{A_{\mathrm{st}}(S^{+})}$ on $\cO\bA_{\mathrm{st}}(U)$. Let $\nabla_{\cO\bA_{\mathrm{st}}(U)}$ be a connection $\nabla_{\cO\bA_{\mathrm{crys}}(U)}\otimes \mathrm{id}_{A_{\mathrm{st}}(S^{+})}$. Similarly, we can define $N_{\cO\bB^{+}_{\mathrm{st}}(U)}$, $N_{\cO\bB_{\mathrm{st}}(U)}$, $\nabla_{\cO\bB^{+}_{\mathrm{st}}(U)}$, and $\nabla_{\cO\bB_{\mathrm{st}}(U)}$.
        
Let $\cO\bB_{\mathrm{st}}$ be the sheaf on $(\fX,\cM_{\fX})_{\eta,\mathrm{prok\et}}$ associated with the presheaf given by $U\mapsto \cO\bB_{\mathrm{st}}(U)$, which is equipped with a Frobenius structure $\varphi_{\cO\bB_{\mathrm{st}}}$ and a monodromy operator $N_{\cO\bB_{\mathrm{st}}}$ on $\cO\bB_{\mathrm{st}}$ induced from $\varphi_{\cO\bB_{\mathrm{st}}(U)}$ and $N_{\cO\bB_{\mathrm{st}}(U)}$. We define a connection 
\[
\nabla_{\cO\bB_{\mathrm{st}}}\colon \cO\bB_{\mathrm{st}}\to \cO\bB_{\mathrm{st}}\otimes_{R[1/p]} \Omega^{1}_{(\fX,\cM_{\fX})_{\eta}/K_{0}}
\]
by $\nabla_{\cO\bB_{\mathrm{st}}}\coloneqq \nabla_{\cO\bB_{\mathrm{crys}}}\otimes_{\bB_{\mathrm{crys}}} \mathrm{id}_{\bB_{\mathrm{st}}}$.
\end{construction}

\begin{construction}[cf.~{\cite[2.3.4]{ai12}}]\label{hat semistable period sheaf}
Let $U\in (\fX,\cM_{\fX})_{\eta,\mathrm{prok\et}}$ be a big log affinoid perfectoid with the associated affinoid perfectoid space $\widehat{U}=\mathrm{Spa}(S,S^{+})$. Choose a factorization $U\to (\mathrm{Spf}(R),\cM_{R})_{\eta}\to (\fX,\cM_{\fX})_{\eta}$, where $(\mathrm{Spf}(R),\cM_{R})\to (\fX,\cM_{\fX})$ is strict \'{e}tale. The object $U$ is also big with respect to a semi-stable formal model $(\fX,\cM_{\fX}\oplus p^{\bN})^{a}$ by definition. We consider the log structure $\cM_{S^{+}}$ and $\cM_{A_{\mathrm{crys}}(S^{+})}$ defined by the bigness with respect to $(\fX,\cM_{\fX}\oplus p^{\bN})^{a}$. There is a natural strict closed immersion
\[
(\mathrm{Spf}(S^{+}),\cM_{S^{+}})\hookrightarrow (\mathrm{Spf}(A_{\mathrm{crys}}(S^{+})),\cM_{A_{\mathrm{crys}}(S^{+})}).
\]
We set $(\mathrm{Spf}(R\langle u\rangle),\cM_{R\langle u\rangle})\coloneqq (\mathrm{Spf}(R\langle u\rangle),\cM_{R}\oplus u^{\bN})^{a}$. We have a map
\[
(\mathrm{Spf}(S^{+}),\cM_{S^{+}})\to (\mathrm{Spf}(R\langle u\rangle),\cM_{R\langle u\rangle})
\]
given by the structure map $R\to S^{+}$ and $u\mapsto p$. Then these two maps induce a closed immersion  
\[
(\mathrm{Spf}(S^{+}),\cM_{S^{+}})\hookrightarrow (\mathrm{Spf}(A_{\mathrm{crys}}(S^{+})\widehat{\otimes}_{W} R\langle u\rangle),\cM_{A_{\mathrm{crys}}(S^{+})}\oplus \cM_{R\langle u\rangle})^{a}.
\]
Let $(\mathrm{Spf}(\cO\widehat{\bA}_{\mathrm{st}}(U)),\cM_{\cO\widehat{\bA}_{\mathrm{st}}(U)})$ denote its $p$-completed log PD-envelope, which is independent of the choice of $(\mathrm{Spf}(R),\cM_{R})$. 

By the universal property of PD-envelopes, there is a natural isomorphism
\[
\cO\widehat{\bA}_{\mathrm{st}}(U)\cong \cO\bA_{\mathrm{crys}}(U)\widehat{\otimes}_{A_{\mathrm{crys}}(S^{+})} \widehat{A}_{\mathrm{st}}(S^{+}).
\]
When $(\fX,\cM_{\fX})$ is equipped with a Frobenius lift $\varphi_{(\fX,\cM_{\fX})}$, we define a Frobenius lift $\varphi_{\cO\widehat{\bA}_{\mathrm{st}}(U)}$ on $\cO\widehat{\bA}_{\mathrm{st}}(U)$ as the product Frobenius $\varphi_{\cO\bA_{\mathrm{crys}}(U)}\otimes \varphi_{\widehat{A}_{\mathrm{st}}(U)}$. Let $N_{\cO\widehat{\bA}_{\mathrm{st}}(U)}$ be the monodromy operator on $\cO\widehat{\bA}_{\mathrm{st}}(U)$ given by $\mathrm{id}_{\cO\bA_{\mathrm{crys}}}\otimes N_{\widehat{A}_{\mathrm{st}}(U)}$.

We set 
\begin{align*}
    \cO\widehat{\bB}^{+}_{\mathrm{st}}(U)&\coloneqq \cO\widehat{\bA}_{\mathrm{st}}(U)[1/p] \\
    \cO\widehat{\bB}_{\mathrm{st}}(U)&\coloneqq \cO\widehat{\bB}^{+}_{\mathrm{st}}(U)[1/t].
\end{align*}
Let $\varphi_{\cO\widehat{\bB}^{+}_{\mathrm{st}}(U)}$, $\varphi_{\cO\widehat{\bB}_{\mathrm{st}}(U)}$, $N_{\cO\widehat{\bB}^{+}_{\mathrm{st}}(U)}$, $N_{\cO\widehat{\bB}_{\mathrm{st}}(U)}$ be the induced Frobenius structures and monodromy operators on respective rings.

Let $\cO\widehat{\bB}_{\mathrm{st}}$ be the sheaf on $(\fX,\cM_{\fX})_{\eta,\mathrm{prok\et}}$ associated with the presheaf given by $U\to \cO\widehat{\bB}_{\mathrm{st}}(U)$. This sheaf is equipped with the induced Frobenius structure and the monodromy operator, denoted by $\varphi_{\cO\widehat{\bB}_{\mathrm{st}}}$ and $N_{\cO\widehat{\bB}_{\mathrm{st}}}$.
\end{construction}

We shall give a local description of period sheaves $\cO\bB_{\mathrm{crys}}$, $\cO\bB_{\mathrm{st}}$, and $\cO\widehat{\bB}_{\mathrm{st}}$. We define a sheaf $\bB_{\mathrm{crys}}\{X_{1}-1,\dots,X_{s}-1\}$ on $(\fX,\cM_{\fX})_{\eta,\mathrm{prok\et}}$ by
\[
U\mapsto A_{\mathrm{crys}}(S^{+})\{X_{1}-1,\dots,X_{s}-1\}[1/pt]
\]
for a log affinoid perfectoid $U$ with the associated affinoid perfectoid $\widehat{U}=\mathrm{Spa}(S,S^{+})$. We define $\bB_{\mathrm{crys}}\{X_{1}-1,\dots,X_{s}-1,Y-1\}$ in the same way. Furthermore, we define the sheaf $\bB_{\mathrm{crys}}\{X_{1}-1,\dots,X_{s}-1\}[\mathrm{log}(Y)]$ on  $(\fX,\cM_{\fX})_{\eta,\mathrm{prok\et}}$ by
\[
U\mapsto A_{\mathrm{crys}}(S^{+})\{X_{1}-1,\dots,X_{s}-1\}[\mathrm{log}(Y)][1/pt],
\]
where $\mathrm{log}(Y)$ is regarded as an independent variable.

\begin{lem}\label{local description of obcrys}
Let $(\fX,\cM_{\fX})=(\mathrm{Spf}(R),\cM_{R})$ be a small affine log formal scheme over $W$ with a fixed framing such that $n=0$. Then there is the following commutative diagram of sheaves on ${(\fX,\cM_{\fX})_{\eta,\mathrm{prok\et}}}_{\slash \widetilde{U}_{\infty,\alpha}}$: 
\[
\begin{tikzcd}
\cO\bB_{\mathrm{crys}}|_{\widetilde{U}_{\infty,\alpha}} \ar[r,"\sim"] \ar[d,hook] & \bB_{\mathrm{crys}}\{X_{1}-1,\dots,X_{s}-1\}|_{\widetilde{U}_{\infty,\alpha}} \ar[d,hook] \\
\cO\bB_{\mathrm{st}}|_{\widetilde{U}_{\infty,\alpha}} \ar[r,"\sim"] \ar[d,hook] & \bB_{\mathrm{crys}}\{X_{1}-1,\dots,X_{s}-1\}[\mathrm{log}(Y)]|_{\widetilde{U}_{\infty,\alpha}} \ar[d,hook] \\
\cO\widehat{\bB}_{\mathrm{st}}|_{\widetilde{U}_{\infty,\alpha}} \ar[r,"\sim"] & \bB_{\mathrm{crys}}\{X_{1}-1,\dots,X_{s}-1,Y-1\}|_{\widetilde{U}_{\infty,\alpha}}.
\end{tikzcd}
\]
\end{lem}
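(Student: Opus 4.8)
The plan is to compute the sections $\cO\bA_{\mathrm{crys}}(U)$ on a log affinoid perfectoid $U$ lying over $\widetilde{U}_{\infty,\alpha}$ (with associated $\widehat{U}=\mathrm{Spa}(S,S^{+})$), identify them with a $p$-completed PD-polynomial ring over $A_{\mathrm{crys}}(S^{+})$, and then deduce all three rows by inverting $p,t$ and base-changing along $A_{\mathrm{crys}}(S^{+})\to A_{\mathrm{st}}(S^{+})$ and $A_{\mathrm{crys}}(S^{+})\to\widehat{A}_{\mathrm{st}}(S^{+})$. Following the proof of Lemma~\ref{local description of our obdr}: the closed immersion $(\mathrm{Spf}(S^{+}),\cM_{S^{+}})\hookrightarrow(\mathrm{Spf}(R\widehat{\otimes}_{W}A_{\mathrm{inf}}(S^{+})),\cM_{R}\oplus\cM_{A_{\mathrm{inf}}(S^{+})})^{a}$ has exactification $\cO\bA_{\mathrm{inf}}(U)$ obtained by adjoining $([y_{j}^{\flat}]/y_{j})^{\pm1}$, and after the substitutions $x_{i}=[x_{i}^{\flat}]X_{i}$ (legitimate since $x_{i}^{\flat}\in (S^{+})^{\flat}$ is a unit), $y_{j}=[y_{j}^{\flat}]X_{l+j}$, together with the fact that $R$ is formally \'{e}tale over $R^{0}=W\langle x^{\pm1},y\rangle$, this exhibits $\cO\bA_{\mathrm{inf}}(U)$ as the $(p,\xi)$-adic completion of a ring formally \'{e}tale over a Laurent--polynomial algebra over $A_{\mathrm{inf}}(S^{+})$ in $X_{1},\dots,X_{s}$, with $\theta\colon\cO\bA_{\mathrm{inf}}(U)\twoheadrightarrow S^{+}$ having kernel topologically generated by $\xi,X_{1}-1,\dots,X_{s}-1$.

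Taking the $p$-completed PD-envelope along $\theta$ --- noting that $A_{\mathrm{crys}}(S^{+})$ is already the $p$-completed PD-envelope of $A_{\mathrm{inf}}(S^{+})$ along $\xi$, that PD-envelopes commute with the formally \'{e}tale base change $R^{0}\to R$, and that once $X_{i}-1$ acquires divided powers in a $p$-complete ring the element $X_{i}$ becomes automatically invertible (so the Laurent/polynomial distinction disappears and the annulus is "filled in") --- gives a natural isomorphism
\[
\cO\bA_{\mathrm{crys}}(U)\;\cong\; A_{\mathrm{crys}}(S^{+})\{X_{1}-1,\dots,X_{s}-1\},
\]
under which $X_{i}$ corresponds to $t_{i}/[t_{i}^{\flat}]$, compatibly with Lemma~\ref{local description of our obdr}. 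Inverting $p$ and $\mu$ and recalling $B_{\mathrm{crys}}=B^{+}_{\mathrm{crys}}[1/t]=B^{+}_{\mathrm{crys}}[1/\mu]$ yields $\cO\bB_{\mathrm{crys}}(U)\cong A_{\mathrm{crys}}(S^{+})\{X_{1}-1,\dots,X_{s}-1\}[1/pt]=\bB_{\mathrm{crys}}\{X_{1}-1,\dots,X_{s}-1\}(U)$; these are functorial in $U$ over $\widetilde{U}_{\infty,\alpha}$, hence sheafify to the top horizontal isomorphism.

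For the two lower rows: by Construction~\ref{construction of semistable period sheaf}, $\cO\bA_{\mathrm{st}}(U)=\cO\bA_{\mathrm{crys}}(U)\otimes_{A_{\mathrm{crys}}(S^{+})}A_{\mathrm{st}}(S^{+})$ with $A_{\mathrm{st}}(S^{+})=A_{\mathrm{crys}}(S^{+})[\mathrm{log}(p/[p^{\flat}])]$ a polynomial ring (by the transcendence of $\mathrm{log}(p/[p^{\flat}])$ over $A_{\mathrm{crys}}(S^{+})$ recalled from \cite[Corollary~2.28]{shi22}); inverting $p,t$ and writing $\mathrm{log}(Y):=\mathrm{log}(p/[p^{\flat}])$ gives $\cO\bB_{\mathrm{st}}(U)\cong\bB_{\mathrm{crys}}\{X_{1}-1,\dots,X_{s}-1\}[\mathrm{log}(Y)](U)$. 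Likewise, by Construction~\ref{hat semistable period sheaf} and $\widehat{A}_{\mathrm{st}}(S^{+})\cong A_{\mathrm{crys}}(S^{+})\{v\}$, one gets $\cO\widehat{\bA}_{\mathrm{st}}(U)\cong\cO\bA_{\mathrm{crys}}(U)\widehat{\otimes}_{A_{\mathrm{crys}}(S^{+})}\widehat{A}_{\mathrm{st}}(S^{+})\cong A_{\mathrm{crys}}(S^{+})\{X_{1}-1,\dots,X_{s}-1,v\}$, hence $\cO\widehat{\bB}_{\mathrm{st}}(U)\cong\bB_{\mathrm{crys}}\{X_{1}-1,\dots,X_{s}-1,Y-1\}(U)$ with $Y-1:=v$. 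The left vertical maps are the canonical inclusions $\cO\bB_{\mathrm{crys}}\hookrightarrow\cO\bB_{\mathrm{st}}\hookrightarrow\cO\widehat{\bB}_{\mathrm{st}}$ coming from $A_{\mathrm{crys}}(S^{+})\hookrightarrow A_{\mathrm{st}}(S^{+})\hookrightarrow\widehat{A}_{\mathrm{st}}(S^{+})$; the right vertical maps are the evident inclusion followed by $\mathrm{log}(Y)\mapsto\mathrm{log}(1+(Y-1))=\sum_{n\geq1}(-1)^{n+1}(n-1)!\,(Y-1)^{[n]}$. Commutativity of the square follows from the fact, recorded in the definition of $\widehat{A}_{\mathrm{st}}$, that $A_{\mathrm{st}}(S^{+})\hookrightarrow\widehat{A}_{\mathrm{st}}(S^{+})$ sends $\mathrm{log}(p/[p^{\flat}])$ to $\mathrm{log}(u/[p^{\flat}])=\mathrm{log}(1+v)$.

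\textbf{Main obstacle.} The heart of the argument is the computation of $\cO\bA_{\mathrm{inf}}(U)$, and hence of $\cO\bA_{\mathrm{crys}}(U)$: one must check that, in the case $n=0$, the exactification makes $\cO\bA_{\mathrm{inf}}(U)$ a $(p,\xi)$-adically complete algebra formally \'{e}tale over a Laurent--polynomial ring over $A_{\mathrm{inf}}(S^{+})$ in $X_{1},\dots,X_{s}$, with $\mathrm{Ker}(\theta)$ generated by the Koszul-regular sequence $\xi,X_{1}-1,\dots,X_{s}-1$, so that the $p$-completed PD-envelope really is the honest PD-polynomial ring $A_{\mathrm{crys}}(S^{+})\{X_{1}-1,\dots,X_{s}-1\}$ with no spurious $p$-power torsion introduced by the non-noetherian completions. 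This is where the log structure, the exactification of the immersion, and the $(p,\xi)$-adic versus $p$-adic completions interact; everything else (\'{e}tale base change for PD-envelopes, $\bB_{\mathrm{crys}}=\bB^{+}_{\mathrm{crys}}[1/\mu]$, transcendence of $\mathrm{log}(p/[p^{\flat}])$, and the Frobenius/monodromy bookkeeping for the lower rows) is routine given the results already in place.
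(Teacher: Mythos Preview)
Your reduction of the two lower rows to the top row via the tensor descriptions $\cO\bB_{\mathrm{st}}\cong\cO\bB_{\mathrm{crys}}\otimes_{\bB_{\mathrm{crys}}}\bB_{\mathrm{st}}$ and $\cO\widehat{\bB}_{\mathrm{st}}\cong\cO\bB_{\mathrm{crys}}\otimes_{\bB_{\mathrm{crys}}}\widehat{\bB}_{\mathrm{st}}$ is exactly what the paper does, and your identification of the right vertical maps is correct.

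For the top row, however, you attempt a \emph{direct} computation of the $p$-completed log PD-envelope, and the ``main obstacle'' you flag --- that in this non-noetherian setting one must rule out spurious torsion when forming the $p$-completed PD-envelope of the exactification --- is real, and you do not actually resolve it. Your heuristics (PD-envelopes commute with \'{e}tale base change; $X_i$ becomes a unit once $X_i-1$ has divided powers) are fine as far as they go, but they do not by themselves pin down the $p$-completed envelope of the completed exactification as the honest PD-polynomial ring.

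The paper bypasses this obstacle rather than confronting it. It writes down maps in both directions: one map
\[
A_{\mathrm{crys}}(S^{+})\{X_{1}-1,\dots,X_{s}-1\}\to \cO\bA_{\mathrm{crys}}(U),\qquad X_i\mapsto t_i/[t_i^{\flat}],
\]
and a map back $\cO\bA_{\mathrm{crys}}(U)\to A_{\mathrm{crys}}(S^{+})\{X_{i}-1\}$ via the universal property of PD-envelopes (using the \'{e}tale lift $R\to A_{\mathrm{crys}}(S^{+})\{X_{i}-1\}$ of $R^{0}\to A_{\mathrm{crys}}(S^{+})\{X_{i}-1\}$, $t_i\mapsto [t_i^{\flat}]X_i$). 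The composite starting from $A_{\mathrm{crys}}(S^{+})\{X_{i}-1\}$ is the identity by inspection. For the other direction the paper does \emph{not} compute anything inside the PD-envelope; instead it embeds both rings into $\cO\bB^{+}_{\mathrm{dR}}(U)\cong B^{+}_{\mathrm{dR}}(S^{+})[[X_{1}-1,\dots,X_{s}-1]]$ (the embedding of $\cO\bA_{\mathrm{crys}}(U)$ being the general injectivity Lemma~\ref{genaral injectivity from Acrys to Bdr}), observes that the resulting square commutes, and reads off injectivity of $\cO\bA_{\mathrm{crys}}(U)\to A_{\mathrm{crys}}(S^{+})\{X_{i}-1\}$ from the already-proved de~Rham isomorphism of Lemma~\ref{local description of our obdr}. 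Thus the paper trades your direct PD-envelope computation for a comparison with $\cO\bB^{+}_{\mathrm{dR}}$, which is cleaner precisely because the de~Rham side has already been handled and Lemma~\ref{genaral injectivity from Acrys to Bdr} supplies the needed injectivity in one stroke.
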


\begin{proof}
    Since we have isomorphisms 
    \[ 
    \cO\bB_{\mathrm{st}}\cong \cO\bB_{\mathrm{crys}}\otimes_{\bB_{\mathrm{crys}}} \bB_{\mathrm{st}}, \ \ \ \cO\widehat{\bB}_{\mathrm{st}}\cong \cO\bB_{\mathrm{crys}}\otimes_{\bB_{\mathrm{crys}}} \widehat{\bB}_{\mathrm{st}},
    \]
    it suffices to construct the upper horizontal isomorphism. Let $U$ be a big log affinoid perfectoid in ${(\fX,\cM_{\fX})_{\eta,\mathrm{prok\et}}}_{\slash \widetilde{U}_{\infty,\alpha}}$ with the associated affinoid perfectoid space $\widehat{U}=\mathrm{Spa}(S,S^{+})$. Define a map of $A_{\mathrm{crys}}(S^{+})$-algebras
    \begin{equation}
         A_{\mathrm{crys}}(S^{+})\{X_{1}-1,\dots,X_{s}-1\}\to \cO\bA_{\mathrm{crys}}(U) \tag{1}
    \end{equation}
    sending $X_{i}$ to $t_{i}/[t_{i}^{\flat}]$ for each $i$. We shall construct the inverse map. Consider a map of $W$-algebras $R^{0}\to A_{\mathrm{crys}}(S^{+})\{X_{1}-1,\dots,X_{s}-1\}$ given by $t_{i}\mapsto [t_{i}^{\flat}]X_{i}$ for each $i$.
    
    Since $R^{0}\to R$ is $p$-completely \'{e}tale, there exists a unique ring map $R\to A_{\mathrm{crys}}(S^{+})\{X_{1}-1,\dots,X_{s}-1\}$ fitting into the following commutative diagram:
    \[
    \begin{tikzcd}
        R^{0} \ar[r] \ar[d] & A_{\mathrm{crys}}(S^{+})\{X_{1}-1,\dots,X_{s}-1\} \ar[d] \\
        R \ar[r] \ar[ur,dotted] & S^{+}.
    \end{tikzcd}
    \]
    This map induces a map of $A_{\mathrm{crys}}(S^{+})$-algebras 
    \[
    R\widehat{\otimes}_{W} A_{\mathrm{crys}}(S^{+})\to  A_{\mathrm{crys}}(S^{+})\{X_{1}-1,\dots,X_{s}-1\},
    \]
    which induces a ring map
    \begin{equation}
        (R\widehat{\otimes}_{W} A_{\mathrm{crys}}(S^{+}))[t_{1}/[t_{1}^{\flat}],\dots,t_{s}/[t_{s}^{\flat}]]^{\wedge}_{p}\to  A_{\mathrm{crys}}(S^{+})\{X_{1}-1,\dots,X_{s}-1\} \tag{2}
    \end{equation}
    sending $t_{i}/[t_{i}^{\flat}]$ to $X_{i}$ for each $i$. Notice that the left hand side of the map $(2)$ is nothing but the exactification of the closed immersion
    \[
    (\mathrm{Spf}(S^{+}),\cM_{S^{+}})\hookrightarrow (\mathrm{Spf}(R\widehat{\otimes}_{W} A_{\mathrm{crys}}(S^{+})),\cM_{R}\oplus \cM_{A_{\mathrm{crys}}(S^{+})})^{a}.
    \]
    Since the ring map $(2)$ is compatible with surjections to $S^{+}$, we obtain a ring map
    \begin{equation}
        \cO\bA_{\mathrm{crys}}(U)\to A_{\mathrm{crys}}(S^{+})\{X_{1}-1,\dots,X_{s}-1\} \tag{3}
    \end{equation}
    by taking PD-envelopes.

    It is enough to prove the map $(3)$ is indeed the inverse of the map $(1)$. The composition
    \[
    A_{\mathrm{crys}}(S^{+})\{X_{1}-1,\dots,X_{s}-1\}\to \cO\bA_{\mathrm{crys}}(U)\to A_{\mathrm{crys}}(S^{+})\{X_{1}-1,\dots,X_{s}-1\}
    \]
    is the identity map by definition. Hence, it suffices to show that the map $\cO\bA_{\mathrm{crys}}(U)\to A_{\mathrm{crys}}(S^{+})\{X_{1}-1,\dots,X_{s}-1\}$ is injective. Combining with isomorphisms in Lemma \ref{local description of our obdr}, we get the following commutative diagram.
    \[
    \begin{tikzcd}
        A_{\mathrm{crys}}(S^{+})\{X_{1}-1,\dots,X_{s}-1\} \ar[r] \ar[d,hook] & \cO\bA_{\mathrm{crys}}(U) \ar[r] \ar[d] & A_{\mathrm{crys}}(S^{+})\{X_{1}-1,\dots,X_{s}-1\} \ar[d,hook] \\
        B^{+}_{\mathrm{dR}}(S^{+})[[X_{1}-1,\dots,X_{s}-1]] \ar[r,"\sim"] & \cO\bB^{+}_{\mathrm{dR}}(U) \ar[r,"\sim"] & B^{+}_{\mathrm{dR}}(S^{+})[[X_{1}-1,\dots,X_{s}-1]]
    \end{tikzcd}
    \]
    The middle vertical map is an isomorphism by Lemma \ref{genaral injectivity from Acrys to Bdr} below. Therefore, we conclude that the upper right map is injective, and so the map
    \[
    A_{\mathrm{crys}}(S^{+})\{X_{1}-1,\dots,X_{s}-1\}\to \cO\bA_{\mathrm{crys}}(U)
    \]
    is an isomorphism. By inverting $\mu$, we obtain the upper horizontal isomorphism in the diagram in the assertion.
\end{proof}

\begin{lem}\label{genaral injectivity from Acrys to Bdr}
    Let $A$ be a $p$-torsion free $p$-complete ring and $I=(f_{1},\dots,f_{r})$ be an ideal of $A$. Suppose that $(\bar{f_{1}},\dots,\bar{f_{r}})$ is a Koszul-regular sequence in $A/p$ and that $A/I$ is $p$-torsion free. Let $D$ denote the $p$-competed PD-envelope of $A\twoheadrightarrow A/I$ and $B$ denote the $I$-adic completion of $A[1/p]$. Then there is a natural injective map $D\to B$. 
\end{lem}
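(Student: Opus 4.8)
The plan is to identify $D$ with a quotient of a $p$-completed divided power polynomial ring, construct the map $D\to B$ explicitly, and prove injectivity by comparing associated graded pieces, following the template of the model case $A=A_{\mathrm{inf}}(S)$, $I=\Ker(\theta)$ (cf.\ \cite[Proposition 2.23]{tt19}, \cite[Proposition 5.36]{ck19}). First some reductions. Since $A$ is $p$-complete and $p$-torsion free, the canonical map $\bZ_{p}\to A$ is injective, so $A[1/p]$ is a $\bQ$-algebra; hence every ideal of $A[1/p]$ carries canonical divided powers, automatically compatible with those on $(p)$. Moreover $p$ lies in the Jacobson radical of $A$, so Nakayama's lemma applies to finitely generated $A$-modules killed by $p$. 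Using the exact sequence $0\to K_{\bullet}(A;f_{1},\dots,f_{r})\xrightarrow{p}K_{\bullet}(A;f_{1},\dots,f_{r})\to K_{\bullet}(A/p;\bar f_{1},\dots,\bar f_{r})\to 0$ together with the Koszul-regularity of $(\bar f_{i})$ on $A/p$, one sees that multiplication by $p$ is bijective on each positive-degree Koszul homology $H_{i}(K_{\bullet}(A;\underline f))$; as these are finitely generated, Nakayama forces them to vanish, so $(f_{1},\dots,f_{r})$ is Koszul-regular on $A$. Then $K_{\bullet}(A;\underline f)$ is a free resolution of $A/I$, and since $A/I$ is $p$-torsion free, $\mathrm{Tor}^{A}_{i}(A/I,A/p^{n}A)=0$ for $i>0$; hence $(f_{1},\dots,f_{r})$ is also Koszul-regular on every $A/p^{n}A$.

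With these in hand, let $D^{0}\coloneqq D_{A}(I)$ be the non-$p$-completed PD-envelope, so $D=(D^{0})^{\wedge}_{p}$. By the structure theory of divided power envelopes of Koszul-regular immersions (cf.\ \cite{ber74}): $D^{0}$ is $p$-torsion free; since $A\to A[1/p]$ is flat and $(A[1/p],IA[1/p])$ already carries divided powers, $D^{0}\otimes_{A}A[1/p]\isom A[1/p]$, so $D^{0}$ is identified with the $A$-subalgebra of $A[1/p]$ generated by the $f_{i}^{n}/n!$, and $D\cong A\{x_{1},\dots,x_{r}\}/(x_{1}-f_{1},\dots,x_{r}-f_{r})$, where $A\{x_{1},\dots,x_{r}\}$ is the $p$-completed PD-polynomial ring; the PD-filtration $J^{[\geq d]}$ on $D^{0}$ has $\mathrm{gr}^{e}_{J}D^{0}\cong\Gamma^{e}_{A/I}(I/I^{2})$, a finite free $A/I$-module. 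Consequently $D^{0}/J^{[\geq d]}$ is a finite iterated extension of finite free $A/I$-modules, hence $p$-complete and $p$-torsion free; passing to $p$-completions in $0\to J^{[\geq d]}\to D^{0}\to D^{0}/J^{[\geq d]}\to 0$ (an exact sequence of $p$-torsion free modules) gives $\Fil^{d}D\coloneqq\Ker(D\to D^{0}/J^{[\geq d]})=(J^{[\geq d]})^{\wedge}_{p}$ and $\mathrm{gr}^{d}D\cong\Gamma^{d}_{A/I}(I/I^{2})$. Finally, since the PD-envelope and its PD-filtration commute with the base change $A\to A/p^{n}A$, one has $D^{0}/p^{n}D^{0}\cong D_{A/p^{n}A}(IA/p^{n}A)$ with PD-filtration induced by $J^{[\geq d]}$; by the Koszul-regularity on $A/p^{n}A$ and the same structure theory, this ring is \emph{separated} for its PD-filtration.

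Now I construct and analyze the map. Sending $x_{i}\mapsto f_{i}$ defines a ring map $A\{x_{1},\dots,x_{r}\}\to B$: indeed $x^{[\underline n]}\mapsto f^{\underline n}/\underline n!\in I^{|\underline n|}B$ and $I^{|\underline n|}B\to 0$, so the image series converge in the $I$-adically complete ring $B$. This map kills each $x_{i}-f_{i}$, so it descends to a natural ring map $D\to B$ with $\Fil^{d}D$ mapping into $I^{d}B$. Because $(f_{i})$ is Koszul-regular on $A$, $\mathrm{gr}^{d}_{I}B\cong\mathrm{Sym}^{d}_{A/I}(I/I^{2})[1/p]$, and on associated graded the map is the natural arrow $\Gamma^{d}_{A/I}(I/I^{2})\to\mathrm{Sym}^{d}_{A/I}(I/I^{2})[1/p]$, which is injective since its source is $p$-torsion free and it becomes an isomorphism after inverting $p$. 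As $B$ is $I$-adically separated, an induction on the filtration degree yields $\Ker(D\to B)\subset\bigcap_{d}\Fil^{d}D$. But $\bigcap_{d}\Fil^{d}D=0$: unwinding the inverse limits, this is equivalent to $\bigcap_{d}(J^{[\geq d]}+p^{n}D^{0})=p^{n}D^{0}$ for every $n$, i.e.\ to the separatedness of $D^{0}/p^{n}D^{0}$ for its PD-filtration, established above. Hence $D\to B$ is injective, as desired.

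The delicate point is exactly the separatedness of $D^{0}/p^{n}D^{0}$ for its PD-filtration (equivalently $\bigcap_{d}\Fil^{d}D=0$): having finite free associated graded pieces does not formally force it, and in the model ring it is precisely the subtle input behind the injectivity $A_{\crys}\hookrightarrow\bB^{+}_{\mathrm{dR}}$. I would secure it from the structure theory of divided power envelopes of Koszul-regular immersions — which applies equally over $A/p^{n}A$ by the reductions above — or, absent a sufficiently precise reference, by transporting the explicit description of $D^{0}/p$ (free over $(A/I)/p$ on the divided power monomials in the $f_{i}$, with all $p$-th powers of positive-degree generators vanishing) to the present generality, as is done for $A_{\crys}/p$.
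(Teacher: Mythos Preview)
Your argument is correct and follows essentially the same strategy as the paper: identify $D$ with the $p$-completion of $C=A[f_{i}^{[n]}]\subset A[1/p]$, show the graded map $\mathrm{gr}^{\bullet}D\to\mathrm{gr}^{\bullet}B$ is injective, and reduce to the separatedness of the PD-filtration on $D$. For that last step the paper commits to your stated fallback --- the explicit presentation $D/p\cong A/(p,f_{1}^{p},\dots,f_{r}^{p})[\{X_{i,m}\}]/(X_{i,m}^{p})$ coming from \cite[Lemma~2.38]{bs22}, from which separatedness of $\mathrm{Fil}^{\bullet}(D)/p$ is visible, and then uses the $p$-torsion freeness of $D/\mathrm{Fil}^{n}D$ (which you also established) to deduce separatedness of $\mathrm{Fil}^{\bullet}D$ itself; note this requires only the $n=1$ case of your reduction. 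Your first suggested route (appealing to ``structure theory'' over $A/p^{n}A$) is not quite enough as stated, since knowing $\mathrm{gr}^{\bullet}$ is free does not by itself force separatedness, so you should simply execute the mod-$p$ computation.
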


\begin{proof}
We follow the argument of \cite[Proposition 5.36]{ck19}. Let $C\coloneqq A[\{f_{i}^{[n]}=\frac{f_{i}^{n}}{n!}\}_{1\leq i\leq r,n\geq 0}]\subset A[1/p]$. By \cite[Lemma 2.38]{bs22}, we have $D\cong C^{\wedge}$, where $\wedge$ means the (classical) $p$-adic completion. Since a Koszul-regular sequence is quasi-regular and $A/I$ is $p$-torsion free, the $I^{n}/I^{n+1}$ is also $p$-torsion free for each $n\geq 0$. Hence, $A/I^{n}$ is also $p$-torsion free. Since the image of the natural map $C\hookrightarrow A[1/p]\twoheadrightarrow (A/I^{n})[1/p]$ is contained an open bounded subring, this extends to a map $D\to (A/I^{n})[1/p]$. Varying $n\geq 0$, we get a map $f\colon D\to B$.

We consider filtrations on each ring. Let $\mathrm{Fil}^{\bullet}(C)$ be the PD-adic filtration on $C$. In other words, $\mathrm{Fil}^{n}(C)$ is the ideal generated by $f_{i}^{[m]}$ for $1\leq i\leq r$ and $m\geq n$. We equip $D$ with the filtration $\mathrm{Fil}^{\bullet}(D)$ such that $\mathrm{Fil}^{n}(D)$ is the closure of $\mathrm{Fil}^{n}(C)$. Let $\mathrm{Fil}^{\bullet}(B)$ be the $IB$-adic filtration on $B$. Suppose that $\displaystyle \sum_{\substack{\underline{m}=(m_{1},\dots,m_{r}) \\ m_{1}+\dots+m_{r}=n}} a_{\underline{m}}f^{[\underline{m}]}\in \mathrm{Fil}^{n+1}(C)$, where $a_{\underline{m}}\in A$ and $f^{[\underline{m}]}\coloneqq \prod_{i=1}^{r}f_{i}^{[m_{i}]}$. Since $(f_{1},\dots,f_{r})$ is a quasi-regular sequence in $A[1/p]$, we have $a_{\underline{m}}\in I[1/p]\cap A=I$ for each $\underline{m}$. Therefore, we see that, for each $n\geq 0$,  there is a natural isomorphism
\[
\bigoplus_{\substack{\underline{m}=(m_{1},\dots,m_{r}) \\ m_{1}+\dots+m_{r}=n}} (A/I)\cdot f^{[\underline{m}]}\cong \mathrm{gr}^{n}(C)
\]
and the natural map $\mathrm{gr}^{n}(C)\to \mathrm{gr}^{n}(B)$ is injective. In particular, $\mathrm{gr}^{n}(C)$ and $C/\mathrm{Fil}^{n}(C)$ are $p$-torsion free and $p$-complete. Hence, the natural map $\mathrm{Fil}^{n}(C)/p^{m}\to C/p^{m}$ is injective for each $n,m\geq 0$. Taking limits with respect to $m\geq 0$, we get an injection $\mathrm{Fil}^{n}(C)^{\wedge}\to D$, which gives $\mathrm{Fil}^{n}(C)^{\wedge}\isom \mathrm{Fil}^{n}(D)$. Furthermore, by taking the $p$-completion of the exact sequence
\[
0\to \mathrm{Fil}^{n+1}(C)\to \mathrm{Fil}^{n}(C)\to \mathrm{gr}^{n}(C)\to 0,
\]
we obtain an exact sequence
\[
0\to \mathrm{Fil}^{n+1}(D)\to \mathrm{Fil}^{n}(D)\to \mathrm{gr}^{n}(C)\to 0.
\]
In other words, the natural map $\mathrm{gr}^{n}(C)\to \mathrm{gr}^{n}(D)$ is an isomorphism. Since $\mathrm{gr}^{n}(C)\to \mathrm{gr}^{n}(B)$ is injective, the map $\mathrm{gr}^{n}(D)\to \mathrm{gr}^{n}(B)$ is also injective.

Since $\mathrm{Fil}^{\bullet}(B)$ is separated, it is enough to prove that $\mathrm{Fil}^{\bullet}(D)$ is separated. By taking the mod-$p$ reduction of the pushout square in \cite[Lemma 2.38]{bs22}, we get an isomorphism
\[
C/p=D/p\cong A/(p,f_{1}^{p},\dots,f_{r}^{p})[\{X_{i,m}\}_{1\leq i\leq r,m\geq 1}]/(X_{i,m}^{p})_{1\leq i\leq r,m\geq 1}
\]
uniquely determined by $f_{i}^{[p^{m}]}\mapsto X_{i,m}$. From this, we see that $\mathrm{Fil}^{\bullet}(D)/p$ is a separated filtration on $D/p$, which implies that $\mathrm{Fil}^{\bullet}(D)$ is separated due to the $p$-torsion freeness of $D/\mathrm{Fil}^{n}(D)$. This proves the claim. \end{proof}

\begin{cor}\label{property of crys and semist period sheaf}
For period sheaves on $(\fX,\cM_{\fX})_{\eta,\mathrm{prok\et}}$, the following statements hold.
    \begin{enumerate}
        \item There is the following commutative diagram:
        \[
        \begin{tikzcd}
        \bB_{\mathrm{crys}} \ar[r,"\sim"] \ar[d,hook] & \cO\bB_{\mathrm{crys}}^{\nabla=0} \ar[d,hook] \\
        \bB_{\mathrm{st}} \ar[r,"\sim"] \ar[d,hook] & \cO\bB_{\mathrm{st}}^{\nabla=0}  \ar[d,hook] \\
        \widehat{\bB}_{\mathrm{st}} \ar[r,"\sim"] & \cO\widehat{\bB}_{\mathrm{st}}^{\nabla=0}.
        \end{tikzcd}
        \]
        \item There is a natural isomorphism
        \[
        \cO\bB_{\mathrm{crys}}\isom \cO\bB_{\mathrm{st}}^{N=0}.
        \]
        \item The sheaf $\cO\bB_{\mathrm{st}}$ is the subsheaf of $\cO\widehat{\bB}_{\mathrm{st}}$ consisting of nilpotent sections for $N_{\cO\widehat{\bB}_{\mathrm{st}}}$.
    \end{enumerate}
\end{cor}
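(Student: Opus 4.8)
The plan is to reduce all three assertions to the explicit local models provided by Lemma \ref{local description of obcrys}. The sheaves $\cO\bB_{\mathrm{crys}}$, $\cO\bB_{\mathrm{st}}$, $\cO\widehat{\bB}_{\mathrm{st}}$, the sheaves $\bB_{\mathrm{crys}}$, $\bB_{\mathrm{st}}$, $\widehat{\bB}_{\mathrm{st}}$, the connections $\nabla$, and the monodromy operators $N$ are all defined compatibly with restriction along strict \'{e}tale maps, and the objects $\widetilde{U}_{\infty,\alpha}$ attached to small affine framings with $n=0$ form a basis of $(\fX,\cM_{\fX})_{\eta,\mathrm{prok\et}}$; hence it suffices to prove each statement after restricting to a fixed such $\widetilde{U}_{\infty,\alpha}$. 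There Lemma \ref{local description of obcrys} gives
\[
\cO\bB_{\mathrm{crys}}|_{\widetilde{U}_{\infty,\alpha}}\cong \bB_{\mathrm{crys}}\{X_{1}-1,\dots,X_{s}-1\}|_{\widetilde{U}_{\infty,\alpha}},
\]
together with the analogous identifications for $\cO\bB_{\mathrm{st}}$ and $\cO\widehat{\bB}_{\mathrm{st}}$, under which $\nabla$ becomes the $\bB_{\mathrm{crys}}$-linear connection determined by $\nabla(X_{i})=X_{i}\otimes \mathrm{dlog}(t_{i})$, and $N$ acts only on the $\log(Y)$-variable (resp.\ the $Y$-variable) through $N_{A_{\mathrm{st}}(S^{+})}$ (resp.\ $N_{\widehat{A}_{\mathrm{st}}(S^{+})}$). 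The vertical arrows in the diagram of (1) are the evident inclusions coming from $A_{\mathrm{crys}}(S^{+})\subset A_{\mathrm{st}}(S^{+})\subset \widehat{A}_{\mathrm{st}}(S^{+})$ and their $\cO$-counterparts, and the horizontal maps are those induced by Lemma \ref{local description of obcrys}; commutativity and compatibility with Frobenius and monodromy are then immediate from the constructions.

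For (1), the point is a crystalline (log) Poincar\'{e} lemma: the kernel of $\nabla$ on $\bB_{\mathrm{crys}}\{X_{1}-1,\dots,X_{s}-1\}|_{\widetilde{U}_{\infty,\alpha}}$ is exactly the constant subsheaf $\bB_{\mathrm{crys}}|_{\widetilde{U}_{\infty,\alpha}}$. Concretely, from $\nabla_{t_{i}\partial/\partial t_{i}}\big((X_{i}-1)^{[n]}\big)=n(X_{i}-1)^{[n]}+(X_{i}-1)^{[n-1]}$ one sees by descending induction on the PD-degree that a horizontal element has all higher PD-coefficients zero, as in the classical crystalline Poincar\'{e} lemma (cf.\ \cite{tt19}); alternatively, one composes with the injections $\cO\bB_{\mathrm{crys}}\hookrightarrow \cO\bB_{\mathrm{dR}}$ (via $A_{\mathrm{crys}}(S^{+})\hookrightarrow B^{+}_{\mathrm{dR}}(S^{+})$, Lemma \ref{genaral injectivity from Acrys to Bdr}) and uses the de Rham Poincar\'{e} lemma $\cO\bB_{\mathrm{dR}}^{\nabla=0}=\bB_{\mathrm{dR}}$ of \cite{dllz23a} together with the local descriptions to identify $\cO\bB_{\mathrm{crys}}\cap \bB_{\mathrm{dR}}$ with $\bB_{\mathrm{crys}}$ inside $\cO\bB_{\mathrm{dR}}$. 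The same argument, tensored with $\bB_{\mathrm{st}}$ and $\widehat{\bB}_{\mathrm{st}}$ respectively, gives the remaining two rows of the diagram.

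For (2), over $\widetilde{U}_{\infty,\alpha}$ the sheaf $\cO\bB_{\mathrm{st}}$ is $\bB_{\mathrm{crys}}\{X_{1}-1,\dots,X_{s}-1\}[\log(Y)]$ with $N$ killing the PD-polynomial factor and sending $\log(Y)^{j}\mapsto j\log(Y)^{j-1}$; since $p$ is invertible, $N\big(\sum_{j\ge 0}a_{j}\log(Y)^{j}\big)=0$ forces $a_{j}=0$ for $j\ge 1$, whence $\cO\bB_{\mathrm{st}}^{N=0}|_{\widetilde{U}_{\infty,\alpha}}=\bB_{\mathrm{crys}}\{X_{1}-1,\dots,X_{s}-1\}|_{\widetilde{U}_{\infty,\alpha}}=\cO\bB_{\mathrm{crys}}|_{\widetilde{U}_{\infty,\alpha}}$, and these canonical isomorphisms glue. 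For (3), over $\widetilde{U}_{\infty,\alpha}$ the sheaf $\cO\widehat{\bB}_{\mathrm{st}}$ is $\bB_{\mathrm{crys}}\{X_{1}-1,\dots,X_{s}-1,Y-1\}$, and by the identification of the nilpotent locus of $\widehat{A}_{\mathrm{st}}(S^{+})$ for $N_{\widehat{A}_{\mathrm{st}}(S^{+})}$ with $A_{\mathrm{st}}(S^{+})=A_{\mathrm{crys}}(S^{+})[\log(u/[p^{\flat}])]$ recalled in the definition, tensoring with $\cO\bB_{\mathrm{crys}}$ shows that the locus of sections annihilated by a power of $N$ is $\bB_{\mathrm{crys}}\{X_{1}-1,\dots,X_{s}-1\}[\log(Y)]|_{\widetilde{U}_{\infty,\alpha}}=\cO\bB_{\mathrm{st}}|_{\widetilde{U}_{\infty,\alpha}}$; again this glues to the global statement.

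I expect the crux to be (1): setting up the crystalline log Poincar\'{e} lemma carefully, so that passing to PD-completions does not enlarge $\ker(\nabla)$ and so that the identification is compatible with the filtration on $\cO\bB_{\mathrm{dR}}$ when one reduces to the de Rham case. Parts (2) and (3) are then essentially bookkeeping on the explicit local models once the local descriptions and the structure of $N_{A_{\mathrm{st}}}$ and $N_{\widehat{A}_{\mathrm{st}}}$ are in hand.
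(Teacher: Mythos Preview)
Your proposal is correct and follows essentially the same approach as the paper: reduce to the explicit local models of Lemma \ref{local description of obcrys} and read off each assertion there. The paper's proof is a one-liner (``all assertions can be checked locally and follow from Lemma \ref{local description of obcrys} and its proof''), whereas you spell out the Poincar\'{e}-lemma computation for (1) and the elementary $N$-computations for (2) and (3); this added detail is fine and matches what the paper leaves implicit.
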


\begin{proof}
    Since all assertions can be checked locally on $(\fX,\cM_{\fX})_{\mathrm{prok\et}}$, they follow from Lemma \ref{local description of obcrys} and its proof.
\end{proof}

We can also consider crystalline and semi-stable period sheaves with connections for horizontally semi-stable log formal scheme over a not absolutely unramified base by using a fixed model over $W$.

\begin{dfn}
Suppose that $(\fX,\cM_{\fX})$ admit a fixed unramified model $(\fX_{0},\cM_{\fX_{0}})$. We define period sheaves $\cO\bB_{\mathrm{crys},(\fX_{0},\cM_{\fX_{0}})}$ (resp. $\cO\bB_{\mathrm{st},(\fX_{0},\cM_{\fX_{0}})}$) on $(\fX,\cM_{\fX})_{\eta,\mathrm{prok\et}}$ equipped connections $\nabla_{\cO\bB_{\mathrm{crys},(\fX_{0},\cM_{\fX_{0}})}}$ (resp. $\nabla_{\cO\bB_{\mathrm{st},(\fX_{0},\cM_{\fX_{0}})}}$) as the pullback of $(\cO\bB_{\mathrm{crys}},\nabla_{\cO\bB_{\mathrm{crys}}})$ (resp. $(\cO\bB_{\mathrm{st}},\nabla_{\cO\bB_{\mathrm{st}}})$) along a strict \'{e}tale map $(\fX,\cM_{\fX})_{\eta}\to (\fX_{0},\cM_{\fX_{0}})_{\eta}$. When $(\fX_{0},\cM_{\fX_{0}})$ is equipped with a Frobenius lift, $\cO\bB_{\mathrm{crys},(\fX_{0},\cM_{\fX_{0}})}$ and $\cO\bB_{\mathrm{st},(\fX_{0},\cM_{\fX_{0}})}$ are also equipped induced Frobenius structures, denoted by $\varphi_{\cO\bB_{\mathrm{crys}}}$ and $\varphi_{\cO\bB_{\mathrm{st}}}$ respectively.
\end{dfn}

\begin{prop}\label{zeroth coh of obcrys}
Let $(\fX,\cM_{\fX})$ be a horizontally semi-stable log formal scheme over $\cO_{K}$ with an unramified model $(\fX_{0},\cM_{\fX_{0}})$. Let $\mu\colon (\fX,\cM_{\fX})_{\eta,\mathrm{prok\et}}\to \fX_{0,\eta,\et}$ be the natural maps of sites. Then there are natural isomorphisms
    \begin{align*}
        \cO_{\fX_{0}}[1/p]\cong \mu_{*}\cO\bB_{\mathrm{crys},(\fX_{0},\cM_{\fX_{0}})}\cong \mu_{*}\cO\bB_{\mathrm{st},(\fX_{0},\cM_{\fX_{0}})}
    \end{align*}
    that are compatible with connections. Furthermore, when $(\fX_{0},\cM_{\fX_{0}})$ is equipped with a Frobenius lift, the above isomorphisms are compatible with Frobenius structures.
\end{prop}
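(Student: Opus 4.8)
The plan is to reduce to a local computation by working étale-locally on $\fX_{0}$, so that we may assume $(\fX_{0},\cM_{\fX_{0}}) = (\mathrm{Spf}(R_{0}),\cM_{R_{0}})$ is small affine with a fixed framing (with $n=0$), and correspondingly $(\fX,\cM_{\fX}) = (\mathrm{Spf}(R),\cM_{R})$ with $R = R_{0}\widehat{\otimes}_{W}\cO_{K}$. First I would handle the three isomorphisms separately. For $\mu_{*}\cO\bB_{\mathrm{crys}} \cong \mu_{*}\cO\bB_{\mathrm{st}}$: by Corollary \ref{property of crys and semist period sheaf}(1) the horizontal sections $\cO\bB_{\mathrm{crys}}^{\nabla=0}$ and $\cO\bB_{\mathrm{st}}^{\nabla=0}$ identify with $\bB_{\mathrm{crys}}$ and $\bB_{\mathrm{st}}$ respectively, and the inclusion $\cO\bB_{\mathrm{crys}}\hookrightarrow\cO\bB_{\mathrm{st}}$ induces the inclusion $\bB_{\mathrm{crys}}\hookrightarrow\bB_{\mathrm{st}}$ on horizontal parts; after applying $\mu_{*}$ one must check that the extra $\mathrm{log}$-variable contributes nothing, which follows because $\mathrm{log}(p/[p^{\flat}])$ has a nontrivial Galois/connection action — more precisely $N$ kills it only after the descent, but on the level of $\mu_{*}$ one uses that $\bB_{\mathrm{st}}^{N=0}=\bB_{\mathrm{crys}}$ together with the fact that $\mu_{*}$ of $\cO\bB_{\mathrm{st}}$ lands in the $N=0$ part via Corollary \ref{property of crys and semist period sheaf}(2)–(3).

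For the key isomorphism $\cO_{\fX_{0}}[1/p]\cong \mu_{*}\cO\bB_{\mathrm{crys},(\fX_{0},\cM_{\fX_{0}})}$, I would argue as in Proposition \ref{zeroth coh of obdr}: there is an obvious map $\cO_{\fX_{0}}[1/p]\to \nu_{*}\cO\bB_{\mathrm{crys}}$, and to show it is an isomorphism one computes the right-hand side using the local description of Lemma \ref{local description of obcrys}, namely $\cO\bB_{\mathrm{crys}}|_{\widetilde{U}_{\infty,\alpha}}\cong \bB_{\mathrm{crys}}\{X_{1}-1,\dots,X_{s}-1\}|_{\widetilde{U}_{\infty,\alpha}}$ with connection $\nabla(X_{i})=X_{i}\otimes\mathrm{dlog}(t_{i})$ (i.e. $\nabla_{t_{i}\partial/\partial t_{i}}$ acts as the "degree in $X_{i}-1$" Euler operator on the PD-power). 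The cohomology $\mu_{*}$ is then computed by a pro-Kummer-étale descent along $\widetilde{U}_{\infty,\alpha}$: one applies $R\Gamma$ of the Galois group $\Gamma \cong \mathbb{Z}_{p}(1)^{s}$ (suitably the Kummer tower group) acting on $\bB_{\mathrm{crys}}\{X_{1}-1,\dots\}$, combines the Poincaré-lemma-type acyclicity for the divided-power variables (the key input being that the operators $\nabla_{t_{i}\partial/\partial t_{i}}$ are bijective on the augmentation ideal after inverting $p$, so $H^{0}$ of the de Rham complex of $\bB_{\mathrm{crys}}\{X_{i}-1\}$ over $\bB_{\mathrm{crys}}$ is just $\bB_{\mathrm{crys}}$), and then uses the known computation $(\bB_{\mathrm{crys}})^{\Gamma}=\mathcal{O}_{\fX_{0}}[1/p]$ on the relevant cover — or more directly, one invokes the already-established de Rham statement (Proposition \ref{zeroth coh of obdr} together with Corollary \ref{two obdr}) via the inclusion $\cO\bB_{\mathrm{crys}}\hookrightarrow \cO\bB^{+}_{\mathrm{dR}}[1/t]$, noting that both $\mu_{*}\cO\bB_{\mathrm{crys}}$ and $\mu_{*}\cO\bB^{+}_{\mathrm{dR}}[1/t]=\cO_{\fX_{\eta}}$ contain $\cO_{\fX_{0}}[1/p]$ and the former is contained in the latter, so it suffices to see that $\mu_{*}\cO\bB_{\mathrm{crys}}$ is contained in $\cO_{\fX_{0}}[1/p]$, which is a matter of checking that a $\nabla$-flat global section of $\cO\bB_{\mathrm{crys}}$ is fixed by Frobenius-compatible Galois descent and lies in the unramified locus.

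The compatibility with connections is immediate since all maps constructed are $\nabla$-equivariant by construction, and the compatibility with Frobenius (when $(\fX_{0},\cM_{\fX_{0}})$ carries a Frobenius lift) follows because the map $\cO_{\fX_{0}}[1/p]\to \mu_{*}\cO\bB_{\mathrm{crys}}$ is the one induced by the structure sheaf, which is tautologically compatible with $\varphi_{R_{0}}$ and $\varphi_{\cO\bB_{\mathrm{crys}}}$. The main obstacle I anticipate is the acyclicity/$H^{0}$-computation for the divided-power variables in characteristic-zero-after-inverting-$p$: one must be careful that $\bB_{\mathrm{crys}}$-coefficient de Rham cohomology of the PD-polynomial algebra $\bB_{\mathrm{crys}}\{X_{i}-1\}$ behaves like a (completed) Poincaré lemma — the Euler operators $\nabla_{t_{i}\partial/\partial t_{i}}$ act on $\bB_{\mathrm{crys}}\cdot (X_{i}-1)^{[k]}$ by multiplication by $k$ plus lower-order terms, hence are topologically invertible on the augmentation ideal after inverting $p$ but only because $p$ is inverted, so the convergence of the resulting "integration" operator $\sum (-1)^{n}(\text{nilpotent})^{n}$ must be justified using the topologically quasi-nilpotent hypothesis on the connection. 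This is exactly the kind of argument appearing in the proof of the de Rham comparison, so I would import it rather than redo it, citing Proposition \ref{zeroth coh of obdr} and reducing the crystalline case to it through the inclusion $\cO\bB_{\mathrm{crys}}\hookrightarrow \cO\bB_{\mathrm{dR}}$.
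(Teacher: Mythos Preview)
Your ``more directly'' branch is on the right track---the inclusion $\cO\bB_{\mathrm{crys}}\hookrightarrow\cO\bB_{\mathrm{st}}\hookrightarrow\cO\bB_{\mathrm{dR}}$ together with Proposition~\ref{zeroth coh of obdr} is exactly what the paper uses---but your final step has a real gap. After applying $\mu_{*}$ you obtain a chain
\[
\cO_{\fX_{0}}[1/p]\hookrightarrow \mu_{*}\cO\bB_{\mathrm{crys},(\fX_{0},\cM_{\fX_{0}})}\hookrightarrow \mu_{*}\cO\bB_{\mathrm{st},(\fX_{0},\cM_{\fX_{0}})}\hookrightarrow \mu_{*}\cO\bB_{\mathrm{dR}}\cong \cO_{\fX_{\eta}},
\]
and the mismatch between the ends is precisely the extension $K_{0}\subset K$: the right-hand side is $\cO_{\fX_{0}}[1/p]\otimes_{K_{0}}K$, not $\cO_{\fX_{0}}[1/p]$. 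Your phrase ``a matter of checking that \dots\ lies in the unramified locus'' does not explain how to bridge this; it is not clear that Frobenius or Galois descent alone pins down the $K_{0}$-structure inside $\cO_{\fX_{\eta}}$.

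The paper's resolution is a clean sandwich argument you are missing: tensor the entire chain with $K$ over $K_{0}$. The first term becomes $\cO_{\fX_{0}}\otimes_{W}K\cong\cO_{\fX_{\eta}}$, and the last term is unchanged (being already a $K$-module). The only thing to check is that the rightmost map stays injective after $\otimes_{K_{0}}K$; this follows from the sheaf-level injectivity $\cO\bB_{\mathrm{st}}\otimes_{K_{0}}K\hookrightarrow\cO\bB_{\mathrm{dR}}$, which in turn reduces (via the local descriptions of Lemma~\ref{local description of our obdr} and Lemma~\ref{local description of obcrys}) to the injectivity of $\bB_{\mathrm{st}}\otimes_{K_{0}}K\to\bB_{\mathrm{dR}}$ (see \cite[Proposition~2.27]{shi22}). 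Once all maps in the tensored chain are injective with isomorphic endpoints, every map is an isomorphism; faithfully flat descent along $K_{0}\to K$ then gives the original statement. Note that this handles $\mu_{*}\cO\bB_{\mathrm{crys}}\cong\mu_{*}\cO\bB_{\mathrm{st}}$ simultaneously, so your separate (and somewhat hazy) argument via $N=0$ is unnecessary.
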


\begin{proof}
    By Lemma \ref{local description of our obdr}, Proposition \ref{zeroth coh of obdr}, and Lemma \ref{local description of obcrys}, we obtain injective maps
    \[
    \cO_{\fX_{0}}[1/p]\hookrightarrow \mu_{*}\cO\bB_{\mathrm{crys},(\fX_{0},\cM_{\fX_{0}})}\hookrightarrow \mu_{*}\cO\bB_{\mathrm{st},(\fX_{0},\cM_{\fX_{0}})}\hookrightarrow \mu_{*}\cO\bB_{\mathrm{dR}}.
    \]
    Taking tensor products with $K$, we get maps
    \begin{equation}
        \cO_{\fX_{0}}\otimes_{W} K \hookrightarrow \mu_{*}\cO\bB_{\mathrm{crys},(\fX_{0},\cM_{\fX_{0}})}\otimes_{K_{0}} K \hookrightarrow \mu_{*}\cO\bB_{\mathrm{st},(\fX_{0},\cM_{\fX_{0}})}\otimes_{K_{0}} K \to \mu_{*}\cO\bB_{\mathrm{dR}} \tag{1}
    \end{equation}
    Since the natural map $\bB_{\mathrm{st}}\otimes_{K_{0}} K\to \bB_{\mathrm{dR}}$ is injective (for example, \cite[Proposition 2.27]{shi22}), it follows from Lemma \ref{local description of our obdr} and Lemma \ref{local description of obcrys} that the map
    \[
    \cO\bB_{\mathrm{st},(\fX_{0},\cM_{\fX_{0}})}\otimes_{K_{0}} K\to \cO\bB_{\mathrm{dR}}
    \]
    is injective. Hence, the map
    \[
    \mu_{*}\cO\bB_{\mathrm{st},(\fX_{0},\cM_{\fX_{0}})}\otimes_{K_{0}} K\to \mu_{*}\cO\bB_{\mathrm{dR}}
    \]
    is also injective. Therefore, Proposition \ref{zeroth coh of obdr} implies that every map in the diagram $(1)$ is an isomorphism. This proves the assertion.
\end{proof}

\subsection{Association for local systems on log adic spaces}

In this subsection, we define the notion of semi-stable $\bZ_{p}$-local systems on $(\fX,\cM_{\fX})_{\eta}$ for a semi-stable log formal scheme $(\fX,\cM_{\fX})$ and study their properties. The most part of this subsection follows the argument of  \cite{gr24}.

\begin{construction}\label{bcrys mod ass to isocrystal}
    Let $\bB\in\{\bB_{\mathrm{crys}},\bB_{\mathrm{dR}}^{+},\bB_{\mathrm{dR}}\}$. For an isocrystal $\cE$ on $(X_{0},\cM_{X_{0}})$, we define $\bB$-module $\bB(\cE)$ on $(\fX,\cM_{\fX})_{\eta,\mathrm{prok\et}}$ by
    \[
    \bB(\cE)(U)\coloneqq \cE(A_{\mathrm{crys}}(S^{+})\twoheadrightarrow S^{+}/p,\cM_{A_{\mathrm{crys}}(S^{+})})\otimes_{B^{+}_{\mathrm{crys}}(S^{+})} \bB(U)
    \]
    for a big log affinoid perfectoid $U$ in $(\fX,\cM_{\fX})_{\eta,\mathrm{prok\et}}$ with the associated affinoid perfectoid space $\widehat{U}=\mathrm{Spa}(S,S^{+})$. When $\bB\in\{\bB_{\mathrm{dR}}^{+},\bB_{\mathrm{dR}}\}$, there exists a natural isomorphism
    \[
    \bB(\cE)(U)\cong \cE(A_{\mathrm{crys},K}(S^{+})\twoheadrightarrow S^{+}/p,\cM_{A_{\mathrm{crys},K}(S^{+})})\otimes_{A_{\mathrm{crys},K}(S^{+})} \bB(U)
    \]
    by the crystal property of $\cE$.

    Let $\bB\in\{\bB_{\mathrm{dR}}^{+},\bB_{\mathrm{dR}}\}$. When $\cE$ is enhanced to a filtered isocrystal $(\cE,\mathrm{Fil}^{\bullet}(E))$, we can define a descending filtration on $\bB(\cE)$ as follows. Let $U$ be a big log affinoid perfectoid in $(\fX,\cM_{\fX})_{\mathrm{prok\et}}$ over $(\mathrm{Spf}(\widetilde{R}_{\infty,\alpha}),\cM_{\widetilde{R}_{\infty,\alpha}})_{\eta}$ and $\widehat{U}=\mathrm{Spa}(S,S^{+})$ be the associated affinoid perfectoid space. Choose a morphism $s\colon (\mathrm{Spf}(A_{\mathrm{crys},K}(S^{+})),\cM_{A_{\mathrm{crys},K}(S^{+})})\to (\fX,\cM_{\fX})$ commuting the following diagram:
    \[
    \begin{tikzcd}
    (\mathrm{Spf}(S^{+}),\cM_{S^{+}}) \ar[r,hook] \ar[d] & (\mathrm{Spf}(A_{\mathrm{crys},K}(S^{+})),\cM_{A_{\mathrm{crys},K}(S^{+})}) \ar[ld,dashed,"s"] \\
    (\fX,\cM_{\fX}) & .    
    \end{tikzcd}
    \]
    By the crystal property of $\cE$, the map of log PD-thickenings 
    \[
    s\colon (R\twoheadrightarrow R/p,\cM_{R})\to (A_{\mathrm{crys},K}(S^{+})\twoheadrightarrow S^{+}/p,\cM_{A_{\mathrm{crys},K}(S^{+})})
    \]
    gives an isomorphism
    \[
    \bB(\cE)(U)\cong E\otimes_{R[1/p],s} \bB(U).
    \]
    The right hand side is equipped with the product filtration $\mathrm{Fil}^{\bullet}(E)\otimes \mathrm{Fil}^{\bullet}(\bB(U))$, which induced a filtration on the left side. 

    To justify this procedure, we need to check that such a map $s$ indeed exists and that the resulting filtration on $\bB(\cE)(U)$ is independent of the choice of $s$. First, we shall prove the existence of $s$. Take a factorization
    \[
    U\to (\mathrm{Spf}(\widetilde{R}_{\infty,\alpha}),\cM_{\widetilde{R}_{\infty,\alpha}})_{\eta}\to (\mathrm{Spf}(R),\cM_{R})_{\eta} \to (\fX,\cM_{\fX})_{\eta}
    \]
    as in Definition \ref{def of big log affinoid perfd}. Let $(A_{\mathrm{crys},K}(S^{+}),\bQ_{\geq 0}^{r}\oplus \bZ\cdot e)$ be a prelog ring defined by $\bQ_{\geq 0}^{r}\to A_{\mathrm{inf}}(S^{+})\to A_{\mathrm{crys},K}(S^{+})$ and $e\mapsto \pi/[\pi^{\flat}]$. Note that we have a natural isomorphism
    \[
    (\mathrm{Spf}(A_{\mathrm{crys},K}(S^{+})),\bQ_{\geq 0}^{r}\oplus \bZ\cdot e)^{a}\cong (\mathrm{Spf}(A_{\mathrm{crys},K}(S^{+})),\cM_{A_{\mathrm{crys},K}(S^{+})}).
    \]
    Consider a map of prelog rings 
    \[
    (R^{0},\bN^{r})\to (A_{\mathrm{crys},K}(S^{+}),\bQ_{\geq 0}^{r}\oplus \bZ\cdot e)
    \]
    defined by 
    \begin{align*}
    x_{i}&\mapsto [x_{i}^{\flat}] \ \ (1\leq i\leq l) \\
    y_{j}&\mapsto [y_{j}^{\flat}] \ \ (1\leq j\leq m) \\
    z_{k}&\mapsto [z_{k}^{\flat}] \ \ (1\leq k\leq n-1) \\
    z_{n}&\mapsto \pi(\prod_{k=1}^{n-1}[z_{k}^{\flat}])^{-1}
    \end{align*}
    and the monoid map $\bN^{r}\to \bQ_{\geq 0}^{r}\oplus \bZ\cdot e$ defined by
    \[
    e_{i}\mapsto
    \begin{cases}
        e_{i} \ \ (1\leq i\leq r-1) \\
        e_{n}+e \ \ (i=n).
    \end{cases}
    \]
    Then, passing to the associated log structures, we obtain a desired map $s$. 

    To check that the resulting filtration on $\bB(\cE)(U)$ is independent of the choice of $s$, it suffices to prove that, for another choice
    \[
    s'\colon (\mathrm{Spf}(A_{\mathrm{crys},K}(S^{+})),\cM_{A_{\mathrm{crys},K}(S^{+})})\to (\fX,\cM_{\fX}),
    \]
    the isomorphism
    \[
    E\otimes_{\cO_{\fX_{\eta}},s} \bB(U)\cong \bB(\cE)(U)\cong  E\otimes_{\cO_{\fX_{\eta}},s'} \bB(U)
    \]
    is a filtered isomorphism. This isomorphism can be described by using the connection $\nabla_{E}$ as in Construction \ref{stratification to connection}, and the claim follows from this description and Griffiths transversality (cf. \cite[Construction 2.28]{gr24},\cite[Construction 3.44]{dlms24}).
\end{construction}

\begin{dfn}\label{def of semist loc sys}
    Let $\bL$ be a Kummer \'{e}tale $\bZ_{p}$-local system on $(\fX,\cM_{\fX})_{\eta}$.
    \begin{enumerate}
        \item We say that $\bL$ is \emph{weakly semi-stable} if, for some locally free $F$-isocrystal $\cE$ on $(X,\cM_{X})$, there exists an isomorphism
        \[
        \vartheta\colon \bL\otimes_{\bZ_{p}} \bB_{\mathrm{crys}}\cong \bB_{\mathrm{crys}}(\cE)
        \]
        which is compatible with Frobenius isomorphisms. In this case, we say that $\bL$ and $\cE$ are \emph{associated}.
        \item We say that $\bL$ is \emph{semi-stable} if, for some filtered $F$-isocrystal $(\cE,\mathrm{Fil}^{\bullet}(E))$ on $(\fX,\cM_{\fX})$, there exists an isomorphism
        \[
        \vartheta\colon \bL\otimes_{\bZ_{p}} \bB_{\mathrm{crys}}\cong \bB_{\mathrm{crys}}(\cE)
        \]
        which is compatible with Frobenius isomorphisms and filtered after taking the base change along $\bB_{\mathrm{crys}}\to \bB_{\mathrm{dR}}$. Here, the filtration on $\bB_{\mathrm{dR}}(\cE)$ is the one constructed in Construction \ref{bcrys mod ass to isocrystal}. In this case, we say that $\bL$ and $(\cE,\mathrm{Fil}^{\bullet}(E))$ are \emph{associated}.
    \end{enumerate}
\end{dfn}

\begin{prop}\label{two conn over obdr is eq}
    Let $(\fX,\cM_{\fX})$ be a semi-stable log formal scheme over $\cO_{K}$ and let $\cE$ be an isocrystal on $(X_{0},\cM_{X_{0}})$. Let $(E,\nabla_{E})$ be the corresponding vector bundle with an integrable connection on $\fX_{\eta}$.
    \begin{enumerate}
        \item Let $\nabla'$ be the product connection $\nabla_{E}$ and $\nabla_{\cO\bB^{+}_{\mathrm{dR}}}$ on $\cO\bB^{+}_{\mathrm{dR}}$-vector bundle $E\otimes_{\cO_{\fX_{\eta}}} \cO\bB^{+}_{\mathrm{dR}}$. Then $(E\otimes_{\cO_{\fX_{\eta}}} \cO\bB^{+}_{\mathrm{dR}},\nabla')$ is naturally isomorphic to $(\bB^{+}_{\mathrm{dR}}(\cE)\otimes_{\bB^{+}_{\mathrm{dR}}} \cO\bB^{+}_{\mathrm{dR}},\text{id}\otimes \nabla_{\cO\bB^{+}_{\mathrm{dR}}})$.
        \item Assume that $\cE$ underlies a filtered isocrystal $(\cE,\mathrm{Fil}^{\bullet}(E))$, and let $\mathrm{Fil}^{\bullet}\bB^{+}_{\mathrm{dR}}(\cE)$ be the induced filtration on $\bB^{+}_{\mathrm{dR}}(\cE)$. Then the product filtration $\mathrm{Fil}^{\bullet}E\otimes_{\cO_{\fX_{\eta}}} \mathrm{Fil}^{\bullet}\cO\bB^{+}_{\mathrm{dR,log}}$ is naturally isomorphic to the product filtration $\mathrm{Fil}^{\bullet}\bB^{+}_{\mathrm{dR}}(\cE)\otimes_{\bB^{+}_{\mathrm{dR}}} \mathrm{Fil}^{\bullet}\cO\bB^{+}_{\mathrm{dR,log}}$.
    \end{enumerate}
\end{prop}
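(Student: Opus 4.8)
The plan is to reduce to an explicit local situation and then recognize the desired comparison as the Taylor expansion attached to the connection $\nabla_{E}$, exactly as in Construction \ref{stratification to connection}.

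First I would reduce: both statements assert an isomorphism of (filtered) sheaves on $(\fX,\cM_{\fX})_{\eta,\mathrm{prok\et}}$, so it is enough to produce it naturally on sections over a basis. Thus work \'{e}tale-locally on $\fX$, assume $(\fX,\cM_{\fX})=(\Spf(R),\cM_{R})$ is small affine with a fixed framing, fix a big log affinoid perfectoid $U$ with $\widehat{U}=\mathrm{Spa}(S,S^{+})$ over $\widetilde{U}_{\infty,\alpha}$, and use the coordinates $t_{1},\dots,t_{s}$ of Notation \ref{convenient notation}, for which $\mathrm{dlog}(t_{1}),\dots,\mathrm{dlog}(t_{s})$ is a basis of $\Omega^{1}_{(\fX,\cM_{\fX})_{\eta}/K}$; by Corollary \ref{two obdr} we may replace $\cO\bB^{+}_{\mathrm{dR,log}}$ by $\cO\bB^{+}_{\mathrm{dR}}$. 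There are two ring maps $R[1/p]\to\cO\bB^{+}_{\mathrm{dR}}(U)$: the structure map $\iota$, along which $\nabla_{\cO\bB^{+}_{\mathrm{dR}}}$ extends the derivation of $\cO_{\fX_{\eta}}$ (Construction \ref{construction of obdr}); and the map $\iota_{s}$ coming from a section $s\colon(A_{\mathrm{crys},K}(S^{+})\twoheadrightarrow S^{+}/p,\cM_{A_{\mathrm{crys},K}(S^{+})})\to(\fX,\cM_{\fX})$ as in Construction \ref{bcrys mod ass to isocrystal}, composed with $A_{\mathrm{crys},K}(S^{+})\to B^{+}_{\mathrm{dR}}(S^{+})\hookrightarrow\cO\bB^{+}_{\mathrm{dR}}(U)$. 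Thus $\iota_{s}$ factors through the horizontal subring $\bB^{+}_{\mathrm{dR}}(U)$, and $\theta\circ\iota=\theta\circ\iota_{s}$, so $\iota(f)-\iota_{s}(f)\in\Fil^{1}\cO\bB^{+}_{\mathrm{dR}}(U)=\Ker(\theta)$. The crystal property of $\cE$ applied to $s$ gives, as in Construction \ref{bcrys mod ass to isocrystal}, a filtered isomorphism $\bB^{+}_{\mathrm{dR}}(\cE)(U)\cong E\otimes_{R[1/p],\iota_{s}}B^{+}_{\mathrm{dR}}(S^{+})$; base changing to $\cO\bB^{+}_{\mathrm{dR}}(U)$ identifies the left-hand sides of (1) and (2) with $E\otimes_{R[1/p],\iota_{s}}\cO\bB^{+}_{\mathrm{dR}}(U)$ carrying $\id_{E}\otimes\nabla_{\cO\bB^{+}_{\mathrm{dR}}}$ and the product filtration $\Fil^{\bullet}(E)\otimes\Fil^{\bullet}\cO\bB^{+}_{\mathrm{dR}}$.

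The heart of the matter is the Taylor isomorphism. Since $\cO\bB^{+}_{\mathrm{dR}}(U)$ is $\Ker(\theta)$-adically complete, $\iota\equiv\iota_{s}\pmod{\Ker\theta}$, and $n!$ is invertible in $\cO\bB^{+}_{\mathrm{dR}}(U)$, the class $\xi_{i}\coloneqq\iota_{s}(t_{i})/\iota(t_{i})-1$ lies in $\Ker(\theta)$ with $\xi_{i}^{[n]}=\xi_{i}^{n}/n!\in\Ker(\theta)^{n}$; hence
\[
\tau\colon\; e\otimes_{\iota_{s}}1\ \longmapsto\ \sum_{\underline{n}\in\bN^{s}}\nabla_{E,\underline{n}}(e)\otimes_{\iota}\textstyle\prod_{i}\xi_{i}^{[n_{i}]}
\]
converges and is precisely the isomorphism $E\otimes_{R[1/p],\iota_{s}}\cO\bB^{+}_{\mathrm{dR}}(U)\isom E\otimes_{R[1/p],\iota}\cO\bB^{+}_{\mathrm{dR}}(U)=E\otimes_{\cO_{\fX_{\eta}}}\cO\bB^{+}_{\mathrm{dR}}$ furnished by $\nabla_{E}$ via the formula of Construction \ref{stratification to connection}, with $\cO\bB^{+}_{\mathrm{dR}}(U)$ playing the role of $\cO_{\cD^{(1)}}$ and $\iota,\iota_{s}$ the two projections. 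By the computation in the proof of Proposition \ref{eq of HPDstrat and conn} (re-used in Lemma \ref{equivalence from phinabla to phinablaN}) --- whose inputs here are that $\iota_{s}(R[1/p])$ is horizontal, that $\nabla_{\cO\bB^{+}_{\mathrm{dR}}}(\xi_{i})$ is a multiple of $\mathrm{dlog}(t_{i})$, and the identity $\nabla_{t_{i}\partial/\partial t_{i}}\circ\nabla_{\underline{n}}=\nabla_{\underline{n}+e_{i}}+n_{i}\nabla_{\underline{n}}$ --- the map $\tau$ carries $\id_{E}\otimes\nabla_{\cO\bB^{+}_{\mathrm{dR}}}$ to the product connection $\nabla'=\nabla_{E}\otimes1+1\otimes\nabla_{\cO\bB^{+}_{\mathrm{dR}}}$, which with the previous paragraph gives (1). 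For (2): Griffiths transversality of $(E,\nabla_{E},\Fil^{\bullet}E)$ forces $\nabla_{E,\underline{n}}(\Fil^{m}E)\subset\Fil^{m-|\underline{n}|}E$, while $\xi_{i}^{[n_{i}]}\in\Fil^{n_{i}}\cO\bB^{+}_{\mathrm{dR}}$, so $\tau$ maps the product filtration $\Fil^{\bullet}(E)\otimes_{R[1/p],\iota_{s}}\Fil^{\bullet}\cO\bB^{+}_{\mathrm{dR}}$ isomorphically onto $\Fil^{\bullet}(E)\otimes_{\cO_{\fX_{\eta}}}\Fil^{\bullet}\cO\bB^{+}_{\mathrm{dR}}$, which with the previous paragraph gives (2).

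Finally, the composite isomorphism $\bB^{+}_{\mathrm{dR}}(\cE)(U)\otimes_{\bB^{+}_{\mathrm{dR}}}\cO\bB^{+}_{\mathrm{dR}}(U)\isom E\otimes_{\cO_{\fX_{\eta}}}\cO\bB^{+}_{\mathrm{dR}}$ is independent of the choice of $s$ --- by the same connection-theoretic independence argument already made in Construction \ref{bcrys mod ass to isocrystal} --- and functorial in $\cE$, so these local isomorphisms glue to the asserted global ones. I expect the main obstacle to be the bookkeeping around connections and log structures: one must check that $\nabla_{E}$, the $\bB^{+}_{\mathrm{dR}}$-linear connection $\nabla_{\cO\bB^{+}_{\mathrm{dR}}}$, and the connections produced by the crystal/stratification formalism all agree under the identifications, with a consistent $\mathrm{dlog}$-normalization of the $t_{i}$; once this is pinned down, the horizontality and strictness of $\tau$ are formal consequences of the Berthelot--Katz computation invoked above.
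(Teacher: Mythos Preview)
Your argument is correct and uses the same essential ingredients as the paper --- the crystal property of $\cE$ together with the Taylor expansion of Construction~\ref{stratification to connection} --- but the organization differs. The paper does not route part~(1) through a section $s$; instead it builds the diagram of log PD-envelopes $R$, $D^{+}(1)$, $A_{\mathrm{crys}}(S^{+})$, $\cO\bA_{\mathrm{crys}}(S^{+})$, $\cO\bA_{\mathrm{crys}}(1)(S^{+})$ and their $\bB^{+}_{\mathrm{dR}}$-completions, obtains the isomorphism $E\otimes_{R[1/p]}\cO\bB^{+}_{\mathrm{dR}}\cong\cE_{\cO\bB^{+}_{\mathrm{dR}}(S^{+})}\cong\bB^{+}_{\mathrm{dR}}(\cE)\otimes_{\bB^{+}_{\mathrm{dR}}}\cO\bB^{+}_{\mathrm{dR}}$ directly from the crystal property along $R\to\cO\bB^{+}_{\mathrm{dR}}(S^{+})\leftarrow B^{+}_{\mathrm{dR}}(S^{+})$, and then verifies the connection compatibility by computing $q_{1}(e\otimes f)-q_{2}(e\otimes f)$ inside $\cE_{\cO\bB^{+}_{\mathrm{dR}}(1)(S^{+})}$ and reading off the $\xi_{i}$-coefficients. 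This has the advantage that the isomorphism in (1) is manifestly canonical (no auxiliary $s$ to later show independence of), and the horizontality check becomes a direct appeal to the crystal property for the two projections $q_{1},q_{2}$ rather than a separate verification that $\tau$ intertwines connections. Your approach, by contrast, treats (1) and (2) uniformly via $s$ and the single Taylor map $\tau$, which is arguably more streamlined once one accepts the independence-of-$s$ check; the paper only introduces $s$ (and the map $\beta\colon D^{+}(1)\to\cO\bA_{\mathrm{crys}}(S^{+})$) for part~(2), where it is genuinely needed to compare filtrations. Both routes rely on Griffiths transversality in exactly the same way for the filtered statement.
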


\begin{proof}
  We follow the argument of \cite[Proposition 2.36]{gr24}. By working \'{e}tale locally on $\fX$, we may assume that $(\fX,\cM_{\fX})=(\mathrm{Spf}(R),\cM_{R})$ is a small affine log formal scheme with a fixed framing. Let $U$ be a big log affinoid perfectoid in $(\fX,\cM_{\fX})_{\eta,\mathrm{prok\et}}$ with the associated affinoid perfectoid space $\widehat{U}=\mathrm{Spa}(S,S^{+})$. We define $\cO\bA_{\mathrm{crys}}(S^{+})$, $\cO\bA_{\mathrm{crys}}(1)(S^{+})$, and $D^{+}(1)$ to be the $p$-completed log PD-envelope of
  \begin{align*}
      (R\widehat{\otimes}_{W} A_{\mathrm{inf}}(S^{+}),\cM_{R}\oplus \cM_{A_{\mathrm{inf}(S^{+})}})^{a}&\to (S^{+},\cM_{S^{+}}), \\
      (R\widehat{\otimes}_{\cO_{K}} R\widehat{\otimes}_{W} A_{\mathrm{inf}}(S^{+}),\cM_{R}\oplus \cM_{R}\oplus \cM_{A_{\mathrm{inf}(S^{+})}})^{a}&\to (S^{+},\cM_{S^{+}}), \\
      (R\widehat{\otimes}_{\cO_{K}} R,\cM_{R}\oplus \cM_{R})^{a}&\to (R,\cM_{R}),
  \end{align*}
  respectively. They fit into the following diagram of $p$-adic PD-thickenings in $(X_{0},\cM_{X_{0}})_{\mathrm{crys}}$:
  \[
  \begin{tikzcd}
      D^{+}(1) \ar[r] & \cO\bA_{\mathrm{crys}}(1)(S^{+}) \\
      R \ar[u,shift left=0.8ex,"p_{1}"] \ar[u,shift right=0.8ex,"p_{2}"'] \ar[r] & \cO\bA_{\mathrm{crys}}(S^{+}) \ar[u,shift left=0.8ex,"q_{1}"] \ar[u,shift right=0.8ex,"q_{2}"'] \\
      & A_{\mathrm{crys}}(S^{+}) \ar[u]. 
  \end{tikzcd}
  \]
  Fix a factorization $U\to (\mathrm{Spf}(\widetilde{R}_{\infty,\alpha}),\cM_{\widetilde{R}_{\infty,\alpha}})_{\eta}\to (\fX,\cM_{\fX})_{\eta}$, which induces charts $\bQ_{\geq 0}^{r}\to \cM_{S^{+}}$ and $\bQ_{\geq 0}^{r}\to \cM_{A_{\mathrm{inf}}(S^{+})}$. We use the notation in Remark \ref{convenient notation} and Construction \ref{stratification to connection}. Analogously, the above diagram admits maps to the following diagram:
  \[
  \begin{tikzcd}
      R[1/p][[\xi_{1},\dots,\xi_{s}]] \cong D(1) \ar[r] & \cO\bB^{+}_{\mathrm{dR}}(1)(S^{+}) \cong \cO\bB^{+}_{\mathrm{dR}}(S^{+})[[\xi_{1},\dots,\xi_{s}]] \\
       R[1/p] \ar[u,shift left=0.8ex,"p_{1}"] \ar[u,shift right=0.8ex,"p_{2}"'] \ar[r] & \cO\bB^{+}_{\mathrm{dR}}(S^{+}) \ar[u,shift left=0.8ex,"q_{1}"] \ar[u,shift right=0.8ex,"q_{2}"'] \\
       & B^{+}_{\mathrm{dR}}(S^{+}) \ar[u],
  \end{tikzcd}
  \]
  where $\cO\bB^{+}_{\mathrm{dR}}(S^{+})$, $\cO\bB^{+}_{\mathrm{dR}}(1)(S^{+})$, and $D(1)$ is the formal completion of the exactification of surjections of prelog rings
  \begin{align*}
      ((R\widehat{\otimes}_{W} A_{\mathrm{inf}}(S^{+}))[1/p],\bN^{r}\oplus \bQ_{\geq 0}^{r})&\to (S^{+},\bQ_{\geq 0}^{r}), \\
      ((R\widehat{\otimes}_{\cO_{K}} R\widehat{\otimes}_{W} A_{\mathrm{inf}}(S^{+}))[1/p],\bN^{r}\oplus \bN^{r}\oplus \bQ_{\geq 0}^{r})&\to (S^{+},\bQ_{\geq 0}^{r}), \\
      ((R\widehat{\otimes}_{\cO_{K}} R)[1/p],\bN^{r}\oplus \bN^{r})\to (R,\bN^{r}).
  \end{align*}
  Note that $\cO\bB^{+}_{\mathrm{dR}}(S^{+})$ coincides with $\cO\bB^{+}_{\mathrm{dR}}(U)$ by Construction \ref{construction of obdr}. By extension of scalars, we can evaluate $\cE$ at each term in the above diagram. 

$(1)$: The crystal property of $\cE$ for the middle horizontal map and the right bottom vertical map induces a natural isomorphism 
  \[
  (E\otimes_{R[1/p]} \cO\bB^{+}_{\mathrm{dR}})(U)\cong (\bB^{+}_{\mathrm{dR}}(\cE)\otimes_{\bB^{+}_{\mathrm{dR}}} \cO\bB^{+}_{\mathrm{dR}})(U).
  \]
  It suffices to prove that this isomorphism is compatible with connections. The maps $p_{i}\colon R[1/p]\to D(1)$ and $q_{i}\colon \cO\bB^{+}_{\mathrm{dR}}(S^{+})\to \cO\bB^{+}_{\mathrm{dR}}(1)(S^{+})$ induce $p_{i}\colon E\to \cE(D(1))$ and $q_{i}\colon \cE(\cO\bB^{+}_{\mathrm{dR}}(S^{+}))\to \cE(\cO\bB^{+}_{\mathrm{dR}}(1)(S^{+}))$ for $i=1,2$, and we have equations
  \begin{align*}
      &q_{1}(e\otimes f)-q_{2}(e\otimes f) \\
      &= q_{1}(f)p_{1}(e)-q_{2}(f)p_{2}(e) \\
      &=q_{1}(f)(p_{1}(e)-p_{2}(e))+(q_{1}(f)-q_{2}(f))p_{2}(e) \\
      &=q_{1}(f)(\sum_{(n_{1},\dots,n_{s})\in \bN^{s}\backslash \{0\}}(\prod_{i=1}^{s}\nabla_{E, t_{i}^{n_{i}}\frac{\partial^{n_{i}}}{\partial t_{i}^{n_{i}}}})(e)(\prod_{i=1}^{s} \frac{\xi_{i}^{n_{i}}}{n_{i}!})) \\
      &+(\sum_{(n_{1},\dots,n_{s})\in \bN^{s}\backslash \{0\}}(\prod_{i=1}^{s}\nabla_{\cO\bB^{+}_{\mathrm{dR,log}}, t_{i}^{n_{i}}\frac{\partial^{n_{i}}}{\partial t_{i}^{n_{i}}}})(f)(\prod_{i=1}^{s} \frac{\xi_{i}^{n_{i}}}{n_{i}!}))p_{2}(e)
  \end{align*}
  for $e\otimes f\in E\otimes_{R[1/p]} \cO\bB^{+}_{\mathrm{dR}}(S^{+})\cong \cE(\cO\bB^{+}_{\mathrm{dR}}(S^{+}))$. Hence, the coefficient of $\xi_{i}$ in the Taylor expansion of $q_{1}(x)-q_{2}(x)$ is $\nabla'_{t_{i}\frac{\partial}{\partial t_{i}}}(x)$ for every $x\in \cE(\cO\bB^{+}_{\mathrm{dR}}(S^{+}))$. Therefore, applying the crystal property of $\cE$ to the right column in the above diagram, we see that $\nabla'$ is horizontal on $\cE(B^{+}_{\mathrm{dR}}(S^{+}))$. This proves the assertion.

$(2)$: By Construction \ref{bcrys mod ass to isocrystal}, the filtration on $\bB^{+}_{\mathrm{dR}}(\cE)\otimes_{\bB^{+}_{\mathrm{dR}}} \cO\bB^{+}_{\mathrm{dR}}$ is computed by choosing a section 
  \[
  s\colon (R,\cM_{R})\to (A_{\mathrm{crys},K}(S^{+}),\cM_{A_{\mathrm{crys},K}(S^{+})}).
  \]
  There exists a unique map $\beta$ of $p$-adic log PD-thickenings fitting into the following diagram:
  \[
  \begin{tikzcd}
      (R,\cM_{R}) \ar[r,yshift=0.8ex,"p_{1}"] \ar[r,yshift=-0.8ex,"p_{2}"'] & (D^{+}(1),\cM_{D^{+}(1)}) \ar[r,"\beta"] & (\cO\bA_{\mathrm{crys}}(S^{+}),\cM_{\cO\bA_{\mathrm{crys}}(S^{+})})
  \end{tikzcd}
  \]
  such that $\beta\circ p_{1}$ is a natural map and $\beta\circ p_{2}$ is the composition map
  \[
  (R,\cM_{R})\stackrel{s}{\to} (A_{\mathrm{crys},K}(S^{+}),\cM_{A_{\mathrm{crys},K}(S^{+})})\to (\cO\bA_{\mathrm{crys}}(S^{+}),\cM_{\cO\bA_{\mathrm{crys}}(S^{+})}).
  \]
  The above diagram admits a map to the following diagram:
  \[
  \begin{tikzcd}
      R[1/p] \ar[r,yshift=0.8ex,"p_{1}"] \ar[r,yshift=-0.8ex,"p_{2}"'] & D(1) \ar[r,"\beta"] & \cO\bB_{\mathrm{dR}}(S^{+}).
  \end{tikzcd}
  \]
  Therefore, the isomorphism
  \[
  E\otimes_{R[1/p]} \cO\bB^{+}_{\mathrm{dR}}(S^{+})\isom (\bB^{+}_{\mathrm{dR}}(\cE)\otimes_{\bB^{+}_{\mathrm{dR}}} \cO\bB^{+}_{\mathrm{dR}})(U)\isom E\otimes_{R[1/p],\beta\circ p_{2}} \cO\bB^{+}_{\mathrm{dR}}(S^{+})
  \]
  is given by 
  \[
  e\otimes 1\mapsto \sum_{(n_{1},\dots,n_{s})\in \bN^{s}}(\prod_{i=1}^{s}\nabla_{E, t_{i}^{n_{i}}\frac{\partial^{n_{i}}}{\partial t_{i}^{n_{i}}}})(e)\otimes (\prod_{i=1}^{s} \frac{\beta(\xi_{i})^{n_{i}}}{n_{i}!})
  \]
  for $e\in E$, and the Griffiths transversality implies that this is a filtered isomorphism.
\end{proof}

\begin{prop}\label{weakly semi-stable implies semi-stable}

Let $(\fX,\cM_{\fX})$ be a semi-stable log formal scheme over $\cO_{K}$. Let $\bL$ be a weakly semi-stable Kummer \'{e}tale local system on $(\fX,\cM_{\fX})_{\eta}$, and take a $F$-isocrystal $\cE$ on $(X,\cM_{X})$ and an isomorphism $\vartheta\colon \bL\otimes_{\bZ_{p}} \bB_{\mathrm{crys}}\cong \bB_{\mathrm{crys}}(\cE)$ as in Definition \ref{def of semist loc sys} (1). Then there exists a unique filtration $\mathrm{Fil}^{\bullet}_{\mathrm{dR}}(E)$ on $E\coloneqq \cE_{(\fX,\cM_{\fX})}$ satisfying the following conditions:
\begin{enumerate}
    \item $(\cE,\mathrm{Fil}^{\bullet}_{\mathrm{dR}}(E))$ is a filtered $F$-isocrystal on $(\fX,\cM_{\fX})$;
    \item the isomorphism 
    \[
    \vartheta\otimes_{\bB_{\mathrm{crys}}} \bB_{\mathrm{dR}}\colon \bL\otimes_{\bZ_{p}} \bB_{\mathrm{dR}}\cong \bB_{\mathrm{dR}}(\cE)
    \]
    is a filtered isomorphism, where the filtration on $\bB_{\mathrm{dR}}(\cE)$ is the one constructed in Construction \ref{bcrys mod ass to isocrystal}.
\end{enumerate}
  
\end{prop}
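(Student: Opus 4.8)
The plan is to construct the filtration $\mathrm{Fil}^{\bullet}_{\mathrm{dR}}(E)$ by pulling back the de Rham filtration along the isomorphism $\vartheta$, then checking the two listed conditions. Concretely, I would first apply $-\otimes_{\bB_{\mathrm{crys}}} \bB_{\mathrm{dR}}$ to $\vartheta$ to obtain an isomorphism of $\bB_{\mathrm{dR}}$-modules $\vartheta_{\mathrm{dR}}\colon \bL\otimes_{\bZ_{p}}\bB_{\mathrm{dR}}\isom \bB_{\mathrm{dR}}(\cE)$. The target $\bB_{\mathrm{dR}}(\cE)$ carries no intrinsic filtration yet (that is precisely what we are trying to produce), but the source $\bL\otimes_{\bZ_p}\bB_{\mathrm{dR}}$ does carry the filtration coming from $\mathrm{Fil}^{\bullet}\bB_{\mathrm{dR}}$. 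Transporting it via $\vartheta_{\mathrm{dR}}$ gives a filtration on $\bB_{\mathrm{dR}}(\cE)$. The key point is then to show this filtration on $\bB_{\mathrm{dR}}(\cE)$ descends to a filtration on $E$ itself, i.e. comes from a filtration $\mathrm{Fil}^{\bullet}_{\mathrm{dR}}(E)$ on the vector bundle $E = \cE_{(\fX,\cM_{\fX})}$ via the recipe of Construction \ref{bcrys mod ass to isocrystal}.

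The heart of the argument is the descent step, and here I would follow \cite[Construction 2.28]{gr24} closely. First one works \'{e}tale locally on $\fX$ and reduces to $(\fX,\cM_{\fX})=(\mathrm{Spf}(R),\cM_{R})$ small affine with a fixed framing. One then passes further to a big log affinoid perfectoid cover $U$ over $\widetilde{U}_{\infty,\alpha}$, so that $\bB_{\mathrm{dR}}(\cE)(U)\cong E\otimes_{R[1/p],s}\bB_{\mathrm{dR}}(S^{+})$ by the crystal property. The transported filtration on the left, together with the fact that $\bL\otimes_{\bZ_p}\bB_{\mathrm{dR}}^+$ is a $\bB_{\mathrm{dR}}^+$-lattice that is $\widehat{\bZ}_p$-pointwise trivialized on $U$, exhibits the filtration as coming from a $\bB_{\mathrm{dR}}^+(S^{+})$-submodule; using that $\bL$ is a local system (so the filtration jumps are locally constant) and a standard argument that a filtered $\bB_{\mathrm{dR}}$-module trivial modulo connection descends along $R[1/p]\to \bB_{\mathrm{dR}}^+(S^{+})$, one gets a filtration $\mathrm{Fil}^{\bullet}_{\mathrm{dR}}(E)$ on $E$ with locally free graded pieces, well-defined independently of $U$ and of the section $s$ (here Proposition \ref{two conn over obdr is eq}(2) and Griffiths transversality for $\cO\bB^+_{\mathrm{dR}}$ are what guarantee $s$-independence, exactly as in the last paragraph of Construction \ref{bcrys mod ass to isocrystal}). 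Then condition (2) holds by construction, and the filtration is unique because two filtrations inducing the same filtration on $\bB_{\mathrm{dR}}(\cE)$ after the faithfully flat, filtered base change $R[1/p]\to \bB^+_{\mathrm{dR}}(S^{+})$ must coincide.

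It remains to check condition (1): that $(\cE,\mathrm{Fil}^{\bullet}_{\mathrm{dR}}(E))$ is a filtered $F$-isocrystal, i.e. that $\mathrm{Fil}^{\bullet}_{\mathrm{dR}}(E)$ is separated and exhaustive with locally free graded pieces and satisfies Griffiths transversality with respect to $\nabla_E$. Separatedness, exhaustiveness and local freeness of graded pieces follow from the corresponding properties of $\mathrm{Fil}^{\bullet}\bB_{\mathrm{dR}}$ and the descent, possibly after a boundedness argument (the jumps of $\bL$ are bounded since $\bL$ is a finite-rank local system). For Griffiths transversality, I would use Proposition \ref{two conn over obdr is eq}(1): the product connection on $E\otimes_{\cO_{\fX_\eta}}\cO\bB^+_{\mathrm{dR}}$ is identified with $\mathrm{id}\otimes\nabla_{\cO\bB^+_{\mathrm{dR}}}$ on $\bB^+_{\mathrm{dR}}(\cE)\otimes_{\bB^+_{\mathrm{dR}}}\cO\bB^+_{\mathrm{dR}}$, which is visibly Griffiths-transverse to $\mathrm{Fil}^{\bullet}\cO\bB^+_{\mathrm{dR}}$; combined with the fact that the transported filtration is $\nabla_{\cO\bB^+_{\mathrm{dR}}}$-horizontal-up-to-a-shift (because $\mathrm{Fil}^\bullet\bB_{\mathrm{dR}}$ is $\nabla$-stable after the appropriate shift, $\bL$ being a local system), one deduces Griffiths transversality for $(\cE,\mathrm{Fil}^{\bullet}_{\mathrm{dR}}(E))$ downstairs by faithfully flat descent along $\cO_{\fX_\eta}\to \cO\bB^+_{\mathrm{dR}}$ and $\nu_*\cO\bB_{\mathrm{dR}}=\cO_{\fX_\eta}$ (Proposition \ref{zeroth coh of obdr}).

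The main obstacle I anticipate is the descent step itself: verifying that the filtration transported onto $\bB_{\mathrm{dR}}(\cE)$ actually arises from a filtration on the finite vector bundle $E$ over $R[1/p]$, rather than merely on its $\bB^+_{\mathrm{dR}}(S^{+})$-base change. This is where one genuinely needs $\vartheta$ to be a map of $F$-modules (so Frobenius pins down the filtration jumps) and where the interplay with the connection — i.e. Proposition \ref{two conn over obdr is eq} and Griffiths transversality — is essential to rule out "tilted" filtrations that do not descend. Everything else is bookkeeping modeled on \cite[Construction 2.28]{gr24}.
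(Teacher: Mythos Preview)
Your descent step is the gap. You propose to transport the $\bB_{\mathrm{dR}}$-filtration from $\bL\otimes\bB_{\mathrm{dR}}$ to $\bB_{\mathrm{dR}}(\cE)$ and then ``descend'' it to a filtration on $E$ over $R[1/p]$, but you never establish the input that makes such descent possible: that $\bL$ is de Rham. The claim ``a filtered $\bB_{\mathrm{dR}}$-module trivial modulo connection descends along $R[1/p]\to\bB_{\mathrm{dR}}^+(S^+)$'' is not a standard argument; it is essentially the definition of $\bL$ being de Rham, and it is a nontrivial theorem, not a formality. Your citation of \cite[Construction 2.28]{gr24} points in the wrong direction: that construction goes from a given filtration on $E$ to one on $\bB_{\mathrm{dR}}(\cE)$, not the reverse. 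The Frobenius structure does not help here either; it constrains isocrystal data, not the Hodge filtration.

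The paper's proof supplies exactly this missing ingredient and takes a different route. First it restricts to the open locus $\fU\subset\fX$ where the log structure is trivial; there $\bL|_{\fU_\eta}$ is weakly crystalline in the sense of \cite{gr24}, hence de Rham by \cite[Proposition 2.38]{gr24}, and then de Rham on the larger trivial-log-structure locus $U\subset\fX_\eta$ by \cite[Theorem 3.9(iv)]{lz17}. With de Rham-ness in hand, the paper invokes the log $p$-adic Riemann--Hilbert functor of \cite{dllz23a}: one computes $\mu_*(\bL\otimes\cO\bB_{\mathrm{dR,log}})\cong E$ via Proposition \ref{two conn over obdr is eq}, and then \cite[Theorem 3.2.7(1)(2)]{dllz23a} (which needs the de Rham input) produces the filtration $\mathrm{Fil}^\bullet_{\mathrm{dR}}(E)$ with locally free graded pieces and Griffiths transversality automatically. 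Finally, taking horizontal sections via \cite[Corollary 2.4.2(2)]{dllz23a} yields the filtered isomorphism over $\bB_{\mathrm{dR}}$. So the construction of the filtration, the verification of condition (1), and condition (2) all come packaged from the Riemann--Hilbert machinery once de Rham-ness is known; your attempt to verify Griffiths transversality and local freeness by hand is unnecessary once you use this route, and as written your arguments for those properties are also incomplete.
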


\begin{proof}
Let $\fU$ be the open subset of $\fX$ on which the log structure $\cM_{\fX}$ is trivial and $U_{0}$ be the mod-$p$ fiber of $\fU$. Then $\bL|_{\fU_{\eta}}$ is weakly crystalline (with respect to $\fU$) in the sense of \cite[Definition 2.31]{gr24}, and $\bL|_{\fU_{\eta}}$ and $\cE|_{U_{0}}$ are associated. By \cite[Proposition 2.38]{gr24}, $\bL|_{\fU_{\eta}}$ is de Rham in the sense of \cite[Definition 8.3]{sch13}. Let $U$ denote the open subset of $\fX_{\eta}$ on which the log structure $\cM_{\fX_{\eta}}$ is trivial. Then $U$ contains $\fU_{\eta}$ and $\bL|_{U}$ is de Rham by \cite[Theorem 3.9 (iv)]{lz17}.
    
Set $\cO\bB_{\mathrm{dR,log}}(\cE)\coloneqq \bB_{\mathrm{dR}}(\cE)\otimes_{\bB_{\mathrm{dR}}} \cO\bB_{\mathrm{dR,log}}$, which is isomorphic to $E\otimes_{\cO_{\fX_{\eta}}} \cO\bB_{\mathrm{dR,log}}$ as $\cO\bB_{\mathrm{dR,log}}$-vector bundles with connection by Proposition \ref{two conn over obdr is eq}. Let $\mu\colon (\fX,\cM_{\fX})_{\eta,\mathrm{prok\et}}\to \fX_{\eta,\mathrm{an}}$ be the canonical map of sites. We have isomorphisms
\[
\mu_{*}(\bL\otimes_{\bZ_{p}} \cO\bB_{\mathrm{dR,log}})\cong \mu_{*}(\bB_{\mathrm{dR}}(\cE)\otimes_{\bB_{\mathrm{dR}}} \cO\bB_{\mathrm{dR,log}})\cong \mu_{*}(E\otimes_{\cO_{\fX_{\eta}}} \cO\bB_{\mathrm{dR,log}})\cong E.
\]
By \cite[Theorem 3.2.7 (1)(2)]{dllz23a} and what we proved in the previous paragraph, the direct image of the filtration $\bL\otimes_{\bZ_{p}} \mathrm{Fil}^{\bullet}\cO\bB_{\mathrm{dR,log}}$ gives a filtration $\mathrm{Fil}^{\bullet}(E)$ on $E$ with locally free grading pieces such that $(\cE,\mathrm{Fil}^{\bullet}(E))$ is a filtered $F$-isocrystal, and the given isomorphism is refined to filtered isomorphisms
\begin{align*}
&(\bL\otimes_{\bZ_{p}} \cO\bB_{\mathrm{dR,log}},\bL\otimes_{\bZ_{p}} \mathrm{Fil}^{\bullet}\cO\bB_{\mathrm{dR,log}}) \\
&\cong (E\otimes_{\cO_{\fX_{\eta}}} \cO\bB_{\mathrm{dR,log}},\mathrm{Fil}^{\bullet}E\otimes_{\cO_{\fX_{\eta}}} \mathrm{Fil}^{\bullet}\cO\bB_{\mathrm{dR,log}}) \\
&\cong (\bB_{\mathrm{dR}}(\cE)\otimes_{\bB_{\mathrm{dR}}} \cO\bB_{\mathrm{dR,log}},\mathrm{Fil}^{\bullet}\bB_{\mathrm{dR}}(\cE)\otimes_{\bB_{\mathrm{dR}}} \mathrm{Fil}^{\bullet}\cO\bB_{\mathrm{dR,log}})
\end{align*}
that are compatible with connections. Taking horizontal sections in both sides, we see that $\vartheta\otimes_{\bB_{\mathrm{crys}}} \bB_{\mathrm{dR}}$ gives a filtered isomorphism
\[
(\bL\otimes_{\bZ_{p}} \bB_{\mathrm{dR}},\bL\otimes_{\bZ_{p}} \mathrm{Fil}^{\bullet}\bB_{\mathrm{dR}})\cong (\bB_{\mathrm{dR}}(\cE),\mathrm{Fil}^{\bullet}\bB_{\mathrm{dR}}(\cE))
\]
by \cite[Corollary 2.4.2 (2)]{dllz23a}. This proves the assertion for the existence of the desired filtration, and the uniqueness of it follows from the above argument.
\end{proof}

\begin{prop}\label{two obst mod}
Let $(\fX,\cM_{\fX})=(\mathrm{Spf}(R),\cM_{R})$ be a horizontally semi-stable log formal scheme over $\cO_{K}$ with a fixed unramified model $(\fX_{0},\cM_{\fX_{0}})=(\mathrm{Spf}(R_{0}),\cM_{R_{0}})$. Let $\cE$ be an object of $\mathrm{Isoc}^{\varphi}_{\mathrm{lftf}}((X_{k},\cM_{X_{k}}\oplus 0^{\bN})^{a}_{\mathrm{crys}})$ and $(E_{0},\nabla_{E_{0}},\varphi_{E_{0}},N_{E_{0}})$ be the object of the category $\mathrm{Vect}^{\varphi,\nabla,N}(R_{0})$ associated with $\mathrm{Res}(\cE)$ (see Proposition \ref{Fisoc on log crys site and Fisoc with monodromy ope}). Then there is a natural isomorphism of the following $\cO\bB_{\mathrm{st}}$-modules with connection, Frobenius structures, and monodromy operators:
    \begin{itemize}
        \item $(\bB_{\mathrm{crys}}(\cE)\otimes_{\bB_{\mathrm{crys}}} \cO\bB_{\mathrm{st},(\fX_{0},\cM_{\fX_{0}})},\mathrm{id}\otimes \nabla_{\cO\bB_{\mathrm{st},(\fX_{0},\cM_{\fX_{0}})}},\varphi_{\bB_{\mathrm{crys}}(\cE)}\otimes \varphi_{\cO\bB_{\mathrm{st}}},\mathrm{id}\otimes N_{\cO\bB_{\mathrm{st},(\fX_{0},\cM_{\fX_{0}})}})$
        \item $(E_{0}\otimes_{R_{0}[1/p]} \cO\bB_{\mathrm{st},(\fX_{0},\cM_{\fX_{0}})},\varphi_{E_{0}}\otimes \varphi_{\cO\bB_{\mathrm{st},(\fX_{0},\cM_{\fX_{0}})}},\nabla_{E_{0}}\otimes \nabla_{\cO\bB_{\mathrm{st},(\fX_{0},\cM_{\fX_{0}})}},N_{E_{0}}\otimes \mathrm{id}+\mathrm{id}\otimes N_{\cO\bB_{\mathrm{st},(\fX_{0},\cM_{\fX_{0}})}})$.
    \end{itemize}
\end{prop}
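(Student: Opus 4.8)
The plan is to factor the comparison through the crystalline period sheaf and then correct the monodromy operator by an explicit twist. First I would reduce to the case where $(\fX,\cM_{\fX})=(\Spf(R),\cM_{R})$ is small affine with a fixed framing and $(\fX_{0},\cM_{\fX_{0}})=(\Spf(R_{0}),\cM_{R_{0}})$ is the unramified model defined by the framing, equipped with its canonical Frobenius lift $\varphi_{R_{0}}$: all the sheaves involved satisfy \'{e}tale descent and every construction is functorial, so working \'{e}tale-locally is harmless. Along the way I would record, via Proposition \ref{Fisoc on log crys site and Fisoc with monodromy ope} and Lemma \ref{equivalence from phinabla to phinablaN}, that $\cE$ corresponds to $M'=E_{0}\widehat{\otimes}_{W}W\{u\}$ with $\varphi_{M'}=\varphi_{E_{0}}\otimes\varphi_{W\{u\}}$ and $\nabla_{M'}=\nabla_{E_{0}}\otimes\mathrm{id}\oplus(N_{E_{0}}\otimes\mathrm{id}+\mathrm{id}\otimes N_{W\{u\}})\,\mathrm{dlog}(u)$; this is the bookkeeping that ties $N_{E_{0}}$ to the $\mathrm{dlog}(u)$-direction of the connection, and here $\bB_{\mathrm{crys}}(\cE)$ is understood as formed from the crystal $U(\cE)$ via Construction \ref{bcrys mod ass to isocrystal}, using Dwork's trick (Lemma \ref{dwork trick}, Remark \ref{dwork trick and unr model}) to move between $(X_{k},\cM_{X_{k}})$ and $(X_{0},\cM_{X_{0}})$.

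Next I would establish the crystalline version of the comparison: a natural isomorphism
\[
\bB_{\mathrm{crys}}(\cE)\otimes_{\bB_{\mathrm{crys}}}\cO\bB_{\mathrm{crys},(\fX_{0},\cM_{\fX_{0}})}\;\cong\;E_{0}\otimes_{R_{0}[1/p]}\cO\bB_{\mathrm{crys},(\fX_{0},\cM_{\fX_{0}})}
\]
intertwining $\mathrm{id}\otimes\nabla_{\cO\bB_{\mathrm{crys}}}$ with $\nabla_{E_{0}}\otimes\mathrm{id}+\mathrm{id}\otimes\nabla_{\cO\bB_{\mathrm{crys}}}$ and compatible with the tensor Frobenius structures. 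This is proved exactly as Proposition \ref{two conn over obdr is eq}(1) (i.e.\ as \cite[Proposition 2.36]{gr24}), with $\cO\bB^{+}_{\mathrm{dR}}$ replaced by $\cO\bB_{\mathrm{crys},(\fX_{0},\cM_{\fX_{0}})}$ and the exactification replaced by the $p$-adic log PD-envelope $\cO\bA_{\mathrm{crys}}(U)$: one evaluates $U(\cE)$ at $\cO\bA_{\mathrm{crys}}(U)$, which receives maps from both $A_{\mathrm{crys}}(S^{+})$ and $R_{0}$, and compares the two resulting descriptions by the Taylor expansion along the $\xi_{i}$; the Frobenius lifts on $R_{0}$, $A_{\mathrm{crys}}(S^{+})$ and $\cO\bA_{\mathrm{crys}}(U)$ make the comparison Frobenius-equivariant. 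Base-changing along $\cO\bB_{\mathrm{crys},(\fX_{0},\cM_{\fX_{0}})}\to\cO\bB_{\mathrm{st},(\fX_{0},\cM_{\fX_{0}})}$ then produces an isomorphism of the two $\cO\bB_{\mathrm{st}}$-modules respecting connections and Frobenii, but under which the monodromy on the right is only $\mathrm{id}\otimes N_{\cO\bB_{\mathrm{st}}}$, \emph{not} the operator $N_{E_{0}}\otimes\mathrm{id}+\mathrm{id}\otimes N_{\cO\bB_{\mathrm{st}}}$ demanded by the statement.

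The final step, which is the heart of the matter, is to absorb the missing term $N_{E_{0}}\otimes\mathrm{id}$ by a twist. Set $\ell$ to be the image of $\log(p/[p^{\flat}])\in A_{\mathrm{st}}(S^{+})$ in $\cO\bB_{\mathrm{st},(\fX_{0},\cM_{\fX_{0}})}$; from the definition of $A_{\mathrm{st}}$, $\varphi_{A_{\mathrm{st}}}$, $N_{A_{\mathrm{st}}}$ and Construction \ref{construction of semistable period sheaf} one has $N_{\cO\bB_{\mathrm{st}}}(\ell)=1$, $\varphi_{\cO\bB_{\mathrm{st}}}(\ell)=p\ell$ and $\nabla_{\cO\bB_{\mathrm{st}}}(\ell)=0$. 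Since $N_{E_{0}}$ is nilpotent by Lemma \ref{N is nilp}, the operator $\Phi\coloneqq\sum_{k\geq 0}\tfrac{(-1)^{k}}{k!}N_{E_{0}}^{k}\otimes\ell^{k}$ is a well-defined $\cO\bB_{\mathrm{st}}$-linear automorphism of $E_{0}\otimes_{R_{0}[1/p]}\cO\bB_{\mathrm{st},(\fX_{0},\cM_{\fX_{0}})}$, with inverse $\sum_{k}\tfrac{1}{k!}N_{E_{0}}^{k}\otimes\ell^{k}$. The plan is then to check: that $N_{\cO\bB_{\mathrm{st}}}(\ell)=1$ and the Leibniz rule for $N_{\cO\bB_{\mathrm{st}}}$ force a telescoping identity giving $\Phi\circ(\mathrm{id}\otimes N_{\cO\bB_{\mathrm{st}}})=(N_{E_{0}}\otimes\mathrm{id}+\mathrm{id}\otimes N_{\cO\bB_{\mathrm{st}}})\circ\Phi$; that $\nabla_{\cO\bB_{\mathrm{st}}}(\ell)=0$ together with horizontality of $N_{E_{0}}$ for $\nabla_{E_{0}}$ makes $\Phi$ commute with the product connection; and that $\varphi_{\cO\bB_{\mathrm{st}}}(\ell)=p\ell$ together with the relation $N_{E_{0}}\varphi_{E_{0}}=p\varphi_{E_{0}}(N_{\varphi_{R_{0}}^{*}E_{0}})$ built into $\mathrm{Vect}^{\varphi,\nabla,N}(R_{0})$ makes the powers of $p$ cancel so that $\Phi$ is Frobenius-equivariant. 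Composing the isomorphism of the previous paragraph with $\Phi$ then yields the asserted isomorphism, and it remains only to note that all constructions are functorial in $\cE$, compatible with varying $U$, and descend along the \'{e}tale cover. I expect the Frobenius-compatibility of $\Phi$ to be the one genuinely delicate verification — it is precisely where the normalization $N\varphi=p\varphi N$ of the monodromy operator is being used — whereas the connection- and monodromy-compatibilities, and the crystalline comparison of the second paragraph, are routine once the de Rham case and Dwork's trick are in hand.
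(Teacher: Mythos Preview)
Your argument is correct, but it proceeds along a genuinely different route than the paper. The paper first reduces to $\cO_{K}=W$ and then proves the statement for the \emph{hat}-variant $\cO\widehat{\bB}_{\mathrm{st}}$, afterwards passing to nilpotent parts for $N$ (using Lemma~\ref{N is nilp} and Corollary~\ref{property of crys and semist period sheaf}(3)) to recover $\cO\bB_{\mathrm{st}}$. The key point is that the paper never forgets the extra $\bN$-factor in the log structure: it evaluates $\cE$ itself, as an object of $(X_{k},\cM_{X_{k}}\oplus 0^{\bN})^{a}_{\mathrm{crys}}$, at the log PD-thickening $(\cO\widehat{\bA}_{\mathrm{st}}(U),\cM_{\cO\widehat{\bA}_{\mathrm{st}}(U)})$ of Construction~\ref{hat semistable period sheaf}, whose log structure carries a $u$-direction. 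The monodromy then appears intrinsically as the $\mathrm{dlog}(u)$-component of the stratification on $\cE_{\cO\widehat{\bA}_{\mathrm{st}}(U)}$, and the comparison $\cE_{A_{\mathrm{crys}}(S^{+})}\otimes\cO\widehat{\bA}_{\mathrm{st}}(U)\cong\cE_{\cO\widehat{\bA}_{\mathrm{st}}(U)}\cong E_{0}\otimes_{R_{0}}\cO\widehat{\bA}_{\mathrm{st}}(U)$ comes straight from the crystal property, with the Taylor expansion (in the extra variable $\xi=p_{1}(u)p_{2}(u)^{-1}-1$) showing that the induced $\nabla',N'$ kill $\cE_{A_{\mathrm{crys}}(S^{+})}$; no twist is required.

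Your approach instead drops to $U(\cE)$ on the smaller site $(X_{k},\cM_{X_{k}})_{\mathrm{crys}}$, obtains the crystalline comparison over $\cO\bB_{\mathrm{crys}}$ without monodromy, and then reinserts the monodromy by the classical exponential twist $\Phi=\exp(-N_{E_{0}}\otimes\ell)$. This is more hands-on and avoids introducing $\cO\widehat{\bA}_{\mathrm{st}}(U)$ and the enlarged log structure altogether; the paper's route is more conceptual in that the monodromy compatibility is forced by the crystal property rather than by an explicit identity, at the cost of having to set up the hat-construction and then descend to nilpotent parts. Both arguments ultimately hinge on the same input, namely the description $\cE_{R_{0}\{u\}}\cong E_{0}\widehat{\otimes}_{W}W\{u\}$ from Lemma~\ref{equivalence from phinabla to phinablaN}.
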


\begin{proof}
Since the two $\cO\widehat{\bB}_{\mathrm{st},(\fX_{0},\cM_{\fX_{0}})}$-modules on $(\fX,\cM_{\fX})_{\eta,\mathrm{prok\et}}$ in the assertion is obtained as the restriction of the corresponding $\cO\widehat{\bB}_{\mathrm{st}}$-modules on $(\fX_{0},\cM_{\fX_{0}})_{\eta,\mathrm{prok\et}}$, we may assume that $\cO_{K}=W$. It is enough to prove that there is natural isomorphisms of the following $\cO\widehat{\bB}_{\mathrm{st}}$-modules with connection, Frobenius structures, and monodromy operators:
    \begin{itemize}
        \item $(\bB_{\mathrm{crys}}(\cE)\otimes_{\bB_{\mathrm{crys}}} \cO\widehat{\bB}_{\mathrm{st}},\mathrm{id}\otimes \nabla_{\cO\widehat{\bB}_{\mathrm{st}}},\varphi_{\bB_{\mathrm{crys}}(\cE)}\otimes \varphi_{\cO\widehat{\bB}_{\mathrm{st}}},\mathrm{id}\otimes N_{\cO\widehat{\bB}_{\mathrm{st}}})$
        \item $(E_{0}\otimes_{R_{0}[1/p]} \cO\widehat{\bB}_{\mathrm{st}},\varphi_{E_{0}}\otimes \varphi_{\cO\widehat{\bB}_{\mathrm{st}}},\nabla_{E_{0}}\otimes \nabla_{\cO\widehat{\bB}_{\mathrm{st}}},N_{E_{0}}\otimes \mathrm{id}+\mathrm{id}\otimes N_{\cO\widehat{\bB}_{\mathrm{st}}})$.
    \end{itemize}
Indeed, Lemma \ref{N is nilp} and Corollary \ref{property of crys and semist period sheaf}(3) imply that taking nilpotent sections for monodromy operators on both sides provides the desired isomorphism. 

Let $U$ be a big log affinoid perfectoid in $(\fX,\cM_{\fX})_{\eta,\mathrm{prok\et}}$ with the associated affinoid perfectoid space $\widehat{U}=\mathrm{Spa}(S,S^{+})$. Let $\cM_{S^{+}}$, $\cM_{A_{\mathrm{crys}(S^{+})}}$, and $(\mathrm{Spf}(R\langle u\rangle),\cM_{R\langle u\rangle})$ be as in Construction \ref{hat semistable period sheaf}. Let $\cM_{R\{u\}}$ be the pullback log structure of $\cM_{R\langle u\rangle}$. The strict closed immersion
\[
(X_{k},\cM_{X_{k}}\oplus 0^{\bN})^{a}\to (\mathrm{Spf}(R\{u\}),\cM_{R\{u\}})
\]
defined by $u\mapsto 0$ belongs to $(X_{k},\cM_{X_{k}}\oplus 0^{\bN})^{a}_{\mathrm{crys}}$.
Let $(\mathrm{Spf}(D^{+}(1)),\cM_{D^{+}(1)})$ be the $p$-completed log PD-envelope of the diagonal map 
\[
(\mathrm{Spf}(R/p),\cM_{R/p}\oplus 0^{\bN})^{a}\to (\mathrm{Spf}(R\langle u_{1}\rangle),\cM_{R\langle u_{1}\rangle})\times_{W} (\mathrm{Spf}(R\langle u_{2}\rangle),\cM_{R\langle u_{2}\rangle}),
\]
given by $u_{1},u_{2}\mapsto 0$. We define $(\mathrm{Spf}(\cO\widehat{\bA}_{\mathrm{st}}(1)(U)),\cM_{\cO\widehat{\bA}_{\mathrm{st}}(1)(U)})$ as the $p$-completed log PD-envelope of the closed immersion
\[
(\mathrm{Spf}(S^{+}/p),\cM_{S^{+}/p})\hookrightarrow (\mathrm{Spf}(A_{\mathrm{crys}}(S^{+})\widehat{\otimes}_{W} R\langle u_{1}\rangle \widehat{\otimes}_{W} R\langle u_{2}\rangle),\cM_{A_{\mathrm{crys}}(S^{+})}\oplus \cM_{R\langle u_{1}\rangle}\oplus \cM_{R\langle u_{2}\rangle})^{a}
\]
Then we have the following commutative diagram of $p$-adic log PD-thickenings equipped with Frobenius lists in $(X_{k},\cM_{X_{k}})_{\mathrm{crys}}$. (We omit log structures and PD-ideals.) 
\[
  \begin{tikzcd}
      D^{+}(1) \ar[r] & \cO\widehat{\bA}_{\mathrm{st}}(1)(U) \\
      R\{u\} \ar[u,shift left=0.8ex,"p_{1}"] \ar[u,shift right=0.8ex,"p_{2}"'] \ar[r] & \cO\widehat{\bA}_{\mathrm{st}}(U) \ar[u,shift left=0.8ex,"q_{1}"] \ar[u,shift right=0.8ex,"q_{2}"'] \\
      & A_{\mathrm{crys}}(S^{+}) \ar[u]. 
  \end{tikzcd}
\]
The crystal property of $\cE$ gives isomorphisms of $\cO\widehat{\bB}^{+}_{\mathrm{st}}(U)$-modules with Frobenius structures
\[
\cE_{A_{\mathrm{crys}}(S^{+})}\otimes \cO\widehat{\bA}_{\mathrm{st}}(U) \cong \cE_{\cO\widehat{\bA}_{\mathrm{st}}(U)} \cong \cE_{R\langle u\rangle}\otimes_{R\langle u\rangle} \cO\widehat{\bA}_{\mathrm{st}}(U).
\]
By Lemma \ref{equivalence from phinabla to phinablaN}, we have an isomorphism
\begin{align*}
    &(\cE_{R\{u\}},\varphi_{\cE_{R\{u\}}},\nabla_{\cE_{R\{u\}}}^{\mathrm{hol}},N_{\cE_{R\{u\}}}) \\
    \cong &(E_{0}\widehat{\otimes}_{W} W\{u\},\varphi_{E_{0}}\otimes \varphi_{W\{u\}},\nabla_{E_{0}}\otimes \mathrm{id},N_{E_{0}}\otimes \mathrm{id}+\mathrm{id}\otimes N_{W\{u\}}).
\end{align*}
Combining these isomorphisms gives an isomorphism of $\cO\widehat{\bB}^{+}_{\mathrm{st}}(U)$-modules with Frobenius structures
\[
\cE_{A_{\mathrm{crys}}(S^{+})}\otimes_{A_{\mathrm{crys}}(S^{+})} \cO\widehat{\bA}_{\mathrm{st}}(U) \cong E_{0}\otimes_{R} \cO\widehat{\bA}_{\mathrm{st}}(U).
\]
This gives a desired isomorphism except for the compatibility with connections and monodromy operators. 

To prove the remaining, it is enough to show that the connection and the monodromy operator on $\cE_{\widehat{\bA}_{\mathrm{st}}(U)}$ induced from $\nabla_{E_{0}}\otimes \nabla_{\cO\widehat{\bA}_{\mathrm{st}}(U)}$ and $N_{E_{0}}\otimes \mathrm{id}+\mathrm{id}\otimes N_{\cO\widehat{\bA}_{\mathrm{st}}(U)}$, denoted by $\nabla'$ and $N'$, kills $\cE_{A_{\mathrm{crys}}(S^{+})}$. When we fix a framing on $(\fX,\cM_{\fX})$, there is an isomorphism
\[
R\{u\}\{\xi_{1},\dots,\xi_{s},\xi\}\cong D^{+}(1)
\]
given by $\xi\mapsto p_{1}(t_{i})p_{2}(t_{i})^{-1}-1$ and $\xi\mapsto p_{1}(u)p_{2}(u)^{-1}-1$, and we have an equation 
\begin{align*}
    p_{1}^{*}(x)-p_{2}^{*}(x)&=\sum_{i=1}^{s} \nabla_{\cE_{R\{u\}},t_{i}\frac{\partial}{\partial t_{i}}}(x)\xi_{i}+N_{\cE_{R\{u\}}}(x)\xi \\
    &=\sum_{i=1}^{s} \nabla_{\cE_{0},t_{i}\frac{\partial}{\partial t_{i}}}(y)\xi_{i}+N_{E_{0}}(y)\xi
\end{align*}
for $x=y\otimes 1\in \cE_{R\{u\}}=E_{0}\widehat{\otimes}_{W} W\{u\}$
after taking modulo $(\xi_{1},\dots,\xi_{s},\xi)^{[\geq 2]}$. The square in the above diagram is cocartesian, and there is an induced isomorphism
\[
\cO\widehat{\bA}_{\mathrm{st}}(U)\{\xi_{1},\dots,\xi_{s},\xi\}\cong \cO\widehat{\bA}_{\mathrm{st}}(U)(1).
\]
Hence, we obtain the following equation in $\cE_{\cO\widehat{\bA}_{\mathrm{st}}(U)}\otimes_{\cO\widehat{\bA}_{\mathrm{st}}(U),q_{2}} \cO\widehat{\bA}_{\mathrm{st}}(U)(1)/(\xi_{1},\dots,\xi)^{[\geq 2]}$:
\begin{align*}
    q_{1}^{*}(x)-q_{2}^{*}(x)&=(q_{1}(f)-q_{2}(f))y+f(p_{1}^{*}(y\otimes 1)-p_{2}^{*}(y\otimes 1)) \\
    &=(\sum_{i=1}^{s}\nabla_{\cO\widehat{\bA}_{\mathrm{st}}(U),t_{i}\frac{\partial}{\partial t_{i}}}(f)\xi_{i}+N_{\cO\widehat{\bA}_{\mathrm{st}}(U)}(f)\xi)\cdot y \\
    &+f\cdot (\sum_{i=1}^{s} \nabla_{\cE_{0},t_{i}\frac{\partial}{\partial t_{i}}}(y)\xi_{i}+N_{E_{0}}(y)\xi)
\end{align*}
for $x=y\otimes f\in \cE_{\cO\widehat{\bA}_{\mathrm{st}}(U)}=E_{0}\otimes_{R} \cO\widehat{\bA}_{\mathrm{st}}(U)$ after taking modulo $(\xi_{1},\dots,\xi_{s},\xi)^{[\geq 2]}$, which implies that $\nabla'(x)$ and $N'(x)$ occur in coefficients of the Taylor expansion of $q_{1}^{*}(x)-q_{2}^{*}(x)$. Therefore, $\nabla'$ and $N'$ kill $\cE_{A_{\mathrm{crys}(S^{+})}}$ by the crystal property of $\cE$ for the right column in the above diagram.
\end{proof}

\begin{prop}\label{bcrys functor is fully faithful}
    Let $(\fX,\cM_{\fX})$ be a semi-stable log formal scheme over $\cO_{K}$ which admits framings with $n\leq 1$ \'{e}tale locally. Then the functor 
    \[ \mathrm{Isoc}^{\varphi}_{\mathrm{lftf}}((X_{p=0},\cM_{X_{p=0}})_{\mathrm{crys}})\to \mathrm{Vect}^{\varphi}(\bB_{\mathrm{crys}})
    \]
    sending $\cE$ to $\bB_{\mathrm{crys}}(\cE)$ is fully faithful.
\end{prop}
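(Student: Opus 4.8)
The plan is to reduce the statement to a computation of global sections attached to the unit object, and then to evaluate those sections by transporting $\bB_{\mathrm{crys}}(\cE)$ to the period sheaves with connection (and, in the presence of a vertical component, monodromy operator). \emph{Reduction to the unit object.} The functor $\cE\mapsto\bB_{\mathrm{crys}}(\cE)$ is a tensor functor which, for locally free objects, carries internal $\Hom$'s to internal $\Hom$'s over $\bB_{\mathrm{crys}}$ — because $\cE_{A_{\mathrm{crys}}(S^{+})}$ is finite projective, so forming $\Hom$ commutes with the base change to $\bB_{\mathrm{crys}}$ — and which sends the unit to $\bB_{\mathrm{crys}}$. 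Since every object of $\mathrm{Isoc}^{\varphi}_{\mathrm{lftf}}$ is locally free, fully faithfulness is equivalent to the assertion that for every such $\cE$ the natural map
\[
\Hom_{\mathrm{Isoc}^{\varphi}_{\mathrm{lftf}}}(\mathbf{1},\cE)\longrightarrow\Hom_{\mathrm{Vect}^{\varphi}(\bB_{\mathrm{crys}})}(\bB_{\mathrm{crys}},\bB_{\mathrm{crys}}(\cE))=H^{0}\big((\fX,\cM_{\fX})_{\eta,\mathrm{prok\et}},\,\bB_{\mathrm{crys}}(\cE)^{\varphi=1}\big)
\]
is bijective. Both sides are global sections of \'{e}tale sheaves on $\fX$ and the comparison is induced by a morphism of such sheaves, so bijectivity may be checked \'{e}tale-locally on $\fX$; hence we may assume $(\fX,\cM_{\fX})=(\mathrm{Spf}(R),\cM_{R})$ is small affine with a fixed framing and $n\le 1$, and (Definition \ref{def of unr model}) we fix an unramified model with a Frobenius lift: of $(\fX,\cM_{\fX})$ itself when $n=0$, and of its horizontally semi-stable factor $(\fX',\cM_{\fX'})$ (so that $(\fX,\cM_{\fX})=(\fX',\cM_{\fX'}\oplus p^{\bN})^{a}$) when $n=1$.

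\emph{Identifying $\bB_{\mathrm{crys}}(\cE)$.} By Dwork's trick (Lemma \ref{dwork trick}, Remark \ref{dwork trick and unr model}) we may replace $(X_{p=0},\cM_{X_{p=0}})$ by the special fibre over $k$. When $n=0$, Proposition \ref{eq isocrystal and conn} identifies $\cE$ with $(E_{0},\nabla_{E_{0}},\varphi_{E_{0}})\in\mathrm{Vect}^{\varphi,\nabla}(R_{0}[1/p])$, and the crystalline analogue of Proposition \ref{two conn over obdr is eq}(1) gives a $\varphi$- and $\nabla$-compatible isomorphism $\bB_{\mathrm{crys}}(\cE)\otimes_{\bB_{\mathrm{crys}}}\cO\bB_{\mathrm{crys},(\fX_{0},\cM_{\fX_{0}})}\cong E_{0}\otimes_{R_{0}[1/p]}\cO\bB_{\mathrm{crys},(\fX_{0},\cM_{\fX_{0}})}$. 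When $n=1$, the special fibre is $(X'_{k},\cM_{X'_{k}}\oplus 0^{\bN})^{a}$; by Proposition \ref{Fisoc on log crys site and Fisoc with monodromy ope} and the equivalences of Construction \ref{construction of Res} we identify $\cE$ with $(E_{0},\nabla_{E_{0}},\varphi_{E_{0}},N_{E_{0}})\in\mathrm{Vect}^{\varphi,\nabla,N}(R'_{0}[1/p])$, and Proposition \ref{two obst mod} gives a $\varphi$-, $\nabla$- and $N$-compatible isomorphism $\bB_{\mathrm{crys}}(\cE)\otimes_{\bB_{\mathrm{crys}}}\cO\bB_{\mathrm{st},(\fX'_{0},\cM_{\fX'_{0}})}\cong E_{0}\otimes_{R'_{0}[1/p]}\cO\bB_{\mathrm{st},(\fX'_{0},\cM_{\fX'_{0}})}$. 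In either case $\bB_{\mathrm{crys}}(\cE)$ is a $\bB_{\mathrm{crys}}$-vector bundle, so $-\otimes_{\bB_{\mathrm{crys}}}(-)$ commutes with the kernels of the $\bB_{\mathrm{crys}}$-linear operators $\nabla$ and $N$; combining with Corollary \ref{property of crys and semist period sheaf}(1)(2) (which gives $\cO\bB_{\mathrm{crys}}^{\nabla=0}=\bB_{\mathrm{crys}}=\cO\bB_{\mathrm{st}}^{N=0,\nabla=0}$), passage to invariants yields a $\varphi$-equivariant isomorphism between $\bB_{\mathrm{crys}}(\cE)$ and the $(\nabla=0)$ — and, when $n=1$, also $(N=0)$ — part of $E_{0}\otimes_{R_{0}[1/p]}\cO\bB_{?,(\fX_{0},\cM_{\fX_{0}})}$, where $\cO\bB_{?}$ is $\cO\bB_{\mathrm{crys}}$ if $n=0$ and $\cO\bB_{\mathrm{st}}$ if $n=1$.

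\emph{Computing $H^{0}$ and matching.} Let $\mu\colon(\fX,\cM_{\fX})_{\eta,\mathrm{prok\et}}\to\fX_{0,\eta,\mathrm{an}}$ be the natural map of sites (with $\fX_{0}$ replaced by $\fX'_{0}$ when $n=1$). Since $\mu_{*}$ is left exact and $E_{0}$ is finite projective, $\mu_{*}$ commutes with $E_{0}\otimes_{R_{0}[1/p]}(-)$; hence Proposition \ref{zeroth coh of obcrys}, which gives $\mu_{*}\cO\bB_{?,(\fX_{0},\cM_{\fX_{0}})}\cong\cO_{\fX_{0}}[1/p]$ compatibly with $\nabla$ and $\varphi$ (and with $N$ acting trivially on the target, as $\cO\bB_{\mathrm{crys}}=\cO\bB_{\mathrm{st}}^{N=0}$), shows
\[
\mu_{*}\big(\bB_{\mathrm{crys}}(\cE)^{\varphi=1}\big)\cong E_{0}^{\,\nabla_{E_{0}}=0,\ N_{E_{0}}=0,\ \varphi_{E_{0}}=1},
\]
so that $H^{0}\big((\fX,\cM_{\fX})_{\eta,\mathrm{prok\et}},\,\bB_{\mathrm{crys}}(\cE)^{\varphi=1}\big)\cong\Gamma(\fX_{0},E_{0})^{\nabla=0,N=0,\varphi=1}$. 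On the crystalline side, under the equivalences recalled above, $\Hom_{\mathrm{Isoc}^{\varphi}_{\mathrm{lftf}}}(\mathbf{1},\cE)$ is precisely $\Gamma(\fX_{0},E_{0})^{\nabla=0,N=0,\varphi=1}$: a morphism from the unit is a horizontal, Frobenius- and $N$-equivariant section, and $N_{\mathbf{1}}=0$ forces it to be killed by $N_{E_{0}}$. One checks that all these identifications are natural and compatible with the comparison map, which is therefore bijective, completing the proof.

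\emph{Main obstacle.} The crux lies in the second step: the $\varphi$-, $\nabla$- (and $N$-)compatible identification of $\bB_{\mathrm{crys}}(\cE)$ with the invariants of $E_{0}\otimes\cO\bB_{?}$, together with the verification in the third step that pushforward along $\mu$ commutes with passage to $\nabla=0$, $N=0$ and $\varphi=1$ — this relying on $\bB_{\mathrm{crys}}(\cE)$ being a vector bundle over $\bB_{\mathrm{crys}}$ (so the tensor product with the period sheaves commutes with the relevant kernels), on Proposition \ref{zeroth coh of obcrys}, and on Corollary \ref{property of crys and semist period sheaf}. The two cases $n=0$ and $n=1$ need separate bookkeeping, the latter forcing the use of the semi-stable period sheaves and the monodromy-operator description of log $F$-isocrystals in place of $\cO\bB_{\mathrm{crys}}$.
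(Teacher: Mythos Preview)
Your proof is correct and follows essentially the same approach as the paper: both reduce \'etale-locally to the small affine case, both hinge on Proposition~\ref{two obst mod} (the $(\varphi,\nabla,N)$-compatible identification of $\bB_{\mathrm{crys}}(\cE)\otimes\cO\bB_{\mathrm{st}}$ with $E_{0}\otimes\cO\bB_{\mathrm{st}}$), Corollary~\ref{property of crys and semist period sheaf} (to recover $\bB_{\mathrm{crys}}$ as invariants), and Proposition~\ref{zeroth coh of obcrys} (to compute the pushforward), and both use the internal-$\Hom$ reduction to global sections. The paper packages this as a commutative square of categories (bottom an equivalence, right and top fully faithful, hence left fully faithful), whereas you unpack the same argument into a direct computation of $H^{0}$ of $\bB_{\mathrm{crys}}(\cE)^{\varphi=1}$; these are the same proof in different clothing. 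One organizational difference: the paper reduces $n=0$ to $n=1$ via Lemma~\ref{dwork trick} and Corollary~\ref{N=0 log isoc}, which avoids having to invoke a separate ``crystalline analogue of Proposition~\ref{two conn over obdr is eq}(1)'' that is not explicitly stated in the paper---your $n=0$ case would be cleaner handled this way, or alternatively by observing that it is the $N_{E_{0}}=0$ specialization of Proposition~\ref{two obst mod}.
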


\begin{proof}
    By working \'{e}tale locally on $\fX$, we may assume that $(\fX,\cM_{\fX})=(\mathrm{Spf}(R),\cM_{R})$ is a small affine log formal scheme over $\cO_{K}$ with a fixed framing such that $n\leq 1$. Let $(\fX_{0},\cM_{\fX_{0}})=(\mathrm{Spf}(R_{0}),\cM_{R_{0}})$ denote the unramified model of $(\fX,\cM_{\fX})$ defined by the fixed framing. 

    We may assume that $n=1$ because, for a horizontally semi-stable log formal scheme $(\fY,\cM_{\fY})$ over $\cO_{K}$, the natural functor
    \[
    \mathrm{Isoc}^{\varphi}((Y_{p=0},\cM_{Y_{p=0}})_{\mathrm{crys}})\to \mathrm{Isoc}^{\varphi}((Y_{p=0},\cM_{Y_{p=0}}\oplus \pi^{\bN})^{a}_{\mathrm{crys}})
    \]
    is fully faithful by Lemma \ref{dwork trick}(1) and Corollary \ref{N=0 log isoc}. In this case, Proposition \ref{two obst mod} gives the following commutative diagram:
    \[
    \begin{tikzcd}
        \mathrm{Vect}^{\varphi}(\bB_{\mathrm{crys}}) \ar[rr,"-\otimes_{\bB_{\mathrm{crys}}} \cO\bB_{\mathrm{st},(\fX_{0},\cM_{\fX_{0}})}"] & & \mathrm{Vect}^{\varphi,\nabla,N}(\cO\bB_{\mathrm{st},(\fX_{0},\cM_{\fX_{0}})}) \\ \mathrm{Isoc}^{\varphi}_{\mathrm{lftf}}((X_{p=0},\cM_{X_{p=0}})_{\mathrm{crys}}) \ar[u,"\cE\mapsto \bB_{\mathrm{crys}}(\cE)"] \ar[rr,"\sim"] & & \mathrm{Vect}^{\varphi,\nabla,N}(R_{0}) \ar[u,"-\otimes_{R_{0}} \cO\bB_{\mathrm{st},(\fX_{0},\cM_{\fX_{0}})}"']
    \end{tikzcd}
    \]
    The bottom horizontal functor is the composition of the equivalence in Proposition \ref{Fisoc on log crys site and Fisoc with monodromy ope} with the equivalence $\mathrm{Vect}^{\varphi,\nabla,N}(R_{0})\simeq \mathrm{Isoc}^{\varphi,N}_{\mathrm{lf}}((X_{k},\cM_{X_{k}})_{\mathrm{crys}})$. Since a problem on morphisms between vector bundles can be reduced to one on global sections of them by taking internal homomorphisms, the right vertical functor is fully faithful by Proposition \ref{zeroth coh of obcrys}, and the upper horizontal functor is fully faithful by $\bB_{\mathrm{crys}}=\cO\bB_{\mathrm{st}}^{\nabla=N=0}$ (Corollary \ref{property of crys and semist period sheaf}(1)(2)). Therefore, the left vertical functor is also fully faithful.
\end{proof}

\begin{rem}
    When $(\fX,\cM_{\fX})$ is horizontally semi-stable, we can use the method of \cite[Proposition 2.29]{gr24}, and the above proposition is proved without $F$-structures. However, we do not use it in this paper.
\end{rem}

\section{Log prismatic crystals}\label{section pris crys}

\subsection{Log prismatic sites}

First, let us review some basics of prisms introduced by \cite{bs22}. A \emph {$\delta$-ring} is a ring $A$ with a map $\delta:A\to A$ satisfying the following conditions:
\begin{enumerate}
    \item $\delta(1)=0$
    \item $\delta(xy)=\delta(x)y^{p}+x^{p}\delta(y)+p\delta(x)\delta(y)$;
    \item $\delta(x+y)=\delta(x)+\delta(y)-\sum_{i=1}^{p-1} \frac{(p-1)!}{i!(p-i)!}x^{i}y^{p-i}$.
\end{enumerate}
For a $\delta$-ring $A$, a map $\phi_{A}:A\to A$ mapping $a$ to $a^{p}+p\delta(a)$ is a ring map lifting the Frobenius map on $A/p$. A \emph{prism} is a pair $(A,I)$ consisting of a derived $(p,I)$-complete ring $A$ and a locally principal ideal $I$ of $A$ such that $p\in I+\phi_{A}(I)A$. We say that a prism $(A,I)$ is \emph{bounded} if $A/I$ has bounded $p^{\infty}$-torsion. A prism $(A,I)$ is called \emph{orientable} if $I$ is a principal ideal. Throughout this paper, we use the following lemma with no reference. 

\begin{lem}
    Let $(A,I)$ be a bounded prism and $B$ be a $(p,I)$-adically complete $A$-algebra. Then the map $A\to B$ is $(p,I)$-completely (faithfully) flat if and only if it is $(p,I)$-adically (faithfully) flat.
\end{lem}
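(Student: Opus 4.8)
The plan is to reduce to the orientable case and then run the standard argument from \cite{bs22}. Both conditions in the statement are local on $\Spf(A)$ and stable under passing to a Zariski localization, so we may assume that $(A,I)$ is orientable, say $I=(d)$. Recall from \cite[\S 3]{bs22} that then $d$ is a nonzerodivisor on $A$, that $A$ is classically $(p,I)$-complete, and that, by boundedness of the prism, $A/I$ has bounded $p^{\infty}$-torsion. Since $B$ is classically $(p,I)$-complete and $(p,I)$ is finitely generated, $B$ is in particular derived $(p,I)$-complete, so ``$(p,I)$-completely flat'' makes sense: it means that $B\otimes^{L}_{A}A/(p,I)$ is concentrated in degree $0$ and flat over $A/(p,I)$, with the ``faithfully flat'' variant adding that $\Spec(B/(p,I)B)\to\Spec(A/(p,I))$ is surjective; ``$(p,I)$-adically (faithfully) flat'' means that $B/(p,I)^{n}B$ is (faithfully) flat over $A/(p,I)^{n}A$ for all $n\ge 1$.

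First I would treat the implication ``completely flat $\Rightarrow$ adically flat''. Derived base change of the injection $A\xrightarrow{d}A$ together with $(p,I)$-completeness of $B$ shows that $d$ is a nonzerodivisor on $B$, hence on each $B/p^{m}$; combined with the bound on the $p^{\infty}$-torsion of $A/I$ (which, as in \cite[\S 3]{bs22}, yields control of the $(p,I)$-adic filtration on $A$), this lets one argue by induction on $n$ that $B\otimes^{L}_{A}A/(p,I)^{n}$ is discrete and flat over $A/(p,I)^{n}$: the case $n=1$ is the hypothesis, and the inductive step uses the short exact sequences
\[
0\to (p,I)^{n}/(p,I)^{n+1}\to A/(p,I)^{n+1}\to A/(p,I)^{n}\to 0,
\]
together with the fact that $(p,I)^{n}/(p,I)^{n+1}$ is a flat $A/(p,I)$-module, so that tensoring it with $B$ stays exact once $B/(p,I)B$ is flat over $A/(p,I)$. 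The converse implication ``adically flat $\Rightarrow$ completely flat'' is then the formal consequence: flatness of $B/(p,I)B$ over $A/(p,I)$ is the case $n=1$, and the vanishing of $\mathrm{Tor}^{A}_{i}(B,A/(p,I))$ for $i>0$ follows by passing to the limit over $n$ in the discreteness of $B\otimes^{L}_{A}A/(p,I)^{n}$ provided by the adic flatness (via the same displayed sequences), using that $B$ is $(p,I)$-adically complete. Finally, the faithfully flat versions match up once plain flatness is known, since in both cases ``faithfully'' is equivalent to surjectivity of $\Spec(B/(p,I)B)\to\Spec(A/(p,I))$, the source and target being independent of the power of $(p,I)$ used.

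The main obstacle is the control of the associated graded pieces $(p,I)^{n}/(p,I)^{n+1}$ of the $(p,I)$-adic filtration on $A$ and of the corresponding $\mathrm{Tor}$-groups: without the boundedness hypothesis these pieces need not be flat over $A/(p,I)$ and the equivalence genuinely fails. Thus the real input is the structural information about bounded prisms from \cite{bs22} (that $d$ is a nonzerodivisor and $A/I$ has bounded $p^{\infty}$-torsion), after which the statement follows along the lines of the flatness lemmas in \cite[\S 3]{bs22}.
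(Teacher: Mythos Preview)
The paper does not argue this directly; it simply records that the result follows from Yekutieli's work on adic flatness, pointing to \cite[Lemma~1.3]{iky24} for a written proof. Your sketch takes a different route, and two issues are worth flagging.

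First, a minor point on the forward direction: you invoke flatness of the graded pieces $(p,I)^{n}/(p,I)^{n+1}$ over $A/(p,I)$, but this is neither proven nor needed. Each graded piece is an $A/(p,I)$-module, so
\[
B\otimes^{L}_{A}\bigl((p,I)^{n}/(p,I)^{n+1}\bigr)\;\simeq\;(B\otimes^{L}_{A}A/(p,I))\otimes^{L}_{A/(p,I)}\bigl((p,I)^{n}/(p,I)^{n+1}\bigr),
\]
which is discrete because $B\otimes^{L}_{A}A/(p,I)=B/(p,I)B$ is flat over $A/(p,I)$ by hypothesis. With this correction the induction and the local flatness criterion go through, and no flatness of the graded pieces is required.

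Second, and more seriously, your converse direction is circular. You want $B\otimes^{L}_{A}A/(p,I)$ to be discrete. To run your induction on $B\otimes^{L}_{A}A/(p,I)^{n}$ via the displayed short exact sequences, you must already know that $B\otimes^{L}_{A}\bigl((p,I)^{n}/(p,I)^{n+1}\bigr)$ is discrete; since the graded piece is only an $A/(p,I)$-module, this again comes down to discreteness of $B\otimes^{L}_{A}A/(p,I)$---precisely the base case you are trying to establish. Adic flatness tells you $B/(p,I)^{n}B$ is flat over $A/(p,I)^{n}$, but says nothing a priori about the higher $\mathrm{Tor}^{A}_{i}(B,A/(p,I)^{n})$. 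This is where boundedness genuinely enters: the bounded $p^{\infty}$-torsion of $A/I$ forces the Koszul pro-system $\{H_{1}(K(A;p^{n},d^{n}))\}_{n}\cong\{(A/d^{n})[p^{n}]\}_{n}$ to be pro-zero, i.e.\ $(p,d)$ is weakly proregular, and that is the hypothesis under which Yekutieli's theorems identify the two flatness notions. Your final paragraph gestures at ``structural information from \cite{bs22}'' but does not isolate this mechanism; the flatness-of-graded-pieces framing is not the right abstraction, and those pieces are not known to be flat for a general bounded prism.
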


\begin{proof}
    This follows from results of Yektieli. For the proof, see \cite[Lemma 1.3]{iky24} for example.
\end{proof}

For a bounded $p$-adic formal scheme $\fX$, the \emph{absolute prismatic site} $\fX_{\Prism}$ is the category of prisms $(A,I)$ equipped with maps $\mathrm{Spf}(A/I)\to \fX$. We equip $\fX_{\Prism}$ with completely flat topology. We simply write $R_{\Prism}$ for $\mathrm{Spf}(R)_{\Prism}$. Let $\cO_{\Prism}$ (resp. $\cI_{\Prism}$) denote the sheaf on $\fX_{\Prism}$ given by $(A,I)\mapsto A$ (resp. $I$). The Frobenius lift $\phi_{A}$ for a prism $(A,I)$ induces a map $\phi:\cO_{\Prism}\to \cO_{\Prism}$. A \emph{prismatic crystal} on $\fX$ is a vector bundle on the ringed site $(\fX_{\Prism},\cO_{\Prism})$, and a \emph{prismatic $F$-crystal} on $\fX$ is a pair $(\cE,\varphi_{\cE})$ consisting of a prismatic crystal $\cE$ and an isomorphism $\varphi_{\cE}\colon (\phi^{*}\cE)[1/\cI_{\Prism}]\isom \cE[1/\cI_{\Prism}]$. When no confusion occurs, we simply write $\cE$ for $(\cE,\varphi_{\cE})$. Let $\mathrm{Vect}(\fX_{\Prism})$ (resp. $\mathrm{Vect}^{\varphi}(\fX_{\Prism})$) denote the category of prismatic crystals (resp. prismatic $F$-crystals) on $\fX$.

\begin{lem}[Quasi-syntomic descent for prismatic crystals]\label{qsyn descent for nonlog pris crys}
    Let $R\to S$ be a quasi-syntomic cover of bounded $p$-complete rings and $S^{(\bullet)}$ be the \v{C}ech nerve of $S$ over $R$. Then the natural functor
    \[
    \mathrm{Vect}(R_{\Prism})\to \varprojlim_{\bullet\in \Delta}\mathrm{Vect}(S^{(\bullet)}_{\Prism})
    \]
    is bi-exact equivalent. Analogous assertions for $\mathrm{Vect}^{\varphi}$ and $\mathrm{Vect}^{\varphi}_{[a,b]}$ also hold.
\end{lem}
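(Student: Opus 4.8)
The plan is to reduce the statement to a known unfolding/descent result for prismatic cohomology via the quasi-syntomic descent for the absolute prismatic site. The key point is that for a quasi-syntomic cover $R \to S$ of bounded $p$-complete rings, the absolute prismatic site $R_{\Prism}$ admits a description via the \v{C}ech nerve: one has a simplicial object $S^{(\bullet)}$ whose associated prismatic sites assemble to recover $\mathrm{Vect}(R_{\Prism})$. This is essentially \cite[Proposition 2.7 and its corollaries]{bs23} (or the discussion in \cite[Section 4]{bs22} on the quasi-syntomic site and its relation to the prismatic site), together with the observation that vector bundles on a ringed site satisfy descent along covers.

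First I would set up the descent datum precisely. Since $R \to S$ is a quasi-syntomic cover of bounded $p$-complete rings, each term $S^{(n)}$ of the \v{C}ech nerve is again bounded $p$-complete (quasi-syntomic covers are stable under base change and composition, and boundedness is preserved), so each $\mathrm{Vect}(S^{(n)}_{\Prism})$ makes sense. There is an obvious restriction functor $\mathrm{Vect}(R_{\Prism}) \to \varprojlim_{\bullet \in \Delta}\mathrm{Vect}(S^{(\bullet)}_{\Prism})$ coming from the morphisms of sites $S^{(\bullet)}_{\Prism} \to R_{\Prism}$. To prove it is an equivalence, I would use the fact, recorded for instance in \cite{bs23} and \cite{iky24}, that the formation of the absolute prismatic site (or rather the resulting category of crystals/vector bundles) satisfies flat descent, combined with the key input that the quasi-syntomic cover $R \to S$ induces, after applying the prismatic functor, a cover in the relevant topology. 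Concretely, one can either invoke the unfolding of the absolute prismatic site along the quasi-syntomic site — writing $\cO_{\Prism}$ on $R_{\Prism}$ as pulled back from the quasi-syntomic site via the canonical map, and using that $R \to S$ is a cover there — or argue directly that any prismatic crystal on $R$ is determined by its restriction to $S^{(\bullet)}$ plus descent data, because prisms over $R$ can be refined by prisms that factor through some $S^{(n)}$.

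The bi-exactness (i.e.\ that the equivalence and its inverse are both exact in the sense of preserving short exact sequences of vector bundles, and likewise on the $F$-crystal and $[a,b]$-effective variants) should then follow formally: exactness of short exact sequences of vector bundles on a ringed site can be checked after pullback along a cover, and the limit category $\varprojlim_{\bullet}\mathrm{Vect}(S^{(\bullet)}_{\Prism})$ inherits its exact structure termwise. For the $F$-crystal version one simply observes that the Frobenius structure is carried along all functors in sight, and the isomorphism $\varphi_{\cE}\colon (\phi^*\cE)[1/\cI_{\Prism}] \isom \cE[1/\cI_{\Prism}]$ descends because inverting $\cI_{\Prism}$ commutes with the restriction functors; the effective-with-bounded-Hodge-Tate-weights condition $\mathrm{Vect}^{\varphi}_{[a,b]}$ is a pointwise condition on each prism and hence is detected after pullback to the cover.

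The main obstacle I expect is making the unfolding argument rigorous: one needs to know that the absolute prismatic topos of $R$ really is computed as the limit topos of the $S^{(\bullet)}_{\Prism}$, which requires identifying the right Grothendieck topology on the indexing category and checking that $R \to S$ being quasi-syntomic (not merely $p$-completely faithfully flat on the prismatic level) suffices to give a cover. This is precisely where the work of \cite{bs23} on the prismatization / quasi-syntomic descent for prismatic $F$-crystals is used, and I would cite that result rather than reprove it; the remaining content is then the bookkeeping of exactness and Frobenius structures, which is routine. An alternative, perhaps cleaner, route is to deduce this from the analogous statement for the prismatization stack $R^{\Prism}$ (when available in this generality), for which quasi-syntomic descent is manifest since quasi-syntomic covers induce fpqc covers of prismatizations; but since the excerpt works site-theoretically, I would keep the proof at the level of sites and lean on \cite{bs23} and \cite{iky24} for the descent input.
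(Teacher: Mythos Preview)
Your proposal is correct and takes essentially the same approach as the paper: the paper's entire proof is the single sentence ``This follows from \cite[Proposition 7.11]{bs22}.'' Your discussion is more elaborate than necessary and cites \cite{bs23} and \cite{iky24} rather than the precise reference \cite[Proposition 7.11]{bs22}, but the underlying idea---invoke the known quasi-syntomic descent result for the prismatic site from Bhatt--Scholze---is identical.
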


\begin{proof}
    This follows from \cite[Proposition 7.11]{bs22}.
\end{proof}

Now, we recall some basics of log prisms introduced in \cite{kos22}. A \emph{$\delta_{\mathrm{log}}$-ring} is a prelog ring $(A,M)$ with a $\delta$-ring structure on $A$ and a map $\delta_{\mathrm{log}}:M\to A$ satisfying the following conditions:
\begin{enumerate}
    \item $\delta_{\mathrm{log}}(e)=0$;
    \item $\alpha(m)^{p}\delta_{\mathrm{log}}(m)=\delta(\alpha(m))$;
    \item $\delta_{\mathrm{log}}(mm')=\delta_{\mathrm{log}}(m)+\delta_{\mathrm{log}}(m')+p\delta_{\mathrm{log}}(m)\delta_{\mathrm{log}}(m')$,
\end{enumerate}
where $\alpha:M\to A$ is the structure map. A \emph{bounded prelog prism} $(A,I,M)$ (resp. \emph{bounded log prism} $(A,I,\cM_{A})$) is a bounded prism $(A,I)$ equipped with a prelog structure $M\to A$ (resp. log structure $\cM_{A}$ on $\mathrm{Spf}(A)$) and a $\delta_{\mathrm{log}}$-structure on the prelog ring $(A,M)$ (resp. the log formal scheme $(\mathrm{Spf}(A),\cM_{A})$). Here, a $\delta_{\mathrm{log}}$-structure on the log formal scheme $(\mathrm{Spf}(A),\cM_{A})$ is a sheaf map $\cM_{A}\to \cO_{\mathrm{Spf}(A)}$ that defines a $\delta_{\mathrm{log}}$-structure on each section. We can attach to a prelog prism $(A,I,M)$ a log prism $(A,I,M)^{a}\coloneqq (A,I,\cM_{A}\coloneqq M^{a})$. For a log prism $(A,I,\cM_{A})$, the ring map $\phi_{A}$ and the map of log structures $\phi_{A}^{*}\cM_{A}\to \cM_{A}$ given by $m\mapsto m^{p}(1+p\delta_{\mathrm{log}}(m))$ for $m\in \phi_{A}^{-1}\cM_{A}$ (here, $1+p\delta_{\mathrm{log}}(m)\in \cO_{\mathrm{Spf}(A)}^{\times}$ is regarded as a section of $\cM_{A}$) define a morphism of log formal schemes $\phi_{(A,\cM_{A})}:(\mathrm{Spf}(A),\cM_{A})\to (\mathrm{Spf}(A),\cM_{A})$. When no confusion occurs, we write simply $\phi_{A}$ for $\phi_{(A,\cM_{A})}$.

\begin{dfn}[Absolute log prismatic sites, {\cite[Remark 4.6]{kos22}}] \label{def of log pris site}

Let $(\fX,\cM_{\fX})$ be a bounded $p$-adic log formal scheme. Let $(\fX,\cM_{\fX})_{\Prism}$ be the site with objects described as follows: an object consists of

\begin{itemize}
 \item a log prism $(A,I,\cM_{A})$ such that $(\mathrm{Spf}(A),\cM_{A})$ is quasi-coherent,
 \item a map of log formal schemes $(\Spf (A/I), \cM_{A/I}) \to (\fX,\cM_{\fX})$, where $\cM_{A/I}$ is the restriction of $\cM_{A}$.
\end{itemize}    
We simply write $(A,I,\cM_{A})$ for an object when there exists no confusion. A morphism $(A,I,\cM_{A})\to (B,J,\cM_{B})$ is the following commutative diagram:
\[
\begin{tikzcd}
    (\mathrm{Spf}(B),\cM_{B}) \ar[d] & (\mathrm{Spf}(B/J),\cM_{B/J}) \ar[l,hook'] \ar[d] \ar[rd]& \\
    (\mathrm{Spf}(A),\cM_{A}) & (\mathrm{Spf}(A/I),\cM_{A/I}) \ar[l,hook'] \ar[r] & (\fX,\cM_{\fX}).
\end{tikzcd}
\]
A morphism $(A,I,\cM_{A})\to (B,J,\cM_{B})$ in $(\fX,\cM_{\fX})_{\Prism}$ is a covering if the corresponding map $(\mathrm{Spf}(B),\cM_{B})\to (\mathrm{Spf}(A),\cM_{A})$ is strict $(p,I)$-completely faithfully flat. The resulting site $(\fX,\cM_{\fX})_{\Prism}$ is called the \emph{(absolute) log prismatic site} of $(\fX,\cM_{\fX})$.

The structure sheaf $\cO_{\Prism}$ is defined by $(A,I,\cM_{A})\mapsto A$. Similarly, the sheaf $\overline{\cO}_{\Prism}$ is defined by $(A,I,\cM_{A})\mapsto A/I$, and $\cI_{\Prism}$ by $(A, I, \cM_{A})\mapsto I$. The functor defined by $(A,I,\cM_{A})\mapsto A/(p,I)$ is also a sheaf and coincides with the sheaf $\cO_{\Prism}/(p,\cI_{\Prism})$, denoted by $\widetilde{\cO}_{\Prism}$. The ring map $\cO_{\Prism}\to \cO_{\Prism}$ induced by $\phi_{A}$ for log prisms $(A,I,\cM_{A})$ is denoted by $\phi$.

Let $(\fX,\cM_{\fX})^{\mathrm{str}}_{\Prism}$ denote the full subcategory of $(\fX,\cM_{\fX})_{\Prism}$ consisting of objects whose structure morphisms $(\mathrm{Spf}(A/I),\cM_{A/I})\to (\fX,\cM_{\fX})$ are strict. The site $(\fX,\cM_{\fX})^{\mathrm{str}}_{\Prism}$ is called the \emph{(absolute) strict log prismatic site} of $(\fX,\cM_{\fX})$.
\end{dfn}

\begin{rem}\label{rem on def of log pris site}
    The definition of log prismatic sites of \cite[Remark 4.6]{kos22} is different from the one given above in that the following two assumptions are added in \emph{loc.cit.}:
    \begin{itemize}
        \item a log prism $(A,I,\cM_{A})$ admits a global chart;
        \item the structure morphism $(\mathrm{Spf}(A/I),\cM_{A/I})\to (\fX,\cM_{\fX})$ admits a chart \'{e}tale locally.
    \end{itemize}
    The first additional assumption does not change the associated topoi. On the other hand, the second additional one changes the associated topoi. However, the category of prismatic crystals is unchanged under certain mild assumptions (see Lemma \ref{category of crystals is unchanged}).
\end{rem}

\begin{rem}\label{str log prism is qcoh}
    Let $(\fX,\cM_{\fX})$ be a bounded $p$-adic quasi-coherent and integral log formal scheme. For the definition of $(\fX,\cM_{\fX})_{\Prism}^{\mathrm{str}}$, the condition that $(A,\cM_{A})$ is quasi-coherent is redundant. To check this, it suffices to show that, for a log prism $(A,I,\cM_{A})$ with a strict morphism $(\mathrm{Spf}(A/I),\cM_{A/I})\to (\fX,\cM_{\fX})$, the log formal scheme  $(\mathrm{Spf}(A),\cM_{A})$ is quasi-coherent. By working \'{e}tale locally on $\mathrm{Spf}(A)$, we may assume that there exists an integral chart $P\to \cM_{\fX}$. We put
    \[
    \widetilde{P}\coloneqq P\times_{\Gamma(\mathrm{Spf}(A/I),\cM_{A/I})} \Gamma(\mathrm{Spf}(A),\cM_{A}).
    \]
    Since $\cM_{A}$ is integral, $\widetilde{P}\to P$ is a $1+I$-torsor by \cite[Lemma 3.8]{kos22}. Therefore, a natural map $\widetilde{P}\to \cM_{A}$ is a chart.
\end{rem}

\begin{dfn}[Log prismatic crystals]\label{def of log pris crys}
    Let $(\fX,\cM_{\fX})$ be a bounded $p$-adic log formal scheme. We let $\mathrm{Vect}((\fX, \cM_{\fX})_{\Prism})$ (resp. $\mathrm{Vect}((\fX, \cM_{\fX})^{\mathrm{str}}_{\Prism})$) denote the category of vector bundles on the ringed site on $((\fX,\cM_{\fX})_{\Prism},\cO_{\Prism})$ (resp. $((\fX, \cM_{\fX})^{\mathrm{str}}_{\Prism},\cO_{\Prism})$). Objects of $\mathrm{Vect}((\fX,\cM_{\fX})_{\Prism})$ are called \emph{log prismatic crystals}. 
    
    We let $\mathrm{Vect}^{\varphi}((\fX,\cM_{\fX})_{\Prism})$ denote the category of pairs $(\cE, \varphi_{\cE})$ where $\cE$ is an object of $\mathrm{Vect}((\fX,\cM_{\fX})_{\Prism})$ and $\varphi_{\cE}$ is an isomorphism $(\phi^{\ast}\cE)[1/\cI_{\Prism}]\isom \cE[1/\cI_{\Prism}]$. Objects of $\mathrm{Vect}^{\varphi}((\fX,\cM_{\fX})_{\Prism})$ are called \emph{log prismatic F-crystals}. Similarly, we define variants $ \mathrm{Vect}^{\varphi}((\fX,\cM_{\fX})_{\Prism}^{\mathrm{str}})$. All of these categories are canonically equipped with the structure of an exact category.
\end{dfn}

\begin{rem}\label{pris crys as lim}
    For $\star\in \{\mathrm{str}, \emptyset\}$, we have the following canonical bi-exact equivalences.
    \begin{align*}
    \mathrm{Vect}((\fX,\cM_{\fX})_{\Prism}^{\star})&\simeq \varprojlim_{(A,I,\cM_{A})\in (\fX,\cM_{\fX})_{\Prism}^{\star}} \mathrm{Vect}(A) \\
    \mathrm{Vect}^{\varphi}((\fX,\cM_{\fX})^{\star}_{\Prism})&\simeq \varprojlim_{(A,I,\cM_{A})\in (\fX,\cM_{\fX})^{\star}_{\Prism}} \mathrm{Vect}^{\varphi}(A,I)
    \end{align*}
    Here, $\mathrm{Vect}(R)$ is the category of finite projective $R$-modules for a ring $R$, and $\mathrm{Vect}^{\varphi}(A,I)$ is the category of pairs $(N,\varphi)$ of a finite projective $A$-module $N$ and an isomorphism $\varphi: (\phi_{A}^{\ast}N)[1/I]\isom N[1/I]$ for a prism $(A,I)$. For an object $\cE$ of a category on the left side in the above equivalences, the image of $\cE$ via the projection  with respect to $(A,I,\cM_{A})$ is denoted by $\cE_{(A,I,\cM_{A})}$. When no confusion occurs, we simply write $\cE_{A}$ for this.
\end{rem}

\begin{dfn}[Analytic log prismatic \texorpdfstring{$F$}--crystals, {\cite{gr24}}, {\cite{dlms24}}]\noindent

    Let $(A,I)$ be a prism. We let $\mathrm{Vect}^{\mathrm{an}}(A,I)$ denote the exact category of vector bundles on a scheme $U(A,I)\coloneqq\Spec(A)\backslash V(p,I)$, and $\mathrm{Vect}^{\mathrm{an},\varphi}(A,I)$ denote the exact category of pairs $(\cE,\varphi_{\cE})$ consisting of $\cE\in \mathrm{Vect}^{\mathrm
    {an}}(A,I)$ and an isomorphism $\varphi_{\cE}\colon (\phi_{A}^{*}\cE)[1/I]\isom \cE[1/I]$. 

    For a bounded $p$-adic formal scheme $\fX$, we define the exact categories $\mathrm{Vect}^{\mathrm{an}}(\fX_{\Prism})$ and $\mathrm{Vect}^{\mathrm{an},\varphi}(\fX_{\Prism})$ by
    \begin{align*}
    \mathrm{Vect}^{\mathrm{an}}(\fX_{\Prism})&\coloneqq \varprojlim_{(A,I,\cM_{A})\in \fX_{\Prism}} \mathrm{Vect}^{\mathrm{an}}(A,I), \\
    \mathrm{Vect}^{\mathrm{an},\varphi}(\fX_{\Prism})&\coloneqq \varprojlim_{(A,I,\cM_{A})\in \fX_{\Prism}} \mathrm{Vect}^{\mathrm{an},\varphi}(A,I).
    \end{align*}
    
    For a bounded $p$-adic log formal scheme $(\fX,\cM_{\fX})$, we define the exact categories $\mathrm{Vect}^{\mathrm{an}}((\fX,\cM_{\fX})_{\Prism})$ and $\mathrm{Vect}^{\mathrm{an},\varphi}((\fX,\cM_{\fX})_{\Prism})$ by
    \begin{align*}
    \mathrm{Vect}^{\mathrm{an}}((\fX,\cM_{\fX})_{\Prism})&\coloneqq \varprojlim_{(A,I,\cM_{A})\in (\fX,\cM_{\fX})_{\Prism}} \mathrm{Vect}^{\mathrm{an}}(A,I), \\
    \mathrm{Vect}^{\mathrm{an},\varphi}((\fX,\cM_{\fX})_{\Prism})&\coloneqq \varprojlim_{(A,I,\cM_{A})\in (\fX,\cM_{\fX})_{\Prism}} \mathrm{Vect}^{\mathrm{an},\varphi}(A,I).
    \end{align*}
    Similarly, we define the exact categories $\mathrm{Vect}^{\mathrm{an}}((\fX,\cM_{\fX})^{\mathrm{str}}_{\Prism})$ and  $\mathrm{Vect}^{\mathrm{an},\varphi}((\fX,\cM_{\fX})^{\mathrm{str}}_{\Prism})$ by
    \begin{align*}
    \mathrm{Vect}^{\mathrm{an}}((\fX,\cM_{\fX})^{\mathrm{str}}_{\Prism})&\coloneqq \varprojlim_{(A,I,\cM_{A})\in (\fX,\cM_{\fX})^{\mathrm{str}}_{\Prism}} \mathrm{Vect}^{\mathrm{an}}(A,I), \\
    \mathrm{Vect}^{\mathrm{an},\varphi}((\fX,\cM_{\fX})^{\mathrm{str}}_{\Prism})&\coloneqq \varprojlim_{(A,I,\cM_{A})\in (\fX,\cM_{\fX})^{\mathrm{str}}_{\Prism}} \mathrm{Vect}^{\mathrm{an},\varphi}(A,I).
    \end{align*}
    Objects of the category $\mathrm{Vect}^{\mathrm{an}}((\fX,\cM_{\fX})_{\Prism})$ (resp. $\mathrm{Vect}^{\mathrm{an},\varphi}((\fX,\cM_{\fX})_{\Prism})$) are called \emph{analytic log prismatic crystals} (resp. \emph{analytic log prismatic $F$-crystals}) on $(\fX,\cM_{\fX})$. 
\end{dfn}

\begin{lem}\label{category of crystals is unchanged}
    Let $(\fX,\cM_{\fX})$ be a quasi-coherent and integral bounded $p$-adic formal scheme. Then the inclusion functor $(\fX, \cM_{\fX})^{\mathrm{str}}_{\Prism}\hookrightarrow (\fX, \cM_{\fX})_{\Prism}$ has a right adjoint functor
    \[
    (\fX, \cM_{\fX})_{\Prism}\to (\fX, \cM_{\fX})^{\mathrm{str}}_{\Prism} \ \ \ \  ((A,I,\cM_{A})\mapsto (A,I,\cN_{A})).
    \]
    In particular, $(\fX, \cM_{\fX})^{\mathrm{str}}_{\Prism}$ is cofinal in $(\fX, \cM_{\fX})_{\Prism}$, and, for any $\star\in \{\emptyset,\varphi,\mathrm{an},(\mathrm{an},\varphi)\}$, the following restriction functors give bi-exact equivalences.
    \[
    \mathrm{Vect}^{\star}((\fX, \cM_{\fX})_{\Prism})\to \mathrm{Vect}^{\star}((\fX, \cM_{\fX})^{\mathrm{str}}_{\Prism}) 
    \]
\end{lem}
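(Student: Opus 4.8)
The statement is a log analogue of Lemma \ref{big log crys site to str log crys site}, and the proof should follow that template closely, using the torsor fact from \cite[Lemma 3.8]{kos22} in place of its crystalline counterpart. First I would construct the right adjoint explicitly. Given $(A,I,\cM_A)\in (\fX,\cM_\fX)_\Prism$ with structure map $g\colon (\Spf(A/I),\cM_{A/I})\to (\fX,\cM_\fX)$, I would set
\[
\cN_{A/I}\coloneqq \cM_{A/I}\times_{g^{*}\cM_{\fX}^{\mathrm{grp-triv}}}\cdots
\]
--- more precisely, define $\cN_{A}\coloneqq \cM_A\times_{\cM_{A/I}} g^{*}\cM_\fX$ on $\Spf(A)$, using the identification of \'etale sites $\Spf(A)_\et\simeq \Spf(A/I)_\et$ (since $I$ is, locally, generated by a nilpotent-modulo-$p$ element and $A$ is $(p,I)$-complete). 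Then $(\Spf(A/I),g^{*}\cM_\fX)\hookrightarrow(\Spf(A),\cN_A)$ is strict by construction, so $(A,I,\cN_A)$ lands in $(\fX,\cM_\fX)^{\mathrm{str}}_\Prism$ once I check $\cN_A$ is still a quasi-coherent (integral) log structure carrying a $\delta_{\mathrm{log}}$-structure.

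**Key steps, in order.** (1) Verify $\cN_A$ is quasi-coherent and integral: working \'etale-locally pick an integral chart $P\to\cM_\fX$, pull back to $A/I$, and form $\widetilde P\coloneqq P\times_{\Gamma(\Spf(A/I),\cM_{A/I})}\Gamma(\Spf(A),\cM_A)$; by integrality and \cite[Lemma 3.8]{kos22} the surjection $\cM_A\twoheadrightarrow\cM_{A/I}$ is a $1+I$-torsor, so $\widetilde P\to P$ is a $1+I$-torsor and $\widetilde P\to\cN_A$ is a chart --- this is exactly the argument recorded in Remark \ref{str log prism is qcoh}. (2) Equip $\cN_A$ with a $\delta_{\mathrm{log}}$-structure compatible with that of $\cM_A$: since $\cN_A\hookrightarrow\cM_A$ is injective with image stable under the relevant operations (the $1+I$-torsor structure and the pullback from $\cM_\fX$ are both compatible with $\delta_{\mathrm{log}}$), the $\delta_{\mathrm{log}}$-map on $\cM_A$ restricts; one checks the three axioms hold on sections of $\cN_A$. (3) Check the universal property: for $(B,J,\cM_B)\in(\fX,\cM_\fX)^{\mathrm{str}}_\Prism$, a morphism $(B,J,\cM_B)\to(A,I,\cM_A)$ in $(\fX,\cM_\fX)_\Prism$ factors uniquely through $(A,I,\cN_A)$, because strictness of $(\Spf(B/J),\cM_{B/J})\to(\fX,\cM_\fX)$ forces the composite $\cM_B\to\cM_A|_{B}\to\cM_{A/I}|_{B/J}$ to come from $g^{*}\cM_\fX$, hence to factor through the fiber product defining $\cN_A$; uniqueness is clear from injectivity of $\cN_A\hookrightarrow\cM_A$. (4) Deduce cofinality: the unit $(A,I,\cN_A)\to(A,I,\cM_A)$ exhibits $(\fX,\cM_\fX)^{\mathrm{str}}_\Prism$ as cofinal, so the limit categories over the two sites agree. (5) For the equivalence statement, combine Remark \ref{pris crys as lim} (for $\star\in\{\emptyset,\varphi\}$) and the definition of $\mathrm{Vect}^{\mathrm{an}}$, $\mathrm{Vect}^{\mathrm{an},\varphi}$ as limits over the site: cofinality of $(\fX,\cM_\fX)^{\mathrm{str}}_\Prism$ immediately gives that restriction is an equivalence, and it is bi-exact because each transition functor $\mathrm{Vect}^{\star}(A,I)$ is exact and restriction of a limit along a cofinal subcategory is computed termwise.

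**Main obstacle.** The delicate point is step (1)--(2): ensuring that the fiber-product log structure $\cN_A$ remains quasi-coherent and integral \emph{and} inherits a genuine $\delta_{\mathrm{log}}$-structure, so that $(A,I,\cN_A)$ is an honest object of the site rather than merely a log scheme with a log structure. The quasi-coherence is handled by the $1+I$-torsor argument of Remark \ref{str log prism is qcoh}, which is why the integrality hypothesis on $(\fX,\cM_\fX)$ is essential; the $\delta_{\mathrm{log}}$ compatibility requires knowing that $1+I$ (with $I$ the prism ideal) and the pullback maps interact well with $\delta_{\mathrm{log}}$, which follows from the log-prism axioms but should be spelled out. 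Everything else is formal adjunction and cofinality bookkeeping. I expect the write-up to be short, essentially a pointer to Remark \ref{str log prism is qcoh} plus the adjunction verification.
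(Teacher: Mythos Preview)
Your approach is essentially identical to the paper's: define $\cN_A = \cM_A \times_{\cM_{A/I}} f^*\cM_\fX$, restrict the $\delta_{\mathrm{log}}$-structure, and invoke Remark~\ref{str log prism is qcoh} for quasi-coherence---the paper simply omits the explicit adjunction check and cofinality argument you spell out in steps (3)--(5). Two small slips to fix in your write-up: the map $(A,I,\cN_A) \to (A,I,\cM_A)$ is the \emph{counit} of the adjunction, not the unit, and in step (3) the log-structure map attached to a site morphism $(B,J,\cM_B)\to(A,I,\cM_A)$ goes $\cM_B|_{\Spf A}\to\cM_A$ (the underlying scheme map is $\Spf A\to\Spf B$), not $\cM_B\to\cM_A|_{\Spf B}$.
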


\begin{proof}
    Let $(A,I,\cM_{A})\in (\fX, \cM_{\fX})_{\Prism}$. We let $f\colon (\mathrm{Spf}(A/I),\cM_{A/I})\to (\fX,\cM_{\fX})$ denote the structure morphism. Under the identification $\mathrm{Spf}(A)_{\et}\simeq \mathrm{Spf}(A/I)_{\et}$, we define the log structure $\cN_{A}$ on $\mathrm{Spf}(A)$ by $\cN_{A}\coloneqq\cM_{A}\times_{\cM_{A/I}} f^{*}\cM_{\fX}$. By restricting the $\delta_{\mathrm{log}}$-structure to $\cN_{A}$, we obtain a log prism $(A,I,\cN_{A})$, which belongs to $(\fX,\cM_{\fX})_{\Prism}^{\mathrm{str}}$ by Remark \ref{str log prism is qcoh}. Then the functor $(\fX, \cM_{\fX})_{\Prism}\to (\fX, \cM_{\fX})^{\mathrm{str}}_{\Prism}$ sending $(A,I,\cM_{A})$ to $(A,I,\cN_{A})$ is the desired one by construction.
\end{proof}

\begin{construction}[Breuil-Kisin log prisms]\label{construction of bk log prism}

Let $(\fX,\cM_{\fX})=(\mathrm{Spf}(R),\cM_{R})$ be a small affine log formal scheme over $\cO_{K}$ and fix a framing $t$. We shall construct a log prism $(\fS_{R,t},(E),\cM_{\fS_{R,t}})=(\fS_{R,t},(E),\bN^{r})^{a}\in (\fX,\cM_{\fX})_{\Prism}^{\mathrm{str}}$. Let 
    \[
    \fS_{R^{0}}\coloneqq W(k)\langle x^{\pm1}_{1},\dots,x^{\pm1}_{l},y_{1},\dots,y_{m},z_{1},\dots,z_{n} \rangle \llbracket t \rrbracket/(\prod^{n}_{k=1}z_{k}-t).
    \]
     We define a prelog structure $\bN^{r}\to \fS_{R^{0}}$ by
     \[
    e_{i}\mapsto 
    \begin{cases}
        y_{i} \ (1\leq i\leq m) \\
        z_{i-m} \ (m+1\leq i\leq r),
    \end{cases}
     \]
     and define a $\delta_{\mathrm{log}}$-structure on this prelog ring by 
     $\delta(x_{i})=\delta(y_{i})=\delta(z_{i})=0$ and $\delta_{\mathrm{log}}(e_{i})=0$ for all $i$. Then we have an isomorphism $\fS_{R^{0}}/(E)\cong R^{0}$, and a log prism $(\fS_{R^{0}},(E),\bN^{r})^{\mathrm{a}}$ is an object of $((\mathrm{Spf}(R^{0}),\bN^{r})^{a})^{\mathrm{str}}_{\Prism}$. The $p$-adically \'{e}tale $R^{0}$-algebra $R$ lifts uniquely to a $(p,E)$-adically \'{e}tale $\fS_{R^{0}}$-algebra $\fS_{R,t}$. By \cite[Lemma 2.13]{kos22}, there exists a unique prelog prism structure on $(\fS_{R,t},(E),\bN^{r})$ such that $(\fS_{R^{0}},(E),\bN^{r})\to (\fS_{R,t},(E),\bN^{r})$ is a map of prelog prisms. Then $(\fS_{R,t},(E),\cM_{\fS_{R,t}})\coloneqq (\fS_{R,t},(E),\bN^{r})^{a}$ is an object of $(\mathrm{Spf}(R),\cM_{R})_{\Prism}^{\mathrm{str}}$. When no confusion occurs, we simply write $(\fS_{R},(E),\cM_{\fS_{R}})$ for $(\fS_{R,t},(E),\cM_{\fS_{R,t}})$.

     By Lemma \ref{divisible monoid and log pris site}, there exists a unique log structure $\cM_{A_{\mathrm{inf}}(\widetilde{R}_{\infty,\alpha})}$ on $\mathrm{Spf}(A_{\mathrm{inf}}(\widetilde{R}_{\infty,\alpha}))$ such that $(A_{\mathrm{inf}}(\widetilde{R}_{\infty,\alpha}),(\xi),\cM_{A_{\mathrm{inf}}(\widetilde{R}_{\infty,\alpha})})$ is a log prism in $(\mathrm{Spf}(\widetilde{R}_{\infty,\alpha}),\cM_{\widetilde{R}_{\infty,\alpha}})_{\Prism}^{\mathrm{str}}$ up to a unique isomorphism. By the proof of \emph{loc. cit.}, $\cM_{A_{\mathrm{inf}}(\widetilde{R}_{\infty,\alpha})}$ is isomorphic to the associated log structure with the prelog structure $\bQ_{\geq 0}^{r}\to A_{\mathrm{inf}}(\widetilde{R}_{\infty,\alpha})$ defined by
     \[
     se_{i}\mapsto
     \begin{cases}
         [(y_{i}^{s})^{\flat}] \ \ (1\leq i\leq m) \\
         [(z_{i-m}^{s})^{\flat}] \ \ (m+1\leq i\leq r). 
     \end{cases}
     \]
     
     We can define a log prism map $(\fS_{R},(E),\cM_{\fS_{R}})\to (A_{\mathrm{inf}}(\widetilde{R}_{\infty,\alpha}),(\xi),\cM_{A_{\mathrm{inf}}(\widetilde{R}_{\infty,\alpha})})$ as follows. Consider a map of prelog rings $(\fS_{R^{0}},\bN^{r})\to (A_{\mathrm{inf}}(\widetilde{R}_{\infty,\alpha}),\bQ_{\geq 0}^{r})$ defined by
     \begin{align*}
         x_{i}&\mapsto [x_{i}^{\flat}] \\
         y_{j}&\mapsto [y_{j}^{\flat}] \\
         z_{k}&\mapsto [z_{k}^{\flat}] 
     \end{align*}
     and the natural inclusion $\bN^{r}\to \bQ_{\geq 0}^{r}$. Since $\fS_{R^{0}}\to \fS_{R}$ is \'{e}tale, the above map lifts uniquely to a map of prelog rings $(\fS_{R},\bN^{r})\to (A_{\mathrm{inf}}(\widetilde{R}_{\infty,\alpha}),\bQ_{\geq 0}^{r})$, which gives a log prism map
     \[
     (\fS_{R},(E),\cM_{\fS_{R}})\to (A_{\mathrm{inf}}(\widetilde{R}_{\infty,\alpha}),(\xi),\cM_{A_{\mathrm{inf}}(\widetilde{R}_{\infty,\alpha})})
     \]
     by taking the associated log structures.
\end{construction}

\begin{rem}\label{rem on another construction of bkprism}
    Let $(\mathrm{Spf}(R),\cM_{R})$ be a small affine log formal scheme over $\cO_{K}$ admitting a fixed framing with $n=0$. Then $(\mathrm{Spf}(R),\cM_{R}\oplus \pi^{\bN})^{a}$ is also small affine and admits an induced framing with $n=1$. Let $(\mathrm{Spf}(R_{0}),\cM_{R_{0}})$ be the unramified model of $(\mathrm{Spf}(R),\cM_{R})$ defined by the fixed framing and $\varphi_{R_{0}}$ be the Frobenius lift on $(\mathrm{Spf}(R_{0}),\cM_{R_{0}})$ defined in Definition \ref{def of unr model}. 

    Consider a log formal scheme $(\mathrm{Spf}(R_{0}[[t]]),\cM_{R_{0}[[t]]})\coloneqq (\mathrm{Spf}(R_{0}[[t]]),\cM_{R}\oplus t^{\bN})^{a}$. The map $\varphi_{R_{0}}$ induces a Frobenius lift $\varphi_{R_{0}[[t]]}$ on $(\mathrm{Spf}(R_{0}[[t]]),\cM_{R_{0}[[t]]})$ given by $t\mapsto t^{p}$, which induces a $\delta$-ring structure on $R_{0}[[t]]$. This is refined to a $\delta_{\mathrm{log}}$-ring structure on $(\mathrm{Spf}(R_{0}[[t]]),\cM_{R_{0}[[t]]})$ defined by $\delta_{\mathrm{log}}(y_{j})=\delta_{\mathrm{log}}(t)=0$. Then a triple $(R_{0}[[t]],(E_{t}),\cM_{R_{0}[[t]]})$ is a log prism which is isomorphic to the Breuil-Kisin log prism $(\fS_{R},(E),\cM_{\fS_{R}})$ associated with $(\mathrm{Spf}(R),\cM_{R}\oplus \pi^{\bN})^{a}$. If we replace $\cM_{R}\oplus t^{\bN}$ with $\cM_{R}$ at the first point, we can obtain the Breuil-Kisin log prism associated with $(\mathrm{Spf}(R),\cM_{R})$.
\end{rem}

\begin{lem}[cf. {\cite[Lemma 2.7 and Lemma 2.8]{dlms24}}]\label{bk prism for semi-stable reduction}

Let $(\fX,\cM_{\fX})=(\mathrm{Spf}(R),\cM_{R})$ be a small affine log formal scheme over $\cO_{K}$ with a fixed framing, and $(A,I,\cM_{A})$ be an orientable log prism in $(\fX,\cM_{\fX})_{\Prism}^{\mathrm{str}}$.
Then there exists a coproduct in $(\fX,\cM_{\fX})_{\Prism}$
\[
(B,J,\cM_{B})\coloneqq (\fS_{R},(E),\cM_{\fS_{R}})\sqcup  (A,I,\cM_{A})
\]
belonging to $(\fX,\cM_{\fX})_{\Prism}^{\mathrm{str}}$, and a map $(A,I,\cM_{A})\to (B,J,\cM_{B})$ is a flat cover. In particular, $(\fS_{R},(E),\cM_{\fS_{R}})$ covers the final object of the topos associated to $(\fX,\cM_{\fX})_{\Prism}^{\mathrm{str}}$. 
\end{lem}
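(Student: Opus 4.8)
The plan is to follow the argument of \cite[Lemma 2.7 and Lemma 2.8]{dlms24}, which in turn adapts the proof of \cite{bs22} that Breuil--Kisin prisms cover the final object of the absolute prismatic site. Fix an orientation $I=(d)$. Since the structure morphism $(\mathrm{Spf}(A/I),\cM_{A/I})\to(\fX,\cM_{\fX})$ is strict and the fixed framing $(\mathrm{Spf}(R),\cM_R)\to(\mathrm{Spf}(R^0),\bN^r)^a$ is strict \'etale, the composite $(\mathrm{Spf}(A/I),\cM_{A/I})\to(\mathrm{Spf}(R^0),\bN^r)^a$ is strict; hence, \'etale-locally on $\mathrm{Spf}(A)$ (which we may pass to, as we are constructing a cover), $\cM_{A/I}$ is associated to a chart $\bN^r\to A/I$ sending $e_i$ to the image of $y_i$ (for $i\le m$) resp.\ $z_{i-m}$ (for $i>m$). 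As $\cM_A$ is integral, the surjection $\cM_A\twoheadrightarrow\cM_{A/I}$ is a $1+I$-torsor by \cite[Lemma 3.8]{kos22}, so this chart lifts to a chart $\bN^r\to\cM_A$; denote by $\widetilde y_j,\widetilde z_k\in A$ the images of the generators, and lift the images of the units $x_i$ to units $\widetilde x_i\in A^\times$. The key numerical input is that $E\bigl(\prod_{k}\widetilde z_k\bigr)\in I$ (when $n=0$, replace $\prod_k\widetilde z_k$ by a lift $\widetilde\pi\in A$ of the image of $\pi$): indeed the $W$-algebra map $\cO_K\to R^0\to R\to A/I$ carries $\pi$ to $\prod_k\overline z_k$, and $E(\pi)=0$ in $\cO_K=W[t]/(E)$, so $E\bigl(\prod_k\widetilde z_k\bigr)$ lies in $\ker(A\to A/I)=(d)$.

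\textbf{Construction of $B$.} Following \cite[Lemma 2.7]{dlms24}, form the $(p,d)$-completed $\delta_{\mathrm{log}}$-$A$-algebra obtained from $\fS_{R^0}\,\widehat\otimes_W\,A$ by imposing, over $A/I$, the compatibility of the two coordinate systems coming from the framing (concretely: adjoin indeterminate coordinates $X_i^{\pm1},Y_j,Z_k$ playing the role of $x_i,y_j,z_k\in\fS_{R^0}$, together with the divided elements $\tfrac{X_i-\widetilde x_i}{d},\tfrac{Y_j-\widetilde y_j}{d},\tfrac{Z_k-\widetilde z_k}{d}$, and take the $\delta$-closure and $(p,d)$-completion). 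Writing $T:=\prod_k Z_k$, let $B$ be the prismatic envelope of this algebra forcing the image of the distinguished element $E(t)\in\fS_{R^0}$, namely $E(T)$, to lie in $(d)$. The defining ideal is generated by a sequence that is Koszul-regular modulo $p$ (here $d$ is a nonzerodivisor, and modulo $(p,d)$ each of $E(T)$ --- using that $E$ is Eisenstein, so $E(T)\equiv T^e\pmod p$ --- and of $X_i-\widetilde x_i$, $Y_j-\widetilde y_j$, $Z_k-\widetilde z_k$ is a nonzerodivisor in the pertinent polynomial ring), so by the flatness of prismatic envelopes in \cite{bs22} the map $A\to B$ is $(p,I)$-completely faithfully flat; in particular $B\neq 0$. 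In $B$ one has $Z_k\equiv\widetilde z_k\pmod{dB}$, hence $E(T)\equiv E\bigl(\prod_k\widetilde z_k\bigr)\equiv0\pmod{dB}$, and since $E(T)$ is the image of a distinguished element and distinguishedness is preserved by maps of $\delta$-rings while $d$ is distinguished, the quotient $E(T)/d\in B^\times$; thus $(B,(d))$ is a bounded prism ($p\in(d)+\phi(d)B$ being inherited from $A$) with distinguished ideal $J:=dB=E(T)B$. Equipping $\mathrm{Spf}(B)$ with the pullback $\cM_B$ of $\cM_A$ and restricting the $\delta_{\mathrm{log}}$-structure gives $(B,J,\cM_B)\in(\fX,\cM_{\fX})^{\mathrm{str}}_{\Prism}$.

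\textbf{Morphisms, universal property, and conclusion.} The map $A\to B$ is a morphism of log prisms, strict by construction, and $(p,I)$-completely faithfully flat, i.e.\ a flat cover. The evident map $\fS_{R^0}\to B$ (sending $x_i,y_j,z_k$ to $X_i,Y_j,Z_k$, hence $t$ to $T$ and $E(t)$ to a generator of $J$) is a morphism of log prisms; since $\fS_{R^0}\to\fS_R$ is $(p,E)$-completely \'etale and $B/J$ carries a map to $R$ (induced from $A/I\to B/J$ and the framing) compatible with $\fS_{R^0}/(E)=R^0\to B/J$, it lifts uniquely to a morphism $\fS_R\to B$. By construction $B$ is the universal $\delta_{\mathrm{log}}$-$A$-algebra carrying such data, so $(B,J,\cM_B)$ is the coproduct $(\fS_R,(E),\cM_{\fS_R})\sqcup(A,I,\cM_A)$ in $(\fX,\cM_{\fX})_{\Prism}$ (it suffices to test the universal property against strict objects, by Lemma \ref{category of crystals is unchanged}). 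For the final assertion: every object of $(\fX,\cM_{\fX})^{\mathrm{str}}_{\Prism}$ is flat-locally orientable, so by what has been shown it admits a cover by an object receiving a morphism from $(\fS_R,(E),\cM_{\fS_R})$; this precisely says that $(\fS_R,(E),\cM_{\fS_R})$ covers the final object of the associated topos.

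\textbf{Main obstacle.} The delicate step is the construction of $B$: one must pin down the prismatic envelope so that $A\to B$ is genuinely $(p,I)$-completely flat --- this rests on isolating the correct Koszul-regular sequence and invoking the flatness (``rigidity'') of prismatic envelopes from \cite{bs22} --- and so that $B$ is honestly a log prism realizing the coproduct, the subtle points being that $E(T)$ and $d$ generate the same ideal, that the morphism from $\fS_R$ is compatible with the log structures (via the $1+I$-torsor property of integral log structures), and that the resulting object is strict.
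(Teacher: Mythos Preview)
Your construction has a genuine gap concerning the log structure on $B$, and this is precisely the point where the paper's proof diverges from the non-logarithmic argument you are adapting.

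You form the envelope by adjoining the \emph{additive} divided differences $\tfrac{Y_j-\widetilde y_j}{d}$ and $\tfrac{Z_k-\widetilde z_k}{d}$, then equip $B$ with the pullback of $\cM_A$ (whose chart sends $e_j\mapsto\widetilde y_j$, $e_{m+k}\mapsto\widetilde z_k$). For the ring map $\fS_{R^0}\to B$, $y_j\mapsto Y_j$, to underlie a morphism of \emph{log} prisms, you must send the chart element $e_j\in\bN^r$ to a section of $\cM_B$ whose image in $B$ is $Y_j$; with your log structure this forces $Y_j=u\cdot\prod\widetilde y_{j'}^{a_{j'}}\widetilde z_k^{b_k}$ for some unit $u$. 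But the relation $Y_j-\widetilde y_j\in dB$ only says they agree modulo $d$, not that they differ by a unit --- and when $\widetilde y_j$ is a non-unit (e.g.\ already for $A=\fS_R$ itself, where $\widetilde y_j=y_j$), there is no such expression. Concretely, the codiagonal $B\to A$ (sending $Y_j\mapsto\widetilde y_j$) shows $Y_j$ is a non-unit, and one checks $Y_j/\widetilde y_j\notin B$ in general. So your $B$ carries no log structure making both structure maps into log prism maps, and hence cannot be the coproduct in $(\fX,\cM_\fX)_{\Prism}$.

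The paper fixes this by working \emph{multiplicatively}: rather than adjoining $(Y_j-\widetilde y_j)/d$, it first passes to the exactification of the surjection of prelog rings $(C,M_A\oplus\bN^r)\to(A/I,M_A)$, which adjoins the ratios $(\widetilde y_j/t_j)^{\pm1}$ and $(\widetilde z_k/u_k)^{\pm1}$ as units, and only then takes the prismatic envelope of the regular sequence $(s_i-\widetilde x_i,\,\widetilde y_j/t_j-1,\,\widetilde z_k/u_k-1)$. In the resulting $B$ one has $\widetilde y_j=t_j\cdot(\text{unit})$, so the two charts $\bN^r\to B$ induce the \emph{same} associated log structure; this simultaneously makes the map from $A$ strict and the map from $\fS_R$ a genuine log prism map. (As a side remark, your Koszul-regularity claim is also off when $n\ge1$: since $E(T)\in(d,\,Z_k-\widetilde z_k)$, the element $E(T)$ is redundant in your generating set, so the sequence including it is not regular; the envelope is unchanged if you drop it, but your stated justification for flatness needs adjustment.)
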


\begin{proof}
    By \cite[Lemma 3.8]{kos22}, $\Gamma(\mathrm{Spf}(A),\cM_{A})\to \Gamma(\mathrm{Spf}(A/I),\cM_{A/I})$ is surjective. Hence, a map $\bN^{r}\to \Gamma(\fX,\cM_{\fX})\to \Gamma(\mathrm{Spf}(A/I),\cM_{A/I})$ admits a lift $\bN^{r}\to \Gamma(\mathrm{Spf}(A),\cM_{A})$. Fix such a lift and let $(A,M_{A})$ denote the prelog ring associated with the fixed lift. Here, $M_{A}=\bN^{r}$ as monoids. Then $(A,I,\cM_{A})=(A,I,M_{A})^{a}$. Let
    \[ C\coloneqq A[s_{1}^{\pm},\dots,s_{l}^{\pm},t_{1},\dots,t_{m},u_{1},\dots,u_{n}][v]/(\prod_{k=1}^{n}u_{k}-v)
    \]
    Let $(C,M_{C})$ the associated prelog ring where $M_{C}\coloneqq M_{A}\oplus \bN^{r}\to C$ is given by $M_{A}\to A\to C$ and 
    \[
    e_{i}\mapsto 
    \begin{cases}
        t_{i} \ \ (1\leq i\leq m)\\
        u_{i-m} \ \ (m+1\leq i\leq r).
    \end{cases}
    \]
    The prelog ring $(C,M_{C})$ is equipped the unique $\delta_{\mathrm{log}}$-structure such that the natural map $(A,M_{A})\to (C,M_{C})$ is compatible with $\delta_{\mathrm{log}}$-structures and $\delta(s_{i})=\delta(t_{j})=\delta(u_{k})=\delta_{\mathrm{log}}(e_{i})=0$ for every $i,j,k$.
    Consider the surjection $(C,M_{C})\to (A/I,M_{A})$ induced by $s_{i}\mapsto x_{i}$, $t_{j}\mapsto y_{j}$, and $u_{k}\mapsto z_{k}$, and let
    \[
    (C,M_{C})\to (C',M_{C'})\to (A/I,M_{A})
    \]
    be the exactification of it. Explicitly, we have
    \begin{align*}
        C'&\coloneqq C[(\widetilde{y}_{1}/t_{1})^{\pm 1},\dots,(\widetilde{y}_{m}/t_{m})^{\pm 1},(\widetilde{z}_{1}/u_{1})^{\pm 1},(\widetilde{z}_{n}/u_{n})^{\pm 1}] \\ &\coloneqq C[T_{1}^{\pm 1},\dots,T_{m}^{\pm 1},U_{1}^{\pm 1},\dots,U_{n}^{\pm 1}]/(t_{j}T_{j}-\widetilde{y}_{j},u_{k}U_{k}-\widetilde{z}_{k})_{j,k},
    \end{align*}
    where $\widetilde{y}_{j}$ (resp. $\widetilde{z}_{k}$) is the image of $e_{j}$ (resp. $e_{m+k}$) via $\bN^{r}=M_{A}\to A$ for every $j,k$. The $\delta_{\mathrm{log}}$-structure on $(C,M_{C})$ uniquely extends to $(C',M_{C'})$ by \cite[Proposition 2.16]{kos22}. Fix a lift $\widetilde{x}_{i}\in A$ of $x_{i}\in A/I$ for each $i$. Let $J$ denote the kernel of the surjection $C'\to A/I$. Then $J$ is generated by $I$ and $(s_{i}-\widetilde{x}_{i},\widetilde{y}_{j}/t_{j}-1,\widetilde{z}_{k}/u_{k}-1)_{i,j,k}$. Let $(C')^{\wedge}$ denote the derived $(p,I)$-adic completion of $C'$, which coincides with the classical $(p,I)$-completion because $C'$ is flat over $A$ (cf.~the proof of \cite[Proposition 3.9]{kos22}), and $(B,IB)$ denote the prismatic envelope of the $\delta$-pair $((C')^{\wedge},J(C')^{\wedge})$ over $(A,I)$. Since $(s_{i}-\widetilde{x}_{i},\widetilde{y}_{j}/t_{j}-1,\widetilde{z}_{k}/u_{k}-1)_{i,j,k}$ is a regular sequence in $(C')^{\wedge}$ and the natural map of derived quotient rings
    \[
    A/^{\bL}(p,d)\to (C')^{\wedge}/^{\bL}(p,d,s_{i}-\widetilde{x}_{i},\widetilde{y}_{j}/t_{j}-1,\widetilde{z}_{k}/u_{k}-1)_{i,j,k}
    \]
    is an isomorphism, the prism map $(A,I)\to (B,IB)$ is a flat cover by \cite[Proposition 3.13 (1)]{bs22} or \cite[Proposition 2.6.6 (1)]{ito24}. 

    We define a prelog ring map $(\fS_{R^{0}},(E),\bN^{r})\to (B,IB,M_{C'})$ by $x_{i}\mapsto s_{i}$, $y_{j}\mapsto t_{J}$, $z_{k}\mapsto u_{k}$, and $t\mapsto v$. Since $E(v)\in J$ implies that $v^{e}\in (p,I)B$, this map is well-defined. Using the \'{e}taleness of $\fS_{R^{0}}\to \fS_{R}$, we obtain a unique lift $(\fS_{R},(E),\bN^{r})\to (B,IB,M_{C'})$ inducing the natural map $(R,\bN^{r})\to (B/IB,M_{A})$. Then $(B,IB,M_{C'})\in (\mathrm{Spf}(R),\cM_{R})_{\Prism}^{\mathrm{str}}$ is the coproduct of $(\fS_{R},(E),\bN^{r})^{a}$ and $(A,I,\cM_{A})$ in $(\mathrm{Spf}(R),\cM_{R})_{\Prism}$.
\end{proof}

For a small affine log formal scheme $(\fX,\cM_{\fX})=(\mathrm{Spf}(R),\cM_{R})$ over $\cO_{K}$ with a fixed framing, the $n+1$--fold self coproduct of $(\fS_{R},(E),\cM_{\fS_{R}})$ in $(\fX,\cM_{\fX})_{\Prism}$ is denoted by $(\fS_{R}^{(n)},(E),\cM_{\fS_{R}^{(n)}})(\in (\fX,\cM_{\fX})_{\Prism}^{\mathrm{str}})$ for each $n\geq 0$.

\begin{lem}\label{fully faithfulness from pris crys to anal pris crys}
    Let $(\fX,\cM_{\fX})$ be a semi-stable log formal scheme over $\cO_{K}$. Then the restriction functor
    \[
    \mathrm{Vect}^{\star}((\fX,\cM_{\fX})_{\Prism})\to \mathrm{Vect}^{\mathrm{an},\star}((\fX,\cM_{\fX})_{\Prism})
    \]
    is fully faithful for $\star\in \{\emptyset,\varphi\}$.
\end{lem}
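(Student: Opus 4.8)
The plan is to reduce the statement to a local depth computation on Breuil--Kisin log prisms. First I would pass to internal homomorphisms: since $\sHom(\cE_1,\cE_2)=\cE_1^{\vee}\otimes\cE_2$ is again a log prismatic crystal, it suffices to treat $\star=\emptyset$ and to show that for every $\cF\in\mathrm{Vect}((\fX,\cM_{\fX})_{\Prism})$ the natural map
\[
\Gamma((\fX,\cM_{\fX})_{\Prism},\cF)\longrightarrow \varprojlim_{(A,I,\cM_{A})}\Gamma(U(A,I),\cF_{A})
\]
is bijective. Using Lemma \ref{category of crystals is unchanged} I would replace $(\fX,\cM_{\fX})_{\Prism}$ by its strict version; then, since vector bundles on $\mathrm{Spec}$ and on the analytic loci $U(-,I)$ satisfy flat descent, I may argue \'{e}tale locally on $\fX$ and assume that $(\fX,\cM_{\fX})=(\mathrm{Spf}(R),\cM_{R})$ is small affine with a fixed framing.

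Next I would invoke Lemma \ref{bk prism for semi-stable reduction}: the Breuil--Kisin log prism $(\fS_{R},(E),\cM_{\fS_{R}})$ covers the final object of the topos of $(\fX,\cM_{\fX})^{\mathrm{str}}_{\Prism}$, so by flat descent both $\mathrm{Vect}((\fX,\cM_{\fX})_{\Prism})$ and $\mathrm{Vect}^{\mathrm{an}}((\fX,\cM_{\fX})_{\Prism})$ are computed as the limit over $\Delta$ of the categories $\mathrm{Vect}(\fS_{R}^{(n)})$, respectively $\mathrm{Vect}(U(\fS_{R}^{(n)},(E)))$, attached to the \v{C}ech nerve $\fS_{R}^{(\bullet)}$. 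Since a limit over $\Delta$ of fully faithful functors that commute with the cosimplicial transition maps is again fully faithful, it would then suffice to prove that for each $n\geq 0$ the restriction functor $\mathrm{Vect}(\fS_{R}^{(n)})\to\mathrm{Vect}(U(\fS_{R}^{(n)},(E)))$ is fully faithful. As $U(\fS_{R}^{(n)},(E))=\mathrm{Spec}(\fS_{R}^{(n)})\setminus V(p,E)$ and the modules involved are finite projective, this amounts to showing that $p,E$ is a regular sequence in $\fS_{R}^{(n)}$, so that sections of a finite projective module extend uniquely across the codimension-$\geq 2$ closed subset $V(p,E)$.

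For $n=0$ this is immediate: $\fS_{R}$ is Noetherian and $p$-torsion free, and $\fS_{R}/p$ is reduced (being $(p,E)$-completely \'{e}tale over $k\langle x_i^{\pm},y_j,z_k\rangle\llbracket t\rrbracket/(\prod_k z_k-t)$), on which $E\equiv E(t)\pmod p$ is a nonzerodivisor. For $n\geq 1$ I would use the explicit presentation of $\fS_{R}^{(n)}$ as a prismatic envelope coming from the proof of Lemma \ref{bk prism for semi-stable reduction}: modulo $p$ it becomes a free module over a regular $k$-algebra (a PD polynomial algebra with $p$-th power relations over a localization of a smooth $k$-algebra), whence $p,E$ remains regular. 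This is precisely the type of computation carried out in \cite{bs22}, \cite{gr24}, \cite{dlms24}, and the log structures occurring here — all of the elementary form $\bN^{r}$ — do not change the underlying rings, so the same arguments apply verbatim. \emph{This step is the one I expect to be the main obstacle}; everything else is formal descent bookkeeping.

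Finally, the case $\star=\varphi$ would follow formally from $\star=\emptyset$: a morphism of analytic log prismatic $F$-crystals is in particular a morphism of the underlying analytic log prismatic crystals, hence by the above it extends uniquely to a morphism $f$ of log prismatic crystals; its compatibility with the Frobenius structures is an equality of maps obtained after inverting $\cI_{\Prism}$, and since $\mathrm{Spec}(A[1/E])\subseteq U(A,(E))$ for every log prism $(A,(E),\cM_{A})$ in the site, this equality — known on the analytic locus by hypothesis — already forces $f$ to be Frobenius-equivariant.
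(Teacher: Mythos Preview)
Your approach is correct and essentially the same as the paper's: reduce \'{e}tale locally to the small affine case, identify both sides with the $\Delta$-limit over the \v{C}ech nerve $(\fS_{R}^{(\bullet)},(E))$ via Lemma~\ref{bk prism for semi-stable reduction}, and check that $\mathrm{Vect}^{\star}(\fS_{R}^{(n)},(E))\to\mathrm{Vect}^{\mathrm{an},\star}(\fS_{R}^{(n)},(E))$ is fully faithful termwise (the paper simply cites \cite[0G7P]{sp24} rather than spelling out the depth argument). Your flagged ``main obstacle''---the regularity of $(p,E)$ in $\fS_{R}^{(n)}$---is not as delicate as you fear: it follows immediately from the $(p,E)$-complete flatness of $\fS_{R}\to\fS_{R}^{(n)}$ already established in Lemma~\ref{bk prism for semi-stable reduction}, so no explicit mod-$p$ presentation is needed.
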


\begin{proof}
    By working \'{e}tale locally on $\fX$, we may assume that $(\fX,\cM_{\fX})=(\mathrm{Spf}(R),\cM_{R})$ is small affine with a fixed framing. The natural functors
    \begin{align*}
        \mathrm{Vect}^{\star}((\fX,\cM_{\fX})_{\Prism})&\to \varprojlim_{\bullet\in \Delta} \mathrm{Vect}^{\star}(\fS_{R}^{(\bullet)},(E)), \\
        \mathrm{Vect}^{\mathrm{an},\star}((\fX,\cM_{\fX})_{\Prism})&\to \varprojlim_{\bullet\in \Delta} \mathrm{Vect}^{\mathrm{an},\star}(\fS_{R}^{(\bullet)},(E))
    \end{align*}
    give equivalences by Lemma \ref{bk prism for semi-stable reduction}. For each $n\geq 0$, the natural functor
    \[
    \mathrm{Vect}^{\star}(\fS_{R}^{(n)},(E))\to \mathrm{Vect}^{\mathrm{an},\star}(\fS_{R}^{(n)},(E))
    \]
    is fully faithful by \cite[0G7P]{sp24}. Therefore, the functor in the statement is fully faithful.
\end{proof}

\begin{dfn}(\cite[Definition 3.7]{dlms24})
    Let $\mathrm{DD}_{\fS_{R}}$ be the category of pairs $(\fM,f)$ where
    \begin{itemize}
        \item $\fM$ is a finitely generated $\fS_{R}$-module such that, $\fM[1/p]$ (resp. $\fM[1/E]$) is finite projective over $\fS_{R}[1/p]$ (resp. $\fS_{R}[1/E]$), and $\fM=\fM[1/p]\cap \fM[1/E]$;
        \item $f:\fM\otimes_{\fS_{R},p_{1}} \fS_{R}^{(1)}\isom \fM\otimes_{\fS_{R},p_{2}} \fS_{R}^{(1)}$ is an isomorphism of $\fS_{R}^{(1)}$-modules satisfying the cocycle condition over $\fS_{R}^{(2)}$.
    \end{itemize}
    Let $\mathrm{DD}^{\varphi}_{\fS_{R}}$ be the category of pairs $(\fM,\varphi_{\fM},f)$ where
    \begin{itemize}
        \item $(\fM,f)$ belongs to $\mathrm{DD}_{\fS_{R}}$;
        \item $\varphi_{\fM}\colon (\phi^{*}\fM)[1/E]\isom \fM[1/E]$ is an isomorphism of $\fS_{R}[1/E]$-modules such that $f$ is compatible with $\varphi_{\fM}$.
    \end{itemize}
\end{dfn}

\begin{lem}\label{anal pris crys via bk prism}
    For $\star\in \{\emptyset,\varphi\}$, we have natural bi-exact equivalences
    \[
    \mathrm{Vect}^{\mathrm{an},\star}((\fX,\cM_{\fX})_{\Prism})\isom \mathrm{DD}^{\star}_{\fS_{R}}
    \]
    sending $\cE$ to $\Gamma(U(\fS_{R},(E)),\cE_{(\fS_{R},(E),\cM_{\fS_{R}})})$.
\end{lem}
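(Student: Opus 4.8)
The statement is a direct combination of the descent results already established in the section together with a module-theoretic analysis of the value of an analytic prismatic crystal at the Breuil-Kisin prism. First I would recall that, by Lemma~\ref{bk prism for semi-stable reduction}, the Breuil-Kisin log prism $(\fS_{R},(E),\cM_{\fS_{R}})$ covers the final object of the topos of $(\fX,\cM_{\fX})_{\Prism}^{\mathrm{str}}$, and that the $n$-fold self-coproducts $(\fS_{R}^{(n)},(E),\cM_{\fS_{R}^{(n)}})$ exist inside $(\fX,\cM_{\fX})_{\Prism}^{\mathrm{str}}$. Combining this with Lemma~\ref{category of crystals is unchanged} (which identifies crystals on the full and strict sites), one gets, exactly as in the proof of Lemma~\ref{fully faithfulness from pris crys to anal pris crys}, bi-exact equivalences
\[
\mathrm{Vect}^{\mathrm{an},\star}((\fX,\cM_{\fX})_{\Prism})\isom \varprojlim_{\bullet\in \Delta} \mathrm{Vect}^{\mathrm{an},\star}(\fS_{R}^{(\bullet)},(E))
\]
for $\star\in\{\emptyset,\varphi\}$, where $\mathrm{Vect}^{\mathrm{an},\star}(A,I)$ is the category of vector bundles (with Frobenius, when $\star=\varphi$) on the punctured spectrum $U(A,I)=\Spec(A)\setminus V(p,E)$. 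So the content to be proved is that the limit of $\mathrm{Vect}^{\mathrm{an},\star}(\fS_{R}^{(\bullet)},(E))$ over the simplex is equivalent to $\mathrm{DD}^{\star}_{\fS_{R}}$, compatibly with the evaluation-at-$(\fS_{R},(E))$ functor.

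The key local input is the identification of global sections over the punctured spectrum. For each $n$, the ring $\fS_{R}^{(n)}$ is Noetherian, $(p,E)$-adically complete, and $(p,E)$ is a regular sequence (this is visible from Construction~\ref{construction of bk log prism} and the explicit description of the coproducts in the proof of Lemma~\ref{bk prism for semi-stable reduction}); hence $\fS_{R}^{(n)}$ satisfies the hypotheses under which, by \cite[0BLE, 0G7P]{sp24}, the restriction $\mathrm{Vect}(\fS_{R}^{(n)})\to \mathrm{Vect}^{\mathrm{an}}(\fS_{R}^{(n)},(E))$ is fully faithful and its essential image consists precisely of those finitely generated $\fS_{R}^{(n)}$-modules $\fN$ with $\fN[1/p]$ and $\fN[1/E]$ finite projective and $\fN=\fN[1/p]\cap\fN[1/E]$ — together with the fact that $\Gamma(U(\fS_{R}^{(n)},(E)),-)$ recovers such an $\fN$ from its bundle. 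For $n=0$ this gives exactly the objects appearing in the definition of $\mathrm{DD}_{\fS_{R}}$; for $n=1,2$ it supplies the descent datum $f$ and the cocycle condition. Thus taking global sections term by term defines a functor $\varprojlim_{\bullet}\mathrm{Vect}^{\mathrm{an},\star}(\fS_{R}^{(\bullet)},(E))\to \mathrm{DD}^{\star}_{\fS_{R}}$, and I would check that it is fully faithful and essentially surjective by unwinding both sides: an object on the left is a vector bundle $\cE_{0}$ on $U(\fS_{R},(E))$ with a descent datum, which by the previous sentence is the same as a module $\fM$ as in the first bullet of $\mathrm{DD}_{\fS_{R}}$ together with an isomorphism $f$ over $\fS_{R}^{(1)}$ satisfying the cocycle condition over $\fS_{R}^{(2)}$; the $\varphi$-version is obtained by carrying along the Frobenius structure, which over $U(\fS_{R},(E))[1/E]$ is the same datum as an isomorphism $\varphi_{\fM}\colon(\phi^{*}\fM)[1/E]\isom\fM[1/E]$ compatible with $f$. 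Bi-exactness is immediate because all the equivalences involved are exact and reflect exactness (exactness of sequences of crystals/bundles is checked objectwise on each $\fS_{R}^{(n)}$, and $\Gamma(U(\fS_{R}^{(n)},(E)),-)$ is exact on the relevant subcategory since the modules in question have no cohomology on the punctured spectrum beyond $H^{0}$, the depth of $(p,E)$ being $2$).

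The main obstacle, and the point requiring the most care, is the claim that $\Gamma(U(\fS_{R}^{(n)},(E)),-)$ and the bundle-ification functor are mutually inverse equivalences between the category of finitely generated $\fS_{R}^{(n)}$-modules $\fN$ with $\fN[1/p],\fN[1/E]$ finite projective and $\fN=\fN[1/p]\cap\fN[1/E]$, and $\mathrm{Vect}^{\mathrm{an}}(\fS_{R}^{(n)},(E))$ — in particular that every such $\fN$ is reflexive, equivalently $(p,E)$-depth $\geq 2$, so that $\fN\cong\Gamma(U,\widetilde{\fN})$. This is the analogue of \cite[Lemma~3.8 and Proposition~3.9]{dlms24} (or \cite[Proposition 2.14]{dlms23}); I would either cite those directly after noting that the Breuil-Kisin log prisms here satisfy the same regularity and completeness hypotheses, or reprove it: given $\fN$ with $\fN[1/p],\fN[1/E]$ finite projective, the intersection $\fN[1/p]\cap\fN[1/E]$ inside $\fN[1/pE]$ is $(p,E)$-reflexive because $V(p,E)$ has codimension $2$ in the regular-in-codimension-$\leq 1$ locus, and the condition $\fN=\fN[1/p]\cap\fN[1/E]$ then forces $\fN$ itself to be the module of sections over $U$. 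I would also need to verify the compatibility clause — that the composite equivalence sends a crystal to its value $\Gamma(U(\fS_{R},(E)),\cE_{(\fS_{R},(E),\cM_{\fS_{R}})})$ — but this is automatic from the construction, since the equivalence with the cosimplicial limit is itself given by evaluation at the $\fS_{R}^{(\bullet)}$.
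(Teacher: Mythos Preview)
Your proposal is correct and follows the same approach as the paper, which simply says ``This follows from the same argument as \cite[Lemma 3.8]{dlms24}.'' You have essentially unpacked that citation: descent along the Breuil--Kisin cover (via Lemma~\ref{bk prism for semi-stable reduction}) reduces to the cosimplicial limit, and the module-theoretic characterization of vector bundles on the punctured spectrum identifies this limit with $\mathrm{DD}^{\star}_{\fS_{R}}$. One small caution: your assertion that $\fS_{R}^{(n)}$ is Noetherian for $n\geq 1$ is not obvious from the prismatic-envelope construction and is not actually needed---what matters is that the projections $p_{i}\colon \fS_{R}\to \fS_{R}^{(1)}$ are $(p,E)$-completely faithfully flat (Lemma~\ref{bk prism for semi-stable reduction}), which suffices to transport the descent datum and verify the cocycle condition without invoking Noetherianity of the higher coproducts.
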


\begin{proof}
    This follows from the same argument as \cite[Lemma 3.8]{dlms24}.
\end{proof}

\subsection{``Kummer quasi-syntomic descent'' for analytic log prismatic \texorpdfstring{$F$}--crystals}

\begin{lem}\label{p-div implies deltalog is zero}
    Let $(A,I,M)$ be a bounded prelog prism. For $m\in M$ admitting a $p^{n}$-th root for every $n\geq 1$, we have $\delta_{\mathrm{log}}(m)=0$.
\end{lem}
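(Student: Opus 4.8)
The statement is purely about the behaviour of the $\delta_{\mathrm{log}}$-map on a $p$-divisible element of the monoid, so the plan is to reduce to a statement about $\delta$ of a unit and then exploit $(p,I)$-completeness. Suppose $m \in M$ has a compatible system of $p^n$-th roots $m_n \in M$ with $m_0 = m$ and $m_{n}^{p} = m_{n-1}$. The defining identity (3) for a $\delta_{\mathrm{log}}$-ring gives, for each $n$,
\[
\delta_{\mathrm{log}}(m_{n-1}) = \delta_{\mathrm{log}}(m_n^{p}) = p\,\delta_{\mathrm{log}}(m_n)\bigl(1 + \tfrac{p}{2}\delta_{\mathrm{log}}(m_n) + \cdots\bigr),
\]
i.e.\ iterating the multiplicativity rule $\delta_{\mathrm{log}}(m m') = \delta_{\mathrm{log}}(m) + \delta_{\mathrm{log}}(m') + p\,\delta_{\mathrm{log}}(m)\delta_{\mathrm{log}}(m')$ across the $p$-fold product $m_n \cdots m_n$. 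The upshot is that $\delta_{\mathrm{log}}(m) = \delta_{\mathrm{log}}(m_0)$ lies in $p^N A$ for every $N$: each application of the root relation produces at least one extra factor of $p$. Since $(A,I)$ is a bounded prism, $A$ is $p$-adically separated (bounded prisms are in particular classically $(p,I)$-complete, and $p$-adic separatedness is what we need), so $\bigcap_N p^N A = 0$, giving $\delta_{\mathrm{log}}(m) = 0$.

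To make the first step precise I would record the elementary consequence of axiom (3): for any $a \in A$ and any $k \ge 1$, writing $a$ as the $k$-fold ``product'' in the $\delta_{\mathrm{log}}$ sense, $\delta_{\mathrm{log}}$ of a $p$-th power is divisible by $p$. Concretely, from $\delta_{\mathrm{log}}(x y) = \delta_{\mathrm{log}}(x) + \delta_{\mathrm{log}}(y) + p\,\delta_{\mathrm{log}}(x)\delta_{\mathrm{log}}(y)$ one gets by induction on $k$ that
\[
\delta_{\mathrm{log}}(x^{k}) = \frac{(1 + p\,\delta_{\mathrm{log}}(x))^{k} - 1}{p},
\]
which for $k = p$ is visibly an element of $p A$ (expand the binomial: the linear term is $p\,\delta_{\mathrm{log}}(x)$, already with a $p$, and all higher terms carry $p^2$ or more, so after dividing by $p$ everything still has a factor of $p$ — one must just check the linear term, $\binom{p}{1}p\,\delta_{\mathrm{log}}(x)/p = p\,\delta_{\mathrm{log}}(x)$, indeed in $pA$). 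Applying this with $x = m_n$, $k = p$ yields $\delta_{\mathrm{log}}(m_{n-1}) \in p\,A$, and more sharply $\delta_{\mathrm{log}}(m_{n-1}) \equiv p\,\delta_{\mathrm{log}}(m_n) \pmod{p^2 A}$; iterating, $\delta_{\mathrm{log}}(m) = \delta_{\mathrm{log}}(m_0) \in p^{n} A$ for all $n \ge 1$.

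The only genuine input beyond bookkeeping is the separatedness $\bigcap_{n} p^n A = 0$, which holds because $(A,I,M)$ is a \emph{bounded} prelog prism: $A$ is derived $(p,I)$-complete and $A/I$ has bounded $p^\infty$-torsion, hence $A$ itself is classically $p$-complete and in particular $p$-adically separated. I do not expect any real obstacle here; the ``hard part'' is merely organizing the induction cleanly so that the power of $p$ accumulated after $n$ steps is exactly $p^n$ (or at least tends to infinity), which the closed formula $\delta_{\mathrm{log}}(x^{p}) = \bigl((1+p\,\delta_{\mathrm{log}}(x))^{p}-1\bigr)/p$ makes transparent. One should also note, for cleanliness, that the argument is insensitive to whether we work with the prelog structure $M$ or its associated log structure, and that no use of the prism ideal $I$ or of the Frobenius is needed — only the $\delta$-ring structure on $A$ and $p$-adic separatedness.
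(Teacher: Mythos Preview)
Your proof is correct and follows essentially the same approach as the paper: derive the formula $\delta_{\mathrm{log}}(x^{p})=\sum_{i=1}^{p}\binom{p}{i}p^{i-1}\delta_{\mathrm{log}}(x)^{i}$ (your closed form $((1+p\,\delta_{\mathrm{log}}(x))^{p}-1)/p$ is equivalent), deduce that $\delta_{\mathrm{log}}(x)\in p^{i}A$ implies $\delta_{\mathrm{log}}(x^{p})\in p^{i+1}A$, and conclude by $p$-adic separatedness of the bounded prism $A$. One small slip: your claim that ``all higher terms carry $p^{2}$ or more'' fails for $p=2$ (the top term is $p\,\delta_{\mathrm{log}}(x)^{2}$), so the congruence modulo $p^{2}A$ is not literally correct at the first step, but the implication $\delta_{\mathrm{log}}(x)\in p^{i}A\Rightarrow\delta_{\mathrm{log}}(x^{p})\in p^{i+1}A$ is visible directly from your closed formula and is exactly what the paper uses to iterate.
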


\begin{proof}
    For $x\in M$, we have a formula
    \[
    \delta_{\mathrm{log}}(x^{p})=\sum_{i=1}^{p}\binom{p}{i}p^{i-1}\delta_{\mathrm{log}}(x)^{i},
    \]
    which can be checked by reducing the problem to the case that $A$ is a free $\delta_{\mathrm{log}}$-ring (cf. \cite[Lemma 2.11]{kos22}). By this formula, $\delta_{\mathrm{log}}(x)\in p^{i}A$ implies $\delta_{\mathrm{log}}(x^{p})\in p^{i+1}A$. Hence, $\delta_{\mathrm{log}}(m)$ is contained in $\displaystyle \bigcap_{i\geq 1}p^{i}A$ by the assumption. Since $A$ is $p$-adically separated by the boundedness, we obtain $\delta_{\mathrm{log}}(m)=0$.
\end{proof}

\begin{lem}\label{divisible monoid and log pris site}
    Let $(\fX,\cM_{\fX})$ be a bounded $p$-adic integral log formal scheme. Assume that $(\fX,\cM_{\fX})$ admits a chart $M\to \cM_{\fX}$ where $M$ is uniquely $p$-divisible (see Lemma \ref{def of div monoid} for the definition). Then the natural functor
    \[
    (\fX,\cM_{\fX})^{\mathrm{str}}_{\Prism}\to \fX_{\Prism}
    \]
    sending $(A,I,\cM_{A})$ to $(A,I)$ is an equivalence.

    In particular, for $\star\in \{\emptyset,\varphi,\mathrm{an},(\mathrm{an},\varphi)\}$, the following functors give bi-exact equivalences:
    \[
    \mathrm{Vect}^{\star}(\fX_{\Prism})\to \mathrm{Vect}^{\star}((\fX,\cM_{\fX})_{\Prism}).
    \]
\end{lem}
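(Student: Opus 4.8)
The plan is to construct an explicit quasi-inverse to the functor $(\fX,\cM_{\fX})^{\mathrm{str}}_{\Prism}\to \fX_{\Prism}$ by equipping every object $(A,I)\in \fX_{\Prism}$ with a \emph{canonical} $\delta_{\mathrm{log}}$-structure coming from the uniquely $p$-divisible chart, and then showing this data is unique and functorial. First I would work \'{e}tale locally on $\fX$, so that there is a global chart $M\to \cM_{\fX}$ with $M$ uniquely $p$-divisible; by Lemma \ref{category of crystals is unchanged} (or rather the argument of Remark \ref{str log prism is qcoh}) the relevant sites and crystal categories are unchanged by this localization, and the final statement about $\mathrm{Vect}^{\star}$ follows formally from the equivalence of sites together with Remark \ref{pris crys as lim}. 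So the heart of the matter is the first assertion.

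Given $(A,I)\in \fX_{\Prism}$ with structure map $\mathrm{Spf}(A/I)\to \fX$, I would pull back the chart to get a prelog structure $M\to A/I$, and then lift it: since $A$ is $(p,I)$-complete and $M$ is uniquely $p$-divisible, each $m\in M$ is a $p$-power of something, so one can lift $m$ by taking compatible $p$-power roots — more precisely, lift a $p$-divisible system $(m^{1/p^n})_n$ through the surjection $A\twoheadrightarrow A/I$ and use that $A$ (being $p$-complete, hence $I$-adically separated modulo the relevant powers) lets one adjust the lift to be genuinely $p$-divisible in $A$. This produces a prelog structure $M\to A$, and hence a log structure $\cM_A\coloneqq M^a$, strict over $(\fX,\cM_{\fX})$ by construction. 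The key point is then that there is a \emph{unique} $\delta_{\mathrm{log}}$-structure on $(A,M)$: condition (1) forces $\delta_{\mathrm{log}}(e)=0$, condition (2) reads $\alpha(m)^p\delta_{\mathrm{log}}(m)=\delta(\alpha(m))$ which is already determined by the $\delta$-ring structure on $A$ when $\alpha(m)$ is a non-zero-divisor, and in general Lemma \ref{p-div implies deltalog is zero} gives $\delta_{\mathrm{log}}(m)=0$ for every $m\in M$ outright (every $m$ has $p$-power roots), so the $\delta_{\mathrm{log}}$-structure is forced to be the zero map. One then checks conditions (2) and (3) of the $\delta_{\mathrm{log}}$-axioms are automatically satisfied with $\delta_{\mathrm{log}}\equiv 0$: (3) is immediate, and (2) needs $\delta(\alpha(m))=0$ for all $m$, which again follows from Lemma \ref{p-div implies deltalog is zero}-type reasoning (apply the formula $\delta(x^p)=\dots$ iteratively using $p$-divisibility of $\alpha(m)$ inside the $p$-complete bounded ring $A$) — this is the one routine verification to carry out carefully.

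With the assignment $(A,I)\mapsto (A,I,M^a)$ in hand, functoriality is straightforward: a prism map $A\to B$ sends the $p$-divisible lift of $M$ in $A$ to a $p$-divisible lift in $B$, which by uniqueness must be the canonical one, and compatibility with $\delta_{\mathrm{log}}\equiv 0$ is automatic. This gives a functor $\fX_{\Prism}\to (\fX,\cM_{\fX})^{\mathrm{str}}_{\Prism}$ which is visibly a section of the forgetful functor; uniqueness of both the $p$-divisible prelog lift and the $\delta_{\mathrm{log}}$-structure shows it is also a quasi-inverse (any object of $(\fX,\cM_{\fX})^{\mathrm{str}}_{\Prism}$ has, by the same argument applied to its own log structure which is strict over a $p$-divisible chart, its prelog/$\delta_{\mathrm{log}}$ data forced), so the two sites are equivalent. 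Finally, since covers on both sides are detected on the underlying prism map being strict $(p,I)$-completely faithfully flat — and strictness is automatic once the log structures are the canonical ones — the equivalence is an equivalence of sites, and the bi-exact equivalences on $\mathrm{Vect}^{\star}$ for $\star\in\{\emptyset,\varphi,\mathrm{an},(\mathrm{an},\varphi)\}$ follow by transport of structure via the limit descriptions in Remark \ref{pris crys as lim} and the definition of $\mathrm{Vect}^{\mathrm{an},\star}$.

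The main obstacle I anticipate is purely the lifting step: producing a genuinely uniquely-$p$-divisible prelog structure $M\to A$ lifting $M\to A/I$, compatibly as $m$ ranges over $M$. One must be slightly careful that the lifts of $m$ and $m'$ multiply to a lift of $mm'$ and that roots are compatible — the cleanest way is to lift the whole monoid map $M_{\bQ_{\geq 0}}$-style system at once, using that $M$ uniquely $p$-divisible means $M\cong M[1/p]$ as a $p$-divisible monoid and that $A\to A/I$ admits a section on $p$-divisible elements because $1+I\subset A^{\times}$ kills the obstruction (this is exactly the $1+I$-torsor phenomenon used in Lemma \ref{big log crys site to str log crys site} and Remark \ref{str log prism is qcoh}). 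Once that bookkeeping is done, everything else is forced by Lemma \ref{p-div implies deltalog is zero}.
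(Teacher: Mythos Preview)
Your proposal is correct and follows essentially the same approach as the paper: construct a quasi-inverse by uniquely lifting the prelog structure $M\to A/I$ to $M\to A$ via $p$-divisibility, set $\delta_{\mathrm{log}}\equiv 0$, and use Lemma~\ref{p-div implies deltalog is zero} to force uniqueness of the $\delta_{\mathrm{log}}$-structure (the paper packages the lifting step as Lemma~\ref{lift of map from div monoid} and the quasi-inverse check via Lemma~\ref{lift of div log str}, but the content is the same). Your explicit note that axiom~(2) requires $\delta(\alpha(m))=0$, verified by iterating $\delta(x^{p})\in pA$ along the $p$-power-root tower, is a detail the paper leaves implicit; conversely, your closing remark about the $1+\cI$-torsor mechanism is not quite the right tool here (you are constructing the log structure on $A$, not comparing two existing ones), so stick with the direct limit-of-$p$-power-lifts argument you outlined first.
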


\begin{proof}
    The latter assertion follows from the former and Lemma \ref{category of crystals is unchanged}. 
    
    We shall prove the former. We construct a quasi-inverse functor. Let $(A,I)\in \fX_{\Prism}$. We let $f$ denote the map 
    \[
    M\to \Gamma(\fX,\cM_{\fX})\to \Gamma(\mathrm{Spf}(A/I),\cM_{A/I})\to A/I.
    \]
    Since $M$ is uniquely $p$-divisible, there exists a unique lift $\widetilde{f}\colon M\to A$ of $f$ by Lemma \ref{lift of map from div monoid}. By setting $\delta_{\mathrm{log}}=0$, the map $\widetilde{f}$ defines a prelog prism $(A,I,M)$, and $(A,I,M)^{a}$ belongs to $(\fX, \cM_{\fX})^{\mathrm{str}}_{\Prism}$. 

    We shall check the functor sending $(A,I)$ to $(A,I,M)^{a}$ constructed above is indeed a quasi-inverse functor. As a non-trivial direction, it suffices to prove that, for $(A,I,\cM_{A})\in (\fX, \cM_{\fX})^{\mathrm{str}}_{\Prism}$, there exists a natural isomorphism $(A,I,M)^{a}\isom (A,I,\cM_{A})$, where $(A,I,M)$ is a prelog prism defined above. By Lemma \ref{lift of div log str} (1), we have a unique map of log structure $M^{a}\to \cM_{A}$ lifting the identity map on $\cM_{A/I}$. This map $M^{a}\to \cM_{A}$ is an isomorphism because so is the restriction to $\mathrm{Spf}(A/I)$. It follows from Lemma \ref{p-div implies deltalog is zero} that this map is compatible with $\delta_{\mathrm{log}}$-structures.
\end{proof}

To state ``Kummer quasi-syntomic descent'' for analytic log prismatic $F$-crystals, we clarify our setup. 

\begin{setting}\label{setting for kqsyn descent for anal pris crys}
    Let $(\fX,\cM_{\fX})=(\mathrm{Spf}(R),\cM_{R})$ be a small affine log formal scheme over $\cO_{K}$ with a fixed framing. Let $(\fX_{\infty,\alpha},\cM_{\fX_{\infty,\alpha}})\coloneqq (\mathrm{Spf}(R_{\infty,\alpha}),\cM_{R_{\infty,\alpha}})$.

Consider a bounded $p$-adic fs log formal scheme $(\fY,\cM_{\fY})$ and a  strict map $(\fY,\cM_{\fY})\to  (\fX_{\infty,\alpha},\cM_{\fX_{\infty,\alpha}})$. Let $(\fY^{(\bullet)},\cM_{\fY^{(\bullet)}})$ be the \v{C}ech nerve of $(\fY,\cM_{\fY})$ over $(\fX,\cM_{\fX})$ in the category of $p$-adic saturated log formal schemes. Then the pullback functor along $(\fY,\cM_{\fY})\to (\fX,\cM_{\fX})$ gives a functor
\begin{align*}
    \mathrm{Vect}^{\mathrm{an},\varphi}((\fX,\cM_{\fX})_{\Prism})&\to \varprojlim_{\bullet\in \Delta} \mathrm{Vect}^{\mathrm{an},\varphi}((\fY^{(\bullet)},\cM_{\fY^{(\bullet)}})_{\Prism}) \\
    &\simeq \varprojlim_{\bullet\in \Delta} \mathrm{Vect}^{\mathrm{an},\varphi}((\fY^{(\bullet)},\cM_{\fY^{(\bullet)}})^{\mathrm{str}}_{\Prism}) \\
    &\simeq \varprojlim_{\bullet\in \Delta}\mathrm{Vect}^{\mathrm{an},\varphi}(\fY^{(\bullet)}_{\Prism})
\end{align*}
by Lemma\ref{category of crystals is unchanged} and Lemma \ref{divisible monoid and log pris site}.
\end{setting}

\begin{prop}\label{kqsyn descent for anal pris crys}
    Consider the setting as in Setting \ref{setting for kqsyn descent for anal pris crys}. Suppose that $(\fY,\cM_{\fY})\to (\fX_{\infty,\alpha},\cM_{\fX_{\infty,\alpha}})$ is strict quasi-syntomic and that $(\fY,\cM_{\fY})\to (\fX,\cM_{\fX})$ is surjective. Then the functor constructed in  Setting \ref{setting for kqsyn descent for anal pris crys}
    \[
    \mathrm{Vect}^{\mathrm{an},\varphi}((\fX,\cM_{\fX})_{\Prism})\to \varprojlim_{\bullet\in \Delta} \mathrm{Vect}^{\mathrm{an},\varphi}((\fY^{(\bullet)})_{\Prism})
    \]
    is fully faithful and gives a bi-exact equivalence to the essential image.
\end{prop}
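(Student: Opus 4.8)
The plan is to reduce the statement to a descent statement for analytic prismatic $F$-crystals in the \emph{non-log} setting, where the Čech nerve $\fY^{(\bullet)}$ lives, and then transport the result back using the equivalences of Lemma \ref{category of crystals is unchanged} and Lemma \ref{divisible monoid and log pris site}. Concretely, first I would replace the left-hand side using the Breuil--Kisin log prism: by Lemma \ref{bk prism for semi-stable reduction}, $(\fS_{R},(E),\cM_{\fS_{R}})$ covers the final object of $(\fX,\cM_{\fX})_{\Prism}^{\mathrm{str}}$, so $\mathrm{Vect}^{\mathrm{an},\varphi}((\fX,\cM_{\fX})_{\Prism})$ is computed as the limit over the cosimplicial object $\fS_{R}^{(\bullet)}$. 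On the right-hand side, for each term $\fY^{(n)}$ I want to further take a quasi-syntomic (prismatic) cover and identify the category of analytic prismatic $F$-crystals via an explicit descent datum description, parallel to Lemma \ref{anal pris crys via bk prism}.

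The key step is to reproduce the argument of \cite[Proposition 7.11]{bs22} / Lemma \ref{qsyn descent for nonlog pris crys} in the analytic setting: descent along quasi-syntomic covers. Since $(\fY,\cM_{\fY})\to (\fX_{\infty,\alpha},\cM_{\fX_{\infty,\alpha}})$ is strict quasi-syntomic and $R_{\infty,\alpha}\to \widetilde{R}_{\infty,\alpha}$ is a quasi-syntomic cover (Lemma \ref{perfectoid cover}), the composite $\fS_{R}\to \fS_{R}\otimes \text{(perfectoid/quasi-regular semiperfectoid stuff)}$ refines both sides to quasi-syntomic covers of the \emph{underlying} formal schemes, where the $\log$ structures become those pulled back from the uniquely $p$-divisible chart $\bQ_{\geq 0}^{r}$. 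Via Lemma \ref{divisible monoid and log pris site}, on those covers the log prismatic site agrees with the ordinary prismatic site, and $\mathrm{Vect}^{\mathrm{an},\varphi}$ satisfies quasi-syntomic descent there because $\mathrm{Vect}^{\mathrm{an},\varphi}(A,I) \hookrightarrow$ descent data can be checked on the punctured spectra $U(A,I)$, combined with the faithfully flat descent statement \cite[0G7P, 0G7P]{sp24} used in Lemma \ref{fully faithfulness from pris crys to anal pris crys}. So the structure is: (i) realize $\mathrm{Vect}^{\mathrm{an},\varphi}((\fX,\cM_{\fX})_{\Prism})$ as descent data over the Breuil--Kisin prism; (ii) realize each $\mathrm{Vect}^{\mathrm{an},\varphi}(\fY^{(n)}_{\Prism})$ similarly via a quasi-syntomic cover compatible with the Breuil--Kisin prism of $\fX$; (iii) observe that the totalization of the bicosimplicial diagram obtained is, by Bhatt--Scholze's flat descent applied levelwise together with the surjectivity hypothesis on $(\fY,\cM_{\fY})\to(\fX,\cM_{\fX})$, equivalent to the single cosimplicial diagram computing $\mathrm{Vect}^{\mathrm{an},\varphi}((\fX,\cM_{\fX})_{\Prism})$; (iv) conclude fully faithfulness and the equivalence onto the essential image from the fact that a cosimplicial limit of a diagram augmented by an object on which the functor is fully faithful inherits full faithfulness, while exactness of the restriction functors (which all are, being base changes along flat maps) gives bi-exactness.

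The main obstacle I anticipate is step (iii): carefully checking that the \emph{saturated} Čech nerve $(\fY^{(\bullet)},\cM_{\fY^{(\bullet)}})$ — which involves fiber products in the category of $p$-adic saturated log formal schemes, so exactification is implicitly happening — interacts correctly with passing to prismatic envelopes and with the quasi-syntomic covers chosen on each level. One has to verify that the cover of $\fY^{(n)}$ can be chosen functorially in $n$ (e.g. as the $(n{+}1)$-fold fiber product of a fixed cover of $\fY$ over the perfectoid base $\widetilde{R}_{\infty,\alpha}$), so that the resulting bicosimplicial object is genuinely the Čech nerve of a single quasi-syntomic cover of $R$; the strictness of all projection maps (cf. \cite[Lemma A.2]{ino25}, used already in Definition \ref{def of big log affinoid perfd}) is what makes this work, because strict covers of integral log formal schemes with uniquely $p$-divisible charts detach the log structure entirely on the prismatic site. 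Once that bookkeeping is in place, the remaining arguments are formal manipulations of limits of exact categories exactly as in \cite{bs22} and in the proof of Proposition \ref{bcrys functor is fully faithful}, so I do not expect further difficulty there.
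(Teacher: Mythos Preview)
Your high-level strategy --- strip the log structure via Lemma \ref{divisible monoid and log pris site} and reduce to ordinary quasi-syntomic descent --- is natural, but step (iii) contains a genuine gap that you have correctly flagged but not resolved. The \v{C}ech nerve $(\fY^{(\bullet)},\cM_{\fY^{(\bullet)}})$ is taken in the category of \emph{saturated} log formal schemes over $(\fX,\cM_{\fX})$, and the underlying formal scheme of $\fY^{(n)}$ is \emph{not} the ordinary $(n{+}1)$-fold fiber product of $\fY$ over $\fX$: the factors carry the chart $\bQ_{\geq 0}^{r}$ while the base carries $\bN^{r}$, so the saturated fiber product involves a nontrivial saturation of $\bQ_{\geq 0}^{r}\oplus_{\bN^{r}}\bQ_{\geq 0}^{r}$, which alters the underlying ring. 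Consequently there is no single quasi-syntomic cover of $R$ whose ordinary \v{C}ech nerve recovers $\fY^{(\bullet)}$, and Bhatt--Scholze's descent (Lemma \ref{qsyn descent for nonlog pris crys}) cannot be applied ``levelwise'' as you propose. The strictness of projections from \cite[Lemma A.2]{ino25} guarantees the log structures line up, but says nothing about the underlying schemes matching an ordinary \v{C}ech nerve.

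The paper does not attempt this reduction. For full faithfulness it argues object-by-object in $(\fX,\cM_{\fX})_{\Prism}^{\mathrm{str}}$: given $(A,I,\cM_{A})$, after a flat refinement (following \cite[Proposition 4.18]{ino25}) one produces a log prism $(B,IB,\cM_{B})\in (\fY,\cM_{\fY})_{\Prism}^{\mathrm{str}}$ together with a factorization $A\to A_{\infty,\widetilde{\alpha}}\to B$ where the second map is strict flat and $\mathrm{Spf}(B)\to\mathrm{Spf}(A)$ is surjective. The crucial input is then \cite[Proposition 2.26]{ino25}, which directly supplies the exact sequence $0\to A\to B^{(0)}\to B^{(1)}$ at the level of rings in this log-saturated setup; this replaces the bicosimplicial bookkeeping you are trying to organize. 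One then tensors with $\Gamma(\mathrm{Spec}(A[1/f]),\cE)$ for $f\in\{p,d,pd\}$ and reassembles over $U(A,I)$ to get full faithfulness. For bi-exactness the paper specializes to the Breuil--Kisin log prism: noetherianity of $\fS_{R}$ upgrades $(p,E)$-complete faithful flatness of $\fS_{R}\to B^{(0)}$ to honest faithful flatness, whence $U(B^{(0)},(E))\to U(\fS_{R},(E))$ is fpqc and exactness descends. Your sketch of the bi-exactness part is essentially correct in spirit but omits this noetherianity step.
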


\begin{proof}
First, we shall prove fully faithfulness. Let $(A,I,\cM_{A})\in (\fX,\cM_{\fX})_{\Prism}^{\mathrm{str}}$. In the same way as the proof of \cite[Proposition 4.18]{ino25}, replacing $(A,I,\cM_{A})$ with a flat cover allows us to assume that the following conditions hold:
    \begin{itemize}
        \item $(A,I,\cM_{A})$ is orientable;
        \item there is a chart $\widetilde{\alpha}\colon \bN^{r}\to \cM_{A}$ lifting a map $\bN^{r}\to \cM_{A/I}$ induced from the given chart;
        \item there exists a log prism $(B,IB,\cM_{B})\in (\fY,\cM_{\fY})_{\Prism}^{\mathrm{str}}$ with log prism maps
    \[
    (A,I,\cM_{A})\to (A_{\infty,\widetilde{\alpha}},IA_{\infty,\widetilde{\alpha}},\cM_{A_{\infty,\widetilde{\alpha}}})\to (B,IB,\cM_{B})
    \]
    in $(\fX,\cM_{\fX})_{\Prism}$ such that $(\mathrm{Spf}(B),\cM_{B})\to (\mathrm{Spf}(A_{\infty,\widetilde{\alpha}}),\cM_{A_{\infty,\widetilde{\alpha}}})$ is strict flat and $\mathrm{Spf}(B)\to (\mathrm{Spf}(A))$ is surjective.
    \end{itemize}
    To make the same argument as the proof of \cite[Proposition 4.18]{ino25} work, it suffices to prove that the natural functor
    \[
    \mathrm{Vect}^{\mathrm{an},\varphi}(A,I)\to \varprojlim_{\bullet\in \Delta} \mathrm{Vect}^{\mathrm{an},\varphi}(B^{(\bullet)},IB^{(\bullet)})
    \]
    is fully faithful. Let $I=(d)$. Considering internal homomorphisms, we reduce the problem to showing that, for $\cE\in \mathrm{Vect}^{\mathrm{an},\varphi}(A,I)$, the sequence
    \[
    0\to\Gamma(U(A,I),\cE)\to \Gamma(U(B^{(0)},IB^{(0)}),\cE_{B^{(0)}})\to \Gamma(U(B^{(1)},IB^{(1)}),\cE_{B^{(1)}})
    \]
    is exact, where $\cE_{B^{(n)}}$ denotes the pullback of $\cE$ along $U(B^{(n)},IB^{(n)})\to U(A,I)$ for each $n\geq 0$. Since $U(A,I)$ has a covering $\mathrm{Spec}(A[1/p])\cap \mathrm{Spec}(A[1/d])$ with the intersection $\mathrm{Spec}(A[1/pd])$, it suffices to prove that the sequence
    \[
    0\to \Gamma(\mathrm{Spec}(A[1/f]),\cE)\to \Gamma(\mathrm{Spec}(B^{(0)}[1/f]),\cE_{B^{(0)}})\to \Gamma(\mathrm{Spec}(B^{(1)}[1/f]),\cE_{B^{(1)}})
    \]
    is exact for $f\in \{p,d,pd\}$. By \cite[Proposition 2.26]{ino25}, we have an exact sequence
    \[
    0\to A\to B^{(0)}\to B^{(1)}.
    \]
    Taking tensor products with $\Gamma(\mathrm{Spec}(A[1/f]),\cE)$, we obtain an exact sequence 
    \[
    0\to \Gamma(\mathrm{Spec}(A[1/f]),\cE)\to \Gamma(\mathrm{Spec}(B^{(0)}[1/f]),\cE_{B^{(0)}})\to \Gamma(\mathrm{Spec}(B^{(1)}[1/f]),\cE_{B^{(1)}}).
    \]
    This proves the fully faithfulness.

    Next, we shall prove bi-exactness. Lemma \ref{bk prism for semi-stable reduction} implies that we can check the exactness of a sequence of analytic log prismatic $F$-crystals by evaluating at the Breuil-Kisin log prism $(\fS_{R},(E),\cM_{\fS_{R}})$. Hence, it suffices to prove that the natural functor
    \[
    \mathrm{Vect}^{\mathrm{an},\varphi}(A,I)\to \varprojlim_{\bullet\in \Delta} \mathrm{Vect}^{\mathrm{an},\varphi}(B^{(\bullet)},IB^{(\bullet)})
    \]
    gives a bi-exact equivalence to the essential image for $(A,I,\cM_{A})\coloneqq(\fS_{R},(E),\cM_{\fS_{R}})$. Since $A$ is noetherian, the $(p,I)$-completely faithfully flat map $A\to B^{(0)}$ is a faithfully flat ring map. Therefore, the morphism of schemes $U(B^{(0)},IB^{(0)})\to U(A,I)$ is also faithfully flat. This proves the assertion.
    \end{proof}

\section{Realization functors for log prismatic crystals}\label{section realization functor}

In this section, we define \'{e}tale, crystalline, and de Rham realization functors for analytic log prismatic $F$-crystals on a semi-stable log formal scheme over $\cO_{K}$ and study their properties.

\subsection{The \'{e}tale realization functor for log prismatic \texorpdfstring{$F$}--crystals}

\begin{dfn}[Laurent $F$-crystals, {\cite[Definition 7.35]{ky23}}] 
Let $(\fX,\cM_{\fX})$ be a bounded $p$-adic log formal scheme. Let $\cO_{\Prism}[1/\cI_{\Prism}]^{\wedge}_{p}$ denote the sheaf on $(\fX,\cM_{\fX})_{\Prism}$ given by $(A,I,\cM_{A})\mapsto A[1/I]^{\wedge}_{p}$ and $\phi\colon \cO_{\Prism}[1/\cI_{\Prism}]^{\wedge}_{p}\to \cO_{\Prism}[1/\cI_{\Prism}]^{\wedge}_{p}$ denote the ring map induced by a ring map $\phi_{A}$ for each log prism $(A,I,\cM_{A})\in (\fX,\cM_{\fX})_{\Prism}$. A vector bundle $\cE$ on the ringed site $((\fX,\cM_{\fX})_{\Prism},\cO_{\Prism}[1/\cI_{\Prism}]^{\wedge}_{p})$ equipped with an isomorphism $\varphi_{\cE}\colon \phi^{*}\cE\isom \cE$ is called a \emph{Laurent $F$-crystal} on $(\fX,\cM_{\fX})$.
Let $\mathrm{Vect}^{\varphi}((\fX,\cM_{\fX})_{\Prism},\cO_{\Prism}[1/\cI_{\Prism}]^{\wedge}_{p})$ denote the category of Laurent $F$-crystals on $(\fX,\cM_{\fX})$.
\end{dfn}

\begin{rem}\label{Laurent crys as lim}
    For a prism $(A,I)$, we let $\mathrm{Vect}^{\varphi}(A[1/I]^{\wedge}_{p})$ denote the category of a finite projective $A[1/I]^{\wedge}_{p}$-module $M$ with an isomorphism $\phi_{A}^{*}M\isom M$, where the ring map $\phi_{A}\colon A[1/I]^{\wedge}_{p}\to A[1/I]^{\wedge}_{p}$ is induced by the ring map $\phi_{A}\colon A\to A$. By the same argument as in \cite[Proposition 2.7]{bs23}, there exists a natural equivalence
    \[
    \mathrm{Vect}^{\varphi}((\fX,\cM_{\fX})_{\Prism},\cO_{\Prism}[1/\cI_{\Prism}]^{\wedge}_{p})\simeq \varprojlim_{(A,I,\cM_{A})\in (\fX,\cM_{\fX})_{\Prism}} \mathrm{Vect}^{\varphi}(A[1/I]^{\wedge}_{p}).
    \]
    For a Laurent $F$-crystal $\cE$ on $(\fX,\cM_{\fX})$ and a log prism $(A,I,\cM_{A})\in (\fX,\cM_{\fX})_{\Prism}$, the projection of $\cE$ to $\mathrm{Vect}^{\varphi}(A[1/I]^{\wedge}_{p})$ via the above functor is denoted by $\cE_{(A,I,\cM_{A})}$. When no confusion occurs, we write $\cE_{A}$ for this.
\end{rem}

\begin{thm}[\cite{ky23}, Theorem 7.36]\label{etale realization} 
    For a bounded $p$-adic fs log formal scheme $(\fX,\cM_{\fX})$, there exists a natural equivalence
    \[
    T_{\et}\colon \mathrm{Vect}^{\varphi}((\fX,\cM_{\fX})_{\Prism},\cO_{\Prism}[1/\cI_{\Prism}]^{\wedge}_{p})\isom \mathrm{Loc}_{\bZ_{p}}((\fX,\cM_{\fX})_{\eta,\mathrm{qprok\et}}^{\Diamond}),
    \]
    where $(\fX,\cM_{\fX})_{\eta}^{\Diamond}$ is the log diamond of the generic fiber of $(\fX,\cM_{\fX})$ ( \cite[Example 7.6]{ky23}). 
\end{thm}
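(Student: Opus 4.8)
The plan is to reduce the statement to its non-logarithmic counterpart, the equivalence $\mathrm{Vect}^{\varphi}(\fX_{\Prism},\cO_{\Prism}[1/\cI_{\Prism}]^{\wedge}_{p})\isom\mathrm{Loc}_{\bZ_{p}}(\fX_{\eta})$ proved in \cite{bs23}, by a descent argument that trivializes the log structure. First I would observe that both categories are sheaves (stacks) for appropriate topologies: Laurent $F$-crystals glue along strict $(p,\cI_{\Prism})$-completely faithfully flat covers by construction of $(\fX,\cM_{\fX})_{\Prism}$, and in fact along Kummer-log-quasi-syntomic covers; on the other side, $\bZ_{p}$-local systems on the log diamond $(\fX,\cM_{\fX})_{\eta}^{\Diamond}$ satisfy Kummer-pro-\'etale (equivalently $v$-) descent. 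Since an fs log formal scheme is Kummer-\'etale-locally of the shape described in Section \ref{section setup}, and the covers $\widetilde{U}^{0}_{\infty}\to(\mathrm{Spf}(R^{0}),\bN^{r})^{a}_{\eta}$ together with the log prismatic covers emanating from $A_{\mathrm{inf}}(\widetilde{R}^{0}_{\infty})$ (Lemma \ref{perfectoid cover}, Construction \ref{construction of bk log prism}) are cofinal enough, I would reduce the whole statement, compatibly with \v{C}ech nerves, to the case of a ``log perfectoid'' base: a perfectoid affinoid $(S,S^{+})$ carrying the pulled-back log structure $\cM_{S^{+}}$, whose chart monoid is uniquely $p$-divisible.

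Over such a base much of the work is already done: the log prismatic site has a final object $(A_{\mathrm{inf}}(S^{+}),(\xi),\cM_{A_{\mathrm{inf}}(S^{+})})$, and by the mechanism of Lemma \ref{divisible monoid and log pris site} (a uniquely $p$-divisible monoid carries no $\delta_{\mathrm{log}}$-information) together with Lemma \ref{category of crystals is unchanged}, the category of Laurent $F$-crystals is identified with $\mathrm{Vect}^{\varphi}(A_{\mathrm{inf}}(S^{+})[1/\xi]^{\wedge}_{p})$. The key input is then the non-logarithmic perfectoid equivalence
\[
\mathrm{Vect}^{\varphi}(A_{\mathrm{inf}}(S^{+})[1/\xi]^{\wedge}_{p})\isom\mathrm{Loc}_{\bZ_{p}}(\mathrm{Spa}(S,S^{+})),
\]
a relative form of Fontaine's classification of $\bZ_{p}$-representations, which follows from \cite{bs23} (via the tilting equivalence and a Kedlaya--Liu-type classification of $\bZ_{p}$-local systems by $\varphi$-modules); the log structure disappears after inverting $\xi$ and $p$-completing. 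This settles the base case, functorially in $(S,S^{+})$.

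It then remains to assemble the two descents. The point requiring care is the compatibility of the relevant \v{C}ech nerves: one must check that the absolute log prism of a \emph{saturated} fiber product of log perfectoids over $(\fX,\cM_{\fX})$ agrees, after passing to the Laurent locus, with the corresponding coproduct of log prisms, and that this corresponds term-by-term to the \v{C}ech nerve of a Kummer-pro-\'etale perfectoid cover of $(\fX,\cM_{\fX})_{\eta}$. This is exactly the kind of bookkeeping handled by the exactification arguments of Section \ref{section pris crys} (compare Lemma \ref{bk prism for semi-stable reduction} and Proposition \ref{kqsyn descent for anal pris crys}), and it rests on the rigidity of log structures on perfect(oid) prisms. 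Granting it, descent along both topologies produces the natural equivalence $T_{\et}$, and functoriality in $(\fX,\cM_{\fX})$ is then formal. I expect this \v{C}ech-nerve compatibility, rather than the base case, to be the main obstacle; a secondary subtlety is pinning down a topology on $(\fX,\cM_{\fX})$ that is fine enough to have a basis of log-perfectoid objects yet coarse enough for $\bZ_{p}$-local systems on the diamond to descend along it.
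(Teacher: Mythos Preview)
The paper does not prove this theorem at all: it simply cites \cite[Theorem 7.36]{ky23} and observes, via Lemma \ref{category of crystals is unchanged}, that the slightly different convention on the log prismatic site adopted in Definition \ref{def of log pris site} leaves the category of Laurent $F$-crystals unchanged. Your proposal is therefore not a comparison target but an attempted re-derivation of the cited result.

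Your overall shape---descend to log-perfectoid bases, trivialize the log structure there via the unique-$p$-divisibility mechanism (Lemma \ref{divisible monoid and log pris site}), then invoke the non-log equivalence of \cite{bs23}---is plausible and is indeed consonant with what Remark \ref{Laurent isom} extracts from the proof in \cite{ky23}. However, there is a genuine gap in your reduction step. The theorem is stated for an \emph{arbitrary} bounded $p$-adic fs log formal scheme, whereas the covers and structural lemmas you invoke (the objects $\widetilde{U}^{0}_{\infty}$ of Section \ref{section setup}, Lemma \ref{perfectoid cover}, Construction \ref{construction of bk log prism}, Lemma \ref{bk prism for semi-stable reduction}, Proposition \ref{kqsyn descent for anal pris crys}) are all specific to the semi-stable setup over $\cO_{K}$. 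It is not true that a general fs log formal scheme is Kummer-\'etale-locally of the shape in Section \ref{section setup}, so your argument as written establishes the equivalence only in the semi-stable case. To close the gap you would need to produce, for a general fs chart, an analogous Kummer-log-quasi-syntomic cover by log-perfectoid objects and verify the \v{C}ech-nerve compatibilities without the crutch of the Breuil--Kisin prism; this is essentially the content of \cite[\S7]{ky23}, and is not a consequence of anything proved in the present paper.
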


\begin{proof}
    See \cite[Theorem 7.36]{ky23}. Although the definition of absolute prismatic sites in \cite[Definition 7.33]{ky23} is slightly different from Definition \ref{def of log pris site}, the category of Laurent $F$-crystals is unchanged under this modification by Lemma \ref{category of crystals is unchanged} (see also Remark \ref{rem on def of log pris site}).
\end{proof}

\begin{rem}\label{Laurent isom}
    Let $(\fX,\cM_{\fX})$ be a bounded $p$-adic fs log formal scheme and $\cE$ be a Laurent $F$-crystal on $(\fX,\cM_{\fX})$. By the proof of \cite[Theorem 7.36]{ky23}, there exists a functorial isomorphism of $A[1/I]^{\wedge}_{p}$-modules
    \[
    \cE_{A}\cong T_{\et}(\cE)|_{(\mathrm{Spd}(S,S^{+}),\cM_{S})}\otimes_{\bZ_{p}} A[1/I]^{\wedge}_{p}
    \]
    for a perfect log prism $(A,I,\cM_{A})\in (\fX,\cM_{\fX})_{\Prism}$ associated with a perfect indiscrete log prism in the sense of \cite[Definition 2.36 and Definition 2.39]{ky23} satisfying the following conditions:
    \begin{itemize}
        \item the pullback of $T_{\et}(\cE)$ to $(\mathrm{Spd}(S,S^{+}),\cM_{S})\coloneqq (\mathrm{Spf}(A/I),\cM_{A/I})^{\Diamond}_{\eta}$ is constant (we simply write $ T_{\et}(\cE)|_{(\mathrm{Spd}(S,S^{+}),\cM_{S})}$ for the corresponding $\bZ_{p}$-module);
        \item $\cM_{S}/\cM_{S}^{\times}$ is divisible.
    \end{itemize}
\end{rem}

\begin{dfn}
    Let $(\fX,\cM_{\fX})$ be a bounded $p$-adic fs log formal scheme. By Remark \ref{pris crys as lim} and Remark \ref{Laurent crys as lim}, we have a sequence of exact functors
    \[
    \mathrm{Vect}^{\varphi}((\fX,\cM_{\fX})_{\Prism})\to \mathrm{Vect}^{\mathrm{an},\varphi}((\fX,\cM_{\fX})_{\Prism})\to \mathrm{Vect}^{\varphi}((\fX,\cM_{\fX})_{\Prism},\cO_{\Prism}[1/\cI_{\Prism}]^{\wedge}_{p}).
    \]
    The exact functors
    \begin{align*}
        \mathrm{Vect}^{\mathrm{an},\varphi}((\fX,\cM_{\fX})_{\Prism})&\to \mathrm{Loc}_{\bZ_{p}}((\fX,\cM_{\fX})_{\eta,\mathrm{qprok\et}}^{\Diamond}) \\
        \mathrm{Vect}^{\varphi}((\fX,\cM_{\fX})_{\Prism})&\to \mathrm{Loc}_{\bZ_{p}}((\fX,\cM_{\fX})_{\eta,\mathrm{qprok\et}}^{\Diamond})
    \end{align*}
    obtained as the composition of the above functor and the equivalence in Proposition \ref{etale realization} are also denoted by $T_{\et}$ and called \emph{\'{e}tale realization functors}. 
\end{dfn}

\begin{rem}\label{et real without diamond}
    When $(\fX,\cM_{\fX})_{\eta}$ is a locally noetherian fs log adic space, the target category of the \'{e}tale realization functor can be identified with the category of $\bZ_{p}$-local systems on $(\fX,\cM_{\fX})_{\eta,\mathrm{k\et}}$ (or $(\fX,\cM_{\fX})_{\eta,\mathrm{prok\et}}$) by Proposition \ref{ket vs qproket}.
\end{rem}

\subsection{The crystalline realization functor for log prismatic \texorpdfstring{$F$}--crystals}\label{subsec of crys crys}

\begin{construction}[Breuil log prism]\label{construction of breuil log prism}

Let $(\fX,\cM_{\fX})=(\mathrm{Spf}(R),\cM_{R})$ be a small affine $p$-adic fs log formal scheme and fix a framing $t$. We have a strict closed immersion
\[
(\mathrm{Spf}(R),\cM_{R})\hookrightarrow (\mathrm{Spf}(\fS_{R,t}),\cM_{\fS_{R,t}}).
\]
Let $(\mathrm{Spf}(S_{R,t}),\cM_{S_{R,t}})$ denote a $p$-completed log PD-envelope of it. More generally, for an integer $i\geq 0$, let $(\mathrm{Spf}(S^{(i)}_{R,t}),\cM_{S^{(i)}_{R,t}})$ denote a $p$-completed log PD-envelope of a closed immersion 
\[
(\mathrm{Spf}(R),\cM_{R})\hookrightarrow (\mathrm{Spf}(\fS_{R,t}),\cM_{\fS_{R,t}})^{(i)/W},
\]
where $(\mathrm{Spf}(\fS_{R,t}),\cM_{\fS_{R,t}})^{(i)/W}$ is the $i+1$--fold self product of $(\mathrm{Spf}(\fS_{R,t}),\cM_{\fS_{R,t}})$ over $W$. The $p$-adic log PD-thickening
$(S^{(i)}_{R,t}\twoheadrightarrow R/p,\cM_{S^{(i)}_{R,t}})$ belongs to $(X_{p=0},\cM_{X_{p=0}})_{\mathrm{crys}}^{\mathrm{str}}$. By construction, $(S_{R,t}\twoheadrightarrow R/p,\cM_{S_{R,t}})$ covers the final object of the topos associated with $(X_{p=0},\cM_{X_{p=0}})_{\mathrm{crys}}$, and $(S_{R,t}^{(i)},\cM_{S_{R,t}^{(i)}})$ is the $i$-th fold self product of $(S_{R,t},\cM_{S_{R,t}})$ in $(X_{p=0},\cM_{X_{p=0}})$. When no confusion occurs, we omit subscripts $t$.

Since $\mathrm{Ker}(\fS_{R}\twoheadrightarrow R)$ is generated by a single regular element $E(t)$, the ring $S_{R}^{(i)}$ is also $p$-torsion free for each $i\geq 0$. Hence, the Frobenius lift $\phi_{S_{R}^{(i)}}$ on $(\mathrm{Spf}(S_{R}^{(i)}),\cM_{S_{R}^{(i)}})$ induced from $\phi_{\fS_{R}^{(i)}}$ uniquely gives a structure of a $\delta$-ring on $S_{R}^{(i)}$. By \cite[Proposition 2.16]{kos22}, this $\delta$-structure uniquely refines to a $\delta_{\mathrm{log}}$-structure on $(\mathrm{Spf}(S_{R}^{(i)}),\cM_{S_{R}^{(i)}})$ such that the natural map 
\[
(\mathrm{Spf}(S_{R}^{(i)}),\cM_{S_{R}^{(i)}})\to (\mathrm{Spf}(\fS_{R}),\cM_{\fS_{R}})^{(i)/W}
\]
is compatible with $\delta_{\mathrm{log}}$-structures. As a result, we obtain a log prism $(S^{(i)}_{R},(p),\phi^{*}_{S_{R}^{(i)}}\cM_{S^{(i)}_{R}})\in (X_{p=0},\cM_{X_{p=0}})_{\Prism}^{\mathrm{str}}$, where the $\delta_{\mathrm{log}}$-structure on $(S^{(i)}_{R},\phi^{*}\cM_{S^{(i)}_{R}})$ is induced from that on $(S^{(i)}_{R},\cM_{S^{(i)}_{R}})$, and the ring map $\fS_{R}\stackrel{\phi_{\fS_{R}}}{\to} \fS_{R}\to S_{R}$ induces a log prism map in $(\fX,\cM_{\fX})_{\Prism}^{\mathrm{str}}$
\[
\phi\colon (\fS_{R}^{(i)},(E),\cM_{\fS_{R}^{(i)}})\to (S^{(i)}_{R},(p),\phi^{*}_{S_{R}^{(i)}}\cM_{S^{(i)}_{R}})
\]
for each $i\geq 0$. 
\end{construction}

\begin{rem}\label{rem on another construction of breuil ring}
Let $(\mathrm{Spf}(R),\cM_{R})$ be a small affine log formal scheme over $\cO_{K}$ admitting a fixed framing with $n=0$. Then $(\mathrm{Spf}(R),\cM_{R}\oplus \pi^{\bN})^{a}$ is also small affine and admits an induced framing with $n=1$. Let $(\mathrm{Spf}(R_{0}),\cM_{R_{0}})$ be the unramified model of $(\mathrm{Spf}(R),\cM_{R})$ defined by the fixed framing and $\varphi_{R_{0}}$ be the Frobenius lift on $(\mathrm{Spf}(R_{0}),\cM_{R_{0}})$ defined in Definition \ref{def of unr model}. 

Let $(\mathrm{Spf}(R_{0}[[t]]),\cM_{R_{0}[[t]]})$ as in Remark \ref{rem on another construction of bkprism}. Consider a strict closed immersion
\[
(\mathrm{Spf}(R),\cM_{R})\hookrightarrow (\mathrm{Spf}(R_{0}[[t]]),\cM_{R_{0}[[t]]})
\]
induced from $(\mathrm{Spf}(R),\cM_{R})\to (\mathrm{Spf}(R_{0}),\cM_{R_{0}})$ and $t\mapsto \pi$. By Remark \ref{rem on another construction of bkprism}, the $p$-completed log PD-envelope of it is nothing but the Breuil ring $(\mathrm{S_{R}},\cM_{S_{R}})$ associated with $(\mathrm{Spf}(R),\cM_{R}\oplus \pi^{\bN})^{a}$. Explicitly, we have an isomorphism $S_{R}\cong R_{0}[t][\{\frac{t^{en}}{(en)!}\}_{n\geq 1}]^{\wedge}$, where $e\coloneqq [K:K_{0}]$. We can also obtain the Breuil ring associated with $(\mathrm{Spf}(R),\cM_{R})$ in a similar way as in Remark \ref{rem on another construction of bkprism}.
\end{rem}

By using Breuil ring, we can prove the following.

\begin{prop}\label{local HK isom}
    Let $(\mathrm{Spf}(R),\cM_{R})$ be a small affine log formal scheme over $\cO_{K}$ admitting a fixed framing with $n=0$. Let $(\mathrm{Spf}(R_{0}),\cM_{R_{0}})$ be the unramified model defined by the fixed framing. Consider the Breuil ring $(\mathrm{Spf}(S_{R}),\cM_{S_{R}})$ associated with $(\mathrm{R},\cM_{R}\oplus \pi^{\bN})$ equipped with the induced framing. Then, for a locally free $F$-isocrystal $\cE$ on $(\mathrm{Spec}(R/p),\cM_{R/p}\oplus \pi^{\bN})$, there is a natural isomorphism
    \[
    \cE_{(S_{R}\twoheadrightarrow R/p,\cM_{S_{R}})}\cong \cE_{(R_{0}\twoheadrightarrow R_{0}/p,\cM_{R_{0}}\oplus 0^{\bN})^{a}}\otimes_{R_{0}[1/p]} S_{R}[1/p]
    \]
    that is compatible with Frobenius isomorphisms on both sides. In particular, there is a natural isomorphism
    \[
    \cE_{(R\twoheadrightarrow R/p,\cM_{R}\oplus \pi^{\bN})^{a}} \cong \cE_{(R_{0}\twoheadrightarrow R_{0}/p,\cM_{R_{0}}\oplus 0^{\bN})^{a}}\otimes_{R_{0}[1/p]} R[1/p].
    \]
\end{prop}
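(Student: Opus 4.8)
The statement compares two evaluations of the $F$-isocrystal $\cE$: at the Breuil log PD-thickening $(S_R \twoheadrightarrow R/p, \cM_{S_R})$ associated with $(\mathrm{Spf}(R),\cM_R \oplus \pi^{\bN})^a$, and at the ``unramified'' log PD-thickening $(R_0 \twoheadrightarrow R_0/p, \cM_{R_0} \oplus 0^{\bN})^a$, which is the restriction of a log PD-thickening over $W$. The strategy is to exhibit both sides as restrictions of a single $F$-isocrystal on the crystalline site of $(\mathrm{Spec}(R_0/p), \cM_{R_0/p} \oplus 0^{\bN})^a$ and then invoke the Dwork-style comparison (Lemma~\ref{dwork trick}) together with the crystal property. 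Concretely: by Remark~\ref{rem on another construction of breuil ring}, $S_R$ is the $p$-completed log PD-envelope of the strict closed immersion $(\mathrm{Spf}(R),\cM_R)\hookrightarrow (\mathrm{Spf}(R_0[[t]]),\cM_{R_0[[t]]})$ given by $t\mapsto \pi$, and $\cM_{R_0[[t]]} = \cM_R \oplus t^{\bN}$. So $t$ plays exactly the role of the auxiliary variable $u$ in Construction~\ref{construction from phinabla to phinablaN}: the Breuil log prism/PD-thickening is the $u$-version of $(R_0, \cM_{R_0})$, with $u$ specialized to $\pi$ rather than killed.

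First I would set up the two relevant log PD-thickenings over $W$ in $(X_k, \cM_{X_k}\oplus 0^{\bN})^a_{\mathrm{crys}}$ (where $X_k = \mathrm{Spec}(R_0/p)$): the thickening $(R_0 \twoheadrightarrow R_0/p, \cM_{R_0}\oplus 0^{\bN})^a$, and the thickening $(S_{R_0}\{u\}\twoheadrightarrow R_0/p, \cM_{S_{R_0}}\oplus u^{\bN})^a$ obtained as the $p$-completed log PD-envelope of $(\mathrm{Spf}(R_0),\cM_{R_0}\oplus 0^{\bN})^a \hookrightarrow (\mathrm{Spf}(R_0\langle u\rangle), \cM_{R_0}\oplus u^{\bN})^a$ via $u\mapsto 0$. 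These are precisely $(\cD,\cM_{\cD})$ and $(\cD\{u\},\cM_{\cD\{u\}})$ of Section~\ref{section isoc}. Applying Lemma~\ref{equivalence from phinabla to phinablaN} (and Proposition~\ref{Fisoc on log crys site and Fisoc with monodromy ope}), the evaluation $\cE_{R_0\{u\}}$ is identified with $E_0 \widehat{\otimes}_W W\{u\}$ with its product Frobenius and the connection built from $\nabla_{E_0}$, $N_{E_0}$ and $\mathrm{dlog}(u)$, where $E_0 := \cE_{(R_0 \twoheadrightarrow R_0/p, \cM_{R_0}\oplus 0^{\bN})^a}$. Then I specialize: the Breuil PD-thickening $(S_R, \cM_{S_R})$ receives a map from $(S_{R_0}\{u\}, \cM_{S_{R_0}}\oplus u^{\bN})^a$ in $(X_k,\cM_{X_k}\oplus 0^{\bN})^a_{\mathrm{crys}}$ sending $u\mapsto t$ (equivalently, $S_R = S_{R_0}\{u\} \widehat{\otimes}_{R_0} R$ along the étale map together with $u\mapsto t$), and the crystal property of $\cE$ gives $\cE_{(S_R \twoheadrightarrow R/p, \cM_{S_R})} \cong \cE_{R_0\{u\}} \otimes_{R_0\{u\}} S_R$, which unwinds to $E_0 \otimes_{R_0[1/p]} S_R[1/p]$ with the stated Frobenius. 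Because $N_{E_0}$ is nilpotent (Lemma~\ref{N is nilp}), there is no convergence subtlety in this base change.

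For the ``in particular'' clause I would note that $(R \twoheadrightarrow R/p, \cM_R \oplus \pi^{\bN})^a$ is itself an object of $(X_{p=0},\cM_{X_{p=0}})_{\mathrm{crys}}$ where $X_{p=0} = \mathrm{Spec}(R/p)$, and the Breuil thickening $(S_R \twoheadrightarrow R/p, \cM_{S_R})$ maps to it via $t \mapsto \pi$ (the PD-envelope surjects onto $R$ compatibly). Applying the crystal property of $\cE$ once more along this map of log PD-thickenings, together with the isomorphism just established, yields $\cE_{(R \twoheadrightarrow R/p, \cM_R\oplus\pi^{\bN})^a} \cong E_0 \otimes_{R_0[1/p]} R[1/p]$. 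The compatibility with Frobenius is automatic because every map in sight is a morphism of log prisms / log PD-thickenings and the identifications come from the crystal and $F$-structure of $\cE$.

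\textbf{Main obstacle.} The genuinely delicate point is the identification $\cE_{(S_R\twoheadrightarrow R/p,\cM_{S_R})} \cong \cE_{R_0\{u\}}\otimes_{R_0\{u\}} S_R$ — i.e. checking that the relevant PD-envelopes behave well under the étale base change $R_0 \to R$ and under the specialization $u \mapsto t = \pi$, so that Dwork's trick (Lemma~\ref{dwork trick}, via Remark~\ref{dwork trick and unr model}) legitimately applies to pass between the $W$-base and the $\cO_K$-base. One must verify that $S_R$, $S_{R_0}$, $R_0[[t]]$ are $p$-torsion free and that the log PD-structures are compatible under these maps, and that the auxiliary variable $u$ maps to an element ($t$, then $\pi$) for which the PD-envelope is still the ``right'' one; this is where Remark~\ref{rem on another construction of breuil ring} and the $p$-torsion freeness of $S_R^{(i)}$ from Construction~\ref{construction of breuil log prism} are essential. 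The connection and monodromy bookkeeping (that the connection on $\cE_{R_0\{u\}}$ involves $\mathrm{dlog}(u)$ with coefficient $N_{E_0}$, which survives the specialization $u\mapsto t$ as $\mathrm{dlog}(t)$ with coefficient $N_{E_0}$, matching the horizontal structure on $S_R$) is routine once the PD-envelope identifications are in place, so I would not belabor it.
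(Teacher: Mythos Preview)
Your argument has a genuine gap at the key specialization step. You claim there is a map of log PD-rings $R_0\{u\}\to S_R$ sending $u\mapsto t$ (and, ``equivalently'', that $S_R=R_0\{u\}\widehat{\otimes}_{R_0}R$), and then invoke the crystal property to obtain $\cE_{S_R}\cong\cE_{R_0\{u\}}\otimes_{R_0\{u\}}S_R$. No such PD-map exists when $e=[K:K_0]>1$. The PD-ideal of $S_R$ is $\mathrm{Ker}(S_R\twoheadrightarrow R/p)$, and under this surjection $t\mapsto\pi$, which is \emph{nonzero} in $R/p$ whenever $e>1$; hence $t$ does not lie in the PD-ideal and therefore has no divided powers in $S_R$, so it cannot be the image of the PD-element $u$. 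The parenthetical identification is likewise wrong: $R_0\{u\}\widehat{\otimes}_{R_0}R$ is just $R\{u\}$, a full PD-polynomial ring, whereas Remark~\ref{rem on another construction of breuil ring} shows $S_R\cong R_0[t][\{t^{en}/(en)!\}_{n\geq 1}]^{\wedge}$, strictly smaller. Consequently the crystal-property isomorphism you wrote cannot be produced in that direction.

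What does exist is the map going the other way: $S_R\to R_0$ with $t\mapsto 0$ (or, if you like, $S_R\to R_0\{u\}$ with $t\mapsto u$ followed by $u\mapsto 0$), which is a map of log PD-thickenings because $E(0)$ is $p$ times a unit. The paper uses exactly this: the crystal property along $S_R\twoheadrightarrow R_0$ gives $\cE_{(S_R,\cM_{S_R})}\otimes_{S_R[1/p]}R_0[1/p]\cong\cE_{(R_0,\cM_{R_0}\oplus 0^{\bN})^a}$, and then Lemma~\ref{beilinson trick} (not Lemma~\ref{dwork trick}, which concerns nilpotent thickenings of the base scheme rather than $\varphi$-modules over a ring with section) applied to the Frobenius-compatible retraction $R_0\to S_R\twoheadrightarrow R_0$ upgrades this to the desired $\cE_{S_R}\cong\cE_{R_0}\otimes_{R_0[1/p]}S_R[1/p]$. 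Your detour through Lemma~\ref{equivalence from phinabla to phinablaN} is not needed, and in any case that lemma is itself an application of Lemma~\ref{beilinson trick}, so it cannot replace it here.
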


\begin{proof}
    The strict closed immersion
    \[
    (\mathrm{Spf}(R_{0}),\cM_{R_{0}}\oplus 0^{\bN})^{a}\hookrightarrow (\mathrm{Spf}(R_{0}[[t]]),\cM_{R_{0}}\oplus t^{\bN})
    \]
    given by $t\mapsto 0$ induces a strict closed immersion
    \[
    (\mathrm{Spf}(R_{0}),\cM_{R_{0}}\oplus 0^{\bN})^{a}\hookrightarrow (\mathrm{Spf}(S_{R}),\cM_{S_{R}})
    \]
    by the universal property of PD-envelopes and Remark \ref{rem on another construction of breuil ring}. The crystal property of $\cE$ along the above map gives an isomorphism
    \[
    \cE_{(S_{R}\twoheadrightarrow R/p,\cM_{S_{R}})}\otimes_{S_{R}[1/p]} R_{0}[1/p] \cong \cE_{(R_{0}\twoheadrightarrow R_{0}/p,\cM_{R_{0}}\oplus 0^{\bN})^{a}}.
    \]
    Therefore, by applying Lemma \ref{beilinson trick} to ring maps $R_{0}\to S_{R}\twoheadrightarrow R_{0}$, we obtain the former assertion. The latter one is proved by taking the former isomorphism along $S_{R}\twoheadrightarrow R$. 
\end{proof}

\begin{dfn}
    Let $\mathrm{DD}_{S_{R}}$ denote the category of pairs $(M,\epsilon)$ where
\begin{itemize}
    \item $M$ is a finitely presented $S_{R}$-module such that $M[1/p]$ is finite projective over $S_{R}[1/p]$;
    \item $\epsilon\colon M\otimes_{S_{R},p_{1}} S_{R}^{(1)}\isom M\otimes_{S_{R},p_{2}} S_{R}^{(1)}$ satisfying the cocycle condition over $S_{R}^{(2)}$.
\end{itemize}
Morphisms $(M,\epsilon)\to (M',\epsilon')$ in $\mathrm{DD}_{S_{R}}$ are $S_{R}$-linear morphisms $M\to M'$ compatible with $\epsilon$ and $\epsilon$'. We let $\mathrm{DD}^{\varphi}_{S_{R}}$ denote the category of triples $(M,\varphi_{M},\epsilon)$ where
\begin{itemize}
    \item $(M,\epsilon)$ belongs to $\mathrm{DD}_{S_{R}}$;
    \item $\varphi_{M}\colon (\phi_{S_{R}}^{*}M)[1/p]\isom M[1/p]$ is an $S_{R}[1/p]$-linear isomorphism that is compatible with $\epsilon$.
\end{itemize}
Morphisms $(M,\varphi_{M},\epsilon)\to (M',\varphi_{M'},\epsilon')$ in $\mathrm{DD}^{\varphi}_{S_{R}}$ are $S_{R}$-linear morphisms $M\to M'$ compatible with $\epsilon$, $\epsilon$', $\varphi_{M}$, and $\varphi_{N}$.
\end{dfn}

\begin{lem}\label{crys crys via breuil prism}
Let $(\fX,\cM_{\fX})=(\mathrm{Spf}(R),\cM_{R})$ be a small affine log formal scheme over $\cO_{K}$ with a fixed framing. Then there exist natural functors
\begin{align*}
    \mathrm{DD}_{S_{R}}&\to \mathrm{Crys}_{\mathrm{tf}}((X_{p=0},\cM_{X_{p=0}})_{\mathrm{crys}}), \\
    \mathrm{DD}^{\varphi}_{S_{R}}&\to \mathrm{Isoc}^{\varphi}_{\mathrm{lftf}}((X_{p=0},\cM_{X_{p=0}})_{\mathrm{crys}}).
\end{align*}
\end{lem}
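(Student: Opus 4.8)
The plan is to construct the two functors in parallel, first handling the Frobenius-free case $\mathrm{DD}_{S_{R}}\to \mathrm{Crys}_{\mathrm{tf}}((X_{p=0},\cM_{X_{p=0}})_{\mathrm{crys}})$ and then adding the Frobenius structure. Recall from Construction \ref{construction of breuil log prism} that $(S_{R}\twoheadrightarrow R/p,\cM_{S_{R}})$ covers the final object of the topos associated with $(X_{p=0},\cM_{X_{p=0}})_{\mathrm{crys}}$ and that $(S_{R}^{(i)},\cM_{S_{R}^{(i)}})$ is the $i$-th self-product of $(S_{R},\cM_{S_{R}})$ in $(X_{p=0},\cM_{X_{p=0}})_{\mathrm{crys}}$. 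Hence an object $(M,\epsilon)$ of $\mathrm{DD}_{S_{R}}$ is exactly a descent datum for a quasi-coherent sheaf of finite type relative to this cover: by Remark \ref{crystalline crystal as lim}, $\mathrm{Crys}((X_{p=0},\cM_{X_{p=0}})_{\mathrm{crys}})\simeq \varprojlim_{(T,\cM_{T})} \mathrm{FQCoh}(T)$, and the cover $(S_{R},\cM_{S_{R}})$ together with its self-products $(S_{R}^{(\bullet)},\cM_{S_{R}^{(\bullet)}})$ computes this limit. So the first step is to send $(M,\epsilon)$ to the crystal $\cE$ with $\cE_{S_{R}^{(i)}}$ the base change of $M$ along $p_{1}^{i}$ (or equivalently, since $\epsilon$ satisfies the cocycle condition, any compatible choice), glued via $\epsilon$; this produces an object of $\mathrm{Crys}((X_{p=0},\cM_{X_{p=0}})_{\mathrm{crys}})$.

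The next step is to check that this crystal lands in the torsion-free subcategory $\mathrm{Crys}_{\mathrm{tf}}$ of Definition \ref{def of torsion free crystals}. Here I would take $(\cD,\cM_{\cD})=(S_{R},(p),\phi^{*}_{S_{R}}\cM_{S_{R}})$ (viewed as a $p$-adic log PD-thickening), which is $\bZ_{p}$-flat since $S_{R}$ is $p$-torsion free (noted in Construction \ref{construction of breuil log prism}, as $\mathrm{Ker}(\fS_{R}\twoheadrightarrow R)=(E(t))$ is generated by a regular element), covers the final object, and admits the self-products $(S_{R}^{(n)},\cM_{S_{R}^{(n)}})$. The only condition to verify is that $\cE_{S_{R}^{(n)}}$ is $p$-torsion free for each $n$: since $M[1/p]$ is finite projective over $S_{R}[1/p]$ and $M$ is finitely presented, one replaces $M$ by its maximal $p$-torsion free quotient $\overline{M}$ (which agrees with $M$ in the isogeny category, cf. the proof of Lemma \ref{autmatically tf}), noting that by flatness of $p_{i}\colon S_{R}\to S_{R}^{(1)}$ the descent datum descends to $\overline{M}$; alternatively, one observes directly that the base change of a $p$-torsion free finitely generated module whose generic fiber is projective along the $p$-adically flat map $S_{R}\to S_{R}^{(n)}$ is again $p$-torsion free, using Lemma \ref{completeness is automatically} and Lemma \ref{lemma for complete tensor and inverting p}. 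For the second functor I would then equip the resulting isocrystal $\cE^{+}[1/p]$ with the Frobenius isomorphism induced from $\varphi_{M}$: the log prism map $\phi\colon (\fS_{R}^{(i)},(E),\cM_{\fS_{R}^{(i)}})\to (S^{(i)}_{R},(p),\phi^{*}_{S_{R}^{(i)}}\cM_{S^{(i)}_{R}})$ of Construction \ref{construction of breuil log prism} gives, after evaluation, a comparison between $\phi_{S_{R}}^{*}\cE_{S_{R}}$ and the pullback via $\phi$, and $\varphi_{M}\colon (\phi_{S_{R}}^{*}M)[1/p]\isom M[1/p]$ combined with the crystal property along this map produces $\varphi_{\cE}\colon F^{*}\cE\isom \cE$ as isocrystals; one checks this is independent of choices and functorial. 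The underlying isocrystal is locally free because $M[1/p]$ is finite projective over $S_{R}[1/p]$ and locally freeness can be checked on the cover $(S_{R},\cM_{S_{R}})$, so $\cE\in \mathrm{Isoc}^{\varphi}_{\mathrm{lftf}}$.

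The main obstacle I anticipate is bookkeeping around the Frobenius twist: $S_{R}$ carries two natural log structures, $\cM_{S_{R}}$ and $\phi^{*}_{S_{R}}\cM_{S_{R}}$, and the log prism structure used in Construction \ref{construction of breuil log prism} is the $\phi$-twisted one, whereas the PD-envelope computation naturally produces $\cM_{S_{R}}$. One must be careful that the identification $\cE_{S_{R}^{(i)}}$ coming from the crystal-as-limit description (which uses $(S_{R}^{(i)},\cM_{S_{R}^{(i)}})$ as an object of the crystalline site) is compatible with the Frobenius descent datum $\epsilon$ after the $\phi$-twist, i.e. that the $\delta_{\mathrm{log}}$-refinement of \cite[Proposition 2.16]{kos22} makes everything coherent. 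This is essentially the same subtlety handled in \cites{bs22,iky24} for the non-log Breuil-Kisin prism, and I expect it to go through by the same formal argument; a clean way to organize it is to factor the construction through $\mathrm{StrFQCoh}(S_{R})\simeq \mathrm{FQCoh}(S_{R})$ via Corollary \ref{eq of crys crys and conn over W} applied with $\cD=S_{R}$, reducing the descent datum $\epsilon$ to an HPD-stratification and then to a connection, but since the statement only asks for the existence of natural functors (not equivalences), it suffices to produce them compatibly and defer the finer properties — fully faithfulness, essential image — to later in the section.
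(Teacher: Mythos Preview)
Your approach matches the paper's in spirit --- build a crystal from the descent datum via the cover $(S_{R},\cM_{S_{R}})$ and pass to the maximal $p$-torsion free quotient $\overline{M}$ --- but there are two imprecisions worth flagging.

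First, the order of operations and the completion issue. The paper does not first produce a crystal from $(M,\epsilon)$ and then ``check'' torsion-freeness; rather, it \emph{defines} the functor by sending $(M,\epsilon)$ to the crystal associated with $(\overline{M},\overline{\epsilon})$. The point is that $\overline{M}$ is only finitely generated (not finitely presented) over $S_{R}$, so the ordinary tensor product $\overline{M}\otimes_{S_{R},p_{i}} S_{R}^{(1)}$ need not be $p$-complete, and one must use the $p$-completed tensor product $\overline{M}\widehat{\otimes}_{S_{R},p_{i}} S_{R}^{(1)}$ instead. That this completed tensor product is $p$-torsion free, and that $\epsilon$ induces an isomorphism $\overline{\epsilon}$ on it satisfying the cocycle condition, is exactly the content of Lemma~\ref{lem for tf quot and completion} (proved immediately after the lemma in question). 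Your appeal to Lemma~\ref{completeness is automatically} and Lemma~\ref{lemma for complete tensor and inverting p} alone is not quite enough: those lemmas establish that $\overline{M}$ is $p$-complete and control inverting $p$, but the compatibility of the torsion-free quotient with $p$-completed base change along a $p$-adically flat map is the additional input.

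Second, your ``main obstacle'' is a red herring for this lemma. The log prism structure $(S_{R},(p),\phi^{*}_{S_{R}}\cM_{S_{R}})$ and the map $\phi\colon (\fS_{R}^{(i)},(E),\cM_{\fS_{R}^{(i)}})\to (S_{R}^{(i)},(p),\phi^{*}_{S_{R}^{(i)}}\cM_{S_{R}^{(i)}})$ are used in Construction~\ref{crys real by using bk prisms} to produce the functor $\mathrm{DD}_{\fS_{R}}\to \mathrm{DD}_{S_{R}}$, \emph{upstream} of this lemma. Here the input is already an object of $\mathrm{DD}^{\varphi}_{S_{R}}$, and the Frobenius structure on the resulting isocrystal comes simply from the fact that $\phi_{S_{R}}$ is a Frobenius lift on the PD-thickening $(S_{R}\twoheadrightarrow R/p,\cM_{S_{R}})$, so $F^{*}\cE$ evaluated at $S_{R}$ is $\phi_{S_{R}}^{*}(\cE_{S_{R}})$ and $\varphi_{M}$ furnishes the isomorphism after inverting $p$. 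The paper accordingly dispatches the $\varphi$-functor in one line: ``It is enough to construct the former functor.''
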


\begin{proof}
It is enough to construct the former functor. Let $(M,\epsilon)\in \mathrm{DD}_{S_{R}}$. Let $\overline{M}$ denote the maximal $p$-torsion free quotient of $M$. Then $\overline{M}$ is $p$-complete by Lemma \ref{completeness is automatically}, and $\epsilon$ induces a $S_{R}^{(1)}$-linear isomorphism
\[
\overline{\epsilon}\colon \overline{M}\widehat{\otimes}_{S_{R},p_{1}} S_{R}^{(1)}\isom \overline{M}\widehat{\otimes}_{S_{R},p_{2}} S_{R}^{(1)}
\]
satisfying the cocycle condition over $S_{R}^{(2)}$ by Lemma \ref{lem for tf quot and completion} below. Since the $p$-adic log PD-thickening $(S_{R}\twoheadrightarrow R/p,\cM_{S_{R}})$ covers the final object of the topos associated with $(X_{p=0},\cM_{X_{p=0}})_{\mathrm{crys}}$, the pair $(\overline{M},\overline{\epsilon})$ gives an object of $\mathrm{Crys}_{\mathrm{tf}}((X_{p=0},\cM_{X_{p=0}})_{\mathrm{crys}})$. This construction gives the former functor in the statement. 
\end{proof}

\begin{lem}\label{lem for tf quot and completion}
    Let $A\to B$ be a $p$-adically flat ring map of $p$-complete and $p$-torsion free rings. Let $M$ be a finitely generated $A$-module and $\overline{M}$ be the maximal $p$-torsion free quotient of $M$ (which is $p$-complete by Lemma \ref{completeness is automatically}). Suppose that $M[1/p]$ is finite projective over $A[1/p]$. Then $\overline{M}\widehat{\otimes}_{A} B$ is isomorphic to the maximal $p$-torsion free quotient of $M\otimes_{A} B$.
\end{lem}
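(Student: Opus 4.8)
The plan is to construct a natural surjection $M\otimes_{A}B\twoheadrightarrow \overline{M}\widehat{\otimes}_{A}B$ and to identify its kernel with the $p$-power torsion submodule of $M\otimes_{A}B$, which is exactly what it means for $\overline{M}\widehat{\otimes}_{A}B$ to be the maximal $p$-torsion free quotient of $M\otimes_{A}B$.

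First I would record the relevant properties of $\overline{M}$. Writing $T\coloneqq \mathrm{Ker}(M\to M[1/p])$ for the $p$-power torsion submodule, we have $\overline{M}=M/T\hookrightarrow M[1/p]$; thus $\overline{M}$ is finitely generated, $p$-torsion free, and $\overline{M}[1/p]=M[1/p]$ is finite projective over $A[1/p]$, so $\overline{M}$ is $p$-complete by Lemma \ref{completeness is automatically}. The surjection $M\otimes_{A}B\twoheadrightarrow\overline{M}\widehat{\otimes}_{A}B$ is then the composite of $M\otimes_{A}B\twoheadrightarrow \overline{M}\otimes_{A}B$ with the canonical map $\overline{M}\otimes_{A}B\to (\overline{M}\otimes_{A}B)^{\wedge}=\overline{M}\widehat{\otimes}_{A}B$, which is surjective because $\overline{M}\otimes_{A}B$ is finitely generated over the $p$-complete ring $B$. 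Inverting $p$, this surjection becomes an isomorphism: $(M\otimes_{A}B)[1/p]=M[1/p]\otimes_{A}B=\overline{M}[1/p]\otimes_{A}B$, while $(\overline{M}\widehat{\otimes}_{A}B)[1/p]\cong \overline{M}[1/p]\otimes_{A}B$ by Lemma \ref{lemma for complete tensor and inverting p} applied to $\overline{M}$, and these identifications are compatible with the map by functoriality. Hence $\mathrm{Ker}(M\otimes_{A}B\to\overline{M}\widehat{\otimes}_{A}B)$ is contained in the $p$-power torsion of $M\otimes_{A}B$.

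The hard part will be the reverse inclusion, i.e. showing that $\overline{M}\widehat{\otimes}_{A}B$ is $p$-torsion free, for then every $p$-power torsion element of $M\otimes_{A}B$ is killed in $\overline{M}\widehat{\otimes}_{A}B$ and the two submodules coincide. Here is where I would use that $A\to B$ is $p$-adically flat, i.e. that $B/p^{n}$ is flat over $A/p^{n}$ for all $n$. Since $\overline{M}$ is $p$-torsion free and $A/p^{n}$ has projective dimension $\le 1$ over $A$, the derived reduction $\overline{M}\otimes^{\mathbf{L}}_{A}A/p^{n}$ is concentrated in degree $0$ (equal to $\overline{M}/p^{n}$); combining this with the flatness of $B/p^{n}$ over $A/p^{n}$ yields $(\overline{M}\otimes^{\mathbf{L}}_{A}B)\otimes^{\mathbf{L}}_{A}A/p^{n}\simeq (\overline{M}/p^{n})\otimes_{A/p^{n}}(B/p^{n})=(\overline{M}\otimes_{A}B)/p^{n}$, again concentrated in degree $0$. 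Passing to derived $p$-completions and using that the tower $\{(\overline{M}\otimes_{A}B)/p^{n}\}$ has surjective transition maps, one finds that the derived $p$-completion of $\overline{M}\otimes^{\mathbf{L}}_{A}B$ is concentrated in degree $0$ and equals the classical completion $\overline{M}\widehat{\otimes}_{A}B$; its reduction modulo $p$ is then $(\overline{M}\otimes_{A}B)/p$ with vanishing degree-$1$ homology, which says precisely that $\overline{M}\widehat{\otimes}_{A}B$ has no $p$-torsion.

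I expect this last step to be the only real obstacle; the subtlety it circumvents is that $\overline{M}$ itself need not be $p$-adically flat over $A$ — for instance $A=W\llbracket X\rrbracket$, $\overline{M}=(p,X)$ has $\overline{M}/p$ non-flat over $A/p$ — so one cannot avoid passing through the derived $p$-completion. Granting it, the kernel of $M\otimes_{A}B\twoheadrightarrow \overline{M}\widehat{\otimes}_{A}B$ is exactly the $p$-power torsion of $M\otimes_{A}B$, and therefore $\overline{M}\widehat{\otimes}_{A}B$ is its maximal $p$-torsion free quotient.
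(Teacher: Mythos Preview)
Your proof is correct and follows essentially the same route as the paper: construct the surjection $M\otimes_A B\twoheadrightarrow \overline{M}\widehat{\otimes}_A B$, show it is an isomorphism after inverting $p$ via Lemma~\ref{lemma for complete tensor and inverting p}, and show the target is $p$-torsion free using $p$-adic flatness. The only difference is that the paper asserts the $p$-torsion freeness of $\overline{M}\widehat{\otimes}_A B$ in one line, whereas you supply a derived justification; your argument is sound, though a slightly lighter version suffices (once you know $\overline{M}\otimes_A B$ is $p$-torsion free---which your computation of $(\overline{M}\otimes^{\mathbf L}_A B)\otimes^{\mathbf L}_A A/p$ already gives---its classical $p$-completion is $p$-torsion free by an elementary limit argument).
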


\begin{proof}
    Since $\overline{M}\otimes_{A} B$ is a finitely generated $B$-module, the natural map $\overline{M}\otimes_{A} B\to \overline{M}\widehat{\otimes}_{A} B$ is surjective. Since $A\to B$ is $p$-adically flat, $\overline{M}\widehat{\otimes}_{A} B$ is $p$-torsion free. By Lemma \ref{lemma for complete tensor and inverting p}, we have a natural isomorphism $(\overline{M}\widehat{\otimes}_{A} B)[1/p]\cong \overline{M}[1/p]\otimes_{A} B$. As a summary, we obtain the following commutative diagram:
    \[
    \begin{tikzcd}
        M\otimes_{A} B \ar[d] \ar[r,twoheadrightarrow] & \overline{M}\otimes_{A} B \ar[r,twoheadrightarrow] & \overline{M}\widehat{\otimes}_{A} B \ar[d,hook] \\
        M[1/p]\otimes_{A} B \ar[r,"\sim"] & \overline{M}[1/p]\otimes_{A} B \ar[r,"\sim"] & (\overline{M}\widehat{\otimes}_{A} B)[1/p].
    \end{tikzcd}
    \]
    This diagram implies the assertion.
\end{proof}

\begin{construction}[Crystalline realization]\label{crys real by using bk prisms}\noindent

Let $(\fX,\cM_{\fX})=(\mathrm{Spf}(R),\cM_{R})$ be a semi-stable log formal scheme over $\cO_{K}$. First, we shall construct a functor
\[
\mathrm{Vect}^{\mathrm{an}}((\fX,\cM_{\fX})_{\Prism})\to \mathrm{Crys}_{\mathrm{tf}}((X_{p=0},\cM_{X_{p=0}})_{\mathrm{crys}}).
\]
To do this, we may work \'{e}tale locally on $\fX$ and assume that $(\fX,\cM_{\fX})=(\mathrm{Spf}(R),\cM_{R})$ is a small affine log formal scheme with a fixed framing. We use notations in Construction \ref{construction of bk log prism} and Construction \ref{construction of breuil log prism}. By Lemma \ref{anal pris crys via bk prism}, there is an equivalence
\[
\mathrm{Vect}^{\mathrm{an}}((\fX,\cM_{\fX})_{\Prism})\to \mathrm{DD}_{\fS_{R}}.
\]
Taking the base change along log prism maps $\phi\colon (\fS_{R}^{(i)},(E),\cM_{\fS_{R}^{(i)}})\to (S_{R}^{(i)},(p),\cM_{S_{R}^{(i)}})$ gives a functor $\mathrm{DD}_{\fS_{R}}\to \mathrm{DD}_{S_{R}}$. Then, by composing these functors with the functor in Lemma \ref{crys crys via breuil prism}, we obtain a functor
\[
\mathrm{Vect}^{\mathrm{an}}((\fX,\cM_{\fX})_{\Prism})\to \mathrm{Crys}_{\mathrm{tf}}((X_{p=0},\cM_{X_{p=0}})_{\mathrm{crys}}),
\]
denoted by $T_{\mathrm{crys}}$. To globalize this construction, we need to check that this functor is independent of the choice of a framing. We can prove this in the same way as \cite[Lemma 3.31]{dlms24}.

Next, we shall construct a functor
\[
\mathrm{Vect}^{\mathrm{an},\varphi}((\fX,\cM_{\fX})_{\Prism})\to \mathrm{Isoc}^{\varphi}_{\mathrm{lftf}}((X_{p=0},\cM_{X_{p=0}})_{\mathrm{crys}}).
\]
For $\cE\in \mathrm{Vect}^{\mathrm{an},\varphi}((\fX,\cM_{\fX})_{\Prism})$, we have $T_{\mathrm{crys}}(\cE)\in \mathrm{Crys}_{\mathrm{tf}}((X_{p=0},\cM_{X_{p=0}})_{\mathrm{crys}})$. By construction, $T_{\mathrm{crys}}(\cE)$ is locally free. Hence, it is enough to construct a Frobenius structure $F^{*}T_{\mathrm{crys}}(\cE)[1/p]\isom T_{\mathrm{crys}}(\cE)[1/p]$. To do this, we may work \'{e}tale locally on $\fX$, and so we may assume that $(\fX,\cM_{\fX})=(\mathrm{Spf}(R),\cM_{R})$ is a small affine log formal scheme with a fixed framing. Taking the base change along log prism maps $\phi:(\fS_{R}^{(i)},(E),\cM_{\fS_{R}^{(i)}})\to (S_{R}^{(i)},(p),\phi^{*}\cM_{S_{R}^{(i)}})$ in Construction \ref{construction of breuil log prism} gives a functor
\[
\mathrm{DD}^{\varphi}_{\fS_{R}} \to \mathrm{DD}^{\varphi}_{S_{R}}.
\]
Then, in the same way as the construction of $T_{\mathrm{crys}}$, we can define a functor
\[
\mathrm{Vect}^{\mathrm{an},\varphi}((\fX,\cM_{\fX})_{\Prism})\to \mathrm{Isoc}^{\varphi}_{\mathrm{lftf}}((X_{p=0},\cM_{X_{p=0}})_{\mathrm{crys}}),
\]
denoted by $T_{\mathrm{isoc}}$. Since the functor $T_{\mathrm{isoc}}$ is compatible with $T_{\mathrm{crys}}$ in the sense that $T_{\mathrm{isoc}}(\cE)$ is isomorphic to $T_{\mathrm{crys}}(\cE)$ as isocrystals, this gives a Frobenius structure on $T_{\mathrm{crys}}(\cE)[1/p]$. 
\end{construction}

\begin{dfn}[Crystalline realization]
For a semi-stable log formal scheme $(\fX,\cM_{\fX})$ over $\cO_{K}$, we constructed functors
\begin{align*}
T_{\mathrm{crys}}\colon \mathrm{Vect}^{\mathrm{an}}((\fX,\cM_{\fX})_{\Prism})&\to \mathrm{Crys}_{\mathrm{tf}}((X_{p=0},\cM_{X_{p=0}})_{\mathrm{crys}}) \\
T_{\mathrm{isoc}}\colon \mathrm{Vect}^{\mathrm{an},\varphi}((\fX,\cM_{\fX})_{\Prism})&\to \mathrm{Isoc}^{\varphi}_{\mathrm{lftf}}((X_{p=0},\cM_{X_{p=0}})_{\mathrm{crys}})
\end{align*}
in Construction \ref{crys real by using bk prisms}. These are called \emph{crystalline realization functors}. 
\end{dfn}

\subsection{The relationship between \'{e}tale and crystalline realization}

Let $(\fX,\cM_{\fX})$ be a semi-stable log formal scheme over $\cO_{K}$.

\begin{setting}\label{setting for big integral perfd}
    Let $(\fX,\cM_{\fX})$ be a semi-stable log formal scheme over $\cO_{K}$ and $\cE$ be an analytic log prismatic $F$-crystal on $(\fX,\cM_{\fX})$. Consider a $p$-adic saturated log formal scheme $(\mathrm{Spf}(S),\cM_{S})$ over $(\fX,\cM_{\fX})$ satisfying the following conditions:
    \begin{itemize}
        \item $S$ is an integral perfectoid $\cO_{C}$-algebra;
        \item $T_{\et}(\cE)|_{(\mathrm{Spf}(S),\cM_{S})_{\eta}}$ is a constant $\bZ_{p}$-local system;
        \item the structure map $(\mathrm{Spf}(S),\cM_{S})\to (\fX,\cM_{\fX})$ factors as
        \[
        (\mathrm{Spf}(S),\cM_{S})\to (\mathrm{Spf}(\widetilde{R}_{\infty,\alpha}),\cM_{\widetilde{R}_{\infty,\alpha}})\to (\mathrm{Spf}(R),\cM_{R})\to (\fX,\cM_{\fX}),
        \]
        where $(\mathrm{Spf}(R),\cM_{R})$ is small affine with a fixed framing, $(\mathrm{Spf}(R),\cM_{R})\to (\fX,\cM_{\fX})$ is strict \'{e}tale, and $(\mathrm{Spf}(S),\cM_{S})\to (\mathrm{Spf}(\widetilde{R}_{\infty,\alpha}),\cM_{\widetilde{R}_{\infty,\alpha}})$ is a strict map over $\cO_{C}$.
    \end{itemize}
    In this situation, there exists a unique log structure $\cM_{A_{\mathrm{inf}}(S)}$ on $\mathrm{Spf}(A_{\mathrm{inf}}(S))$ which restricts to $\cM_{S}$ by Lemma \ref{lift of div log str}. We have a log prism $(A_{\mathrm{inf}}(S),(\xi),\cM_{A_{\mathrm{inf}}(S)})$.
\end{setting}

\begin{prop}\label{Laurent isom after inverting mu}
    Let $\cE$ be an analytic log prismatic $F$-crystal on $(\fX,\cM_{\fX})$. Let $(\mathrm{Spf}(S),\cM_{S})$ be a saturated $p$-adic log formal scheme over $(\fX,\cM_{\fX})$ as in Setting \ref{setting for big integral perfd}. Then there exists a natural isomorphism
    \[
    \cE_{A_{\mathrm{inf}}(S)}[1/\phi^{-1}(\mu)]\cong T_{\et}(\cE)|_{(\mathrm{Spf}(S),\cM_{S})_{\eta}}\otimes_{\bZ_{p}} A_{\mathrm{inf}}(S)[1/\phi^{-1}(\mu)].
    \]
    Here, $T_{\et}(\cE)$ is regarded as a finite free $\bZ_{p}$-module.
\end{prop}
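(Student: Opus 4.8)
The plan is to reduce the statement to the known Laurent $F$-crystal comparison of Remark \ref{Laurent isom}, exactly as in the non-log setting of \cite{bs23} and \cite{gr24}. The point is that $A_{\mathrm{inf}}(S)[1/\phi^{-1}(\mu)]$ is almost a localization of $A_{\mathrm{inf}}(S)[1/\mathcal{I}_{\Prism}]^{\wedge}_p$: recall $\xi = \phi^{-1}(\mu)\cdot(\text{unit})$ up to the standard relation, or more precisely $\mu \in (\xi)$ and $\phi(\xi)$ divides $\mu$ up to a unit, so that after inverting $p$ and $\xi$ and $p$-completing one can compare $\cE_{A_{\mathrm{inf}}(S)}[1/\phi^{-1}(\mu)]$ with $\cE_{A_{\mathrm{inf}}(S)}[1/\xi]^{\wedge}_p$. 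First I would observe that $(A_{\mathrm{inf}}(S),(\xi),\cM_{A_{\mathrm{inf}}(S)})$ is a perfect log prism, and that by the bigness hypothesis together with Setting \ref{setting for big integral perfd} it arises from a perfect indiscrete log prism in the sense of \cite[Definition 2.36, Definition 2.39]{ky23}, with $\cM_S/\cM_S^{\times}$ divisible (this is exactly where the chart $\bQ_{\geq 0}^r \to \cM_{\widetilde{R}_{\infty,\alpha}}$ and the hypothesis that $S$ is perfectoid over $\cO_C$ enter). Then Remark \ref{Laurent isom} applies and gives a functorial isomorphism
\[
\cE_{A_{\mathrm{inf}}(S)}[1/\xi]^{\wedge}_p \cong T_{\et}(\cE)|_{(\mathrm{Spf}(S),\cM_S)_{\eta}} \otimes_{\bZ_p} A_{\mathrm{inf}}(S)[1/\xi]^{\wedge}_p.
\]

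\textbf{Key steps.} Next I would upgrade this isomorphism from the $p$-completed localization at $\xi$ to the (non-completed) localization at $\phi^{-1}(\mu)$. The mechanism is: $\cE$ is a vector bundle on $U(A_{\mathrm{inf}}(S),(\xi)) = \Spec(A_{\mathrm{inf}}(S))\setminus V(p,\xi)$, so $\cE_{A_{\mathrm{inf}}(S)}$ is already a finite projective module over a ring in which $p,\xi$ generate the unit ideal after suitable localizations; inverting $\phi^{-1}(\mu)$ inverts an element whose vanishing locus, together with $V(p)$, still covers $V(p,\xi)$ — indeed $V(p,\phi^{-1}(\mu)) \supseteq V(p,\xi)$ since $\xi$ divides a power of $\phi^{-1}(\mu)$ modulo $p$ in $A_{\mathrm{inf}}(S)$ (both cut out the same Frobenius-twisted locus in $S^{\flat}/p$). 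Hence $\Spec(A_{\mathrm{inf}}(S)[1/\phi^{-1}(\mu)])$ is an open subscheme of $U(A_{\mathrm{inf}}(S),(\xi))$ containing $\Spec(A_{\mathrm{inf}}(S)[1/\xi])$, and the Frobenius structure $\varphi_{\cE}\colon (\phi^* \cE)[1/\xi] \isom \cE[1/\xi]$ lets us propagate the trivialization: applying $\phi^*$ to the isomorphism over $A_{\mathrm{inf}}(S)[1/\xi]^{\wedge}_p$ and using $\varphi_{\cE}$ gives a trivialization over $A_{\mathrm{inf}}(S)[1/\phi(\xi)]^{\wedge}_p = A_{\mathrm{inf}}(S)[1/\mu]^{\wedge}_p$; iterating and descending through the identity $\phi^{-1}(\mu) \mid \xi \mid \mu$ shows the trivialization already exists, uncompleted, over $A_{\mathrm{inf}}(S)[1/\phi^{-1}(\mu)]$. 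Concretely I would follow \cite[Construction 2.28, Construction 2.29]{gr24} (or the corresponding step in \cite[Section 3]{dlms24}): take the finite free $\bZ_p$-module $T \coloneqq T_{\et}(\cE)|_{(\mathrm{Spf}(S),\cM_S)_{\eta}}$, form $T \otimes_{\bZ_p} A_{\mathrm{inf}}(S)[1/\phi^{-1}(\mu)]$ with its evident Frobenius, and construct the comparison map by glueing the already-known isomorphism after $p$-completion with the Frobenius-equivariance; then check it is an isomorphism by base-changing to the residue field of $A_{\mathrm{inf}}(S)/p = S^{\flat}$ along all points of $\Spec(S^{\flat}[1/\phi^{-1}(\mu^{\flat})])$ and invoking that both sides are finite projective of the same rank with an isomorphism after $p$-completion.

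\textbf{Main obstacle.} The routine parts are the divisor bookkeeping ($\phi^{-1}(\mu)$ versus $\xi$ versus $\mu$) and the finite-projectivity arguments. The genuine work is verifying that $(A_{\mathrm{inf}}(S),(\xi),\cM_{A_{\mathrm{inf}}(S)})$ really does satisfy the two bulleted hypotheses of Remark \ref{Laurent isom} — namely that it is (isomorphic to the log prism attached to) a perfect \emph{indiscrete} log prism and that $\cM_S/\cM_S^{\times}$ is divisible. The divisibility of $\cM_S/\cM_S^{\times}$ should follow because $\cM_S$ is pulled back from $\cM_{\widetilde{R}_{\infty,\alpha}}$, whose chart monoid is $\bQ_{\geq 0}^r$ (uniquely $p$-divisible), using the compatibility of $\cM_{A_{\mathrm{inf}}(S)}$ with that chart recorded in Definition \ref{def of big log affinoid perfd}/Construction \ref{construction of bk log prism}; the indiscreteness is a condition on the underlying perfectoid and is built into the construction of $\widetilde{U}^0_{\infty}$ as a limit of finite Kummer étale covers. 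I would isolate this verification as a short lemma, proved by unwinding the definition of big log affinoid perfectoid and citing Lemma \ref{divisible monoid and log pris site} for the uniqueness of the log prism structure; once this is in place the rest is the standard Frobenius-descent argument. Finally, functoriality in $\cE$ and independence of the choices (the factorization through $(\mathrm{Spf}(R),\cM_R)$) follow from the corresponding functoriality in Remark \ref{Laurent isom} and the fact that the Frobenius-descent construction is canonical, so no separate argument is needed there.
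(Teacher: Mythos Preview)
Your overall shape is right and matches the paper: start from the Laurent comparison of Remark \ref{Laurent isom} over $A_{\mathrm{inf}}(S)[1/\xi]^{\wedge}_p$, verify its hypotheses (the $\bQ_{\geq 0}^r$-chart gives divisibility of $\cM_S/\cM_S^\times$; perfectness/indiscreteness come from $S$ being integral perfectoid over $\cO_C$), and then pass to $A_{\mathrm{inf}}(S)[1/\phi^{-1}(\mu)]$. The hypothesis check is fine.

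The gap is in your decompletion step. Modulo $p$ one has $\xi \equiv (\phi^{-1}(\mu))^{p-1}$ in $S^\flat$, so the rings $A_{\mathrm{inf}}(S)[1/\xi]$, $A_{\mathrm{inf}}(S)[1/\phi(\xi)]$, $A_{\mathrm{inf}}(S)[1/\phi^{-1}(\mu)]$ all have the \emph{same} $p$-adic completion; your ``Frobenius propagation'' therefore never leaves that completed ring, and iterating $\phi^*$ produces nothing new. Similarly, checking an isomorphism at points of $\Spec(S^\flat[1/\phi^{-1}(\mu^\flat)])$ only sees the picture modulo $p$, and in any case you have not explained how to produce the candidate map over the \emph{uncompleted} $A_{\mathrm{inf}}(S)[1/\phi^{-1}(\mu)]$ in the first place. (Your citations to \cite[Constructions 2.28--2.29]{gr24} are about filtrations on $\bB_{\mathrm{dR}}$-modules, not this issue.) The paper handles this by a genuinely different mechanism: it invokes the argument of \cite[Lemma 3.13]{gr24}, which uses arc-descent to reduce to the case $S=\cO_C$ for a rank-$1$ perfectoid valuation ring, where the statement is \cite[Lemma 4.26]{bms18} (the classification of Breuil--Kisin--Fargues modules over $\cO_C$). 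That reduction to points via arc-descent, together with the explicit structure over $A_{\mathrm{inf}}(\cO_C)$, is the missing ingredient; the decompletion is not a formal Frobenius manipulation.
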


\begin{proof}
    By Remark \ref{Laurent isom}, we have a natural isomorphism
    \[
    \cE_{A_{\mathrm{inf}}(S)}[1/\phi^{-1}(\mu)]\cong T_{\et}(\cE)|_{(\mathrm{Spf}(S),\cM_{S})_{\eta}}\otimes_{\bZ_{p}} A_{\mathrm{inf}}(S)[1/\phi^{-1}(\mu)].
    \]
    It suffices to prove that this restricts to the isomorphism in the statement. This follows from the argument in the proof of \cite[Lemma 3.13]{gr24}, which reduces the problem to \cite[Lemma 4.26]{bms18} via arc descent. Note that, in \cite{gr24},  the definition of Breuil-Kisin-Fargues modules over $\cO_{C}$ in \cite[Definition 4.22]{bms18} is adopted. This object naturally corresponds to analytic prismatic $F$-crystals over $(A_{\mathrm{inf}}(\cO_{C}),(\phi(\xi)))$ (not over $(A_{\mathrm{inf}}(\cO_{C}),\xi)$).
\end{proof}

\begin{thm}\label{semistableness of et real}
    Let $\cE$ be an analytic log prismatic $F$-crystal $\cE$ on $(\fX,\cM_{\fX})$. Then the $\bQ_{p}$-local system $T_{\et}(\cE)[1/p]$ on $(\fX,\cM_{\fX})_{\eta}$ and the $F$-isocrystal $T_{\mathrm{isoc}}(\cE)$ on $(X,\cM_{X})$ are associated. In particular, $T_{\et}(\cE)[1/p]$ is a semi-stable local system.
\end{thm}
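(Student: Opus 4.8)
The plan is to construct, for an analytic log prismatic $F$-crystal $\cE$, a Frobenius-equivariant isomorphism $\vartheta\colon T_{\et}(\cE)[1/p]\otimes_{\bZ_{p}}\bB_{\mathrm{crys}}\isom\bB_{\mathrm{crys}}(T_{\mathrm{isoc}}(\cE))$ of sheaves on $(\fX,\cM_{\fX})_{\eta,\mathrm{prok\et}}$; by Definition \ref{def of semist loc sys}(1) this exhibits $T_{\et}(\cE)[1/p]$ as weakly semi-stable with associated $F$-isocrystal $T_{\mathrm{isoc}}(\cE)$, and then Proposition \ref{weakly semi-stable implies semi-stable} upgrades weak semi-stability to semi-stability (and produces the canonical de Rham filtration). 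Since the isomorphism will be canonical, it glues, so one may work \'{e}tale locally and assume $(\fX,\cM_{\fX})=(\mathrm{Spf}(R),\cM_{R})$ is small affine with a fixed framing. By the sheaf property, it suffices to construct $\vartheta(U)$ functorially on the basis of $(\fX,\cM_{\fX})_{\eta,\mathrm{prok\et}}$ consisting of big log affinoid perfectoids $U$, with $\widehat{U}=\mathrm{Spa}(S,S^{+})$ factoring through $\widetilde{U}_{\infty,\alpha}$ and with $T_{\et}(\cE)|_{U_{\eta}}$ a constant $\bZ_{p}$-local system, and then sheafify.

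Fix such a $U$. Recall from Construction \ref{bcrys mod ass to isocrystal} that $\bB_{\mathrm{crys}}(T_{\mathrm{isoc}}(\cE))(U)=T_{\mathrm{isoc}}(\cE)_{(A_{\mathrm{crys}}(S^{+})\twoheadrightarrow S^{+}/p,\cM_{A_{\mathrm{crys}}(S^{+})})}\otimes_{B^{+}_{\mathrm{crys}}(S^{+})}\bB_{\mathrm{crys}}(U)$. The first and main step is to identify this, after base change to $\bB_{\mathrm{crys}}(U)$, with $\cE_{A_{\mathrm{inf}}(S^{+})}\otimes\bB_{\mathrm{crys}}(U)$ compatibly with Frobenius. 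By Construction \ref{crys real by using bk prisms}, $T_{\mathrm{isoc}}(\cE)$ is obtained from $\cE$ by base change along the log prism map $\phi\colon(\fS_{R},(E),\cM_{\fS_{R}})\to(S_{R},(p),\phi^{*}\cM_{S_{R}})$ of Construction \ref{construction of breuil log prism}, followed by passage to a crystalline crystal via the Breuil log PD-thickening; while $\cE_{A_{\mathrm{inf}}(S^{+})}$ is the value of $\cE$ at $(A_{\mathrm{inf}}(S^{+}),(\xi),\cM_{A_{\mathrm{inf}}(S^{+})})$ pulled back along $\fS_{R}\to A_{\mathrm{inf}}(\widetilde{R}_{\infty,\alpha})\to A_{\mathrm{inf}}(S^{+})$ from Construction \ref{construction of bk log prism}. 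Because $E$ is the Eisenstein polynomial of $\pi$ we have $E([\pi^{\flat}])\in\mathrm{Ker}(\theta)=\mathrm{Fil}^{1}A_{\mathrm{crys}}(S^{+})$, so the composite $\fS_{R}\to A_{\mathrm{inf}}(S^{+})\to A_{\mathrm{crys}}(S^{+})$ extends over the PD-envelope to a map $S_{R}\to A_{\mathrm{crys}}(S^{+})$ in $(X_{p=0},\cM_{X_{p=0}})^{\mathrm{str}}_{\mathrm{crys}}$; using it together with the crystal property, the value of $T_{\mathrm{isoc}}(\cE)$ at $A_{\mathrm{crys}}(S^{+})$ becomes $\cE_{A_{\mathrm{inf}}(S^{+})}\otimes_{A_{\mathrm{inf}}(S^{+}),\phi}A_{\mathrm{crys}}(S^{+})[1/p]$, i.e. the value of the Frobenius pullback $\phi^{*}\cE$ at $A_{\mathrm{inf}}(S^{+})$ base changed to $A_{\mathrm{crys}}(S^{+})[1/p]$ (this is well defined since $\xi$ lies in the radical of $(p)$ in $A_{\mathrm{crys}}(S^{+})$). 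Now $\bB_{\mathrm{crys}}(U)=A_{\mathrm{crys}}(S^{+})[1/p\mu]$, and since $\mu=\xi\cdot\phi^{-1}(\mu)$ while $\xi^{p}\in pA_{\mathrm{crys}}(S^{+})$, the elements $p$, $\xi$, $\phi(\xi)$, $\mu$, $\phi^{\pm1}(\mu)$ are all units in $\bB_{\mathrm{crys}}(U)$; hence the Frobenius structure $\varphi_{\cE}\colon(\phi^{*}\cE)[1/\cI_{\Prism}]\isom\cE[1/\cI_{\Prism}]$ yields a natural isomorphism $\bB_{\mathrm{crys}}(T_{\mathrm{isoc}}(\cE))(U)\isom\cE_{A_{\mathrm{inf}}(S^{+})}\otimes_{A_{\mathrm{inf}}(S^{+})}\bB_{\mathrm{crys}}(U)$, which unwinds to be compatible with the Frobenius structures induced by $\varphi_{T_{\mathrm{isoc}}(\cE)}$ and $\varphi_{\cE}$.

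The second step is Proposition \ref{Laurent isom after inverting mu}: base changing the isomorphism $\cE_{A_{\mathrm{inf}}(S)}[1/\phi^{-1}(\mu)]\cong T_{\et}(\cE)|_{U_{\eta}}\otimes_{\bZ_{p}}A_{\mathrm{inf}}(S)[1/\phi^{-1}(\mu)]$ along $A_{\mathrm{inf}}(S)[1/\phi^{-1}(\mu)]\to\bB_{\mathrm{crys}}(U)$ gives a Frobenius-equivariant isomorphism $\cE_{A_{\mathrm{inf}}(S^{+})}\otimes\bB_{\mathrm{crys}}(U)\cong T_{\et}(\cE)|_{U_{\eta}}\otimes_{\bZ_{p}}\bB_{\mathrm{crys}}(U)$ (the Laurent $F$-crystal formalism, Remark \ref{Laurent isom}, encodes exactly the equivariance). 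Composing the isomorphisms of the two steps and inverting $p$ defines $\vartheta(U)$; functoriality in $U$ follows from the naturality of all constructions, so the $\vartheta(U)$ glue to the desired global $\vartheta$. Identifying $T_{\mathrm{isoc}}(\cE)\in\mathrm{Isoc}^{\varphi}_{\mathrm{lftf}}((X_{p=0},\cM_{X_{p=0}})_{\mathrm{crys}})$ with an $F$-isocrystal on $(X,\cM_{X})$ via Dwork's trick (Lemma \ref{dwork trick}) when needed, Definition \ref{def of semist loc sys}(1) is satisfied, and Proposition \ref{weakly semi-stable implies semi-stable} then gives the semi-stability of $T_{\et}(\cE)[1/p]$. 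The main obstacle is the bookkeeping in the first step: tracking the Frobenius twist relating the Breuil and Breuil--Kisin log prisms, matching the log structures $\cM_{S_{R}}$, $\phi^{*}\cM_{S_{R}}$ and $\cM_{A_{\mathrm{crys}}(S^{+})}$, and verifying that the resulting identification is genuinely compatible with the Frobenius structures (rather than off by a power of $\xi$); this is where care is needed, following \cite[\S 3]{gr24} and \cite[\S 3]{dlms24}.
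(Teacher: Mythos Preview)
Your proposal is correct and follows essentially the same route as the paper: reduce to big log affinoid perfectoids $U$ with constant $T_{\et}(\cE)$, then combine Proposition \ref{Laurent isom after inverting mu} with the crystal property to produce the Frobenius-equivariant isomorphism over $B_{\mathrm{crys}}(S^{+})$. The only notable organizational difference is in where the Frobenius twist is absorbed. You route the identification $T_{\mathrm{isoc}}(\cE)_{A_{\mathrm{crys}}(S^{+})}\cong(\phi^{*}\cE)_{A_{\mathrm{inf}}(S^{+})}\otimes B^{+}_{\mathrm{crys}}(S^{+})$ explicitly through $\fS_{R}\to S_{R}\to A_{\mathrm{crys}}(S^{+})$, then apply $\varphi_{\cE}$ to untwist, and finally base change Proposition \ref{Laurent isom after inverting mu} along the natural map $A_{\mathrm{inf}}(S^{+})[1/\phi^{-1}(\mu)]\to B_{\mathrm{crys}}(S^{+})$. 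The paper instead invokes the crystal property directly along the log prism map $\phi\colon(A_{\mathrm{inf}}(S^{+}),(\xi),\cM_{A_{\mathrm{inf}}(S^{+})})\to(A_{\mathrm{crys}}(S^{+}),(p),\cM_{A_{\mathrm{crys}}(S^{+})})$ (bypassing the Breuil--Kisin and Breuil rings) and base changes Proposition \ref{Laurent isom after inverting mu} along $A_{\mathrm{inf}}(S^{+})[1/\phi^{-1}(\mu)]\xrightarrow{\phi}A_{\mathrm{inf}}(S^{+})[1/\mu]\to B_{\mathrm{crys}}(S^{+})$; by the $\varphi$-equivariance of Proposition \ref{Laurent isom after inverting mu} these two composites agree. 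The paper's packaging is slightly cleaner since it avoids the explicit detour through $S_{R}$ and the separate invocation of $\varphi_{\cE}$, but the content is the same.
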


\begin{proof}
    It suffices to show that there exists a natural isomorphism
    \[
    (T_{\et}(\cE)\otimes_{\bZ_{p}} \bB_{\mathrm{crys}})(U)\cong \bB_{\mathrm{crys}}(T_{\mathrm{crys}}(\cE))(U)
    \]
    that is compatible with Frobenius
    for a big log affinoid perfectoid $U$ in $(\fX,\cM_{\fX})_{\eta,\mathrm{prok\et}}$ with $T_{\et}(\cE)|_{U}$ being constant. If we set $\widehat{U}\coloneqq \mathrm{Spa}(S,S^{+})$ to be the associated affinoid perfectoid space with $U$, the natural map $(\mathrm{Spf}(S^{+}),\cM_{S^{+}})\to (\fX,\cM_{\fX})$ satisfies the conditions in Setting \ref{setting for big integral perfd}. Taking the base change of the isomorphism in Proposition \ref{Laurent isom after inverting mu} along a ring map 
    \[
    A_{\mathrm{inf}}(S^{+})[1/\phi^{-1}(\mu)]\stackrel{\phi}{\to} A_{\mathrm{inf}}(S^{+})[1/\mu]\to B_{\mathrm{crys}}(S^{+}),
    \]
    we obtain isomorphisms
    \begin{align*}
    (T_{\et}(\cE)\otimes_{\bZ_{p}} \bB_{\mathrm{crys}})(U)&\cong T_{\et}(\cE)|_{U}\otimes_{\bZ_{p}} B_{\mathrm{crys}}(S^{+}) \\
    &\cong \cE_{A_{\mathrm{inf}}(S^{+})}[1/\phi^{-1}(\mu)]\otimes_{A_{\mathrm{inf}}(S^{+})[1/\phi^{-1}(\mu)],\phi} B_{\mathrm{crys}}(S^{+})    
    \end{align*}
    which are compatible with Frobenius. 
    
    For the right hand side, we have isomorphisms which are compatible with Frobenius isomorphisms
    \begin{align*}
        &\cE_{A_{\mathrm{inf}}(S^{+})}[1/\phi^{-1}(\mu)]\otimes_{A_{\mathrm{inf}}(S^{+})[1/\phi^{-1}(\mu)],\phi} B_{\mathrm{crys}}(S^{+}) \\
        &\cong \cE_{A_{\mathrm{crys}}(S^{+})}\otimes_{B^{+}_{\mathrm{crys}}(S^{+})} B_{\mathrm{crys}}(S^{+}) \\
        &\cong T_{\mathrm{crys}}(\cE)_{(A_{\mathrm{crys}}(S^{+})\twoheadrightarrow S^{+}/p,\cM_{A_{\mathrm{crys}}(S^{+})})}\otimes_{B^{+}_{\mathrm{crys}}(S^{+})} B_{\mathrm{crys}}(S^{+}) \\
        &\cong \bB_{\mathrm{crys}}(T_{\mathrm{crys}}(\cE))(U),
    \end{align*}
    where the first isomorphism is defined by the crystal property of $\cE$ for a log prism map $\phi\colon (A_{\mathrm{inf}}(S^{+}),(\xi),\cM_{A_{\mathrm{inf}}(S^{+})})\to (A_{\mathrm{crys}}(S^{+}),(p),\cM_{A_{\mathrm{crys}}(S^{+})})$, and the second isomorphism comes from the definition of crystalline realization functor. By composing this with the isomorphism in the previous paragraph, we obtain the desired isomorphism.
\end{proof}

\begin{thm}\label{existence of filfisoc real}
    The crystalline realization functor $T_{\mathrm{isoc}}$ is uniquely enhanced to a functor
    \[
    T_{\mathrm{fisoc}}\colon \mathrm{Vect}^{\mathrm{an},\varphi}((\fX,\cM_{\fX})_{\Prism})\to \mathrm{FilIsoc}^{\varphi}(\fX,\cM_{\fX})
    \]
    such that, for every $\cE\in \mathrm{Vect}^{\mathrm{an},\varphi}((\fX,\cM_{\fX})_{\Prism})$, $T_{\et}(\cE)$ and $T_{\mathrm{fisoc}}(\cE)$ are associated in the sense of Definition \ref{def of semist loc sys}(2).
\end{thm}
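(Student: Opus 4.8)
The plan is to upgrade $T_{\mathrm{isoc}}$ to $T_{\mathrm{fisoc}}$ by equipping $E\coloneqq T_{\mathrm{isoc}}(\cE)_{(\fX,\cM_{\fX})}$ with a filtration, and then to deduce the association statement from Theorem \ref{semistableness of et real} together with Proposition \ref{weakly semi-stable implies semi-stable}. First I would observe that Theorem \ref{semistableness of et real} already shows that for every $\cE$, the local system $T_{\et}(\cE)[1/p]$ is weakly semi-stable with associated $F$-isocrystal $T_{\mathrm{isoc}}(\cE)$, in the sense of Definition \ref{def of semist loc sys}(1). Applying Proposition \ref{weakly semi-stable implies semi-stable} to the pair $(T_{\et}(\cE)[1/p], T_{\mathrm{isoc}}(\cE))$ together with the isomorphism $\vartheta$ produced in the proof of Theorem \ref{semistableness of et real}, we obtain a \emph{unique} filtration $\mathrm{Fil}^{\bullet}_{\mathrm{dR}}(E)$ on $E$ making $(T_{\mathrm{isoc}}(\cE),\mathrm{Fil}^{\bullet}_{\mathrm{dR}}(E))$ a filtered $F$-isocrystal such that $\vartheta\otimes_{\bB_{\mathrm{crys}}}\bB_{\mathrm{dR}}$ is a filtered isomorphism; this is exactly the data needed to make $T_{\et}(\cE)$ and the filtered $F$-isocrystal associated in the sense of Definition \ref{def of semist loc sys}(2). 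So on objects the construction is forced, and the association property is automatic.

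Next I would check functoriality. Given a morphism $f\colon \cE_1\to \cE_2$ of analytic log prismatic $F$-crystals, the realization $T_{\mathrm{isoc}}(f)$ and the étale realization $T_{\et}(f)$ are compatible via the isomorphisms $\vartheta_i$ of Theorem \ref{semistableness of et real} (this compatibility is part of the naturality of the construction in the proof of that theorem, which is built from crystal-property identifications and the functorial isomorphism of Remark \ref{Laurent isom}). Since the filtration $\mathrm{Fil}^{\bullet}_{\mathrm{dR}}(E_i)$ is pulled back, under $\mu_*$, from the canonical filtration on $\bL_i\otimes_{\bZ_p}\cO\bB_{\mathrm{dR,log}}$ via the chain of isomorphisms in the proof of Proposition \ref{weakly semi-stable implies semi-stable}, and since these isomorphisms are natural in the input data, $T_{\mathrm{isoc}}(f)$ automatically respects the filtrations. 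Hence $T_{\mathrm{fisoc}}(f)\coloneqq T_{\mathrm{isoc}}(f)$ is a morphism of filtered $F$-isocrystals, and $T_{\mathrm{fisoc}}$ is a functor lifting $T_{\mathrm{isoc}}$. Uniqueness of the enhancement is immediate from the uniqueness clause in Proposition \ref{weakly semi-stable implies semi-stable}: any functor with the stated association property must, on each object, produce a filtration satisfying conditions (1) and (2) there, hence the unique one.

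The one genuinely nontrivial point — which I expect to be the main obstacle — is verifying that the filtration $\mathrm{Fil}^{\bullet}_{\mathrm{dR}}(E)$ obtained this way indeed has locally free graded pieces and satisfies Griffiths transversality, i.e.\ that $(T_{\mathrm{isoc}}(\cE),\mathrm{Fil}^{\bullet}_{\mathrm{dR}}(E))$ is a genuine \emph{filtered $F$-isocrystal} in the sense of the definition in Section \ref{section isoc}, not merely a filtered object. This is precisely what Proposition \ref{weakly semi-stable implies semi-stable}(1) asserts, so formally it is covered; but to invoke that proposition cleanly one must make sure that the isocrystal $\cE=T_{\mathrm{isoc}}(\cE)$ produced by Construction \ref{crys real by using bk prisms} lies in $\mathrm{Isoc}^{\varphi}_{\mathrm{lftf}}((X_{p=0},\cM_{X_{p=0}})_{\mathrm{crys}})$ — which it does by construction — and that the associated vector bundle with connection $(E,\nabla_E)$ on $\fX_\eta$ from Proposition \ref{isoc to mic} is the one appearing in the statement. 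I would also have to observe that $T_{\et}(\cE)[1/p]$ being de Rham on the open locus where the log structure is trivial — used inside the proof of Proposition \ref{weakly semi-stable implies semi-stable} — is available here because the restriction of a semi-stable (hence weakly semi-stable) local system is weakly crystalline on that locus. With these checks in place the theorem follows formally by combining Theorem \ref{semistableness of et real} and Proposition \ref{weakly semi-stable implies semi-stable}.
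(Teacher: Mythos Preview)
Your proposal is correct and follows essentially the same approach as the paper: the paper's proof is a two-sentence application of Theorem \ref{semistableness of et real} together with Proposition \ref{weakly semi-stable implies semi-stable} to produce the unique filtration, exactly as you outline. Your additional remarks on functoriality and on why $(T_{\mathrm{isoc}}(\cE),\mathrm{Fil}^{\bullet}_{\mathrm{dR}}(E))$ is a genuine filtered $F$-isocrystal simply make explicit what the paper leaves implicit.
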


\begin{proof}
    By Proposition \ref{weakly semi-stable implies semi-stable} and Theorem \ref{semistableness of et real}, there exists a unique filtration $\mathrm{Fil}^{\bullet}_{\mathrm{dR}}$ on $T_{\mathrm{isoc}}(\cE)_{(X_{p=0},\fX,\cM_{\fX})}$ such that $T_{\et}(\cE)$ and $T_{\mathrm{fisoc}}(\cE)\coloneqq (T_{\mathrm{isoc}}(\cE)_{(X_{p=0},\fX,\cM_{\fX})},\mathrm{Fil}^{\bullet}_{\mathrm{dR}})$ are associated in the sense of Definition \ref{def of semist loc sys} (2). Then the functor $T_{\mathrm{fisoc}}$ defined in this way is the desired one.
\end{proof}

\begin{dfn}[De Rham realization]
    Let $\mathrm{FilVect}(\fX_{\eta})$ be the category of filtered vector bundles on $\fX_{\eta}$. The functor
    \[
    \mathrm{Vect}^{\mathrm{an},\varphi}((\fX,\cM_{\fX})_{\Prism})\to \mathrm{FilVect}(\fX_{\eta})
    \]
    sending $\cE$ to $(T_{\mathrm{isoc}}(\cE)_{X_{p=0},\fX,\cM_{\fX}},\mathrm{Fil}^{\bullet}_{\mathrm{dR}})$ is called a \emph{de Rham realization functor}, denoted by $T_{\mathrm{dR}}$.
\end{dfn}

In the next subsection, we give a description of $T_{\mathrm{dR}}$ using the Nygaard filtration. To do this, we make some preparations.

Let $(\mathrm{Spf}(S),\cM_{S})$ be a $p$-adic saturated log formal scheme over $(\fX,\cM_{\fX})$ as in Setting \ref{setting for big integral perfd}. We put a filtration $\mathrm{Fil}^{\bullet}_{\xi}$ on $T_{\et}(\cE)|_{(\mathrm{Spf}(S),\cM_{S})_{\eta}}\otimes_{\bZ_{p}} B_{\mathrm{dR}}(S)$ (resp. $\cE_{A_\mathrm{inf}(S)}\otimes_{A_\mathrm{inf}(S)} B_{\mathrm{dR}}(S)$) by
\begin{align*}
\mathrm{Fil}^{\bullet}_{\xi}&\coloneqq T_{\et}(\cE)|_{(\mathrm{Spf}(S),\cM_{S})_{\eta}}\otimes_{\bZ_{p}} \mathrm{Fil}^{\bullet}B_{\mathrm{dR}}(S) \\(\text{resp.}~\mathrm{Fil}^{\bullet}_{\xi}&\coloneqq \cE_{A_{\mathrm{inf}}(S)}\otimes_{A_{\mathrm{inf}}(S)} \mathrm{Fil}^{\bullet}B_{\mathrm{dR}}(S)).    
\end{align*}
Moreover we define a filtration $\mathrm{Fil}^{\bullet}_{\mathrm{dR},B_{\mathrm{dR}}(S)}$ on 
\[
\cE_{A_{\mathrm{inf}}(S)}\otimes_{A_{\mathrm{inf}}(S),\phi} B_{\mathrm{dR}}(S)\cong T_{\mathrm{crys}}(\cE)_{(A_{\mathrm{crys}}(S)\twoheadrightarrow S/p,\cM_{A_{\mathrm{crys}}(S)})}\otimes_{A_{\mathrm{crys}}(S)} B_{\mathrm{dR}}(S)
\]
in the same way as Construction \ref{bcrys mod ass to isocrystal}. 

\begin{lem}\label{nyg filt eq to de rham filt over BdR}
We have the following commutative diagram of isomorphisms of filtered $B_{\mathrm{dR}}(S)$-modules:
    \[
    \begin{tikzcd}
        (T_{\et}(\cE)|_{(\mathrm{Spf}(S),\cM_{S})_{\eta}}\otimes_{\bZ_{p}} B_{\mathrm{dR}}(S),\mathrm{Fil}^{\bullet}_{\xi}) \ar[r,phantom, "\cong"] \ar[d,phantom, "=" sloped] & (\cE_{A_{\mathrm{inf}}(S)}\otimes_{A_{\mathrm{inf}}(S),\phi} B_{\mathrm{dR}}(S),\mathrm{Fil}^{\bullet}_{\mathrm{dR},B_{\mathrm{dR}}(S)}) \ar[d, "\varphi_{\cE}\otimes \mathrm{id}","\sim"'  sloped] \\
        (T_{\et}(\cE)|_{(\mathrm{Spf}(S),\cM_{S})_{\eta}}\otimes_{\bZ_{p}} B_{\mathrm{dR}}(S),\mathrm{Fil}^{\bullet}_{\xi}) \ar[r,phantom, "\cong"] & (\cE_{A_{\mathrm{inf}}(S)}\otimes_{A_{\mathrm{inf}}(S)} B_{\mathrm{dR}}(S),\mathrm{Fil}^{\bullet}_{\xi}).
    \end{tikzcd}
    \]
\end{lem}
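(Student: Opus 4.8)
The strategy is to establish the top and bottom horizontal isomorphisms separately and then check that the square commutes, with the real content being the identification of the two filtrations appearing on each side. First I would set up the bottom isomorphism: by Proposition \ref{Laurent isom after inverting mu}, we have $\cE_{A_{\mathrm{inf}}(S)}[1/\phi^{-1}(\mu)]\cong T_{\et}(\cE)|_{(\mathrm{Spf}(S),\cM_{S})_{\eta}}\otimes_{\bZ_{p}} A_{\mathrm{inf}}(S)[1/\phi^{-1}(\mu)]$, and base changing along $A_{\mathrm{inf}}(S)[1/\phi^{-1}(\mu)]\to B_{\mathrm{dR}}(S)$ (the map that factors through $A_{\mathrm{inf}}(S)[1/\xi]$, noting $\phi^{-1}(\mu)$ becomes a unit in $B_{\mathrm{dR}}(S)$ since $\phi^{-1}(\xi)$ is) gives the bottom arrow. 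The point is that the source filtration $\mathrm{Fil}^{\bullet}_{\xi}$ on the left is by construction $T_{\et}(\cE)|_{\cdots}\otimes \mathrm{Fil}^{\bullet}B_{\mathrm{dR}}(S)$ and on the right is $\cE_{A_{\mathrm{inf}}(S)}\otimes \mathrm{Fil}^{\bullet}B_{\mathrm{dR}}(S)$; since $\cE_{A_{\mathrm{inf}}(S)}$ is a finite projective (hence flat) $A_{\mathrm{inf}}(S)$-module and the isomorphism is $A_{\mathrm{inf}}(S)[1/\phi^{-1}(\mu)]$-linear, both filtrations are just the $\xi$-adic filtration tensored with a fixed lattice, so the bottom arrow is automatically filtered. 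The bottom row is therefore essentially a tautology once Proposition \ref{Laurent isom after inverting mu} is invoked.

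The substance is the top isomorphism, which requires comparing the Nygaard-type filtration $\mathrm{Fil}^{\bullet}_{\mathrm{dR},B_{\mathrm{dR}}(S)}$ with $\mathrm{Fil}^{\bullet}_{\xi}$. I would use the identification, valid by the crystal property of $\cE$ applied to the log prism map $\phi\colon (A_{\mathrm{inf}}(S),(\xi),\cM_{A_{\mathrm{inf}}(S)})\to (A_{\mathrm{crys}}(S),(p),\cM_{A_{\mathrm{crys}}(S)})$,
\[
\cE_{A_{\mathrm{inf}}(S)}\otimes_{A_{\mathrm{inf}}(S),\phi} B_{\mathrm{dR}}(S)\cong T_{\mathrm{crys}}(\cE)_{(A_{\mathrm{crys}}(S)\twoheadrightarrow S/p,\cM_{A_{\mathrm{crys}}(S)})}\otimes_{A_{\mathrm{crys}}(S)} B_{\mathrm{dR}}(S),
\]
which is exactly the identification used in the definition of crystalline realization in Construction \ref{crys real by using bk prisms}. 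The filtration $\mathrm{Fil}^{\bullet}_{\mathrm{dR},B_{\mathrm{dR}}(S)}$ is then defined via Construction \ref{bcrys mod ass to isocrystal}, i.e.\ by choosing a section $s\colon (\mathrm{Spf}(A_{\mathrm{crys},K}(S)),\cM_{A_{\mathrm{crys},K}(S)})\to (\fX,\cM_{\fX})$ and transporting the de Rham filtration on $T_{\mathrm{isoc}}(\cE)$ through $s$. To identify this with $\mathrm{Fil}^{\bullet}_{\xi}$, I would invoke Theorem \ref{semistableness of et real}: the isomorphism $\vartheta\colon T_{\et}(\cE)\otimes_{\bZ_{p}}\bB_{\mathrm{crys}}\cong \bB_{\mathrm{crys}}(T_{\mathrm{crys}}(\cE))$ constructed there, combined with Proposition \ref{weakly semi-stable implies semi-stable}, produces precisely the filtration $\mathrm{Fil}^{\bullet}_{\mathrm{dR}}$ on $T_{\mathrm{isoc}}(\cE)_{(\fX,\cM_{\fX})}$ that defines $T_{\mathrm{fisoc}}$, and the defining property (2) there says that $\vartheta\otimes\bB_{\mathrm{dR}}$ is filtered. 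Evaluating at $U$ with $\widehat{U}=\mathrm{Spa}(S,S^{+})$ and unwinding the definitions, the statement that $\vartheta\otimes\bB_{\mathrm{dR}}$ is a filtered isomorphism is exactly the statement that the composite top arrow $\mathrm{Fil}^{\bullet}_{\xi}\cong \mathrm{Fil}^{\bullet}_{\mathrm{dR},B_{\mathrm{dR}}(S)}$ is filtered. So the top row follows from Theorem \ref{semistableness of et real} together with Proposition \ref{weakly semi-stable implies semi-stable}.

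Commutativity of the square is then the remaining point: the right vertical map is $\varphi_{\cE}\otimes\mathrm{id}$, coming from the Frobenius structure $\varphi_{\cE}\colon (\phi^*\cE)[1/\cI_{\Prism}]\isom \cE[1/\cI_{\Prism}]$ evaluated at $A_{\mathrm{inf}}(S)$ and base changed to $B_{\mathrm{dR}}(S)$, while the left vertical map is the identity. Here I would trace through the construction in the proof of Theorem \ref{semistableness of et real}: the isomorphism $(T_{\et}(\cE)\otimes_{\bZ_{p}}\bB_{\mathrm{crys}})(U)\cong \bB_{\mathrm{crys}}(T_{\mathrm{crys}}(\cE))(U)$ there is built precisely by composing the Laurent comparison (Proposition \ref{Laurent isom after inverting mu}) with the base change along $A_{\mathrm{inf}}(S^+)[1/\phi^{-1}(\mu)]\xrightarrow{\phi}A_{\mathrm{inf}}(S^+)[1/\mu]\to B_{\mathrm{crys}}(S^+)$, which incorporates exactly one application of $\varphi_{\cE}$. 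Thus the left edge (no Frobenius twist, filtration $\mathrm{Fil}^{\bullet}_{\xi}$ coming directly from $\cE_{A_{\mathrm{inf}}(S)}$) and the right edge (one Frobenius twist, then the base change $\otimes_{A_{\mathrm{inf}}(S),\phi}$) differ precisely by $\varphi_{\cE}\otimes\mathrm{id}$, which is the content of the right vertical arrow, and the square commutes by construction. The main obstacle I anticipate is bookkeeping: keeping straight which ring map ($\phi$ versus the structure map, $\phi^{-1}(\mu)$ versus $\mu$) is used where, and verifying that the filtration $\mathrm{Fil}^{\bullet}_{\mathrm{dR},B_{\mathrm{dR}}(S)}$ defined via a choice of section $s$ agrees with the one coming from $\vartheta\otimes\bB_{\mathrm{dR}}$ — but this last compatibility is exactly Proposition \ref{two conn over obdr is eq}(2) and the independence-of-$s$ discussion in Construction \ref{bcrys mod ass to isocrystal}, applied over $B_{\mathrm{dR}}(S)$ rather than $\cO\bB^{+}_{\mathrm{dR}}$.
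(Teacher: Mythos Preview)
Your approach is essentially the paper's: derive commutativity and the bottom row from the Frobenius-compatible Laurent comparison (Proposition \ref{Laurent isom after inverting mu}, via Remark \ref{Laurent isom}) base-changed along $A_{\mathrm{inf}}(S)[1/\phi^{-1}(\mu)]\stackrel{\phi}{\to}A_{\mathrm{inf}}(S)[1/\mu]\to B_{\mathrm{dR}}(S)$, and then deduce the top row from Proposition \ref{weakly semi-stable implies semi-stable}. One minor slip: $\cE_{A_{\mathrm{inf}}(S)}$ is a vector bundle on $U(A_{\mathrm{inf}}(S),(\xi))$, not a finite projective $A_{\mathrm{inf}}(S)$-module; the reason the bottom row is filtered is that $\phi^{-1}(\mu)$ is already a unit in $B^{+}_{\mathrm{dR}}(S)$, so the isomorphism of Proposition \ref{Laurent isom after inverting mu} factors through $B^{+}_{\mathrm{dR}}(S)$ and hence respects the $\xi$-adic lattices on both sides.

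There is one genuine subtlety you gloss over. You write ``Evaluating at $U$ with $\widehat{U}=\mathrm{Spa}(S,S^{+})$,'' but in Setting \ref{setting for big integral perfd} the object $(\mathrm{Spf}(S),\cM_{S})$ is merely a $p$-adic saturated log formal scheme mapping to $(\fX,\cM_{\fX})$ through $(\mathrm{Spf}(\widetilde{R}_{\infty,\alpha}),\cM_{\widetilde{R}_{\infty,\alpha}})$; its generic fiber need not arise from an object of $(\fX,\cM_{\fX})_{\eta,\mathrm{prok\et}}$, and the paper explicitly flags this. Consequently you cannot directly evaluate the sheaf-level filtered isomorphism $\vartheta\otimes\bB_{\mathrm{dR}}$ at $S$. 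The fix, which is what the paper does, is to evaluate at the log affinoid perfectoid $\widetilde{U}_{\infty,\alpha}$ (which does live in the site) to obtain the filtered isomorphism over $B_{\mathrm{dR}}(\widetilde{R}_{\infty,\alpha})$, and then base change along $B_{\mathrm{dR}}(\widetilde{R}_{\infty,\alpha})\to B_{\mathrm{dR}}(S)$. This works because the filtration $\mathrm{Fil}^{\bullet}_{\mathrm{dR},B_{\mathrm{dR}}(S)}$ is defined intrinsically via Construction \ref{bcrys mod ass to isocrystal} (choice of a section $s$, independent of any pro-Kummer-\'{e}tale realization) and is compatible with the base change of $\mathrm{Fil}^{\bullet}_{\mathrm{dR},B_{\mathrm{dR}}(\widetilde{R}_{\infty,\alpha})}$.
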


\begin{proof}
    Since the isomorphism in Proposition \ref{Laurent isom after inverting mu} is obtained as the restriction of the isomorphism in Remark \ref{Laurent isom} which is compatible with Frobenius, we have a commutative diagram
    \[
    \begin{tikzcd}
        T_{\et}(\cE)|_{(\mathrm{Spf}(S),\cM_{S})_{\eta}}\otimes_{\bZ_{p}} A_{\mathrm{inf}}(S)[1/\phi^{-1}(\mu)] \ar[r,phantom, "\cong"] \ar[d,"\mathrm{id}\otimes \phi","\sim"' sloped] & \cE_{A_{\mathrm{inf}}(S)}[1/\phi^{-1}(\mu)] \ar[d,"\varphi_{\cE}","\sim"' sloped] \\
        T_{\et}(\cE)|_{(\mathrm{Spf}(S),\cM_{S})_{\eta}} \otimes_{\bZ_{p}} A_{\mathrm{inf}}(S)[1/\mu] \ar[r,phantom, "\cong"] & \cE_{A_{\mathrm{inf}}(S)}[1/\mu],
    \end{tikzcd}
    \]
    where the upper horizontal map is the isomorphism in Proposition \ref{Laurent isom after inverting mu}, the bottom horizontal map is the localization of the upper horizontal isomorphism. Taking the base change of this diagram along ring maps
    \[
    A_{\mathrm{inf}}(S)[1/\phi^{-1}(\mu)]\stackrel{\phi}{\isom} A_{\mathrm{inf}}(S)[1/\mu]\to B_{\mathrm{dR}}(S),
    \]
    we obtain the commutative diagram in the statement.

    We shall prove that every isomorphism in the square in the statement is a filtered isomorphism. Since $\phi^{-1}(\mu)$ is invertible in $B^{+}_{\mathrm{dR}}(S)$, we have a ring map $A_{\mathrm{inf}}(S)[1/\phi^{-1}(\mu)]\to B^{+}_{\mathrm{dR}}(S)$, and the bottom horizontal isomorphism is obtained as the base change of the isomorphism in Proposition \ref{Laurent isom after inverting mu} along 
    \[
    A_{\mathrm{inf}}(S)[1/\phi^{-1}(\mu)]\to B^{+}_{\mathrm{dR}}(S)\to B_{\mathrm{dR}}(S).
    \]
    This implies that the bottom horizontal isomorphism in the diagram in the statement is a filtered isomorphism.  
    
    For the upper horizontal isomorphism, note that $(\mathrm{Spf}(S),\cM_{S})_{\eta}$ does not necessarily live in $(\fX,\cM_{\fX})_{\eta,\mathrm{prok\et}}$. However, when we take a factorization of $(\mathrm{Spf}(S),\cM_{S})\to (\fX,\cM_{\fX})$ as in Setting \ref{setting for big integral perfd}, by taking the filtered base change of the filtered isomorphism
    \begin{align*}
    &(T_{\et}(\cE)|_{(\mathrm{Spf}(\widetilde{R}_{\infty,\alpha}),\cM_{\widetilde{R}_{\infty,\alpha}})_{\eta}}\otimes_{\bZ_{p}} B_{\mathrm{dR}}(\widetilde{R}_{\infty,\alpha}),\mathrm{Fil}^{\bullet}_{\xi}) \\
    &\cong (\cE_{A_{\mathrm{inf}}(\widetilde{R}_{\infty,\alpha})}\otimes_{A_{\mathrm{inf}}(\widetilde{R}_{\infty,\alpha}),\phi} B_{\mathrm{dR}}(\widetilde{R}_{\infty,\alpha}),\mathrm{Fil}^{\bullet}_{\mathrm{dR},B_{\mathrm{dR}}(\widetilde{R}_{\infty,\alpha})})
    \end{align*}
    given by Proposition \ref{weakly semi-stable implies semi-stable} along $B_{\mathrm{dR}}(\widetilde{R}_{\infty,\alpha})\to B_{\mathrm{dR}}(S)$, we see that the upper horizontal isomorphism is a filtered isomorphism. Therefore, the right vertical morphism is also a filtered isomorphism.
\end{proof}

\subsection{The de Rham realization functor for log prismatic \texorpdfstring{$F$}--crystals}\noindent

Let $(\fX,\cM_{\fX})=(\mathrm{Spf}(R),\cM_{R})$ be a small affine log formal scheme over $\cO_{K}$ with a fixed framing $t$. Let $\cE$ be an analytic log prismatic $F$-crystal on $(\fX,\cM_{\fX})$. The goal of this subsection is to give the description of the filtered vector bundle $(T_{\mathrm{isoc}}(\cE)_{(X_{p=0},\fX,\cM_{\fX})},\mathrm{Fil}^{\bullet}_{\mathrm{dR}})$ by using Breuil-Kisin log prisms and Nygaard filtrations (Proposition \ref{nyg filt and de rham filt are eq}). 

For a log prism $(A,I,\cM_{A})\in (\fX,\cM_{\fX})_{\Prism}$, we set
\begin{align*}
\cE[1/p]_{A}&\coloneqq \Gamma(\mathrm{Spec}(A[1/p]),\cE_{A}), \\
(\phi^{*}\cE)[1/p]_{A}&\coloneqq \Gamma(\mathrm{Spec}(A[1/p]),\phi_{A}^{*}\cE_{A}), \\
\overline{\cE[1/p]}_{A}&\coloneqq \cE_{A}[1/p]\otimes_{A[1/p]} A/I[1/p], \\
\overline{(\phi^{*}\cE)[1/p]}_{A}&\coloneqq (\phi^{*}\cE)[1/p]_{A}\otimes_{A[1/p]} A/I[1/p].
\end{align*}
The map $\varphi_{\cE}:(\phi^{*}\cE)[1/\cI_{\Prism}]\isom \cE[1/\cI_{\Prism}]$ induces a map 
\[
\varphi_{\cE_{A}}:(\phi^{*}\cE)_{A}[1/pI]\isom \cE_{A}[1/pI]
\]
of $A[1/pI]$-modules. We define a Nygaard filtration  $\mathrm{Fil}^{\bullet}_{\cN,A}$ on $\overline{(\phi^{*}\cE)[1/p]}_{A}$ by
\[
\mathrm{Fil}^{n}_{\cN,A}\coloneqq \mathrm{Im}(\varphi_{\cE_{A}}^{-1}(I^{n}\cE_{A}[1/p])\cap (\phi^{*}\cE)[1/p]_{A}\to \overline{(\phi^{*}\cE)[1/p]}_{A}).
\]

\begin{lem}\label{crys de Rham comparison}
    There exists a natural isomorphism of $R[1/p]$-modules
    \[
    T_{\mathrm{isoc}}(\cE)_{(X_{p=0},\fX,\cM_{\fX})}\cong  \overline{(\phi^{*}\cE)[1/p]}_{\fS_{R,t}}.
    \]
\end{lem}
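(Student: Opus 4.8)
<br>

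The plan is to unwind the definition of $T_{\mathrm{isoc}}(\cE)$ from Construction \ref{crys real by using bk prisms} and match it with the Nygaard-type filtration object on the right-hand side. Recall that $T_{\mathrm{isoc}}(\cE)$ is obtained by first passing through the equivalence $\mathrm{Vect}^{\mathrm{an},\varphi}((\fX,\cM_{\fX})_{\Prism})\simeq \mathrm{DD}^{\varphi}_{\fS_{R}}$ of Lemma \ref{anal pris crys via bk prism}, then taking the base change along the log prism map $\phi\colon (\fS_{R},(E),\cM_{\fS_{R}})\to (S_{R},(p),\phi^{*}\cM_{S_{R}})$ of Construction \ref{construction of breuil log prism}, and finally applying the functor of Lemma \ref{crys crys via breuil prism} to land in $\mathrm{Isoc}^{\varphi}_{\mathrm{lftf}}((X_{p=0},\cM_{X_{p=0}})_{\mathrm{crys}})$. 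Therefore $T_{\mathrm{isoc}}(\cE)_{(X_{p=0},\fX,\cM_{\fX})}$ is the evaluation of this isocrystal at the $p$-adic log PD-thickening $(X_{p=0},\fX,\cM_{\fX})$, i.e. at $R$ regarded as a thickening via $\fS_{R}\twoheadrightarrow R$. First I would identify this evaluation concretely: by the crystal property for the map of log PD-thickenings $(S_{R}\twoheadrightarrow R/p)\to (R\twoheadrightarrow R/p)$ (the latter obtained since $\fS_{R}\twoheadrightarrow R$ factors through $S_{R}$), one gets $T_{\mathrm{isoc}}(\cE)_{(X_{p=0},\fX,\cM_{\fX})}\cong \big(\overline{\phi^*\cE}_{S_R}\big)[1/p]\otimes_{S_R[1/p]} R[1/p]$, where $\overline{\phi^*\cE}_{S_R}$ denotes the reduction of $\phi^*\cE_{S_R} = \cE_{\fS_R}\otimes_{\fS_R,\phi} S_R$ modulo the PD-ideal. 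Actually it is cleaner to observe directly that the functor of Lemma \ref{crys crys via breuil prism}, evaluated back at $\fS_R\twoheadrightarrow R$, returns precisely $\cE_{S_R}\otimes_{S_R} R[1/p]$ after base change; and $\cE_{S_R} = \cE_{\fS_R}\otimes_{\fS_R,\phi} S_R$, so that $\cE_{S_R}\otimes_{S_R} R[1/p] = \cE_{\fS_R}\otimes_{\fS_R,\phi} R[1/p] = (\phi^*\cE)[1/p]_{\fS_R}\otimes_{\fS_R[1/p]} R[1/p] = \overline{(\phi^*\cE)[1/p]}_{\fS_R}$, using that $\fS_R\xrightarrow{\phi}\fS_R\twoheadrightarrow R$ agrees with $\fS_R\twoheadrightarrow R$ modulo the relevant identifications because $\phi$ is the Frobenius lift and $R/p$ is killed. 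This is exactly the desired isomorphism.

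The key technical point to verify is compatibility: the map $\fS_{R}\xrightarrow{\phi_{\fS_R}}\fS_{R}$ followed by $\fS_{R}\twoheadrightarrow R$ does not literally equal $\fS_R\twoheadrightarrow R$, but rather the composite $\fS_R\xrightarrow{\phi}S_R\twoheadrightarrow R$; what makes this work is that in $\mathrm{DD}^{\varphi}_{\fS_R}$ we have the extra structure $\varphi_{\cE}\colon (\phi^*\fM)[1/E]\isom \fM[1/E]$, and upon base change to $R[1/p]$ (where $E$ becomes a unit since $E(0)$ is a unit times $\pi$... more precisely one must be careful: $E$ maps to $0$ in $R$, so $\varphi_\cE$ is \emph{not} available over $R$ directly). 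The correct route, which I would follow, is to not invoke $\varphi_\cE$ at this stage at all: $T_{\mathrm{isoc}}(\cE)$ as an \emph{isocrystal} is built from $\phi^*\cE_{S_R}$ — the base change of $\cE$ along the \emph{divided Frobenius} $\phi\colon \fS_R\to S_R$ — which is an honest finite projective $S_R[1/p]$-module (no inversion of $E$ needed, since $\phi(E)$ is already $p$ times a unit in $S_R$). Evaluating the resulting crystal at $(\fS_R\twoheadrightarrow R)$ via the crystal property along $\phi\colon(\fS_R,(E))\to(S_R,(p))$ — which is a map in $(\fX,\cM_\fX)_\Prism^{\mathrm{str}}$ covering the relevant thickenings — gives $\cE_{S_R}\otimes_{S_R,\,\mathrm{can}} R[1/p]$, and the composite $S_R\twoheadrightarrow R$ precomposed with $\fS_R\xrightarrow{\phi}S_R$ is the map classifying $R$ as the target of $\phi$, which is exactly what $\overline{(\phi^*\cE)[1/p]}_{\fS_R}$ computes. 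I would spell this out by writing the commutative square of log PD-thickenings relating $\fS_R$, $S_R$, and $R$ and invoking Remark \ref{crystalline crystal as lim} / the crystal property.

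The main obstacle I anticipate is bookkeeping the base points: making precise that ``evaluate $T_{\mathrm{isoc}}(\cE)$ at $(X_{p=0},\fX,\cM_\fX)$'' means evaluate at the thickening $(\fS_R\twoheadrightarrow R,\cM_{\fS_R})$ — \emph{not} at a log PD-envelope — and that this evaluation, through the chain of functors in Construction \ref{crys real by using bk prisms}, literally produces $\cE_{S_R}\otimes_{S_R}R[1/p]$ and hence $\overline{(\phi^*\cE)[1/p]}_{\fS_R}$. Once the diagram
\[
\begin{tikzcd}
(\fS_R,(E),\cM_{\fS_R}) \ar[r,"\phi"] \ar[rd] & (S_R,(p),\phi^*\cM_{S_R}) \ar[d] \\
& (R,(0),\cM_R)
\end{tikzcd}
\]
is in place and one notes both the slanted map and the composite classify the same log PD-thickening $(\fS_R\twoheadrightarrow R)$ of $(X_{p=0},\cM_{X_{p=0}})$, the crystal property gives the isomorphism canonically, and naturality in $\cE$ is automatic. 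I would also remark that a similar identification over $\fS_R^{(i)}$ shows the isomorphism is independent of the framing, matching the globalization already carried out for $T_{\mathrm{isoc}}$, though for the statement as phrased (small affine with fixed framing) this is not strictly needed.
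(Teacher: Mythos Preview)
Your approach is essentially the paper's: use the crystal property for the map of log PD-thickenings $(S_{R}\twoheadrightarrow R/p,\cM_{S_R})\to (R\twoheadrightarrow R/p,\cM_R)$ to reduce to the evaluation at $S_R$, then unwind the definition of $T_{\mathrm{isoc}}$ via $\mathrm{DD}^{\varphi}_{\fS_R}\to\mathrm{DD}^{\varphi}_{S_R}$ to identify that evaluation with $\cE[1/p]_{\fS_R}\otimes_{\fS_R[1/p],\phi}S_R[1/p]$, and finally tensor down to $R[1/p]$ to get $\overline{(\phi^*\cE)[1/p]}_{\fS_R}$. The middle paragraph where you arrive at this (after correctly abandoning the attempt to invoke $\varphi_{\cE}$) is exactly right.

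Two bookkeeping points deserve cleaning up. First, $(X_{p=0},\fX,\cM_{\fX})$ denotes the log PD-thickening $(R\twoheadrightarrow R/p,\cM_R)$, \emph{not} $(\fS_R\twoheadrightarrow R,\cM_{\fS_R})$; your own crystal-property step uses this correctly, but the later paragraph misstates it. Second, the triangle you draw at the end is misleading: $(R,(0),\cM_R)$ is not a log prism, and the slanted arrow $\fS_R\twoheadrightarrow R$ is \emph{not} equal to the composite $\fS_R\xrightarrow{\phi}S_R\twoheadrightarrow R$ (they differ by Frobenius). No such commutativity is needed; what you actually use is that $\overline{(\phi^*\cE)[1/p]}_{\fS_R}$ is \emph{by definition} $(\phi_{\fS_R}^*\cE_{\fS_R})[1/p]\otimes_{\fS_R[1/p]}R[1/p]$, which coincides with $(\cE[1/p]_{\fS_R}\otimes_{\fS_R[1/p],\phi}S_R[1/p])\otimes_{S_R[1/p]}R[1/p]$ simply because $\fS_R\to S_R\to R$ is the canonical quotient. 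Drop the triangle and the claim about ``same log PD-thickening'', and the argument is clean and matches the paper.
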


\begin{proof}
The crystal property of $T_{\mathrm{isoc}}(\cE)$ for the map 
\[
(S_{R,t}\twoheadrightarrow R/p,\cM_{S_{R,t}})\to (R\twoheadrightarrow R/p,\cM_{R})
\]
in $(X_{p=0},\cM_{X_{p=0}})_{\mathrm{crys}}^{\mathrm{str}}$ gives an isomorphism of $R[1/p]$-modules
\[
T_{\mathrm{isoc}}(\cE)_{(X_{p=0},\fX,\cM_{\fX})}\cong T_{\mathrm{isoc}}(\cE)_{(S_{R,t}\twoheadrightarrow R/p,\cM_{S_{R,t}})}\otimes_{S_{R,t}[1/p]} R[1/p].
\]
By the definition of $T_{\mathrm{isoc}}(\cE)$ and the crystal property of $\cE$ of the map
\[
\phi\colon (\fS_{R,t},(E),\cM_{\fS_{R,t}})\to (S_{R,t},(p),\phi_{S_{R,t}}^{*}\cM_{S_{R,t}})
\]
in $(\fX,\cM_{\fX})_{\Prism}^{\mathrm{str}}$, we have isomorphisms of $R[1/p]$-modules
\begin{align*}
    &T_{\mathrm{isoc}}(\cE)_{(S_{R,t}\twoheadrightarrow R/p,\cM_{S_{R,t}})}\otimes_{S_{R,t}[1/p]} R[1/p]\\
    \cong &(\Gamma(\mathrm{Spec}(\fS_{R,t}[1/p]),\cE_{\fS_{R,t}})\otimes_{\fS_{R,t}[1/p],\phi} S_{R,t}[1/p])\otimes_{S_{R,t}[1/p]} R[1/p] \\
    \cong &\Gamma(\mathrm{Spec}(\fS_{R,t}[1/p]),\phi_{\fS_{R,t}}^{*}\cE_{\fS_{R,t}})\otimes_{\fS_{R,t}[1/p]} R[1/p] \\
    \cong &\overline{(\phi^{*}\cE)[1/p]}_{\fS_{R,t}}.
\end{align*}
The combination of these isomorphisms give the desired isomorphism.
\end{proof}

\begin{lem}\label{absolute int clos}
    There exists a $p$-adic saturated log formal scheme $(\mathrm{Spf}(S),\cM_{S})$ over $(\fX,\cM_{\fX})$ satisfying the following conditions:
    \begin{itemize}
        \item $S$ is an integral perfectoid $\cO_{C}$-algebra, and $R\to S$ is faithfully flat;
        \item $\bL|_{(\mathrm{Spf}(S),\cM_{S})_{\eta}}$ is constant for every $\bZ_{p}$-local system $\bL$ on $(\fX,\cM_{\fX})_{\eta,\mathrm{k\et}}$;
        \item the structure map $(\mathrm{Spf}(S),\cM_{S})\to (\fX,\cM_{\fX})$ factors as
        \[
        (\mathrm{Spf}(S),\cM_{S})\to (\mathrm{Spf}(\widetilde{R}_{\infty,\alpha}),\cM_{\widetilde{R}_{\infty,\alpha}})\to (\fX,\cM_{\fX}),
        \]
        where $(\mathrm{Spf}(S),\cM_{S})\to (\mathrm{Spf}(\widetilde{R}_{\infty,\alpha}),\cM_{\widetilde{R}_{\infty,\alpha}})$ is a strict map over $\cO_{C}$.
    \end{itemize}
\end{lem}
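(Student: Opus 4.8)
The plan is to construct $(\mathrm{Spf}(S),\cM_{S})$ in two stages: first pass to the perfectoid cover $\widetilde{R}_{\infty,\alpha}$ associated with the fixed framing, and then build a further pro-(finite Kummer étale) cover that trivializes all $\bZ_p$-local systems, while keeping the log structure under control. Recall from Section~\ref{section setup} that $\widetilde{R}_{\infty,\alpha} = \widetilde{R}^0_\infty \widehat{\otimes}_{R^0} R$ is an integral perfectoid $\cO_C$-algebra equipped with the prelog structure $\bQ_{\geq 0}^r$, that $R^0 \to \widetilde{R}^0_\infty$ is a quasi-syntomic cover (Lemma~\ref{perfectoid cover}), and hence $R \to \widetilde{R}_{\infty,\alpha}$ is faithfully flat; in particular the associated affinoid perfectoid space is the generic fiber $\widetilde{U}_{\infty,\alpha}$, a log affinoid perfectoid in $(\fX,\cM_\fX)_{\eta,\mathrm{prok\acute{e}t}}$.

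First I would work on the generic fiber. The log adic space $(\mathrm{Spf}(R),\cM_R)_\eta$ is a locally noetherian fs log adic space, so by Proposition~\ref{ket vs qproket} (invoked in Remark~\ref{et real without diamond}) the category of $\bZ_p$-local systems on its Kummer étale site is identified with that on the pro-Kummer-étale site, and such local systems are classified by continuous representations of the Kummer étale fundamental group $\pi_1^{\mathrm{k\acute{e}t}}$ of $(\mathrm{Spf}(R),\cM_R)_\eta$ with a chosen base point. Since $R$ is topologically of finite type over $\cO_K$ and $(\mathrm{Spf}(R),\cM_R)$ is semi-stable, this fundamental group is topologically finitely generated; therefore there is a cofinal system of finite Kummer étale covers, and the projective limit over all finite Kummer étale covers of $\widetilde{U}_{\infty,\alpha}$ (equivalently, over all finite Kummer étale covers of $(\mathrm{Spf}(R),\cM_R)_\eta$ pulled back to $\widetilde{U}_{\infty,\alpha}$) gives a pro-Kummer-étale object $U$ on which every $\bZ_p$-local system becomes constant. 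Because $\widetilde{U}_{\infty,\alpha}$ is a log affinoid perfectoid and log affinoid perfectoid objects are stable under finite Kummer étale covers and cofiltered limits, $U$ is again a log affinoid perfectoid, with associated affinoid perfectoid space $\mathrm{Spa}(S[1/p],S^+)$ for some integral perfectoid $\cO_C$-algebra; we take $(\mathrm{Spf}(S),\cM_S)$ with $S \coloneqq S^+$ and $\cM_S$ the pullback of $\cM_{\widetilde{R}_{\infty,\alpha}}$ along $\mathrm{Spf}(S) \to \mathrm{Spf}(\widetilde{R}_{\infty,\alpha})$, as in Setting~\ref{setting for big integral perfd}. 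The map $\mathrm{Spf}(S) \to \mathrm{Spf}(\widetilde{R}_{\infty,\alpha})$ is strict by construction (we pull back the log structure), it is a map over $\cO_C$, and the composite factorization $(\mathrm{Spf}(S),\cM_S) \to (\mathrm{Spf}(\widetilde{R}_{\infty,\alpha}),\cM_{\widetilde{R}_{\infty,\alpha}}) \to (\fX,\cM_\fX)$ is exactly the one demanded.

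It remains to verify faithful flatness of $R \to S$. Since $R \to \widetilde{R}_{\infty,\alpha}$ is faithfully flat, it suffices to check that $\widetilde{R}_{\infty,\alpha} \to S$ is faithfully flat; equivalently, that each finite Kummer étale cover appearing in the tower is finite flat at the level of integral models after $p$-completion, and that faithful flatness is preserved under the (derived, hence here classical) $p$-completed colimit. Finite Kummer étale morphisms of log adic spaces are, after passing to the perfectoid cover $\widetilde{U}_{\infty,\alpha}$ where the log structure is ``as divisible as possible'', finite étale on the perfectoid level by the almost purity theorem together with the structure of Kummer covers (adjoining $p$-power roots of the coordinates $y_j, z_k$, which is already absorbed into $\widetilde{R}_{\infty,\alpha}$, and genuinely étale covers), so each stage is finite faithfully flat on integral perfectoid rings; the transition maps are injective and the colimit of faithfully flat maps of $p$-complete rings along injective transition maps is faithfully flat. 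This is the step I expect to be the main obstacle: one must argue carefully that the integral model $S$ of the pro-Kummer-étale cover is obtained as a $p$-completed filtered colimit of finite faithfully flat $\widetilde{R}_{\infty,\alpha}$-algebras (using that on the perfectoid level Kummer étale covers become finite étale, and that the relevant roots of the toric coordinates are already present), and then deduce faithful flatness of $R \to S$; the analogous statements in the non-log setting are in \cite{bs23,gr24}, and the log version should follow from \cite{dllz23b} on the structure of the Kummer étale site together with the almost purity theorem, but it needs to be spelled out.
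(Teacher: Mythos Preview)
Your construction is natural but genuinely different from the paper's, and the faithful flatness step---which you correctly identify as the crux---is not actually established. The passage ``so each stage is finite faithfully flat on integral perfectoid rings'' does not follow from almost purity: for a finite \'etale extension $B$ of $\widetilde{R}_{\infty,\alpha}[1/p]$, almost purity only tells you that $\widetilde{R}_{\infty,\alpha}\to B^{+}$ is \emph{almost} finite \'etale, and almost flatness is strictly weaker than classical flatness. Nothing you cite upgrades this, and a $p$-completed colimit of almost faithfully flat maps need not be faithfully flat. Since the lemma demands honest faithful flatness of $R\to S$ (used, e.g., in Proposition~\ref{nyg filt and de rham filt are eq} to descend a filtered isomorphism), this is a real gap, not a matter of writing out details.

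The paper sidesteps this entirely by a different construction: it takes $S$ to be the $p$-adic completion of the integral closure of $R$ in an algebraic closure of $\mathrm{Frac}(R)$, then invokes Bhatt's results on absolute integral closures \cite[Lemma~4.20(2), Theorem~5.16]{bha20} to obtain both the perfectoid property and classical faithful flatness of $R\to S$ at once. The factorization through $\widetilde{R}_{\infty,\alpha}$ is then automatic, and the triviality of each $\bL_n=\bL\otimes\bZ/p^n$ is checked by refining a trivializing finite Kummer \'etale cover so that it factors through a standard chart cover $(\fX_{m,\alpha},\cM_{\fX_{m,\alpha}})$, after which an argument from \cite[Lemma~4.10]{gr24} shows $(\mathrm{Spf}(S),\cM_S)_\eta$ maps through it. A minor additional point: your appeal to topological finite generation of the Kummer \'etale fundamental group of $(\mathrm{Spf}(R),\cM_R)_\eta$ is unjustified and in any case unnecessary, since the system of finite Kummer \'etale Galois covers is cofiltered regardless.
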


\begin{proof}
We may assume that $\fX$ is connected. Let $S$ denote the $p$-adic completion of the integral closure of $R$ in an algebraic closure of $\mathrm{Frac}(R)$. The ring $S$ is an integral perfectoid $\cO_{C}$-algebra by \cite[Lemma 4.20 (2)]{bha20}, and $R\to S$ is faithfully flat by \cite[Theorem 5.16]{bha20}. Since the ring map $R\to S$ admits a factorization
\[
R\to \widetilde{R}_{\infty,\alpha}\to S,
\]
we fix such a factorization and let $\cM_{S}$ denote the pullback log structure of $\cM_{\widetilde{R}_{\infty,\alpha}}$. 

It suffices to prove that $(\mathrm{Spf}(S),\cM_{S})$ satisfies the second condition in the statement. Let $\bL$ be a $\bZ_{p}$-local system on $(\fX,\cM_{\fX})_{\eta,\mathrm{k\et}}$ and let $\bL_{n}$ denote the $\bZ/p^{n}$-local system $\bL\otimes_{\bZ_{p}} \bZ/p^{n}$ for $n\geq 1$. Since $\mathrm{GL}_{s}(\bZ/p^{n+1})\to \mathrm{GL}_{s}(\bZ/p^{n})$ is surjective for $s\coloneqq \mathrm{rk}(\bL)$ and each $n\geq 1$, it is enough to show that $\bL_{n}|_{(\mathrm{Spf}(S),\cM_{S})_{\eta}}$ is constant for each $n\geq 1$.

Take a finite Kummer \'{e}tale cover $(Y_{n},\cM_{Y_{n}})\to (\fX,\cM_{\fX})_{\eta}$ such that $\bL_{n}|_{(Y_{n},\cM_{Y_{n}})}$ is constant. By \cite[Lemma 4.2.5]{dllz23b}, we may refine $(Y_{n},\cM_{Y_{n}})$ and assume that $(Y_{n},\cM_{Y_{n}})\to (\fX,\cM_{\fX})_{\eta}$ factors as
\[
(Y_{n},\cM_{Y_{n}})\to (\fX_{m,\alpha},\cM_{\fX_{m,\alpha}})_{\eta}\to  (\fX,\cM_{\fX})_{\eta}
\]
for some $m\geq 1$, where $(Y_{n},\cM_{Y_{n}})\to (\fX_{m,\alpha},\cM_{\fX_{m,\alpha}})_{\eta}$ is strict finite \'{e}tale. Here, $(\fX_{m,\alpha},\cM_{\fX_{m,\alpha}})$ is defined by
\[
(\fX_{m,\alpha},\cM_{\fX_{m,\alpha}})\coloneqq (\fX,\cM_{\fX})\times_{(\bZ[\bN^{r}],\bN^{r})^{a}} (\bZ[\frac{1}{m}\bN^{r}],\frac{1}{m}\bN^{r}).
\]
Then $(\mathrm{Spf}(S),\cM_{S})\to (\fX,\cM_{\fX})$ factors as
\[
(\mathrm{Spf}(S),\cM_{S})\to (\mathrm{Spf}(\widetilde{R}_{\infty,\alpha}),\cM_{\widetilde{R}_{\infty,\alpha}})\to (\fX_{m,\alpha},\cM_{\fX_{m,\alpha}})\to (\fX,\cM_{\fX}), 
\]
and, when we fix such a factorization, $(\mathrm{Spf}(S),\cM_{S})_{\eta}\to (\fX_{m,\alpha},\cM_{\fX_{m,\alpha}})_{\eta}$ factors as
\[
(\mathrm{Spf}(S),\cM_{S})_{\eta}\to (Y_{n},\cM_{Y_{n}})\to  (\fX_{m,\alpha},\cM_{\fX_{m,\alpha}})_{\eta}
\]
by the same argument as the forth paragraph of the proof of \cite[Lemma 4.10]{gr24}. Therefore, $\bL_{n}|_{(\mathrm{Spf}(S),\cM_{S})_{\eta}}$ is constant.
\end{proof}

\begin{prop}\label{nyg filt and de rham filt are eq}
    The isomorphism in Lemma \ref{crys de Rham comparison} is refined to an isomorphism of filtered vector bundles on $\fX_{\eta}$
    \[
    T_{\mathrm{dR}}(\cE)\cong (\overline{(\phi^{*}\cE)[1/p]}_{\fS_{R,t}},\mathrm{Fil}^{\bullet}_{\cN,\fS_{R,t}}).
    \]
    In particular, the filtration $\mathrm{Fil}^{\bullet}_{\cN,\fS_{R,t}}$ on $\overline{(\phi^{*}\cE)[1/p]}_{\fS_{R,t}}$ is a filtration with locally free graded pieces.
\end{prop}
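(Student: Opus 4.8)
The plan is to prove the statement by comparing two filtrations on the $B_{\mathrm{dR}}(S)$-base change, using the descent of faithfully flat base change along $R[1/p]\to S[1/p]$ and the injectivity/compatibility results already established. First I would fix a $p$-adic saturated log formal scheme $(\mathrm{Spf}(S),\cM_{S})$ over $(\fX,\cM_{\fX})$ as produced in Lemma \ref{absolute int clos}, so that $R\to S$ is faithfully flat, all relevant $\bZ_{p}$-local systems restrict to constant ones on $(\mathrm{Spf}(S),\cM_{S})_{\eta}$, and the structure map factors through $(\mathrm{Spf}(\widetilde{R}_{\infty,\alpha}),\cM_{\widetilde{R}_{\infty,\alpha}})$; this puts us in Setting \ref{setting for big integral perfd}. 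Since $\mathrm{Fil}^{\bullet}_{\mathrm{dR}}$ on $T_{\mathrm{isoc}}(\cE)_{(X_{p=0},\fX,\cM_{\fX})}$ is, by Theorem \ref{existence of filfisoc real} and Construction \ref{bcrys mod ass to isocrystal}, characterized by the property that $\vartheta\otimes_{\bB_{\mathrm{crys}}}\bB_{\mathrm{dR}}$ becomes a filtered isomorphism, and since faithfully flat descent along $R[1/p]\to S[1/p]$ (hence along $R[1/p]\to B_{\mathrm{dR}}(S)$ after picking a section $s$) detects a filtration on a finite projective module, it suffices to identify the two filtrations after base change to $B_{\mathrm{dR}}(S)$.

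Next I would compute both sides over $B_{\mathrm{dR}}(S)$. On the one hand, by Lemma \ref{crys de Rham comparison} together with the crystal property of $\cE$ for the log prism map $\phi\colon (A_{\mathrm{inf}}(S),(\xi),\cM_{A_{\mathrm{inf}}(S)})\to (A_{\mathrm{crys}}(S),(p),\cM_{A_{\mathrm{crys}}(S)})$, the module $T_{\mathrm{isoc}}(\cE)_{(X_{p=0},\fX,\cM_{\fX})}\otimes_{R[1/p]} B_{\mathrm{dR}}(S)$ (via the chosen section $s$) is identified with $\cE_{A_{\mathrm{inf}}(S)}\otimes_{A_{\mathrm{inf}}(S),\phi} B_{\mathrm{dR}}(S)$, and $\mathrm{Fil}^{\bullet}_{\mathrm{dR}}$ goes over to $\mathrm{Fil}^{\bullet}_{\mathrm{dR},B_{\mathrm{dR}}(S)}$; by Lemma \ref{nyg filt eq to de rham filt over BdR} this equals $\mathrm{Fil}^{\bullet}_{\xi}$ on $T_{\et}(\cE)|_{(\mathrm{Spf}(S),\cM_{S})_{\eta}}\otimes_{\bZ_{p}} B_{\mathrm{dR}}(S)$ transported via $\varphi_{\cE}$. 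On the other hand, I would show that the Nygaard filtration $\mathrm{Fil}^{\bullet}_{\cN,\fS_{R}}$ on $\overline{(\phi^{*}\cE)[1/p]}_{\fS_{R}}$, after base change along $\fS_{R}[1/p]\twoheadrightarrow R[1/p]\to B_{\mathrm{dR}}(S)$, becomes the $\xi$-adic (equivalently $\mathrm{Ker}(\theta)$-adic) filtration transported by $\varphi_{\cE}$: indeed $\mathrm{Fil}^{n}_{\cN,\fS_{R}}$ is by definition the image of $\varphi_{\cE_{\fS_{R}}}^{-1}(E^{n}\cE_{\fS_{R}}[1/p])\cap (\phi^{*}\cE)[1/p]_{\fS_{R}}$, and the point is that $E(t)$ maps to a generator of $\mathrm{Ker}(\theta)\subset B^{+}_{\mathrm{dR}}(S)$ up to a unit (as $\theta(E(\,\cdot\,))$ vanishes on the image of $t\mapsto$ the relevant element), so that after inverting $p$ and completing, cutting by $E^{n}$ on the source of $\varphi_{\cE}$ corresponds exactly to the $\mathrm{Fil}^{n}_{\xi}$ condition on the target. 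Combining, both filtrations agree over $B_{\mathrm{dR}}(S)$, and faithfully flat descent gives the claimed filtered isomorphism over $\fX_{\eta}$; the "in particular" about locally free graded pieces then follows since $(\cE,\mathrm{Fil}^{\bullet}_{\mathrm{dR}})=T_{\mathrm{fisoc}}(\cE)$ is a filtered $F$-isocrystal.

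The main obstacle I anticipate is the bookkeeping in the second computation: making precise that the Nygaard filtration, defined via the intersection $\varphi_{\cE_{\fS_{R}}}^{-1}(E^{n}\cE_{\fS_{R}}[1/p])\cap (\phi^{*}\cE)[1/p]_{\fS_{R}}$ inside a module over $\fS_{R}[1/p]$ (which is not complete in a way directly comparable to $B_{\mathrm{dR}}$), base-changes correctly to the $\xi$-adic filtration after the (completed) map $\fS_{R}\to A_{\mathrm{inf}}(S)\to B^{+}_{\mathrm{dR}}(S)$. Here I would argue via the section $s\colon (R,\cM_{R})\to (A_{\mathrm{crys},K}(S^{+}),\cM_{A_{\mathrm{crys},K}(S^{+})})$ as in Construction \ref{bcrys mod ass to isocrystal}, reduce to a statement about finite projective modules with an isomorphism $\varphi_{\cE}$ defined away from $E$, and use that $B^{+}_{\mathrm{dR}}(S)$ is a complete DVR-like ring with uniformizer $\xi$ so that images of intersections behave well under base change along $\fS_{R}[1/p]\to B^{+}_{\mathrm{dR}}(S)$ (this is where $p$-torsion-freeness of the modules, Lemma \ref{completeness is automatically}, and flatness of $p_{i}\colon \fS_{R}\to\fS_{R}^{(1)}$ enter). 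This step parallels the treatment in \cite{iky24} and \cite{dlms24}, and I would model it on their arguments, adapting to the semi-stable (horizontal $\times$ vertical) framing by using the explicit description of $\cO\bB^{+}_{\mathrm{dR}}$ and $A_{\mathrm{crys},K}(S^{+})$ recorded in Construction \ref{construction of obdr} and Definition \ref{def of big log affinoid perfd}.
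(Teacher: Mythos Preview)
Your proposal is correct and follows essentially the same route as the paper: reduce to $S[1/p]$ via faithfully flat descent using Lemma \ref{absolute int clos}, then pass through $B^{+}_{\mathrm{dR}}(S)$ and invoke Lemma \ref{nyg filt eq to de rham filt over BdR} to match the Nygaard and de Rham filtrations. The one point you leave vague---that the Nygaard filtration $\mathrm{Fil}^{\bullet}_{\cN,\fS_{R}}$ base-changes correctly along $\fS_{R}[1/p]\to B^{+}_{\mathrm{dR}}(S)$---is handled in the paper simply by the \emph{flatness} of $\fS_{R}\to B^{+}_{\mathrm{dR}}(S)$ (which makes the formation of $\varphi_{\cE}^{-1}(E^{n}\cE)\cap (\phi^{*}\cE)$ and its image in the mod-$E$ reduction commute with base change), rather than the auxiliary facts about $p$-torsion-freeness or flatness of $p_{i}\colon \fS_{R}\to \fS_{R}^{(1)}$ that you list.
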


\begin{proof}
Take a sequence of $p$-adic saturated log formal schemes
\[
(\mathrm{Spf}(S),\cM_{S})\to (\mathrm{Spf}(\widetilde{R}_{\infty,\alpha}),\cM_{\widetilde{R}_{\infty,\alpha}})\to (\fX,\cM_{\fX})
\]
as in Lemma \ref{absolute int clos}. Since $R\to S$ is faithfully flat, it is enough to show that the isomorphism in Lemma \ref{crys de Rham comparison} is a filtered isomorphism after taking the base change along $R[1/p]\to S[1/p]$.

We prepare some categories and functors as follows:
\begin{itemize}
    \item $\mathrm{Mod}^{\varphi}(\fS_{R}[1/p])$ denote the category of a pair $(M,\varphi_{M})$ consisting of a finite projective $\fS_{R}[1/p]$-module $M$ with an isomorphism $\varphi_{M}\colon (\phi_{\fS_{R}}^{*}M)[1/E]\isom M[1/E]$;
    \item $\mathrm{Mor}(B^{+}_{\mathrm{dR}}(S))$ denote the category of a triple $(M_{1},M_{2},f)$ consisting of finite projective $B^{+}_{\mathrm{dR}}(S)$-modules $M_{1}$ and $M_{2}$ with a $B_{\mathrm{dR}}(S)$-linear isomorphism 
    \[
    M_{1}\otimes_{B^{+}_{\mathrm{dR}}(S)} B_{\mathrm{dR}}(S)\isom M_{2}\otimes_{B^{+}_{\mathrm{dR}}(S)} B_{\mathrm{dR}}(S);
    \]
    \item the functor $\mathrm{Mod}^{\varphi}(\fS_{R}[1/p])\to \mathrm{Mor}(B^{+}_{\mathrm{dR}}(S))$ given by 
    \[
    (M,\varphi_{M})\mapsto (M\otimes_{\fS_{R},\phi} B^{+}_{\mathrm{dR}}(S),M\otimes_{\fS_{R}} B^{+}_{\mathrm{dR}}(S),\varphi_{M}\otimes \mathrm{id}),
    \]
    where $M\otimes_{\fS_{R},\phi} B^{+}_{\mathrm{dR}}(S)$ (resp. $M\otimes_{\fS_{R}} B^{+}_{\mathrm{dR}}(S)$) is the base change of $M$ along the ring map
    \[
    \fS_{R}\stackrel{\phi}{\to} \fS_{R}\to A_{\mathrm{inf}}(S)\to B_{\mathrm{dR}}(S) \ \ \ 
    (\text{resp.}~\fS_{R}\to A_{\mathrm{inf}}(S)\to B_{\mathrm{dR}}(S))
    \]
    (for the definition of $\fS_{R}\to A_{\mathrm{inf}}(S)$, see the last paragraph of Construction \ref{bk prism for semi-stable reduction});
    \item for a ring $A$, $\mathrm{FilMod}(A)$ denote the category of a finite projective $A$-module with a descending filtration;
    \item the functor $\mathrm{Mod}^{\varphi}(\fS_{R}[1/p])\to \mathrm{FilMod}(R[1/p])$ sending $(M,\varphi_{M})$ to the $R[1/p]$-module $M\otimes_{\fS_{R}[1/p]} R[1/p]$ equipped with the filtration defined as
    \[
    \mathrm{Im}(\phi^{*}_{\fS_{R}}M\cap \varphi_{M}^{-1}(E^{\bullet}M)\to M\otimes_{\fS_{R}[1/p]} R[1/p]);
    \]
    \item the functor $\mathrm{Mor}(B^{+}_{\mathrm{dR}}(S))\to \mathrm{FilMod}(S[1/p])$ sending $(M_{1},M_{2},f)$ to the $B^{+}_{\mathrm
    {dR}}(S)$-module $M_{1}\otimes_{B^{+}_{\mathrm
    {dR}}(S)} S[1/p]$ equipped with the filtration defined as
    \[
    \mathrm{Im}(M_{1}\cap f^{-1}(\xi^{\bullet}M_{2})\to M_{1}\otimes_{B^{+}_{\mathrm
    {dR}}(S)} S[1/p]).
    \]
\end{itemize}
Then, by flatness of the ring map $\fS_{R}\to B^{+}_{\mathrm{dR}}(S)$, the following diagram is commutative:
    \[
    \begin{tikzcd}
    \mathrm{Mod}^{\varphi}(\fS_{R}[1/p]) \ar[r] \ar[d] & \mathrm{FilMod}(R[1/p]) \ar[d,"{-\otimes_{R[1/p]} S[1/p]}"] \\
    \mathrm{Mor}(B^{+}_{\mathrm{dR}}(S)) \ar[r]  & \mathrm{FilMod}(S[1/p]).
    \end{tikzcd}
    \]
    Let 
    \[
    \varphi_{B_{\mathrm{dR}}(S)}\colon \cE_{\fS_{R}}\otimes_{\fS_{R},\phi} B_{\mathrm{dR}}(S)\isom \cE_{\fS_{R}}\otimes_{\fS_{R}} B_{\mathrm{dR}}(S)
    \]
    be the image of $\cE[1/p]_{\fS_{R}}\in \mathrm{Mod}^{\varphi}(\fS_{R}[1/p])$ via the left vertical functor. We define a filtration $\mathrm{Fil}^{\bullet}_{\cN,B_{\mathrm{dR}}(S)}$ on $\cE_{\fS_{R}}\otimes_{\fS_{R},\phi} B^{+}_{\mathrm{dR}}(S)$ by
    \[
    \mathrm{Fil}^{\bullet}_{\cN,B_{\mathrm{dR}}(S)}\coloneqq (\cE_{\fS_{R}}\otimes_{\fS_{R},\phi} B^{+}_{\mathrm{dR}}(S))\cap \varphi_{B_{\mathrm{dR}}(S)}^{-1}(\xi^{\bullet}\cdot (\cE_{\fS_{R}}\otimes_{\fS_{R},\phi} B^{+}_{\mathrm{dR}}(S))).
    \]
    Let $\overline{\mathrm{Fil}}^{\bullet}_{\cN,B_{\mathrm{dR}}(S)}$ denote the filtration on $\cE_{\fS_{R}}\otimes_{\fS_{R},\phi} B^{+}_{\mathrm{dR}}(S)\otimes_{B^{+}_{\mathrm
    {dR}}(S)} S[1/p]$ defined as the image of $\mathrm{Fil}^{\bullet}_{\cN,B_{\mathrm{dR}}(S)}$ via the surjection
    \[
    \cE_{\fS_{R}}\otimes_{\fS_{R},\phi} B^{+}_{\mathrm{dR}}(S)\twoheadrightarrow \cE_{\fS_{R}}\otimes_{\fS_{R},\phi} B^{+}_{\mathrm{dR}}(S)\otimes_{B^{+}_{\mathrm
    {dR}}(S)} S[1/p].
    \]
    Applying the commutativity of the above diagram to $\cE[1/p]_{\fS_{R}}\in \mathrm{Mod}^{\varphi}(\fS_{R}[1/p])$ gives an isomorphism
    \begin{align}
        (\overline{(\phi^{*}\cE)[1/p]}_{\fS_{R}},\mathrm{Fil}^{\bullet}_{\cN,\fS_{R}})\otimes_{R} S\cong (\cE_{A_{\mathrm{inf}}(S)}\otimes_{A_{\mathrm{inf}}(S),\phi} B^{+}_{\mathrm{dR}}(S)\otimes_{B^{+}_{\mathrm{dR}}(S)} S,\overline{\mathrm{Fil}}^{\bullet}_{\cN,B_{\mathrm{dR}}(S)}).
    \end{align}
    By Lemma \ref{nyg filt eq to de rham filt over BdR}, we have the identity of filtrations on $\cE_{A_{\mathrm{inf}}(S)}\otimes_{A_{\mathrm{inf}}(S),\phi} B^{+}_{\mathrm{dR}}(S)$
    \[
    \mathrm{Fil}^{\bullet}_{\cN,B_{\mathrm{dR}}(S)}=(\cE_{A_{\mathrm{inf}}(S)}\otimes_{A_{\mathrm{inf}}(S),\phi} B^{+}_{\mathrm{dR}}(S))\cap \mathrm{Fil}^{\bullet}_{\mathrm{dR},B_{\mathrm{dR}}(S)}.
    \]
    By the definition of $\mathrm{Fil}^{\bullet}_{\mathrm{dR}}$ and the argument in Construction \ref{bcrys mod ass to isocrystal}, when we choose $s\colon (R,\cM_{R})\to (A_{\mathrm{crys},K}(S),\cM_{A_{\mathrm{crys},K}(S)})$ as in \emph{loc. cit.}, there is an isomorphism 
    \[
    \cE_{A_{\mathrm{inf}}(S)}\otimes_{A_{\mathrm{inf}}(S),\phi} B^{+}_{\mathrm{dR}}(S)\cong T_{\mathrm{isoc}}(\cE)_{(X_{p=0},\fX,\cM_{\fX})}\otimes_{R[1/p],s} B^{+}_{\mathrm{dR}}(S)
    \] 
    such that the induced isomorphism
    \[
    \cE_{A_{\mathrm{inf}}(S)}\otimes_{A_{\mathrm{inf}}(S),\phi} B_{\mathrm{dR}}(S)\cong T_{\mathrm{isoc}}(\cE)_{(X_{p=0},\fX,\cM_{\fX})}\otimes_{R[1/p],s} B_{\mathrm{dR}}(S)
    \] 
    sends the filtration on the left hand side $\mathrm{Fil}^{\bullet}_{\mathrm{dR},B_{\mathrm{dR}}(S)}$ to the product filtration $\mathrm{Fil}^{\bullet}_{\mathrm{dR}}\otimes_{R[1/p],s} \mathrm{Fil}^{\bullet}B_{\mathrm{dR}}(S)$ on the right hand side. Therefore, we have filtered isomorphisms
    \begin{align*}
    &(\cE_{\fS_{R}}\otimes_{\fS_{R},\phi} B^{+}_{\mathrm{dR}}(S),\mathrm{Fil}^{\bullet}_{\cN,B_{\mathrm{dR}}(S)}) \\
    = &(\cE_{\fS_{R}}\otimes_{\fS_{R},\phi} B^{+}_{\mathrm{dR}}(S),(\cE_{\fS_{R}}\otimes_{\fS_{R},\phi} B^{+}_{\mathrm{dR}}(S))\cap \mathrm{Fil}^{\bullet}_{\mathrm{dR},B_{\mathrm{dR}}(S)}) \\
    \cong &(T_{\mathrm{isoc}}(\cE)_{(R\twoheadrightarrow R/p,\cM_{R})}\otimes_{R[1/p]} B^{+}_{\mathrm{dR}}(S),\bigoplus_{n\geq 0} (\mathrm{Fil}^{\bullet-n}_{\mathrm{dR}}\otimes_{R[1/p]} \mathrm{Fil}^{n}B_{\mathrm{dR}}(S))).
    \end{align*}
    Taking the mod-$\xi$ reductions with induced filtrations, we get a filtered isomorphism
    \begin{align}
        (\cE_{\fS_{R}}\otimes_{\fS_{R},\phi} B^{+}_{\mathrm{dR}}(S)\otimes_{B^{+}_{\mathrm{dR}}(S)} S[1/p],\overline{\mathrm{Fil}}^{\bullet}_{\cN,B^{+}_{\mathrm{dR}}(S)})\cong T_{\mathrm{dR}}(\cE)\otimes_{R[1/p]} S[1/p].
    \end{align}
    Then the combination of (1) and (2) proves the assertion.
\end{proof}

\subsection{The proof of fully faithfulness part of Theorem \ref{mainthm for et real}}

\begin{lem}\label{faithfulness of et real}
Let $(\mathrm{Spf}(S),\cM_{S})$ be a $p$-adic saturated log formal scheme. Suppose that $S$ is a $p$-torsion free qrsp ring, and we are given a chart $P\to \cM_{S}$ with $P$ being and uniquely $n$-divisible for every $n\geq 1$ and saturated. Then the \'{e}tale realization functor
\[
T_{\et}\colon \mathrm{Vect}^{\mathrm{an},\varphi}((\mathrm{Spf}(S),\cM_{S})_{\Prism})\to \mathrm{Loc}_{\bZ_{p}}((\mathrm{Spf}(S),\cM_{S})^{\Diamond}_{\eta,\mathrm{qprok\et}})
\]
is faithful.
\end{lem}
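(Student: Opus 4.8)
The plan is to reduce the faithfulness of $T_{\et}$ to a concrete statement about the evaluation of analytic prismatic $F$-crystals on a single well-chosen prism, and then identify that evaluation with an \'etale cohomology object via the results already recorded in the excerpt. Concretely, since $S$ is a $p$-torsion free qrsp ring and $P\to \cM_S$ is a chart with $P$ saturated and uniquely $n$-divisible for all $n$, Lemma \ref{divisible monoid and log pris site} gives an equivalence $(\mathrm{Spf}(S),\cM_S)^{\mathrm{str}}_{\Prism}\simeq \mathrm{Spf}(S)_{\Prism}$, so we may work with the \emph{non-log} absolute prismatic site of the qrsp ring $S$. For a qrsp ring there is a final object of the topos, namely the prismatic envelope $(\mathbbl{\Delta}_S,I)$ of a chosen surjection from a perfect prism (equivalently, $A_{\mathrm{crys}}$-type construction), and so $\mathrm{Vect}^{\mathrm{an},\varphi}(\mathrm{Spf}(S)_{\Prism})\simeq \mathrm{Vect}^{\mathrm{an},\varphi}(\mathbbl{\Delta}_S,I)$, i.e. every analytic prismatic $F$-crystal is just a vector bundle on $U(\mathbbl{\Delta}_S,I)=\Spec(\mathbbl{\Delta}_S)\setminus V(p,I)$ equipped with a Frobenius isomorphism after inverting $I$.

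Next I would show that a morphism $f\colon \cE_1\to \cE_2$ of analytic log prismatic $F$-crystals which becomes zero after applying $T_{\et}$ is already zero. Considering internal homomorphisms, it suffices to treat the case $\cE_1=\mathbf{1}$, so $f$ is a Frobenius-fixed global section $s\in \Gamma(U(\mathbbl{\Delta}_S,I),\cE_2)^{\varphi=1}$, and we must show that if its image in $T_{\et}(\cE_2)$ vanishes then $s=0$. The $p$-adically complete localization $\mathbbl{\Delta}_S[1/I]^{\wedge}_p$ gives the Laurent $F$-crystal $\cE_2[1/\cI_{\Prism}]^{\wedge}_p$, whose associated object under Theorem \ref{etale realization} is $T_{\et}(\cE_2)$; concretely, by Remark \ref{Laurent isom} (applied after passing to a perfect indiscrete log prism covering, where the bigness hypotheses on the chart $P$ are exactly what allows us to arrange $\cM_S/\cM_S^\times$ divisible), the Frobenius-invariants of $\cE_2$ evaluated over that perfect prism recover $T_{\et}(\cE_2)$ as a $\bZ_p$-module, and the comparison is functorial in $\cE_2$ and compatible with the restriction from $U(\mathbbl{\Delta}_S,I)$. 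Hence the image of $s$ in the Laurent $F$-crystal, hence in $T_{\et}(\cE_2)$, vanishing forces $s$ to vanish after the $(p,I)$-completed localization $\mathbbl{\Delta}_S\to \mathbbl{\Delta}_S[1/I]^{\wedge}_p$; combined with the Frobenius equation $\varphi(s)=s$ and the fact that $\varphi$ is an isomorphism over $\mathbbl{\Delta}_S[1/I]$, one propagates the vanishing from the $I$-torsion-free locus across $U(\mathbbl{\Delta}_S,I)$. The key point here is the usual one in these arguments (as in \cite{bs22,bs23,gr24}): over a bounded prism $\mathbbl{\Delta}_S$ the map $\mathbbl{\Delta}_S\to \mathbbl{\Delta}_S[1/p]\times_{\mathbbl{\Delta}_S[1/pI]}\mathbbl{\Delta}_S[1/I]^{\wedge}_p$ is injective on global sections of a vector bundle on $U(\mathbbl{\Delta}_S,I)$, so a section that dies in $\mathbbl{\Delta}_S[1/I]^{\wedge}_p$ and is $\varphi$-fixed must already be zero after inverting $p$, and since $\cE_2$ is a vector bundle (hence $p$-torsion free) on the $I$-torsion-free part, it is zero.

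The main obstacle I anticipate is the matching of normalizations between the log prismatic input and the \'etale output: one has to be careful that the $\varphi$-twist present in Remark \ref{Laurent isom} (the identification is over $A[1/I]^{\wedge}_p$ with the Frobenius-twisted structure, and over perfect prisms corresponding to \emph{indiscrete} log prisms) genuinely produces $T_{\et}(\cE)$ as a constant local system on the relevant diamond, rather than some Frobenius-pullback thereof. This is exactly the role of the hypothesis that $P$ is uniquely $n$-divisible and saturated: it lets us cover $(\mathrm{Spf}(S),\cM_S)$ in the log prismatic site by a perfect log prism whose residue log structure is divisible, putting us precisely in the situation of Remark \ref{Laurent isom}. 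Once that identification is in place, faithfulness is formal: a $\varphi$-fixed global section over $U(\mathbbl{\Delta}_S,I)$ whose image in $T_{\et}$ vanishes must vanish, because the comparison isomorphism there is $A[1/I]^{\wedge}_p$-linear and injective on the $\varphi$-fixed part, and the section, being in a vector bundle over the locus where $p$ is invertible or $I$ is invertible, is detected by these two localizations together with their common refinement.
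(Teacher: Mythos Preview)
Your overall strategy matches the paper's: both reduce the source category to the non-log prismatic site via Lemma~\ref{divisible monoid and log pris site}, and both ultimately rely on the faithfulness of the non-log \'etale realization for qrsp rings established in \cite[Lemma~4.1]{gr24}. The paper's proof, however, is shorter and cleaner in two respects. First, it explicitly handles the \emph{target} side by invoking Lemma~\ref{qproket map to div log diamond is str qproet} to identify $\mathrm{Loc}_{\bZ_p}((\mathrm{Spf}(S),\cM_S)^{\Diamond}_{\eta,\mathrm{qprok\et}})$ with $\mathrm{Loc}_{\bZ_p}(\mathrm{Spf}(S)_{\eta,\mathrm{pro\et}})$; you never address why the log and non-log local system categories agree, and your appeal to Remark~\ref{Laurent isom} does not supply this. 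Second, the paper simply cites \cite[Lemma~4.1]{gr24} as a black box for the non-log statement, whereas you attempt to unpack it.

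Your unpacking has a genuine gap. The injectivity of $\Gamma(U(\mathbbl{\Delta}_S,I),\cE)\to \cE(\mathbbl{\Delta}_S[1/p])\times_{\cE(\mathbbl{\Delta}_S[1/pI])}\cE(\mathbbl{\Delta}_S[1/I]^{\wedge}_p)$ is fine (it follows from transversality of $\mathbbl{\Delta}_S$ since $S$ is $p$-torsion free), but it only tells you that a section vanishing in \emph{both} $\mathbbl{\Delta}_S[1/p]$ and $\mathbbl{\Delta}_S[1/I]^{\wedge}_p$ is zero. Your assertion that ``a section that dies in $\mathbbl{\Delta}_S[1/I]^{\wedge}_p$ and is $\varphi$-fixed must already be zero after inverting $p$'' is precisely the missing step, and the $\varphi$-equivariance relation $\varphi_{\cE}(\phi^*s)=s$ lives only over $\Spec(\mathbbl{\Delta}_S[1/I])$, so it is not clear how it propagates vanishing to $\Spec(\mathbbl{\Delta}_S[1/p])$. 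This is the actual content of \cite[Lemma~4.1]{gr24}, and your sketch does not supply the mechanism. If you want to reprove it rather than cite it, you need to explain concretely how iterated Frobenius pullback (or a reduction to the perfectoid case where $\phi$ is an automorphism) carries the vanishing across the open cover.
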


\begin{proof}
We see that the \'{e}tale realization functor
\[
T_{\et}\colon \mathrm{Vect}^{\mathrm{an},\varphi}(S_{\Prism})\to \mathrm{Loc}_{\bZ_{p}}(\mathrm{Spf}(S)_{\eta,\mathrm{pro\et}})
\]
is faithful by the same argument as the proof of \cite[Lemma 4.1]{gr24}. By Lemma \ref{divisible monoid and log pris site}, the source category is equivalent to $\mathrm{Vect}^{\mathrm{an},\varphi}((\mathrm{Spf}(S),\cM_{S})_{\Prism})$. On the other hand, Lemma \ref{qproket map to div log diamond is str qproet} gives an equivalence
\[
\mathrm{Loc}_{\bZ_{p}}((\mathrm{Spf}(S),\cM_{S})^{\Diamond}_{\eta,\mathrm{qprok\et}})\simeq \mathrm{Loc}_{\bZ_{p}}(\mathrm{Spf}(S)^{\Diamond}_{\eta,\mathrm{qpro\et}})\simeq \mathrm{Loc}_{\bZ_{p}}(\mathrm{Spf}(S)_{\eta,\mathrm{pro\et}}).
\]
This proves the assertion.
\end{proof}

\begin{construction}[{\cite[Construction 4.12]{gr24}}]\label{construction of latt real}

Let $S$ be a $p$-torsion free integral perfectoid $\cO_{C}$-algebra and set $\fX\coloneqq \mathrm{Spf}(S)$. The \emph{lattice realization functor} is a functor
\begin{align*}
    \mathrm{Vect}^{\mathrm{an},\varphi}(A_{\mathrm{inf}}(S),(\xi))&\to \bB^{+}_\mathrm{dR}\text{-}\mathrm{Latt} \\
    &\coloneqq\{(\bL,M)| \ \bL\in \mathrm{Loc}_{\bZ_{p}}(\fX_{\eta,\mathrm{pro\et}}), M(\subset \bL\otimes_{\bZ_{p}} \bB_{\mathrm{dR}}):\bB_{\mathrm{dR}}^{+}\text{-lattice} \}
\end{align*}
given on an object $(\cE,\varphi_{\cE})\in \mathrm{Vect}^{\mathrm{an},\varphi}(A_{\mathrm{inf}}(S),(\xi))$ as follows:
\begin{itemize}
    \item $\bL$ is the \'{e}tale realization of $(\cE,\varphi_{\cE})$.
    \item Taking the base change of the isomorphism in Proposition \ref{Laurent isom after inverting mu} along 
    \[
    \bA_{\mathrm{inf}}[1/\phi^{-1}(\mu)]\stackrel{\phi}{\to} \bA_{\mathrm{inf}}[1/\mu]\to \bB_{\mathrm{dR}},
    \]
    we get an isomorphism
    \[
    \phi_{\bB_{\mathrm{inf}}}^{*}\bB_{\mathrm{inf}}(\cE)\otimes_{\bB_{\mathrm{inf}}} \bB_{\mathrm{dR}}\cong \bL\otimes_{\bZ_{p}} \bB_{\mathrm{dR}}.
    \]
    and a $\bB^{+}_{\mathrm{dR}}$-lattice $M\coloneqq \phi_{\bB_{\mathrm{inf}}}^{*}\bB_{\mathrm{inf}}(\cE)\otimes_{\bB_{\mathrm{inf}}} \bB^{+}_{\mathrm{dR}}$ in $\bL\otimes_{\bZ_{p}} \bB_{\mathrm{dR}}$.
\end{itemize}
    
\end{construction}

\begin{lem}[{\cite[Lemma 4.3 and Proposition 4.4]{gr24}}]\label{fully faithfulness of latt real}
    Let $S$ be a $p$-torsion free perfectoid $\cO_{C}$-algebra. Suppose that $S/\pi$ is a free $\cO_{C}/\pi$-module. Then the following assertions hold.
    \begin{enumerate}
        \item We have the following formulas.
        \[
        \bigcap_{r\geq 1} \frac{\mu}{\phi^{-r}(\mu)}\Prism_{S}=\mu \Prism_{S}
        \]
        \[
        \bigcap_{r\geq 1} (\frac{\mu}{\phi^{-r}(\mu)}\Prism_{S}[1/p])=\mu \Prism_{S}[1/p]
        \]
        \[
        \bigcap_{r\geq 1} (\frac{\mu}{\phi^{-r}(\mu)}\Prism_{S}[1/\xi])=\mu \Prism_{S}[1/\xi]
        \]
        \item The lattice realization functor in Construction \ref{construction of latt real} is fully faithful.
    \end{enumerate}
    
\end{lem}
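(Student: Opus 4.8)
The two parts of the statement have quite different flavours, so I would treat them separately. For part (1), the three intersection formulas are essentially formal consequences of the structure of $\Prism_S = A_{\mathrm{inf}}(S)$ together with the hypothesis that $S/\pi$ is free over $\cO_C/\pi$, which allows one to reduce the assertion to the case $S = \cO_C$ (or a free base-changed version thereof) where the computation is classical. Concretely, I would first establish the first formula: the inclusion $\mu\Prism_S \subseteq \bigcap_r \frac{\mu}{\phi^{-r}(\mu)}\Prism_S$ is clear since $\mu \in \frac{\mu}{\phi^{-r}(\mu)}\Prism_S$ for every $r$ (as $\phi^{-r}(\mu) \mid \mu$ in $A_{\mathrm{inf}}(\cO_C)$, hence in $\Prism_S$). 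For the reverse inclusion, an element $x$ lying in all these ideals can be written $x = \frac{\mu}{\phi^{-r}(\mu)} y_r$; using that $\frac{\mu}{\phi^{-r}(\mu)} = \prod_{i=1}^{r}\frac{\phi^{-(i-1)}(\mu)}{\phi^{-i}(\mu)}$ and that each factor $\frac{\phi^{-(i-1)}(\mu)}{\phi^{-i}(\mu)}$ is (up to a unit) the element often denoted $q^{1/p^i}$-type factor, one sees $x/\mu$ lies in $\bigcap_r \phi^{-r}(\mu)^{-1}\cdot(\text{stuff})$, and the freeness hypothesis lets one check this forces $x/\mu \in \Prism_S$ by looking modulo $\pi$ and using that $A_{\mathrm{inf}}(S) = A_{\mathrm{inf}}(\cO_C)\widehat{\otimes}_{\cO_C}S$ respects the relevant filtrations. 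The $[1/p]$ and $[1/\xi]$ variants then follow by the same argument after inverting, since inverting $p$ or $\xi$ commutes with the countable intersections in question (here one uses that each $\frac{\mu}{\phi^{-r}(\mu)}\Prism_S$ is a principal ideal in a coherent enough ring, so that $\big(\bigcap_r J_r\big)[1/f] = \bigcap_r (J_r[1/f])$ for $f \in \{p,\xi\}$ — this requires a small Mittag-Leffler / boundedness input).

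For part (2), the fully faithfulness of the lattice realization functor, I would follow the strategy of \cite[Proposition 4.4]{gr24}. The functor $\mathrm{Vect}^{\mathrm{an},\varphi}(A_{\mathrm{inf}}(S),(\xi)) \to \bB^+_{\mathrm{dR}}\text{-}\mathrm{Latt}$ sends $(\cE,\varphi_{\cE})$ to the pair $(\bL, M)$ where $\bL = T_{\et}(\cE)$ and $M = \phi^*\bB_{\mathrm{inf}}(\cE)\otimes \bB^+_{\mathrm{dR}}$. By taking internal homomorphisms, fully faithfulness reduces to the statement that for $\cE \in \mathrm{Vect}^{\mathrm{an},\varphi}(A_{\mathrm{inf}}(S),(\xi))$ the global sections $\Gamma(U(A_{\mathrm{inf}}(S),(\xi)),\cE)^{\varphi = 1}$ are recovered as $\bL^{\varphi=1} \cap M$ inside $\bL\otimes \bB_{\mathrm{dR}}$ — more precisely, that the natural map identifies $\cE$-data with its image. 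The key point is a Beauville--Laszlo / arc-descent style glueing: $\cE$ on $U(A_{\mathrm{inf}}(S),(\xi)) = \Spec(A_{\mathrm{inf}}(S))\setminus V(p,\xi)$ is glued from its restriction to $\Spec(A_{\mathrm{inf}}(S)[1/p])$ and $\Spec(A_{\mathrm{inf}}(S)[1/\xi]) = \Spec(W(S^\flat)[1/\xi])$, with the Frobenius structure pinning down the $[1/\xi]$-part in terms of $\bL$, and the $\bB^+_{\mathrm{dR}}$-lattice recording the $[1/p]$-completed local data at $\xi$. The intersection formulas in part (1) enter precisely here: they control how much information is lost upon passing to $\bB_{\mathrm{dR}}$, showing that the descent datum is faithfully recorded by $(\bL, M)$. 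I would assemble the argument by: (a) checking the functor is well-defined (the lattice $M$ is independent of choices — this uses Proposition \ref{Laurent isom after inverting mu} and its compatibility with Frobenius), (b) constructing a candidate quasi-inverse on the essential image sending $(\bL, M)$ back to the glued pair, and (c) verifying that composing with the forgetful data recovers $\cE$ using the formulas of part (1) to see that no sections are lost.

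The main obstacle, as I see it, is the careful bookkeeping in part (1): making the reduction "to the free case over $\cO_C$" fully rigorous requires knowing that the ideals $\frac{\mu}{\phi^{-r}(\mu)}\Prism_S$ behave well under the base change $A_{\mathrm{inf}}(S) \cong A_{\mathrm{inf}}(\cO_C)\widehat{\otimes}_{\cO_C}S$ and under the $\pi$-adic (or $p$-adic) filtration, and that the countable intersections commute with this base change and with localization. The freeness of $S/\pi$ over $\cO_C/\pi$ is exactly what makes this flatness/filtration compatibility hold, but one must be attentive to completions: $A_{\mathrm{inf}}(S)$ is $(p,\xi)$-complete, not just $p$-complete, and the intersections are genuinely infinite, so a naive "flat base change commutes with intersection" is not available — one needs the explicit principal generators and a direct element-chasing argument modulo $\pi$. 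Once part (1) is secured, part (2) is comparatively formal and follows the template of \cite{gr24} with only cosmetic changes, since the log structure on $S$ plays no role at this stage (the $A_{\mathrm{inf}}(S)$ here carries no log structure in the statement, only the underlying prism).
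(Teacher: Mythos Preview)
Your overall strategy matches the paper's exactly: the paper's proof is nothing more than the observation that the arguments of \cite[Lemma~4.3 and Proposition~4.4]{gr24}, written there for a specific integral perfectoid ring, go through verbatim once one knows $S/\pi$ is free over $\cO_C/\pi$; in particular it uses (1) as the key input to (2), just as you propose.

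That said, your attempt to flesh out part (1) contains a genuine error. The identity $A_{\mathrm{inf}}(S) \cong A_{\mathrm{inf}}(\cO_C)\,\widehat{\otimes}_{\cO_C}\,S$ is false: $A_{\mathrm{inf}}(S)=W(S^{\flat})$ is built from the tilt $S^{\flat}$, and the Witt-vector construction does not commute with base change along $\cO_C$-algebras in this way (indeed, $A_{\mathrm{inf}}(\cO_C)$ is not an $\cO_C$-algebra in any natural sense compatible with $\theta$). So the reduction-to-$\cO_C$ mechanism you outline does not work as stated. What the freeness of $S/\pi$ over $\cO_C/\pi$ actually buys you is freeness of $S^{\flat}/\pi^{\flat}$ over $\cO_C^{\flat}/\pi^{\flat}$ (via the standard tilting identification $S/\pi\cong S^{\flat}/\pi^{\flat}$), and hence a direct-sum decomposition of $A_{\mathrm{inf}}(S)$ modulo $[\pi^{\flat}]$; the argument in \cite{gr24} then runs by working modulo powers of $[\pi^{\flat}]$ and using this freeness to reduce the divisibility questions to $A_{\mathrm{inf}}(\cO_C)$, where the elements $\mu$, $\phi^{-r}(\mu)$ have explicit Teichm\"uller descriptions. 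Your claim that ``inverting $p$ or $\xi$ commutes with the countable intersection'' via a Mittag--Leffler argument is also not how this goes: the three formulas are proved separately by the same style of element chase, not deduced from one another by localization.

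Your sketch of part (2) is essentially correct and is indeed the template from \cite{gr24}: reduce to a statement about global sections via internal $\mathrm{Hom}$, glue along the open cover $\{\mathrm{Spec}(A_{\mathrm{inf}}(S)[1/p]),\,\mathrm{Spec}(A_{\mathrm{inf}}(S)[1/\xi])\}$ of $U(A_{\mathrm{inf}}(S),(\xi))$, and invoke the formulas of (1) to see that the $\bB^{+}_{\mathrm{dR}}$-lattice together with $\bL$ determines $\cE$. You are also right that the log structure plays no role here.
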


\begin{proof}
    \begin{enumerate}
        \item In \cite[Lemma 4.3]{gr24}, this assertion is proved for a specified integral perfectoid ring $S$. The argument in \emph{loc. cit.} works in this setting as well.
        \item This follows from the same argument as \cite[Proposition 4.4]{gr24}. We can use (1) as a key input.
    \end{enumerate}
\end{proof}

\begin{thm}\label{fully faithfulness of et real for anal pris}
    For a semi-stable log formal scheme $(\fX,\cM_{\fX})$ over $\cO_{K}$, the \'{e}tale realization functor 
    \[
    T_{\et}\colon \mathrm{Vect}^{\mathrm{an},\varphi}((\fX,\cM_{\fX})_{\Prism})\to \mathrm{Loc}_{\bZ_{p}}((\fX,\cM_{\fX})_{\eta})
    \]
    is fully faithful.
\end{thm}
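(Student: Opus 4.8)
The plan is to reduce the fully faithfulness over $(\fX,\cM_{\fX})$ to a local computation on Breuil--Kisin log prisms, and there to the known fully faithfulness of Fargues's (lattice) functor. First I would use Lemma \ref{fully faithfulness from pris crys to anal pris crys}-style \'{e}tale descent (more precisely, since both source and target satisfy descent for the strict \'{e}tale topology on $\fX$) to reduce to the case $(\fX,\cM_{\fX})=(\mathrm{Spf}(R),\cM_{R})$ small affine with a fixed framing $t$. Faithfulness is then handled separately: by Lemma \ref{faithfulness of et real}, applied to a cover $(\mathrm{Spf}(S),\cM_{S})$ coming from Lemma \ref{absolute int clos} (so $S$ is a $p$-torsion free qrsp perfectoid $\cO_C$-algebra with a uniquely $n$-divisible saturated chart and $R\to S$ faithfully flat), the functor $T_{\et}$ is faithful; note that $R\to S$ faithfully flat forces faithfulness to descend from $(\mathrm{Spf}(S),\cM_{S})$ back to $(\fX,\cM_{\fX})$ after one invokes that evaluation at the Breuil--Kisin log prism is conservative on morphisms (Lemma \ref{bk prism for semi-stable reduction}).

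For fullness, I would argue as in \cite[\S4]{gr24}. Fix two analytic log prismatic $F$-crystals $\cE_1,\cE_2$ and a morphism $f\colon T_{\et}(\cE_1)\to T_{\et}(\cE_2)$ of Kummer \'{e}tale $\bZ_p$-local systems. Replacing $\cE_i$ by $\mathcal{H}om(\cE_1,\cE_2)$ and $\mathbf{1}$, it suffices to show that every $T_{\et}$-invariant global section of $T_{\et}(\cE)$ lifts uniquely to a morphism $\mathbf{1}\to\cE$, i.e.\ to a global section of $\cE$ fixed by Frobenius. Via Lemma \ref{anal pris crys via bk prism} we evaluate at $(\fS_R,(E),\cM_{\fS_R})$ and its self-coproducts, so a morphism $\mathbf{1}\to\cE$ is the data of an element $x\in\Gamma(U(\fS_R,(E)),\cE_{\fS_R})$ fixed by $\varphi_\cE$ and compatible with the descent datum over $\fS_R^{(1)}$. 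The section $f$ gives, after base change along the log prism map $(\fS_R,(E),\cM_{\fS_R})\to(A_{\mathrm{inf}}(\widetilde R_{\infty,\alpha}),(\xi),\cM_{A_{\mathrm{inf}}(\widetilde R_{\infty,\alpha})})$ constructed in Construction \ref{construction of bk log prism}, a Frobenius-fixed element of $\cE_{A_{\mathrm{inf}}(\widetilde R_{\infty,\alpha})}[1/\phi^{-1}(\mu)]$ using Proposition \ref{Laurent isom after inverting mu}; the point is that a priori this element only lies in the localization at $\phi^{-1}(\mu)$, and one must show that being Frobenius-fixed and living over all of $U(A_{\mathrm{inf}}(S),(\xi))$ forces it into $\cE_{A_{\mathrm{inf}}(S)}$ itself, and then that it descends to $\fS_R$.

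The mechanism for the first clearing-denominators step is exactly Lemma \ref{fully faithfulness of latt real}: after passing to a perfectoid cover $S$ as in Lemma \ref{absolute int clos} (which one arranges, using Lemma \ref{absolute int clos} together with a standard splitting argument, to have $S/\pi$ free over $\cO_C/\pi$, so that the intersection formulas $\bigcap_{r\ge1}\frac{\mu}{\phi^{-r}(\mu)}\Prism_S=\mu\Prism_S$ of Lemma \ref{fully faithfulness of latt real}(1) apply), the fully faithfulness of the lattice realization functor of Construction \ref{construction of latt real} produces the morphism over $(A_{\mathrm{inf}}(S),(\xi))$. Then Proposition \ref{kqsyn descent for anal pris crys} — the ``Kummer quasi-syntomic descent'' for analytic log prismatic $F$-crystals — identifies $\mathrm{Vect}^{\mathrm{an},\varphi}((\fX,\cM_{\fX})_{\Prism})$ with the descent category of $\mathrm{Vect}^{\mathrm{an},\varphi}$ along the cover $(\mathrm{Spf}(S),\cM_{S})\to(\fX,\cM_{\fX})$ (after checking this factors through a strict quasi-syntomic cover of $(\fX_{\infty,\alpha},\cM_{\fX_{\infty,\alpha}})$, cf.\ Lemma \ref{perfectoid cover}), so the morphism over $S$ together with its descent datum over the \v{C}ech nerve glues to a morphism over $(\fX,\cM_{\fX})$; since $T_{\et}$ recovers $f$ by construction of the lattice functor, we are done.

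The main obstacle I anticipate is the bookkeeping in the descent step: one must check that the perfectoid log formal scheme $(\mathrm{Spf}(S),\cM_{S})$ produced by Lemma \ref{absolute int clos}, equipped with the pullback log structure of $\cM_{\widetilde R_{\infty,\alpha}}$, genuinely satisfies the hypotheses of Proposition \ref{kqsyn descent for anal pris crys} (strict quasi-syntomic over $(\fX_{\infty,\alpha},\cM_{\fX_{\infty,\alpha}})$ and surjective over $(\fX,\cM_{\fX})$), and that the compatibility of the lattice functor's output with the descent datum on the \v{C}ech nerve is preserved — i.e.\ that all the isomorphisms of Proposition \ref{Laurent isom after inverting mu} and Remark \ref{Laurent isom} are functorial in the perfectoid cover. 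The remaining steps (the intersection formula, the faithfulness, the reduction to $\mathcal{H}om$) are essentially the arguments of \cite[\S4]{gr24} transported verbatim to the logarithmic setting, the logarithmic input being entirely absorbed into Construction \ref{construction of bk log prism}, Lemma \ref{bk prism for semi-stable reduction}, and Proposition \ref{kqsyn descent for anal pris crys}.
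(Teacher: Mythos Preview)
Your proposal has the right overall architecture --- \'etale localization to a small affine, faithfulness via Lemma \ref{faithfulness of et real}, fullness via the lattice realization functor over a perfectoid cover, and descent via Proposition \ref{kqsyn descent for anal pris crys} --- but there is a genuine gap at the step where you invoke Lemma \ref{fully faithfulness of latt real}(2). The lattice realization functor of Construction \ref{construction of latt real} lands in \emph{pairs} $(\bL,M)$, and to use its full faithfulness you need a morphism of pairs as input, i.e.\ you must already know that $f\otimes\mathrm{id}_{\bB_{\mathrm{dR}}}$ carries the lattice $M_1=\phi^*_{\bB_{\mathrm{inf}}}\bB_{\mathrm{inf}}(\cE_1)\otimes_{\bB_{\mathrm{inf}}}\bB^+_{\mathrm{dR}}$ into $M_2$. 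A map of \'etale $\bZ_p$-local systems alone does not force this. In the paper this is exactly where Proposition \ref{bcrys functor is fully faithful} enters: the map $f\otimes\mathrm{id}_{\bB_{\mathrm{crys}}}$ on $\bB_{\mathrm{crys}}(T_{\mathrm{crys}}(\cE_i))$, by full faithfulness of $\cE\mapsto\bB_{\mathrm{crys}}(\cE)$ on $F$-isocrystals, comes from a map $T_{\mathrm{crys}}(\cE_1)\to T_{\mathrm{crys}}(\cE_2)$ of $F$-isocrystals, and \emph{that} is what guarantees the $\bB^+_{\mathrm{dR}}$-lattices are respected. Your argument supplies no substitute for this input, so the appeal to Lemma \ref{fully faithfulness of latt real}(2) is not justified.

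This omission cascades into a second problem you do not address: Proposition \ref{bcrys functor is fully faithful} is only available when $(\fX,\cM_{\fX})$ admits framings with $n\le 1$, so the strategy above handles only that case directly. For $n\ge 2$ the paper argues by induction on $n$, covering $\fX$ by two Zariski opens each admitting a framing with strictly smaller $n$, and then verifying by an explicit exact-sequence computation on $\fS_R^{(\bullet)}$ that morphisms of analytic log prismatic $F$-crystals glue along this cover. Your proposal treats all $n$ uniformly, which cannot go through by this route. A secondary issue: you propose to arrange $S/\pi$ free over $\cO_C/\pi$ via Lemma \ref{absolute int clos} ``together with a standard splitting argument,'' but that lemma produces the completed absolute integral closure, for which such freeness is far from clear; the paper instead works directly with $\widetilde R_{\infty,\alpha}$ and checks $\widetilde R_{\infty,\alpha}/\pi$ is $\cO_C/\pi$-free by an explicit computation --- one that again only goes through for $n\le 1$.
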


\begin{proof}
    By working \'{e}tale locally on $\fX$, we may assume that $(\fX,\cM_{\fX})=(\mathrm{Spf}(R),\cM_{R})$ is small affine. Fix a framing $(\mathrm{Spf}(R),\cM_{R})\to (\mathrm{Spf}(R_{0}),\bN^{r})^{a}$. By Proposition \ref{kqsyn descent for anal pris crys} and Lemma \ref{faithfulness of et real}, it is enough to prove that $T_{\et}$ is full. Let $\cE_{1},\cE_{2}\in \mathrm{Vect}^{\mathrm{an},\varphi}((\fX,\cM_{\fX})_{\Prism})$ and $f\colon T_{\et}(\cE_{1})\to T_{\et}(\cE_{2})$ be a morphism of $\bZ_{p}$-local systems. 

    First, suppose that $n\leq 1$ (in other words, the special fiber of $\fX$ is smooth). In this case, by isomorphisms
    \begin{align*}
        \widetilde{R}/\pi &\cong (R\otimes_{\cO_{K}\langle \{x_{i}^{\pm 1}\},\{y_{j}\} \rangle} \cO_{K}\langle \{x_{i}^{\pm 1/p^{\infty}}\},\{y_{j}^{\bQ_{\geq 0}}\} \rangle \otimes_{\cO_{K}} \cO_{C})/\pi \\
        &\cong (R/\pi \otimes_{k[\{x_{i}^{\pm 1}\},\{y_{j}\}]} k[\{x_{i}^{\pm 1/p^{\infty}}\},\{y_{j}^{\bQ_{\geq 0}}\}])  \otimes_{k} \cO_{C}/\pi,
    \end{align*}
    $\widetilde{R}/\pi$ is a free $\cO_{C}/\pi$-module. Hence we can use Lemma \ref{fully faithfulness of latt real} and obtain the following diagram.
    \[
    \begin{tikzcd}
        & \mathrm{Vect}^{\mathrm{an},\varphi}((\fX,\cM_{\fX})_{\Prism}) \ar[rd,"\cE\mapsto (T_{\et}(\cE)\text{,}\bB^{+}_{\mathrm{dR}}(T_{\mathrm{crys}}(\cE)))"] \ar[ld] &  \\
        \mathrm{Vect}^{\mathrm{an},\varphi}(\widetilde{R}_{\Prism}) \ar[r,"\sim"] &
        \mathrm{Vect}^{\mathrm{an},\varphi}(\Prism_{\widetilde{R}},I) \ar[r,"\sim"]  & \bB_\mathrm{dR}(\widetilde{R})\text{-}\mathrm{Latt}
    \end{tikzcd}
    \]
    By Proposition \ref{bcrys functor is fully faithful}, there exists a unique map $T_{\mathrm{crys}}(\cE_{1})\to T_{\mathrm{crys}}(\cE_{2})$ of isocrystals which induces the map
    \[
    \bB_{\mathrm{crys}}(T_{\mathrm{crys}}(\cE_{1}))\cong T_{\et}(\cE_{1})\otimes_{\bZ_{p}} \bB_{\mathrm{crys}}\stackrel{f\otimes \mathrm{id}}{\to} T_{\et}(\cE_{2})\otimes_{\bZ_{p}} \bB_{\mathrm{crys}}\cong \bB_{\mathrm{crys}}(T_{\mathrm{crys}}(\cE_{2})),
    \]
    and so $f\otimes \mathrm{id}_{\bB_{\mathrm{dR}}}\colon T_{\et}(\cE_{1})\otimes_{\bZ_{p}} \bB_{\mathrm{dR}}\to T_{\et}(\cE_{1})\otimes_{\bZ_{p}} \bB_{\mathrm{dR}}$ maps the $\bB^{+}_{\mathrm{dR}}(T_{\mathrm{crys}}(\cE_{1}))$ into $\bB^{+}_{\mathrm{dR}}(T_{\mathrm{crys}}(\cE_{2}))$. Hence, the above diagram implies that there exists a unique morphism $g\colon \cE_{1}|_{\mathrm{Spf}(\widetilde{R})}\to \cE_{2}|_{\mathrm{Spf}(\widetilde{R})}$ which induces $f|_{\mathrm{Spf}(\widetilde{R})_{\eta}}$ via \'{e}tale realization. Lemma \ref{faithfulness of et real} implies that $\mathrm{pr}_{1}^{*}(g)=\mathrm{pr}_{2}^{*}(g)$, and so $g$ descents to a morphism $\cE_{1}\to \cE_{2}$ by Proposition \ref{kqsyn descent for anal pris crys}.

    We shall prove the assertion in general cases by induction on $n$. What we proved in the previous paragraph allows us to assume that $n\geq 2$. Let $\fU_{i}$ be the vanishing locus of $x_{i}\in R$ for $i=1,2$ and $\fU_{3}\coloneqq\fU_{1}\cap \fU_{2}$. Let $\cM_{\fU_{i}}$ be log structures obtained as restrictions of $\cM_{\fX}$ for $i=1,2,3$. By the induction hypothesis, we have morphisms $\cE_{1}|_{(\fU_{i},\cM_{\fU_{i}})}\to \cE_{2}|_{(\fU_{i},\cM_{\fU_{i}})}$ which induces $f$ via the \'{e}tale realization for $i=1,2$, and these morphisms coincide on $(\fU_{3},\cM_{\fU_{3}})$ by the faithfulness of the \'{e}tale realization. Hence, it suffices to prove that the restriction functor
    \begin{align*}
        &\mathrm{Vect}^{\mathrm{an},\varphi}((\fX,\cM_{\fX})_{\Prism}) \\
        &\to \mathrm{Vect}^{\mathrm{an},\varphi}((\fU_{1},\cM_{\fU_{1}})_{\Prism})\times_{\mathrm{Vect}^{\mathrm{an},\varphi}((\fU_{3},\cM_{\fU_{3}})_{\Prism})} \mathrm{Vect}^{\mathrm{an},\varphi}((\fU_{2},\cM_{\fU_{2}})_{\Prism})
    \end{align*}
    is fully faithful.

    Let $x_{3}\coloneqq x_{1}x_{2}$. A log prism $(\fS_{R}[1/x_{i}]^{\wedge}_{(p,E)},(E),\cM_{\fS_{R}[1/x_{i}]^{\wedge}_{(p,E)}})\in (\fU_{i},\cM_{\fU_{i}})_{\Prism}$ covers the final object of the associated topos, and the $n+1$-fold product of this log prism in $ (\fU_{i},\cM_{\fU_{i}})^{\mathrm{str}}_{\Prism}$ coincides with
    \[
    (\fS_{R}^{(n)}[1/x_{i}]^{\wedge}_{(p,E)},(E),\cM_{\fS_{R}^{(n)}[1/x_{i}]^{\wedge}_{(p,E)}}).
    \]
    It suffices to show that
    \begin{align*}
        &\mathrm{Vect}^{\mathrm{an},\varphi}(\fS_{R}^{(n)},(E)) \\
        &\to \mathrm{Vect}^{\mathrm{an},\varphi}(\fS_{R}^{(n)}[1/x_{1}]^{\wedge},(E))\times_{\mathrm{Vect}^{\mathrm{an},\varphi}(\fS_{R}^{(n)}[1/x_{3}]^{\wedge},(E))} \mathrm{Vect}^{\mathrm{an},\varphi}(\fS_{R}^{(n)}[1/x_{2}]^{\wedge},(E))
    \end{align*}
    is fully faithful for $n\geq 0$. Since $\mathrm{Spf}(A)\ V(p,d)$ has a covering $\mathrm{Spec}(A[1/p])\cap \mathrm{Spec}(A[1/d])$ with an intersection $\mathrm{Spec}(A[1/pd])$ for a prism $(A,(d))$, it is enough to prove that
    \begin{align*}
        &\mathrm{Vect}(\fS_{R}^{(n)}[1/a]) \\
        &\to \mathrm{Vect}(\fS_{R}^{(n)}[1/x_{1}]^{\wedge}[1/a])\times_{\mathrm{Vect}(\fS_{R}^{(n)}[1/x_{3}]^{\wedge}[1/a])} \mathrm{Vect}(\fS_{R}^{(n)}[1/x_{2}]^{\wedge}[1/a])
    \end{align*}
    is fully faithful for $a\in \{p,E,pE\}$. Taking internal homomorphisms, we are reduced to show that
    \[
    0\to \fS_{R}^{(n)}\to \fS_{R}^{(n)}[1/x_{1}]^{\wedge}\times \fS_{R}^{(n)}[1/x_{2}]^{\wedge}\to \fS_{R}^{(n)}[1/x_{3}]^{\wedge} \ \ \ (1)
    \]
    is exact. We have an exact sequence
    \[
    0\to R\to R[1/x_{1}]^{\wedge}_{p}\times R[1/x_{2}]^{\wedge}_{p} \to R[1/x_{3}]^{\wedge}_{p} \ \ \ (2)
    \]
    because the mod-$\pi$ reductions of this is obtained by the base change of an exact sequence
    \[
    0\to R_{0}/\pi\to (R_{0}/\pi)[1/x_{1}]\times (R_{0}/\pi)[1/x_{2}] \to (R_{0}/\pi)[1/x_{3}]
    \]
    (the exactness of which can be checked by a direct computation) along the \'{e}tale map $R_{0}/\pi\to R/\pi$. Therefore, the sequence $(1)$ is exact because the mod-$E$ reduction of the sequence $(2)$ is obtained by the base change of the sequence $(1)$ along the $p$-completely flat map $R\to \fS_{R}^{(n)}/(E)$.
\end{proof}

\begin{cor}\label{fully faithfulness of et real for log pris crys}
    For a semi-stable log formal scheme $(\fX,\cM_{\fX})$ over $\cO_{K}$, the \'{e}tale realization functor 
    \[
    T_{\et}:\mathrm{Vect}^{\varphi}((\fX,\cM_{\fX})_{\Prism})\to \mathrm{Loc}_{\bZ_{p}}((\fX,\cM_{\fX})_{\eta})
    \]
    is fully faithful.
\end{cor}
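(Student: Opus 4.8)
The plan is to deduce this immediately from the analytic case, Theorem \ref{fully faithfulness of et real for anal pris}, together with the comparison between honest and analytic log prismatic $F$-crystals established in Lemma \ref{fully faithfulness from pris crys to anal pris crys}. The key observation is that the \'etale realization functor on $\mathrm{Vect}^{\varphi}((\fX,\cM_{\fX})_{\Prism})$ was \emph{defined}, in the subsection on the \'etale realization functor, as the composition
\[
\mathrm{Vect}^{\varphi}((\fX,\cM_{\fX})_{\Prism})\to \mathrm{Vect}^{\mathrm{an},\varphi}((\fX,\cM_{\fX})_{\Prism})\xrightarrow{\ T_{\et}\ } \mathrm{Loc}_{\bZ_{p}}((\fX,\cM_{\fX})_{\eta}),
\]
where the first arrow is the natural restriction functor (coming from $\mathrm{Vect}^{\varphi}(A,I)\to\mathrm{Vect}^{\mathrm{an},\varphi}(A,I)$ at each log prism, via Remark \ref{pris crys as lim}).

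The first step is therefore simply to record this factorization, citing the construction of $T_{\et}$ in Definition of \'etale realization functors. The second step is to invoke Lemma \ref{fully faithfulness from pris crys to anal pris crys} with $\star=\varphi$: since $(\fX,\cM_{\fX})$ is semi-stable over $\cO_{K}$, the restriction functor $\mathrm{Vect}^{\varphi}((\fX,\cM_{\fX})_{\Prism})\to \mathrm{Vect}^{\mathrm{an},\varphi}((\fX,\cM_{\fX})_{\Prism})$ is fully faithful. The third step is to apply Theorem \ref{fully faithfulness of et real for anal pris}, which gives that $T_{\et}\colon \mathrm{Vect}^{\mathrm{an},\varphi}((\fX,\cM_{\fX})_{\Prism})\to \mathrm{Loc}_{\bZ_{p}}((\fX,\cM_{\fX})_{\eta})$ is fully faithful. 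A composition of two fully faithful functors is fully faithful, so the claim follows.

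There is essentially no obstacle here: the corollary is a formal consequence of the two cited results, and the only thing to be careful about is making sure the functor appearing in Lemma \ref{fully faithfulness from pris crys to anal pris crys} is literally the first arrow in the factorization of $T_{\et}$ used in Theorem \ref{fully faithfulness of et real for anal pris}, which is immediate from the way the \'etale realization functor was set up via $\mathrm{Vect}^{\varphi}\to\mathrm{Vect}^{\mathrm{an},\varphi}\to\mathrm{Vect}^{\varphi}(\cO_{\Prism}[1/\cI_{\Prism}]^{\wedge}_{p})$. (One could alternatively note that $(\fX,\cM_{\fX})_{\eta}$ is a locally noetherian fs log adic space, so by Remark \ref{et real without diamond} the target may be taken to be $\bZ_{p}$-local systems on the Kummer-\'etale site; this does not affect the argument.)

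\begin{proof}
By the construction of the \'etale realization functor, the functor
\[
T_{\et}\colon \mathrm{Vect}^{\varphi}((\fX,\cM_{\fX})_{\Prism})\to \mathrm{Loc}_{\bZ_{p}}((\fX,\cM_{\fX})_{\eta})
\]
factors as the composition of the natural restriction functor
\[
\mathrm{Vect}^{\varphi}((\fX,\cM_{\fX})_{\Prism})\to \mathrm{Vect}^{\mathrm{an},\varphi}((\fX,\cM_{\fX})_{\Prism})
\]
with the \'etale realization functor
\[
T_{\et}\colon \mathrm{Vect}^{\mathrm{an},\varphi}((\fX,\cM_{\fX})_{\Prism})\to \mathrm{Loc}_{\bZ_{p}}((\fX,\cM_{\fX})_{\eta}).
\]
The first functor is fully faithful by Lemma \ref{fully faithfulness from pris crys to anal pris crys}, and the second is fully faithful by Theorem \ref{fully faithfulness of et real for anal pris}. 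Hence their composition is fully faithful.
\end{proof}
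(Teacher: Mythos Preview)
Your proof is correct and takes essentially the same approach as the paper: the paper's proof simply cites Lemma \ref{fully faithfulness from pris crys to anal pris crys} and Theorem \ref{fully faithfulness of et real for anal pris}, and you have spelled out the factorization that makes this work.
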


\begin{proof}
    This follows from Lemma \ref{fully faithfulness from pris crys to anal pris crys} and Theorem \ref{fully faithfulness of et real for anal pris}.
\end{proof}

\subsection{The proof of bi-exactness part of Theorem \ref{mainthm for et real}}

\begin{setting}
    Let $(\fX,\cM_{\fX})=(\mathrm{Spf}(R),\cM_{R})$ be a small affine log formal scheme over $\cO_{K}$ with a fixed framing. Let $\cE$ be an analytic log prismatic $F$-crystal on $(\fX,\cM_{\fX})$. Let $(\mathrm{Spf}(S),\cM_{S})$ be a $p$-adic saturated log formal scheme over $(\fX,\cM_{\fX})$ as in Lemma \ref{absolute int clos}.
    
    We simply write $T_{\et}(\cE)|_{(\mathrm{Spf}(S),\cM_{S})_{\eta}}$ for the $\bZ_{p}$-module corresponding to the constant $\bZ_{p}$-local system $T_{\et}(\cE)|_{(\mathrm{Spf}(S),\cM_{S})_{\eta}}$. 

    We consider the following rings:
    \begin{align*}
    B_{[0,1]}&\coloneqq A_{\mathrm{inf}}(S)[p/[p^{\flat}]]^{\wedge}_{[p^{\flat}]}[1/[p^{\flat}]] \\
    B_{[1,\infty]}&\coloneqq A_{\mathrm{inf}}(S)[[p^{\flat}]/p]^{\wedge}_{p}[1/p] \\
    B_{[1,1]}&\coloneqq A_{\mathrm{inf}}(S)[[p^{\flat}]/p,p/[p^{\flat}]]^{\wedge}_{p}[1/p[p^{\flat}]].    
    \end{align*}
\end{setting}

\begin{lem}(\cite[Theorem 3.8]{ked20}) \label{kedlaya lemma}
The functor
\[
\mathrm{Vect}(U(A_{\mathrm{inf}}(S)),(\xi))\to \mathrm{Vect}(B_{[0,1]})\times_{\mathrm{Vect}(B_{[1,1]})} \mathrm{Vect}(B_{[1,\infty]})
\]
is a bi-exact equivalence.
    
\end{lem}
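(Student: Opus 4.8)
This statement is \cite[Theorem 3.8]{ked20} transported to the present notation; the plan is to recall why the three rings $B_{[0,1]}$, $B_{[1,\infty]}$, $B_{[1,1]}$ form the standard cover appearing there, and then to read off bi-exactness. Write $A\coloneqq A_{\mathrm{inf}}(S)$, so that $U(A,(\xi)) = \mathrm{Spec}(A)\setminus V(p,\xi)$. The first observation is that $V(p,\xi) = V(p,[p^{\flat}])$: since $\theta([p^{\flat}])=p=\theta(p)$ we have $[p^{\flat}]-p\in\ker\theta=(\xi)$, so the image $\overline{\xi}$ of $\xi$ in $A/p\cong S^{\flat}$ divides $p^{\flat}$; conversely $\overline{\xi}\in\ker(S^{\flat}\to S/p)=(p^{\flat})$, using $S/p\cong S^{\flat}/p^{\flat}$ for the perfectoid ring $S$, so $p^{\flat}$ divides $\overline{\xi}$. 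Hence $(\overline{\xi})=(p^{\flat})$ in $S^{\flat}$, which gives $(p,\xi)=(p,[p^{\flat}])$ as ideals of $A$ and thus $U(A,(\xi))=\mathrm{Spec}(A)\setminus V(p,[p^{\flat}])$, the locus on which $p$ and $[p^{\flat}]$ never vanish simultaneously.

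Next I would identify the three rings as the sections over the three overlapping sub-annuli covering this locus: $B_{[0,1]}$ is obtained from $A$ by adjoining $p/[p^{\flat}]$, completing $[p^{\flat}]$-adically, and inverting $[p^{\flat}]$ (the region where $|p|\le|[p^{\flat}]|\neq 0$); $B_{[1,\infty]}$ is the mirror construction with the roles of $p$ and $[p^{\flat}]$ exchanged; and $B_{[1,1]}$ is the overlap, obtained by adjoining both ratios and inverting $p[p^{\flat}]$. With this identification, \cite[Theorem 3.8]{ked20} asserts precisely that restriction along $A\to B_{[0,1]}$, $A\to B_{[1,\infty]}$, $A\to B_{[1,1]}$ induces an equivalence of $\mathrm{Vect}(U(A,(\xi)))$ with the category of triples of vector bundles equipped with the two gluing isomorphisms over $B_{[1,1]}$; this is the displayed equivalence. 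The ring-theoretic inputs that power the theorem --- flatness of $A\to B_{[0,1]}$ and $A\to B_{[1,\infty]}$ over the relevant opens, the sheafiness/coherence of the associated adic spaces, and the vanishing of the pertinent first cohomology on the pieces --- are furnished in \emph{loc.\ cit.}

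For bi-exactness it suffices to observe that both the functor and its quasi-inverse are exact. The functor is a product of base-change functors along the flat maps $A\to B_{[0,1]}$, $A\to B_{[1,\infty]}$, $A\to B_{[1,1]}$ (restricted to the analytic locus), hence carries short exact sequences of vector bundles to short exact sequences; conversely, the induced cover by $\mathrm{Spec}(B_{[0,1]})$ and $\mathrm{Spec}(B_{[1,\infty]})$ is jointly faithfully flat over $U(A,(\xi))$, so exactness of a sequence of vector bundles there may be tested after pullback, which shows that the gluing quasi-inverse is exact as well. The main obstacle --- such as it is --- is purely notational: one must confirm that the completions and localisations written in the statement are literally Kedlaya's sub-annuli and that $(\xi)$ may be replaced by $([p^{\flat}])$ as above, after which the cited theorem and its structural underpinnings apply verbatim.
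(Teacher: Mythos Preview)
The paper gives no proof beyond the citation to \cite[Theorem 3.8]{ked20}; your proposal correctly unpacks why that theorem applies here, in particular the identification $(p,\xi)=(p,[p^{\flat}])$ reducing $U(A_{\mathrm{inf}}(S),(\xi))$ to Kedlaya's punctured spectrum and the routine flatness argument for bi-exactness. This is the same approach, with the details filled in.
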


\begin{lem}\label{lem for biexactness}
    We have canonical isomorphisms
    \begin{align*}
    \cE_{A_{\mathrm{inf}}(S)}\otimes_{A_{\mathrm{inf}}(S)} B_{[0,1]}&\cong T_{\et}(\cE)\otimes_{\bZ_{p}} B_{[0,1]} \\
    (\cE_{A_{\mathrm{inf}}(S)}\otimes_{A_{\mathrm{inf}}(S)} B_{[1,\infty]})[1/\xi]&\cong (\cE_{A_{\mathrm{crys}}(S)}\otimes_{A_{\mathrm{crys}}(S)} B_{[1,\infty]})[1/\xi].   
    \end{align*}
\end{lem}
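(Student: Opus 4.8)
\textbf{Proof plan for Lemma \ref{lem for biexactness}.}

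The plan is to build both isomorphisms by spreading out the already-established comparisons along suitable base-change maps, so that the content is really contained in Remark \ref{Laurent isom}, Proposition \ref{Laurent isom after inverting mu}, and the crystal property of $\cE$. For the first isomorphism, observe that $B_{[0,1]} = A_{\mathrm{inf}}(S)[p/[p^{\flat}]]^{\wedge}_{[p^{\flat}]}[1/[p^{\flat}]]$ is an $A_{\mathrm{inf}}(S)[1/\phi^{-1}(\mu)]$-algebra: indeed $\phi^{-1}(\mu)$ maps to a unit in $B_{[0,1]}$ because $B_{[0,1]}$ is a ring ``on the locus where $p$ and $[p^{\flat}]$ differ by a unit'', on which $\xi$ (and hence, after applying $\phi^{-1}$, also $\mu = \phi^{-1}$ of $\phi(\mu)$) becomes invertible — concretely one checks $[p^{\flat}]$ is invertible and $p/[p^{\flat}]$ is a well-defined element, forcing $\xi = $ (up to unit) $[p^{\flat}] - p$ to be a non-zero-divisor that becomes invertible after completing at $[p^{\flat}]$ and inverting $[p^{\flat}]$; the relation $\mu$ and $\xi$ differing by the invertible factor $\mu/\xi$ modulo $p$-powers then gives invertibility of $\phi^{-1}(\mu)$ as well. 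Granting this, I would take the base change of the isomorphism of Proposition \ref{Laurent isom after inverting mu},
\[
\cE_{A_{\mathrm{inf}}(S)}[1/\phi^{-1}(\mu)] \cong T_{\et}(\cE)|_{(\mathrm{Spf}(S),\cM_{S})_{\eta}} \otimes_{\bZ_{p}} A_{\mathrm{inf}}(S)[1/\phi^{-1}(\mu)],
\]
along $A_{\mathrm{inf}}(S)[1/\phi^{-1}(\mu)] \to B_{[0,1]}$, which yields $\cE_{A_{\mathrm{inf}}(S)} \otimes_{A_{\mathrm{inf}}(S)} B_{[0,1]} \cong T_{\et}(\cE) \otimes_{\bZ_{p}} B_{[0,1]}$ as desired (using that $T_{\et}(\cE)|_{(\mathrm{Spf}(S),\cM_{S})_{\eta}}$ is the finite free $\bZ_{p}$-module we write $T_{\et}(\cE)$ for).

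For the second isomorphism, the point is that $B_{[1,\infty]}[1/\xi]$ is a $(p,\xi)$-localized ring on which one can compare $A_{\mathrm{inf}}$ and $A_{\mathrm{crys}}$. The cleanest route is to use the crystal property of $\cE$ for the log prism map $\phi\colon (A_{\mathrm{inf}}(S),(\xi),\cM_{A_{\mathrm{inf}}(S)}) \to (A_{\mathrm{crys}}(S),(p),\cM_{A_{\mathrm{crys}}(S)})$ that already appears in the proof of Theorem \ref{semistableness of et real}: this gives $\cE_{A_{\mathrm{inf}}(S)} \otimes_{A_{\mathrm{inf}}(S),\phi} A_{\mathrm{crys}}(S) \cong \cE_{A_{\mathrm{crys}}(S)}$. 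However, what is asked compares $\cE_{A_{\mathrm{inf}}(S)} \otimes B_{[1,\infty]}$ (no Frobenius twist) with $\cE_{A_{\mathrm{crys}}(S)} \otimes B_{[1,\infty]}$, both after inverting $\xi$. So I would instead argue that both sides are naturally identified with $\cE$ evaluated on $U(A_{\mathrm{inf}}(S),(\xi))$ restricted to the relevant open: the ring $B_{[1,\infty]} = A_{\mathrm{inf}}(S)[[p^{\flat}]/p]^{\wedge}_{p}[1/p]$ is flat over $A_{\mathrm{inf}}(S)$ with the property that $A_{\mathrm{crys}}(S) \to B_{[1,\infty]}$ (the PD-structure extends because $p$ becomes a unit, or rather $\xi^{[n]} = \xi^n/n!$ makes sense after $[1/p]$) factors the map $A_{\mathrm{inf}}(S) \to B_{[1,\infty]}$. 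Then after inverting $\xi$ the natural map $A_{\mathrm{crys}}(S)[1/\xi] \to B_{[1,\infty]}[1/\xi]$ and $A_{\mathrm{inf}}(S)[1/\xi] \to B_{[1,\infty]}[1/\xi]$ induce, by the crystal property applied to the morphism of prisms connecting $A_{\mathrm{inf}}(S)[1/\xi]$-data to $A_{\mathrm{crys}}(S)[1/\xi]$-data (which is an isomorphism of ``prisms'' in the appropriate localized sense since $I_{\Prism}$ becomes trivial), the required identification $(\cE_{A_{\mathrm{inf}}(S)} \otimes B_{[1,\infty]})[1/\xi] \cong (\cE_{A_{\mathrm{crys}}(S)} \otimes B_{[1,\infty]})[1/\xi]$.

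The main obstacle I anticipate is bookkeeping around which Frobenius twist is in play and making precise the claim that, after inverting $\xi$, the passage from $A_{\mathrm{inf}}(S)$ to $A_{\mathrm{crys}}(S)$ does not change the value of the analytic crystal $\cE$ — i.e., that $A_{\mathrm{inf}}(S)[1/\xi] \to A_{\mathrm{crys}}(S)[1/\xi]$ (and its further base change to $B_{[1,\infty]}[1/\xi]$) is an isomorphism, or at least a map along which $\cE$ is insensitive. This is essentially the statement that on $U(A,I)$ one may invert $I$ and then the PD-envelope structure collapses; it should follow from $A_{\mathrm{crys}}(S) = A_{\mathrm{inf}}(S)[\{\xi^{[n]}\}_{n \ge 1}]^{\wedge}_p$ together with $\xi^{[n]} \cdot \xi = (n+1)\xi^{[n+1]}$, so that after inverting $\xi$ every $\xi^{[n]}$ already lies in $A_{\mathrm{inf}}(S)[1/\xi]$, hence $A_{\mathrm{inf}}(S)[1/\xi] \to A^{0}_{\mathrm{crys}}(S)[1/\xi]$ is an isomorphism, and one only needs to control the $p$-completion — which is harmless after the relevant $p$-complete flat base change to $B_{[1,\infty]}$, where $p$ is a unit. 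Once that algebraic point is nailed down, both displayed isomorphisms are immediate consequences of functoriality of evaluation of $\cE$, and the lemma will be used together with Lemma \ref{kedlaya lemma} to glue the \'etale and crystalline descriptions in the proof of bi-exactness.
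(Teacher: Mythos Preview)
Your approach to the first isomorphism is exactly the paper's: invoke invertibility of $\phi^{-1}(\mu)$ in $B_{[0,1]}$ and base-change Proposition~\ref{Laurent isom after inverting mu}. (The paper justifies that invertibility via the intersection formula of Lemma~\ref{fully faithfulness of latt real}(1); your heuristic about $\xi$ being invertible on that locus is pointing in the right direction but would need tightening.)

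For the second isomorphism, however, there is a genuine gap. You correctly start with the crystal property for the log prism map $\phi\colon (A_{\mathrm{inf}}(S),(\xi))\to (A_{\mathrm{crys}}(S),(p))$, obtaining $\cE_{A_{\mathrm{crys}}(S)}\cong \cE_{A_{\mathrm{inf}}(S)}\otimes_{A_{\mathrm{inf}}(S),\phi} B^{+}_{\mathrm{crys}}(S)$. But then you abandon this and try to argue that $A_{\mathrm{inf}}(S)[1/\xi]\to A_{\mathrm{crys}}(S)[1/\xi]$ becomes an isomorphism after passage to $B_{[1,\infty]}$, so that the crystal values agree without any twist. This does not work: $\cE_{A_{\mathrm{crys}}(S)}$ is by definition the value of the analytic crystal at the prism $(A_{\mathrm{crys}}(S),(p))$, whose distinguished ideal is $(p)$, not $(\xi)$. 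Inverting $\xi$ does not trivialize that prism structure, and the naive inclusion $A_{\mathrm{inf}}(S)\hookrightarrow A_{\mathrm{crys}}(S)$ is not a map of prisms $(A_{\mathrm{inf}}(S),(\xi))\to (A_{\mathrm{crys}}(S),(p))$. The only prism map available is the one through $\phi$, so the Frobenius twist is unavoidable at the level of the crystal property.

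The missing idea is that you are working with an $F$-crystal: the Frobenius structure $\varphi_{\cE}\colon (\phi^{*}\cE)[1/\xi]\isom \cE[1/\xi]$ is precisely what removes the twist. The paper's argument is simply to base-change the crystal-property isomorphism to $B_{[1,\infty]}$, invert $\xi$, and then apply $\varphi_{\cE}$:
\[
(\cE_{A_{\mathrm{crys}}(S)}\otimes B_{[1,\infty]})[1/\xi]\cong (\phi^{*}\cE_{A_{\mathrm{inf}}(S)})[1/\xi]\otimes B_{[1,\infty]}\stackrel{\varphi_{\cE}}{\cong}\cE_{A_{\mathrm{inf}}(S)}[1/\xi]\otimes B_{[1,\infty]}.
\]
Once you see that $\varphi_{\cE}$ is the bridge, the second isomorphism is immediate; your ring-theoretic detour is neither needed nor correct as stated.
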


\begin{proof}
    First, we construct the former isomorphism. The intersection formula in Lemma \ref{fully faithfulness of latt real} (1) implies that $\phi^{-1}(\mu)$ is invertible in $B_{[0,1]}$ (cf. \cite[footnote in p.44]{gr24}), and so we have a ring map $A_{\mathrm{inf}}[1/\phi^{-1}(\mu)]\to B_{[0,1]}$. Taking the base change of the isomorphism in Proposition \ref{Laurent isom after inverting mu} along this ring map, we obtain an isomorphism
    \[
    \cE_{A_{\mathrm{inf}}(S)}\otimes_{A_{\mathrm{inf}}(S)} B_{[0,1]}\cong T_{\et}(\cE)\otimes_{\bZ_{p}} B_{[0,1]}.
    \]
    
    The remaining is to construct the latter isomorphism. The crystal property of $\cE$ for a log prism map $\phi\colon (A_{\mathrm{inf}}(S),(\xi),\cM_{A_{\mathrm{inf}}(S)})\to (A_{\mathrm{crys}}(S),(p),\cM_{A_{\mathrm{crys}}(S)})$ induces an isomorphism of $B^{+}_{\mathrm{crys}}(S)$-modules
    \[
    \cE_{A_{\mathrm{crys}}(S)}\cong \cE_{A_{\mathrm{inf}}(S)}\otimes_{A_{\mathrm{inf}}(S),\phi} B^{+}_{\mathrm{crys}}(S).
    \]
    Therefore, we have isomorphisms
    \begin{align*}
    (\cE_{A_{\mathrm{crys}}(S)}\otimes_{A_{\mathrm{crys}}(S)} B_{[1,\infty]})[1/\xi]
    &\cong (\cE_{A_{\mathrm{inf}}(S)}\otimes_{A_{\mathrm{inf}}(S),\phi} B_{[1,\infty]})[1/\xi] \\
    &\cong (\phi^{*}\cE_{A_{\mathrm{inf}}(S)})[1/\xi]\otimes_{A_{\mathrm{inf}}(S)} B_{[1,\infty]} \\
    &\stackrel{\varphi_{\cE}}{\cong} \cE_{A_{\mathrm{inf}}(S)}[1/\xi]\otimes_{A_{\mathrm{inf}}(S)} B_{[1,\infty]} \\
    &\cong (\cE_{A_{\mathrm{inf}}(S)}\otimes_{A_{\mathrm{inf}}(S)} B_{[1,\infty]})[1/\xi].
    \end{align*}
\end{proof}

\begin{thm}\label{biexactness of et real}
    Let $(\fX,\cM_{\fX})$ be a semi-stable log formal scheme over $\cO_{K}$ which admits framings with $n\leq 1$ \'{e}tale locally. Then the \'{e}tale realization functor 
    \[
    T_{\et}:\mathrm{Vect}^{\mathrm{an},\varphi}((\fX,\cM_{\fX})_{\Prism})\to \mathrm{Loc}_{\bZ_{p}}((\fX,\cM_{\fX})_{\eta})
    \]
    gives a bi-exact equivalence to the essential image.
\end{thm}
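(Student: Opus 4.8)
The plan is to prove that $T_{\et}$ \emph{reflects} short exactness, the forward implication being immediate since $T_{\et}$ is a composition of exact functors. By the full faithfulness already established in Theorem \ref{fully faithfulness of et real for anal pris}, it suffices to show: if $0\to \cE_{1}\to \cE_{2}\to \cE_{3}\to 0$ is a complex in $\mathrm{Vect}^{\mathrm{an},\varphi}((\fX,\cM_{\fX})_{\Prism})$ whose image under $T_{\et}$ is a short exact sequence of Kummer \'{e}tale local systems, then the original complex is short exact. Since exactness of analytic log prismatic $F$-crystals may be checked \'{e}tale locally on $\fX$, we reduce to the case $(\fX,\cM_{\fX})=(\mathrm{Spf}(R),\cM_{R})$ small affine with a fixed framing and $n\leq 1$.

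Next I would descend to the Breuil--Kisin--Fargues level. Choose $(\mathrm{Spf}(S),\cM_{S})$ as in Lemma \ref{absolute int clos}: $S$ is integral perfectoid, $R\to S$ is faithfully flat, and every $\bZ_{p}$-local system on $(\fX,\cM_{\fX})_{\eta,\mathrm{k\et}}$ becomes constant over $(\mathrm{Spf}(S),\cM_{S})_{\eta}$. Using Proposition \ref{kqsyn descent for anal pris crys} (after, if necessary, factoring through $(\mathrm{Spf}(\widetilde{R}_{\infty,\alpha}),\cM_{\widetilde{R}_{\infty,\alpha}})$ and then passing to $S$ by flat/arc descent), exactness of a complex of analytic log prismatic $F$-crystals on $(\fX,\cM_{\fX})$ can be detected after pullback to $\mathrm{Spf}(S)$, hence after evaluating at the log prism $(A_{\mathrm{inf}}(S),(\xi),\cM_{A_{\mathrm{inf}}(S)})$ on the scheme $U(A_{\mathrm{inf}}(S),(\xi))$. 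By Kedlaya's gluing lemma (Lemma \ref{kedlaya lemma}) this is equivalent to exactness over $B_{[0,1]}$, $B_{[1,1]}$ and $B_{[1,\infty]}$; and since $\mathrm{Spa}(B_{[1,1]})$ is a rational subset of $\mathrm{Spa}(B_{[0,1]})$, exactness over $B_{[1,1]}$ follows from exactness over $B_{[0,1]}$. So it remains to treat the two components $B_{[0,1]}$ and $B_{[1,\infty]}$.

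Over $B_{[0,1]}$ the first isomorphism of Lemma \ref{lem for biexactness} identifies, functorially in $\cE$, the module $\cE_{A_{\mathrm{inf}}(S)}\otimes_{A_{\mathrm{inf}}(S)} B_{[0,1]}$ with $T_{\et}(\cE)|_{(\mathrm{Spf}(S),\cM_{S})_{\eta}}\otimes_{\bZ_{p}} B_{[0,1]}$; as $0\to T_{\et}(\cE_{1})\to T_{\et}(\cE_{2})\to T_{\et}(\cE_{3})\to 0$ is exact with locally free terms, applying $-\otimes_{\bZ_{p}} B_{[0,1]}$ preserves exactness and this component is done. Over $B_{[1,\infty]}$ I would use the second isomorphism of Lemma \ref{lem for biexactness} together with the identification $\cE_{A_{\mathrm{crys}}(S)}\cong T_{\mathrm{isoc}}(\cE)_{(A_{\mathrm{crys}}(S)\twoheadrightarrow S/p,\cM_{A_{\mathrm{crys}}(S)})}$ built into Construction \ref{crys real by using bk prisms}, reducing to exactness of the crystalline realizations $0\to T_{\mathrm{isoc}}(\cE_{1})\to T_{\mathrm{isoc}}(\cE_{2})\to T_{\mathrm{isoc}}(\cE_{3})\to 0$ over the relevant crystalline period rings. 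This is precisely where $n\leq 1$ is used: by Proposition \ref{bcrys functor is fully faithful} the functor $\cE\mapsto \bB_{\mathrm{crys}}(\cE)$ is fully faithful, and because on a big log affinoid perfectoid it is essentially base change along the faithfully flat-type map provided by Proposition \ref{zeroth coh of obcrys} (followed by $-\otimes\bB_{\mathrm{crys}}$) it also reflects short exactness; combined with Theorem \ref{semistableness of et real}, which supplies $T_{\et}(\cE_{i})\otimes_{\bZ_{p}} \bB_{\mathrm{crys}}\cong \bB_{\mathrm{crys}}(T_{\mathrm{isoc}}(\cE_{i}))$ compatibly with the transition maps, exactness of $T_{\et}(\cE_{\bullet})$ forces exactness of $T_{\mathrm{isoc}}(\cE_{\bullet})$. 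Feeding this back through Lemma \ref{lem for biexactness} yields exactness of $\cE_{\bullet,A_{\mathrm{inf}}(S)}\otimes_{A_{\mathrm{inf}}(S)} B_{[1,\infty]}$ after inverting $\xi$, and the remaining locus, a PD/crystalline neighbourhood of $\{\xi=0\}$ inside $\mathrm{Spec}(B_{[1,\infty]})$, is handled directly by exactness of $\cE_{\bullet,A_{\mathrm{crys}}(S)}=T_{\mathrm{isoc}}(\cE_{\bullet})_{(A_{\mathrm{crys}}(S)\twoheadrightarrow S/p,\cM_{A_{\mathrm{crys}}(S)})}$, which we have just obtained. Reassembling the three components via Lemma \ref{kedlaya lemma} gives exactness over $U(A_{\mathrm{inf}}(S),(\xi))$, hence the theorem.

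The step I expect to be the main obstacle is the $B_{[1,\infty]}$ component: propagating crystalline control across all of $\mathrm{Spec}(B_{[1,\infty]})$ rather than only over $B_{[1,\infty]}[1/\xi]$ requires a careful covering of $\mathrm{Spec}(B_{[1,\infty]})$ by $\{\xi\neq 0\}$ and a crystalline neighbourhood of $\{\xi=0\}$ and matching the two descriptions of $\cE$ there, and one must verify that $\cE\mapsto \bB_{\mathrm{crys}}(\cE)$ genuinely reflects admissible short exact sequences and not merely morphisms; both points are technical refinements of the arguments underlying Proposition \ref{bcrys functor is fully faithful} and Lemma \ref{lem for biexactness}, but they are the part that needs the most care.
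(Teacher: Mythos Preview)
Your overall architecture matches the paper's: reduce to small affine with $n\leq 1$, pass to $A_{\mathrm{inf}}(S)$ via Proposition~\ref{kqsyn descent for anal pris crys}, split $U(A_{\mathrm{inf}}(S),(\xi))$ via Kedlaya's lemma, handle $B_{[0,1]}$ by Lemma~\ref{lem for biexactness}, and treat $B_{[1,\infty]}$ by combining a crystalline input with something at $\xi=0$. The two points where your argument diverges from the paper are exactly the ones you flag as obstacles, and in both places your proposed fix is not enough.

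First, the exactness of $0\to\cE_{1,A_{\mathrm{crys}}(S)}\to\cE_{2,A_{\mathrm{crys}}(S)}\to\cE_{3,A_{\mathrm{crys}}(S)}\to 0$. You want to deduce this from full faithfulness of $\cE\mapsto\bB_{\mathrm{crys}}(\cE)$ (Proposition~\ref{bcrys functor is fully faithful}) plus the claim that this functor reflects short exactness. Full faithfulness alone does not give reflection of exactness, and your appeal to Proposition~\ref{zeroth coh of obcrys} is not a proof: that result computes $\mu_{*}\cO\bB_{\mathrm{crys}}$, which says nothing about faithful flatness of the relevant base change at the level of isocrystals. The paper explicitly notes that the direct approach (Brinon-style, as in \cite{iky24}) does not carry over to the present setting, and instead argues as follows: from exactness of $T_{\et}(\cE_{\bullet})\otimes B_{\mathrm{dR}}(S)$ take graded pieces to obtain exactness of $\mathrm{gr}^{\bullet}T_{\mathrm{dR}}(\cE_{\bullet})\otimes S[1/p]$, then use faithful flatness of $R[1/p]\to S[1/p]$ to get exactness of $T_{\mathrm{dR}}(\cE_{\bullet})$. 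Now the local Hyodo--Kato isomorphism (Proposition~\ref{local HK isom}) identifies $T_{\mathrm{dR}}(\cE_{i})$ with $T_{\mathrm{isoc}}(\cE_{i})_{(R_{0},\cM_{R_{0}}\oplus 0^{\bN})^{a}}\otimes_{K_{0}}K$, giving exactness over the unramified model; this is where $n\leq 1$ is genuinely used. Base-changing to $S_{R}$ and using that $(S_{R},\cM_{S_{R}})$ covers the final object yields exactness at $A_{\mathrm{crys}}(S)$.

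Second, the $\xi=0$ locus inside $\mathrm{Spec}(B_{[1,\infty]})$. Your ``PD/crystalline neighbourhood'' is not the right object: the paper uses Beauville--Laszlo gluing along $\xi$, so one needs exactness over $B_{[1,\infty]}[1/\xi]$ and over the $\xi$-adic completion $(B_{[1,\infty]})^{\wedge}_{\xi}\cong B^{+}_{\mathrm{dR}}(S)$. The former comes from the second isomorphism of Lemma~\ref{lem for biexactness} and the crystalline exactness just established; the latter is handled by identifying $(B_{[1,\infty]})^{\wedge}_{\xi}\cong (B_{[0,1]})^{\wedge}_{\xi}$ and invoking the $B_{[0,1]}$ exactness you already have. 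Exactness of $\cE_{\bullet,A_{\mathrm{crys}}(S)}$ alone does not control the $\xi$-completion of $B_{[1,\infty]}$ directly.
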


\begin{proof}
    By working \'{e}tale locally on $\fX$, we may assume that $(\fX,\cM_{\fX})$ is a small affine log formal scheme with a fixed framing such that $n\leq 1$. We treat just the $n=1$ case because the same argument works in the $n=0$ case.
    
    Let $(\mathrm{Spf}(R),\cM_{R})$ be a small affine log formal scheme over $\cO_{K}$ admitting a fixed framing with $n=0$ such that $(\fX,\cM_{\fX})\cong (\mathrm{Spf}(R),\cM_{R}\oplus \pi^{\bN})^{a}$. Let $\cE_{1}\to \cE_{2}\to \cE_{3}$ be a sequence in $\mathrm{Vect}^{\mathrm{an},\varphi}((\fX,\cM_{\fX})_{\Prism})$ and suppose that
    \[
    0\to T_{\et}(\cE_{1})\to T_{\et}(\cE_{2})\to T_{\et}(\cE_{3})\to 0
    \] 
    is exact. The goal is to prove that
    \[
    0\to \cE_{1}\to \cE_{2}\to \cE_{3}\to 0
    \]
    is exact. To do this, it is enough to show that the sequence
    \[
    0\to \cE_{1,A_{\mathrm{inf}}(S)}\to \cE_{2,A_{\mathrm{inf}}(S)}\to \cE_{3,A_{\mathrm{inf}}(S)}\to 0
    \]
    is exact by Proposition \ref{kqsyn descent for anal pris crys}.
    
    First, we shall show that the sequence of $B^{+}_{\mathrm{crys}}(S)$-modules
    \[
    0\to \cE_{1,A_{\mathrm{crys}}(S)}\to \cE_{2,A_{\mathrm{crys}}(S)}\to \cE_{3,A_{\mathrm{crys}}(S)}\to 0
    \]
    is exact. In \cite{iky24}, this is proved by using Brinon's formulation. Since this argument does not work in this setting, we discuss in a different way.
    Taking the grading pieces of an exact sequence of filtered $B_{\mathrm{dR}}(S)$-modules
    \[
    0\to T_{\et}(\cE_{1})\otimes_{\bZ_{p}} B_{\mathrm{dR}}(S)\to T_{\et}(\cE_{2})\otimes_{\bZ_{p}} B_{\mathrm{dR}}(S)\to T_{\et}(\cE_{3})\otimes_{\bZ_{p}} B_{\mathrm{dR}}(S)\to 0,
    \] 
    we obtain an exact sequence
    \begin{align*}
        0\to \mathrm{gr}^{\bullet}T_{\mathrm{dR}}(\cE_{1})\otimes_{R[1/p]}  \mathrm{gr}^{\bullet}B_{\mathrm{dR}}(S)\to \mathrm{gr}^{\bullet}T_{\mathrm{dR}}(\cE_{2})&\otimes_{R[1/p]} \mathrm{gr}^{\bullet}B_{\mathrm{dR}}(S) \\
        &\to \mathrm{gr}^{\bullet}T_{\mathrm{dR}}(\cE_{3})\otimes_{R[1/p]}  \mathrm{gr}^{\bullet}B_{\mathrm{dR}}(S)\to 0.
    \end{align*}
    Since $\mathrm{gr}^{n}B_{\mathrm{dR}}(S)\cong S[1/p]$ is faithfully flat over $R[1/p]$ for each $n\geq 1$, we see that the sequence
    \[
    0\to T_{\mathrm{dR}}(\cE_{1})\to T_{\mathrm{dR}}(\cE_{2})\to T_{\mathrm{dR}}(\cE_{3})\to 0
    \]
    is exact. Let $(\mathrm{Spf}(R_{0}),\cM_{R_{0}})$ be the unramified model of $(\mathrm{Spf}(R),\cM_{R})$ defined by the framing. Let $(\mathrm{Spf}(S_{R}),\cM_{S_{R}})$ be the Breuil ring associated with $(\mathrm{Spf}(R),\cM_{R}\oplus \pi^{\bN})^{a}$. By Proposition \ref{local HK isom} and Proposition \ref{crys de Rham comparison}, we have isomorphisms
\begin{align}
    T_{\mathrm{dR}}(\cE_{i})&\cong T_{\mathrm{isoc}}(\cE_{i})_{(R_{0},\cM_{R_{0}}\oplus 0^{\bN})^{a}}\otimes_{K_{0}} K, \\
    T_{\mathrm{isoc}}(\cE_{i})_{(S_{R},\cM_{S_{R}})}&\cong T_{\mathrm{isoc}}(\cE_{i})_{(R_{0},\cM_{R_{0}}\oplus 0^{\bN})^{a}}\otimes_{R_{0}[1/p]} S_{R}[1/p],
\end{align}
for $i=1,2,3$. By (1), the sequence
\[
0\to T_{\mathrm{isoc}}(\cE_{1})_{(R_{0},\cM_{R_{0}}\oplus 0^{\bN})^{a}}\to T_{\mathrm{isoc}}(\cE_{2})_{(R_{0},\cM_{R_{0}}\oplus 0^{\bN})^{a}}\to T_{\mathrm{isoc}}(\cE_{3})_{(R_{0},\cM_{R_{0}}\oplus 0^{\bN})^{a}}\to 0
\]
is exact. By (2), the sequence
\[
0\to T_{\mathrm{isoc}}(\cE_{1})_{(S_{R},\cM_{S_{R}})}\to T_{\mathrm{isoc}}(\cE_{2})_{(S_{R},\cM_{S_{R}})}\to T_{\mathrm{isoc}}(\cE_{3})_{(S_{R},\cM_{S_{R}})}\to 0
\]
is exact. Since $(\mathrm{Spf}(S_{R}),\cM_{S_{R}})$ covers the final object of the topos associated with $(X_{p=0},\cM_{X_{p=0}})_{\mathrm{crys}}$, we have an exact sequence
\[
0\to T_{\mathrm{isoc}}(\cE_{1})_{(A_{\mathrm{crys}}(S),\cM_{A_{\mathrm{crys}}(S)})}\to T_{\mathrm{isoc}}(\cE_{1})_{(A_{\mathrm{crys}}(S),\cM_{A_{\mathrm{crys}}(S)})}\to T_{\mathrm{isoc}}(\cE_{1})_{(A_{\mathrm{crys}}(S),\cM_{A_{\mathrm{crys}}(S)})}\to 0.
\]
Since the crystal property of $\cE_{i}$ for a log prism map $(\fS_{R},(E),\cM_{\fS_{R}})\to (A_{\mathrm{crys}}(S),(p),\cM_{A_{\mathrm{crys}}})$ gives isomorphisms
\begin{align*}
    T_{\mathrm{isoc}}(\cE_{i})_{(A_{\mathrm{crys}}(S),\cM_{A_{\mathrm{crys}}}})&\cong T_{\mathrm{isoc}}(\cE_{i})_{S_{R},\cM_{S_{R}}}\otimes_{S_{R}[1/p]} A_{\mathrm{crys}}(S)[1/p] \\
    &\cong (\cE_{i,\fS_{R}}\otimes_{\fS_{R}} S_{R}[1/p])\otimes_{S_{R}[1/p]} A_{\mathrm{crys}}(S)[1/p] \\
    &\cong  \cE_{i, A_{\mathrm{crys}}(S)}
\end{align*}
for $i=1,2,3$, we conclude that the sequence
    \[
    0\to \cE_{1,A_{\mathrm{crys}}(S)}\to \cE_{2,A_{\mathrm{crys}}(S)}\to \cE_{3,A_{\mathrm{crys}}(S)}\to 0
    \]
    is exact. 
    
    By Lemma \ref{kedlaya lemma}, it suffices to prove that the following sequences are exact:
    \begin{align}
        0\to \cE_{1,A_{\mathrm{inf}}(S)}\otimes B_{[0,1]}\to \cE_{2,A_{\mathrm{inf}}(S)}\otimes B_{[0,1]}\to \cE_{3,A_{\mathrm{inf}}(S)}\otimes B_{[0,1]}\to 0 \\
        0\to \cE_{1,A_{\mathrm{inf}}(S)}\otimes B_{[1,\infty]}\to \cE_{2,A_{\mathrm{inf}}(S)}\otimes B_{[1,\infty]}\to \cE_{3,A_{\mathrm{inf}}(S)}\otimes B_{[1,\infty]}\to 0
    \end{align}
    The exactness of the sequence $(3)$ follows from Lemma \ref{lem for biexactness}. For the exactness of the sequence $(4)$, Beauville-Laszlo glueing reduces the problem to prove that the following sequences are exact:
    \begin{align}
        0\to \cE_{1,A_{\mathrm{inf}}(S)}\otimes B_{[1,\infty]}[1/\xi]\to \cE_{2,A_{\mathrm{inf}}(S)}\otimes B_{[1,\infty]}[1/\xi]\to \cE_{3,A_{\mathrm{inf}}(S)}\otimes B_{[1,\infty]}[1/\xi]\to 0 \\
        0\to (\cE_{1,A_{\mathrm{inf}}(S)}\otimes B_{[1,\infty]})^{\wedge}_{\xi}\to (\cE_{2,A_{\mathrm{inf}}(S)}\otimes B_{[1,\infty]})^{\wedge}_{\xi}\to (\cE_{3,A_{\mathrm{inf}}(S)}\otimes B_{[1,\infty]})^{\wedge}_{\xi}\to 0.
    \end{align}
    The exactness of $(5)$ follows from Lemma \ref{lem for biexactness} and what we proved in the previous paragraph, and the exactness of $(6)$ follows from the exactness of $(3)$ and the identification $(B_{[1,\infty]})^{\wedge}_{\xi}\cong (B_{[0,1]})^{\wedge}_{\xi}(\cong B^{+}_{\mathrm{dR}}(S))$. This completes the proof.
\end{proof}

\appendix

\section{Some lemmas on log schemes}

\begin{lem}\label{log str for thickening}
     Let $i:S\hookrightarrow T$ be a nilpotent closed immersion of affine schemes with an ideal sheaf $\cI$. Let $\cM_{T}$ be an integral log structure on $T$. We let $\cM_{S}$ denote the pullback log structure $i^{*}\cM_{T}$. Then a natural map $\Gamma(T,\cM_{T})\to \Gamma(S,\cM_{S})$ is a $\Gamma(T,1+\cI)$-torsor.
\end{lem}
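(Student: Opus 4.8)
The claim is local, so I would work with rings: write $T=\mathrm{Spec}(A)$, $S=\mathrm{Spec}(A/I)$ with $I$ nilpotent, and let $(A,M)$ be a chart-free description via $\cM_{T}=M_{T}$ with $M_{T}\coloneqq\Gamma(T,\cM_{T})$. Since $\cM_{T}$ is integral, $M_{T}$ is an integral monoid and the structure map $\alpha_{T}\colon M_{T}\to A$ exhibits $\cM_{T}$ as the pushout; the pullback log structure $\cM_{S}=i^{*}\cM_{T}$ has global sections $M_{S}\coloneqq\Gamma(S,\cM_{S})$ fitting into the pushout square defining $i^{*}$. The first step is to identify $M_{S}$ concretely: I would show $M_{S}\cong M_{T}\oplus_{\alpha_{T}^{-1}(A^{\times})}(A/I)^{\times}$, i.e. $M_{S}$ is obtained from $M_{T}$ by replacing the ``unit part'' $A^{\times}$ by $(A/I)^{\times}$. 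Concretely, the natural map $M_{T}\to M_{S}$ is surjective (a chart-style argument: the monoid $M_{T}$ together with $A^{\times}\to M_{T}$ generates $\cM_{T}$, and on $S$ only the units change), and its fibers are exactly the orbits of the subgroup $\ker(A^{\times}\to(A/I)^{\times})=\Gamma(T,1+\cI)$ acting on $M_{T}$ through $A^{\times}\hookrightarrow M_{T}$.

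Then the two things to check are: (i) the action of $\Gamma(T,1+\cI)$ on $M_{T}$ (via multiplication by units coming from $1+\cI\subset A^{\times}$) is \emph{free}, and (ii) the map $M_{T}\to M_{S}$ is surjective with fibers exactly the $\Gamma(T,1+\cI)$-orbits. For freeness, suppose $u\cdot m=m$ in $M_{T}$ with $u\in 1+\cI$ and $m\in M_{T}$; integrality of $M_{T}$ (so $M_{T}\hookrightarrow M_{T}^{\mathrm{gp}}$ and $M_{T}^{\mathrm{gp}}$ is a group, hence cancellative) gives $u=1$ in $M_{T}^{\mathrm{gp}}$; since $A^{\times}\to M_{T}^{\mathrm{gp}}$ restricted to $\alpha_{T}^{-1}(A^{\times})\cong A^{\times}$ is injective (this is part of the definition of a log structure: $\alpha_{T}$ induces an isomorphism $\alpha_{T}^{-1}(A^{\times})\isom A^{\times}$), we conclude $u=1$ in $A^{\times}$. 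For the orbit description in (ii): surjectivity follows from the explicit pushout computation of $i^{*}$, and if $m,m'\in M_{T}$ have the same image in $M_{S}$, then in $M_{S}^{\mathrm{gp}}=M_{T}^{\mathrm{gp}}\oplus_{A^{\times}}(A/I)^{\times}$ their difference $m'm^{-1}$ lies in the image of $A^{\times}$ and maps to $1$ in $(A/I)^{\times}$, hence $m'm^{-1}=u$ for a unique $u\in\ker(A^{\times}\to(A/I)^{\times})=1+\cI$; this $u$ then lies in $M_{T}$ (being a unit), so $m'=u\cdot m$ as desired.

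Combining (i) and (ii): $M_{T}\to M_{S}$ is a surjection whose fibers are precisely free $\Gamma(T,1+\cI)$-orbits, which is exactly the assertion that $\Gamma(T,\cM_{T})\to\Gamma(S,\cM_{S})$ is a $\Gamma(T,1+\cI)$-torsor. I expect the main obstacle to be step (ii)'s surjectivity, i.e. pinning down the global sections of the pullback log structure $i^{*}\cM_{T}$: one must argue that, because $T$ and $S$ have the same underlying topological space (as $i$ is a nilpotent immersion) and $\cM_{T}$ is already a log structure, sheafification in the pushout construction of $i^{*}$ introduces nothing new beyond enlarging units—so $\Gamma(S,i^{*}\cM_{T})$ equals the naive pushout of monoids $M_{T}\oplus_{A^{\times}}(A/I)^{\times}$ on global sections. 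This is where integrality of $\cM_{T}$ is genuinely used (to keep the pushout monoid integral and to control the gp-functor), and it mirrors the argument already invoked in Lemma \ref{big log crys site to str log crys site} and in Remark \ref{str log prism is qcoh} via \cite[Lemma 3.8]{kos22}; indeed, once one knows $\cM_{T}\twoheadrightarrow\cM_{S}$ is a $1+\cI$-torsor as sheaves (which is the content there), passing to global sections on the affine $T$ is harmless since $1+\cI$ is a quasi-coherent-type sheaf with vanishing higher cohomology. So in fact the cleanest route is: invoke the sheaf-level torsor statement (which follows from integrality exactly as in \emph{loc.\ cit.}), then take global sections over the affine scheme $T$, using $H^{1}(T,1+\cI)=0$ (as $1+\cI\cong\cI$ additively via $\log$/truncation, $\cI$ coherent, $T$ affine) to deduce the global-sections sequence is still a torsor.
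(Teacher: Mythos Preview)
Your proposal is correct, and the ``cleanest route'' you arrive at in the final paragraph is exactly what the paper does: reduce to $\cI^{2}=0$, use the short exact sequence $1\to 1+\cI\to\cM_{T}^{\mathrm{gp}}\to\cM_{S}^{\mathrm{gp}}\to 1$ of sheaves, and invoke $H^{1}(T,1+\cI)\cong H^{1}(T,\cI)=0$ on the affine $T$ to get surjectivity of $\Gamma(T,\cM_{T}^{\mathrm{gp}})\to\Gamma(S,\cM_{S}^{\mathrm{gp}})$. The one step the paper makes explicit that you leave implicit is how to pass from surjectivity on $\mathrm{gp}$ back to surjectivity on the monoids themselves: the paper uses the identification $\cM_{T}\cong\cM_{S}\times_{\cM_{S}^{\mathrm{gp}}}\cM_{T}^{\mathrm{gp}}$ (which holds because $\overline{\cM_{T}}\cong\overline{\cM_{S}}$ under the nilpotent thickening), so that a lift in $\cM_{T}^{\mathrm{gp}}$ of a section of $\cM_{S}$ automatically lands in $\cM_{T}$. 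Your earlier direct pushout description $M_{S}\cong M_{T}\oplus_{A^{\times}}(A/I)^{\times}$ is morally the same statement, but as you correctly flag, justifying it on global sections without going through the $H^{1}$ argument is the nontrivial part---so in the end both proofs hinge on the same cohomological vanishing.
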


\begin{proof}
    It is obvious that each fiber of the map $\Gamma(T,\cM_{T})\to \Gamma(S,\cM_{S})$ is a $\Gamma(T,1+\cI)$-orbit. Since $\cM_{T}$ is integral, the action of $\Gamma(T,1+\cI)$ on $\Gamma(T,\cM_{T})$ is faithful. Hence it suffices to prove that the map $\Gamma(T,\cM_{T})\to \Gamma(S,\cM_{S})$ is surjective. We may assume that $\cI^{2}=0$. We have an exact sequence
    \[
    1\to 1+\cI\to \cM_{T}^{\mathrm{gp}}\to \cM_{S}^{\mathrm{gp}}\to 1
    \]
    Since we have $H^{1}(T,1+\cI)\cong H^{1}(T,\cI)=0$, the natural map $\Gamma(T,\cM_{T}^{\mathrm{gp}})\to \Gamma(S,\cM_{S}^{\mathrm{gp}})$ is surjective, which implies that the natural map $\Gamma(T,\cM_{T})\to \Gamma(S,\cM_{S})$ is also surjective because $\cM_{T}$ is isomorphic to $\cM_{S}\times_{\cM_{S}^{\mathrm{gp}}} \cM_{T}^{\mathrm{gp}}$.
\end{proof}

\begin{dfn}\label{def of div monoid}
    Let $P$ be a monoid. For an integer $n\geq 1$, a monoid $P$ is called \emph{uniquely $n$-divisible} (resp. \emph{$n$-divisible}) if a monoid map $P\to P$ mapping $p$ to $p^{n}$ is an isomorphism (resp. a surjection).
\end{dfn}

\begin{lem}\label{lift of map from div monoid}
    Let $n\geq 1$ be an integer. Let $P$ be a uniquely $n$-divisible monoid and $\pi:Q\to R$ is a surjection of monoids. Assume that there exists an integer $k\geq 1$ such that, for any $p_{1},p_{2}\in P$ with $\pi(p_{1})=\pi(p_{2})$, we have $p_{1}^{n^{k}}=p_{2}^{n^{k}}$. Then the natural map
    \[
    \mathrm{Hom}(P,Q)\to \mathrm{Hom}(P,R)
    \]
    is bijective.
\end{lem}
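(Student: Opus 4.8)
The plan is to prove the statement by induction on $k$, reducing everything to the case $k=1$, and then to construct the inverse map by hand using the unique $n$-divisibility of $P$. First I would set up notation: write $f \colon P \to R$ for an arbitrary monoid map, and seek a unique lift $\widetilde{f} \colon P \to Q$ with $\pi \circ \widetilde{f} = f$. Since $P$ is uniquely $n$-divisible, for each $p \in P$ and each $j \geq 1$ there is a well-defined element $p^{1/n^{j}} \in P$, the unique element whose $n^{j}$-th power is $p$, and these are compatible in the obvious way.

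For the construction of $\widetilde{f}$, I would proceed as follows. Given $p \in P$, the hypothesis with exponent $k$ says that any two preimages under $\pi$ of $f(p^{1/n^{k}})$ have the same $n^{k}$-th power in $Q$. Because $\pi$ is surjective, choose some $q \in Q$ with $\pi(q) = f(p^{1/n^{k}})$; then $q^{n^{k}} \in Q$ is independent of the choice of $q$, and $\pi(q^{n^{k}}) = f(p^{1/n^{k}})^{n^{k}} = f(p)$. So I would \emph{define} $\widetilde{f}(p) \coloneqq q^{n^{k}}$. The main points to check are: (i) $\widetilde{f}$ is a monoid homomorphism — this uses that if $\pi(q_{1}) = f(p_{1}^{1/n^{k}})$ and $\pi(q_{2}) = f(p_{2}^{1/n^{k}})$ then $\pi(q_{1}q_{2}) = f((p_{1}p_{2})^{1/n^{k}})$ since $p_{1}^{1/n^{k}} p_{2}^{1/n^{k}} = (p_{1}p_{2})^{1/n^{k}}$ by uniqueness of roots, hence $(q_{1}q_{2})^{n^{k}} = \widetilde{f}(p_{1}p_{2})$, while also $(q_{1}q_{2})^{n^{k}} = q_{1}^{n^{k}} q_{2}^{n^{k}} = \widetilde{f}(p_{1})\widetilde{f}(p_{2})$ by commutativity; and $\widetilde{f}(e) = e$ because one may take $q = e$; (ii) $\pi \circ \widetilde{f} = f$, done above; (iii) uniqueness of the lift.

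For uniqueness, suppose $\widetilde{f}_{1}, \widetilde{f}_{2} \colon P \to Q$ both lift $f$. For $p \in P$, apply the hypothesis to the two elements $p^{1/n^{k}}$ and... more precisely, $\widetilde{f}_{1}(p^{1/n^{k}})$ and $\widetilde{f}_{2}(p^{1/n^{k}})$ are both preimages under $\pi$ of $f(p^{1/n^{k}})$, so by hypothesis (noting $f(p^{1/n^k})$ is in the image of $f$, though actually the hypothesis as stated is about preimages of arbitrary elements of $R$, which is fine) their $n^{k}$-th powers agree: $\widetilde{f}_{1}(p^{1/n^{k}})^{n^{k}} = \widetilde{f}_{2}(p^{1/n^{k}})^{n^{k}}$, i.e. $\widetilde{f}_{1}(p) = \widetilde{f}_{2}(p)$ since each $\widetilde{f}_{i}$ is a homomorphism and $(p^{1/n^{k}})^{n^{k}} = p$. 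Hence $\widetilde{f}_{1} = \widetilde{f}_{2}$.

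The step I expect to be the main (though still mild) obstacle is verifying the multiplicativity of $\widetilde{f}$ cleanly — one must be careful that the "choice of $q$" is genuinely irrelevant and that the product of chosen preimages is a valid choice for the product element, which is where both the surjectivity of $\pi$ and the uniqueness of $n^{j}$-th roots in $P$ get used together. Once that is in place the remaining verifications are formal. (One should also remark that the hypothesis as literally stated quantifies over $p_{1}, p_{2} \in P$ with $\pi(p_{1}) = \pi(p_{2})$, which is exactly what is needed: in the uniqueness argument one takes $p_{i}' \in P$ with $\pi$-images... — actually here $\widetilde f_i(p^{1/n^k})$ lie in $Q$, not $P$; so strictly the hypothesis should read "for any $q_1, q_2 \in Q$" — I would either interpret it that way or note the intended reading, since with "$p_1, p_2 \in P$" and the map $P \to R$ the statement is slightly under-specified; I expect the paper means $Q$.)
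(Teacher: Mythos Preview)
Your proof is correct and is essentially identical to the paper's own argument: both define $\widetilde{f}(p)$ by choosing a $\pi$-preimage $q$ of $f(p^{1/n^{k}})$ and setting $\widetilde{f}(p)=q^{n^{k}}$, then check well-definedness, multiplicativity, and uniqueness exactly as you do (the paper is terser, simply asserting ``by construction''). Your observation about the typo is also on point: the hypothesis should read $q_{1},q_{2}\in Q$, and the paper's proof has a matching slip (writing ``lift of $\pi(p')$'' where ``lift of $f(p')$'' is meant). One small remark: you announce induction on $k$ at the outset but never use it, and indeed it is unnecessary---the direct construction you give works for any $k$.
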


\begin{proof}
Let $f:P\to R$ be a monoid map. We define a monoid map $\widetilde{f}:P\to Q$ lifting $f$ as follows. For $p\in P$, we take a unique $p'\in P$ such that $(p')^{n^{k}}=p$ and choose a lift $q'\in Q$ of $\pi(p')$. We put $\widetilde{f}(p)\coloneqq(q')^{n^{k}}$. For another lift $q''\in Q$ of $\pi(p')$, the assumption implies $(q')^{n^{k}}=(q'')^{n^{k}}$. Therefore $\widetilde{f}$ is a well-defined monoid map. By construction, $\widetilde{f}$ is a unique lift of $f$ 
\end{proof}

\begin{rem}
    when $\pi:Q\to R$ is either of the following types, the assumption in Lemma \ref{lift of map from div monoid} is satisfied:
    \begin{itemize}
        \item a natural surjection $\pi:A\to A/I$ where $A$ is a ring, $I$ is an ideal of $A$ with $I^{k}=0$ and $n\in I$
        \item a natural surjection $\pi:M\to M/G$ where $M$ is a monoid and $G$ is a subgroup of $M^{\times}$ with $G^{n^{k}}=\{1\}$.
    \end{itemize}
\end{rem}

\begin{lem}\label{lift of div log str}
    Let $i:S\hookrightarrow T$ be a nilpotent closed immersion of schemes with an ideal sheaf $\cI$. Suppose that an integer $n\geq 1$ is nilpotent on $S$. 
    \begin{enumerate}
        \item Let $\cM_{T},\cN_{T}$ be integral log structures on $T$. We let $\cM_{S},\cN_{S}$ denote the pullback log structures $i^{*}\cM_{T},i^{*}\cN_{T}$ respectively. Suppose that, \'{e}tale locally, $\cM_{T}$ admits a chart $P\to \cM_{T}$ for an $n$-divisible monoid $P$. Then any map of log structures $\cM_{S}\to \cN_{S}$ extends uniquely to a map of log structures $\cM_{T}\to \cN_{T}$.
        \item Let $\cM_{S}$ be an integral log structure on $S$. Suppose that, \'{e}tale locally, $\cM_{S}$ admits a chart $P\to \cM_{S}$ for a uniquely $n$-divisible monoid $P$. Then there exists a unique log structure $\cM_{T}$ on $T$ with $i^{*}\cM_{T}\cong \cM_{S}$.
    \end{enumerate}
\end{lem}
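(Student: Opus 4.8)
\textbf{Proof plan for Lemma \ref{lift of div log str}.}

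The plan is to reduce both statements to the infinitesimal case $\cI^{2}=0$ by dévissage along the nilpotent filtration, then handle each part by a chart-level computation using the monoid lemmas (Lemma \ref{lift of map from div monoid} and the remark following it). Since $n$ is nilpotent on $S$, locally on $T$ there is an integer $k$ with $\cI^{k}=0$, and by factoring $i$ into a chain of square-zero closed immersions it suffices to treat $\cI^{2}=0$; a uniqueness statement proved at each stage glues to the general statement since the chain is canonical. Working \'{e}tale locally, I may also assume all schemes are affine and that the relevant chart $P\to \cM$ exists.

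For part (1): I want to lift a map $\cM_{S}\to \cN_{S}$ to $\cM_{T}\to \cN_{T}$. A map of log structures on an affine scheme is determined by the induced map on global sections of the associated sheaves of monoids (together with compatibility with the structure maps to $\cO$), and since $\cM_{T}$ is integral with chart $P$, the map $\cM_{T}\to \cN_{T}$ is determined by the composite $P\to \cM_{T}\to \cN_{T}$, i.e.\ by a monoid map $P\to \Gamma(T,\cN_{T})$ compatible with the structure maps to $\cO_{T}$ and reducing to the given map modulo $\cI$. The surjection $\Gamma(T,\cN_{T})\to \Gamma(S,\cN_{S})$ is a $\Gamma(T,1+\cI)$-torsor by Lemma \ref{log str for thickening}, so its kernel-pair is controlled by $1+\cI$, and since $\cI^{2}=0$ with $n\in \cI$ we have $(1+\cI)^{n}=1$; thus the hypothesis of Lemma \ref{lift of map from div monoid} (with $Q=\Gamma(T,\cN_{T})\to R=\Gamma(S,\cN_{S})$, using the remark) is met once $P$ is $n$-divisible, giving a unique lift $P\to \Gamma(T,\cN_{T})$. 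One then checks this monoid lift is automatically compatible with the structure maps into $\cO_{T}$: both $\alpha_{T}\circ(\text{lift})$ and the map dictated by $\alpha_{S}$ lift the same map into $\cO_{S}$ and differ by $1+\cI$, and a short argument using integrality of $\cM_{T}$ and $(1+\cI)$-torsoriality pins them down — here one uses that the map $\cM_{T}\to \cN_{T}$ built from the monoid lift can be checked to be independent of the chosen chart, so it glues \'{e}tale locally.

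For part (2): I want to construct $\cM_{T}$ with $i^{*}\cM_{T}\cong \cM_{S}$, uniquely. Locally, using the chart $P\to \cM_{S}$ with $P$ \emph{uniquely} $n$-divisible, I define $\cM_{T}$ as the log structure associated to a prelog structure $P\to \cO_{T}$ lifting $P\to \cM_{S}\to \cO_{S}$; such a lift of the underlying map of monoids into $\Gamma(T,\cO_{T})$ exists and is unique by Lemma \ref{lift of map from div monoid} applied to $\Gamma(T,\cO_{T})\to\Gamma(S,\cO_{S})$ (again $\cI^{2}=0$, $n\in\cI$ supplies the hypothesis via the remark, and unique $n$-divisibility of $P$ is what forces uniqueness of the lift rather than mere existence). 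Then $i^{*}\cM_{T}$ is the log structure associated to $P\to \cO_{S}$, which is $\cM_{S}$ by choice of chart. Uniqueness: given two such $\cM_{T},\cM_{T}'$, part (1) (applied to $\cN_{T}=\cM_{T}'$, noting uniquely $n$-divisible implies $n$-divisible) produces a canonical map $\cM_{T}\to \cM_{T}'$ lifting the identity on $\cM_{S}$, which is an isomorphism because its restriction to $S$ is and because $\cM_{T}\to \cM_{T}'$ is a map of $(1+\cI)$-torsors over a common base (or: check on stalks, where it lifts an iso and differs from any splitting by $1+\cI$-which-is-trivial-squared). The canonicity of the local constructions lets everything glue \'{e}tale locally on $T$.

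The main obstacle I expect is \emph{not} the monoid bookkeeping but verifying that the lifted chart-level maps are independent of the chosen chart (for part (1)) and that the locally constructed $\cM_{T}$ (for part (2)) glue: one must show the chart-independence of the map $\cM_{T}\to\cN_{T}$ so that the \'{e}tale-local lifts agree on overlaps. This is handled by the uniqueness built into Lemma \ref{lift of map from div monoid}: any two choices of chart are dominated by a common refinement, and the induced lifts agree there by uniqueness, hence agree on the overlap; a diagram chase with integrality of the log structures and Lemma \ref{log str for thickening} closes this. The dévissage to $\cI^{2}=0$ is routine but must be stated carefully so that "unique lift at each square-zero step" composes to "unique lift" overall, which it does because composition of the canonical lifts is again canonical.
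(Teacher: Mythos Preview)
Your plan for part~(2) matches the paper's argument and is fine. For part~(1), however, there is a genuine gap: Lemma~\ref{lift of map from div monoid} requires $P$ to be \emph{uniquely} $n$-divisible, whereas the hypothesis of part~(1) is only that $P$ is $n$-divisible. The construction in that lemma picks, for each $p\in P$, \emph{the} $n^{k}$-th root $p'$ and lifts it; when roots are not unique, different choices of $p'$ can produce different candidate values for the lift at $p$, and nothing at the bare monoid level forces these to agree. So your assertion that ``the hypothesis of Lemma~\ref{lift of map from div monoid} \dots\ is met once $P$ is $n$-divisible'' is not correct. (There is also a minor slip: the hypothesis does not give $n\in\cI$; rather $n$ is nilpotent on $T$ since $T_{\mathrm{red}}=S_{\mathrm{red}}$, whence $n^{k}\cI=0$ and $(1+\cI)^{n^{k}}=1$ for suitable $k$.)

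The paper's proof of part~(1) takes a genuinely different route: it passes to the group completion. Since $\cN_{T}$ is integral one has $\cN_{T}\cong \cN_{S}\times_{\cN_{S}^{\mathrm{gp}}}\cN_{T}^{\mathrm{gp}}$, so lifting the monoid map $P\to \Gamma(S,\cN_{S})$ amounts to lifting the induced group homomorphism $P^{\mathrm{gp}}\to \Gamma(S,\cN_{S}^{\mathrm{gp}})$ along the exact sequence of abelian groups $0\to \Gamma(T,\cI)\to \Gamma(T,\cN_{T}^{\mathrm{gp}})\to \Gamma(S,\cN_{S}^{\mathrm{gp}})$ (using $1+\cI\cong\cI$ when $\cI^{2}=0$, together with Lemma~\ref{log str for thickening}). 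The paper then deduces bijectivity from the vanishing of $\mathrm{Hom}(P^{\mathrm{gp}},\Gamma(T,\cI))$ and of an $\mathrm{Ext}^{1}$-term, using that $P^{\mathrm{gp}}$ is $n$-divisible and $n^{k}\Gamma(T,\cI)=0$. The passage to abelian groups is precisely what circumvents the missing unique-divisibility hypothesis; your monoid-level shortcut via Lemma~\ref{lift of map from div monoid} cannot do this.
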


\begin{proof}
    \begin{enumerate}
        \item By working \'{e}tale locally on $T$, we may assume that $T$ is affine and  we have an $n$-divisible monoid $P$ and a chart $P\to \cM_{T}$. It is enough to prove that a natural map
        \[
        \mathrm{Hom}(P,\Gamma(T,\cM_{T}))\to \mathrm{Hom}(P,\Gamma(S,\cM_{S}))
        \] 
        is bijective. We may assume that $\cI^{2}=0$. Since we have $\cM_{T}\cong \cM_{S}\times_{\cM_{S}^{\mathrm{gp}}} \cM_{T}^{\mathrm{gp}}$, we are reduced to show that a natural map
        \[
        \mathrm{Hom}(P^{\mathrm{gp}},\Gamma(T,\cM_{T}^{\mathrm{gp}}))\to \mathrm{Hom}(P^{\mathrm{gp}},\Gamma(S,\cM_{S}^{\mathrm{gp}}))
        \] 
        is bijective. Lemma
        \ref{log str for thickening} gives an exact sequence
        \begin{align*}
            &\mathrm{Hom}(P^{\mathrm{gp}},\Gamma(T,\cI))\to \mathrm{Hom}(P^{\mathrm{gp}},\Gamma(T,\cM_{T}^{\mathrm{gp}}))\to \mathrm{Hom}(P^{\mathrm{gp}},\Gamma(S,\cM_{S}^{\mathrm{gp}}))\to \\
            \to &\mathrm{Ext}^{1}(P^{\mathrm{gp}},H^{1}(T,\cI)),
        \end{align*}
        by $1+\cI\cong \cI$. Since $P^{\mathrm{gp}}$ is $n$-divisible and $n^{k}\cI=0$ for a sufficiently large $k$, we have
        \[
        \mathrm{Hom}(P^{\mathrm{gp}},\Gamma(T,\cI))=\mathrm{Ext}^{1}(P^{\mathrm{gp}},H^{1}(T,\cI))=0.
        \]
        This proves the assertion.
        \item By working \'{e}tale locally on $T$, we may assume that $T$ is affine and we have a uniquely $n$-divisible monoid $P$ and a chart $P\to \cM_{S}$. Lemma \ref{lift of map from div monoid} gives a unique lift $P\to \cO_{T}$ of the map $P\to \cM_{S}\to \cO_{S}$. Let $\cM_{T}$ denote the associated log structure with the map $P\to \cO_{T}$. By construction, we have $i^{*}\cM_{T}\cong \cM_{S}$. To prove the uniqueness, we take another log structure $\cN_{T}$ on $T$ with $i^{*}\cN_{T}\cong \cM_{S}$. By (1), there exist a unique lift $\cM_{T}\to \cN_{T}$ of the identity map on $\cM_{S}$. This map $\cM_{T}\to \cN_{T}$ is an isomorphism because so is the pullback to $S$. This proves the uniqueness.
    \end{enumerate}
\end{proof}

\section{Log adic spaces and log diamonds}

In this section, we study the relationship between log adic spaces (introduced in \cite{dllz23b}) and associated log diamond (introduced in \cite{ky23}). 

\begin{lem}\label{qproket map to div log diamond is str qproet}
    Let $f\colon (X,\cM_{X})\to (Y,\cM_{Y})$ be a map of log diamonds. Suppose that we are given a divisible saturated monoid $P$ and a chart $P\to \cM_{Y}$. Then $f$ is quasi-pro-Kummer-\'{e}tale if and only if it is strict quasi-pro-\'{e}tale.
\end{lem}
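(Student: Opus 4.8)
The plan is to reduce the statement to the definition of the Kummer-\'etale topology and show that, once a \emph{divisible} saturated chart is present, the Kummer-\'etale part of any quasi-pro-Kummer-\'etale map becomes trivial. Recall that a quasi-pro-Kummer-\'etale map factors (pro-locally) through a composite of a Kummer-\'etale map and a strict quasi-pro-\'etale map; conversely strict quasi-pro-\'etale maps are automatically quasi-pro-Kummer-\'etale, so only the ``only if'' direction requires work. The key point will be that a Kummer-\'etale map $(Z,\cM_Z)\to (Y,\cM_Y)$ with target admitting a divisible chart $P\to\cM_Y$ is automatically strict \'etale: Kummer-\'etale covers are built from strict \'etale maps and from the ``root'' covers obtained by base change along $P\to P^{1/n}$ (in the notation of the paper, $P\to P^{1/n}$ sends $p\mapsto p^{n}$), and when $P$ is already $n$-divisible the map $P\to P^{1/n}$ is an isomorphism, so these root covers contribute nothing new.

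First I would recall the precise local structure of Kummer-\'etale morphisms of log diamonds from \cite{ky23}: \'etale-locally on the source and target, such a morphism is a composite of a strict \'etale morphism with the base change of $(\Spd(K,\cO_K)[\tfrac1m P^{1/m}],P^{1/m})\to (\Spd(K,\cO_K)[P],P)$ for some integer $m$ prime to $p$ (or with $m$ allowed to be a $p$-power after passing to diamonds; the precise formulation in \cite{ky23} is what I would cite). Using the chart $P\to\cM_Y$ and the hypothesis that $P$ is divisible and saturated, the monoid map $P\to P^{1/m}$ sending $p\mapsto p^{m}$ is an isomorphism for every $m\geq 1$, hence the root cover is an isomorphism onto its image; this shows the Kummer-\'etale factor of $f$ is strict \'etale. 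Thus any quasi-pro-Kummer-\'etale $f$, which pro-locally is an inverse limit of Kummer-\'etale maps over a strict quasi-pro-\'etale base, is in fact an inverse limit of strict \'etale maps over a strict quasi-pro-\'etale one, i.e. strict quasi-pro-\'etale.

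Second I would handle the converse and the descent of ``strictness'' and ``quasi-pro-\'etaleness'' along the pro-local factorization. A strict quasi-pro-\'etale morphism is in particular a quasi-pro-Kummer-\'etale morphism (the Kummer-\'etale topology refines the strict \'etale topology and the relevant sites are comparable by \cite{ky23}), giving the ``if'' direction for free. For the ``only if'' direction I need to check that the property ``strict quasi-pro-\'etale'' can be verified pro-\'etale-locally on $(Y,\cM_Y)$ and is stable under the inverse limits appearing in the definition of quasi-pro-Kummer-\'etale; both are standard and follow from the corresponding statements for diamonds in \cite{sch17}-style foundations together with the fact that the inverse-image log structure along a strict map is strict. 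Finally, one observes that passing to a pro-\'etale cover of $(Y,\cM_Y)$ preserves the existence of a divisible saturated chart (pull back $P\to\cM_Y$), so the local argument of the previous paragraph applies everywhere and glues.

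The main obstacle I anticipate is bookkeeping at the level of log diamonds rather than log adic spaces: making precise the ``pro-local structure theorem'' for Kummer-\'etale morphisms of log diamonds (the analogue of the classical description for fs log adic spaces in \cite{dllz23b}) and verifying that the reduction $P\to P^{1/m}$ isomorphism really trivializes all the generating Kummer covers, including the subtle $p$-divisible direction where one must work with the diamond $\Spd$ and not merely with adic spaces. Once that structural input is in place, the rest is formal. I would also need to be slightly careful that ``divisible'' here means uniquely $n$-divisible for all $n\geq 1$ (cf.\ Definition \ref{def of div monoid}), which is exactly what makes every $P\to P^{1/m}$ an isomorphism; saturatedness of $P$ is used to keep the charts fs so that the structure theory applies.
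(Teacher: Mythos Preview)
Your approach differs substantially from the paper's, which is much shorter. The paper does not invoke any structure theorem for Kummer-\'etale morphisms; instead it appeals directly to the \emph{definition} of quasi-pro-Kummer-\'etale in \cite{ky23}. In that definition, a map of log diamonds is quasi-pro-Kummer-\'etale when its base change along any map from a strictly totally disconnected log perfectoid space (whose log structure is required to be divisible, cf.\ \cite[Definition~7.12]{ky23}) is strict quasi-pro-\'etale. The paper simply chooses a strictly totally disconnected perfectoid $Z$ with a quasi-pro-\'etale cover $Z\to Y$, equips $Z$ with the pullback log structure $\cM_Z$ (which inherits the divisible chart $P$, so $(Z,\cM_Z)$ is a valid test object), forms the Cartesian square, and reads off that the top map is strict quasi-pro-\'etale by definition; since both vertical maps are strict quasi-pro-\'etale covers, $f$ is strict quasi-pro-\'etale by descent. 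No analysis of root covers is needed.

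Your route through a local factorization theorem is plausible in spirit but has two genuine gaps. First, the ``pro-local structure theorem'' for Kummer-\'etale morphisms of log diamonds that you invoke is precisely the obstacle you yourself flag, and you give no argument for it; the paper's approach avoids it entirely. Second, your claim that $P\to P^{1/m}$ is an isomorphism requires $P$ to be \emph{uniquely} $m$-divisible (in the paper's conventions this map is $p\mapsto p^{m}$, see Definition~\ref{def of div monoid}), but the hypothesis of the lemma is only that $P$ is divisible and saturated. You would need an additional argument---either that saturated plus divisible forces enough uniqueness at the level of associated log structures, or that surjectivity of $P\to P^{1/m}$ already trivializes the root cover---and you do not supply one. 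Neither issue is insurmountable, but both are real work that the paper's two-line argument sidesteps.
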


\begin{proof}
    Since the "if" part is trivial, we shall prove the "only if" part. Take a strictly totally disconnected perfectoid space $Z$ and a quasi-pro-\'{e}tale cover $Z\to Y$. Let $\cM_{Z}$ denote the log structure on $Z$ obtained as the pullback of $\cM_{Y}$. Consider the following Cartesian diagram:
    \[
    \begin{tikzcd}
    (X,\cM_{X})\times_{(Y,\cM_{Y})} (Z,\cM_{Z})\ar[r] \ar[d] & (Z,\cM_{Z})  \ar[d] \\
    (X,\cM_{X}) \ar[r,"f"] & (Y,\cM_{Y}).
    \end{tikzcd}
    \]
    In this diagram, the upper horizontal map is strict quasi-pro-\'{e}tale by the definition of quasi-pro-Kummer-\'{e}tale maps, and both of vertical maps are strict quasi-pro-\'{e}tale covers. Therefore, $f$ is strict quasi-pro-\'{e}tale. 
\end{proof}

\begin{dfn}
    Let $(X,\cM_{X})$ be an fs log adic space over $\mathrm{Spa}(\bQ_{p},\bZ_{p})$. We have the associated locally spatial diamond $X^{\Diamond}$. there exists a natural morphism of ringed sites
    \[ 
    \nu\colon (X^{\Diamond}_{\mathrm{qpro\et}},\widehat{\cO}_{X^{\Diamond}})\to (X_{\et},\cO_{X}).
    \]
    Let $\cM_{X^{\Diamond}}\to \widehat{\cO}_{X^{\Diamond}}$ be the associated log structure with the prelog structure 
    \[
    \nu^{-1}\cM_{X}\to \nu^{-1}\cO_{X}\to \widehat{\cO}_{X^{\Diamond}}.
    \]
    The locally spatial log diamond $(X^{\Diamond},\cM_{X^{\Diamond}})$ is called \emph{the associated log diamond} with $(X,\cM_{X})$ and denoted by $(X,\cM_{X})^{\Diamond}$. By definition, a chart $P\to \cM_{X}$ induces a chart $P\to \cM_{X^{\Diamond}}$.
\end{dfn}

\begin{lem}
    Let $(X,\cM_{X})\to (Y,\cM_{Y})$ be a Kummer log \'{e}tale morphism of locally noetherian fs log adic spaces over $\mathrm{Spa}(\bQ_{p},\bZ_{p})$. Then the induced morphism of log diamonds $(X,\cM_{X})^{\Diamond} \to (Y,\cM_{Y})^{\Diamond}$ is Kummer log \'{e}tale.
\end{lem}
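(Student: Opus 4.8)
The plan is to reduce the Kummer log \'etaleness of the morphism of log diamonds $(X,\cM_{X})^{\Diamond}\to(Y,\cM_{Y})^{\Diamond}$ to a local statement using the structure theory of Kummer log \'etale morphisms of fs log adic spaces from \cite{dllz23b}. Recall that a Kummer log \'etale morphism is, Kummer-\'etale-locally on the source and on the base, a composition of a strict \'etale morphism with a ``standard Kummer covering'' $(\Spa(A,A^{+}),Q)^{a}\to (\Spa(A,A^{+}),P)^{a}$ coming from a Kummer homomorphism of fs monoids $P\hookrightarrow Q$ (i.e.\ injective with $Q^{\mathrm{gp}}/P^{\mathrm{gp}}$ finite of order invertible). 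Since the formation of the associated log diamond $(-)^{\Diamond}$ preserves strict \'etale morphisms (this is the non-logarithmic statement, which is standard for diamonds), the only thing to check is that the diamondification of a standard Kummer covering is Kummer log \'etale as a morphism of log diamonds in the sense of \cite{ky23}.

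First I would set up the local picture: working Kummer-\'etale-locally we may assume $(Y,\cM_{Y})=(\Spa(A,A^{+}),P)^{a}$ with a global chart $P$, and $(X,\cM_{X})=(\Spa(A',A'^{+}),Q)^{a}$ where $A'$ is obtained from $A$ by adjoining appropriate roots dictated by the Kummer homomorphism $P\hookrightarrow Q$, followed by a strict \'etale map. Next I would observe that the underlying morphism of adic spaces $X\to Y$ is finite \'etale (away from the locus where the log structures matter it is a finite \'etale cover, and this is exactly the content of the definition of a standard Kummer covering), hence $X^{\Diamond}\to Y^{\Diamond}$ is a finite \'etale morphism of locally spatial diamonds. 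Then I would identify the log structure: by the final sentence of the definition of the associated log diamond in the excerpt, the chart $P\to\cM_{Y}$ induces a chart $P\to\cM_{Y^{\Diamond}}$, and similarly $Q\to\cM_{X^{\Diamond}}$, and one checks that the square expressing $(X,\cM_{X})^{\Diamond}$ as $(X^{\Diamond},Q\to\widehat{\cO})^{a}$ sitting over $(Y^{\Diamond},P\to\widehat{\cO})^{a}$ is precisely the diamond-theoretic standard Kummer covering attached to $P\hookrightarrow Q$. This exhibits $(X,\cM_{X})^{\Diamond}\to(Y,\cM_{Y})^{\Diamond}$ Kummer-\'etale-locally as (strict \'etale)$\circ$(standard Kummer covering of log diamonds), which is the definition of Kummer log \'etale in \cite{ky23}.

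The main technical point — and the step I expect to be the real obstacle — is checking compatibility of the two constructions on charts, i.e.\ that $\nu^{-1}$ of the chart $P\to\cM_{X}$ genuinely yields the expected chart on $\cM_{X^{\Diamond}}$ and that the resulting square of log diamonds is Cartesian in the fs (saturated) sense, not merely the integral sense; here one needs that pullback of log structures along $\nu$ commutes with passing to associated log structures and with the saturation operation, which uses that $Q^{\mathrm{gp}}/P^{\mathrm{gp}}$ has order invertible on the base so that no pathology with torsion arises. A secondary bookkeeping issue is that Kummer log \'etaleness is defined up to Kummer-\'etale-local refinement on \emph{both} source and target, so one must make sure the local charts chosen on $(Y,\cM_{Y})$ and on $(X,\cM_{X})$ are compatible; this is handled by \cite[Lemma 4.2.5]{dllz23b} (the same input used in Lemma \ref{absolute int clos}), which lets one refine any Kummer \'etale cover to a standard one factoring through the $\tfrac1m$-root construction. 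Once these chart compatibilities are pinned down, the rest is the formal observation that $(-)^{\Diamond}$ preserves strict \'etale morphisms and finite \'etale morphisms, and the claim follows.
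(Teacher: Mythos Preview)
Your proposal contains a genuine error: the claim that for a standard Kummer covering the underlying morphism of adic spaces $X\to Y$ is finite \'etale is false. Take $P=\bN\hookrightarrow Q=\tfrac{1}{n}\bN$ over the closed unit disc; the underlying map $\bQ_{p}\langle t\rangle\to\bQ_{p}\langle t^{1/n}\rangle$ is finite flat of degree $n$ but ramified at $t=0$. Being finite \'etale on the open locus where the log structure is trivial does not imply finite \'etaleness globally, and consequently $X^{\Diamond}\to Y^{\Diamond}$ is not a finite \'etale morphism of diamonds. Since this step is stated as the mechanism by which you pass to diamonds, the argument as written does not go through. More broadly, it is not clear that the notion of ``standard Kummer covering of log diamonds'' you invoke is actually set up in \cite{ky23} in a form that lets you conclude directly; the definition of Kummer \'etale there is phrased via base change to strictly totally disconnected log perfectoid test objects, not via a structural decomposition.

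The paper proceeds differently and avoids both issues. Working \'etale locally on $Y$ with an fs chart $P$, it passes to the $n$-th root covers $(Y_{n},\cM_{Y_{n}})$ and uses \cite[Lemma~4.2.5]{dllz23b} to choose $n$ so that the saturated base change $(X_{n},\cM_{X_{n}})\to(Y_{n},\cM_{Y_{n}})$ is \emph{strict} \'etale. Then it checks the test-object definition directly: for any strictly totally disconnected log perfectoid $(Z,\cM_{Z})\to(Y,\cM_{Y})^{\Diamond}$, the monoid $\Gamma(Z,\cM_{Z})$ is divisible by \cite[Definition~7.12]{ky23}, so the chart $P\to\cM_{Z}$ extends to $P_{\bQ_{\geq 0}}$ and in particular the map factors through $(Y_{n},\cM_{Y_{n}})^{\Diamond}$. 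The base change of $(X,\cM_{X})^{\Diamond}\to(Y,\cM_{Y})^{\Diamond}$ to $(Z,\cM_{Z})$ is then a base change of the strict \'etale map $(X_{n},\cM_{X_{n}})^{\Diamond}\to(Y_{n},\cM_{Y_{n}})^{\Diamond}$, hence strict \'etale. The divisibility of the log structure on test objects is the key input you are missing; it is what absorbs the Kummer part and reduces everything to the strict \'etale case, with no need to analyze the underlying (ramified) map of diamonds.
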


\begin{proof}
    By working \'{e}tale locally on $Y$, we may assume that $Y$ is quasi-compact and that $(Y,\cM_{Y})$ admits an fs chart $P\to \cM_{Y}$ with $P$ being a torsion free. We set 
    \[
    (Y_{n},\cM_{Y_{n}}):=(Y,\cM_{Y})\times_{(\mathrm{Spa}(\bQ_{p}\langle P \rangle,\bZ_{p}\langle P \rangle),P)^{a}} (\mathrm{Spa}(\bQ_{p}\langle P^{1/n} \rangle,\bZ_{p}\langle P^{1/n} \rangle),P)^{a}
    \]
    and let $(X_{n},\cM_{X_{n}})$ denote the saturated fiber product of $(Y_{n},\cM_{Y_{n}})$ and $(X,\cM_{X})$ over $(Y,\cM_{Y})$ for an integer. By \cite[Lemma 4.2.5]{dllz23b}, there exists an integer $n\geq 1$ such that the projection map $(X_{n},\cM_{X_{n}})\to (Y_{n},\cM_{Y_{n}})$ is strict \'{e}tale.
    
    Let $f\colon (Z,\cM_{Z})\to (Y,\cM_{Y})^{\Diamond}$ be a map from strictly totally disconnected log perfectoid space $(Z,\cM_{Z})$. Since $\Gamma(Z,\cM_{Z})$ is divisible by \cite[Definition 7.12]{ky23}, the map $P\to \cM_{Z}$ extends to a map $P_{\bQ_{\geq 0}}\to \cM_{Z}$. Hence, the map $f$ factors through $(Y_{n},\cM_{Y_{n}})$ and the base-changed map
    \[
    g\colon (Z,\cM_{Z})\times_{(Y,\cM_{Y})^{\Diamond}} (X,\cM_{X})^{\Diamond}\to (Z,\cM_{Z})
    \]
    can be written as the base change of the strict \'{e}tale map $(X_{n},\cM_{X_{n}})^{\Diamond}\to (Y_{n},\cM_{Y_{n}})^{\Diamond}$. Therefore, $g$ is also strict \'{e}tale, and so the map $(X,\cM_{X})^{\Diamond}\to (Y,\cM_{Y})^{\Diamond}$ is Kummer \'{e}tale.
\end{proof}

\begin{prop}\label{ket vs qproket}
    Let $(X,\cM_{X})$ be a locally noetherian fs log adic space. For a site $\sC$, we let $\mathrm{Loc}_{\bZ/p^{n}}(\sC)$ denote the category of $\bZ/p^{n}$-local systems on $\sC$. Then the natural functor
    \[
    \mathrm{Loc}_{\bZ/p^{n}}((X,\cM_{X})_{\mathrm{k\et}})\to \mathrm{Loc}_{\bZ/p^{n}}((X,\cM_{X})^{\Diamond}_{\mathrm{qprok\et}})
    \]
    is an equivalence.
\end{prop}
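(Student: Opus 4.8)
The plan is to reduce the statement to the known comparison between the Kummer pro-étale site of $(X,\cM_X)$ and its diamond incarnation, and then to invoke the equivalence between $\bZ/p^n$-local systems on a (log) site and on the associated hypercomplete topos of Kummer pro-étale covers. Concretely, I would first recall from \cite{dllz23b} that $\bZ/p^n$-local systems on $(X,\cM_X)_{\mathrm{k\acute{e}t}}$ are the same as those on $(X,\cM_X)_{\mathrm{prok\acute{e}t}}$: a $\bZ/p^n$-local system is locally constant, hence trivialized by a Kummer étale (equivalently, by a pro-Kummer-étale) cover, so the restriction functor $\mathrm{Loc}_{\bZ/p^n}((X,\cM_X)_{\mathrm{k\acute{e}t}})\to \mathrm{Loc}_{\bZ/p^n}((X,\cM_X)_{\mathrm{prok\acute{e}t}})$ is an equivalence. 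The analogous statement on the diamond side identifies $\mathrm{Loc}_{\bZ/p^n}((X,\cM_X)^{\Diamond}_{\mathrm{qprok\acute{e}t}})$ with local systems on a suitable Kummer-étale-type site of the log diamond. Thus the statement will follow once one matches the two ``small'' Kummer étale sites.

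\textbf{Key steps.} First I would show that the functor $(X,\cM_X)_{\mathrm{k\acute{e}t}}\to (X^{\Diamond},\cM_{X^{\Diamond}})_{\mathrm{k\acute{e}t}}$, $(U,\cM_U)\mapsto (U,\cM_U)^{\Diamond}$, is well-defined (this is the lemma just proved in the excerpt, that Kummer log étale maps of locally noetherian fs log adic spaces induce Kummer log étale maps of log diamonds) and moreover is an equivalence of sites: on objects it is essentially surjective because a Kummer étale cover of the diamond descends, by the equivalence of the étale sites $X_{\mathrm{\acute{e}t}}\simeq X^{\Diamond}_{\mathrm{\acute{e}t}}$ together with the fact that charts and root extractions $P\to P^{1/n}$ can be performed on the adic space, and it is fully faithful by the same chart-level analysis combined with full faithfulness of $X_{\mathrm{\acute{e}t}}\to X^{\Diamond}_{\mathrm{\acute{e}t}}$; moreover it matches the covering families, since a family is a Kummer étale cover iff it is so after the strict étale base change trivializing the log structure via roots of the chart, which is detected on the underlying diamonds. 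Second, I would combine this with \cref{qproket map to div log diamond is str qproet}: after passing to a chart $P\to \cM_X$ and its divisible saturation $P_{\bQ_{\geq 0}}$, quasi-pro-Kummer-étale maps over a log diamond with divisible saturated chart are precisely strict quasi-pro-étale maps, so $\mathrm{Loc}_{\bZ/p^n}((X,\cM_X)^{\Diamond}_{\mathrm{qprok\acute{e}t}})$ is computed after a Kummer étale localization by strict quasi-pro-étale descent, i.e.\ by the strict quasi-pro-étale site, which is the diamond of the Kummer étale cover in the usual (non-log) sense. Third, I would assemble these identifications into a commutative square of functors and conclude by two-out-of-three, using the classical equivalence $\mathrm{Loc}_{\bZ/p^n}(\mathcal C_{\mathrm{\acute{e}t}})\simeq \mathrm{Loc}_{\bZ/p^n}(\mathcal C_{\mathrm{qpro\acute{e}t}})$ for the underlying étale sites on both the adic-space side and the diamond side (e.g.\ via \cite[\S14]{sch13} on adic spaces and Scholze's diamond formalism), so that the log versions inherit it.

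\textbf{Main obstacle.} I expect the genuine work to be in the first step: verifying that the functor from the Kummer étale site of the log adic space to the Kummer étale site of the associated log diamond is an equivalence of sites (not merely a morphism), in particular essential surjectivity. One must check that every Kummer étale object over $(X,\cM_X)^{\Diamond}$ is, étale-locally on $X$ and after a root extraction $P\to P^{1/n}$ of a chart, strict étale, and that such strict étale diamonds descend to strict étale adic spaces over $X_n$ — this uses the equivalence of the small étale sites of an analytic adic space and its diamond, but care is needed because the saturated fiber product and the chart bookkeeping must be carried out compatibly on both sides. A secondary subtlety is compatibility of the covering structures (a morphism being a ``cover'' in the Kummer étale topology is a surjectivity condition that must be phrased and checked on the diamonds after trivializing the log structure). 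Once the site equivalence is in place, everything else is formal: passing to pro-Kummer-étale / quasi-pro-Kummer-étale completions and the standard fact that local systems do not see the difference between the étale and (quasi-)pro-étale topologies. I would also remark that the case $\cM_X$ trivial recovers \cite[Lemma 15.6]{sch13} (or its diamond analogue), which can be used as a black box for the non-log input.
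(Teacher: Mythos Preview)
Your plan uses the same ingredients as the paper---root extraction $P\to P^{1/m}$, Lemma~\ref{qproket map to div log diamond is str qproet}, and the non-log comparison $X_{\et}\simeq X^{\Diamond}_{\et}$---but packages them as an equivalence of Kummer \'etale \emph{sites} $(X,\cM_X)_{\mathrm{k\et}}\simeq (X,\cM_X)^{\Diamond}_{\mathrm{k\et}}$, which is stronger than what the paper proves. The paper never compares sites; it computes both categories of local systems via \v{C}ech descent along explicit covers. On the adic side it uses the tower $(X_m,\cM_{X_m})$ together with \cite[Lemma 4.2.5, Theorem 4.4.15]{dllz23b} to write $\mathrm{Loc}_{\bZ/p^n}((X,\cM_X)_{\mathrm{k\et}})\simeq\varinjlim_m\varprojlim_{\bullet\in\Delta}\mathrm{Loc}_{\bZ/p^n}((X_m^{(\bullet)})_{\et})$. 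On the diamond side it descends along the qprok\'et cover $(X_\infty,\cM_{X_\infty})\to(X,\cM_X)^{\Diamond}$ (where $X_\infty$ carries the chart $P_{\bQ_{\geq 0}}$), applies Lemma~\ref{qproket map to div log diamond is str qproet} to each term of the \v{C}ech nerve to replace qprok\'et by qpro\'et, and then invokes the limit argument \cite[Proposition 11.23(i)]{sch22} to rewrite the result as the same $\varinjlim_m\varprojlim_{\bullet}$. The termwise identification is then the non-log input \cite[Lemma 15.6]{sch22} and \cite[Proposition 3.7]{mw23}.

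The essential-surjectivity step you flag is a real gap in your route. You need that an arbitrary Kummer \'etale object over the log diamond is trivialized by a \emph{finite} root extraction $P\to P^{1/n}$, but the definition of Kummer \'etale for log diamonds in \cite{ky23} only guarantees strictness after base change to strictly totally disconnected log perfectoids with divisible charts, i.e.\ over $P_{\bQ_{\geq 0}}$; extracting a finite $n$ requires a spreading-out argument you have not supplied. The paper sidesteps this entirely: it works with $\bZ/p^n$-local systems (which are finitely presented) rather than arbitrary k\'et objects, and the passage from $X_\infty$ down to some $X_m$ is handled on the non-log qpro\'et side by \cite[Proposition 11.23(i)]{sch22}, after the log structure has already been stripped. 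If you insist on your route, you should restrict your site comparison to \emph{finite} Kummer \'etale objects and justify the descent from $X_\infty^{\Diamond}$ to $X_m^{\Diamond}$ for those; at that point your argument becomes essentially a reorganization of the paper's.
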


\begin{proof}
    By working \'{e}tale locally on $X$, we may assume that $(X,\cM_{X})$ admits a chart $P\to \cM_{X}$ and that $X$ is qcqs. We define log adic spaces 
    \[ (X_{m},\cM_{X_{m}})\coloneqq(X,\cM_{X})\times_{(\mathrm{Spa}(\bQ_{p}\langle P \rangle,\bZ_{p}\langle P \rangle),P)^{a}} (\mathrm{Spa}(\bQ_{p}\langle P^{1/m} \rangle,\bZ_{p}\langle P^{1/m} \rangle),P^{1/m})^{a}
    \]
    and a log diamond
    \[ (X_{\infty},\cM_{X_{\infty}})\coloneqq(X,\cM_{X})^{\Diamond}\times_{(\mathrm{Spd}(\bQ_{p}\langle P \rangle,\bZ_{p}\langle P \rangle),P)^{a}} (\mathrm{Spd}(\bQ_{p}\langle P_{\bQ_{\geq 0}} \rangle,\bZ_{p}\langle P_{\bQ_{\geq 0}} \rangle),P_{\bQ_{\geq 0}})^{a}.
    \]
    Then the natural map $(X_{\infty},\cM_{X_{\infty}})\to (X,\cM_{X})^{\Diamond}$ is a quasi-pro-Kummer \'{e}tale cover by \cite[Proposition 7.20 (2)]{ky23}, and we have the following isomorphisms:
    \[
    X_{\infty}\cong \varprojlim_{m} X_{m}^{\Diamond}, \ \ \  \ \ 
    (X_{\infty},\cM_{X_{\infty}})\cong \varprojlim_{m} (X_{m},\cM_{X_{m}})^{\Diamond}.
    \]
    Let $(X_{m}^{(\bullet)},\cM_{X_{m}^{(\bullet)}})$ (resp. $(X_{\infty}^{(\bullet)},\cM_{X_{\infty}^{(\bullet)}})$) is the \v{C}ech nerve of $(X_{m},\cM_{X_{m}})\to (X,\cM_{X})$ (resp. $(X_{\infty},\cM_{X_{\infty}})\to (X,\cM_{X})^{\Diamond}$) in the category of fs log adic spaces (resp. saturated log diamonds).
    
    By \cite[Lemma 4.2.5 and Theorem 4.4.15]{dllz23b}, every object of $\mathrm{Loc}_{\bZ/p^{n}}((X,\cM_{X})_{\mathrm{k\et}})$ comes from an object of $\mathrm{Loc}_{\bZ/p^{n}}((X_{m})_{\mathrm{\et}})$ after being pulled back to $(X_{m},\cM_{X_{m}})$ for some $m\geq 1$. In other words, we have a natural equivalence
    \[
    \mathrm{Loc}_{\bZ/p^{n}}((X,\cM_{X})_{\mathrm{k\et}})\simeq \varinjlim_{m} \varprojlim_{\bullet\in \Delta} \mathrm{Loc}_{\bZ/p^{n}}((X_{m}^{(\bullet)})_{\mathrm{\et}}).
    \]
    On the other hand, we have natural equivalences
    \begin{align*}
        \mathrm{Loc}_{\bZ/p^{n}}((X,\cM_{X})^{\Diamond}_{\mathrm{qprok\et}}) &\simeq \varprojlim_{\bullet\in \Delta} \mathrm{Loc}_{\bZ/p^{n}}((X_{\infty}^{(\bullet)},\cM_{X_{\infty}^{(\bullet)}})_{\mathrm{qprok\et}}) \\
        &\simeq \varprojlim_{\bullet\in \Delta} \mathrm{Loc}_{\bZ/p^{n}}((X_{\infty}^{(\bullet)})_{\mathrm{qpro\et}}) \\
        &\simeq \varinjlim_{m} \varprojlim_{\bullet\in \Delta} \mathrm{Loc}_{\bZ/p^{n}}((X_{m}^{(\bullet)})^{\Diamond}_{\mathrm{qpro\et}}),
    \end{align*}
    where the first equivalence is quasi-pro-Kummer \'{e}tale descent, the second one comes from Lemma \ref{qproket map to div log diamond is str qproet}, and the third one is the limit argument \cite[Proposition 11.23 (i)]{sch22}. Moreover, \cite[Lemma 15.6]{sch22} and \cite[Proposition 3.7]{mw23} imply that there exists an equivalence
    \[
    \mathrm{Loc}_{\bZ/p^{n}}((X_{m}^{(k)})_{\mathrm{\et}})\simeq \mathrm{Loc}_{\bZ/p^{n}}((X_{m}^{(k)})^{\Diamond}_{\mathrm{\et}})\simeq \mathrm{Loc}_{\bZ/p^{n}}((X_{m}^{(k)})^{\Diamond}_{\mathrm{qpro\et}})
    \]
    for each $m\geq 1$ and $k\geq 0$. This proves the assertion.
\end{proof}

\end{document}